\documentclass[12pt,a4paper,reqno]{amsart}
\usepackage[margin=3.5cm]{geometry}
\usepackage[utf8]{inputenc}
\usepackage{amsmath,nccmath}
\usepackage{enumerate}   
\usepackage{amsthm}
\usepackage{amsfonts}
\usepackage[square, comma, sort&compress,numbers]{natbib}
\usepackage{amssymb}
\usepackage{setspace}
\usepackage{graphicx}
\usepackage{color}
\usepackage[marginal]{footmisc}
\usepackage[section]{placeins}
\usepackage{mathtools}
\usepackage{indentfirst}
\usepackage{bm}
\usepackage[font={small}]{caption}
\usepackage{ifmtarg}
\usepackage[backref=page]{hyperref}
\hypersetup{
     colorlinks=true,    
     linktocpage=true,
    }
\renewcommand*{\backref}[1]{}
\renewcommand*{\backrefalt}[4]{~~{\tiny%
    \ifcase #1 Not cited.%
          \or [Cited on page~#2.]%
          \else [Cited on pages #2.]%
    \fi%
    }}
\usepackage[shortlabels]{enumitem} 

\usepackage{float}

\theoremstyle{plain}
\newtheorem{prop}{Proposition}[section]
\newtheorem*{prop*}{Proposition}
\newtheorem{cor}[prop]{Corollary}
\newtheorem{lem}[prop]{Lemma}

\newtheorem{thms}[prop]{Theorem}
\newtheorem{ques}[prop]{Question}
\newtheorem*{ques*}{Question}
\newtheorem{conj}[prop]{Conjecture}
\newtheorem*{conj*}{Conjecture}

\newtheorem*{lem*}{Lemma}
\newtheorem*{thm*}{Theorem}
\newtheorem*{claim*}{Claim}

\newtheorem*{fact*}{Fact}

\newtheorem{thma}{Theorem}

\newtheorem{cora}[thma]{Corollary}

\newtheorem{lema}{Lemma}

\newtheorem{manualtheoreminner}{Theorem}
\newenvironment{manualtheorem}[1]{%
  \renewcommand\themanualtheoreminner{#1}%
  \manualtheoreminner
}{\endmanualtheoreminner}

\newtheorem{manualcorinner}{Corollary}
\newenvironment{manualcor}[1]{%
  \renewcommand\themanualcorinner{#1}%
  \manualcorinner
}{\endmanualcorinner}

\theoremstyle{definition}
\newtheorem{defi}[prop]{Definition}
\newtheorem*{defi*}{Definition}

\newtheorem{rem}[prop]{Remark}

\numberwithin{equation}{section}

\renewcommand{\geq}{\geqslant}
\renewcommand{\leq}{\leqslant}

\allowdisplaybreaks

\DeclareMathOperator{\tr}{tr}

\def\eps{\varepsilon}
\def\pp{\varphi}

\definecolor{darkred}{rgb}{.56,0,0}

\makeatletter
\newcommand*{\rom}[1]{\expandafter\@slowromancap\romannumeral #1@}

\newcommand\primeitem{%
 \item[(\roman{enumi}\textquotesingle)]\def\@currentlabel{(\roman{enumi}\textquotesingle)}}
\newcommand\pprimeitem{%
 \item[\arabic{enumi}\textquotesingle]\def\@currentlabel{\arabic{enumi}\textquotesingle}}

\newcommand{\setword}[2]{%
  \phantomsection
  #1\def\@currentlabel{\unexpanded{#1}}\label{#2}%
} 

\makeatother

\makeatletter
\newcommand{\xMapsto}[2][]{\ext@arrow 0599{\Mapstofill@}{#1}{#2}}
\def\Mapstofill@{\arrowfill@{\Mapstochar\Relbar}\Relbar\Rightarrow}
\makeatother

\title{{\bf Symplectic blenders near whiskered tori and  persistence of saddle-center  homoclinics}}
\author{Dongchen Li}
\address{Shanghai Center for Mathematical Sciences, Fudan University, China}
\email{dongchenli@fudan.edu.cn}

\author{Dmitry Turaev}	
\address{Department of Mathematics, Imperial College London, UK}
\email{d.turaev@imperial.ac.uk}

\begin{document}
\def\p{\partial}
\def\thebeta{\alpha}
\def\thebetai{\alpha^{-1}}
\def\D{\mathrm{D}}
\def\bModa{\;\mathrm{mod}_0\;}
\def\Moda{\quad\mathrm{mod}_0}
\def\bMod{\mathrm{mod}\;}
\def\Modb{\quad\mathrm{mod}^-\;}
\def\d{\mathrm{d}}
\def\diff{\mathrm{Diff}}
\def\symp{\mathrm{Symp}}
\def\sympe{\mathrm{Symp}_\gamma}
\def\ham{\text{Ham}}
\def\inds{\mbox{ind}^s}
\def\indu{\mbox{ind}^u} 
\def\ind{\mbox{ind}}
\def\rc{\mathrm{KAM}}
\def\s{\mathrm{s}}
\def\ss{\mathrm{ss}}
\def\cs{\mathrm{cs}}
\def\u{\mathrm{u}}
\def\uu{\mathrm{uu}}
\def\cu{\mathrm{cu}}
\def\c{\mathrm{c}}
\def\loc{\mathrm{loc}}
\def\T{\mathrm{T}}
\def\cc{{\hat{\lambda}}}
\def\dd{\beta}
\def\A{\mathbb{A}}
\def\const{\mathrm{const}}
\def\orb{{\mathcal{O}(\gamma)}}
\def\orba{{\mathcal{O}(\mathbb{A})}}
\def\per{{\mathrm{per}(\gamma)}}
\def\Me{\mathcal{M}_{\mathrm{e}}}
\def\M{\mathcal{M}}
\def\DD{\mathbb{D}}
\def\P{\mathrm{P}}
\def\new{\mathrm{new}}
\def\O{\mathcal{O}}
\def\aa{\hat p(\delta)}

\subjclass[2020]{37C29, 37G25, 37J11,37D30}
\keywords{whiskered torus, homoclinic tangency, blender, symplectic dynamics}

\begin{abstract}
A blender is a hyperbolic basic set such that the projection of its stable/unstable set onto some center subspace has a higher topological dimension than the set itself.
We prove that, for any   $C^s$ symplectic diffeomorphism (where $s=2,\dots\infty,\omega$), if it has a one-dimensional  whiskered torus  with a homoclinic orbit, then a symplectic blender can be created by an  arbitrarily $C^s$-small perturbation.  Using this result, we show that the non-transverse homoclinic intersection between the invariant manifolds of a saddle-center periodic point is persistent, in the sense that the original system lies in the $C^s$-closure of a $C^1$-open set of symplectic diffeomorphisms where those having  saddle-center homoclinics are dense. Our results also hold in the corresponding continuous-time settings.
\end{abstract}
\maketitle
{\setlength{\parskip}{0pt}
\tableofcontents
}

\section{Introduction}\label{sec:intro}
In this paper, we develop a theory of hyperbolic dynamics and bifurcations  near homoclinic orbits to one-dimensional whiskered tori of symplectic diffeomorphisms. We establish the existence of  symplectic blenders near such tori and, as an application, prove the persistence of homoclinics to saddle-center periodic orbits. The results also hold in the corresponding settings for Hamiltonian flows. We also discuss some potential use of our results in celestial mechanics and in the stable ergodicity problem for symplectic diffeomorphisms.

\subsection{Symplectic blenders near whiskered tori}

A blender is a hyperbolic set such that the projection of its stable/unstable set onto some center subspace has non-empty interior; in particular, this projection has a higher topological dimension than the set itself. Since this property is $C^1$-robust, it allows for an unremovable non-transverse intersection of low-dimensional manifolds with the blender's stable/unstable set; see Section~\ref{sec:blender} for the precise definition. The notion of a blender  was introduced by Bonatti and D\'iaz \citep{BonDia:96} as a tool for producing robust transitivity for non-hyperbolic  diffeomorphisms. Since then, the theory of blenders has been used to show that various non-trivial  phenomena that seem fragile at first glance are in fact persistent: the persistence of heterodimensional cycles \citep{BonDia:08,BonDiaKir:12,LiTur:24},  abundance of $C^1$-robust homoclinic tangencies \citep{BonDia:12b,Li:24b}, typicality of Newhouse phenomenon in the space of families of diffeomorphisms \citep{Ber:16}, robustly fast growth of number of periodic orbits \citep{AsaShiTur:17,AsaShiTur:21,Ber:21},  robust existence of nonhyperbolic ergodic measures \citep{BocBonDia:16}, $C^1$ density of stable ergodicity \citep{HerHerTahUre:11,AviCroWil:21}, robust transitivity in Hamiltonian dynamics \citep{NasPuj:12} and robust bifurcations in complex dynamics \citep{Duj:17,Bie:19,Taf:21}, among others. 

A  symplectic variation of blenders was  proposed by Nassiri and Pujals \citep{NasPuj:12}. 
It was shown in \citep{NasPuj:12} that symplectic blenders emerge after small perturbations of a direct product of an integrable twist diffeomorphism and a symplectic diffeomorphism with a hyperbolic basic set. A non-trivial generalization of the Nassiri-Pujals construction, applicable to a wide class of near-integrable systems, was developed by Guardia and Paradela in the recent paper \citep{GuaPar:25}.
We show that the emergence of blenders is, in fact, a general phenomenon of partially-hyperbolic symplectic dynamics.

We begin with the description of the setting of the problem; a detailed explanation is in Section~\ref{sec:kam}. Let  $\M$ be a compact  symplectic  manifold ($C^\infty$ or real analytic) of dimension $2N$, with $N\geqslant 2$, equipped with a symplectic form $\Omega$. Denote by $\symp^s(\M)$ the space of $C^s$ symplectic diffeomorphisms of $\M$ ($1\leq s\leq \infty$, or $s=\omega$ meaning the real-analytic case).

Recall that a compact invariant set $\Lambda$ of a diffeomorphism $f$ is hyperbolic if its tangent bundle admits an invariant splitting $T_\Lambda \mathcal{M}=E^\s\oplus E^\u$, where the vectors in $E^\s$ and $E^\u$ are, respectively, uniformly contracted and expanded by the differential $\D f$. In the symplectic setting $\dim 
E^\s = \dim E^\u = N$.
Every point of $\Lambda$ has a smooth local stable and unstable manifolds tangent to $E^s$ and, respectively, $E^u$. They comprise locally-invariant continuous fibrations $W^\s_{\loc}(\Lambda)$
and $W^\u_{\loc}(\Lambda)$, which are extended globally by iteration.
A {\em basic set} is a zero-dimensional, transitive (i.e., containing a dense orbit),  locally maximal hyperbolic set (where  local maximality means the set contains all orbits that never leave its small neighborhood). Basic sets (along with their stable and unstable fibrations) persist at $C^1$-small perturbations, and depend continuously on the map $f$.

A compact invariant set is called {\em partially hyperbolic} if its tangent bundle admits an invariant splitting $T_\Lambda \mathcal{M}=E^\ss\oplus E^\c\oplus E^\uu$, where the vectors in $E^\ss$ and $ E^\uu$ are uniformly contracted and expanded, respectively, by the differential $\D f$, while the possible contraction and expansion in $E^\c$ are dominated by those in $E^\ss$ and $E^\uu$. This splitting extends to a small neighborhood of $\Lambda$. In this neighborhood, there exist a strong-stable locally invariant foliation $\mathcal{F}^{\ss}$ and a strong-unstable locally invariant foliation $\mathcal{F}^{\uu}$, which are tangent to $E^\ss$ and $E^\uu$, respectively. The leaves of these foliations that pass through the points of $\Lambda$ are defined uniquely and are extended globally by iterations.

Let a smooth curve  $\gamma\cong \mathbb{S}^1$ 
be invariant with respect to an iteration of a symplectic diffeomorphism $f$, i.e., $f^n(\gamma)=\gamma$ for some $n\geq 1$; we denote the minimal such $n$ as $\per$. We call $\gamma$
a {\em one-dimensional whiskered torus} if the orbit $\mathcal{O}(\gamma)$ of $\gamma$ (the union of $\per$ smooth curves)  is partially hyperbolic with a two-dimensional center, i.e.,  $T_\orb \mathcal{M}=E^\ss\oplus E^\c\oplus E^\uu$ with $\dim E^\c=2$ and the tangent to $\gamma$ belonging to $E^c$ at every point of $\gamma$. One can choose a sufficiently small tubular neighborhood of $\gamma$ such that the partially-hyperbolic splitting extends to it continuously. Then, there is a smooth two-dimensional manifold $\A \supset \gamma$, 
tangent to $E^\c$ and locally invariant with respect to $f^\per$.
The leaves of the local foliations 
$\mathcal{F}^{\ss}$ and $\mathcal{F}^{\uu}$ are $(N-1)$-dimensional,  smoothly embedded discs. The leaves of $\mathcal{F}^{\ss}$ and
$\mathcal{F}^{\uu}$ through points of $\gamma$ form the $N$-dimensional local stable and local unstable manifolds (the ``whiskers'') of 
$\gamma$, denoted by $W^\s_\loc(\gamma)$ and $W^\u_\loc(\gamma)$. The global stable or unstable manifold of $\gamma$ is defined as the union of iterations of the corresponding local 
manifold.

\begin{rem}\label{rem:exact}
The restriction of the symplectic form $\Omega$ to 
a small tubular neighborhood of $\orb$ is always exact, since  $H^2_{\mathrm{dR}}=0$ for such a neighborhood. Moreover, the fact that $\gamma$ is $f^\per$-invariant implies that $f$ is exact in this neighborhood. 
\end{rem} 

We always assume that $f^\per$ preserves the orientation on $\gamma$, so the rotation number $\rho(\gamma)$ is defined.
Recall that an orbit in the intersection $W^\s(\gamma)\cap W^\u(\gamma)$ is called  {\em homoclinic}. Our main result is

\begin{thma}\label{thm:main_blender_allcases}
Let $f\in \symp^s(\M)$,  $s=2,\dots, \infty,\omega$, have a one-dimensional whiskered torus $\gamma$ of class $C^s$ with a homoclinic orbit $\Gamma$, and let $\rho(\gamma)$ be irrational. Given any neighborhood $\hat V$ of $\Gamma\cup\mathcal{O}(\gamma)$, there exists $g\in \symp^{s}(\M)$, arbitrarily $C^s$-close to $f$, such that $g$ has a symplectic blender $\Lambda$, which is connected via $\hat V$ to a non-degenerate whiskered KAM-torus $\gamma_g$ of class $C^s$, arbitrarily $C^s$-close to $\gamma$. When the smoothness $s$ is finite,  both  $g$ and $\gamma_g$ can be taken $C^\infty$.
\end{thma}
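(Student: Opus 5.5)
The plan is to reduce the dynamics near $\Gamma\cup\orb$ to a first-return map on the center annulus, and then to build the blender from a horseshoe in the strong directions combined with a twist-like dynamics in the center.

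\emph{Step 1: KAM normalization.} Restrict $f^{\per}$ to the two-dimensional locally invariant center manifold $\A\supset\gamma$. By Remark~\ref{rem:exact}, $f$ is exact near $\orb$, and the restriction to $\A$ is an exact area-preserving annulus map with an invariant circle of irrational rotation number. First we make a $C^s$-small symplectic perturbation, supported near $\orb$. It makes $\rho(\gamma)$ Diophantine and the local map near $\gamma$ a non-degenerate twist map; this can be done by composing with the time-$\varepsilon$ map of a Hamiltonian depending on the action variable. KAM theory then gives a smooth invariant curve $\gamma_g$ close to $\gamma$. When $s<\infty$, we smooth first, so that $g$ and $\gamma_g$ are $C^\infty$. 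After this, take $C^s$ coordinates $(x,y,u,v)$ near $\gamma_g$, with $(x,y)\in\A$ angle--action and $u,v\in\mathbb{R}^{N-1}$ strong-stable and strong-unstable. In these coordinates the local map is
\[
\bar x=x+\omega+y+\dots,\quad \bar y=y+\dots,\quad \bar u=A(x,y)u+\dots,\quad \bar v=B(x,y)v+\dots
\]
This is a whiskered twist map with invariant circles $y=\mathrm{const}$ accumulating on $\gamma_g$, all carrying whiskers. The homoclinic orbit $\Gamma$ persists as a homoclinic orbit to $\gamma_g$, or can be restored by a small perturbation near a point of $\Gamma$. We also make it transverse in the strong directions, so that $W^\u(\gamma_g)$ and $W^\s(\gamma_g)$ intersect transversally along a curve over the center.

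\emph{Step 2: return maps.} Compute the return map $T_k=T_1\circ T_0^k$ near a homoclinic point. Here $T_0$ is the local map and $T_1$ is the global transition along $\Gamma$. Using Shilnikov-type boundary-value coordinates for the strong directions, the $u,v$ components are uniformly hyperbolic for large $k$. The center component $(x,y)\mapsto(x+k\omega+ky+\dots,\;y+\dots)$ composed with the center part of $T_1$ is, after rescaling $y$ by $1/k$, close to an area-preserving map of the annulus. This limiting map is the center part of $T_1$ composed with a large twist. Choosing several return times $k_1,\dots,k_m$ gives a horseshoe $\Lambda$ for a multi-branch iterated function system. Its strong-hyperbolic directions are $u,v$, and its center dynamics is generated by the maps $\phi_j$ (translations/twists along $\A$).

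\emph{Step 3: blender property.} We need the projection of $W^\u_\loc(\Lambda)$ onto the center coordinates to have non-empty interior, robustly. Following the Nassiri--Pujals and Guardia--Paradela covering criterion, it suffices to find a family of center maps $\phi_j$ for which the following holds: the inverse contractions, or the expansion of the center in one direction after composing two branches with different twists, cover a disc $D\subset\A$, i.e. $D\subset\bigcup_j\phi_j(D)$ with uniform center expansion. The center dynamics is area-preserving, so there is no contraction. We will therefore split the center into two directions, angle and action. We use the branches with different $k$ so that the action is expanded along one direction (the unstable side of the center) and contracted along the other. This makes $\Lambda$ hyperbolic with $\dim E^\u=N$ and $\dim E^\s=N$. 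The differences between branches in $x$ are translations by $k_j\omega\bmod 1$, which, by irrationality, can be chosen to form an overlapping covering of the angle interval. This yields the covering property for a cs-strip family, which is $C^1$-open.

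The main obstacle is Step 3: making the center-covering condition hold with uniform estimates. The composed center maps must have a hyperbolic splitting inside $\A$, which has to come from a small perturbation creating a transverse fold of the center projection of $T_1$ against the twist. At the same time, the translations $k_j\omega$ must be chosen so the images overlap. Finally, $\Lambda$ has to be connected to $\gamma_g$ through $\hat V$, which follows because $W^{\u}(\Lambda)$ and $W^{\s}(\gamma_g)$ intersect by construction of the branches along $\Gamma$.
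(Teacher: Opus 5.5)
Your Steps 1--2 roughly match the preparatory perturbations of Proposition~\ref{prop:A1}. Step 3, which you yourself flag as the main obstacle, is where the proof actually lives, and the mechanism you sketch there does not work as stated.

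\textbf{Why the transverse route fails.} I use the paper's notation $(r,\varphi)$ for action and angle. With a transverse homoclinic, $S(\gamma)$ crosses $\gamma$ at some angle $b\neq0$. The twist shears this crossing, so the center part of $T_0^k\circ T_1$ is hyperbolic, with center expansion of order $k|b|$. But then expansion and translation are governed by the same integer $k$.
\begin{itemize}
\item Along a weak-stable chord of the box, the action $\tilde r$ after $T_1$ is essentially fixed. Every branch moves the image of that chord by the angular shift $k\,\omega(\tilde r)+\theta \bmod 1$, where $\omega(\tilde r)=\rho+\tilde r+\dots$ and $\theta$ is fixed by the homoclinic points.
\item Where $\omega(\tilde r)=p/q$, all branches shift the whole chord only along the grid $\theta+\frac1q\mathbb{Z}$. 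For boxes adapted to the splitting, the covering therefore fails there as soon as $1/q$ exceeds the box's extent $\nu$ in the weak-unstable (essentially angular) direction. It also fails on an $O(1/(qk))$-neighborhood of such $\tilde r$.
\item The $\tilde r$-range of the box is at least $|b|\nu$. An interval of that length around $\rho$ generally contains rationals with $q\sim(|b|\nu)^{-1/2}$, i.e.\ resonances with $1/q>\nu$, unless $|b|\lesssim\nu$.
\end{itemize}
So ``the translations $k_j\omega \bmod 1$ form an overlapping covering'' is not available here: covering requires the crossing to be nearly tangential at the scale of the box. Relatedly, rescaling the action by $1/k$ has no limit when $T_1$ is transverse, because $\tilde r$ contains $b(\varphi-\varphi^-)$, which becomes $kb(\varphi-\varphi^-)$. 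Nor can the ``transverse fold'' be produced by a $C^s$-small perturbation localized near $\Gamma$: turning an $O(1)$ crossing angle into a tangency costs $O(1)$ in $C^1$.

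\textbf{The missing idea.} Center hyperbolicity with translations independent of the expansion comes from a homoclinic tangency with $|\alpha|\neq1$, and that tangency must be 2-flat. Take $k\sim\delta^{-1}$ and scale $r\sim\hat p(\delta)\delta^2$, $\varphi-\varphi^\pm\sim\delta$. At this scale the twist $kr$ is negligible, whereas a quadratic term $\beta(\tilde\varphi-\varphi^+)^2$ would dominate. The rescaled return maps are then $C^1$-close to $(R,\Phi)\mapsto(\alpha^{-1}R,\alpha\Phi+c(k))$, with $\alpha$ fixed and $c(k)=\delta^{-1}((\varphi^+-\varphi^-+k\rho)\bmod 1)$ forming a fine grid by Dirichlet approximation. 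This is Theorem~\ref{thm:main_blender_cubic_nonpara}.

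\textbf{How the paper reaches that situation.} This is the bulk of the proof of the statement.
\begin{itemize}
\item Proposition~\ref{prop:A2} creates a quadratic homoclinic tangency from transverse homoclinics. It makes an invariant circle of $\A$ near $\gamma$ resonant and turns one of its periodic orbits into a generic saddle-center with points on $S(\gamma)$ and $\tilde S^{-1}(\gamma)$. The small KAM circles around it are then tangent to $\tilde S^{-1}(\gamma)$ for free, and the inclination lemma (Lemma~\ref{lem:incli0}) converts this into a homoclinic tangency.
\item A proper unfolding then yields two quadratic tangencies (Lemma~\ref{lem:1to2}), then a cubic one (Lemma~\ref{lem:cubic}), then an expanding cubic one (Lemma~\ref{lem:a11}).
\item The cs-blender comes from the same argument applied to $f^{-1}$. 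The two blenders are related through $\gamma$ via Lemmas~\ref{lem:trans1} and~\ref{lem:incli0}.
\end{itemize}
Finally, the required connection in Definition~\ref{defi:sympblen_conn} is a blender property for discs $C^1$-close to strong leaves of $W^\u_\loc(\gamma_g)$, obtained via Lemma~\ref{lem:blentoblenCon}. An intersection of $W^\u(\Lambda)$ with $W^\s(\gamma_g)$, which is what you invoke, is not enough.
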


Theorem~\ref{thm:main_blender_allcases} is proved in Section~\ref{sec:thmA}. 
By the connection we mean that the symplectic blender and  the whiskered torus are homoclinically related (i.e., $W^{\u/\s}(\Lambda)$
intersects $W^{\s/\u}(\gamma)$) in a certain special way; since the precise definition is somewhat technical, we postpone  it to Section~\ref{sec:blencon} (see Definition~\ref{defi:sympblen_conn}). In what follows, we state the key properties of $\Lambda$ and $\gamma_g$, as well as the consequences of their connection (see Proposition~\ref{prop:persis}). 

The whiskered torus $\gamma_g$  is $C^\infty$ if $s\leq \infty$ and $C^\omega$ in the real-analytic case. The KAM non-degeneracy means that the rotation number $\rho(\gamma_g)$ is Diophantine and $g^{\per}|_{\A}$ is close to an integrable nonlinear rotation with a non-zero twist (see Definition~\ref{defi:kamtori} and the discussion in Section~\ref{sec:quadraunfold}). In particular, the KAM theory gives the persistence of $\gamma_g$ at symplectic perturbations that are small in sufficiently high regularity \citep{Mos:73}.

The symplectic blender $\Lambda$ is a hyperbolic basic set endowed with a  partially hyperbolic structure:
\begin{equation*}
T_\Lambda \mathcal{M}=E^\s\oplus E^\u=(E^\ss\oplus E^\mathrm{ws})\oplus( E^\mathrm{wu}\oplus E^\uu)=E^\ss\oplus E^\c\oplus E^\uu,
\end{equation*}
where $\dim E^\mathrm{ws} =\dim E^\mathrm{wu}=1$ and $\dim E^\s=\dim E^\u = N-1$.  It has the following {\em blender property}:  for every map that is  $C^1$-close to $g$,  if an $(N-1)$-dimensional disc $L^\u$ is $C^1$-close 
to a strong-unstable leaf\footnote{We mean that they are $C^1$-close as two embeddings of a unit disc in $\mathbb{R}^{N-1}$. We shall use this convention throughout the paper.
} 
from $W^\u_\loc(\gamma_g)$, then it intersects $W^\s(\Lambda)$, and if an $(N-1)$-dimensional disc $L^\s$ is $C^1$-close 
 to a  strong-stable leaf from $W^\s_\loc(\gamma_g)$, then it intersects $W^\u(\Lambda)$.

The blender property is in line with the one that firstly appeared in \citep{BonDia:96}:  
although the intersection of the $(N-1)$-dimensional disc $L^{\u/\s}$ with the $N$-dimensional stable/unstable manifold of any individual point of  $\Lambda$ is non-transverse (and hence can be removed  by a small perturbation),  the non-transverse intersection of $L^{\u/\s}$ with the whole set $W^{\s/\u}(\Lambda)$ is $C^1$-robust, that is, it persists under all $C^1$-small perturbations.

Recall that the stable and unstable manifolds of any given point of the basic set $\Lambda$ are dense in $W^\s(\Lambda)$
and, respectively, $W^\u(\Lambda)$.  As a result, one can, by a $C^s$-small perturbation, create an intersection between $L^{\u/\s}$ and $W^{\s/\u}(P)$ for any particular point $P\in\Lambda$. 
In fact, the specific type of connection given by Theorem \ref{thm:main_blender_allcases} between the blender $\Lambda$ and a  non-degenerate  whiskered KAM-torus $\gamma_g$ leads to a stronger statement:  one can establish the persistence of intersections between $L^{\u/\s}$ and $W^{\s/\u}(P)$ within parametric families satisfying explicitly formulated genericity conditions,  thus allowing one to locate,
in a wide set of applications, persistent intersections of manifolds whose dimension does not allow transversality. 

Let $s=\infty,\omega$, and let  $\{f_\eps\}$ be
a  $C^s$ family (with at least two parameters) of diffeomorphisms in $\symp^s(\M)$ such that $f_0$ has a symplectic blender $\Lambda$ 
connected, as in Definition~\ref{defi:sympblen_conn}, to a non-degenerate whiskered KAM-torus $\gamma$ of class $C^s$. As we mentioned, the blender and the KAM-torus persist at small $\eps$.

\begin{prop}[Persistent intersections]\label{prop:persis}
Let $\{L^\u_\eps\}$ and $\{L^\s_\eps\}$ be two  $C^1$ families of $(N-1)$-dimensional embedded discs such that 
the disc $L^\u_0$ is close to some local strong-unstable leaf
in $W^\u_{\loc}(\gamma)$, and $L^\s_0$ is close to some
strong-stable leaf in $W^\s_{\loc}(\gamma_\eps)$. If  the distances from $L^\u_\eps$ and $L^\s_\eps$ to $\gamma_\eps$ change independently with $\eps$ (see condition \eqref{eq:gcpersis}), then there exists a neighborhood $\mathcal{E}$ of $\eps=0$ such that
\begin{itemize}[nosep]
\item for any pair of points $P$ and $P'$ of $\Lambda$,
$$L^\u_\eps \cap W^\s(P_\eps)\neq \emptyset \;\;\mbox{and}\;\;   L^\s_\eps \cap W^\u(P'_\eps)\neq \emptyset$$ for a dense subset of  $\mathcal{E}$;
\item an iteration of $L^\u_\eps$ by $f_\eps$ intersects $ L^\s_\eps$ for a dense subset of  $\mathcal{E}$.
\end{itemize}
\end{prop}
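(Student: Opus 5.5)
The proof works in the coordinates near the KAM-torus. By KAM theory (and the non-degeneracy in Definition~\ref{defi:kamtori}), for all small $\eps$ the map $f_\eps^{\per}|_{\A_\eps}$ has an invariant curve $\gamma_\eps$ with fixed Diophantine rotation number. It is accumulated from both sides by invariant KAM curves, whose rotation numbers vary monotonically because of the twist. We introduce action-angle type coordinates $(y,\varphi)$ on the center manifold $\A_\eps$, with $\gamma_\eps=\{y=0\}$. In these coordinates the strong-stable and strong-unstable leaves of $W^{\s/\u}_\loc(\gamma_\eps)$ are parametrized by $\varphi$. A disc $L^\u_\eps$ $C^1$-close to a strong-unstable leaf then meets the strong-stable foliation at a unique point whose projection to $\A_\eps$ has coordinates $(y^\u(\eps),\varphi^\u(\eps))$. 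Similarly, $L^\s_\eps$ projects along the strong-unstable foliation to a point $(y^\s(\eps),\varphi^\s(\eps))$. I read condition \eqref{eq:gcpersis} as saying that $\eps\mapsto(y^\u(\eps),y^\s(\eps))$ is a submersion at $\eps=0$, i.e.\ its Jacobian has full rank $2$.

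The connection in Definition~\ref{defi:sympblen_conn} should give the following. For every point $P\in\Lambda$, the projection of $W^\s(P_\eps)$ onto $\A_\eps$ along the strong-stable leaves is dense in an annular region adjacent to $\gamma_\eps$, and this persists in $\eps$. So $L^\u_\eps\cap W^\s(P_\eps)\neq\emptyset$ holds as soon as the projected point $(y^\u,\varphi^\u)$ lies on the trace of $W^\s(P_\eps)$; the blender property ensures this trace is nonempty in every small open region. The intersection is non-transverse, so it holds only on a codimension-one condition in $\eps$.

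The density argument is then as follows. Take any open set $U\subset\mathcal E$. By the submersion property we can vary $y^\u$ within $U$, and $\varphi^\u$ is also free to move: iterating by $f_\eps^{\per}$ rotates the angle through a dense set of values, since the rotation number is irrational on nearby KAM curves. Pick a piece of $W^\s(P_\eps)$ whose projection crosses the relevant level $y=y^\u(\eps)$ transversally inside $\A_\eps$; density of $W^\s(P)$ in $W^\s(\Lambda)$ together with the blender property provides such a piece. The condition ``projected point lies on this piece'' is then a smooth one-dimensional equation in $\eps$. It has a solution in $U$ by an intermediate-value argument using the $C^1$-robustness in Proposition's setting. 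The same argument applied to $L^\s_\eps$ and $W^\u(P'_\eps)$ uses the $y^\s$ direction. Because $y^\u$ and $y^\s$ move independently, both conditions can be met on a dense set of $\eps$; I expect this to be a dense set of curves when there are $2$ parameters.

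For the second bullet, I compose the two intersections. Once $L^\u_\eps$ meets $W^\s(P)$ and $L^\s_\eps$ meets $W^\u(P')$ for suitable $P,P'\in\Lambda$, the inclination lemma applies, using transitivity of $\Lambda$ and transverse homoclinic relations inside it. It shows that forward iterates of $L^\u_\eps$ accumulate on $W^\u(\Lambda)$, in particular near $W^\u(P')$ in the $C^1$ sense along the appropriate directions. Hence an iterate of $L^\u_\eps$ is $C^1$-close to a disc that meets $L^\s_\eps$. The remaining intersection is again non-transverse, of codimension one, and is achieved by a further small move of $\eps$ within $U$, using the $y^\s$ direction.

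The main obstacle is making the codimension-one equation genuinely solvable in $\eps$ for a prescribed point $P$. This needs the projection of $W^\s(P_\eps)$ to be controlled uniformly and $C^1$ in $\eps$, which in turn needs the smooth dependence of KAM coordinates on $\eps$. I would first try to get this by restricting to Diophantine rotation numbers, where KAM curves depend smoothly on parameters in the Whitney sense.
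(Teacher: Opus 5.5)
There is a genuine gap at the step everything else rests on: you never establish a non-zero unfolding velocity. The condition $L^\u_\eps\cap W^\s(P_\eps)\neq\emptyset$ has codimension one, and the direction missing from $TL^\u+TW^\s(P)$ is $E^{\mathrm{wu}}$.

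Your parameter only moves the action $r^\u$ (your $y^\u$). In the box where the cu-blender is built, the action direction is, up to small terms, the weak-stable one: $\bar R=\alpha^{-1}R$ with $|\alpha|>1$ in \eqref{eq:Tk}. Moreover, by Remark~\ref{invfol}, $W^\s_\loc(P)$ is a graph over $(R,X)$ with slopes $o(1)$. So moving $r^\u$ by itself gives no lower bound on the crossing velocity.

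Asking the trace of $W^\s(P)$ to cross the level $\{r=r^\u\}$ transversally is the wrong transversality. You need the trace to be transverse to the direction in which the base point moves with $\eps$. Without that, the intermediate-value argument gives nothing: a dense countable family of curves nearly parallel to the motion can be missed altogether. Your remark that iterating ``rotates the angle through a dense set of values, since the rotation number is irrational'' does not repair this. Irrationality gives discrete recurrence, not a motion of the angle controlled by $\eps$. (Also, points of $W^\s(P)$ do not lie on $W^\s_\loc(\A)$, so ``the projected point lies on the trace'' must be defined through holonomy along discs of the type of $L^\u$, not along strong-stable leaves.)

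The missing ingredient is the twist, used as in Lemma~\ref{lem:unfold1}.
\begin{itemize}
\item By \eqref{eq:derisyst2:4}, the iterate $T_0^{k'}(L^\u_\eps)$ has angle $k'(\rho+r^\u_*)+k'\eps+\hat\ell^\u_2(y,\eps)$ with $\p\hat\ell^\u_2/\p\eps=O(k'\delta)$. For large $k'$ it therefore sweeps across $W^\s_\loc(P_1)$ with velocity of order $k'$ in $E^{\mathrm{wu}}$. Here $W^\s_\loc(P_1)$ has bounded slopes and $P_1$ is supplied by the blender property.
\item Equivalently, the pieces of $W^\s(P)$ that $L^\u$ actually meets are $T_0^{-k'}$-preimages of local stable manifolds in the box. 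The twist shears them so that they become uniformly transverse to the action direction.
\item $C^1$ accumulation of $W^\s(P)$ on $W^\s(P_1)$ then transfers this to any $P$ (Lemma~\ref{lem:unfold2}).
\item Two applications of the implicit function theorem in the coordinates $(\mu,\nu)=(\Delta^\u,\Delta^\s)$ give the dense set with independent unfoldings (Proposition~\ref{prop:unfoldble}). Only this independence makes the conditions for $P$ and $P'$ hold simultaneously, and in a two-parameter family that happens at isolated points, not on curves.
\end{itemize}

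The second bullet needs this too, and your version fails as stated. Since $L^\u$ has dimension $N-1$, it is never transverse to $W^\s(P)$, so the inclination lemma does not apply. Its iterates only approach the strong-unstable leaf of $P$. Meeting $L^\s$ is a codimension-two condition. Take $P=P'$ periodic with weak multipliers $\lambda_1,\gamma_1$. Then $L^\u$ must be displaced off $W^\s(P)$ by $O(\gamma_1^{-k})$ while $L^\s$ stays $O(\lambda_1^k)$-close to $W^\u(P)$. This is solved using both unfoldings at once (Lemma~\ref{lem:model}), not by a one-parameter move in the $y^\s$ direction.

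Finally, Whitney regularity of the KAM family is not the obstacle: the Diophantine KAM curves are already straightened to $\{r=\const\}$ for all small $\eps$. What is needed is the twist.
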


This result is proved in Section~\ref{sec:unfoldblen}. The KAM property of $\gamma_\eps$ is  essential for the first claim; the second claim follows from the first one by   taking $P=P'$ a periodic point of $\Lambda$. One can see from the proof that Proposition~\ref{prop:persis} also holds  for all sufficiently large finite $s$.

 The proof of Theorem~\ref{thm:main_blender_allcases} is based on a somewhat counter-intuitive result that  a small perturbation of transverse homoclinics to $\gamma$ creates tangencies between $W^\u(\gamma)$ and $W^\s(\gamma)$ (see Proposition~\ref{prop:A2}), and then follows from our analysis of bifurcations of
orbits of tangency of whiskered KAM-tori in two-parameter families. 

More specifically, we consider {\em partially-hyperbolic homoclinic tangencies} (see Definition~\ref{defi:phtangency}), implying, in particular, the partial hyperbolicity of the union of  $\orb$ and the homoclinic orbits. Let $\{f_\eps\}\subset \symp^s(\M)$  be a two-parameter family such that $f_0=f$ and   $f_{\eps}$ is jointly $C^s$ with respect to parameters and coordinates. We investigate three cases:
\begin{itemize}[nosep]
\item[(1)]   $\{f_\eps\}$ unfolds generically a  partially-hyperbolic cubic homoclinic tangency of $W^\u(\gamma)$ and $W^\s(\gamma)$;
\item[(2)]   $\{f_\eps\}$ unfolds generically two  partially-hyperbolic  quadratic homoclinic tangencies of $W^\u(\gamma)$ and $W^\s(\gamma)$; and
\item[(3)]   $\{f_\eps\}$ unfolds {\em properly} a partially-hyperbolic quadratic homoclinic tangency of $W^\u(\gamma)$ and $W^\s(\gamma)$.
\end{itemize}

The natural genericity/propriety conditions for these two-parameter unfoldings  are given by~\eqref{eq:unfoldcubic},~\eqref{eq:unfold2quadra} and~\eqref{eq:unfoldquadra}, respectively. 
In the third case, the unfolding involves the change of the hyperbolicity rate of the tangency. 
Examples of such unfoldings are given in Section~\ref{sec:unfoldingfam}. 
As part of the proof of Theorem~\ref{thm:main_blender_allcases}, we also explicitly construct  perturbations that turn a whiskered torus with an irrational rotation number into a non-degenerate KAM-torus and make arbitrary homoclinic orbits partially hyperbolic.

\begin{thma}\label{thm:main_2punfolding}
Let $f \in \symp^\infty(\M)$ have a  non-degenerate whiskered KAM-torus $\gamma$ of class $C^\infty$. Let $\{f_\eps\}\subset \symp^\infty(\M)$ be any  two-parameter  family of type  (1), (2) or (3), where  each $f_\eps$ is exact (see Remark~\ref{rem:exact}) in a small tubular neighborhood of $\orb$. Then, for any small neighborhood $\hat V$ of the union of  $\mathcal{O}(\gamma)$ and the homoclinic orbit(s) involved, there exist arbitrarily small values of $\eps$ for which  $f_\eps$  has  a symplectic blender $\Lambda_\eps$, connected via $\hat V$ to the KAM-continuation of $\gamma$.
\end{thma}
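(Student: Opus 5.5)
We will derive the blender from a renormalization near the homoclinic tangency. First return maps along orbits passing close to the tangency are put in a normal form, and in the limit they reduce to a map of the center direction that is easy to analyse.

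\emph{Step 1: coordinates near the torus.} Since $\gamma$ is a non-degenerate whiskered KAM-torus and $f_\eps$ is exact near $\orb$, we use symplectic normal-form coordinates near $\orb$ for $f_\eps^{\per}$. In these coordinates:
\begin{itemize}[nosep]
\item the center manifold is $\A=\{(\theta,I)\}$ with $\theta\in\mathbb{S}^1$ and $I$ small;
\item the local map is close to $(\theta,I,x,y)\mapsto(\theta+\omega+\beta I+\dots,\ I+\dots,\ A x+\dots,\ A^{-T}y+\dots)$, with $x$ strong-stable, $y$ strong-unstable and twist $\beta\neq0$;
\item $W^\s_\loc(\gamma)=\{y=0,I=0\}$ and $W^\u_\loc(\gamma)=\{x=0,I=0\}$ are straightened;
\item the strong foliations are flattened.
\end{itemize}
Exactness ensures that the "flux" of $I$ vanishes. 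Nearby invariant KAM-curves $I=\mathrm{const}$, with Diophantine rotation numbers, therefore persist and fill a positive-measure set of levels.

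\emph{Step 2: the return map.} We choose the global transition map $T_1$ from a neighborhood of $M^-\in W^\u_\loc(\gamma)$ to a neighborhood of $M^+\in W^\s_\loc(\gamma)$ along the homoclinic orbit(s). We then compose it with $T_0^k$ for large $k$.

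Using Shilnikov-type boundary-value coordinates and partial hyperbolicity of the tangency, the strong directions contract. The return map $T_1T_0^k$ then reduces, up to small errors, to a two-dimensional symplectic map on the center variables $(\theta,I)$. This reduced map is a composition of:
\begin{itemize}[nosep]
\item a rotation by about $k\omega$ plus the twist $k\beta I$;
\item the center component of $T_1$.
\end{itemize}
After rescaling $I$ by $1/k$, the tangency appears in the center component of $T_1$ as a term $h(\theta)$ that is a quadratic or cubic function of the splitting distance. The result is a family of standard-like (Hénon-type area-preserving) maps. The parameters $\eps$ control, after rescaling, the tangency depth and the rotation phase. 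We choose $k$ and the phase so that this rescaled family has a horseshoe, and we use the remaining parameter to tune the position.

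In the three cases we use, respectively:
\begin{itemize}[nosep]
\item case (1): the cubic nondegeneracy \eqref{eq:unfoldcubic};
\item case (2): the two independent quadratic tangencies \eqref{eq:unfold2quadra}, giving two independent branches;
\item case (3): the change of hyperbolicity rate in \eqref{eq:unfoldquadra}.
\end{itemize}
In each case the aim is a two-dimensional reduced map with a hyperbolic set whose projection, transversally to a strong leaf, overlaps in the "covering" sense of a blender.

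\emph{Step 3: blender structure.} For the full map, the horseshoe $\Lambda_\eps$ has splitting $E^\ss\oplus E^{\mathrm{ws}}\oplus E^{\mathrm{wu}}\oplus E^\uu$. The weak directions come from the hyperbolicity of the reduced 2D map, and the strong directions come from $A$.

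To get the blender property we construct a family of $(N-1)$-discs close to strong-unstable leaves of $W^\u_\loc(\gamma_\eps)$. These discs are parametrized by their $(\theta,I)$-position in an interval of center positions. We then verify a covering property: the images of this family under the two branches of the return map cover the family again, in the Bonatti–Díaz fashion. The discs cover a whole $I$-interval around $0$, so any disc close to a strong-unstable leaf of the KAM torus is captured; the symmetric argument handles $W^\u(\Lambda)$. The connection via $\hat V$ to the KAM-continuation of $\gamma$ holds because the returns stay in $\hat V$ and the torus lies inside the covered region.

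\emph{Main obstacle.} The hardest step is Step 2: proving that the reduced center map has two branches whose weak-unstable images overlap robustly, uniformly in large $k$, despite the irrational rotation. We will first try to choose $k$ along a sequence with $k\omega$ close to $0$ mod $1$, which the Diophantine property controls. We will then check that the $\eps$-dependence of the normal form is non-degenerate enough to place the overlap.
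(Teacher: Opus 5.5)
Your plan stops exactly where the content of the theorem lies: the covering (overlap) property, which you list as the ``main obstacle'', is what makes a hyperbolic set a blender, and the route you sketch does not by itself produce it. Rescaling so that the twist $k\beta I$ is of order one turns the returns near a quadratic or cubic tangency into conservative H\'enon-type maps. These give horseshoes, but a horseshoe is not a blender. For every disc near a strong-unstable leaf to meet $W^{\mathrm{s}}(\Lambda)$ robustly, you need finitely many branches whose inverse images of an interval in the weak-unstable direction overlap and cover that interval. The branches of a H\'enon horseshoe are separated by gaps. Your scaling also depends on $k$, so different return times do not become branches of one iterated function system in common coordinates. Choosing $k$ with $k\omega\approx 0 \bmod 1$ creates no overlaps. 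Finally, a symplectic blender needs covering in both weak directions, and an area-preserving reduced map does not give this automatically, so ``the symmetric argument'' for $W^{\mathrm{u}}(\Lambda)$ is unjustified.

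The missing ingredients are the following.
\begin{enumerate}
\item The blender is obtained non-perturbatively at a \emph{hyperbolic 2-flat} tangency (Theorem~\ref{thm:main_blender_cubic_nonpara}). One uses a scaling in which both the twist and the tangency nonlinearity vanish: $\varphi-\varphi^\pm\sim\delta$, $r\sim\hat p(\delta)\delta^2$, and $k\in[q,2q-1]$ with $\delta\sim 1/q$. This is why a cubic tangency is needed, since a quadratic term would dominate at this scale. In one common coordinate system, all $T_k$ are then $C^1$-close to $(R,\Phi)\mapsto(\alpha^{-1}R,\alpha\Phi+c(k))$, with $c(k)=\delta^{-1}\bigl((\varphi^+-\varphi^-+k\rho)\;\mathrm{mod}_0\;1\bigr)$. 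Hyperbolicity comes from $|\alpha|\neq 1$, not from a H\'enon nonlinearity. Dirichlet's approximation $\rho\approx p/q$ makes the $c(k)$ a grid fine enough for the covering property (Lemmas~\ref{prop:covering} and~\ref{cor:image}), which yields a cu-blender when $|\alpha|>1$. Note that the $R$-direction carries no translations, so this set is not a cs-blender.
\item In case (1), composing with a transverse homoclinic creates secondary cubic tangencies with $|\alpha|\to\infty$ that still unfold generically (Lemma~\ref{lem:a11}). This gives a cu-blender; repeating the argument for $f_\eps^{-1}$ gives a cs-blender. Both are homoclinically related to $\gamma$ (Lemma~\ref{lem:trans1}), hence to each other by the inclination lemma, and together they form the symplectic blender.
\item Cases (3) and (2) are reduced to case (1). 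First, two quadratic tangencies are created from one, using KAM-curves accumulating on $\gamma$ and the variation of $\alpha$ (Lemma~\ref{lem:1to2}). Then a cubic tangency is created from two quadratic ones (Lemma~\ref{lem:cubic}).
\end{enumerate}
None of these steps appears in your proposal.
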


\begin{rem}\label{rem:exact2}
We only use the   exactness of $f_\eps$  to apply the KAM theory near $\orb$ to achieve persistence of KAM-curves. 
When $\gamma$  lies in a simply connected subset of $\M$,  the required local exactness is satisfied automatically for all maps in $\symp^s(\M)$ by the Poincar\'e lemma.
\end{rem}

By Definition~\ref{defi:sympblen_conn}, the set of $\eps$ values in Theorem~\ref{thm:main_2punfolding} is open.
The detailed setting of this theorem and an outline of the proof are given in Section~\ref{sec:quadraunfold}, and the complete proof is presented in Section~\ref{sec:proofsymp}. It relies on the key fact that center-stable and center-unstable blenders, from which we build symplectic blenders, naturally arise  {\em without perturbations} near a whiskered torus with a cubic homoclinic tangency (see Theorem~\ref{thm:main_blender_cubic_nonpara}).

One can see from the proof that Theorem~\ref{thm:main_2punfolding} also holds when the family $f_\eps$ and the whiskered torus $\gamma$ have sufficiently large finite regularity. The order of regularity can be estimated in terms of the Diophantine properties of the rotation number $\rho(\gamma)$ (see Remark~\ref{rem:smoothness}).  
The genericity/propriety conditions we impose on $\{f_\eps\}$ are open and dense in the space of families of any sufficiently high regularity, including $C^\omega$. As a result, the theorem is applicable to  real analytic families.
  
Thus, Theorem~\ref{thm:main_2punfolding} holds for any sufficiently smooth family of perturbations satisfying 
explicitly formulated and verifiable genericity conditions, which allows for a broad set of applications.  In particular, we discuss in Section~\ref{sec:introSC} the setting of saddle-center periodic points, where whiskered KAM-tori exist naturally and Theorem~\ref{thm:main_2punfolding} and Proposition \ref{prop:persis} apply.

\subsection{Saddle-center periodic points with homoclinics}\label{sec:introSC}
Let $f\in\symp^s(\mathcal{M})$  have a {\em saddle-center} periodic point $O$, that is, $O$ has exactly two multipliers\footnote{eigenvalues of the differential of the period map at $O$} on the unit circle: 
$\lambda=e^{\pm i\rho}$, $\rho\in (0,\pi)$, and, by symplecticity, there are $N-1$ multipliers outside and inside the unit circle. The point $O$ has a two-dimensional, locally-invariant, normally-hyperbolic, symplectic center manifold $W^\c(O)$, which has the same smoothness as $f$ if $s$ is finite and can have arbitrarily large finite smoothness if $f$ is $C^\infty$. The local invariance means that there exists a neighborhood $V$ of $O$ such that the orbit of any point in $W^\c(O)$ stays in $W^\c(O)$ as long as it lies in $V$. Normal hyperbolicity means that the tangent bundle $T_{W^\c(O)}\mathcal{M}$  admits a partially-hyperbolic splitting with the center bundle $E^\c$ equal to ${T} {W^\c(O)}$. The partial hyperbolicity implies that every point of $W^\c(O)$ has a strong-stable and a strong-unstable leaf, which are both $(N-1)$-dimensional. In particular,  the leaves through $O$  are the  stable and  unstable invariant manifolds $W^\s(O)$ and $W^\u(O)$ (see Figure~\ref{fig:sc}). We assume that these two manifolds have a homoclinic intersection, which is automatically non-transverse by counting the dimensions. Because of the non-transversality, this homoclinic intersection can  be removed by an arbitrarily small perturbation of $f$ (note that $O$ persists under small perturbations since it does not have 1 as its multiplier, and it remains a saddle-center when the perturbations are symplectic).  However, we show that  {\em arbitrarily close to  $f$ in the space of symplectic diffeomorphisms there are  open regions where maps having a saddle-center with a homoclinic orbit are dense}, as given by the Theorem \ref{thm:main_SC_para} below. 

\begin{figure}[!h]
\begin{center}
\includegraphics[scale=.85]{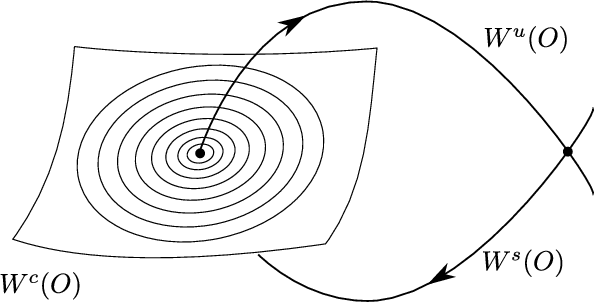}
\end{center}
\caption{The schematic picture for a saddle-center periodic point with a homoclinic orbit.}
\label{fig:sc}
\end{figure}

We say that the saddle-center $O$ and its homoclinic orbit $\Gamma$ are {\em generic} if conditions \ref{word:C1}--\ref{word:C3} of Section~\ref{sec:para_SC}  are fulfilled. These conditions ensure that, first, Moser's conditions \citep{Mos:73} are met, so that the restriction $f^{\mathrm{per}(O)}|_{W^\c(O)}$ (where $\mathrm{per}(O)$ denotes the period) has a large measure set of KAM-curves on $W^\c(O)$. Second, the so-called scattering map (see \eqref{eq:scattering_1}) of $W^\c(O)$ can be defined along the strong-stable and strong-unstable leaves near $W^\s(O)$ and $W^\u(O)$, and the scattering map is not equal to a linear rotation (cf. \citep{LerMar:15}).  Note that all  the genericity conditions  can be achieved by an arbitrarily small perturbation, see Section~\ref{sec:gc2}.
 
A family $\{f_\eps\}\subset \symp^s(\mathcal{M})$ is called  a {\em proper unfolding of the homoclinic intersection at $\Gamma$} if it has at least {\em four parameters} and condition \ref{word:H1} of Section~\ref{sec:para_SC} is fulfilled. This condition means that, when  $\eps$ varies,  certain four quantities change independently, which control 
\begin{enumerate}[nosep]
\item the positions of the KAM-curves on $W^\c(O)$,
\item the distortion of the image of these KAM-curves by the scattering map, 
\item the position of a piece of $W^\u(O)$ near $W^\s_\loc(O)$, and
\item the position of a piece of $W^\s(O)$ near $W^\u_\loc(O)$, respectively.
\end{enumerate} 
An  example of such unfolding family is provided in   Section~\ref{sec:famC}.

\begin{thma}\label{thm:main_SC_para}
Let $f\in \symp^{\infty}(\mathcal{M})$  have a generic  saddle-center with a generic homoclinic orbit. For any   proper unfolding family $\{f_\eps\}\subset  \symp^{\infty}(\mathcal{M})$,  there exists a sequence $\{\mathcal{E}_j\}$ of open sets converging to $\eps=0$ such that  the set of $\eps$ for which the saddle-center has a homoclinic orbit is dense in $\bigcup_j \mathcal{E}_j$.
\end{thma}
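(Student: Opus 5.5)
We prove Theorem~\ref{thm:main_SC_para} by reducing it to Theorem~\ref{thm:main_2punfolding} and Proposition~\ref{prop:persis}, using a KAM-curve on $W^\c(O)$ as the whiskered torus $\gamma$.

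First, the genericity conditions \ref{word:C1}--\ref{word:C3} give Moser's twist condition. Hence $f^{\mathrm{per}(O)}|_{W^\c(O)}$ has a positive-measure family of invariant KAM-curves accumulating on $O$. Each such curve $\gamma$ is a one-dimensional whiskered torus: it lies in the normally hyperbolic manifold $W^\c(O)$. Its whiskers are the unions of strong-stable and strong-unstable leaves through $\gamma$. Since $O$ lies in a simply connected neighbourhood, the local exactness of Remark~\ref{rem:exact2} holds automatically for all $f_\eps$.

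Next, we produce homoclinic tangencies of a suitable $\gamma$. The homoclinic orbit $\Gamma$ of $O$ defines the scattering map $S$ of $W^\c(O)$ along strong leaves near $W^\u(O)$ and $W^\s(O)$. Intersections of $W^\u(\gamma)$ with $W^\s(\gamma')$ correspond to intersections of $S(\gamma)$ with $\gamma'$ inside $W^\c(O)$. Because $S$ is not a linear rotation, the image $S(\gamma)$ of a small KAM-curve around $O$ is a distorted closed curve. Using parameters (1)--(2) of \ref{word:H1}, we adjust the curve positions and the distortion so that $S(\gamma)$ is tangent to $\gamma$. Tangency of $S(\gamma)$ and $\gamma$ in the $2$-dimensional center yields a partially-hyperbolic homoclinic tangency of $W^\u(\gamma)$ and $W^\s(\gamma)$; partial hyperbolicity is inherited from the normal hyperbolicity of $W^\c(O)$.

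Choosing two curves, or two tangency points of one curve, gives a two-parameter family of type (2), or of type (1) at a cubic degeneracy. Verifying the genericity conditions \eqref{eq:unfold2quadra} or \eqref{eq:unfoldcubic} for this family, built from parameters (1)--(2), is the technical core. We expect this step to be the \textbf{main obstacle}. The tangency persists only for Diophantine curves, which form a Cantor set rather than a continuum. We would handle this by working with a one-parameter Whitney-smooth family of KAM-curves, and by using the non-linearity of $S$ to obtain transversal crossing of the tangency locus.

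Theorem~\ref{thm:main_2punfolding} then gives an open set of parameter values $\eps$ near $0$ for which $f_\eps$ has a symplectic blender $\Lambda_\eps$ connected to the continuation $\gamma_\eps$. Letting the KAM-curve shrink toward $O$ yields a sequence of such open sets $\mathcal{E}_j\to 0$.

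Finally, on each $\mathcal{E}_j$ we apply Proposition~\ref{prop:persis}. We take $L^\u_\eps$ to be a piece of $W^\u(O)$ that, following $\Gamma$, passes near $W^\s_\loc(O)$. Since $\gamma_\eps$ is close to $O$, this piece is close to a strong-unstable leaf of $W^\u_\loc(\gamma_\eps)$. Similarly, $L^\s_\eps$ is a piece of $W^\s(O)$ near $W^\u_\loc(O)$. Parameters (3)--(4) of \ref{word:H1} move these discs independently relative to $\gamma_\eps$, which is condition \eqref{eq:gcpersis}. The second conclusion of Proposition~\ref{prop:persis} then gives a dense set of $\eps\in\mathcal{E}_j$ for which an iterate of $W^\u(O)$ meets $W^\s(O)$, that is, $O$ has a homoclinic orbit. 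A residual family of parameters not used above can be frozen, since density in a sub-slice extends to an open set by continuity of the construction.
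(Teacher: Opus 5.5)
The proposal has genuine gaps at two places: the tangency step, which you flagged as the main obstacle, and the final step, which you present as routine.

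\textbf{The tangency step aims at the wrong target.} While the homoclinic orbit persists, $S(O)=O$. For a small KAM-circle $\gamma$, the curve $S(\gamma)$ is then, to leading order, the ellipse $L\gamma$ enclosing the same area as $\gamma$. Since $L$ is not a rotation, this ellipse crosses $\gamma$ transversely at four points (Lerman--Markova). So no adjustment of $(\rho,\tr L^\T L)$ makes $S(\gamma)$ tangent to $\gamma$ itself, at one point or two. The nonlinear part of $S$ is a relative $O(\sqrt r)$ correction and changes nothing.

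The paper uses a different mechanism. By Lemma~\ref{lem:tangency_2family}, $S(C_r)$ is quadratically tangent to the circles of radii $\lambda^{\pm1}r+O(r^{3/2})$, where $\lambda^{\pm1}$ are the eigenvalues of $L^\T L$. The tangency points lie on two smooth curves $\ell^\pm$ transverse to the circles. The union of the KAM-curves in $\mathcal G$ and its $S$-image both have Lebesgue density $1$ at $O$. Hence, already at $\eps=0$, there are $\gamma,\gamma_1,\gamma_2\in\mathcal G$ with $S(\gamma)$ tangent to $\gamma_1$ and $\gamma_2$. This density argument, not a Whitney-smooth family, is what disposes of the Cantor-set issue.

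Lemma~\ref{lem:tangenfamily} then shows these two \emph{heteroclinic} tangencies unfold independently in $(\rho,\tr L^\T L)$, with determinant $-(\lambda_0-1)^2\lambda_0^{-2}A\,r_{*,0}+O(r_{*,0}^{3/2})$. They become two independently unfolding homoclinic tangencies of $\gamma$ because $W^\s(\gamma)$ accumulates on $W^\s(\gamma_{1,2})$, via the $S^{-1}$-chains of Lemma~\ref{lem:chain} and Lemma~\ref{lem:incli}. This is what feeds case (2) of Theorem~\ref{thm:main_2punfolding} on the sub-family $\mu=\nu=0$, giving Theorem~\ref{thm:main_SCblender}.

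\textbf{The final step fails as stated.} A piece of $W^\u(O)$ arriving near $W^\s_\loc(O)$ meets $W^\s_\loc(\mathbb{D})$ at a point whose strong-stable projection is $S(O)=(\mu,\nu)$. That point has action $r^\u=\tfrac12(\mu^2+\nu^2)$, so forward iterates of the piece accumulate on strong-unstable leaves at action $r^\u$. At the homoclinic parameters this means $W^\u_\loc(O)$ itself, at action-distance $r_\gamma$ from $W^\u_\loc(\gamma_\eps)$.

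Being close to $O$ does not help. The tolerance in the blender property is fixed only after $\gamma$ is chosen and need not exceed $r_\gamma$. Iterating by $T_0$ essentially preserves the action.

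Worse, condition~\eqref{eq:gcpersis} fails there. We have $\partial r^\u/\partial(\mu,\nu)=(\mu,\nu)$, and likewise for the action $r^\s$ of $S^{-1}(O)$, while $\rho$ shifts both distances at the same rate. So at $\mu=\nu=0$ the Jacobian is degenerate for every choice of two parameters. This contradicts your claim that parameters (3)--(4) of \ref{word:H1} move the discs independently.

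The missing ingredient is Lemma~\ref{lem:Otogamma}. One must break the homoclinic orbit, choosing $(\Delta\rho_j,\mu_j,\nu_j)$ with $\mu_j\nu_j\neq0$ so that $S(O)$ and $S^{-1}(O)$ both lie on a small KAM-circle $\gamma_j$. This is solvable because $\tr L^\T L>2$. At such a point, $\det\partial(r^\u,r^\s)/\partial(\mu,\nu)=(b_{11}^2+b_{12}^2-b_{21}^2-b_{22}^2)\mu\nu+\dots\neq0$ by Lemma~\ref{lem:rotaton}. The resulting intersections are then transferred to the blender's torus $\gamma_*$ via heteroclinic chains and Lemma~\ref{lem:incli}. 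Only then do iterates of $W^{\u/\s}(O)$ become $C^1$-close to strong leaves of $\gamma_*$ with independent unfolding. At that point the second claim of Proposition~\ref{prop:persis} produces the open sets $\mathcal{E}_j$ with dense homoclinic parameters, as in Proposition~\ref{prop:homofamily}.
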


This theorem is proven in Section~\ref{sec:homounfold}. The main ingredient of the proof is the creation of a symplectic blender near  the homoclinic orbit to the saddle-center.
The fundamental observation  here is that the KAM-curves in the two-dimensional manifold $W^\c(O)$  are whiskered tori: they  possess $N$-dimensional stable and unstable invariant manifolds due to the normal hyperbolicity of $W^\c(O)$. 
For the whiskered tori close to $O$, their stable and unstable manifolds are close to those of $O$. If $O$ has a generic homoclinic orbit, then each of the torus has transverse  homoclinic orbits \citep{LerMar:15}. 
However, we cannot immediately apply Theorem~\ref{thm:main_blender_allcases} as the perturbations we construct in its proof are not done within the context of finite-parameter families.

Therefore, in order to proceed with the proper unfolding families of Theorem~\ref{thm:main_SC_para}, we further show that the whiskered tori near $O$ have persistent heteroclinic tangencies (see Lemma~\ref{lem:tangency_KAM}). For the proper unfolding family $\{f_\eps\}$, bifurcations of these heteroclinic tangencies, as $\eps$ varies, create two quadratic  homoclinic tangencies for some whiskered torus near $O$ (see Proposition~\ref{prop:tangenfamily}). At this point, the existence of symplectic blenders  within the proper unfolding family becomes possible by case (2) of Theorem~\ref{thm:main_2punfolding} (by Remark~\ref{rem:exact2}, the local exactness required by Theorem~\ref{thm:main_2punfolding} is ensured by noticing that the  whiskered tori here are contractible to $O$). Namely, we consider two-parameter sub-families which {\em do not destroy} the homoclinic intersection of $W^\u(O)$ and $W^\s(O)$, but still control
the positions of the KAM-curves on $W^\c(O)$ and the distortion of the image of these KAM-curves by the scattering map. We call such two-parameter families {\em tangency unfolding}, see condition~\ref{word:H2} in Section~\ref{sec:properunfolding}. Using Theorem~\ref{thm:main_2punfolding}, we obtain

\begin{thma}\label{thm:main_SCblender}
Let $f\in \symp^\infty(\mathcal{M})$ have a generic  saddle-center $O$ with a generic homoclinic orbit $\Gamma$. 
 For any  tangency-unfolding family $\{f_\eps\}$ and any small neighborhood $V'$ of $\mathcal{O}(O)\cup\Gamma$,  there exists a sequence $\{\eps_j\}$ converging to $\eps=0$ such that $f_{\eps_j}$ has a symplectic blender connected in $V'$ to some whiskered KAM-torus that lies in $W^\c(O)$.
\end{thma}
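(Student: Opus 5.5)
The plan is to reduce Theorem~\ref{thm:main_SCblender} to case (2) of Theorem~\ref{thm:main_2punfolding}. We produce, inside the given tangency-unfolding family, a whiskered KAM-torus on $W^\c(O)$ with two partially-hyperbolic quadratic homoclinic tangencies. We also check that a suitable two-parameter sub-family unfolds these tangencies generically in the sense of~\eqref{eq:unfold2quadra}.

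\textbf{Step 1: KAM-curves.} By the genericity conditions \ref{word:C1}--\ref{word:C3}, Moser's theorem applies to $f^{\mathrm{per}(O)}|_{W^\c(O)}$. Hence, arbitrarily close to $O$, there is a positive-measure set of non-degenerate KAM-curves $\gamma$ with Diophantine rotation numbers. By normal hyperbolicity of $W^\c(O)$, each such curve is a one-dimensional whiskered torus whose whiskers are unions of strong-stable and strong-unstable leaves. These whiskers are $C^1$-close to $W^\s(O)$ and $W^\u(O)$ near $O$. The tori are contractible to $O$, so by Remark~\ref{rem:exact2} the local exactness needed in Theorem~\ref{thm:main_2punfolding} holds automatically. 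For all small $\eps$ in the family, these curves persist as KAM-continuations.

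\textbf{Step 2: tangencies.} Follow the homoclinic orbit $\Gamma$ and express the intersection of $W^\u(\gamma)$ and $W^\s(\gamma')$ through the scattering map $S$ of~\eqref{eq:scattering_1}. A strong-unstable leaf through a point $x\in\gamma$ meets a strong-stable leaf through $y\in\gamma'$ near $\Gamma$ exactly when $S(x)=y$. So heteroclinic intersections between $\gamma$ and $\gamma'$ correspond to points of $S(\gamma)\cap\gamma'$. Tangencies of the whiskers correspond to tangencies of the curves $S(\gamma)$ and $\gamma'$ inside $W^\c(O)$. This is the content of Lemma~\ref{lem:tangency_KAM}. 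Since $S$ is not a rotation, the image $S(\gamma)$ of a KAM-curve is not a level curve of the integrable normal form, so it is tangent to some nearby KAM-curves. A density argument over the Cantor family of KAM-curves then yields tangencies that persist in measure.

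Next use the two parameters of condition~\ref{word:H2}. They independently move the positions of the KAM-curves and the distortion of their images under $S$, while keeping the homoclinic intersection of $O$. Following Proposition~\ref{prop:tangenfamily}, choose parameter values $\eps_j\to0$ at which some KAM-curve $\gamma$ satisfies $S(\gamma)$ tangent to $\gamma$ itself at two distinct points. These are two quadratic homoclinic tangencies of the same torus. The map from the two parameters to the pair of tangency splittings should have nondegenerate Jacobian, which gives the generic two-tangency unfolding~\eqref{eq:unfold2quadra}. Partial hyperbolicity of the tangencies (Definition~\ref{defi:phtangency}) follows because the homoclinic orbits stay near $\mathcal{O}(O)\cup\Gamma$, where the splitting of $W^\c(O)$ extends.

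\textbf{Step 3: blender.} Apply case (2) of Theorem~\ref{thm:main_2punfolding} to the two-parameter sub-family recentered at each $\eps_j$. This gives parameters arbitrarily close to $\eps_j$ at which there is a symplectic blender connected, via a neighborhood inside $V'$, to the KAM-continuation of $\gamma$. That continuation still lies in $W^\c(O)$, since $W^\c(O)$ is the center manifold for all small $\eps$. A diagonal choice then gives the required sequence converging to $\eps=0$.

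\textbf{Main obstacle.} The hardest step is Step 2. Both tangencies must occur on a single \emph{Diophantine} curve, while the other KAM-curves form only a Cantor set in parameter space. I would handle this by a transversality/measure argument. The set of parameters giving a double tangency for the torus with rotation number $\omega$ is a curve depending smoothly on $\omega$. Sweeping over the positive-measure set of Diophantine $\omega$ then gives double-tangency parameters of full local measure along a one-parameter direction, arbitrarily close to $0$. On each such curve, the second parameter direction provides the genericity needed for~\eqref{eq:unfold2quadra}.
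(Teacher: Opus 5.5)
Your Steps~1 and~3 match the paper, but Step~2 asks for a configuration that cannot occur.

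In a tangency-unfolding family, $S(O)=O$ for all $\eps$. Moreover $\D S(O)=L$, where $L^\T L$ has eigenvalues $\lambda^{\pm1}$ with $\lambda>1$ by \ref{word:C3} (Remark~\ref{rem:LTL}), and $\lambda$ stays bounded away from $1$ for small $\eps$. Hence, as in Lemma~\ref{lem:tangency_2family}, the image $S(C_r)$ of a small circle is tangent only to the circles of action $\lambda^{\pm1}r+O(r^{3/2})\neq r$. It crosses $C_r$ itself transversely at four points (Lerman--Markova).

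So for a KAM-curve $\gamma$ near $O$, $S(\gamma)$ is never tangent to $\gamma$. All primary homoclinics of $\gamma$ are transverse, whatever the values of $(\rho,\tr L^\T L)$. This is also not what Proposition~\ref{prop:tangenfamily} asserts: its homoclinic tangencies are secondary. Your ``main obstacle'' paragraph (the sweeping/measure argument over $\omega$) is therefore aimed at an empty set of parameters. In addition, the nondegeneracy of the Jacobian in~\eqref{eq:unfold2quadra} is only asserted, not shown.

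The missing idea is to produce the two homoclinic tangencies from two \emph{heteroclinic} ones, in three steps.
\begin{enumerate}
\item At $\eps=0$, choose $\gamma,\gamma_1,\gamma_2\in\mathcal G$ such that $S(\gamma)$ is quadratically tangent to $\gamma_1$ (action $\approx\lambda r$) and to $\gamma_2$ (action $\approx\lambda^{-1}r$). This is possible because the tangency points lie on two smooth curves $\ell^\pm$ through $O$, while $\bigcup_{\gamma\in\mathcal G}\gamma$ and its $S$-image both have Lebesgue density $1$ at $O$.
\item Show these two heteroclinic tangencies unfold independently. Changing $\rho$ shifts all KAM radii by $-\Delta\rho$ up to $O(r^2\eps)$, by~\eqref{eq:rotnr}. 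Changing $\tr L^\T L$ moves $\lambda$. The resulting Jacobian is of order $r_*$ and is nonzero because $\lambda_0>1$ (Lemma~\ref{lem:tangenfamily}).
\item Use the transverse $S^{-1}$-heteroclinic chains from $\gamma$ to $\gamma_{1,2}$ (Lemma~\ref{lem:chain}) together with the inclination Lemma~\ref{lem:incli}. These make $W^\s(\gamma)$ accumulate in $C^2$ on $W^\s_\loc(\gamma_1)$ and $W^\s_\loc(\gamma_2)$. An arbitrarily small parameter change then converts the two heteroclinic tangencies into two quadratic homoclinic tangencies of $\gamma$ that still unfold independently.
\end{enumerate}
Only after this does case~(2) of Theorem~\ref{thm:main_2punfolding} apply. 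Since $\gamma,\gamma_1,\gamma_2$ keep their Diophantine rotation numbers under KAM continuation, the Cantor-set difficulty you anticipate never arises.
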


This theorem is proven in Section~\ref{sec:SC}. We next show that, within the proper unfolding family, one can find pieces $W^\s$ and $W^\u$ of $W^\s(O)$ and $W^\u(O)$ that are  arbitrarily close to the local invariant manifolds of the whiskered KAM-torus in Theorem~\ref{thm:main_SCblender}.  Theorem~\ref{thm:main_SC_para} then follows from  Proposition~\ref{prop:persis}, by taking  $L^{\s/\u}=W^{\s/\u}$ (see Proposition~\ref{prop:homofamily}). 

\begin{rem}
It follows from the proof that  $\mathcal{E}_j$ in Theorem~\ref{thm:main_SC_para} and  $\eps_j$ in Theorem~\ref{thm:main_SCblender}  depend continuously on the family $\{f_\eps\}$.
\end{rem}

It is obvious from the proof that the above results  also hold  for sufficiently large finite $s$.
The regularity requirement   can be relaxed when we do not restrict to parameterized perturbations.
Given any $f\in\symp^s(\mathcal{M})$ with $1\leq s<\infty$, we can approximate it in the $C^s$ topology by a map $g\in\symp^\infty(\mathcal{M})$ \citep{Zeh:76}. If $f$ has a saddle-center periodic orbit with a homoclinic, so does $g$.  The genericity conditions \ref{word:C1}--\ref{word:C3} become fulfilled after a further perturbation of $g$. Embedding the resulting map into a proper unfolding family of Theorem~\ref{thm:main_SC_para}, we obtain

\begin{cora}\label{cor:SC_nonpara}
Let $f\in \symp^{s}(\mathcal{M})$,  $s=1,\dots,\infty$, have a   saddle-center $O$ with a homoclinic orbit $\Gamma$, and let $ V'$ be any neighborhood of $\mathcal{O}(O)\cup\Gamma$. Then
 there exist a map $g\in \symp^{\infty}(\mathcal{M})$, arbitrarily $C^s$ close to $f$, a $C^1$ neighborhood $\mathcal{U}\subset \symp^{\infty}(\mathcal{M})$ of $g$ and a $C^\infty$-dense subset $\mathcal{U'}$ of $\mathcal{U}$ such that 
\begin{itemize}[nosep]
\item $g$ has a symplectic blender $\Lambda$ connected to a whiskered KAM-torus  via $V'$,
\item $W^\u(O_h)\cap W^\s(\Lambda_h)\neq \emptyset$ and  $W^\s(O_h)\cap W^\u(\Lambda_h)\neq \emptyset$ for every $h\in\mathcal{U}$, and
\item $O_h$ has a homoclinic orbit for every $h\in \mathcal{U}'$.
\end{itemize}
If $f\in \symp^{\omega}(\mathcal{M})$, then the same result holds with $\omega$ in place of $\infty$ for the regularity.
\end{cora}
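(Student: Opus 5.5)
The plan follows the reduction already outlined before the statement: approximate $f$ by a smooth map, make it generic, embed it in a proper unfolding family, and read off the three properties from Theorems~\ref{thm:main_SCblender}, \ref{thm:main_SC_para} and Proposition~\ref{prop:persis}.

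\emph{Step 1: smoothing and genericity.} By Zehnder's approximation theorem \citep{Zeh:76}, we take $g_0\in\symp^\infty(\M)$ (or $\symp^\omega$ if $f$ is analytic) that is $C^s$-close to $f$. Since $O$ has no multiplier equal to $1$, it continues to a saddle-center $O_{g_0}$. The homoclinic orbit is not transverse, so it does not persist automatically; to keep it, we make the approximation and then a small correction supported near a point of $\Gamma$ far from $O$. This correction is a symplectic map generated by a Hamiltonian bump, and it pushes the continued piece of $W^\u(O)$ back onto $W^\s(O)$. A further arbitrarily small perturbation, as in Section~\ref{sec:gc2}, gives conditions \ref{word:C1}--\ref{word:C3} while keeping the homoclinic orbit. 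We then embed the resulting map $g_1$ into a four-parameter family $\{f_\eps\}$ satisfying \ref{word:H1}, using the explicit construction of Section~\ref{sec:famC}. The four parameters are realized by localized Hamiltonian perturbations, which are analytic or smooth as needed. The family contains a two-parameter tangency-unfolding subfamily satisfying \ref{word:H2}.

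\emph{Step 2: creating the blender.} Theorem~\ref{thm:main_SCblender} gives arbitrarily small $\eps_j$ for which $f_{\eps_j}$ has a symplectic blender connected in $V'$ to a whiskered KAM-torus in $W^\c(O)$. Following the proof of Theorem~\ref{thm:main_SC_para} through Proposition~\ref{prop:homofamily}, we can also adjust the remaining two parameters slightly. This makes pieces of $W^\u(O)$ and $W^\s(O)$ $C^1$-close to a strong-unstable leaf of $W^\u_\loc(\gamma_g)$ and a strong-stable leaf of $W^\s_\loc(\gamma_g)$, respectively. This can be done without destroying the blender, since the connection is an open condition in $\eps$. We set $g=f_\eps$ for such an $\eps$; it is $C^s$-close to $f$ because $\eps$ is small. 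This gives the first bullet.

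\emph{Step 3: the $C^1$ neighborhood and the dense set.} The blender property is $C^1$-robust for maps $h$ that are $C^1$-close to $g$. The discs $W^{\u/\s}(O_h)$ stay $C^1$-close to the strong leaves of $W^{\u/\s}_\loc$ of the continued torus, or of nearby leaves of the partially hyperbolic neighborhood if the torus itself does not persist in $C^1$. Hence the blender property gives $W^\u(O_h)\cap W^\s(\Lambda_h)\neq\emptyset$ and $W^\s(O_h)\cap W^\u(\Lambda_h)\neq\emptyset$, which is the second bullet.

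A subtle point here is that the blender property was formulated relative to leaves of $W^{\u/\s}_\loc(\gamma_g)$. I expect we must use the version from the proof in Section~\ref{sec:blencon}: any disc $C^1$-close to such a leaf is admissible. That version suffices, since closeness to $g$ in $C^1$ keeps $W^{\u/\s}(O_h)$ close to the original leaves.

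For density, take $h\in\mathcal U$ of class $C^\infty$ (or $C^\omega$) and embed it in a two-parameter family that independently moves $W^\u(O)$ and $W^\s(O)$ relative to the torus, i.e.\ satisfying \eqref{eq:gcpersis}. Every point $h\in\mathcal U$ is only $C^1$-close to $g$, so the KAM-torus of $h$ may not exist. Proposition~\ref{prop:persis} therefore cannot be used directly; instead, we first $C^\infty$-approximate $h$ by a map $h'$ whose KAM-torus persists. We can take $h'$ close to $h$ and from a family in which the KAM continuation of $\gamma_g$ exists, for instance by composing $h$ with a small perturbation that is $C^\infty$-close to the identity near $\mathcal O(\gamma)$ and restores the normal form. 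Proposition~\ref{prop:persis} with $P=P'$ a periodic point of $\Lambda$ then gives, for a dense set of parameters, an iterate of $W^\u(O)$ meeting $W^\s(O)$, i.e.\ a homoclinic orbit. This yields a $C^\infty$-dense subset $\mathcal U'$.

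I expect this last step to be the main obstacle: arranging a $C^\infty$-small perturbation of a merely $C^1$-close map so that a KAM-torus connected to the blender reappears. If this fails, the fallback is to shrink $\mathcal U$. The blender and the two intersections are $C^1$-robust, and for the density we would use only the density of $W^{\s/\u}(P)$ in $W^{\s/\u}(\Lambda_h)$ together with a local $C^\infty$-small symplectic push near the intersection points to create the homoclinic directly.
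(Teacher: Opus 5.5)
Your Steps 1 and 2 and your argument for the second bullet follow the paper's route. The paper's proof is the paragraph before the corollary, read together with the proofs of Theorem~\ref{thm:main_SC_para} and Proposition~\ref{prop:homofamily}: Zehnder smoothing, conditions \ref{word:C1}--\ref{word:C3}, embedding into a proper unfolding family, then Theorem~\ref{thm:main_SCblender} and Lemma~\ref{lem:Otogamma}, and finally the $C^1$-robust blender property of Definition~\ref{defi:sympblen_conn}. That property is stated relative to the leaves of the fixed torus $\gamma_g$, so your corrected formulation is the right one. Your local correction restoring the non-transverse homoclinic after smoothing is also needed; the paper only asserts that the homoclinic survives.

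The gap is the $C^\infty$-density of $\mathcal U'$. Your main route cannot work. A map $h\in\mathcal U$ is only $C^1$-close to $g$ and need not be close to it in any $C^k$ with $k\geq 2$. Any $C^\infty$-small perturbation $h'$ of $h$ is then just as far from $g$ in those norms. So no perturbation that is small relative to $h$ can ``restore the normal form'' of $g$ near $\gamma_g$, KAM persistence gives nothing, and $h'$ may have no invariant circle near $\gamma_g$ at all. Even KAM curves of $h'$ elsewhere near $O_{h'}$ would not help. The blender property only covers leaves through points in an $r$-window of size about $\hat p(\delta)\delta^2$ around $\gamma_g$. Hence Proposition~\ref{prop:persis}, whose first claim uses the twist of a persisting KAM curve via Lemma~\ref{lem:unfold1}, is unavailable for a general $h\in\mathcal U$.

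Your fallback points the right way but leaves out the mechanism. The intersections $W^\u(O_h)\cap W^\s(P)$ and $W^\s(O_h)\cap W^\u(P)$ are both non-transverse. A push that creates one can destroy the other, and even with both you only have a non-transverse cycle $O\to P\to O$, not a homoclinic orbit of $O$.

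What you need is a KAM-free replacement for Lemma~\ref{lem:unfold1}. Take a two-parameter family of localized symplectic perturbations of $h$ with these properties:
\begin{itemize}
\item their supports avoid an isolating neighbourhood of $\Lambda_h$, so the laminations $W^{\s/\u}_{\loc}(\Lambda_h)$ do not move;
\item their supports avoid the relevant piece of the other invariant manifold of $O_h$;
\item one parameter moves the chosen piece of $W^\u(O_h)$ with non-zero speed along $E^{\mathrm{wu}}$, and the other moves the piece of $W^\s(O_h)$ with non-zero speed along $E^{\mathrm{ws}}$.
\end{itemize}
With this family, the argument of Lemma~\ref{lem:unfold2} applies, using the $C^1$-accumulation of $W^\s(P)$ on every leaf of $W^\s(\Lambda_h)$. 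Together with Proposition~\ref{prop:unfoldble}, it gives arbitrarily small parameters where both intersections with a periodic point $P\in\Lambda_h$ hold and unfold independently. Lemma~\ref{lem:model} then gives a homoclinic orbit of $O$ at nearby parameters. Each step is a $C^\infty$-small perturbation (or $C^\omega$-small, via Cartan's Theorem B) of an arbitrary $h\in\mathcal U$, and this is what yields the density of $\mathcal U'$.
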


Note that our approach allows for a straightforward transmission of the above results to the Hamiltonian setting, i.e., the case of continuous time, see Section~\ref{sec:conti}.

\subsection{Further application directions}
\subsubsection{Planar elliptic restricted  three-body problem}
The restricted    three-body problem is the special case   where one of the bodies has negligible mass. The problem is to study the motion of this mass under the gravitational pull of the other two bodies with large masses. A classical model is the  Sun-Jupiter-asteroid system.
The problem is further  called circular if the two massive bodies move along  circular orbits (with a common center of mass) that solve the Kepler problem, and is called elliptic if they move on  ellipces. Here we consider the case where the three bodies are coplanar, reducing the problem to a two-dimensional case. 

Let us first recall some basic results of the circular problem (see e.g. \citep[Section 4]{MeyOff:17}). The system has 2 degrees of freedom and  its Hamiltonian in the rotating coordinates is given by 
$$
H(x,y)= \dfrac{y^2}{2} - x^T \begin{pmatrix}
0 &1\\
-1 & 0
\end{pmatrix}
y
-\left( \frac{\mu}{d_1(x,y)}+\frac{1-\mu}{d_2(x,y)}\right),
$$
where $x,y\in\mathbb{R}^2$, $d_1$ and $d_2$ are the distances from the infinitesimal body to the  massive body  at $(1-\mu,0)$ with mass $\mu$ and, respectively, to the one at $(-\mu,0)$ with mass  $1-\mu$.
 This Hamiltonian has five critical points, corresponding to five equilibria  in the four-dimensional phase space. They are called the Lagrange points and denoted by $L_i$ $(i=1,\dots,5)$. 
 Each of the collinear points $L_{1,2,3}$ has two purely imaginary eigenvalues and two real ones, so they are saddle-centers. We are in particular interested in $L_1$ (the one between  the Sun and Jupiter), since it possesses a homoclinic orbit \citep{LliMarSim:85} (it was called $L_2$ in \citep{LliMarSim:85}).

In order to apply the results from Section~\ref{sec:introSC}, it is necessary to produce from the saddle-center equilibrium $L_1$  a saddle-center periodic point of some symplectic map. To this aim, we allow the two massive bodies move on ellipses with small eccentricities. It is well-known that the corresponding  elliptic problem  is  a small time-periodic perturbation of the circular one. Specifically, in the pulsating coordinates, its Hamiltonian is given by 
$$
R_{e,\mu}(x,y,\tau)= \dfrac{y^2}{2} - x^T \begin{pmatrix}
0 &1\\
-1 & 0
\end{pmatrix}
y - g_e(\tau)\left( \frac{\mu}{d_1(x,y)}+\frac{1-\mu}{d_2(x,y)}\right) + \dfrac{1-g_e(\tau)}{2}x^T x,
$$
where $e$ is the eccentricity, $\tau$ is the time and $g_e(\tau)=1/(1+e\cos \tau)$ (see e.g. \citep[Section 8.10]{MeyOff:17}). 
 In the perturbed system with small $e\neq 0$, the Lagrange point $L_2$ becomes a periodic orbit. The intersection point $O_e=L_1\cap \{\tau=0\}$ is a saddle-center fixed point of the time-$2\pi$  map of the four-dimensional cross-section $\{\tau=0\}$. This map 
  is an analytic symplectic diffeomorphism. Also, it is reasonable to believe that the homoclinic orbit of $L_1$ given by \citep{LliMarSim:85} at $e=0$ (in the circular problem) persists for small $e$. 
 
\begin{conj}
For every small $e>0$ there exists $\mu(e)$ for which
the orbit $L_1$ in the planar elliptic restricted three-body problem has a homoclinic orbit. Moreover,  there is a neighborhood $U$ of the curve $\mu=\mu(e)$ such that for each $(e,\mu) \in U$ the  time-$2\pi$ map of $R_{e,\mu}$  has a symplectic blender, and the orbit $L_1$ has a homoclinic orbit for a dense subset of $U$.
\end{conj}

As can be seen from the proof of Theorem~\ref{thm:main_SC_para}, in general one needs four parameters to achieve the persistence of the saddle-center homoclinics. However, the Hamiltonian $R_{e,\mu}$ is time-reversible. We believe that the reversibility allows for a reduction of the number of parameters to two as in the above conjecture. For other applications of symplectic blenders to the three-body problem, see \citep{GuaPar:25}.

\subsubsection{Stable ergodicity problem}
It was pointed out to us by Xue, Zhang and Avila that our results can be potentially applied to  the stable ergodicity problem for symplectic diffeomorphisms. In what follows, we briefly review the problem and explain how our result could be relevant.

Let $\mathcal{N}$ be any compact connected Riemannian manifold. 
Denote by $\mathrm{PH}^s_m(\mathcal{N})$ the space of  partially-hyperbolic, volume-preserving $C^s$ diffeomorphisms of $\mathcal N$.
The works on stable ergodicity in the past thirty years were mostly motivated by the conjecture of Pugh and Shub \citep{PugShu:97}: {\em for any $s>1$, stable ergodicity is $C^s$-dense in $\mathrm{PH}^s_m(\mathcal{N})$.} Here a map is stably ergodic if it is ergodic 
along with every close map in $\mathrm{PH}^s_m(\mathcal{N})$. The conjecture was proven for the case where $\dim E^c=1$ by F. Rodriguez-Hertz, M. A. Rodriguez-Hertz and Ures \citep{HerHerUre:08}, and remains open in other cases. However, a $C^1$ version of the conjecture has recently been fully proven by Avila, Crovisier and Wilkinson \citep{AviCroWil:21}: {\em for any $s>1$, stable ergodicity is $C^1$-dense in $\mathrm{PH}^s_m(\mathcal{N})$} (see also \citep{HerHerTahUre:11}). One could then ask the natural question:

{\em Is stable ergodicity  $C^1$-dense in $\mathrm{PH}^s_\Omega(\mathcal{N})$ for $s>1$?}

\noindent Here $\mathrm{PH}^s_\Omega(\mathcal{N})$ denotes the space of partially-hyperbolic $C^s$ symplectic diffeomorphisms of a $2N$-dimensional manifold $\mathcal{N}$ equipped with a symplectic form $\Omega$, and the ergodicity is with respect to the volume form $\Omega^N$. 

In the seminal work \citep{AviBocWil:09}, Avila, Bochi and Wilkinson proved that  generic maps in $\mathrm{PH}^1_\Omega(\mathcal{N})$ are ergodic. Their strategy is to first create  local ergodicity and then spread it to the whole manifold by accessibility and nonuniform center bunching. The key step in the local ergodicity part is to create saddle-center periodic point:

\begin{thms}[{\citep[Theorem 3.5]{AviBocWil:09}}]\label{thm:ABW}
Let $f\in \mathrm{PH}^1_\Omega(\mathcal{N})$ have the splitting $T\mathcal{N}=E^\s\oplus E^\c\oplus  E^\u$ such that $d^\c:=\dim E^\c$ is minimal (so that  any map close to $f$ has a center  of the same dimension). There exists a $C^1$-small perturbation $\tilde{f}$ that has a periodic point with $d^\c$ eigenvalues of modulus 1.
\end{thms}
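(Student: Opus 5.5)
The plan is a perturbative argument along periodic orbits. It combines three ingredients: density of periodic orbits, a dichotomy between dominated splittings and the possibility of mixing directions, and a symplectic Franks lemma that realizes the prescribed linear perturbations by a $C^1$-small symplectic perturbation of $f$. Since $f$ preserves the volume $\Omega^N$, every point is nonwandering. By the $C^1$ closing lemma in its symplectic version (Pugh–Robinson, Arnaud), after an arbitrarily $C^1$-small symplectic perturbation we may assume that hyperbolic-or-not periodic points of arbitrarily large period are dense in $\mathcal{N}$. We may also assume this holds for all maps in a residual subset of a neighborhood of $f$. Along each such orbit, $E^\c$ is a $d^\c$-dimensional $\Omega$-symplectic subbundle, because $E^\c=(E^\s\oplus E^\u)^{\Omega}$. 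Hence the center multipliers come in pairs $\lambda,\lambda^{-1}$, and the goal is to push all of them onto the unit circle.

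The key step uses the minimality of $d^\c$ to exclude uniform domination inside the center. Suppose that, for all maps $C^1$-close to $f$, there were $m\geq 1$ and $i\in\{1,\dots,d^\c-1\}$ such that every periodic orbit of large period carried an $m$-dominated splitting $E^\c=E_1\oplus E_2$ with $\dim E_1=i$. Then, by the standard extension of dominated splittings to closures (Bonatti–Díaz–Pujals), this splitting would extend to the closure of the periodic points, which is all of $\mathcal{N}$. Symplecticity of the center forces the extreme parts of such a splitting to be uniformly contracting or expanding. This would produce a partially hyperbolic splitting with strictly smaller center dimension for a nearby map, contradicting the choice of $d^\c$. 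Consequently, for every $m$ and every index $i$ there are perturbations $g$ of $f$ and periodic orbits of $g$ of arbitrarily large period along which $E^\c$ admits no $m$-dominated splitting of index $i$.

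Next, I would apply a symplectic Bochi–Viana type mixing lemma to the linear cocycle $\D g|_{E^\c}$ along such an orbit. The idea is to compose with small rotations inside $E^\c$, chosen in $\mathrm{Sp}(E^\c)$, at the times where domination fails. This exchanges expanded and contracted directions, so the gap between the $i$-th and $(i+1)$-th center Lyapunov exponents is reduced. The resulting linear perturbation is then realized by the symplectic Franks lemma. Iterating over the indices $i$, and choosing $m$ larger at each stage so that the successive perturbations stay $C^1$-small, brings all center exponents as close to $0$ as we like. A final small symplectic perturbation of the center return map, which is a matrix in $\mathrm{Sp}(2k)$ with $2k=d^\c$, moves it into the open set of elliptic matrices. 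This set is nonempty and reachable from near-identity-spectrum matrices, so the perturbed periodic point has $d^\c$ eigenvalues of modulus one, while the $E^\s$ and $E^\u$ multipliers are unaffected by the partial hyperbolicity.

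The main obstacle is the mixing step. The rotations must respect the symplectic form on $E^\c$, and the exponent-averaging must be carried out for all indices simultaneously rather than merely equalizing one pair at a time. My first attempt would be to work with the symplectic normal form of $E^\c$, rotating within Lagrangian-compatible planes $E_i\oplus E_{d^\c+1-i}$, and to use the pairing $\lambda\leftrightarrow\lambda^{-1}$: collapsing the middle gap (index $d^\c/2$) to zero forces, by symmetry, the whole spectrum toward modulus one, which I would then induct outward.
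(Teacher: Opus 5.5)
The paper gives no proof of this statement; it quotes it from \citep{AviBocWil:09}. Your outline has the right general architecture, a Bonatti--D\'iaz--Pujals type route: dense periodic orbits, non-domination extracted from the minimality of $d^{\c}$, and a symplectic Franks lemma. As written, however, it has two genuine gaps.

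\textbf{One orbit with no dominated index.} Your contradiction argument only produces, for each index $i$ separately, some perturbation $g$ and some periodic orbit of $g$ without $m$-domination at index $i$. The maps and orbits may be different for different $i$. To put all $d^{\c}$ center eigenvalues on the unit circle by an $\varepsilon$-small perturbation, you need \emph{one} periodic orbit along which $E^{\c}$ has no $m(\varepsilon)$-dominated splitting of \emph{any} index. At an index where the orbit is $m$-dominated, the corresponding eigenvalue moduli are separated exponentially in the period, and this separation survives all sufficiently small perturbations.

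Your index-by-index iteration does not supply such an orbit. After collapsing one gap, nothing guarantees that the perturbed orbit still lacks domination at the next index. The quantifiers also run the wrong way: keeping later perturbations smaller requires non-domination for a larger $m$, on the same orbit.

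Nor does your minimality argument exclude the case where every periodic orbit has \emph{some} dominated index that depends on the orbit. The missing ingredient is the transition/gluing mechanism of Bonatti--D\'iaz--Pujals. For $C^1$-generic symplectic maps the whole manifold is a single homoclinic class (Bonatti--Crovisier, and Arnaud--Bonatti--Crovisier in the symplectic case). Periodic orbits homoclinically related to a fixed one can therefore be concatenated into periodic orbits that shadow each of them for a long time, and so inherit their non-domination at all indices at once. Equivalently, if every such orbit had an $m$-dominated index, one gets a \emph{common} index with uniform constants on a dense set. That gives a global dominated splitting of $E^{\c}$, which is what actually contradicts minimality.

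\textbf{The final step.} As stated, it does not work. Small center exponents $\chi$ along an orbit of period $p$ only give moduli in $[e^{-p\chi},e^{p\chi}]$, which need not be close to $1$. Franks' lemma lets you perturb each of the $p$ derivatives by $\varepsilon$; it does not give an arbitrary ``small perturbation of the return map''. Unlike the dissipative Bonatti--Gourmelon--Vivier argument, you cannot renormalise by homotheties at the end inside $\mathrm{Sp}$.

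Likewise, collapsing the middle gap does not push the rest of the spectrum to the unit circle. In $\mathrm{Sp}(4)$ with center eigenvalues $a^{\pm1},b^{\pm1}$ and $a>b>1$, making $b=1$ leaves $a$ unchanged.

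What you need is a symplectic ``no domination implies equal moduli'' lemma for a single periodic orbit. It should say: if along a periodic orbit of large period the cocycle $\D g|_{E^{\c}}$ has no $m$-dominated splitting of any index, then an $\varepsilon$-perturbation in $\mathrm{Sp}(E^{\c})$, realised by the symplectic Franks lemma, makes all center eigenvalues have the same modulus. By the pairing $\lambda\leftrightarrow\lambda^{-1}$, that modulus is $1$. Applied to the single orbit from the first gap, this yields the statement directly, with no need for iterative exponent-averaging.
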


The center manifold of the periodic point is a $d^\c$-dimensional disc. For a further perturbation $\hat f$, which implements the Anosov-Katok example \citep{AnoKat:70}, the disc becomes an ergodic component of $\hat f$ \citep[Lemma 3.8]{AviBocWil:09}, and providing the sought local source of ergodicity.  Since the Anosov-Katok construction does not persist under perturbations, only the genericity of ergodicity was established in \citep{AviBocWil:09}, instead of the density of stable ergodicity.

On the other hand, in \citep{HerHerTahUre:11,AviCroWil:21}, stable ergodicity was successfully achieved by using blenders to let the system robustly satisfy   the ergodicity criterion of \citep{HerHerTahUre:11}. Thus, one might expect to replicate this method in the symplectic setting. An immediate difficulty is that the creation of blenders in \citep{HerHerTahUre:11,AviCroWil:21} requires  nonuniform hyperbolicity (e.g. a full measure set of points which do not have zero Lyapunov exponents), but this property cannot be dense in $\mathrm{PH}^s_\Omega(\mathcal{N})$ by Bochi \citep{Boc:10}: all central Lyapunov exponents vanish at almost every point. By Theorem~\ref{thm:main_SCblender}, this difficulty of creating blenders is overcome for the space of
 partially-hyperbolic symplectic diffeomorphisms with two-dimensional center (denoted by $\widetilde{\mathrm{PH}}^s_\Omega(\mathcal{N})$). Note that $\widetilde{\mathrm{PH}}^s_\Omega(\mathcal{N})=\mathrm{PH}^s_\Omega(\mathcal{N})$ when $\dim \mathcal{N}=4$.
 
  Recall that for $C^s$ diffeomorphisms with $s>1$, one can define the Pesin stable and unstable manifolds for points in a full measure set (the Oseledets regular points). Let us show that Corollary \ref{cor:SC_nonpara} implies

\begin{cora}\label{cor:ergodic}
There exists a $C^1$-open and $C^1$-dense subset of $\widetilde{\mathrm{PH}}^s_\Omega(\mathcal{N})$, $s\geq~1$, where every map has a symplectic blender $\Lambda$. Moreover, if $s>1$, then the manifolds $W^\u(\Lambda)$ and $W^\s(\Lambda)$ intersect the Pesin stable and, respectively, unstable manifolds of almost every point.
\end{cora}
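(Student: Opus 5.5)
The plan is to obtain the open set as a union, over saddle-center configurations, of the $C^1$-neighborhoods $\mathcal{U}$ produced by Corollary~\ref{cor:SC_nonpara}, using Theorem~\ref{thm:ABW} to reach such configurations from an arbitrary $f\in\widetilde{\mathrm{PH}}^s_\Omega(\mathcal{N})$.

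\textbf{Step 1: create a saddle-center.} Take any $f\in\widetilde{\mathrm{PH}}^s_\Omega(\mathcal{N})$. Since the center is two-dimensional and symplectic, the center dimension is minimal in the sense of Theorem~\ref{thm:ABW}. That theorem therefore gives a $C^1$-small symplectic perturbation with a periodic point $O$ having two multipliers of modulus one. A further small perturbation moves them to $e^{\pm i\rho}$ with $\rho\in(0,\pi)$, so $O$ is a saddle-center. By Zehnder's approximation \citep{Zeh:76}, we may take this map $C^\infty$. Partial hyperbolicity with two-dimensional center is $C^1$-open, so all of this stays inside $\widetilde{\mathrm{PH}}^s_\Omega$.

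\textbf{Step 2: create a homoclinic orbit to $O$.} Here $W^\s(O)$ and $W^\u(O)$ are the $(N-1)$-dimensional strong leaves through $O$. Two routes are available.
\begin{itemize}
\item Use a $C^1$ closing/connecting lemma for conservative diffeomorphisms, combined with $C^1$-generic transitivity of partially hyperbolic symplectic maps (via accessibility, as in \citep{AviBocWil:09}). This should yield a perturbation, $C^1$-small and still symplectic, for which $W^\u(O)$ meets $W^\s(O)$. It must fix $O$ and its multipliers, which is possible by localizing the perturbation away from $O$.
\item Alternatively, use the strong stable/unstable leaves of KAM-curves in $W^\c(O)$ (whiskered tori) together with accessibility.
\end{itemize}
Once a homoclinic orbit exists, Corollary~\ref{cor:SC_nonpara} (with $s=1$) gives $g$, arbitrarily $C^1$-close, and a $C^1$-open neighborhood $\mathcal{U}_g$ in which every map has a symplectic blender. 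Blenders are $C^1$-robust, so the union of such $\mathcal{U}_g$, intersected with $\widetilde{\mathrm{PH}}^s_\Omega$, is $C^1$-open and $C^1$-dense. This settles the first claim.

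\textbf{Step 3: Pesin manifolds (for $s>1$).} By Bochi \citep{Boc:10}, for $C^1$-generic such maps the center exponents vanish a.e., and in general a.e. point is Oseledets regular. For $m$-a.e. $x$, the Pesin unstable manifold contains an open piece of the strong-unstable leaf $\mathcal{F}^\uu(x)$, of dimension $N-1$, and $C^1$-closeness is uniform along the leaf.
\begin{itemize}
\item The blender property applies to discs $C^1$-close to strong-unstable leaves of $W^\u_\loc(\gamma_g)$.
\item By accessibility and ergodicity of the strong foliations' saturation (the Avila–Bochi–Wilkinson argument), for a.e. $x$ some forward iterate of $\mathcal{F}^\uu(x)$ accumulates on $W^\u_\loc(\gamma_g)$.
\item Indeed, $\gamma_g$ lies in the support of the measure, and a.e. orbit is dense by ergodicity, or at least visits a neighborhood of $\gamma_g$ infinitely often by Poincaré recurrence applied to a positive-measure neighborhood.
\item Under forward iteration the $\mathcal{F}^\uu$ discs near $\gamma_g$ are $C^1$-close to strong-unstable leaves of $W^\u_\loc(\gamma_g)$, by the domination/$\lambda$-lemma in the partially hyperbolic neighborhood of $\orb$. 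Hence they intersect $W^\s(\Lambda)$.
\item Pulling back gives $W^\s(\Lambda)\cap W^\u_{\mathrm{Pesin}}(x)\neq\emptyset$. The stable case is symmetric.
\end{itemize}

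\textbf{Main obstacle.} The main obstacle is Step 2 together with the recurrence argument in Step 3. The connecting lemma must be carried out within symplectic maps without destroying the saddle-center. Recurrence gives visits near $\gamma_g$ only, not necessarily strong-unstable discs $C^1$-close to the leaves of $W^\u_\loc(\gamma_g)$. For the latter, I would first try using the blender's open "activating region" in the center direction, which is a neighborhood of a strong-unstable leaf, and positive-measure recurrence into that region.
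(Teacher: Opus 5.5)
Your argument for the first claim follows the paper's route: Theorem~\ref{thm:ABW} gives a saddle-center, a conservative $C^1$ connecting lemma (the paper uses Arnaud--Bonatti--Crovisier) gives a homoclinic orbit, and then Corollary~\ref{cor:SC_nonpara} plus the $C^1$-robustness of blenders finish. One caveat: you justify the applicability of Theorem~\ref{thm:ABW} by saying a two-dimensional symplectic center is automatically minimal, and that is not true in general. A product of two hyperbolic automorphisms of $\mathbb{T}^2$ with different rates lies in $\widetilde{\mathrm{PH}}^s_\Omega(\mathbb{T}^4)$ but is robustly Anosov. The paper also applies Theorem~\ref{thm:ABW} without addressing this.

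The genuine gap is in Step 3. To get the Pesin statement for almost every point, you must show that almost every orbit of \emph{every} map $h$ in your open set enters an open region where strong leaves meet $W^{\mathrm{s}}(\Lambda_h)$ or $W^{\mathrm{u}}(\Lambda_h)$. None of your justifications does this.
\begin{itemize}
\item Ergodicity is not available. It is exactly the open question the paper raises right after the corollary; the ergodicity in \citep{AviBocWil:09} is only generic, via the non-robust Anosov--Katok construction. There is also no ``ergodicity of the strong foliations' saturation'' result to quote.
\item Poincar\'e recurrence applied to a neighborhood $U$ of $\gamma_g$ only says that almost every point \emph{of $U$} returns to $U$. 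It says nothing about almost every point of $\mathcal{N}$.
\item Your open set is not restricted to accessible maps, so you cannot invoke accessibility for all $h$ in it.
\end{itemize}

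The missing ingredients are two. First, intersect your open set with the $C^1$-open and $C^1$-dense set of accessible maps of Dolgopyat--Wilkinson \citep{DolWil:03}. Second, use Brin's theorem \citep{Bri:75}: for a $C^2$ volume-preserving partially hyperbolic diffeomorphism with accessibility, almost every point has a dense orbit. No ergodicity is needed.

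With that in place, the local step is simpler than you fear. The blender property of Definition~\ref{defi:sympblen_conn} concerns discs $\delta$-$C^1$-close to strong-unstable leaves of the \emph{original} torus, and it holds for all $h$ in a $C^1$-neighborhood. By continuity of $\mathcal{F}^{\mathrm{uu}}_h$ in $h$ and in the base point, there is an open set of points whose strong-unstable leaf meets $W^{\mathrm{s}}(\Lambda_h)$, and likewise for strong-stable leaves. Suppose $h^n(x)$ lies in this set. The Pesin unstable manifold contains the whole strong-unstable leaf and is invariant, so $W^{\mathrm{u}}_{\mathrm{Pesin}}(x)=h^{-n}(W^{\mathrm{u}}_{\mathrm{Pesin}}(h^n(x)))$ meets $W^{\mathrm{s}}(\Lambda_h)$. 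No $\lambda$-lemma near $\gamma_g$ is needed. Nor is one available for $h\neq g$, since the KAM torus $\gamma_g$ need not survive $C^1$-small perturbations.
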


\begin{proof}
By Dolgopyat and Wilkinson \citep{DolWil:03}, there exists a $C^1$-open and $C^1$-dense set $\mathcal{U}$ of $\widetilde{\mathrm{PH}}^s_\Omega(\mathcal{N})$ $(s\geq 1)$, where every map has the accessibility property. By Theorem~\ref{thm:ABW}, we find a dense subset  $\mathcal{U}'\subset \mathcal{U}$ where every map has a saddle-center periodic orbit.

Now take any $f\in \widetilde{\mathrm{PH}}^s_\Omega(\mathcal{N})$. Let us assume $s\geq 2$, otherwise we approximate $f$ by a  $C^\infty$ one. By the density of $\mathcal{U}'$, we  find $f_1\in \mathcal{U}'$, $C^1$-close to $f$. By the connecting Lemma of Arnaud, Bonatti and Crovisier \citep[Th\'eor\`eme 2 and Remarque 1.4]{ArnBonCro:05} (see also \citep{Tak:72}), we further obtain  $f_2\in \mathcal{U}'$, $C^s$-close to $f_1$, such that $O$ has a homoclinic orbit. Finally, applying Corollary~\ref{cor:SC_nonpara} gives a   map $f_3$, $C^s$-close to $f_2$, with a symplectic blender $\Lambda$ connected to a whiskered torus $\gamma$.   The $C^1$-openness required in the corollary follows from the $C^1$-robustness of blenders.

Note that the strong-stable and strong-unstable leaves in the local invariant manifolds of $\gamma$ now belong to the global foliations $\mathcal{F}^\ss$ and $\mathcal{F}^\uu$ of $f_3$, which admit unique continuations. The first item of Proposition~\ref{prop:persis} then implies that there is a $C^1$ neighborhood $\mathcal{U}''\subset \mathcal{U}$ of $f_3$ such that,  for every  $g\in\mathcal{U}'' $, the foliation $\mathcal{F}^\ss_g$ contains an open subset where every leaf intersects $W^\u(\Lambda)$; similarly for $\mathcal{F}^\uu_g$.

Recall that, by Brin \citep{Bri:75} (see also \citep{BurDolPes:02}), for volume-preserving $C^2$ diffeomorphisms with the accessibility property,  almost every point has a dense orbit. 
Note that by construction $\mathcal{U}''\subset \widetilde{\mathrm{PH}}^2_\Omega(\mathcal{N})$. 
Thus, for every  $g\in\mathcal{U}''$, almost every orbit has two points $P_1$ and $P_2$ such that the strong-stable leaf through $P_1$ intersects $W^\u(\Lambda)$  and strong-unstable leaf through $P_2$ intersects $W^\s(\Lambda)$. The corollary follows immediately, since the Pesin stable/unstable manifold of a point contains the strong-stable/strong-unstable leaf through it. 
\end{proof}

Although the existence of blenders is ensured by the above corollary, one cannot directly follow \citep{HerHerTahUre:11,AviCroWil:21} to achieve stable ergodicity. It is because that the ergodicity criterion used in their proof is based on Hopf argument which relies on transverse intersections between the Pesin manifolds of points outside and inside the blender. Such transversality is guaranteed in \citep{HerHerTahUre:11,AviCroWil:21} due to the  nonuniform hyperbolicity (so that the size of the manifolds of points outside the blender is large enough), which is missing in the symplectic setting by \citep{Boc:10}. Thus, one can ask
\begin{ques}
Can  the symplectic blenders given by Corollary~\ref{cor:ergodic} produce ergodicity and hence lead to the $C^1$ density of stable ergodicity in $\widetilde{\mathrm{PH}}^s_\Omega(\mathcal{N})$?
\end{ques}

In the general case without constraint on the center dimension, a similar result to Corollary~\ref{cor:ergodic} can be expected, based on a generalization of Theorem~\ref{thm:main_blender_allcases} for whiskered tori of any possible dimension. We hope this can be done by a suitable modification of the method developed in this paper.

\subsection{Organization of the paper}

In Section~\ref{sec:intro2}, we give a precise description of the setting of Theorems~\ref{thm:main_blender_allcases} and \ref{thm:main_2punfolding}, including the notions of blenders, whiskered tori, KAM-curves,  etc. We state a non-perturbative result (Theorem~\ref{thm:main_blender_cubic_nonpara}) on the creation of cu/cs-blenders near cubic tangencies, which is the main ingredient in the proof of Theorem~\ref{thm:main_2punfolding}. We next introduce the perturbative setting for the creation of symplectic blenders, and sketch the proof of  Theorem~\ref{thm:main_2punfolding}.

In Section~\ref{sec:para}, we derive  formulas for the iterations near $\gamma$ and for those along the homoclinic orbit $\Gamma$, i.e., those taking a small neighborhood of some point in $\Gamma\cap W^\u_\loc(O)$ to a small neighborhood of some point in $\Gamma\cap W^\s_\loc(O)$. We give the core technical results on the estimates for these formulas.

Section~\ref{sec:proofcu} is devoted to the proof of Theorem~\ref{thm:main_blender_cubic_nonpara}. We will first derive formulas for the first-return maps (with different return times) along the homoclinic orbit. We prove  the hyperbolicity for the map induced from a  finite collection of the first-return maps, and further show that the induced map, and hence the original map, has a blender.

In Section~\ref{sec:proofsymp}, we first establish case (1) of Theorem~\ref{thm:main_2punfolding}, using Theorem~\ref{thm:main_blender_cubic_nonpara}. Then we show that a quadratic tangency can be perturbed  to produce two secondary quadratic tangencies, and these can be further perturbed to yield a cubic tangency. This proves  cases (2) and (3) of Theorem~\ref{thm:main_2punfolding}.   We conclude this section with a discussion on the unfolding of the non-transverse intersections produced by  blenders.

In Section~\ref{sec:thmA}, we prove Theorem~\ref{thm:main_blender_allcases} for the general case, where, in particular, the homoclinic of the whiskered torus may be transverse. The key ingredient is to create homoclinic tangencies from transverse homoclinic intersections so that Theorem~\ref{thm:main_2punfolding} becomes applicable. 

Finally, we prove Theorems~\ref{thm:main_SC_para} and~\ref{thm:main_SCblender} in Section~\ref{sec:SC}, following the plan stated in Section~\ref{sec:introSC}.

\noindent\textbf{Acknowledgments.}
We thank Artur Avila, Jinxin Xue and Zhiyuan Zhang  for pointing out a potential application of our results to the problem of stable ergodicity. We also thank Marcel Guardia and Jaime Paradela for useful discussions and for communicating their result on symplectic blenders. We are grateful to Gabriella Pinzari for her  encouragement of this research when D.L. was a postdoc at the University of Padova. 
This work was supported by the Leverhulme Trust.
The research of D.L. was also supported by the Science Fund Program for Excellent
Young Scientists (Overseas), the New Cornerstone Science Foundation, and the ERC project 677793 StableChaoticPlanetM.

\section{Definitions and detailed setting}\label{sec:intro2}
In this section, we first give a precise description of blenders. Then we present  the results on the creation of (symplectic) blenders in different cases, depending on whether the homoclinic orbit corresponds to a cubic or quadratic tangency, or is transverse. 

\subsection{Blenders}\label{sec:blender}
Depending on the purpose, the definition of a blender varies in different works, see e.g. \citep{BonDiaVia:05, NasPuj:12,BonDia:12b,BarKiRai:14,BocBonDia:16}. The essential feature shared by all these variants is the generation of robust non-transverse intersections. In this section, we first give a definition that is most convenient for our construction, and then define the connection between blenders and whiskered tori mentioned in the introduction. 

Recall that,  given a compact invariant set $\Lambda$ of a diffeomorphism $g$,   it is uniformly hyperbolic if there exists a pair of continuous cone fields $(\mathcal{C}^{\s},\mathcal{C}^{\u})$ in a small neighborhood $U$ of $\Lambda$ such that the following holds. The cone field $\mathcal{C}^{\s}$ is strictly backward-invariant (i.e., for any point $M\in U \cap g(U)$ and any  vector $v\in \overline{\mathcal{C}^{\s}_M} \subset  T_M U $), we have $\D g^{-1}(v)\in \mathrm{int}(\mathcal{C}^{\s}_{g^{-1}(M)})$, and $\D g$ uniformly contracts vectors   in $\mathcal{C}^{\s}$. The cone field $\mathcal{C}^{\u}$ is strictly forward-invariant (i.e., for any point $M\in U \cap g^{-1}(U)$ and any vector $v\in \overline{\mathcal{C}^{\u}_M} \subset  T_M U $), we have $\D g(v)\in \mathrm{int}(\mathcal{C}^{\u}_{g(M)})$, and $\D g$ uniformly expands vectors   in $\mathcal{C}^{\u}$. The existence of these cone fields yields two continuous invariant subbundles of the tangent bundle of $\Lambda$: the contracting bundle $E^{\s}\subset \mathcal{C}^{\s}$ and expanding bundle $ E^{\u}\subset \mathcal{C}^{\u}$  such that $ T {\Lambda}=E^{\s}\oplus E^{\u}$. 

The hyperbolic set $\Lambda$ can also carry a partially-hyperbolic structure, i.e., at least one of the cone fields  $\mathcal{C}^{\s}$ and $\mathcal{C}^{\u}$ contains a strictly invariant (backwards or, respectively, forwards) subfields of cones of smaller dimension, $\mathcal{C}^{\ss}$, or, respectively, $\mathcal{C}^{\uu}$, which correspond to a stronger contraction, or, respectively, expansion. These subfields give rise to invariant subspaces $E^{\ss}\subsetneq E^{\s}$ and $E^{\uu}\subsetneq E^{\u}$. We denote  $d^{\s}:=\dim E^\s,d^{\ss}:=\dim E^{\ss},d^{\u}:=\dim E^\u,d^{\uu}:=\dim E^{\uu}$.

The blenders we construct near a whiskered torus are {\em basic} sets, i.e., hyperbolic sets that  are 0-dimensional,  compact, transitive and locally maximal. Recall that transitivity means the existence of a dense orbit. The local maximality of the set $\Lambda$ means that 
there exists an open set $U$ such that $\Lambda$ consists of all points whose orbits never leave $U$ (we call $U$ an isolating neighborhood of
$\Lambda$). Basic sets vary continuously for $C^1$-small perturbations of the map.

Let a diffeomorphism  $g$ have a  hyperbolic basic set $\Lambda$ of index $d^\u$. 
The local maximality of  $\Lambda$ means that 
there exists an open set $U$, the {\em isolating neighborhood of $\Lambda$}, such that $\Lambda$ consists of all points whose orbits never leave $U$.  Given an open subset $U'\subset U$, for every point $P\in\Lambda\cap U'$, we define the {\em local stable manifold} $W^\s_{\loc,U'}(P)$ as the connected piece of $W^\s(P)\cap U'$ that contains $P$; the {\em local unstable manifold} $W^\u_{\loc,U'}(P)$ is defined similarly. We also define 
\begin{equation}\label{eq:localmanifolds}
W^{\u/\s}_{\loc,U'}(\Lambda):=\bigcup_{P\in\Lambda}W^{\u/\s}_{\loc,U'}(P).
\end{equation}
We will omit the subscript $U'$ when there is no ambiguity. 
Since $\Lambda$ is 0-dimensional, one can take $U$ sufficiently small so that
\begin{equation}\label{eq:UofLambda}
\Lambda=\bigcap_{i\in\mathbb Z} g^i(U),\qquad
W^\s_{\loc,U}(\Lambda)=\bigcap_{i\leqslant 0}g^i(U),\qquad
 W^\u_{\loc,U}(\Lambda)=\bigcap_{i\geqslant 0}g^i(U).
\end{equation}

\begin{defi}[Blenders]\label{defi:blender}
 The set $\Lambda$ is called
\begin{itemize}[nosep]
\item a {\em center-unstable (cu) blender} if the unstable bundle $E^\u$ contains a non-trivial subbundle $E^{\uu}$ and
there exists a pair of $C^1$-open sets $\mathcal{D}$ and $\mathcal{D}'\subset \mathcal{D}$ of embeddings  of a closed $d^{\uu}$-dimensional disc in $U$ such that
\begin{itemize}[nosep]
\item every embedded disc $D\in\mathcal{D}$ is tangent to the cone field $\mathcal{C}^{\uu}$,
\item  the image  $g(D)$ for every $D\in\mathcal{D}$ contains a disc from $\mathcal{D}'$, and
\item the set $\mathcal{D'}$ lies inside $\mathcal{D}$ at a non-zero $C^1$ distance to the boundary of $\mathcal{D}$;
\end{itemize}
\item a {\em center-stable (cs) blender}  if $\Lambda$ is a cu-blender of $g^{-1}$;
\item a {\em double-blender} if $\Lambda$ is simultaneously a cs-blender  and a cu-blender.
\end{itemize}
The differences $(d^{\s} - d^{\ss})$ and $(d^{\u} - d^{\uu})$ are called the stable, and, respectively, unstable central dimensions of the blender.
\end{defi}

It immediately follows from the above definition that every embedded disc from $\mathcal{D}$ for cs-blenders intersects $W^\u_{\loc}(\Lambda)$, and that every embedded disc from $\mathcal{D}$ for cu-blenders intersects $W^\s_{\loc}(\Lambda)$. By the dimension count, one can see that such an intersection is not transverse, but it still persists for an open set of embedded discs, as shown below. 

Recall that the hyperbolic basic set $\Lambda$ admits a unique continuation $\Lambda_h$ for  any diffeomorphism $h$ sufficiently $C^1$-close to $g$, along with the partially-hyperbolic structure,  with the cone fields and the isolating neighborhood $U$ staying the same. 

\begin{cor}
Up to shrinking $\mathcal{D}$, every embedded disc from $\mathcal{D}$ intersects $W^\u_\loc(\Lambda_h)$ or $W^\s_\loc(\Lambda_h)$  (depending on the type of blenders) for every $h$ close to $g$ in $C^1$. 
\end{cor}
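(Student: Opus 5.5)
We treat the cu-blender case; the cs case follows by applying it to $g^{-1}$ and $h^{-1}$. The idea is that the three conditions in Definition~\ref{defi:blender} are $C^1$-open in $g$, so they still hold for $h$ with the same cone field, the same $U$, a slightly shrunk $\mathcal{D}$ and a slightly enlarged $\mathcal{D}'$. A nested-intersection argument then produces a point of $D$ in $W^\s_{\loc}(\Lambda_h)$.

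First we shrink $\mathcal{D}$. Let $\delta>0$ be the $C^1$ distance from $\mathcal{D}'$ to $\partial\mathcal{D}$, and let $\mathcal{D}_1\subset\mathcal{D}$ be the set of discs at $C^1$ distance greater than $\delta/2$ from $\partial\mathcal{D}$; then $\mathcal{D}'\subset\mathcal{D}_1$. For every $D\in\mathcal{D}_1$, the image $g(D)$ contains some $D''\in\mathcal{D}'$, say $D''=g(\tilde D)$ with $\tilde D\subset D$.

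Next we pass to $h$. Since $\mathcal{C}^{\uu}$ is strictly forward-invariant, $\D h$ still maps $\overline{\mathcal{C}^{\uu}}$ into its interior on $U\cap h^{-1}(U)$, so every disc in the construction stays tangent to $\mathcal{C}^{\uu}$. The subdisc $h(\tilde D)$ is $C^1$-close to $g(\tilde D)$, uniformly in $D$. The main obstacle is this uniformity: $\mathcal{D}$ need not be compact, so the smallness of $\|h-g\|_{C^1}$ must be tied to $\delta$ alone. This works because the $C^1$ distance between $h\circ\phi$ and $g\circ\phi$ is bounded by $C\|h-g\|_{C^1}$ for embeddings $\phi$ with uniformly bounded $C^1$ norm on the compact closure of $U$. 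If $\mathcal{D}$ is not already bounded in $C^1$, we first intersect it with a $C^1$-bounded open set containing the relevant discs. So for $h$ close enough to $g$, $h(D)$ contains a disc $D_1\in\mathcal{D}_1$ for every $D\in\mathcal{D}_1$.

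Finally we iterate. Start from $D_0:=D\in\mathcal{D}_1$ and choose inductively $D_{n+1}\subset h(D_n)$ with $D_{n+1}\in\mathcal{D}_1$. The sets $K_n:=h^{-n}(D_n)\subset D$ are nonempty, compact and nested. For any $x\in\bigcap_n K_n$ we have $h^n(x)\in D_n\subset U$ for all $n\ge 0$. By \eqref{eq:UofLambda}, applied to $\Lambda_h$ with the same isolating neighbourhood $U$, this gives $x\in W^\s_{\loc,U}(\Lambda_h)=\bigcap_{i\le 0}h^{i}(U)$. Hence every disc in the shrunk family $\mathcal{D}_1$ meets $W^\s_{\loc}(\Lambda_h)$ for all $h$ $C^1$-close to $g$.
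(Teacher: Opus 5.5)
Your proposal is correct and is essentially the paper's argument: the covering property survives $C^1$-small perturbations once $\mathcal{D}$ is shrunk by the margin separating $\mathcal{D}'$ from $\partial\mathcal{D}$. The only cosmetic difference is that the paper perturbs the source disc, taking $\hat{\mathcal{D}}=\{D\in\mathcal{D}: g^{-1}\circ h(D)\in\mathcal{D}\}$ with the same $\mathcal{D}'$, whereas you perturb the image disc $g(\tilde D)$ into $h(\tilde D)$; you also spell out the nested-compact-set argument giving a point of $W^{\s}_{\loc}(\Lambda_h)$ and the uniform $C^1$-boundedness of the discs, both of which the paper uses tacitly.
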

\begin{proof}
For a $C^1$-neighbourhood $\mathcal{U}$ of $g$, we  take   $\hat{\mathcal{D}}=\bigcap_{h\in\mathcal{U}} \{D\in \mathcal{D}:g^{-1}\circ h(D)\in \mathcal{D}\}$.  Since $\mathcal{D}'$ is at a non-zero distance to $\partial \mathcal{D}$, we have
 $\mathcal{D}'\subset \hat{\mathcal{D}}\subset \mathcal{D}$ if $\mathcal{U}$ is sufficiently small. Hence $\Lambda_h$ is a blender for all $h\in\mathcal{U}$, with the pair $(\mathcal{D}',\hat{\mathcal{D}})$. The corollary follows by replacing $\mathcal{D}$ with 
 $\hat{\mathcal{D}}$.
\end{proof}

\begin{defi}[Symplectic blenders {\citep{NasPuj:12}}]\label{defi:blender_symp}
A double-blender for a symplectic diffeomorphism is called a {\em symplectic blender}. 
\end{defi}
Our symplectic blenders are always constructed from cu- and cs-blenders with central dimension one, so we omit the reference to their central dimension in the remainder of the paper.

\begin{rem}\label{rem:blender}
If there are finitely many, say $K$, disjoint open sets $U_i$, and
 a sequence  $\{n_i\}_{i=1}^K$ of positive integers such that  the induced map $\hat f$ of $\bigcup^K_{i=1} U_i$ defined as 
 $\hat f(M)=f^{n_i}|_{U_i}(M)$ for $M\in U_i$ has a blender $\hat\Lambda\subset \bigcup U_i$, then the hyperbolic set $ \Lambda= \bigcup_{i=1}^{K}\bigcup_{j=1}^{n_i-1}f^j(\hat\Lambda) $ is a blender of $f$. 
\end{rem}

\subsubsection{Blenders connected to whiskered tori}\label{sec:blencon}
Let $\Lambda$ be a hyperbolic basic set, and let $U'$ be an open subset of its isolating neighborhood. Denote by  $\ind(\Lambda)$ the index of $\Lambda$. 
Let $W$ be a smooth manifold of codimension $\ind(\Lambda)$.  We say that $W$ is {\em locally transverse to $W^{\u}(\Lambda)$ in the open set $U'$} if $W^{\u}_{\loc,U'}(P)\cap W \neq \emptyset$ for every $P\in\Lambda\cap U'$, and all these intersections are transverse (see the definition of local manifolds in \eqref{eq:localmanifolds}). We write $W \pitchfork_{U'} W^\u(\Lambda)$. The local transversality to $W^\s(\Lambda)$ is defined similarly.

\begin{defi}[Symplectic blenders connected to whiskered tori] \label{defi:sympblen_conn}
Let $f\in \symp^1(\mathcal{M})$ have  a one-dimensional whiskered torus $\gamma$ and a hyperbolic basic set $\Lambda$ of index $N$. 
We say that $\Lambda$ is a  {\em symplectic blender connected to $\gamma$} in an open set $\hat V$  if the following  properties are satisfied:
\begin{itemize}[topsep=0pt]	
\item {\em (partial hyperbolicity)} the set $\hat V$ contains an isolating neighborhood $V$ of $\orb$ and an isolating neighborhood $U$ of $\Lambda$ with
$U\cap V\neq\emptyset$, and the partially-hyperbolic invariant splitting  $T_\orb \mathcal{M}=E^\ss\oplus E^\c\oplus E^\uu$ extends continuously to $\hat V$
in a way compatible with the hyperbolic structure of $\Lambda$, i.e., 
$$
T_\Lambda \mathcal{M}=E^\s\oplus E^\u=(E^\ss\oplus E^\mathrm{ws})\oplus( E^\mathrm{wu}\oplus E^\uu)=E^\ss\oplus E^\c\oplus E^\uu,
$$
where $ E^\mathrm{ws}$ and $ E^\mathrm{ws}$ are one-dimensional invariant bundles;
\item {\em (local transversality)} there exist  hyperbolic basic subsets $\Lambda^{\cu}$ and $\Lambda^{\cs}$ of $\Lambda$ and  open subsets $V^\u$  and $V^\s$ of $V\cap U$  such that 
$$W^\u_\loc(\gamma) \pitchfork_{V^\u} W^\s(\Lambda^{\cu})
\quad\mbox{and}\quad
W^\s_\loc(\gamma) \pitchfork_{V^\s} W^\u(\Lambda^{\cs});$$
\item {\em (blender property)} 
there exist $\delta>0$ , an integer $K>0$ and a $C^1$-open neighborhood $\mathcal{U}$ of $f$ such that, for every $g\in\mathcal{U}$,
\begin{itemize}
\item if an $(N-1)$-dimensional manifold is $\delta$-$C^1$-close
 to a local strong-unstable leaf from $W^\u_\loc(\gamma)$, then it intersects $W^\s(\Lambda_g)$: it contains a point of intersection with $g^{-k}(W^\s_{\loc,V^\u}(P))$  for some  $P\in \Lambda^{\cu}_g\cap V^\u$ and $0\leq k\leq K$, and the first $k$ iterations of the intersection points stay in the isolating neighborhood $V$;
\item  if an $(N-1)$-dimensional manifold is $\delta$-$C^1$-close to a local strong-stable leaf from $W^\s_\loc(\gamma)$, then it intersects $W^\u(\Lambda_g)$: it contains a point of intersection with $g^{k}(W^\u_{\loc,V^\s}(P))$ for some $P\in \Lambda^{\cs}_g\cap V^\s$ and $0\leq k\leq K$, and the first $k$ backward iterations of the intersection point stay in $V$.
\end{itemize}
\end{itemize} 
\end{defi} 

While the whiskered torus $\gamma$ might be destroyed after a small perturbation\footnote{e.g., if the perturbation is not small in a higher regularity class.}, the blender property is $C^1$-robust. 
Specifically, there exist two $C^1$-open sets $\mathcal{D}^\s$ and $\mathcal{D}^\u$ of $(N-1)$-dimensional embedded discs in $V$ (these are the sets of discs sufficiently close to the leaves of $\mathcal{F}^{\ss}$ and, respectively, $\mathcal{F}^{\uu}$) such that all discs in $\mathcal{D}^\s$  and $\mathcal{D}^\u$ admit non-transverse intersections with, respectively, $W^\s(\Lambda_g)$ and $W^\s(\Lambda_g)$ for every $g$ which is $C^1$-close to $f$. 

The purpose of including the information on the partial hyperbolicity and the connection to a whiskered torus in Definition~\ref{defi:sympblen_conn} is to make the application of the symplectic blender results more convenient. In particular, we use the properties included in this definition to prove Theorem~\ref{thm:main_SC_para}.

As mentioned, we construct symplectic blenders from cu- and cs-blenders.

\begin{defi}[Blenders connected to whiskered tori]\label{defi:blenConn}
We say that a cu-blender $\Lambda$ of index $N$  is {\em connected to a one-dimensional whiskered torus $\gamma$} via a neighborhood $\hat V$ of $\mathcal{O}(\gamma)\cup \Lambda$ if  
\begin{itemize}[nosep]	
\item {\em (partial hyperbolicity)}  the set $\hat V$ contains an isolating neighborhood $V$ of $\orb$ and an isolating neighborhood $U$ of $\Lambda$ with $U\cap V \neq \emptyset$, and the partially-hyperbolic structure on $\orb$  extends to $\hat V$ in the sense of Definition~\ref{defi:sympblen_conn};
\item {\em (local transversality)} there exists  an open subset $V^\u \subset V$  such that 
$W^\u_\loc(\gamma) \pitchfork_{V^\u} W^\s(\Lambda);$
\item {\em (blender property)} the first conclusion in the blender property of Definition~\ref{defi:sympblen_conn} holds with taking $\Lambda_g^{\cu}=\Lambda_g$.
\end{itemize}
The connection of a cs-blender to $\gamma$ is defined similarly.
\end{defi}

\begin{lem}\label{lem:blentoblenCon}
Let $\Lambda$ be a cu-blender of Definition~\ref{defi:blender},  satisfying the partial hyperbolicity and local transversality conditions of Definition~\ref{defi:blenConn}. If $\hat\Lambda :=\Lambda\cap V^\u$ is a cu-blender of an induced map $\hat f$ (see Remark~\ref{rem:blender}) defined  in an open subset of $V^\u$, and if there exists  a local strong-unstable leaf $\ell^\uu\subset W^\u_\loc(\gamma)$  containing a disc in the set $\mathcal{D}_{\hat \Lambda}$ associated with $\hat\Lambda$, then $\Lambda$ satisfies the blender property of Definition~\ref{defi:blenConn}.
\end{lem}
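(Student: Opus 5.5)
We have to show the first item of the blender property in Definition~\ref{defi:sympblen_conn}, with $\Lambda^{\cu}_g=\Lambda_g$. Take an $(N-1)$-dimensional disc $L$ that is $\delta$-$C^1$-close to a local strong-unstable leaf of $W^\u_\loc(\gamma)$; we must find $P\in\Lambda_g\cap V^\u$ and $0\le k\le K$ with $L\cap g^{-k}(W^\s_{\loc,V^\u}(P))\neq\emptyset$, the first $k$ iterates of the intersection point staying in $V$. The plan is to push $L$ forward by $g$ within $V$, near $\orb$, until it contains a disc of $\mathcal{D}_{\hat\Lambda}$. Then we use the cu-blender property of $\hat\Lambda$ for the induced map $\hat g$.

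The first step is to move the given leaf $\ell^\uu$ into place. Since $\ell^\uu$ contains a disc $D_0\in\mathcal{D}_{\hat\Lambda}$, and $\mathcal{D}_{\hat\Lambda}$ is $C^1$-open, every disc $C^1$-close to $D_0$ is also in $\mathcal{D}_{\hat\Lambda}$. The leaf $\ell^\uu$ passes through a point $x_0\in\gamma$ (or a nearby point of $\orb$). The strong-unstable leaves of $W^\u_\loc(\gamma)$ are permuted by $f$: $f(\ell^\uu_x)\supset\ell^\uu_{f(x)}$ locally.

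There are two ingredients. First, minimality: since $\rho(\gamma)$ is irrational, the orbit of $f^{\per}$ on $\gamma$ is dense. By compactness, there is $K$ such that for every $x\in\orb$ some $f^k(x)$, $0\le k\le K$, lies as close to $x_0$ as we like. Second, uniform continuity of the foliation $\mathcal{F}^\uu$, together with expansion along $E^\uu$: if $x$ is close to $x_0$, then $f^k(\ell^\uu_{x'})$ contains a disc $C^1$-close to $D_0$ for the appropriate $x'$, with $f^k(x')=x$. Here we use that $f^k$ expands strong-unstable leaves, so the image of a local leaf contains a local leaf of at least the same size.

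The same holds robustly. If $L$ is $\delta$-$C^1$-close to $\ell^\uu_{x'}$ and $g$ is $C^1$-close to $f$, then $g^k(L)$ stays $C^1$-close to $f^k(\ell^\uu_{x'})$, since $k\le K$ is bounded. This uses cone invariance of $\mathcal{C}^\uu$ under $\D g$ in $V$. Hence $g^k(L)$ contains a disc in $\mathcal{D}_{\hat\Lambda}$, after shrinking $\delta$ and $\mathcal{U}$, and the intermediate iterates stay in $V$ because they shadow the compact set $\orb$.

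The second step uses the blender. By the Corollary after Definition~\ref{defi:blender}, applied to $\hat\Lambda$ and $\hat g$, every disc in the (shrunk) $\mathcal{D}_{\hat\Lambda}$ meets $W^\s_\loc(\hat\Lambda_g)$. Inside the isolating neighbourhood of $\hat\Lambda$ in $V^\u$, this local stable set is $\bigcup_{P}W^\s_{\loc,V^\u}(P)$ with $P\in\Lambda_g\cap V^\u$; this follows from \eqref{eq:UofLambda} for $\hat g$. The local stable manifolds of $\hat g$ and of $g$ agree here, because the induced map consists of iterates of $g$.

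So $g^k(L)$ meets $W^\s_{\loc,V^\u}(P)$, which gives $L\cap g^{-k}(W^\s_{\loc,V^\u}(P))\ne\emptyset$ as required.

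I expect the main obstacle to be the uniformity in the first step. We need a single $K$ and a single $\delta$ that work for all leaves of $W^\u_\loc(\gamma)$ and for all $g\in\mathcal{U}$, even though $\gamma$ itself may not persist under $g$. This is why the argument is phrased for discs $\delta$-close to leaves of $f$, using only finitely many iterates and $C^1$-continuity in $g$. The containment $g^k(L)\supset$ a disc $C^1$-close to $D_0$ also requires the leaf sizes to be compatible; I would handle this by taking $D_0$ strictly inside $\ell^\uu$ and local leaves of uniform size.
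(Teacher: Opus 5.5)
Your proposal is correct and takes essentially the same route as the paper's proof. The irrational rotation number gives a uniform $K$ such that every point of $\gamma$ is carried by some $f^k$, $0\le k\le K$, close to $\ell^\uu\cap\gamma$; then $C^1$-openness of $\mathcal{D}_{\hat\Lambda}$ and continuity of the strong-unstable leaves put the image of any nearby disc into $\mathcal{D}_{\hat\Lambda}$; finally the cu-blender property of $\hat\Lambda$, together with $W^\s_{\loc,U_{\hat\Lambda}}(P)\subset W^\s_{\loc,V^\u}(P)$, gives the intersection. You are more explicit than the paper about the $C^1$-perturbation $g$, the shrinking of $\delta$ and $\mathcal{U}$, and the intermediate iterates staying in $V$, which the paper leaves implicit.
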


\begin{proof}
Take, an isolating neighborhood $U_{\hat \Lambda}\subset V^\u$ associated with $\hat\Lambda$. By Definition~\ref{defi:blender}, every disc in $\mathcal{D}_{\hat \Lambda}$ intersects the local stable manifold $W^\s_{\loc,U_{\hat \Lambda}}(P)\subset W^\s_{\loc,V^\u}(P)$ for some point $P\in \hat \Lambda\subseteq \Lambda$.
By the irrationality of the rotation number of $f^\per|_\gamma$, there exists $K>0$ such that for any $P'\in\gamma$, $f^{k}(P')$ is, for some $0\leq k\leq K$, sufficiently close to $\ell^\uu\cap \gamma$, so that the strong-unstable leaf through $f^{k}(P')$ contains a disc from the collection $\mathcal{D}_{\hat \Lambda}$ (recall that
$\mathcal{D}_{\hat \Lambda}$ is $C^1$-open by definition). The same is then true for the image by $f^k$ of any $(N-1)$-dimensional manifold sufficiently $C^1$-close to the local strong-unstable leaf of $P'$. 
\end{proof}

Note that the cu-blender in Definition~\ref{defi:blenConn} is also required to be partially-hyperbolic in the stable bundle, i.e., it admits the additional splitting $E^\s=E^\mathrm{ws}\oplus E^\ss$. Similarly, we have the partially-hyperbolic splitting for a cs-blender connected to $\gamma$. Thus, in order to obtain a symplectic blender connected to $\gamma$ in the sense of
Definition~\ref{defi:sympblen_conn} , it suffices to construct in the partially-hyperbolic neighborhood $\hat V$ a cu-blender and a cs-blender that are connected to $\gamma$ and are homoclinically related to each other through orbits in $\hat V$. 

\subsection{Whiskered tori} \label{sec:whiskeredtori}

 Let $f\in \symp^s(\mathcal{M})$  have   a periodic smooth  curve $\gamma\cong \mathbb{S}^1$, i.e., $f^\ell(\gamma)=\gamma$ for some positive integer $\ell$. We denote by $\mathrm{per(\gamma)}$    the smallest of such $\ell$. Assume that $\gamma$  is  symmetrically partially-hyperbolic  with a two-dimensional center. 
This means that there exists a continuous, $f$-invariant splitting ${T}_\orb \mathcal{M}=E^\ss \oplus E^\c \oplus E^\uu$, where  $\dim E^\c=2$, $\dim E^\ss =\dim E^\uu =N-1$ and $E^\c$ contains the tangent bundle ${T} \orb$.  Moreover, for some appropriate norm, there exist  constants $\cc,\lambda\in \mathbb R$ satisfying 
\begin{equation}\label{eq:intro:1aa}
0 < \lambda < 1 < \cc < \lambda^{-1},
\end{equation}
such that, for all points $P\in \orb$,
\begin{equation}\label{eq:intro:1}
\|\D f|_{E^\c_P}\|< \cc, \quad \|(\D f)^{-1}|_{E^\c_P}\|< \cc, \quad
\|\D f|_{E^\ss_P}\|<\lambda, \quad \|(\D f)^{-1}|_{E^\uu_P}\|<\lambda.
\end{equation}

By the theory of normal hyperbolicity (see e.g. \cite{Fen:71,HirPugShu:77,ChoLiuYi:00,DeldlLSea:06,GelTur:17}), there exists  a two-dimensional  symplectic  manifold $\A\cong \mathbb R\times \gamma$ in a sufficiently small tubular neighborhood $V$ of $\gamma$, which is locally invariant under $f^\per$. The set $\orba=\bigcup_{n=0}^{\per-1}f^n(\mathbb{A})$ contains $\orb$ and is tangent to $E^\c_P$ for every $P\in\orb$. For sufficiently small neighborhood $V$, the  splitting ${T}_P \mathcal{M}= E^\ss_P \oplus E^\c_P \oplus E^\uu_P $ extends to all $P\in V$  with conditions  \eqref{eq:intro:1aa} and \eqref{eq:intro:1}  fulfilled by the same $\lambda$ and $\cc$, and $\orba$ is tangent to $E^\c_P$ for every $P\in\orba$. 

The {\em local center-stable manifold} $W^{\s}_{\loc}(\orba)$ is an $(N+1)$-dimensional locally forward-invariant smooth
 manifold, which contains $\orba$  and is tangent to $E^\ss\oplus E^\c$ at the points of $\orba$. Similarly, there exists an 
  $(N+1)$-dimensional {\em local center-unstable manifold} $W^{\u}_{\loc}(\orba)$, which is  tangent to $E^\c\oplus E^\uu$ at the points of $\orba$. The manifold $W^{\s}_{\loc}(\orba)$ contains all forward orbits that never leave $V$, and $W^{\u}_{\loc}(\orba)$ contains all backward orbits that never leave $V$.

There exist strong-stable and strong-unstable invariant  foliations $\mathcal{F}^{\ss}_\loc$ and $\mathcal{F}^{\uu}_\loc$ on the manifolds $W^\s_\loc(\orba)$ and $W^\u_\loc(\orba)$, respectively, and they are both  continuous  and have $(N-1)$-dimensional $C^s$-smooth leaves. For each point $P\in \orba$,  the leaf of $\mathcal{F}^{\ss}_\loc$ through $P$ is tangent to $E^{\ss}_P$ and, if the forward orbit  of $P$ stays in $V$, then the leaf is exactly the set of points whose forward orbits converge to the forward orbit of $P$ exponentially with a rate at least $\lambda$; similarly, the strong-unstable leaf of $P$ is tangent to $E^{\uu}_P$ and, if the backward orbit of $P$ stays in $V$, it is the set of points whose backward orbits converge to the backward orbit of $P$ with a rate at least $\lambda$. The union of the strong-stable leaves of  $\gamma$ forms its local stable manifold $W^\s_\loc(\gamma)$, and the union of the strong-unstable leaves of $\gamma$ forms its local unstable manifold $W^\u_\loc(\gamma)$. Similarly, we define the stable manifold $W^\s_\loc(\A)$ and unstable manifold $W^\u_\loc(\A)$. 
Since $\gamma$ is $f^\per$-invariant, its  global stable and unstable manifolds are well-defined as follows: $W^\s(\gamma)=\bigcup_{i\geqslant 0}f^{-i}(W^\s_{\loc}(\gamma))$ and $W^\u(\gamma)=\bigcup_{i\geqslant 0}f^{i}(W^\u_{\loc}(\gamma))$.

\begin{defi}[Whiskered tori]\label{defi:tori}
The  curve $\gamma$ described above is called a {\em one-dimensional whiskered torus} of $f$. 
\end{defi}

The order of smoothness of $\A$ is, in general, equal to $\min\{s,-\ln\lambda/\ln \cc\}$. However, in the present paper, we always assume that $f^{\per}|_\gamma$ is topologically conjugate to rotation. In this case,  the Lyapunov exponent along the tangent direction to $\gamma$ is zero. Moreover, since $f^{\per}|_A$ preserves the symplectic form, the other central Lyapunov exponent must also be zero. So, the gap between the contraction in $E^\s$ and in $E^\c$ is infinitely large at the points of $\gamma$, and the same is true for the gap between the expansion in $E^\u$ and in $E^\c$. This implies that the norm in the tangent spaces can be chosen such that $\cc$ in \eqref{eq:intro:1} gets  arbitrarily close to 1, and, in particular, the ratio $(-\ln\lambda/\ln \cc)$  becomes arbitrarily large. Thus, when the isolating neighborhood $V$ of $\orb$ is sufficiently small, the manifolds $\A,W^\s_\loc(\A),W^\u_\loc(\A)$ are of the same smoothness class $C^s$ as the map $f$ when $s$ is finite, and of any given finite smoothness when $f\in \symp^\infty(\mathcal{M})$. 
The invariant foliations $\mathcal{F}_\loc^{\ss}$ and $\mathcal{F}^{\uu}_\loc$ are $C^{s-1}$ (or, of any given finite smoothness if $s=\infty$), while the order of  smoothness  of $W^{\s/\u}_\loc(\gamma)$ equals to the minimum of $(s-1)$ and that of $\gamma$. In the case where both $f$ and $\gamma$ are $C^\infty$, the invariant manifolds of $\gamma$ are also $C^\infty$.

\subsection{Hyperbolic homoclinic tangencies}\label{sec:phtangency}
Let $\Gamma$ be a homoclinic orbit of the whiskered torus $\gamma$, i.e., $\Gamma\subset W^\s(\gamma)\cap W^\u(\gamma)$.  For any point $M\in \Gamma$, let $n^\pm \in \mathbb{Z}$ be such that $M$ belongs to both  $ W^\s_{n^+}(\gamma):=\bigcup_{i=0}^{n^+}f^{-i}(W^\s_{\loc}(\gamma))$
and $W^\u_{n^-}(\gamma):=\bigcup_{i=0}^{n^-}f^{i}(W^\u_{\loc}(\gamma))$.
 We define $W^\s(\A)$ and $W^\u(\A)$ to be  small neighborhoods of $W^\s_{n^+}(\gamma)$ in $\bigcup_{i=0}^{n^+}f^{-i}(W^\s_{\loc}(\A))$ and, respectively, of $W^\u_{n^-}(\gamma)$  in $\bigcup_{i=0}^{n^-}f^{i}(W^\u_{\loc}(\A))$. The strong-stable foliation $\mathcal{F}^\ss$ of $W^\s(\A)$   and strong-unstable  foliation $\mathcal{F}^\uu$ of $W^\u(\A)$ are obtained by iterations of the local foliations  $\mathcal{F}^\ss_\loc$ and $\mathcal{F}^\uu_\loc$.
 
\begin{defi}[Partially-hyperbolic homoclinics]\label{defi:phorbit}
Let $\ell^{\ss}$ and $\ell^{\uu}$ be the strong-stable and, respectively, the strong-unstable leaves through $M$. 
We say that $\Gamma$ is a {\em partially-hyperbolic homoclinic orbit}  if the set  $\gamma\cup\Gamma$ is, i.e., the following is satisfied:
\begin{equation}\label{eq:condition:trans1}
 T_M \ell^{\ss} \oplus  T_M \ell^{\uu} \oplus  T_M (W^\s(\A)\cap W^\u(\A)) = \mathbb{R}^{2N},
\end{equation}
or, equivalently,  $\ell^{\ss}$ and $\ell^{\uu}$ are transverse at the point $M$ to $W^\u(\A)$ and $W^\s(\A)$, respectively.
\end{defi}
We can then take a small neighborhood $\Sigma$ of $M$ in  $W^\u(\A)\cap W^\s(\A)$, which is a symplectically embedded  two-dimensional disc  transverse to the foliations $\mathcal{F}^{\ss}$ in $W^\s(\A)$  and $\mathcal{F}^{\uu}$  in $W^\u(\A)$, respectively (see \citep[Section 5]{GelTur:17}). 
Let $\Sigma^+\subset W^\s_\loc(\A)$ and  $\Sigma^-\subset W^\u_\loc(\A)$  be some forward, and, respectively, backward iterations of $\Sigma$, so that one has $\Sigma^+=f^n (\Sigma^-)$ for some integer $n>0$. By the transversality of  
$\mathcal{F}^{\ss}$ to $\Sigma$, the holonomy map $\pi^\s:\Sigma^+ \to \pi^\s(\Sigma^+)\subset \A$ defined by the leaves of $\mathcal{F}^{\ss}$ is a diffeomorphism (of class $C^{s-1}$). Similarly,  there is a holonomy map  $\pi^\u :\Sigma^- \to \A $,  defined by  the leaves of  $\mathcal{F}^{\uu}$. 
The scattering map is  defined on $\A$ as
\begin{equation}\label{eq:scattering_1}
S:={\pi}^\s \circ f^n \circ ({\pi}^\u)^{-1}: {\pi}^\u(\Sigma^-) \to {\pi}^\s(\Sigma^+),
\end{equation}
which is a  symplectic $C^{s-1}$-diffeomorphism (see \citep{DeldlLSea:06,GelTur:17}). 

We further assume that $\Gamma$ is an orbit of {\em homoclinic tangency}, that is,  $W^\s(\gamma)$ intersects 
$W^\u(\gamma)$ non-transversely  at the points of $\Gamma$.
By \eqref{eq:condition:trans1}, the intersections $W^\s(\gamma)\cap \Sigma$ and $W^\u(\gamma)\cap \Sigma$ are one-dimensional curves and, by the non-transversality assumption, they are tangent at the point $M$, see Figure~\ref{fig:tangency}.
Because the invariant manifolds of $\gamma$ consist of the leaves of the foliations $\mathcal{F}^{\ss}$ and $\mathcal{F}^{\uu}$, we have that $\pi^\s(W^\s(\gamma)\cap \Sigma)$ and $\pi^\u(W^\u(\gamma)\cap \Sigma)$ are arcs of $\gamma$ near the points $\pi^\s(M)$ and $\pi^\u(M)$, respectively. Therefore, the curve $S(\gamma)$ is tangent to $\gamma$ at $\pi^\s(M)$, i.e., the derivative $\D S$ takes a tangent $v$ to $\gamma$ at the point $\pi^\u(M)$ to a tangent $\D S(v)$ to $\gamma$ at the point  $\pi^\s(M)$. We denote by $\alpha$   the ratio of the signed length of these two vectors, i.e.,
\begin{equation}\label{beta} 
\D S(v) =\alpha v.
\end{equation}
So, $|\alpha|$ measures the contraction or expansion by $\D S$ along $\gamma$ (see formula \eqref{eq:a11}).   

Note that $\alpha$ depends on the choice of coordinates. In this paper, we focus on the case where $f^\per|_\gamma$ is 
an irrational rotation in some smooth coordinates. These coordinates are unique up to a rotation, so we define $\alpha$ in these coordinates: it becomes a well-defined quantuty and the following definition makes sense.

\begin{defi}[Hyperbolic homoclinic tangencies]\label{defi:phtangency}
A  homoclinic tangency of $\gamma$ is called  {\em  partially hyperbolic} if $\Gamma$ is partially hyperbolic (i.e.,  \eqref{eq:condition:trans1} is satisfied). It is called {\em  hyperbolic} if, in addition,
$$|\alpha|\neq 1;$$
in this case, it is said to be  {\em contracting} if $|\alpha|<1$ and {\em expanding} if $|\alpha|>1$. 
\end{defi}
 
\begin{figure}[!h]
\begin{center}
\includegraphics[scale=1]{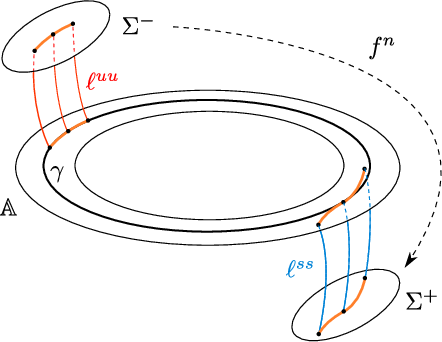}
\end{center}
\caption{
A schematic picture for a partially-hyperbolic cubic  homoclinic tangency, where the space above $\A$  represents $W^\u_{\loc}(\A)$ and  the space below $\A$  represents $W^\s_{\loc}(\A)$.  Here $\ell^{\ss}$ and $\ell^{\uu}$ are the strong-stable and strong-unstable leaves, respectively. The orange piece of $\gamma$  is mapped by ${{S}}$ to a curve tangent to $\gamma$ at $\pi^\s(M)$.
}
\label{fig:tangency}
\end{figure}

\subsection{Blenders near a whiskered torus with a 2-flat homoclinic tangency}\label{sec:kam}

If $f$ is at least $C^3$, then the stable and unstable foliations are $C^2$. So, the scattering map $S$ is $C^2$ and, when 
$\gamma$ is $C^2$ as well, the stable and unstable manifolds of 
$\gamma$ are $C^2$. Thus we can distinguish quadratic tangencies between $W^\s(\gamma)$ and $W^\u(\gamma)$ from those with higher degeneracy. We call a tangency {\em 2-flat} if the first two derivatives of $S$ along $\gamma$ vanish at the tangency point. 
For example, a cubic tangency is 2-flat, but a 2-flat one is not necessarily cubic (see Section~\ref{sec:transtangency} for a precise description.)

\begin{thma}\label{thm:main_blender_cubic_nonpara}
Let $f\in \symp^3(\mathcal{M})$ have a one-dimensional whiskered torus $\gamma$ of class $C^2$ such that $f^\per|_\gamma$ is $C^1$-conjugate to an irrational rotation. Let $\gamma$ have an orbit $\Gamma$ of a  hyperbolic  2-flat homoclinic tangency. Then, given any neighborhood $\hat V$ of $\orb\cup \Gamma$, there exists a blender $\Lambda \subset \hat V$ connected to $\gamma$, center-stable if the tangency is contracting and center-unstable if expanding.
\end{thma}

\begin{rem}\label{rem:herman}
By Herman-Yoccoz Theorem \citep{Yoc:84}, the requirement of $C^1$ conjugacy is automatically satisfied when $\gamma$ is $C^3$ and 
$\rho(\gamma)$ is Diophantine of order $<1$.
\end{rem}

Theorem~\ref{thm:main_blender_cubic_nonpara} implies that if the cylinder $\A$ is taken sufficiently small, then for any point in $\A$ either, when  $|\thebeta| <1$, its strong-stable leaf intersects $W^\u(\Lambda)$ or, when $|\thebeta| >1$, its strong-unstable leaf intersects $W^\s(\Lambda)$. Moreover, the intersections persist for all $C^1$-close maps and all $(N-1)$-dimensional discs which are $C^1$-close to the strong-stable (for $|\alpha| <1$) or strong-unstable  (for $|\alpha| >1$) leaves through $\A$ -- even though the whiskered torus $\gamma$ does not necessarily persist at $C^1$-small perturbations.

The theorem is proved in Section~\ref{sec:proofcu}. The proof is based on the  analysis of the dynamics of a pair of maps $T_0$ and $T_1$. The {\em local map} $T_0$ is  the restriction of $f^\per$ to a small neighborhood $V$ of $\A$. The map $T_1$ is the {\em transition map} along the homoclinic orbit $\Gamma$, i.e.,  the iteration of $f$  which takes a small neighborhood $\Pi^-$ of some homoclinic point $M^-\in \Gamma\cap W^\u_{\loc}(\gamma)$  to  a small neighborhood $\Pi^+$ of  a homoclinic point $M^+\in \Gamma\cap W^\s_{\loc}(\gamma)$, see Figure~\ref{fig:returnmap}. 
Because the rotation number $\rho(\gamma)$ is irrational, there are infinitely many $k$ values for which  $T^k_0(\Pi^+)\cap \Pi^-\neq \emptyset$ so the first-return maps $T_k:=T^k_0\circ T_1$ are  well-defined for such $k$, and $T_k(\Pi^-)\cap \Pi^-\neq \emptyset$. A detailed discussion on these maps is given in Section \ref{sec:para}.

\begin{figure}[!h]
\begin{center}
\includegraphics[scale=1.3]{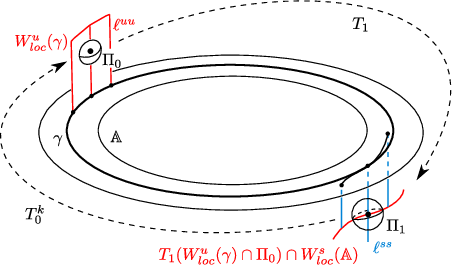}
\end{center}
\caption{
A schematic picture for the return maps, where the vertical lines $\ell^{\ss}$ and $\ell^{\uu}$ are the strong-stable and strong-unstable leaves, respectively. The two points in $\Pi^-$ and $\Pi^+$ are $M^+$ and $M^-$, respectively, and the image of $W^\u_{\loc}(\gamma)$ by $T_1$ intersects  $W^\s_{\loc}(\A)$ along the red curve, which is tangent to $W^\s_{\loc}(\gamma)$ at $M^+$ and whose projection by $\pi^\s$ is tangent to $\gamma$ in $\A$ at $\pi^\s(M^+)$.
}
\label{fig:returnmap}
\end{figure}

It follows from the partial hyperbolicity of $\orb\cup \Gamma$ that the dynamics of the first-return maps $T_k$ are essentially reduced to the central dynamics, projected to the two-dimensional cylinder $\A$. We analyze the central dynamics by rescaling the coordinates in $\Pi^-$ and $\Pi^+$ to a factor vanishing as $k\to\infty$. In Section \ref{sec:scaling}, we show that, at large $k$, the rescaled maps $T_k$ projected to the central coordinates  are close to the affine maps 
\begin{equation}\label{eq:ifs}
(R,\Phi)\mapsto(\bar R,\bar \Phi)= ( \alpha^{-1}R,  \alpha \Phi + c(k)),
\end{equation}
for some constants $c(k)$. Obviously, these maps are hyperbolic for
$|\alpha|\neq 1$. Using the irrationality of the rotation number $\rho$ and the Dirichlet approximation theorem, we find a finite set $\mathcal{K}$ of $k$ values for which  the constants $c(k)$ form a sufficiently dense grid. This ensures the fulfilment, by the iterated function system \eqref{eq:ifs}, of a version of the  so-called covering property, which is  crucial for the creation of blenders (see e.g. \citep{BonDiaVia:05,BarKiRai:14}). In this way, a blender is found for the induced map ${\hat T}$:
$${\hat T}(P)=T_k(P) \quad\mbox{if}\quad P\in T^{-1}_k(\Pi^-)\cap\Pi^+
\quad\mbox{for}\quad k\in \mathcal{K},
$$
which by Remark~\ref{rem:blender} gives a blender of $f$. 

\subsection{Symplectic blenders at the unfolding of homoclinic tangencies to a whiskered torus}\label{sec:quadraunfold}
Note that Theorem~\ref{thm:main_blender_cubic_nonpara} has a non-perturbative nature: the existence of a hyperbolic
cubic homoclinic tangency to  a whiskered torus implies the existence of a cs- or cu-blender. In this subsection, 
we describe the setting for the  perturbative result, Theorem~\ref{thm:main_2punfolding}, which shows that symplectic blenders can emerge in two-parameter families.
The proof of this theorem is based on Theorem~\ref{thm:main_blender_cubic_nonpara}. So, it is important that  the local structure involved in Theorem~\ref{thm:main_blender_cubic_nonpara} persists under small perturbations. To this aim, we introduce
\begin{defi}[Whiskered KAM-tori]\label{defi:kamtori}
A one-dimensional whiskered torus  $\gamma$ of $f$  is called a {\em whiskered KAM-torus}    if it is a KAM-curve  for the restriction  $f^\per|_\A$. We further call $\gamma$ {\em non-degenerate} if it is a non-degenerate KAM-curve.
\end{defi}

In order to explain this definition, let us collect some facts on circle maps and the  KAM theory. Let $\tilde f$ be a $C^s$ exact symplectic  diffeomorphism of a two-dimensional  cylinder, with an invariant curve $\gamma\cong \mathbb{S}^1$ of class $C^{s'}$ for some number $s'\leq s$. 

Recall that an irrational  number $\rho$
 is called $(c,\tau)$-Diophantine for some $c>0$ and $\tau\geqslant 2$ if
 \begin{equation}\label{eq:diophantine} 
 \left|\rho - \dfrac{p}{q}\right| > \dfrac{c}{q^{\tau}},\quad p\in\mathbb Z,\;q\in \mathbb Z\setminus \{0\}.
 \end{equation}
By Yoccoz \citep{Yoc:84}, there is $s''>0$ depending on $\tau$ such that, if the rotation number $\rho$ of $\tilde f|_\gamma$ is $(c,\tau)$-Diophantine and $s'\geq s'' \geq 3$, then  $\tilde f|_\gamma$ is smoothly conjugate to a rigid rotation with the same rotation number. Moreover, the smoothness of the conjugacy increases with that of the map $\tilde f|_\gamma$, and is $C^\infty$ or real analytic if $\tilde f|_\gamma$ is.

One extracts from the above the existence of smooth symplectic coordinates 
$(r,\varphi)\in \mathbb{R}^2$  in a small  neighborhood $\A$  of 
$\gamma$  such that the curve $\gamma$ is given by $\{r=0\}$ and  
$\tilde f|_\A$ assumes the form 
\begin{equation}\label{eq:kam_low}
\bar r = r + O(r^2),\qquad
\bar \varphi =\varphi + \rho+ O(r),
\end{equation}
where the terms $O(r^2)$ and $O(r)$  are functions of $r$ and $\pp$, periodic in $\pp$ with period 1. The smoothness of the coordinates can be made arbitrarily high by taking $s'$ and $s$ sufficiently large and 
$\A$ sufficiently small. 

\begin{defi}[KAM-curves]\label{defi:kam}
The invariant curve $\gamma$  is called a {\em KAM-curve} of $\tilde f$  if  there exist smooth symplectic coordinates  such that the restriction $\tilde f|_\A$  takes the form \eqref{eq:kam_low}  with    Diophantine $\rho$. We further call the KAM-curve {\em non-degenerate} if $\int_{\pp\in \mathbb{S}^1}\frac{d\bar\pp(0,\pp)}{d r} d\pp\neq 0$ (i.e., the twist condition is satisfied).
\end{defi}
The KAM theory  establishes that, if the map $\tilde f$ and the non-degenerate KAM-curve $\gamma$ are sufficiently smooth, then  $\gamma$ is accumulated (from both sides) by a large measure set of non-degenerate KAM-curves \citep{Mos:62}. In particular,  we single out a large measure subset  of these KAM-curves (including $\gamma$) whose rotation numbers are characterized by the same $c,\tau$ in \eqref{eq:diophantine}  as $\rho$. 
Every such curve persists for all $C^s$-small exact symplectic perturbations of $\tilde f$ if $s$ is sufficiently large, in the sense that the perturbed map has a KAM-curve which is close to the original one and has the same rotation number. Thus, the whole structure described above persists under small perturbations. See the discussion in Section~\ref{sec:kam_strait}.

Returning to the higher-dimensional case, we have
\begin{lem}
Let $f$   have a one-dimensional whiskered torus $\gamma$. If  $\rho(\gamma)$ is Diophantine and $f$ and $\gamma$ are sufficiently smooth, then $\gamma$ is a whiskered KAM-torus.
\end{lem}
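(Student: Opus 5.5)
The statement asks us to verify Definition~\ref{defi:kamtori}, i.e.\ that $\gamma$ is a KAM-curve of $\tilde f:=f^\per|_\A$ in the sense of Definition~\ref{defi:kam}. The approach has three steps: reduce to the two-dimensional map on the center manifold $\A$, check that this map is exact symplectic, and then produce coordinates of the form \eqref{eq:kam_low} by linearizing the circle map and straightening a symplectic neighborhood of $\gamma$.

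First, by the discussion in Section~\ref{sec:whiskeredtori}, $f^\per|_\gamma$ is conjugate to a rotation, so the central Lyapunov exponents vanish and $\cc$ can be taken arbitrarily close to $1$. The center manifold $\A$ is therefore $C^{s}$ (or of any prescribed finite smoothness when $s=\infty$) once the neighborhood $V$ is small. $\A$ is symplectic, since $E^\c$ is the symplectic complement of $E^\ss\oplus E^\uu$. So $\tilde f$ is a $C^{s}$ area-preserving map of the cylinder $\A$, and by Remark~\ref{rem:exact} it is exact. The curve $\gamma\subset\A$ is invariant and of class $C^{s'}$ with $s'$ as large as desired.

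Second, the Diophantine condition \eqref{eq:diophantine} holds for $\rho$ with some $(c,\tau)$. By the theorem of Yoccoz \citep{Yoc:84} quoted above, if $s'\ge s''(\tau)$ there is a $C^{m}$ diffeomorphism $h:\mathbb{S}^1\to\gamma$, with $m$ large when $s'$ is large, such that $h^{-1}\circ\tilde f|_\gamma\circ h$ is the rotation $\varphi\mapsto\varphi+\rho$. We normalize $h$ to preserve the induced length measure, as an area-preserving setting requires.

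Third, we extend $\varphi$ to symplectic coordinates $(r,\varphi)$ near $\gamma$ with $\gamma=\{r=0\}$. We choose a smooth transverse vector field along $\gamma$ and use a Weinstein/Moser-type normal form for a neighborhood of a Lagrangian (here one-dimensional) curve in a two-dimensional symplectic manifold. Concretely, we take any coordinates $(\tilde r,\varphi)$ with $\gamma=\{\tilde r=0\}$ and solve for $r=\int_0^{\tilde r}\omega(\partial_{\tilde r},\partial_\varphi)\,d\tilde r$, so that $\omega=dr\wedge d\varphi$. This loses one derivative, which is harmless since smoothness is assumed large.

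In these coordinates invariance of $\{r=0\}$ gives $\bar r=O(r)$, and the restriction to $\gamma$ gives $\bar\varphi=\varphi+\rho+O(r)$. Area preservation then forces $\partial\bar r/\partial r\,|_{r=0}\cdot\partial\bar\varphi/\partial\varphi\,|_{r=0}=1$, and since $\partial\bar\varphi/\partial\varphi=1$ on $r=0$, we get $\bar r=r+O(r^2)$. This is exactly \eqref{eq:kam_low}, so $\gamma$ is a KAM-curve and hence a whiskered KAM-torus.

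The main obstacle I expect is bookkeeping of regularity. We need $\gamma$ smooth enough for Yoccoz's theorem with the given $\tau$, and $\A$ smooth enough that the resulting coordinates are as smooth as required. This is handled by the "sufficiently smooth'' hypothesis together with the arbitrarily-large-gap argument for $\A$ in Section~\ref{sec:whiskeredtori}.
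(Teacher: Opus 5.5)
Your proof is correct and follows the same route as the paper. The Diophantine condition with Yoccoz's theorem linearizes $f^{\mathrm{per}(\gamma)}|_\gamma$, the large gap discussed after Definition~\ref{defi:tori} makes $\mathbb{A}$ as smooth as $f$, and symplectic coordinates with $\gamma=\{r=0\}$ then put $f^{\mathrm{per}(\gamma)}|_{\mathbb{A}}$ in the form \eqref{eq:kam_low}; the paper leaves this last step implicit (cf.~\eqref{o0}) and you carry it out explicitly.

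The only superfluous point is normalizing $h$ to preserve an ``induced length measure''. This is neither needed nor really possible, since for irrational $\rho$ the linearizing conjugacy is unique up to a rotation, and your formula $r=\int_0^{\tilde r}\omega(\partial_{\tilde r},\partial_\varphi)\,d\tilde r$ gives $\omega=dr\wedge d\varphi$ for any angular coordinate.
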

\begin{proof}
Under the conditions of the lemma, $f^\per|_\gamma$ is smoothly conjugate to a rotation. Thus, as discussed after Definition~\ref{defi:tori}, the cylinder $\A$ defined in Section~\ref{sec:kam} is of the same smoothness as $f$.
This gives enough smoothness to bring $\tilde f:= f^\per|_\A$ to the form \eqref{eq:kam_low}, up to shrinking $\A$.
\end{proof}

Let us take $\gamma$ to be a non-degenerate whiskered KAM-torus of sufficiently high smoothness.
By the normal hyperbolicity of $\A$, any  $C^s$ symplectic diffeomorphism $g$ that is sufficiently close to $f$ admits a  continuation  $\A_g$ of $\A$. The restriction $g^\per|_{\A_g}$ is a small   perturbation of $f^\per|_{\A}$. 
When $g$ is locally exact near $\orba$ (see Remark~\ref{rem:exact}), all KAM-curves (of the same Diophantine class as $\gamma$) persist  for $g^\per|_{\A_g}$.

Thus, we further take $\A$ to be a subcylinder that contains $\gamma$ and is bounded by two persistent KAM-curves. Then $\A$ is invariant with respect to $f^\per$, which implies \citep{Fen:71}  that $\A$ is uniquely defined, along with its  continuation as $f$ varies within the class of symplectic diffeomorphisms that are locally exact near $\orba$.

Since the orbit $\orba$ is now invariant, rather than locally invariant, the center-stable manifold  and center-unstable manifold  can be defined globally by $W^\s(\A)=\bigcup_{i\geqslant 0}f^{-i}(W^\s_{\loc}(\A))$ and $W^\u(\A)=\bigcup_{i\geqslant 0}f^{i}(W^\u_{\loc}(\A))$, where the local manifolds are defined in Section~\ref{sec:kam}. Similarly, the foliations $\mathcal{F}^\ss$ and $\mathcal{F}^\uu$ can be extended globally. Like in Section~\ref{sec:phtangency}, we only operate with  the subsets of $W^{\s}(\A)$ and $W^{\u}(\A)$ that  correspond to finitely many iterations of  the local manifolds.

Let us  assume that $\gamma$ has an orbit $\Gamma$ of partially-hyperbolic homoclinic tangency between $W^\s(\gamma)$ and $W^\u(\gamma)$. As in Section \ref{sec:kam}, the strong-stable and strong-unstable foliations near $\Gamma$ define the scattering map $\hat S $ as in \eqref{eq:scattering_1}.
The map $S$  takes a small piece $I$ of $\gamma$ to a curve ${S}(I)$ that has a  tangency to $\gamma$. In certain coordinates $(r,\pp)\in \A$, the curve ${S}(I)$ has the form $r=\beta \pp^2+o(\pp^2)$  if the tangency is quadratic and  $r=\ \beta \pp^3 + o(\pp^3)$ if  the tangency is cubic, where $\beta\neq 0$ is some constant; here $\gamma:\{r=0\}$ and $(r,\pp)=0$ is the tangency point. 
For a sufficiently smooth family $\{f_\eps\}$ with $f_0=f$,  all objects above  depend smoothly on $\eps$. So, for all $\eps$ sufficiently close to $0$, the equation of  ${S}(I)$ takes the  general form
$$
r=\mu +  \beta (\pp-\pp^*)^2 + o((\pp-\pp^*)^2),
$$
for quadratic tangencies, or
$$
r=\mu +  \nu (\pp-\pp^*) + \beta (\pp-\pp^*)^3 +o( (\pp-\pp^*)^3),
$$
for cubic tangencies, where  all coefficients depend smoothly on $\eps$, and $\pp^*$ satisfies $\pp^*(\eps)=0$ and is chosen to kill the linear or quadratic term in the corresponding equation. We call $\mu$ and $\nu$  the {\em splitting parameters}; in the quadratic case $\mu$  can be interpreted as the signed distance between ${S}(I)$ and $\gamma$.

In case (1) of  Theorem~\ref{thm:main_2punfolding}, where $\gamma$ has a cubic  tangency, the genericity condition imposed on the two-parameter unfolding family $\{f_\eps\}$ is 
\begin{equation}\label{eq:unfoldcubic}
\det\left.\dfrac{\partial (\mu,\nu)}{\partial (\eps_1,\eps_2)}\right|_{\eps=0}\neq 0.
\end{equation}
In case (2) of two quadratic tangencies,  we take the splitting parameters $\mu_1$ and $\mu_2$ for each of the two homoclinic tangencies, and the genericity condition is
\begin{equation}\label{eq:unfold2quadra}
\det\left.\dfrac{\partial (\mu_1,\mu_2)}{\partial (\eps_1,\eps_2)}\right|_{\eps=0}\neq 0.
\end{equation}
Finally, in case (3), in addition to the splitting parameter $\mu$, we use $\alpha$ defined in \eqref{beta}, which measures the contraction/expansion of ${S}(I)$. The family $\{f_\eps\} $ is called {\em proper}  if
\begin{equation}\label{eq:unfoldquadra}
\det\left.\dfrac{\partial (\mu,\alpha)}{\partial (\eps_1,\eps_2)}\right|_{\eps=0}\neq 0.
\end{equation}

We now outline the proof of Theorem~\ref{thm:main_2punfolding}.
In case~(1), we show that unfolding a partially-hyperbolic cubic tangency can always produce an expanding cubic tangency (see Lemma~\ref{lem:a11}).
Applying Theorem~\ref{thm:main_blender_cubic_nonpara} then yields a cu-blender $\Lambda_1$ connected to $\gamma$.
Since a blender is $C^1$-robust, we may, while keeping $\Lambda_1$, apply the same argument to $f^{-1}$ to obtain a cu-blender $\Lambda_2$ of $f^{-1}$, which corresponds to a cs-blender of $f$.
We next show that these two blenders are homoclinically related, so that a locally maximal invariant hyperbolic set containing $\Lambda_1\cup\Lambda_2$ is a symplectic blender connected to $\gamma$. This concludes case (1). 
 The other two cases are reduced to case (1) by the following
\begin{prop}
In case (2) or case (3), there exists a sequence of $\eps\to 0$ for which $f_\eps$ has a partially-hyperbolic cubic  homoclinic tangency of $\gamma$ that unfolds generically as $\eps$ varies.
\end{prop}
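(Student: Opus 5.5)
The idea is to produce the cubic tangency from \emph{secondary} homoclinic tangencies. These are orbits that follow $\Gamma$, then make a long excursion near $\orb$, and then follow a homoclinic orbit again. Throughout, everything is reduced to the central dynamics on $\A$ through the scattering maps. In the KAM coordinates $(r,\pp)$ of \eqref{eq:kam_low}, the local map near $\gamma$ is close to the twist map $(r,\pp)\mapsto(r+O(r^2),\pp+\rho+\omega r+O(r^2))$ with $\omega\neq0$, by non-degeneracy. The homoclinic excursion acts on $\A$ by the scattering map $S$. The image $S(I)$ of an arc $I\subset\gamma$ is the curve $r=\mu+\beta(\pp-\pp^*)^2+\dots$, and its slope along $\gamma$ near the tangency point is governed by $\alpha$.

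\textbf{Case (2).} Take the arc $S_1(I)$ near the first tangency and transport it by $T_0^k$. The twist turns the parabola $r=\mu_1+\beta_1\pp^2$ into a curve whose height $r$ is unchanged. Its $\pp$-coordinate, however, is shifted by $k\rho+k\omega r$. So the piece of $S_1(I)$ at height $r\sim\mu_1$ is sheared by an amount of order $k\omega\mu_1$. Choose $k$ so that $k\rho \bmod 1$ brings this piece to the domain of the second scattering map $S_2$; irrationality of $\rho$ makes infinitely many such $k$ available. Then apply $S_2$, whose image of $\gamma$ is the parabola $r=\mu_2+\beta_2\pp^2$. Writing the composed curve $S_2\circ T_0^k\circ S_1(I)$ in the coordinate $\pp$ gives
\[
r=\mu_2+\beta_2\bigl(\pp_0+\alpha_2(\text{shift}(\mu_1,k)+\pp)\bigr)^2+\dots ,
\]
where the shift contains a term of size $k\omega\bigl(\mu_1+\beta_1(\pp/\alpha_1)^2\bigr)$. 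The secondary curve is tangent to $\gamma$ when its first derivative vanishes. It is cubic when, in addition, the second derivative vanishes. The second derivative combines $2\beta_2\alpha_2^2$ with a term of size $2\beta_2\alpha_2\,k\omega\beta_1(\cdots)$ coming from the sheared parabola. Because $k$ is large, the $\pp$-location of the relevant piece can be tuned by $\mu_1$, and this lets us cancel the quadratic coefficient. Cubic tangency is then a system of two equations (vanishing $r$-offset and vanishing quadratic term), to be solved in the two parameters $(\mu_1,\mu_2)$ near $(O(1/k),O(1/k))$. Using \eqref{eq:unfold2quadra}, I would pass to $(\mu_1,\mu_2)$ as parameters. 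I would then rescale $\mu_1=\tilde\mu_1/k$ and similarly $\mu_2$, and obtain a limit system as $k\to\infty$ with a non-degenerate solution. The implicit function theorem then gives $\eps_k\to0$.

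\textbf{Case (3).} There is only one tangency, so first create a second one. Unfolding $\mu$ of the primary quadratic tangency produces, via the same twist-shear mechanism, secondary quadratic tangencies along $S\circ T_0^k\circ S$. Alternatively, transverse intersections near the tangency give secondary homoclinics. The proper parameter $\alpha$ then plays the role of the second splitting parameter. I would first use $\mu$ to place a secondary quadratic tangency for some large $k$, and then check that the map $(\mu,\alpha)\mapsto(\mu_1,\mu_2)$, where $\mu_1,\mu_2$ are the splitting parameters of the primary tangency and of a secondary tangency (or of two secondary ones with different $k$), has non-zero Jacobian by \eqref{eq:unfoldquadra}. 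This reduces case (3) to case (2), and case (2) is then applied to this new pair. A cleaner alternative is to produce the cubic tangency directly: the quadratic coefficient of the secondary curve depends on $\alpha$ through the $\alpha^2\beta$ term, so one solves directly for $(\mu,\alpha)$.

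In both cases one must also check three further things. Partial hyperbolicity of the secondary homoclinic orbit follows because it shadows $\Gamma\cup\orb$, where the splitting \eqref{eq:condition:trans1} is open. The cubic term of the resulting tangency is non-zero. The unfolding \eqref{eq:unfoldcubic} is generic, which follows from the non-degenerate Jacobian in the implicit function step. All of this requires the KAM torus $\gamma_\eps$ to persist with the same rotation number, which is where exactness is used.

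The main obstacle is making the large-$k$ asymptotics rigorous. The iterates $T_0^k$ near $\gamma$ must be controlled in $C^3$ along the curve, with remainders $o(1)$ after rescaling, so that the limit system really governs the vanishing of the first and second derivatives. I would rely on the normal form \eqref{eq:kam_low} and the estimates for $T_0^k$ promised in Section~\ref{sec:para}. A further point to check is that the non-degeneracy of the limit system uses $\omega\neq0$ and $\beta_1\beta_2\neq0$.
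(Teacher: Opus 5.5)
Your case (2) has the same architecture as the paper's Lemma~\ref{lem:cubic}: compose $T_2\circ T_0^k\circ T_1$, rescale, pass to a limit system, and solve by the implicit function theorem in $(\mu_1,\mu_2)$. However, the formula you display for the composed curve, and the second-derivative computation behind your mechanism, omit a leading-order term. Without that term no cubic tangency can appear.

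The second excursion also acts on heights. By \eqref{eq:T2new}, $\bar r=\mu_2+b\hat r+\beta_2(\bar\varphi-\varphi_2^+)^2+\dots$ with $b=\alpha_2^{-1}$. In the balanced scaling $\tilde\varphi-\varphi_1^+\sim k^{-1}$, $r\sim\mu_i\sim k^{-2}$, the term $b\hat r=b(\mu_1+\beta_1\tilde\varphi^2)$ has the same order as $\beta_2\bar\varphi^2$. Note the scale is $k^{-2}$, not $k^{-1}$; the paper chooses $k$ by Dirichlet so that $(\varphi_1^+-\varphi_2^-+k\rho)\bModa 1=O(k^{-1})$.

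Without this term, your curve $r=\mu_2+\beta_2u^2$, $u=\bar\varphi-\varphi_2^+$, lies on a fixed parabola, which is at most quadratically tangent to $\gamma$. In your parametrization the conditions $\bar r'=\bar r''=0$ read $uu'=0$ and $u'^2+uu''=0$. Since $u''\propto k\omega\beta_1\neq0$, they force $u=u'=0$, which is a singular (fold) point of the parametrization, not a cubic tangency.

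In the paper's normalization the correct limit is \eqref{cubic:comp}, $\bar R=\mu_2''+B(\mu_1''+\tilde\Phi^2)+(\mu_1''+\tilde\Phi+\tilde\Phi^2)^2$ with $B=b^3\beta_1/\beta_2$. Eliminating $\mu_1'',\mu_2''$ gives \eqref{cubic:root}, $(2\tilde\Phi+1)^3=-B+O(k^{-1})$, and the third derivative is proportional to $1+2\tilde\Phi$. So a non-degenerate solution exists exactly because $B\neq0$. At $B=0$, which is your model, the only solution is $\tilde\Phi=-1/2$, where the third derivative vanishes.

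In case (3) an idea is genuinely missing. A non-zero Jacobian of $(\mu,\alpha)\mapsto(\mu_1,\mu_2)$ gives independent unfolding, but not a parameter value where two tangencies coexist. It would also not follow from \eqref{eq:unfoldquadra}, which concerns only the primary tangency. The double-round curves $S\circ T_0^k\circ S(\ell_0)$ do not supply a coexistence point:

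\begin{itemize}
\item Take the normalization \eqref{eq:1to2:scat0} with $\alpha>0$. At $\mu=0$ the arc $S(\ell_0)$ lies in $\{r\le0\}$, and $T_0^k$ preserves $r$ to leading order. The second application of $S$ then gives $\bar r=\alpha^{-1}\hat r-(\bar\varphi-\varphi^+)^2\le0$. So, generically, no such secondary tangency coexists with the primary one.
\item For $\alpha<0$, and likewise for your direct cubic in $(\mu,\alpha)$, the only free quantities are the discrete $k$ and a small variation of $\alpha$. Here $\mu$ is either frozen at $0$ or enters both excursions. It therefore cannot absorb the offset $A(k)=k((\varphi^+-\varphi^-+k\rho)\bModa 1)$ the way the independent $\mu_1''$ does in case (2). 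You would need $A(k)$ to fall into a prescribed small window, and neither irrationality nor Dirichlet's theorem provides that.
\end{itemize}

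The paper's Lemma~\ref{lem:1to2} gets the missing continuous freedom from the KAM curves in $\A$. Since $r=0$ is a Lebesgue density point of the sets $\mathcal{G}^\pm$ of KAM radii, one can choose nearby KAM curves $\gamma^\pm$ as follows:

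\begin{itemize}
\item when $\alpha>0$, take $r^+\approx-\alpha^{-1}r^-$ and $\mu\approx r^+$; then $S(\ell_0)$ is tangent to $\gamma^+$ and $S(\ell^-)$ is tangent to $\gamma$;
\item when $\alpha<0$, take $r^+\approx\alpha^{-1}r^-$ at $\mu=0$; then $S(\ell^-)$ is tangent to $\gamma^+$ and the primary tangency is kept.
\end{itemize}

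These heteroclinic tangencies unfold independently in $(\mu,\alpha)$. The accompanying transverse heteroclinic intersections, together with Lemma~\ref{lem:incli}, make $W^{\mathrm{u}}(\gamma)$ accumulate on the unstable whisker of $\gamma^-$ and $W^{\mathrm{s}}(\gamma)$ on the stable whisker of $\gamma^+$. This turns the heteroclinic tangencies into two independently unfolding homoclinic tangencies, and Lemma~\ref{lem:cubic} is then applied to this pair.
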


The proof is given by Lemmas~\ref{lem:1to2} and~\ref{lem:cubic}.
In case (3), we unfold the original quadratic tangency to obtain  values of $\mu$ and $\alpha$ for which there exist two coexisting partially-hyperbolic quadratic tangencies that unfold independently as $\mu$ and $\alpha$ vary, reducing case~(3)  to case~(2)  (see Lemma~\ref{lem:1to2}). This is achieved by using an inclination lemma (Lemma~\ref{lem:incli}) for whiskered KAM-tori. Finally, in Lemma~\ref{lem:cubic}, we show that the unfolding of the two quadratic tangencies leads to a partially-hyperbolic cubic tangency -- this is similar in spirit to results of \cite{GonTurShi:07}.

\section{Local map and transition maps for  a whiskered torus with homoclinics}\label{sec:para}

In this section, we  derive  formulas for the local map $T_0$ and its iterations $T_0^k$, and for the the transition map $T_1$.
 Since all our results, except for Theorem~\ref{thm:main_blender_cubic_nonpara}, involve perturbations, we also consider a family $\{f_\eps\}\subset \symp^s(\mathcal{M})$ where $f_0=f$ and   $f_\eps$ is jointly $C^\s$ with respect to $\eps$ and coordinates. Consequently, the maps $T_0$ and  $T_1$ also depend on $\eps$.

\subsection{Local map}\label{sec:local}
Recall that the local map $T_0$ is defined as $f^\per|_V$ for some neighborhood $V$ of the normally-hyperbolic cylinder $\A$ containing 
$\gamma$. The restriction of $T_0$ to $\gamma$ is smoothly conjugate to the rigid rotation $\pp\mapsto \pp+\rho$, which implies that the normal hyperbolicty of the cylinder $\A$ is strong enough, so it has the same smoothness $s$ as the map $f$, or it can be taken to have any given finite smoothness $s$ when $f\in C^\infty$, as discussed in Section~\ref{sec:whiskeredtori}. The invariant foliations 
$\mathcal{F}^\ss$ and $\mathcal{F}^\uu$ are $C^{s-1}$.

\subsubsection{Fenichel coordinates}\label{sec:innermap}
We, first, introduce $C^s$ coordinates in $V$ such that $\A$ and its local stable and unstable manifolds get {\em straightened}, i.e., these are coordinates  $(r,\varphi,x,y)\in \mathbb R\times \mathbb R\times \mathbb{R}^{N-1} \times \mathbb{R}^{N-1}$  such that
\begin{equation}\label{eq:strait}
\A=\{x=0,y=0\},\qquad W^\s_{\loc}(\A)=\{y=0\}, \qquad W^\u_{\loc}(\A)=\{x=0\}.
\end{equation}
Note that $\varphi$ is an angular vatriable, so we identify the points corresponding to $\varphi$ and $\varphi+1$ -- this is accompanied by gluing the $(x,y)$ variables in a way that respects the symplectic form.

The symmetrically normally-hyperbolic manifold $\A$ is symplectic (see e.g. \citep{GelTur:17}), implying that the symplectic form  $\Omega|_\A$ is given by $q(r,\pp;\eps) d r \wedge d\pp$ for some nowhere vanishing  function $q$ of class $C^{s-1}$. Applying the Darboux Theorem, we  make $q\equiv 1$ by a $C^{s-1}$ coordinate transformation so that
\begin{equation}\label{8222}
\Omega|_\A= d r \wedge d\pp.
\end{equation} 

We always assume that the curve $\gamma$ is at least $C^{s-1}$, so one can choose the $C^{s-1}$ coordinates such that
\begin{equation}\label{eq:strait1}
\gamma=\{r=0,x=0,y=0\},
\end{equation}
while keeping the symplectic form $\Omega|_\A$ in the standard form
(\ref{8222}). 

After that, we straighten the foliations 
$\mathcal{F}^\ss$ and $\mathcal{F}^\uu$ by a $C^{s-1}$ change of coordinates, identical on $\A$. Formulas (\ref{8222}) and (\ref{eq:strait1}) hold, and we also obtain that the leaves of
$\mathcal{F}^\ss$ and $\mathcal{F}^\uu$ are given by 
\begin{equation}\label{eq:strait2a}
\ell^{\ss}=\{(r,\varphi)={\const},y=0\},\qquad \ell^{\uu}=\{(r,\varphi)={\const},x=0\},
\end{equation}
implying
\begin{equation}\label{eq:strait3}
W^\s_{\loc}(\gamma)=\{r=0, y=0\},\qquad W^\u_{\loc}(\gamma)=\{r=0, x=0\}.
\end{equation}

We call the coordinates for which (\ref{eq:strait})--(\ref{eq:strait3}) are satisfied the {\em Fenichel coordinates}.
Note that assumptions on the whiskered torus differ in Theorem~\ref{thm:main_blender_cubic_nonpara} and everywhere else. We distinguish them as the {\em low regularity case} and the {\em KAM case}, respectively. In the low regularity case we have $s=3$, so the Fenichel coordinates are at least $C^2$. 

We do not introduce a parameter dependence in Theorem~\ref{thm:main_blender_cubic_nonpara}, so we do not analyze it in the low regularity case. However, we do study the parameter dependence in the KAM case. Here, we assume that the regularity class of $f_\eps$ is high enough and that $\gamma$ is a sufficiently smooth non-degenerate KAM-curve. Thus, it persists under small perturbations 
as a non-degenerate, $C^{s-1}$-smooth KAM-curve for all small $\eps$,
implying that the Fenichel coordinates can be chosen such that
$\gamma_\eps$ is given by (\ref{eq:strait1}) (so $W^{\s/\u}_{\loc}(\gamma_\eps)$ are given by (\ref{eq:strait3})) for all small $\eps$.\\

Using the terminology of \citep{DeldlLSea:06,GelTur:17}, the map 
$$F:=T_0|_\A:(r,\pp)\mapsto (F_1(r,\pp), F_2(r,\pp))$$
 is called the {\em inner map}.
In the Fenichel coordinates the local map  $T_0:(r,\pp,x,y)\mapsto (\bar r,  \bar \pp,\bar x,\bar y)$ assumes the following form (there is no dependence on $\eps$ in the low regularity case):
\begin{equation}\label{eq:T0}
\begin{aligned}
\bar r &= F_1(r,\pp,\eps) + \hat g_1(r,\varphi,x,y,\eps),\qquad
\bar \varphi =F_2(r,\pp,\eps) + \hat g_2(r,\varphi,x,y,\eps),\\
\bar x &=  \hat g_3(r,\varphi,x,y,\eps),\qquad
\bar y =  \hat g_4(r,\varphi,x,y,\eps),	
\end{aligned}
\end{equation}
where  the functions $\hat g_i$ are at least $C^2$, and they satisfy 
\begin{equation}\label{eq:T0_nonlinearities}
\begin{aligned}
&\hat g_{1,2}(r,\varphi,0,y,\eps)\equiv 0, \quad \hat g_{1,2}(r,\varphi,x,0,\eps) \equiv 0,\\
&\hat g_3(r,\varphi,0,y,\eps)\equiv 0,\quad \hat g_4(r,\varphi,x,0,\eps)\equiv 0.
\end{aligned}
\end{equation}
These identities correspond to straightened invariant manifolds $W_{loc}^{\s/\u}(\A)$ and foliations $\mathcal{F}^{\ss}_{\loc}$ and  $\mathcal{F}^{\uu}_{\loc}$.

It will be convenient for us to consider the so-called {\em cross-form} of this map. Since the restriction of $T_0$ to the locally invariant manifold $W^\u_\loc(\A):\{x=0\}$ is a diffeomorphism, the matrix ${\partial \hat g_4}/{\partial y} $ is invertible. Hence, the variable $y$ can be found as a function of $(r,\pp,\bar y,x,\eps)$ from the fourth equation of \eqref{eq:T0}. Substituting the resulting expression for $y$ into the remaining equations, yields that $T_0(r,\varphi,x,y)=(\bar r,\bar\varphi,\bar x,\bar y)$ if and only if the points satisfy
\begin{equation}\label{eq:T0_xform}
\begin{aligned}
\bar r &=F_1(r,\varphi,\eps)+g_1(r,\varphi,x,\bar y,\eps),\qquad
\bar \varphi = F_2(r,\varphi,\eps)+g_2(r,\varphi,x,\bar y,\eps),\\
\bar x &=  g_3(r,\varphi,x,\bar y,\eps),\qquad
 y =  g_4(r,\varphi,x,\bar y,\eps),	
\end{aligned}
\end{equation}
where $g$ are smooth functions satisfying
\begin{equation}\label{eq:ggg}
\begin{aligned}
&g_{1,2}(r,\varphi,0,\bar y,\eps)\equiv 0, \quad g_{1,2}(r,\varphi,x,0,\eps) \equiv 0, \\
&g_3(r,\varphi,0,\bar y,\eps)\equiv 0,\quad g_4(r,\varphi,x,0,\eps)\equiv 0.
\end{aligned}
\end{equation}

Note that by \eqref{eq:intro:1} we have
\begin{equation}\label{eq:T0_xform_deri}
\left\|\dfrac{\partial(F_1,F_2)}{\partial (r,\varphi)}\right\|<\cc, \quad
\left\|\left(\dfrac{\partial(F_1,F_2)}{\partial (r,\varphi)}\right)^{-1}\right\|<\cc, \quad
\left\| \dfrac{\partial g_3}{\partial x} \right\| < \lambda, \quad 
\left\| \dfrac{\partial g_4}{\partial \bar y} \right\| < \lambda,
\end{equation}
where $\hat\lambda$ can be taken arbitrarily close to 1 (see remarks after Definition \ref{defi:tori}).

We use these formulas to obtain the following
\begin{lem}\label{lem:derisyst}
There exists $C>0$ such that, if $V$ is sufficiently small, then 
for every small $(x_0,y_k)$ and  
for every point $(r_0,\pp_0)\in \A$ such that 
its orbit $\{F^j(r_0,\pp_0)\}_{j=0}^k$ by the inner map stays in the interior of $\A$ there exist uniquely defined $r_k,\varphi_k,x_k,y_0$ such that 
$(r_k,\varphi_k,x_k,y_k)= T_0^k (r_0,\varphi_0,x_0,y_0)$. The coordinates $r_k,\varphi_k,x_k,y_0$ are smooth functions of $r_0,\varphi_0,x_0,y_k$
and of $\eps$ (if there is a parameter dependence), and satisfy
 \begin{equation}\label{eq:derisyst:1}
\left\|\dfrac{\partial^{|i|} (r_k, \varphi_k)}
{\partial (r_0,\varphi_0,x_0,y_k,\eps)^i}
-\dfrac{\partial^{|i|} F^k(r_0,\varphi_0,\eps)}
{\partial (r_0,\varphi_0,x_0,y_k,\eps)^i }\right\|\leqslant C\lambda^{\frac{k}{2}}, 
\end{equation}
\begin{equation}\label{eq:derisyst:2}
\left\| \dfrac{\partial^{|i|} (x_k,y_0)}{\partial(r_0,\varphi_0,x_0,y_k,\eps)^{i}} \right\|\leqslant C\lambda^k,
\end{equation}
for multi-indices $i$ satisfying $0\leq |i| \leq s-2$. We also have, for all $k\geq 0$, that
\begin{align*}
&x_k=0 \;\;\mbox{when}\;\; x_0=0, \qquad
 y_0 =0 \;\;\mbox{when}\;\; y_k=0,\\
 &(r_k,\varphi_k) = F^k(r_0,\varphi_0,\eps)\;\;\mbox{when}\;\; x_0=0 \;\; \mbox{or} \;\; y_k=0.
\end{align*}
\end{lem}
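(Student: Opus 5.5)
We treat the $k$-step orbit segment as a boundary-value problem (a Shilnikov problem) and solve it by a contraction argument in the cross-form \eqref{eq:T0_xform}. The unknowns are the intermediate points $(r_j,\varphi_j,x_j,y_j)$, $j=0,\dots,k$. The data are $(r_0,\varphi_0,x_0)$ at the initial time and $y_k$ at the final time. The orbit satisfies
\[
(r_{j+1},\varphi_{j+1})=F(r_j,\varphi_j)+g_{1,2}(r_j,\varphi_j,x_j,y_{j+1}),\quad x_{j+1}=g_3(r_j,\varphi_j,x_j,y_{j+1}),\quad y_j=g_4(r_j,\varphi_j,x_j,y_{j+1}).
\]
We view the right-hand side as an operator $\mathcal T$ on sequences $\{(r_j,\varphi_j,x_j,y_j)\}$: the $x$-part is computed forward from $x_0$, the $y$-part backward from $y_k$, and the central part forward from $(r_0,\varphi_0)$.

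\textbf{Step 1: hyperbolic components.} We first freeze the central sequence and take it close to the inner orbit $F^j(r_0,\varphi_0)$, which stays in the interior of $\A$. By \eqref{eq:T0_xform_deri}, the $(x,y)$ part of $\mathcal T$ is a contraction with factor $\lambda$ in the sup-norm. Since $g_3(\cdot,0,\cdot)=0$ and $g_4(\cdot,\cdot,0)=0$, we get the weighted bounds $\|x_j\|\le C\lambda^j\|x_0\|$ and $\|y_j\|\le C\lambda^{k-j}\|y_k\|$. To see this, run the contraction in the weighted norm $\sup_j(\lambda^{-j}\|x_j\|+\lambda^{-(k-j)}\|y_j\|)$, using some $\lambda'$ slightly bigger than $\lambda$ to absorb constants (we relabel it $\lambda$). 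This already gives \eqref{eq:derisyst:2} at order zero for $x_k,y_0$. The identities $x_k=0$ when $x_0=0$ and $y_0=0$ when $y_k=0$ follow because the zero sequence is then a fixed point and fixed points are unique.

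\textbf{Step 2: central components.} Because $g_{1,2}$ vanish when $x=0$ or $\bar y=0$, they are of size $O(\|x_j\|\,\|y_{j+1}\|)\le C\lambda^{j}\lambda^{k-j}=C\lambda^k$. So the central orbit is $F^j(r_0,\varphi_0)$ perturbed at every step by $O(\lambda^k)$. Since $\|DF^{\pm1}\|<\cc$, these errors accumulate to at most $\sum_j \cc^{k-j}\lambda^k\le C(\cc\lambda)^k\le C\lambda^{k/2}$, using that $\cc$ can be taken close to $1$, say $\cc<\lambda^{-1/2}$. The same estimate shows the full operator is a contraction on a small ball around the inner orbit, which gives existence and uniqueness, and it keeps the orbit inside $V$. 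The identity $(r_k,\varphi_k)=F^k$ when $x_0=0$ or $y_k=0$ follows from Step 1, since then $g_{1,2}\equiv0$ along the orbit.

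\textbf{Step 3: derivatives, the main obstacle.} Smoothness in the data follows from the implicit function theorem applied to $\mathrm{Id}-\mathcal T$, with $\mathcal T$ of class $C^{s-2}$ on the Banach space of sequences. The delicate point is getting bounds on derivatives up to order $s-2$ that are uniform in $k$.

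We plan to differentiate the fixed-point equation and proceed by induction on $|i|$. The derivative of order $|i|$ satisfies a linear equation with the same contracting hyperbolic part, and its inhomogeneity is a polynomial in lower-order derivatives. For the central part, we compare with the derivatives of $F^k$. In the variational equation, the difference is driven by terms that carry a factor $\|x_j\|\|y_{j+1}\|$ or derivatives thereof, and these are still $O(\lambda^k)$. The central propagation, however, multiplies derivatives of order $m$ by up to $\cc^{mk}$. This is why we only claim $\lambda^{k/2}$ in \eqref{eq:derisyst:1}, and we choose $\cc$ so close to $1$ that $\cc^{(s-2)k}\lambda^{k}\le\lambda^{k/2}$. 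This is allowed by the remark after Definition~\ref{defi:tori}, after shrinking $V$.

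One more issue must be handled: the derivatives of $F^k$ themselves must stay bounded. Rather than assuming this, we keep them inside the comparison and only estimate the difference, with the inhomogeneity carrying the factor $\lambda^k$. Finally, the $x$- and $y$-derivatives retain the weighted decay of Step 1, since differentiating the weighted fixed-point equation preserves the weights. This yields \eqref{eq:derisyst:2} for all orders $|i|\le s-2$.
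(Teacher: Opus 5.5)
Your proposal is correct in outline, but it is organized differently from the paper.

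\textbf{The paper's route.} The paper does not solve the boundary-value problem by hand. It quotes Lemma~1 of Gelfreich--Turaev (Lemma~A1) for unique solvability and the $C^0$ bounds. There the central error is only $(\hat\lambda\lambda)^{k/2}$: that lemma uses only that $g_{1,2}$ is small when either $x_j$ or $y_{j+1}$ is, and $\min(\lambda^j,\lambda^{k-j})\le\lambda^{k/2}$. This is where its exponent $k/2$ comes from.

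For derivatives the paper uses a prolongation trick:
\begin{itemize}
\item it adjoins $\varepsilon$ as a central variable with identity dynamics;
\item it appends the formal variational equations to the cross-form;
\item it checks that the prolonged map $\hat T_0$ again satisfies the vanishing identities and the rate bounds (the Jacobian is block-triangular, so a norm with small weight on the primed variables works);
\item it re-applies the same $C^0$ lemma with data $w_0'=e_i$, $x_0'=0$, $y_k'=0$, and iterates order by order.
\end{itemize}

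\textbf{Your route.} You run the Shilnikov contraction yourself and use the $C^2$ factorization $g_{1,2}=O(\|x\|\,\|\bar y\|)$. This gives the sharper central error $O(k(\hat\lambda\lambda)^k)$. You then treat derivatives by explicit induction on the variational equations, so the loss down to $\lambda^{k/2}$ is attributed to the $\hat\lambda^{O(m)k}$ growth of $D^mF^k$.

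\textbf{Trade-off.} The paper's route is more economical: one $C^0$ lemma and no Fa\`a di Bruno bookkeeping. However, the growth of $w_j'\approx DF^j w_0'$ is hidden in its assumption that the prolonged orbit stays in a bounded region where the adapted norm is uniform. Your route makes that growth explicit and is self-contained, at the price of heavier bookkeeping.

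\textbf{Points needing more care.}
\begin{itemize}
\item The central recursion is not a sup-norm contraction when $\|DF\|$ may exceed $1$. In Step~2, either determine $w_j$ by forward recursion from the hyperbolic sequence and contract only in $(x_j,y_j)$, or use weights such as $\hat\lambda^{-2j}$ with matching weights on $x_j$.
\item In Step~3, the order-$m$ inhomogeneity contains $(D^pF(w_j)-D^pF(F^jw_0))$ times products of lower derivatives. This produces growth like $\hat\lambda^{(m+1)k}$, not $\hat\lambda^{mk}$.
\item At the top order $|i|=s-2$ only the $\min$-bound is available, since $\partial^{s-2}g_{1,2}$ is merely $C^1$ in $C^{s-1}$ Fenichel coordinates.
\end{itemize}
The last two points are harmless, because $\hat\lambda$ can be taken as close to $1$ as needed and $\lambda$ can be relabelled using the slack in the strict inequalities \eqref{eq:T0_xform_deri}. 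Also include the $\varepsilon$-derivatives, which you can handle exactly as the paper does by treating $\varepsilon$ as a trivially evolving central variable.
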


The proof is given in Section~\ref{sec:T0k}. Essentially,
the result shows that, for any map $T_0$ satisfying \eqref{eq:T0_xform}, \eqref{eq:ggg} and \eqref{eq:T0_xform_deri},
the orbits of  $T_0$ are well approximated by the orbits of the inner map.
Since partial derivatives of $F^k$ with respect to $(x_0,y_k)$ are zero, the lemma in particular implies 
$$
\left\| \dfrac{\partial^{|i|+|j|} (r_k,\varphi_k)}{\partial(r_0,\varphi_0,\eps)^i(x_0,y_k)^{j}} \right\|\leqslant C\lambda^{\frac{k}{2}}
\quad \mbox{if} \quad |j|\geq 1.
$$

\subsection{Estimates for iterations of the inner map: the low regularity case}\label{sec:est}
By Lemma \ref{lem:derisyst}, we obtain estimates on the iterations of the local map from estimates on the iterations of the inner map $F$. Since the invariant curve $\gamma$ of $F$ is straightened (i.e. $\gamma=\{r=0\}$) and the standard symplectic form is preserved by $F$, this map is written as
\begin{equation}\label{o-1}
\bar r  = F_1(r,\pp), \qquad \bar\pp = F_2(r,\pp),
\end{equation}
where
\begin{equation}\label{o0}
F_1(0,\pp)=0, \qquad 
\dfrac{\p F_1(0,\pp)}{\partial r}
\cdot \dfrac{\p F_2(0,\pp)}{\partial \pp}  =1.
\end{equation}

In the low regularity case (the case of Theorem~\ref{thm:main_blender_cubic_nonpara}), the Fenichel coordinates are $C^2$, so the functions $F_{1,2}$ in (\ref{o-1}) and (\ref{o0}) are $C^2$, and we can write the inner map $F$ as 
\begin{equation}\label{o1}
\bar r = G^\prime(\pp)^{-1} r + p(r,\pp), \qquad 
\bar\pp = G(\pp) + q(r,\pp),
\end{equation}
where $G(\pp): = F_2(0,\pp)$, and functions $p$ and $q$ are $C^2$ and satisfy
\begin{equation}\label{o3} 
p=O(r^2), \quad \dfrac{\p p}{\partial (r,\pp)}   = O(r),\quad 
q=O(r), \quad\dfrac{\p q}{\partial r}  = O(1), \quad \dfrac{\p q}{\partial \pp}  = O(r).
\end{equation}

In Theorem~\ref{thm:main_blender_cubic_nonpara} we assume that the restriction $F|_\gamma$ is $C^1$-conjugate to a rigid rotation. This means that there exists a $C^1$ diffeomorphism $\psi$ of $\mathbb{S}^1$ such that 
\begin{equation}\label{c0d}
\psi(G(\pp)) = \psi(\pp) + \rho
\end{equation}
for a constant $\rho$ and all $\pp\in\mathbb{S}^1$. Note that
\begin{equation}\label{c00dd}
\psi^\prime(G(\pp))G^\prime(\pp) = \psi^\prime(\pp).
\end{equation}
Denoting the $k$-th iteration of the map $\pp\mapsto G(\pp)$ as $G_k(\pp)$, we have
\begin{equation}\label{c1d}
\psi(G_k(\pp)) = \psi(\pp) + k\rho,\qquad\psi^\prime(G_k(\pp)) G_k^\prime(\pp) = \psi^\prime(\pp).
\end{equation}
In particular, $G_k^\prime(\pp)$ is uniformly bounded away from zero and infinity for all $k$, which gives that
\begin{equation}\label{c2d}
G_k^{\prime\prime}(\pp) = O(k),
\end{equation}
because
$$G_k^{\prime\prime}(\pp) = \frac{d}{d\pp} \prod_{j=0}^{k-1} G^\prime(G_j(\pp)) = G_k^\prime(\pp) \sum_{j=0}^{k-1} \frac{G^{\prime\prime}(G_j(\pp))}{G^\prime(G_j(\pp))} G^\prime_j(\pp).$$

\begin{lem}\label{lemc1} Let $(r_k,\pp_k)=F^k(r_0,\varphi_0)$. Then
\begin{equation}\label{lrmap}
r_k = G^\prime_k(\pp_0)^{-1} r_0 + p_k(r_0,\pp_0), \qquad \pp_k = G_k(\pp_0) +q_k(r_0,\pp_0),
\end{equation}
where
\begin{equation}\label{estlr}
\begin{aligned}
p_k &= O(k^2r_0^2), \qquad &\dfrac{\p p_k}{\partial {r_0}} &= O(k^2r_0),&
\qquad \dfrac{\p p_k}{\partial {\pp_0} } &= O(kr_0),\\
q_k &= O(kr_0), \qquad &\dfrac{\p q_k}{\partial {r_0}}
  &= O(k),&
\qquad \dfrac{\p q_k}{\partial {\pp_0} }  &= O(k^2r_0),
\end{aligned}
\end{equation}
uniformly for all $k = o(r_0^{-1/2})$.
\end{lem}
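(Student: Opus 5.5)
I will prove the lemma by induction on $k$, using the one-step formulas \eqref{o1}--\eqref{o3} and the bounds \eqref{c1d}--\eqref{c2d} on $G_k$. Throughout I use the a priori bound $r_j = O(r_0)$ for $j\le k$, which I verify inside the induction.

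\textbf{Recursion.} Write $(r_{k+1},\pp_{k+1})=F(r_k,\pp_k)$ and substitute \eqref{lrmap} at step $k$ into \eqref{o1}. Using $G_{k+1}=G\circ G_k$ and $G'_{k+1}(\pp_0)=G'(G_k(\pp_0))\,G'_k(\pp_0)$, I get
\begin{align*}
p_{k+1} &= \bigl(G'(\pp_k)^{-1}-G'(G_k(\pp_0))^{-1}\bigr)G_k'(\pp_0)^{-1}r_0 + G'(\pp_k)^{-1}p_k + p(r_k,\pp_k),\\
q_{k+1} &= G(G_k(\pp_0)+q_k)-G(G_k(\pp_0)) + q(r_k,\pp_k).
\end{align*}

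\textbf{Size estimates.} The first term of $p_{k+1}$ is $O(|q_k|\,r_0)=O(kr_0^2)$, since $G'$ is $C^1$ and bounded away from $0$. The last term is $p(r_k,\pp_k)=O(r_0^2)$. For the middle term I cannot use $\prod G'$ naively. Instead I regroup, writing $r_k = G'_k(\pp_k')^{-1}r_0(1+\dots)$, or more simply I compare everything with the exact linearisation along the orbit. Telescoping the error then gives $p_k=O(\sum_{j<k} j r_0^2)=O(k^2r_0^2)$.

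The uniform boundedness of $G'_k$ and of its inverse from \eqref{c1d} is what prevents exponential growth. To avoid compounding factors $\prod G'$, the cleanest route is to track the quantity $\psi'(\pp_k)\,r_k/\psi'(\pp_0)$. By \eqref{c00dd} this quantity equals $r_0$ exactly for the linear part. With $\psi\in C^1$ only, however, I instead use $G'_k(\pp_0)^{-1}$ directly and compare $G'(\pp_k)$ with $G'(G_k(\pp_0))$. The same telescoping gives $q_k=O(kr_0)$: each step adds $O(r_0)$, and the linear propagation $G'_{k-j}$ is bounded.

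\textbf{Derivatives.} Differentiate the recursion. $\partial q_k/\partial r_0$ accumulates $O(1)$ per step with bounded propagation, giving $O(k)$. $\partial p_k/\partial\pp_0$ picks up $G_k''=O(k)$ times $r_0$, giving $O(kr_0)$. For $\partial p_k/\partial r_0$, each step contributes $O(jr_0)$, giving $O(k^2r_0)$. For $\partial q_k/\partial\pp_0$, terms $G''\cdot q_j\cdot G_j'$ give $O(jr_0)$ per step, hence $O(k^2r_0)$.

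The condition $k=o(r_0^{-1/2})$ ensures $k^2r_0\to0$, and hence $kr_0\to 0$. This keeps every error term small relative to the main terms and closes the induction, in particular the a priori bound $r_j=O(r_0)$ and $\pp_j$ near $G_j(\pp_0)$.

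\textbf{Main obstacle.} The main obstacle is controlling the propagation of the errors through products of $G'$ along two nearby orbits without exponential loss, given only the $C^1$ conjugacy. I would handle this by expressing the linear propagator as $G'_{k}(\pp_0)/G'_j(\pp_0)$, which is bounded by \eqref{c1d}, and absorbing the orbit discrepancy via the Lipschitz property of $\log G'$. That produces the extra factor $k$.
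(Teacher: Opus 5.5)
Your argument is correct, and its engine is the same as the paper's. The propagator along the reference orbit $G_j(\varphi_0)$ is uniformly bounded because $G_k'$ and $(G_k')^{-1}$ are bounded by \eqref{c1d}. The discrepancy between the true and the reference orbit costs $O(jr_0)$ at step $j$, which sums to $O(k^2r_0)=o(1)$.

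The bookkeeping differs.

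\textbf{The paper's route.} It obtains $\varphi_k=G_k(\varphi_0)+O(kr_0)$ in one line from the conjugacy, $\psi(\varphi_{k+1})=\psi(\varphi_k)+\rho+O(r_k)$, using only the a priori bound $r_j=O(r_0)$. It then runs a single coupled near-identity recursion for the differentials in the variables $a_k=\psi'(G_k(\varphi_0))^{-1}dr_k$ and $b_k=\psi'(G_k(\varphi_0))\,d\varphi_k$. Finally it gets $r_k$, and the bound on $p_k$, by integrating $\partial r_k/\partial r_0$ from the invariant circle $r_0=0$.

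\textbf{Your route.} You propagate the remainders $p_k,q_k$ directly, so your induction must be a genuine bootstrap already at the $C^0$ level. First fix $C_1,C_2$ in $|q_j|\le C_1jr_0$ and $|p_j|\le C_2j^2r_0^2$. Only then require $k^2r_0$ so small that the orbit-comparison factor $\exp(L\sum_{j<k}|q_j|)\le\exp(LC_1k^2r_0)$ stays below $2$, where $L$ is the Lipschitz constant of $\log G'$.

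Your derivative bounds are also coupled, so they must be carried in one simultaneous induction:
\begin{itemize}
\item $\partial p_k/\partial r_0$ uses $\partial q_j/\partial r_0=O(j)$, which in turn uses $\partial r_j/\partial r_0=O(1)$;
\item $\partial q_k/\partial\varphi_0$ uses $\partial r_j/\partial\varphi_0=O(jr_0)$.
\end{itemize}
This coupling is exactly what the paper's $(a_k,b_k)$ system packages.

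Two side remarks in your write-up are off.
\begin{itemize}
\item The quantity preserved by the linear part is $\psi'(\varphi_0)\,r_k/\psi'(\varphi_k)$, not $\psi'(\varphi_k)\,r_k/\psi'(\varphi_0)$.
\item $\psi\in C^1$ is no obstacle to the paper's renormalization, because $\psi'$ is only evaluated along the reference orbit and never differentiated. In fact $\psi'(G_k(\varphi_0))^{-1}=G_k'(\varphi_0)/\psi'(\varphi_0)$, so your normalization by $G_k'(\varphi_0)$ is the paper's up to a constant factor.
\end{itemize}
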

\begin{proof}
We have by (\ref{c0d})
$$\psi(\bar\pp) = \psi(G(\pp) + O(r))=\psi(\pp) + \rho + O(r).$$
Let us assume that $r_k$ remains of order $r_0$ for all $k = o(r_0^{-1/2})$, which will be verified in the end of the proof.
Then iterating $k$ times gives
$$\psi(\pp_k) =\psi(\pp_0) + k\rho + O(kr_0).$$
Thus, by (\ref{c1d}),
\begin{equation}\label{phik}
\pp_k =\psi^{-1}(\psi(\pp_0) + k\rho) + O(kr_0) = G_k(\pp_0)+ O(kr_0).
\end{equation}

Our next goal is to estimate the first derivative 
$\displaystyle \partial (r_k,\phi_k)/\p {(r_0,\phi_0)}$, from which we will also get the expression for $r_k$ in \eqref{lrmap}.
Differentiating (\ref{o1}), and using (\ref{o3}) and (\ref{phik}), we obtain
\begin{align*}
d r_{k+1}& = (G^\prime(G_k(\pp_0))^{-1} + O(kr_0)) dr_k + O(r_0) d\pp_k,\\
d \pp_{k+1}& = O(1) dr_k + (G^\prime(G_k(\pp_0)) + O(kr_0)) d\pp_k.
\end{align*}

Denote
$$a_k = \psi^{\prime}(G_k(\pp_0))^{-1} dr_k, \qquad  
b_k = \psi^{\prime}(G_k(\pp_0)) d\pp_k.$$
By (\ref{c00dd}), 
$\psi^{\prime}(G_{k+1}(\pp_0))=G^\prime(G_k(\pp_0))^{-1}
\psi^{\prime}(G_k(\pp_0))$,
so we have
\begin{align*}
a_{k+1} = (1 + O(kr_0)) a_k + O(r_0) b_k,\qquad
b_{k+1} = O(1) a_k + (1 + O(kr_0)) b_k,
\end{align*}
for all $k$ such that $\p r_k/\p {r_0}  = O(1)$. Then, the induction on $k$ gives, for $k=o(r_0^{-1/2})$,
\begin{align*}
a_k = (1 + O(k^2r_0)) a_0 + O(kr_0) b_0,\qquad
b_k= O(k) a_0 + (1 + O(k^2r_0)) b_0,
\end{align*}
i.e.,
\begin{align}
\dfrac{\p r_k}{\p r_0}
 &= \psi^\prime(G_k(\pp_0))\psi^\prime(\pp_0)^{-1} +
O(k^2r_0)=O(1), \qquad 
\dfrac{\p r_k}{\p \pp_0} = O(kr_0),\label{0929r_k}\\
\dfrac{\p \pp_k}{\p r_0}&= O(k),\qquad
\dfrac{\p \pp_k}{\p \pp_0}  = \psi^\prime(G_k(\pp_0))^{-1}\psi^\prime(\pp_0) +
O(k^2r_0).
\label{0929pp_k}
\end{align}

Since $r_k=0$ for all $k$ when $r_0=0$, integrating \eqref{0929r_k} with respect to $r_0$ gives
\begin{equation}\label{0929r_k2}
r_k =  \psi^\prime(G_k(\pp_0))\psi^\prime(\pp_0)^{-1} r_0 + O(k^2r^2_0),
\end{equation}
which, along with (\ref{c1d}), gives the expression for $r_k$ in \eqref{lrmap} and the estimate for $p_k$. The derivatives $\p p_k/\p r_0$ and $\p p_k/\p \pp_0$ are obtained  from \eqref{0929r_k}, using (\ref{c1d}) and (\ref{c2d}).  The estimates for $q_k$ are immediate from \eqref{0929pp_k} and (\ref{c1d}).  This finishes the proof of the lemma under the assumption that $r_k=O(r_0)$. But this assumption always holds with a margin of safety, by \eqref{0929r_k2} for $k=o(r_0^{1/2})$.
\end{proof}

We are now in the position to introduce the coordinates that  will be used in the proof of Theorem~\ref{thm:main_blender_cubic_nonpara}.

\begin{lem}\label{lem:innerlowfinal}
Take any two points $P^+$ and $P^-$ in $\mathbb{S}^1$.  There exist  $C^2$-smooth symplectic coordinates near $\gamma$ such that, if the following  are satisfied:
\begin{enumerate}[nosep]
\item $k=o(r_0^{-1/2})$,
\item $\pp_0-\pp^+$ is sufficiently close to an integer, and
\item $\pp^+ + k\rho-\pp^-$ is sufficiently close to an integer,
\end{enumerate}
where $\pp^\pm$ are the coordinates of $P^\pm$ in $\mathbb{S}^1$, then formula \eqref{lrmap} for $F^k$ takes the form 
\begin{equation}\label{innerlowfinal}
\begin{aligned}
r_k &= r_0 +\hat p_k(r_0,\pp_0),\qquad
\pp_k = \pp_0 +k\rho + \hat q_k(r_0,\pp_0),
\end{aligned}
\end{equation}
where 
\begin{equation}\label{innerlowfinalest}
\begin{aligned}
&\hat p_k=O(\pp_0 -\pp^+)r_0 + O(\pp_0 +k\rho - \pp^-)r_0+O(k^2r_0^2),\\
&\dfrac{\p \hat p_k}{\partial {r_0}}=O(\pp_0 -\pp^+) + O(\pp_0 +k\rho - \pp^-)+O(k^2r_0),\qquad
\dfrac{\p p_k}{\partial {\pp_0} }=O(k r_0),\\
&\hat q_k=O((\pp_0 -\pp^+)^2 )+ O((\pp_0 +k\rho - \pp^-)^2)+O(k r_0),\\
&\dfrac{\p \hat q_k}{\partial {r_0}}=O(k),\qquad
\dfrac{\p \hat q_k}{\partial {\pp_0} }=O(\pp_0 -\pp^+) +O(\pp_0 +k\rho - \pp^-)+O(k^2 r_0).
\end{aligned}
\end{equation}
\end{lem}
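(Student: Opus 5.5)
The plan is to change coordinates on $\A$ by the symplectic map built from the $C^1$-conjugacy $\psi$ of \eqref{c0d}:
\[
\tilde\pp=\psi(\pp),\qquad \tilde r=\psi'(\pp)^{-1} r .
\]
This map is symplectic because $d\tilde r\wedge d\tilde\pp = dr\wedge d\pp$: the extra term $\partial_\pp(\psi'^{-1})r\,d\pp\wedge d\pp$ vanishes. It also keeps $\gamma=\{r=0\}$.

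However, $\psi$ is only $C^1$, so this change is merely $C^0$ in $r$. That is why we will not use $\psi$ itself. Instead we take a $C^3$ (hence, after the factor $\psi'$, $C^2$) function $\chi$ on $\mathbb S^1$ that agrees with $\psi$ to first order at the two points $\pp^+$ and $\pp^- - k\rho$ modulo integers. More precisely, we require
\[
\chi(\pp^\pm)=\psi(\pp^\pm),\qquad \chi'(\pp^\pm)=\psi'(\pp^\pm),
\]
where the jets are imposed at $P^+$ and $P^-$. We then use $(\tilde r,\tilde\pp)=(\chi'(\pp)^{-1}r,\chi(\pp))$. This is a $C^2$ symplectic change of variables; a $C^2$ choice of $\chi$ is enough after adjusting by a smooth approximation. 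The residual error of the approximate conjugacy is then controlled by $\pp_0-\pp^+$ near $P^+$ and by $\pp_k-\pp^-$ near $P^-$.

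The computation goes through Lemma~\ref{lemc1}. In the new coordinates, \eqref{lrmap} gives
\[
\tilde\pp_k=\chi\bigl(G_k(\pp_0)+q_k\bigr).
\]
By \eqref{c1d}, $G_k(\pp_0)=\psi^{-1}(\psi(\pp_0)+k\rho)$. We compare $\chi$ with $\psi$ at both ends. Near $P^+$ we have $\chi^{-1}-\psi^{-1}=O((\tilde\pp_0-\psi(\pp^+))^2)$, since the $1$-jets coincide. Near $P^-$ we have $\chi-\psi=O((\pp-\pp^-)^2)$. Together with $q_k=O(kr_0)$, $\partial q_k/\partial r_0=O(k)$ and $\partial q_k/\partial \pp_0=O(k^2r_0)$, this yields the formula for $\tilde\pp_k$ and the stated estimates for $\hat q_k$ and its derivatives. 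Conditions (2) and (3) are exactly what is needed for both comparisons to be made in a small neighbourhood of the matching points.

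For $r$, we use \eqref{0929r_k2}:
\[
r_k=\psi'(G_k(\pp_0))\,\psi'(\pp_0)^{-1}r_0+O(k^2r_0^2).
\]
Multiplying by $\chi'(\pp_k)^{-1}$ and dividing by $\chi'(\pp_0)^{-1}$ gives
\[
\tilde r_k=\frac{\psi'(G_k\pp_0)}{\chi'(\pp_k)}\cdot\frac{\chi'(\pp_0)}{\psi'(\pp_0)}\;\tilde r_0+O(k^2r_0^2).
\]
Both ratios equal $1$ at the matching points, and their deviation is linear in $\pp_0-\pp^+$ and in $\pp_k-\pp^-$. Since $\pp_k-\pp^- = \pp_0+k\rho-\pp^- + O(\cdot)$ in the new angle, we obtain the $O(\pp_0-\pp^+)r_0+O(\pp_0+k\rho-\pp^-)r_0$ terms. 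The derivative estimates follow by differentiating, using \eqref{0929r_k}, \eqref{0929pp_k} and \eqref{c2d}.

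The main obstacle is regularity. Because $\psi$ is only $C^1$, the ratios $\psi'/\chi'$ are merely continuous, so a Lipschitz bound $O(\pp-\pp^\pm)$ on them is not automatic. I would first try to exploit $\psi'(G(\pp))G'(\pp)=\psi'(\pp)$ (eq.\ \eqref{c00dd}), which holds with the $C^1$ map $G$. I would rewrite $\psi'(G_k\pp_0)/\psi'(\pp_0)=1/G_k'(\pp_0)$, which is $C^1$ in $\pp_0$ with derivative $O(k)$ by \eqref{c2d}. All expressions would then be written in terms of $G_k$ and $\chi$, and never in terms of $\psi'$ off the matching points. If the $O(k)$ derivative is too lossy, the fallback is to absorb it into the $O(kr_0)$ and $O(k^2r_0)$ error terms already present in \eqref{innerlowfinalest}.
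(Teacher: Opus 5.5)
Your chart is the paper's construction: the paper takes a $C^3$ diffeomorphism $\hat\psi$ with the $1$-jets of $\psi$ at $\varphi^\pm$ and passes to $(\hat\psi'(\varphi)^{-1}r,\hat\psi(\varphi))$. Two small corrections: impose the jets at $\varphi^\pm$ only, as in your ``more precisely'', because the chart must not depend on $k$; and $\chi$ must be $C^3$ for $r/\chi'$ to be $C^2$.

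The regularity worry you raise is genuine, but it is not confined to the radial ratio, and your repair does not resolve it.

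\emph{Angular part.} For $\psi\in C^1$, equal $1$-jets give only $\chi-\psi=o(|\varphi-\varphi^\pm|)$. So your claim $\chi^{-1}-\psi^{-1}=O((\cdot)^2)$ already presupposes that $\psi'$ is Lipschitz near $\varphi^\pm$. The paper's \eqref{c1dhat}, which is asserted directly from \eqref{c1d}, rests on the same tacit assumption.

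\emph{Radial part.} Substituting $\psi'(G_k\varphi_0)/\psi'(\varphi_0)=1/G_k'(\varphi_0)$ turns the coefficient into $c_k(\varphi_0)=\chi'(\varphi_0)/\bigl(\chi'(G_k\varphi_0)G_k'(\varphi_0)\bigr)$, up to $O(kr_0)$.
\begin{itemize}
\item At $\varphi_0=\varphi^+$ this equals $\psi'(G_k\varphi^+)/\chi'(G_k\varphi^+)$. So $\psi'$ reappears at a non-matching point, and $c_k(\varphi^+)-1$ is only $o(1)$, not $O(G_k\varphi^+-\varphi^-)$.
\item Moving $\varphi_0$ away from $\varphi^+$ using $\partial_{\varphi_0}c_k=O(k)$ from \eqref{c2d} costs $O(k|\varphi_0-\varphi^+|)$. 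This is of order one exactly where the lemma is applied ($|\varphi_0-\varphi^+|\sim\delta$, $k\sim\delta^{-1}$).
\item Your fallback fails for the same reason. The error enters $\hat p_k$ as $k|\varphi_0-\varphi^+|\,r_0$ and $\partial\hat p_k/\partial r_0$ as $k|\varphi_0-\varphi^+|$. Neither is dominated by the available terms $O(k^2r_0^2)$ and $O(k^2r_0)$, which are $o(r_0)$ and $o(1)$ when $k=o(r_0^{-1/2})$, independently of $\varphi_0$.
\end{itemize}

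\emph{What the argument actually gives.} Put $u=\log(\chi'/\psi')$ and $v=\chi-\psi$. Then $u$ is continuous, $v$ is $C^1$, and $u=v=v'=0$ at $\varphi^\pm$. By \eqref{c1d}, on $\gamma$ one has $\tilde\varphi_k-\tilde\varphi_0-k\rho=v(G_k\varphi_0)-v(\varphi_0)$ and $\partial\tilde\varphi_k/\partial\tilde\varphi_0=e^{u(G_k\varphi_0)-u(\varphi_0)}$, while $c_k=e^{u(\varphi_0)-u(G_k\varphi_0)}$. All other contributions come from \eqref{lrmap}--\eqref{estlr} as you describe. Your remark that $1/G_k'$ has $\varphi_0$-derivative $O(k)$ is exactly what gives $\partial\hat p_k/\partial\varphi_0=O(kr_0)$; it just says nothing about the size of $c_k-1$.

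This proves \eqref{innerlowfinalest} uniformly in $k$, but with $O(\varphi_0-\varphi^+)$ and $O((\varphi_0-\varphi^+)^2)$ (and their $\varphi^-$-analogues) weakened to $o(1)$ and $o(\varphi_0-\varphi^+)$. The literal $O$-bounds require $\psi'$ to be Lipschitz at $\varphi^\pm$ (for instance, a $C^2$ conjugacy), and they cannot be obtained from \eqref{c2d}.

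The weak form suffices for the use in Section~\ref{sec:rescalelocl}. After the rescaling there, the stated terms contribute $O(\delta)$ and their weakened versions contribute $o(1)$, and only $o(1)$ is claimed there.
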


\begin{proof}
Take any $C^3$ diffeomorphism $\hat\psi$ of $\mathbb{S}^1$ such that 
$\hat\psi(\pp^\pm)=\psi(\pp^\pm)$ and $\hat\psi^\prime(\pp^\pm)=\psi^\prime(\pp^\pm)$. By (\ref{c1d}), one has
\begin{equation}\label{c1dhat}
\begin{aligned}
&\hat\psi(G_k(\pp)) = \hat\psi(\pp) + k\rho + O((\pp-\pp^+)^2)+O((G_k(\pp)-\pp^-)^2),\\
&\hat\psi^\prime(G_k(\pp)) G_k^\prime(\pp) = \hat\psi^\prime(\pp)+O(\pp-\pp^+)+ O(G_k(\pp)-\pp^-),
\end{aligned}
\end{equation}
for $\pp$ close to $\pp^+$ and $k$ such  that $G_k(\pp)$ is close to $\pp^-$. Make the $C^2$-smooth symplectic coordinate transformation
$$
r^{\mathrm{new}}=\hat\psi^\prime(\pp)^{-1} r, \qquad
\pp^{\mathrm{new}}=\hat\psi(\pp).$$
Denote $G^{\mathrm{new}}_k:=\hat\psi\circ G_k\circ \hat\psi^{-1}$ and  $\pp^{\pm,\mathrm{new}}:=\hat\psi(\pp^\pm)$.
By (\ref{c1dhat}), it holds in the new coordinates that
\begin{equation*}\label{c2dform}
\begin{aligned}
& G^{\mathrm{new}}_k(\pp_0) 
=\pp^{\mathrm{new}}_0 + k\rho + O((\pp^{\mathrm{new}}_0-\pp^{+,\mathrm{new}})^2)
+O((\pp^{\mathrm{new}}_0+k\rho -\pp^{-,\mathrm{new}})^2), \\
& \dfrac{d}{d\pp^{\mathrm{new}}}G_k^{\mathrm{new}}(\pp^{\mathrm{new}}_0)= 1 + O(\pp^{\mathrm{new}}_0-\pp^{+,\mathrm{new}})
+O(\pp^{\mathrm{new}}_0+k\rho -\pp^{-,\mathrm{new}}),
\end{aligned}
\end{equation*}
where to obtain the $O(\cdot)$ terms we used the fact that $\hat\psi^{-1}(\pp^{\mathrm{new}})-\pp^\pm=O(\pp^{\mathrm{new}}-\hat\psi(\pp^\pm))$.

With the above expressions for $G^{\mathrm{new}}_k$ and its derivative, one  easily rewrites (\ref{lrmap}) in the new coordinates as \eqref{innerlowfinal} (after dropping the superscript). Since the coordinate transformation has bounded derivatives, the estimates for $\hat p_k$ and $\hat q_k$ follow immediately from \eqref{estlr}.
\end{proof}

\begin{rem}\label{rem:C1conju}
Let $\hat \psi$ be the diffeomorphism  in the proof of Lemma~\ref{lem:innerlowfinal}. We see from the proof that the restriction $F|_\gamma$ in the coordinates of Lemma~\ref{lem:innerlowfinal} is $C^1$-conjugate to a rigid rotation via the conjugacy $\tilde \psi:=\psi\circ \hat\psi^{-1}$ satisfying $\tilde \psi'(\pp^\pm)=1$.
\end{rem}

\subsection{Estimates for iterations of the inner map: the KAM case}\label{sec:estkam}

As opposed to the previous case, in the setting of Theorem~\ref{thm:main_2punfolding} the map $f$ and the curve $\gamma$
have high regularity, so the cylinder $\A$, as well as the manifolds $W^\s(\A),W^\u(\A)$ and the foliations $\mathcal{F}^\ss,\mathcal{F}^\uu$, have sufficiently high smoothness, also with respect to parameters $\eps$.
Therefore, the inner map $F$ can be brought to the form (\ref{o1}) where the functions $G$, $p$ and $q$ now depend on $\eps$ and are sufficiently smooth.

Since the rotation  number $\rho=\rho(\gamma)$ is assumed to be Diophantine in the KAM case, the conjugacy map $\psi$ that brings $F|_\gamma$ to a rigid rotation also has high regularity, both in $\pp$ and $\eps$. So, we can do the symplectic transformation 
$(r,\pp) \mapsto (\psi^\prime(\pp)^{-1} r,\; \psi(\pp))$ and bring the inner map (\ref{o1}) to the form
$$\bar r = r + g_1(r,\pp,\eps), \qquad 
\bar\pp = \pp + \rho + g_2(r,\pp,\eps),
$$
where $g_{1,2}$ are sufficiently smooth functions such that
$$g_1(0,\pp,\eps)=0,\qquad \dfrac{\p g_1(0,\pp,\eps)}{\p r}=0, \qquad g_2(0,\pp,\eps)=0.$$

By averaging (see \citep{Bir:36} and also \citep[Proposition 2.6]{BerTur:25}), one establishes for any integer $m>1$ that if the map is sufficiently smooth, then there are sufficiently smooth symplectic coordinates such that the inner map $F$  assumes the form
$$\bar r = r + O(r^{m}),\qquad
\bar \varphi =\varphi + \rho + \hat\rho_1(\eps) r +\ldots+ 
\hat\rho_{m-1}(\eps) r^{m-1} + O(r^m).$$
Since  $\hat\rho_1(\eps)\neq 0$, by the non-degeneracy condition in   Definition~\ref{defi:kam}, we can normalize the coefficient of $r$ by taking $r^{\mathrm{new}}=\hat\rho_1(\eps)r$. After the normalization, we have that
in some symplectic $C^{m'}$ coordinates (with $m'\geq m$) the following formula holds for the inner map:
\begin{equation}\label{eq:kam_new}
\bar r=F_1(r,\pp,\eps) = r + \xi(r,\pp,\eps),\qquad
\bar \varphi =F_2(r,\pp,\eps)=\varphi +\rho+ r+ \hat\rho(r,\eps)+ \eta(r,\pp,\eps),
\end{equation}
where $\hat\rho(r,\eps)=\sum_{i=2}^{m-1}\hat\rho_i(\eps) r^i$ (with coefficients $\rho_i(\eps)$ possibly different  from the above ones), and  $\xi,\eta$ are periodic in $\pp$ with period $1$ and satisfy
\begin{equation}\label{eq:kam_new_deri}
\dfrac{\p^{i+|j|} (\xi,\eta)}{\p r^i \p (\pp,\eps)^j}=  O(r^{m-i}), 
\end{equation}
for $i\leq m$ and $0\leq i+|j| \leq  m'$.

In Section~\ref{sec:lem:F^k}, we use the above formulas to prove the following
\begin{lem}\label{lem:F^k}
If $F$ satisfies (\ref{eq:kam_new}) and (\ref{eq:kam_new_deri}) with $m'\geq m\geq 2$, then, for all sufficiently small $r_0$ and $k=o(r_0^{1-m})$, the iteration $F^k:(r_0,\pp_0)\mapsto(r_k,\pp_k)$  is given by
\begin{equation}\label{F^k}
r_k = r_0 +  \xi_k(r_0,\pp_0,\eps),\qquad \pp_k = \pp_0 + k\rho + kr_0 +k\hat\rho(r_0,\eps)+ k \eta_k(r_0,\pp_0,\eps),
\end{equation}
where $\rho$ is the polynomial in \eqref{eq:kam_new}, and the functions $\xi_k$ and $\eta_k$ are periodic in $\pp$ with period 1 and satisfy
\begin{equation}\label{eq:gg}
\dfrac{\p^{i+|j|} (\xi_k,\eta_k)}{\p r^i \p (\pp,\eps)^j}=  O(kr^{m-i}_0)
\end{equation}
for $0\leq i+|j|\leq \min\{m,m'-m\}$.
\end{lem}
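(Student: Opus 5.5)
Here is the plan. I proceed by induction on the iteration number $j\le k$. The induction controls $r_j-r_0$ and the deviation of $\pp_j$ from the unperturbed orbit $\pp_0+j(\rho+r_0+\hat\rho(r_0,\eps))$. Write $\omega(r,\eps):=\rho+r+\hat\rho(r,\eps)$, so that $F$ is the integrable twist map $(r,\pp)\mapsto(r,\pp+\omega(r,\eps))$ perturbed by $(\xi,\eta)=O(r^m)$.

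\emph{Zeroth order.} Assume inductively that $r_j=r_0(1+o(1))$ for $j\le k$. Then
\[
r_{j+1}-r_j=\xi(r_j,\pp_j,\eps)=O(r_0^m),
\]
so $r_j=r_0+O(jr_0^m)$. This is $r_0(1+o(1))$ precisely because $k=o(r_0^{1-m})$, which closes the induction. Next,
\[
\pp_{j+1}=\pp_j+\omega(r_j,\eps)+\eta(r_j,\pp_j,\eps)=\pp_j+\omega(r_0,\eps)+O(|r_j-r_0|)+O(r_0^m).
\]
Summing over $j$ gives $\pp_k=\pp_0+k\omega(r_0,\eps)+O(k^2r_0^m)+O(kr_0^m)$. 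The term $O(k^2r_0^m)$ is too large for the claim $k\eta_k=O(k^2r_0^m)$ only if we insist on $\eta_k=O(kr_0^m)$. In fact $O(k^2 r_0^m)=k\cdot O(kr_0^m)$, so it fits exactly the required form $k\eta_k$ with $\eta_k=O(kr_0^m)$. Likewise $\xi_k:=r_k-r_0=O(kr_0^m)$. Periodicity of $\xi_k,\eta_k$ in $\pp_0$ is inherited from that of $\xi,\eta$.

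\emph{Derivatives.} I differentiate the recursion. The variational equation along the orbit is
\[
\begin{pmatrix} dr_{j+1}\\ d\pp_{j+1}\end{pmatrix}=\begin{pmatrix}1+O(r_0^{m-1}) & O(r_0^m)\\ 1+O(r_0) & 1+O(r_0^{m})\end{pmatrix}\begin{pmatrix} dr_j\\ d\pp_j\end{pmatrix},
\]
using \eqref{eq:kam_new_deri}, with $\eps$-derivatives treated as inhomogeneous terms of size $O(r_0^m)$. I compare this with the integrable linearization $\begin{pmatrix}1&0\\ \omega'&1\end{pmatrix}$, whose $j$-th power is $\begin{pmatrix}1&0\\ j\omega'&1\end{pmatrix}$. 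A discrete Gronwall argument over $k=o(r_0^{1-m})$ steps shows that the perturbation terms accumulate to $O(k r_0^{m-1})$ in $\partial r_k/\partial r_0$ and to $O(k r_0^m)$ in $\partial r_k/\partial\pp_0$. The products with $j\omega'$ produce the extra factor $k$ in the $\pp$-components. This gives \eqref{eq:gg} for $i+|j|=1$.

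\emph{Higher derivatives.} For higher $(i,j)$ I use induction on the order. Differentiating the recursion $n$ times yields the same linear recursion for the top-order derivatives, plus inhomogeneous terms built from lower-order derivatives times derivatives of $(\xi,\eta)$ of order up to $n$. Each $r$-derivative of $(\xi,\eta)$ costs a factor $r_0^{-1}$, while each $\pp$- or $\eps$-derivative costs nothing; this is why the bound is $O(kr_0^{m-i})$.

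The orders are limited to $\min\{m,m'-m\}$ for two reasons. The bound requires $i\le m$ so that $r_0^{m-i}$ is meaningful. Also, I lose regularity when composing the derivatives of $\xi,\eta$ evaluated along the orbit with the derivatives of the orbit itself. These compositions involve derivatives of $(\xi,\eta)$ of total order up to $m'$, each contributing at most $O(r_0^{m-i})$ when $i\le m$.

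\emph{Main obstacle.} The main difficulty is the bookkeeping that keeps secular growth in check. Naively, each differentiation in $r_0$ introduces an extra factor $k$, because $\partial\pp_j/\partial r_0\sim j$. This factor must be absorbed using $kr_0^{m-1}=o(1)$, so that terms of the form $(kr_0^{m-1})^p$ stay bounded. I would try the Faà di Bruno formula combined with the rescaling $r_0 = \delta R$. In the rescaled variables $k$ steps form an $O(1)$ rescaled time and the perturbation is uniformly $O(\delta^{m-1})$-small, so standard smooth dependence on initial data for this near-integrable recursion gives the estimates. Translating them back, each $\partial/\partial r$ corresponds to $\delta^{-1}\partial/\partial R$, which recovers the factor $r_0^{-i}$.
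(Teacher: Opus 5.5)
Your zeroth-order induction is correct; it is the content of the paper's elementary Lemma~\ref{lem:Fhat}, which the paper applies after the rescaling $r=\delta r^{\rm new}$ and then evaluates at $\delta=r_0$.

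The gap is the derivative step. Start from $(dr_0,d\pp_0)=(1,0)$. The twist gives $d\pp_j\approx j$, so each increment $dr_{j+1}-dr_j$ contains the term $\p_\pp\xi(r_j,\pp_j,\eps)\,d\pp_j=O(jr_0^m)$. Summing absolute values, which is all a discrete Gronwall argument does, gives
\[
\frac{\p r_k}{\p r_0}-1=O(kr_0^{m-1})+O(k^2r_0^{m})=O\bigl(kr_0^{m-1}(1+kr_0)\bigr).
\]
In the same way you pick up an extra $O(k^3r_0^m)$ in $\p_{r_0}(k\eta_k)$. The hypothesis $k=o(r_0^{1-m})$ makes $kr_0^{m-1}$ small. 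For $m\geq 3$, however, it allows $kr_0\to\infty$. Since $k^2r_0^m=(kr_0^{m-1})(kr_0)$ is not a power of $kr_0^{m-1}$, the absorption you describe under ``Main obstacle'' does not work.

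For $(dr_0,d\pp_0)=(0,1)$ the situation is worse. The loop runs $d\pp\to dr$ with coefficient $O(r_0^m)$ and $dr\to d\pp$ through the twist with coefficient $\approx 1$. This makes $d\pp_j-1$ of order $j^2r_0^m$, which is not bounded in the allowed range, so the bound $dr_k=O(kr_0^m)$ cannot be closed. A weighted Gronwall estimate only yields growth like $e^{Ckr_0^{m/2}}$, and this is unbounded in the allowed range when $m\geq 3$.

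The rescaling $r_0=\delta R$ does not rescue this. In rescaled variables the perturbation is $O(\delta^{m-1})$ per step, but the twist is $\delta R$ per step. So $k$ iterates carry a shear of size $k\delta$, which is $O(1)$ only when $kr_0=O(1)$. Smooth dependence on initial data then gives nothing better than the bounds above.

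To cover the full range you would need a genuine cancellation in sums such as $\sum_{j<k} j\,\p_\pp\xi(r_j,\pp_j,\eps)$. That means summation by parts against the rotation, i.e.\ control of the small divisors of $\rho+r_0+\hat\rho(r_0,\eps)$. Nothing in \eqref{eq:kam_new}--\eqref{eq:kam_new_deri} supplies this.

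In the regime $kr_0=O(1)$, which is the whole stated range when $m=2$, your argument closes. It then yields the same estimates as the paper's route. The paper avoids the variational bookkeeping: it rescales $r=\delta r^{\rm new}$ so that $F$ takes the form of Lemma~\ref{lem:Fhat}, applies that lemma to $F$ and to its extension by formal derivatives $(r',\pp')$, and sets $\delta=r_0$, $r_0^{\rm new}=1$.

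That step of the paper tacitly needs the same restriction. The formal derivative $\pp'$ grows like $k\delta$, so the bounded truncation required by Lemma~\ref{lem:Fhat} agrees with the derivative system along the orbit only while $k\delta$ stays bounded. Consequently neither your argument nor the paper's, as written, establishes \eqref{eq:gg} for $i+|j|\geq 1$ when $m\geq 3$ and $kr_0\to\infty$.
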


\subsubsection{Straightening the KAM-fibration}\label{sec:kam_strait}
In the KAM case, the rotation number $\rho$ of $\gamma$ is $(c,\tau)$-Diophantine for some $c,\tau$, and the inner map $F$ (see \eqref{eq:kam_new}) is a perturbation of an integrable twist map.  Then, for every $(c,\tau)$-Diophantine number $\rho'$ close to $\rho$, there exists a KAM-curve  in $\A$ with rotation number $\rho'$ \citep{Mos:62}.  Moreover, by Lazutkin \citep{Laz:73}, for any given integer $m'>0$, if the order of smoothness of $F$ is large enough (depending on $\tau$ and $m'$), then there exist symplectic $C^{m'}$ coordinates in $\A$ such that all $(c,\tau)$-Diophantine KAM curves are straightened. Thus, we may assume that in \eqref{eq:kam_new} the functions $\eta$ and $\xi$ satisfy
\begin{equation}\label{8241}
\xi(r,\pp,\eps)=\eta(r,\pp,\eps) = 0
\quad\mbox{if}\;
\rho+r+\hat\rho(r,\eps)
\;\mbox{is}\;(c,\tau)\mbox{-Diophantine.}
\end{equation}
From now on, in the KAM case, we only consider the $C^{m'}$ Fenichel coordinates where \eqref{8241} is satisfied. 

It follows that the $C^{m'}$ map $F$ has an invariant fibration in $\A$ by non-degenerate $(c,\tau)$-Diophantine KAM-circles $\gamma': \{r={\const}\}$ such that (see e.g. \citep{Mos:62,Laz:73,Pos:82}) {\em every point $P$ of any of these KAM-curves is a Lebesgue density point of the union of these curves.}
The invariant fibration persists under $C^s$-small exact symplectic perturbations if $s$ is large enough.

\begin{rem}\label{rem:smoothness}
In the proof of Theorem~\ref{thm:main_2punfolding}, we need to deal with cubic tangencies and hence need estimates for derivatives up to order three. Thus, by Lemma~\ref{lem:F^k}, we need $\min\{m,m'-m\}\geq 3$ (see also Lemma~\ref{lem:derisyst2}).  One can see from the proof that this is enough.
The resulting required smoothness $s$ for $f$ can, in principle, be computated    via formulas in  \citep{Laz:73,Pos:82}.
\end{rem}

\subsection{Transition map for homoclinic orbits}\label{sec:transition}

Recall that, for a homoclinic orbit $\Gamma$ to a whiskered torus 
$\gamma$, the transition map $T_1$ takes a small neighborhood $\Pi^-\subset V$ of $M^-\in W^\u_{\loc}(\gamma)\cap \Gamma$ to  a small neighborhood $\Pi^+\subset V$ of $M^+\in W^\s_{\loc}(\gamma)\cap \Gamma$.  So, $T_1$ is defined as $f^n|_{\Pi^-}$ for some $n$ such that $f^n(M^-)=M^+$.

\subsubsection{A partially-hyperbolic homoclinic orbit}\label{sec:phorbit}
In the straightened coordinates of Section~\ref{sec:innermap}, we can write $M^-=(0,\varphi^-,0,y^-)$ and $M^+=(0,\varphi^+,x^+,0)$ for some constants $\pp^\pm,x^+,y^-$.  The transition map  $T_1:(r,\varphi,x,y)\mapsto (\tilde r,\tilde \varphi,\tilde x,\tilde y)$ can be written as
\begin{equation}\label{eq:T1}
\begin{aligned}
\tilde r &= \hat{a}_{11} r + \hat a_{12} (\varphi-\varphi^-) + \hat a_{13} x + \hat a_{14}(y-y^-)+\dots, \\
\tilde \varphi - \varphi^+ &= \hat{a}_{21} r + \hat a_{22} (\varphi-\varphi^-) + \hat a_{23} x + \hat a_{24}(y-y^-)+\dots, \\
\tilde x-x^+ &= \hat{a}_{31} r + \hat a_{32} (\varphi-\varphi^-) + \hat a_{33} x + \hat a_{34}(y-y^-)+\dots, \\
\tilde y &= \hat{a}_{41} r + \hat a_{42} (\varphi-\varphi^-) + \hat a_{43} x + \hat a_{44}(y-y^-)+\dots, 
\end{aligned}
\end{equation}
where the dots denote  the Taylor remainder of order two.  Observe that the transversality in Definition~\ref{defi:phorbit}  holds at every point of the homoclinic orbit, and, in particular, we have the transverse intersections of $\ell^\uu$ with $T_1^{-1}(W^\s_\loc(\A))$ at $M^-$, and of $\ell^\ss$ with $T_1(W^\u_\loc(\A))$ at $M^+$. This implies that  
\begin{equation}\label{8281}
\hat a_{44}\neq 0.
\end{equation}
To see this, take any non-zero vector $(\Delta \tilde r, \Delta \tilde \varphi, \Delta \tilde x,0)\in {T}_{M^+} W^\s(\A_\loc)$ and denote its preimage by the derivative $\D T_1$ as $(\Delta  r, \Delta  \varphi,  \Delta x,\Delta y)$.  The intersection of $\ell^\uu$ with $T_1^{-1}(W^\s_\loc(\A))$ is not transverse if and only if the preimage lies in $T_{M^-} \ell^{\uu}$, i.e., $(\Delta  r, \Delta  \varphi,  \Delta x)=0$. By   \eqref{eq:T1}, we have 
\begin{equation*}
\Delta \tilde r =  \hat a_{14}\Delta y, \quad
\Delta \tilde \varphi =  \hat a_{24}\Delta y,  \quad
\Delta \tilde x=  \hat a_{34}\Delta y,  \quad
0=  \hat a_{44}\Delta y. 
\end{equation*}
Hence,  the non-transversality is equivalent to the existence of non-zero solutions to this system, i.e., to  $\hat a_{44} = 0$.   This proves \eqref{8281}. 

We then rewrite \eqref{eq:T1} as
\begin{equation}\label{eq:T1_general}
\begin{aligned}
\tilde r &= \tilde{a}_{11} r + \tilde a_{12} (\varphi-\varphi^-) + \tilde a_{13} x + \tilde a_{14}\tilde y+\dots, \\
\tilde \varphi - \varphi^+ &= \tilde{a}_{21} r + \tilde a_{22}(\varphi-\varphi^-)+ \tilde a_{23} x + \tilde a_{24}\tilde y+\dots, \\
\tilde x-x^+ &= \tilde{a}_{31} r + \tilde a_{32} (\varphi-\varphi^-) + \tilde a_{33} x + \tilde a_{34}\tilde y+\dots, \\
 y-y^- &= \tilde{a}_{41} r + \tilde a_{42}(\varphi-\varphi^-) + \tilde a_{43} x + \tilde a_{44}\tilde y+\dots, 
\end{aligned}
\end{equation}
for some constants $\tilde a_{ij}$ with $\tilde a_{44}\neq 0$.

\subsubsection{Scattering map}\label{sec:scattering}
We now take the disc $\Sigma^+$ used to define the scattering map  ${S}$  in  \eqref{eq:scattering_1} such that it contains $M^+$, and take $\Sigma^-:=T_1^{-1}(\Sigma^+)$. The  scattering map  assumes the form
\begin{equation}\label{eq:scattering_3}
S=\pi^\s\circ T_1|_{\Sigma^-}\circ ( \pi^\u)^{-1}:{\pi^\u(\Sigma^-)}\to \pi^\s(\Sigma^+),
\end{equation}
where $\pi^\s$ and $\pi^\u$ are the holonomy maps.  Note that the scattering map $\hat S$ defined by a different choice of $\Sigma^+$ and $\Sigma^-$ is a composition of $S$ with iterations of $T_0$, i.e.,
\begin{equation}\label{eq:changesigma}
\hat S= T_0^{n_1} \circ S\circ T_0^{n_2},
\end{equation}
for some integers $n_1$ and $n_2$.
 By \citep[Proposition 6]{GelTur:17}, the scattering map preserves the symplectic form on $\A$, i.e., $\det \D S(\pi^{\u}(M^-)) =1$.

\begin{lem}\label{lem:scattering_deri}
$$\D S(\pi^{\u}(M^-))=\D T_1|_{\Sigma^-}(M^-)= 
\begin{pmatrix}
\tilde{a}_{11}  & \tilde a_{12}\\
\tilde{a}_{21}  & \tilde a_{22}
\end{pmatrix},$$ 
where $\tilde a_{ij}$ are the coefficients from (\ref{eq:T1_general}).
In particular, $ \det \D T_1|_{\Sigma^-}(M^-)=1$.
\end{lem}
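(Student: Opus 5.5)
The plan is to compute the differential of each factor of $S=\pi^\s\circ T_1|_{\Sigma^-}\circ(\pi^\u)^{-1}$ at the relevant points in the Fenichel coordinates, and show that the two holonomies act as the identity on the $(r,\pp)$-components.

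\textbf{Holonomies.} By \eqref{eq:strait2a}, the leaves of $\mathcal{F}^\ss$ in $W^\s_\loc(\A)=\{y=0\}$ are $\{(r,\pp)=\const,\,y=0\}$, and those of $\mathcal{F}^\uu$ in $W^\u_\loc(\A)=\{x=0\}$ are $\{(r,\pp)=\const,\,x=0\}$. Hence
\[
\pi^\s(r,\pp,x,0)=(r,\pp),\qquad \pi^\u(r,\pp,0,y)=(r,\pp).
\]
So $(\pi^\u)^{-1}$ sends $(r,\pp)$ to the unique point of $\Sigma^-$ with these $(r,\pp)$-coordinates. Its existence and uniqueness near $M^-$ follow from the transversality of $\Sigma^-$ to $\mathcal{F}^\uu$, which is \eqref{eq:condition:trans1}. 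Consequently $\D S(\pi^\u(M^-))$ is the $(r,\pp)$-block of $\D T_1$ applied to tangent vectors of $\Sigma^-$ at $M^-$, with those vectors parametrized by $(\Delta r,\Delta\pp)$. This already gives the middle equality in the statement, with $\D T_1|_{\Sigma^-}(M^-)$ read in the $(r,\pp)$-coordinates on $\Sigma^\pm$.

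\textbf{Tangent plane of $\Sigma^-$.} Since $\Sigma^+\subset W^\s_\loc(\A)=\{y=0\}$ and $\Sigma^-\subset W^\u_\loc(\A)=\{x=0\}$, every tangent vector to $\Sigma^-$ at $M^-$ has the form $(\Delta r,\Delta\pp,0,\Delta y)$, and it is mapped by $\D T_1$ to a vector with $\Delta\tilde y=0$.

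\textbf{Reading off the block.} Insert $x=0$ and $\tilde y=0$ into the linear part of the cross-form \eqref{eq:T1_general}. This is legitimate because $\tilde a_{44}\neq0$ by \eqref{8281}, so \eqref{eq:T1_general} is an equivalent description of $T_1$. The first two equations become
\[
\Delta\tilde r=\tilde a_{11}\Delta r+\tilde a_{12}\Delta\pp,\qquad \Delta\tilde\pp=\tilde a_{21}\Delta r+\tilde a_{22}\Delta\pp,
\]
while the fourth equation determines the $\Delta y$-component of the tangent vector. This yields exactly the claimed matrix.

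\textbf{Determinant.} The scattering map is symplectic by \citep[Proposition 6]{GelTur:17}, and the form on $\A$ is $dr\wedge d\pp$ by \eqref{8222}. Therefore $\det\D S=1$, and hence the determinant of the matrix is $1$.

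The one point that needs care is the first step: both holonomies must be the identity in $(r,\pp)$ to first order, and not merely up to some linear correction. This holds because the foliations are exactly straightened in the Fenichel coordinates, and $\Sigma^\pm$ lie in $\{y=0\}$ and $\{x=0\}$ respectively. Given that, everything else is bookkeeping with the cross-form.
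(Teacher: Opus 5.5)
Your proposal is correct and follows essentially the same route as the paper. In Fenichel coordinates the holonomies are just the $(r,\pp)$-projections, so you parametrize $\Sigma^\pm$ by $(r,\pp)$ and substitute $x=0$, $\tilde y=0$ into the linear part of the cross-form \eqref{eq:T1_general} to read off the matrix. The determinant then comes from the symplecticity of $S$ via \citep[Proposition 6]{GelTur:17}. You also state explicitly that the holonomies are exactly these projections, which the paper leaves implicit in this proof and only mentions later, before Lemma~\ref{lem:equitangency}.
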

\begin{proof}
Recall that in coordinates \eqref{eq:strait}, one has $W^\s_\loc(\A)=\{y=0\}$ and $W^\u_\loc(\A)=\{x=0\}$.
By condition \eqref{eq:condition:trans1}, $\Sigma^+\subset W^\s_\loc(\A)$ and $\Sigma^-\subset W^\u_\loc(\A)$  are  given by 
\begin{equation}\label{eq:sigmapm}
\{\tilde x = h^+(\tilde r,\tilde \pp), \tilde y=0 \}
 \quad \mbox{and} \quad 
 \{x=0, y=h^-(r,\pp)\},
\end{equation} 
for some smooth functions $h^\pm$, respectively.  Substituting $x=0$ and $\tilde y=0$ into the first two equations of \eqref{eq:T1_general}, yields the formula for $T_1|_{\Sigma^-}:\Sigma^-\to \Sigma^+$ as 
\begin{equation*}
\begin{aligned}
\tilde r &= \tilde{a}_{11} r + \tilde a_{12} (\varphi-\varphi^-) + \tilde a_{13} h^-_1(r,\pp) + \tilde a_{14}h_2^+(\tilde r,\tilde \pp)+\dots, \\
\tilde \varphi - \varphi^+ &= \tilde{a}_{21} r + \tilde a_{22}(\varphi-\varphi^-)+ \tilde a_{23} h^-_1(r,\pp) + \tilde a_{24}h_2^+(\tilde r,\tilde \pp)+\dots,
\end{aligned}
\end{equation*}
where the dots denote terms of at least second order that are functions of $r$ and $\pp$.  The statement of the lemma then follows from  \eqref{eq:scattering_3}.
\end{proof}

\subsubsection{An orbit of homoclinic tangency}\label{sec:transtangency}
Let us now consider the case where $\Gamma$ is an  orbit of tangency, namely, the tangent space $T_{M^+}W^\s_{\loc}(\gamma)$ has a non-zero intersection with $\D T_1({T}_{M^-}W^\u_{\loc}(\gamma))$. This means that $(0,\Delta \tilde\pp,\Delta \tilde x,0)=\D T_1(0,\Delta \pp,0,\Delta  y)$ for some non-zero vector $(0,\Delta \tilde\pp,\Delta \tilde x,0)$, which immediately gives  $\tilde a_{12}=0$.
It then follows from Lemma~\ref{lem:scattering_deri} that $\tilde a_{11}$ and $\tilde a_{22}$ are non-zero and $\tilde a_{11}^{-1}=\tilde a_{22}$. Moreover, if the symplectic coordinates are chosen such that $T_0|_\gamma$ is the rigid rotation, then 
\begin{equation}\label{eq:a11}
\tilde a_{11}^{-1}=\tilde a_{22}=\thebeta,
\end{equation}
where $\thebeta$ is given by \eqref{beta}. Since $ d(T_0|_\gamma)/d\pp=1$ in this case, relation \eqref{eq:changesigma} implies that the value of $\alpha$ does not depend on the choice of the discs $\Sigma^+$ and $\Sigma^-$. 

Note that relation (\ref{eq:a11}) automatically holds for a more general choice of symplectic coordinates, where $T_0|_\gamma$ is conjugate to a rotation via  a diffeomorphism whose derivative at  $\pp=\pp^\pm$ equals 1. Such are the coordinates we choose for the proof of Theorem~\ref{thm:main_blender_cubic_nonpara}, see Remark~\ref{rem:C1conju}. 

Since $\alpha\neq 0 $, we can resolve \eqref{eq:T1_general} with respect to $\pp$, which together with $\tilde a_{12}=0$ yields
\begin{equation}\label{eq:T1_xform0}
\begin{aligned}
\tilde r &= \thebetai r +     a_{13} x +  a_{14}\tilde y+\dots, \\
\varphi-\varphi^-&= {a}_{21} r + \thebetai (\tilde \varphi - \varphi^+) +  a_{23} x +  a_{24}\tilde y+\dots, \\
\tilde x-x^+ &= a_{31} \tilde r +  a_{32} (\tilde \varphi - \varphi^+) +  a_{33} x +  a_{34}\tilde y+\dots, \\
y-y^- &= a_{41} r +  a_{42}(\tilde \varphi - \varphi^+)+  a_{43} x +  a_{44}\tilde y+\dots,
\end{aligned}
\end{equation}
where $a_{ij}$ are some coefficients, and the dots represent the Taylor remainder of  order two. 
 The formula  implies that the common tangent space of $W^\s_{\loc}(\gamma)$ with $T_1(W^\u_{\loc}(\gamma))$ is one-dimensional.

This is the general formula for the transition map along a partially-hyperbolic homoclinic tangency,  without information on the order of the tangency. Recall that two manifolds with a one-dimensional common tangent space, $W^\s_{\loc}(\gamma)$ and $T_1(W^\u_{\loc}(\gamma))$ in our case, have a tangency of order $\ell$ if there exist local coordinates $(u,v)\in \mathbb R^N\times \mathbb R^N$ near $M^+$ such that $W^\s_{\loc}(\gamma)$ remains straightened as $\{u_1=\dots=u_N=0\}$ and $T_1(W^\u_{\loc}(\gamma))$ is given by $\{u_1=h(v_1),v_2=\dots=v_N=0\}$ for some function $h$ vanishing along with its derivatives up to order $\ell$  at $0$, and $h^{(\ell+1)}(0)\neq 0$. 

Substituting the equation $\{r=0,x=0\}$ for $W^\u_{\loc}(\gamma)$ into \eqref{eq:T1_xform0} gives the equation of 
$T_1(W^\u_{\loc}(\gamma))$ as 
\begin{equation*}
\begin{aligned}
\tilde r &=     a_{14}\tilde y+\dots=:h_1(\tilde \varphi,\tilde y), \\
\tilde x-x^+ &=   a_{31}\tilde r + a_{32} (\tilde \varphi - \varphi^+)  +  a_{34}\tilde y+\dots=:h_2(\tilde \varphi,\tilde y).
\end{aligned}
\end{equation*}
Denote by $a^{(i)}$ the coefficients  of the terms $(\tilde \varphi - \varphi^+)^i$ in $h_1(\tilde \varphi,\tilde y)$.
One readily sees that the coordinate transformation   $\tilde r^{\mathrm{new}}=\tilde r-h_1(\tilde \varphi,\tilde y)+h_1(\tilde \varphi,0), \; \tilde x^{\mathrm{new}}=\tilde x-h_2(\tilde \varphi,\tilde y)$ leads to the standard coordinates for defining the order of the tangency.  We therefore have that {\em the partially-hyperbolic homoclinic tangency of $\gamma$ has order $\ell$} if $a_{12}^{(i)}=0$ for $i=1,\dots,\ell$ and $a_{12}^{(\ell+1)}\neq 0$. Thus, for such a tangency, $T_1(r,\varphi,x,y)=(\tilde r,\tilde \varphi,\tilde x,\tilde y)$ if and only if
\begin{equation}\label{eq:T1_xform_n}
\begin{aligned}
\tilde r &=\thebetai r + \dd (\tilde \varphi - \varphi^+)^{\ell+1} +  a_{13} x +  a_{14}\tilde y+\dots, \\
\varphi-\varphi^- &= a_{21} r +  \thebetai(\tilde \varphi - \varphi^+) +  a_{23} x +  a_{24}\tilde y+\dots, \\
\tilde x-x^+ &= a_{31}  r +  a_{32} (\tilde \varphi - \varphi^+) +  a_{33} x +  a_{34}\tilde y+\dots, \\
y-y^- &= a_{41} r +  a_{42} (\tilde \varphi - \varphi^+) +  a_{43} x +  a_{44}\tilde y+\dots,
\end{aligned}
\end{equation}
where $\dd :=a^{(\ell+1)}\neq 0$.
Here the dots represent the remaining terms of  the  Taylor polynomial of order $\ell+1$, plus the Taylor remainder of higher order. Note that there are no terms $(\tilde \varphi - \varphi^+)^i$ for $i\leqslant \ell+1$ in the first equation. 

Note that, to define a tangency of order $\ell$, we need the map $T_1$ to be at least $C^{\ell+1}$ (so that the coefficient $\beta$ is defined). 
In the case where the map  is $C^{\ell+1}$ but not necessarily $C^{\ell+2}$, we define {\em $(\ell+1)$-flat} tangencies by
requiring the vanishing of the first $\ell+1$ derivatives of $\tilde r$ with respect to $\tilde \pp$ in the  transition map.
In this case, the transition map takes the form \eqref{eq:T1_xform_n} with  $\dd =0$. 

Under the assumptions of Theorem \ref{thm:main_blender_cubic_nonpara}, only $C^2$  coordinates can be guaranteed, and hence it is not always possible to define cubic tangencies -- but we can still have 2-flat tangencies, whose transition map takes the form
\begin{equation}\label{eq:T1_qudra+0}
\begin{aligned}
\tilde r &=\thebetai r  +  a_{13} x +  a_{14}\tilde y+ p_1(\tilde \varphi )+q_1(r,\tilde \pp,x,\tilde y), \\
\varphi-\varphi^- &= a_{21} r +  \thebetai(\tilde \varphi - \varphi^+) +  a_{23} x +  a_{24}\tilde y+b_2(\tilde \varphi - \varphi^+)^2\\
&\qquad\qquad\qquad\qquad\qquad\qquad\qquad\qquad\qquad\qquad
 +p_2(\tilde \varphi )+q_2(r,\tilde \pp,x,\tilde y), \\
\tilde x-x^+ &= a_{31}  r+  a_{32} (\tilde \varphi - \varphi^+) +  a_{33} x +  a_{34}\tilde y+b_3(\tilde \varphi - \varphi^+)^2\\
&\qquad\qquad\qquad\qquad\qquad\qquad\qquad\qquad\qquad\qquad
+
p_3(\tilde \varphi )+q_3(r,\tilde \pp,x,\tilde y), \\
y-y^- &= a_{41} r +  a_{42} (\tilde \varphi - \varphi^+) +  a_{43} x +  a_{44}\tilde y + b_4(\tilde \varphi - \varphi^+)^2\\
&\qquad\qquad\qquad\qquad\qquad\qquad\qquad\qquad\qquad\qquad
+p_4(\tilde \varphi )+q_4(r,\tilde \pp,x,\tilde y).  
\end{aligned}
\end{equation}
Here $b_{2,3,4}$ are constants,  and the functions $p$ and $q$ satisfy
\begin{equation}\label{eq:aa0}\begin{array}{l}
p=o((\tilde \varphi-\pp^+ )^{2}),\qquad
\dfrac{\p p}{\p \tilde\pp} = o(\tilde \varphi-\pp^+ ),\\
q=O(r^2+x^2+\tilde y^2+|\tilde \varphi-\pp^+|(|r|+\|x\|+\|\tilde y\|)),\\
\dfrac{\p q}{\p \tilde\pp} = O(|r|+
\|x\| + \|\tilde y\|),\qquad
\dfrac{\p q}{\p (r, x, \tilde y)} = O(|r|+|\tilde \varphi-\pp^+|
+\|x\| + \|\tilde y\|).\end{array}
\end{equation}

For the convenience in later computations, we substitute
\begin{align*}
r&=\alpha(\tilde r -  a_{13} x -  a_{14}\tilde y- p_1(\tilde \varphi )-q_1(r,\tilde \pp,x,\tilde y)),\\
\tilde \varphi - \varphi^+ &=\alpha
(\varphi-\varphi^- -a_{21} r -  a_{23} x -  a_{24}\tilde y-b_2(\tilde \varphi - \varphi^+)^2\\
&\qquad\qquad\qquad\qquad\qquad\qquad\qquad\qquad\qquad
-
p_2(\tilde \varphi )-q_2(r,\tilde \pp,x,\tilde y)
)
\end{align*}
into the terms $a_{31}r$, $a_{42} (\tilde \varphi - \varphi^+)$, $b_4(\tilde \varphi - \varphi^+)^2$ in \eqref{eq:T1_qudra+0} and obtain
\begin{equation}\label{eq:T1_qudra+}
\begin{aligned}
\tilde r &=\thebetai r  +  a_{13} x +  a_{14}\tilde y+ p_1(\tilde \varphi )+q_1(r,\tilde \pp,x,\tilde y), \\
\varphi-\varphi^- &= a_{21} r +  \thebetai(\tilde \varphi - \varphi^+) +  a_{23} x +  a_{24}\tilde y+b_2(\tilde \varphi - \varphi^+)^2\\
&\qquad\qquad\qquad\qquad\qquad\qquad\qquad\qquad\qquad\qquad
+
p_2(\tilde \varphi )+q_2(r,\tilde \pp,x,\tilde y), \\
\tilde x-x^+ &= a_{31} \tilde r+  a_{32} (\tilde \varphi - \varphi^+) +  a_{33} x +  a_{34}\tilde y+b_3(\tilde \varphi - \varphi^+)^2\\
&\qquad\qquad\qquad\qquad\qquad\qquad\qquad\qquad\qquad\qquad
+
p_3(\tilde \varphi )+q_3(r,\tilde \pp,x,\tilde y), \\
y-y^- &= a_{41} r +  a_{42} (\varphi - \varphi^-) +  a_{43} x +  a_{44}\tilde y + b_4(\varphi - \varphi^-)^2\\
&\qquad\qquad\qquad\qquad\qquad\qquad\qquad\qquad\qquad\qquad
+p_4(\tilde \varphi )+q_4(r,\tilde \pp,x,\tilde y),
\end{aligned}
\end{equation}
whith modified coefficients $a_{3i}$, $a_{4i}$, $b_4$ and functions $p_{3,4}$ and $q_{3,4}$, which satisfy the same estimates \eqref{eq:aa0}.
~\\

In Theorem~\ref{thm:main_2punfolding}, we consider a family of systems depending on parameters $\eps$ and assume that at $\eps=0$ the system has a whiskered KAM-torus $\gamma$ with a hyperbolic homoclinic tangency. In this case the transition maps are sufficiently smooth (at least $C^3$) so that we can define quadratic and cubic tangencies. 

Suppose first that the tangency is quadratic. We take the Fenichel coordinates \eqref{eq:strait} which depend smoothly on parameters. The local map $T_0$ is given in the form  \eqref{eq:T0_xform}  for all small $\eps$, with coefficients  depending on $\eps$. Similarly, $T_1$ keeps the form \eqref{eq:T1_xform_n}  except that there are extra terms in the $\tilde r$-equation:
$$\tilde r = \thebetai r + \hat\mu_1  + \hat \mu_2(\tilde \varphi - \varphi^+)+ \dd (\tilde \varphi - \varphi^+)^2 +  a_{13} x +  a_{14}\tilde y+\dots,$$
where $\hat \mu_1$ and $\hat \mu_2$ vanish at $\eps=0$.  Here and below we  omit the explicit dependence of coefficients on $\eps$.

Since $\dd\neq 0$, the term $\hat \mu_2 (\tilde \varphi - \varphi^+ )$ can be killed for all small $\eps$ by adding a small correction to $\pp^+$ depending on $\eps$. Thus, the formula \eqref{eq:T1_xform_n} for $T_1$ can be rewritten as
\begin{equation}\label{eq:T1_quadra}
\begin{aligned}
\tilde r &=\mu+\thebetai  r +  \dd (\tilde \varphi - \varphi^+)^2 +  a_{13} x +  a_{14}\tilde y+\dots, \\
\varphi-\varphi^- &= a_{21} r +  \thebetai (\tilde \varphi - \varphi^+) +  a_{23} x +  a_{24}\tilde y+\dots, \\
\tilde x-x^+ &= a_{31}  r +  a_{32} (\tilde \varphi - \varphi^+) +  a_{33} x +  a_{34}\tilde y+\dots, \\
y-y^- &= a_{41} r +  a_{42} (\tilde \varphi - \varphi^+) +  a_{43} x +  a_{44}\tilde y+\dots,
\end{aligned}
\end{equation}
where  the dots are terms of order at least two, and  there are no  $(\tilde \varphi - \varphi^+)^2$ term in the first equation. 
By \eqref{eq:scatter_new}, the coefficient $\mu$ in this formula is the splitting parameter introduced in Section~\ref{sec:quadraunfold}.

Similarly, in the case of a cubic tangency, $T_1$ takes the form
\begin{equation}\label{eq:T1_cubic}
\begin{aligned}
\tilde r &=\mu+ \thebetai r +\nu(\tilde \varphi - \varphi^+)+  \dd (\tilde \varphi - \varphi^+)^3 +  a_{13} x +  a_{14}\tilde y+\dots, \\
\varphi-\varphi^- &= a_{21} r +  \thebetai(\tilde \varphi - \varphi^+) +  a_{23} x +  a_{24}\tilde y+\dots, \\
\tilde x-x^+ &= a_{31}  r +  a_{32} (\tilde \varphi - \varphi^+) +  a_{33} x +  a_{34}\tilde y+\dots, \\
y-y^- &= a_{41} r +  a_{42} (\tilde \varphi - \varphi^+) +  a_{43} x +  a_{44}\tilde y+\dots,
\end{aligned}
\end{equation}
where  the dots are terms of order at least two, and  there are no  $(\tilde \varphi - \varphi^+)^2$ and  $(\tilde \varphi - \varphi^+)^3$ terms in the dots of the first equation.  Again by \eqref{eq:scatter_new}, the coefficient $\nu$ in \eqref{eq:T1_cubic} is the other splitting parameter introduced in Section~\ref{sec:quadraunfold}.

\subsection{Transition maps for heteroclinic orbits}\label{transhe}

In the proofs, we also consider partially-hyperbolic {\em heteroclinic} orbits connecting two whiskered tori  $\gamma_1$ and $\gamma_2$ in $\A$, i.e., heteroclinic orbits satisfying \eqref{eq:condition:trans1}. A transition map can be defined in the same way as in the homoclinic case. More specifically, we  consider coordinates where both $\gamma_1$ and $\gamma_2$ are straightened, i.e., $\gamma_i=\{r=r_i,x=y=0\}$. Let $\mathcal{O}$ be a heteroclinic orbit and take points $M^-=(r_1,\pp^-,0,y^-)\in \mathcal{O}\cap W^\u_{\loc}(\gamma_1)$ and $M^+=(r_2,\pp^+,x^+,0)\in \mathcal{O}\cap W^\s_{\loc}(\gamma_2)$ satisfying $M^+=f^n(M^-)$ for some $n\in\mathbb N$. The transition map from a small neighborhood of $M^-$ to a small neighborhood of $M^+$ takes the form
\begin{equation}\label{eq:T1_hetero}
\begin{aligned}
\tilde r - r_2 &=\thebetai (r-r_1) + \dd (\tilde \varphi - \varphi^+)^{\ell+1} +  a_{13} x +  a_{14}\tilde y+\dots, \\
\varphi-\varphi^- &= {a}_{21} (r-r_1) +  \thebetai(\tilde \varphi - \varphi^+) +  a_{23} x +  a_{24}\tilde y+\dots, \\
\tilde x-x^+ &= {a}_{31} (r-r_1) +  a_{32} (\tilde \varphi - \varphi^+) +  a_{33} x +  a_{34}\tilde y+\dots, \\
y-y^- &= {a}_{41} (r-r_1) +  a_{42} (\tilde \varphi - \varphi^+) +  a_{43} x +  a_{44}\tilde y+\dots,
\end{aligned}
\end{equation}
where $\alpha,\dd,a_{44}$ are non-zero, and $\ell\geq 0$ is the order of the tangency (with $\ell=0$ for transverse heteroclinics).
\begin{rem}\label{rem:scat_trans}
Similar to the homoclinic case, the scattering map $S$ can also be defined along any partially-hyperbolic heteroclinic orbit connecting two whiskered tori, by the same formula \eqref{eq:scattering_3} with the transition map $T_1$ given by \eqref{eq:T1_hetero} and the holonomy maps $\pi^\u$ and $\pi^\s$ defined, respectively, in $W^\u_{\loc}(\gamma_1)$
and $W^\s_{\loc}(\gamma_2)$.  
\end{rem}

Remark~\ref{rem:scat_trans} allows us to establish a correspondence between the tangencies of  $T_1(W^\u_{\loc}(\gamma_1))$ with $W^\s_{\loc}(\gamma_2)$ and those of  $S(\gamma_1)$ with $ \gamma_2$. To see this, note that the holonomy maps used to define $S$ in \eqref{eq:scattering_3} are projections to $(r,\pp)$-coordinates, due to the straightening of the foliations. It follows from \eqref{eq:T1_hetero} that the scattering map $S$ restricted to a small neighborhood of $\pi^\u(M^-)\in\gamma_1$ is given by
\begin{equation}\label{eq:scatter_new}
\begin{aligned}
\tilde r - r_2 &=\thebetai (r-r_1) + \dd (\tilde \varphi - \varphi^+)^{\ell+1} +\dots, \\
\varphi-\varphi^- &= {a}_{21} (r-r_1) +  \thebetai(\tilde \varphi - \varphi^+) +\dots,
\end{aligned}
\end{equation}
which implies that the image $S\circ  \pi^\u(\Sigma^-\cap W^\u_{\loc}(\gamma_1))\subset S(\gamma_1)$ is given by
$$\tilde r - r_2 =    \dd (\tilde \varphi - \varphi^+)^{\ell+1}  +\dots.$$
Thus, with the obvious definition of tangencies in $\mathbb{R}^2$, we obtain
\begin{lem}\label{lem:equitangency}
For two whiskered tori $\gamma_1$ and $\gamma_2$, a partially-hyperbolic  tangency between $T_1(W^\u_{\loc}(\gamma_1))$ and $W^\s_{\loc}(\gamma_2)$ at $M^+=T_1(M^-)$ corresponds to
a tangency between $S(\gamma_1)$ and $\gamma_2$ at $(r_2,\pp^+)=S(r_1,\pp^-)$, respecting the order of the tangency.
The same is true for transverse intersections (tangencies of order 0).
\end{lem}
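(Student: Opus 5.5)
The argument works entirely in the Fenichel coordinates of Section~\ref{sec:innermap}. There $\gamma_i=\{r=r_i,x=y=0\}$, and the leaves of $\mathcal{F}^\ss$ and $\mathcal{F}^\uu$ are $\{(r,\pp)=\const,\,y=0\}$ and $\{(r,\pp)=\const,\,x=0\}$. Consequently $W^\u_\loc(\gamma_1)=\{r=r_1,\,x=0\}$ and $W^\s_\loc(\gamma_2)=\{r=r_2,\,y=0\}$, and the holonomies $\pi^\u,\pi^\s$ are simply the projection $(r,\pp,x,y)\mapsto(r,\pp)$. The plan is to compare two curves: the curve $T_1(W^\u_\loc(\gamma_1))\cap W^\s_\loc(\A)$, which lies in the $N$-dimensional manifold $W^\s_\loc(\A)=\{y=0\}$, and the planar curve $S(\gamma_1)$. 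We then show that the tangency condition for the pair of $N$-dimensional manifolds reduces to the corresponding condition for the pair of curves in $\A$.

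\emph{Step 1.} Substitute $r=r_1$, $x=0$ into \eqref{eq:T1_hetero} and use the cross-form. Here $\tilde y$ and $\tilde\pp$ are free variables, while $\pp$ and $y$ are determined by the second and fourth equations (possible since $\alpha\ne0$). This describes $T_1(W^\u_\loc(\gamma_1))$ as a graph
\[
\tilde r-r_2=h_1(\tilde\pp,\tilde y),\qquad \tilde x-x^+=h_2(\tilde\pp,\tilde y),
\]
with $h_1(\tilde\pp,0)=\dd(\tilde\pp-\pp^+)^{\ell+1}+\dots$. Intersecting with $W^\s_\loc(\A)=\{\tilde y=0\}$ gives the curve $\tilde r-r_2=h_1(\tilde\pp,0)$, $\tilde x-x^+=h_2(\tilde\pp,0)$, and its projection to $\A$ is $\tilde r-r_2=h_1(\tilde\pp,0)$.

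\emph{Step 2.} Show that this projected curve coincides with $S(\gamma_1)$ near $(r_2,\pp^+)$. We have $\Sigma^-\subset W^\u_\loc(\A)=\{x=0\}$ and $T_1(\Sigma^-)=\Sigma^+\subset\{y=0\}$, so $\Sigma^-\cap W^\u_\loc(\gamma_1)$ is exactly the set of points with $r=r_1$, $x=0$ that $T_1$ maps into $\{\tilde y=0\}$. By \eqref{eq:scattering_3}, $S(\gamma_1)$ near that point is therefore the $(r,\pp)$-projection of $T_1(W^\u_\loc(\gamma_1))\cap\{\tilde y=0\}$, which is the curve from Step~1. This is exactly \eqref{eq:scatter_new}.

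\emph{Step 3.} Compare orders. Use the coordinate change from Section~\ref{sec:transtangency}:
\[
\tilde r^{\new}=\tilde r-h_1(\tilde\pp,\tilde y)+h_1(\tilde\pp,0),\qquad \tilde x^{\new}=\tilde x-h_2(\tilde\pp,\tilde y).
\]
It keeps $W^\s_\loc(\gamma_2)=\{\tilde r=r_2,\tilde y=0\}$ straight and puts $T_1(W^\u_\loc(\gamma_1))$ into the normal form $\{\tilde r^{\new}-r_2=h_1(\tilde\pp,0),\ \tilde x^{\new}=x^+,\ \tilde y \text{ free}\}$. The complementary directions $\tilde x,\tilde y$ are then matched transversally, which is where partial hyperbolicity and $a_{44}\ne0$ enter. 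As a result, the only obstruction to transversality is the one-dimensional graph $h_1(\tilde\pp,0)$ against $0$. So the order of tangency of the $N$-manifolds equals the order of vanishing of $h_1(\cdot,0)$ at $\pp^+$, and by Step~2 this is the order of contact between $S(\gamma_1)$ and $\gamma_2=\{r=r_2\}$. If $h_1'(\pp^+,0)\ne0$, both intersections are transverse, which gives the order-$0$ case. Finally, $S(r_1,\pp^-)=(r_2,\pp^+)$ because $M^\pm$ project to these points.

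The main obstacle is Step~3: making sure that the normal-form change preserves the definition of order and does not introduce spurious dependence on $\tilde y$. Since both the change of coordinates and $S$ involve only the restriction $\tilde y=0$, this amounts to checking that the change is a local diffeomorphism fixing $W^\s_\loc(\gamma_2)$, which is immediate.
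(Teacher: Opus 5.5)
Your proposal is correct and follows essentially the same route as the paper. In Fenichel coordinates the holonomies $\pi^{\u},\pi^{\s}$ are just the $(r,\pp)$-projections, so restricting \eqref{eq:T1_hetero} to $\Sigma^-\subset\{x=0\}$ and $\Sigma^+\subset\{\tilde y=0\}$ gives \eqref{eq:scatter_new} and shows $S(\gamma_1)$ as the graph $\tilde r-r_2=h_1(\tilde\pp,0)$; the order of the $N$-dimensional tangency is then read off from the same function via the normal-form change of coordinates of Section~\ref{sec:transtangency}, which your Step~3 makes explicit and the paper builds into its definition of the order.
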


\section{Existence of cs/cu-blenders}\label{sec:proofcu}
In this section, we prove Theorem~\ref{thm:main_blender_cubic_nonpara}. 
Recall that this is a non-perturbative result, so  no parameters are involved.   We first  rewrite the formulas for $T_0$ and $T_1$ obtained in Section~\ref{sec:para} in certain rescaled coordinates and derive a formula for the first-return map $T_k=T^k_0\circ T_1$. After that, we prove Theorem~\ref{thm:main_blender_cubic_nonpara} along the lines sketched  in Section~\ref{sec:kam}. Since the case $|\thebeta|<1$ reduces to $|\thebeta|>1$ by considering the inverse map $f^{-1}$ instead of $f$, it is enough to prove Theorem~\ref{thm:main_blender_cubic_nonpara} for the case 
$|\thebeta|>1$ only. Thus, we
assume $|{\thebeta}|>1$ throughout this Section.

\subsection{First-return map in rescaled coordinates}\label{sec:scaling}

\subsubsection{Rescaled transition map}
Recall that $\Pi^-$ and $\Pi^+$ are small neighborhoods of the points 
$M^-=(0,\pp^-,0,y^-)$ and  $M^+=(0,\pp^+,x^+,0)$, respectively.   The  transition map $T_1:\Pi^- \to \Pi^+$ near the 2-flat tangency is given by \eqref{eq:T1_qudra+}. Since the functions $p_i$ in
\eqref{eq:T1_qudra+} are such that $p_i(\pp^++\delta)=o(\delta^2)$
as $\delta\to 0$, we can take sufficiently slowly decreasing to zero
positive functions $\aa$ and $\tilde p(\delta)$ such that 
\begin{equation}\label{eq:aa}
\tilde p(\delta) = o(\aa) 
\quad\mbox{and}\quad p_i(\pp^++\delta) = \tilde p(\delta)\cdot o(\delta^2)
\quad\mbox{as}\quad 
\delta\to 0.
\end{equation}
 
For sufficiently small $\delta>0$, consider the following coordinate change in $\Pi^-$:
\begin{equation}\label{eq:scaling1}
\begin{array}{l}
r=  \aa \cdot \delta^2 R,\quad \varphi-\varphi^-=\delta\Phi,\quad x=\tilde p(\delta) \cdot \delta^2 X, \\[5pt]
 y-y^-= \tilde p(\delta) \cdot \delta^2 Y+a_{41}r+ a_{42}\thebeta(\varphi-\pp^-)+
 a_{43}x+ b_4(\varphi-\pp^-)^2,
\end{array}
\end{equation}
where $a$'s and $b$'s are coefficients from \eqref{eq:T1_qudra+}. Similarly, in $\Pi^+$ we consider the rescaling
\begin{equation}\label{eq:scaling2}
\begin{array}{l}
\tilde r= \aa\cdot \delta^2 \tilde R,\quad \tilde \varphi-\varphi^+=\delta \tilde \Phi,\quad \tilde y= \tilde p(\delta) \cdot \delta^2 \tilde Y,\\[5pt]
\tilde x-x^+=\tilde p(\delta) \cdot \delta^2 \tilde X + a_{31}\tilde r + a_{32}(\tilde \varphi - \varphi^+)
+ a_{34}\tilde y + b_3(\tilde \varphi - \varphi^+)^2.
\end{array}
\end{equation}

In the new coordinates,  formula \eqref{eq:T1_qudra+} for $T_1:(R,\Phi,X,Y)\mapsto(\tilde R,\tilde \Phi,\tilde X,\tilde Y)$ recasts as
\begin{equation}\label{eq:T1_new}
\begin{aligned}
&\tilde R= \thebetai R+\tilde h_1(R,\tilde\Phi,X,\tilde{Y}), \qquad
\Phi=\thebetai\tilde \Phi +\tilde h_2(R,\tilde\Phi,X,\tilde{Y}), \\
&\tilde X=a_{33}X+\tilde h_3(R,\tilde\Phi,X,\tilde{Y}), \qquad
Y =a_{44}\tilde Y +\tilde h_4(R,\tilde\Phi,X,\tilde{Y}),
\end{aligned}
\end{equation}
where $\|\tilde h\|_{C^1}=o(1)_{\delta\to 0}$ by (\ref{eq:aa0}) and (\ref{eq:aa}).

\subsubsection{Rescaled local map}\label{sec:rescalelocl}
Let $\{(r_j,\varphi_j,x_j,y_j)\}_{j=0}^k\subset V$ be an orbit segment of $T_0$ such that
\begin{equation}\label{eq:asign}
(\tilde r,\tilde \varphi,\tilde x,\tilde y):=(r_0,\varphi_0,x_0,y_0)\in \Pi^+\quad\mbox{and}\quad
(\bar r,\bar \varphi,\bar x,\bar y):=(r_k,\varphi_k,x_k,y_k)\in \Pi^-.
\end{equation}
The point $(\tilde r,\tilde \varphi,\tilde x,\tilde y)$ corresponds to rescaled coordinates $(\tilde R, \tilde \Phi, \tilde X, \tilde Y)$ as defined by
\eqref{eq:scaling2} and the point $(\bar r,\bar \varphi,\bar x,\bar y)$
corresponds to rescaled coordinates $(\bar R, \bar \Phi, \bar X, \bar Y)$ as defined by \eqref{eq:scaling1}. 

Our goal is to estimate the map $T_0^k: (\tilde R, \tilde \Phi, \tilde X, \tilde Y)\mapsto (\bar R, \bar \Phi, \bar X, \bar Y)$. We will use formula \eqref{innerlowfinal} -- for that, we need to ensure that the three conditions in Lemma~\ref{lem:innerlowfinal} are satisfied. The second condition is automatic for bounded values of $\tilde \Phi$ when the scaling coefficient $\delta$ in \eqref{eq:scaling1} and \eqref{eq:scaling2} is sufficiently small. Also, 
we will only consider $k$ values such that 
\begin{equation}\label{eq:kr}
k\in[c_1\delta^{-1}, c_2\delta^{-1}],
\end{equation}
for some constants $c_2>c_1>0$. This implies that for bounded $R$ in \eqref{eq:scaling1} we have $k=o(r^{-1/2})$, and hence the first condition in Lemma~\ref{lem:innerlowfinal} is fulfilled as well. For the last condition, we further assume that 
\begin{equation}\label{condition}
|(\varphi^+ -\varphi^-+k\rho) \bModa 1| =O(\delta),
\end{equation}
where we used the notation 
\begin{equation}\label{mod}
a \bModa b = ((a+ b/2)\; \bMod b) - b/2.
\end{equation}
For such choice of $k$, the map $T_0^k$ takes an $O(\delta)$ neighborhood of the homoclinic point $M^+$ to an $O(\delta)$
neighborhood of the homoclinic point $M^-$.

Then, combining  Lemma~\ref{lem:derisyst} and Lemma~\ref{lem:innerlowfinal}, we find the following formula for $T_0^k$ in the non-rescaled coordinates:
\begin{equation}\label{eq:T0^kaaa}
\begin{aligned}
r_k &=r_0 + \hat h_1(r_0,\pp_0,x_0,y_k), \\ 
\varphi_k &=\varphi_0 + k\rho + \hat h_2(r_0,\pp_0,x_0,y_k),\\
 x_k &= \hat h_3(r_0,\pp_0,x_0,y_k),\qquad  y_0 = \hat h_4(r_0,\pp_0,x_0,y_k),
\end{aligned}
\end{equation}
where the functions $\hat h$ satisfy
\begin{equation}\label{eq:derisyst2:11}
\begin{array}{l}
\hat h_1 = O(k^2r_0^2 + |r_0|\cdot |\pp-\pp^+| + \lambda^{k/2}), 
\qquad \hat h_2 = O(k |r_0| + (\pp-\pp^+)^2 + \lambda^{k/2}),\\
\dfrac{\p \hat h_1}{\p {r_0}}  = O(k^2 |r_0| + |\pp-\pp^+| + \lambda^{k/2}), \qquad 
\dfrac{\p \hat h_1}{\p {\pp_0}} = O(k |r_0| + \lambda^{k/2}),\\
\dfrac{\p \hat h_2}{\p {r_0}} = O(k), \qquad 
\dfrac{\p \hat h_2}{\p {\pp_0}} = O(k^2 |r_0| + |\pp-\pp^+| + \lambda^{k/2}),\\
\dfrac{\p \hat h_{1,2}}{\p (x_0,y_k)}
  = O(\lambda^{k/2}), \qquad \|\hat h_{3,4}\|_{C^1}=O(\lambda^k).
\end{array}
\end{equation}

After the scalings (\ref{eq:scaling1}) and (\ref{eq:scaling2}), for $k$ satisfying (\ref{eq:kr}), the map $T_0^k:(\tilde R,\tilde \Phi,\tilde X,\tilde Y)\mapsto(\bar R,\bar \Phi,\bar X,\bar Y)$  takes the form
\begin{equation}\label{eq:T0^k_new}
\begin{aligned}
\bar R&= \tilde  R +\bar h_1(\tilde R,\tilde \Phi,\tilde X,\bar Y), \\
\bar \Phi&=\delta^{-1}(\varphi^+ + k\rho -\varphi^-) +\tilde \Phi + \bar h_2(\tilde R,\tilde \Phi,\tilde X,\bar Y), \\
\bar X&=\bar h_3(\tilde R,\tilde \Phi,\tilde X,\bar Y),\qquad
\tilde Y=\bar h_4(\tilde R,\tilde \Phi,\tilde X,\bar Y),
\end{aligned}
\end{equation}
where $\|\bar h\|_{C^1} =o(1)_{k\to +\infty}= o(1)_{\delta\to 0}$.

\subsubsection{Rescaled first-return map}
The region
\begin{equation}\label{eq:Tk_domain}
\Pi: \{R \in [ - 1, 1], \Phi \in [-1,1], X\in [-1,1]^{N-1}, Y \in [-1,1]^{N-1}\} \subset \Pi^-
\end{equation}
is an $O(\delta)$ neighborhood (in the original, non-rescaled coordinates) of the homoclinic point $M^-$. Combining (\ref{eq:T1_new}) and (\ref{eq:T0^k_new}), we obtain
\begin{lem}\label{lem:normalform}
Let conditions \eqref{eq:kr} and \eqref{condition} be satisfied. For all sufficiently small $\delta$ (hence large $k$),  the points 
 $(R,\Phi,X,Y)$ and $(\bar R,\bar \Phi,\bar X,\bar Y)$ in $\Pi$ are related by the map
$T_k:= T_0^k\circ T_1$ if and only if
\begin{equation}\label{eq:Tk}
\begin{aligned}
\bar R&= {\thebetai}R +h_1( R,\Phi, X,\bar Y), \\
\bar\Phi&=\delta^{-1}((\varphi^+-\varphi^- +k\rho) \bModa 1) +\thebeta \Phi +h_2( R,\Phi, X,\bar Y), \\
\bar X&=h_3( R,\Phi, X,\bar Y),\qquad
Y=h_4(R,\Phi, X,\bar Y),
\end{aligned}
\end{equation}
where $\bModa 1$ is defined in \eqref{mod} and the functions $h$ satisfy
\begin{equation}\label{eq:est1}
\|h\|_{C^1}=o(1)_{\delta\to 0}.
\end{equation}
\end{lem}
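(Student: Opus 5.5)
We compose the two rescaled maps \eqref{eq:T1_new} and \eqref{eq:T0^k_new}, both of which are already in cross-form. The composition $T_k=T_0^k\circ T_1$ relates $(R,\Phi,X,Y)\in\Pi$ to $(\bar R,\bar\Phi,\bar X,\bar Y)\in\Pi$ through an intermediate point $(\tilde R,\tilde\Phi,\tilde X,\tilde Y)\in\Pi^+$. The natural free variables are $(R,\Phi,X,\bar Y)$, and the intermediate unknowns $(\tilde R,\tilde\Phi,\tilde X,\tilde Y)$ should be found as functions of them by a contraction/implicit function argument.

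First, \eqref{eq:T1_new} expresses $(\tilde R,\tilde X)$ and $(\Phi,Y)$ in terms of $(R,\tilde\Phi,X,\tilde Y)$. Since $\alpha\neq0$ and $\|\tilde h\|_{C^1}=o(1)$, the equation $\Phi=\alpha^{-1}\tilde\Phi+\tilde h_2$ can be solved for $\tilde\Phi$:
\[
\tilde\Phi=\alpha\Phi+o(1)_{C^1}.
\]
Substituting this gives $\tilde R=\alpha^{-1}R+o(1)$, $\tilde X=a_{33}X+o(1)$ and $Y=a_{44}\tilde Y+o(1)$, all as $C^1$-small perturbations of linear functions of $(R,\Phi,X,\tilde Y)$.

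Second, we insert these into \eqref{eq:T0^k_new}. That formula gives $\tilde Y=\bar h_4(\tilde R,\tilde\Phi,\tilde X,\bar Y)$ with $\|\bar h_4\|_{C^1}=o(1)$. Combined with the previous step, this is a fixed-point equation
\[
\tilde Y=\bar h_4\bigl(\tilde R(R,\Phi,X,\tilde Y),\tilde\Phi(\cdot),\tilde X(\cdot),\bar Y\bigr),
\]
whose right-hand side has $o(1)$ Lipschitz constant in $\tilde Y$. Its unique solution is therefore $\tilde Y=o(1)_{C^1}$ as a function of $(R,\Phi,X,\bar Y)$. Substituting back yields:
\begin{itemize}[nosep]
\item $\bar R=\tilde R+\bar h_1=\alpha^{-1}R+h_1$;
\item $\bar\Phi=\delta^{-1}(\varphi^++k\rho-\varphi^-)+\alpha\Phi+h_2$;
\item $\bar X=\bar h_3=h_3$;
\item $Y=a_{44}\tilde Y+\tilde h_4=h_4$.
\end{itemize}
In each case the remainder is $C^1$-small by the chain rule, since all the composed functions have bounded derivatives. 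The term $a_{44}\tilde Y$ is itself $o(1)$ in $C^1$ because $\tilde Y$ is. Also, $\varphi$ is an angle, so the shift $\delta^{-1}(\varphi^++k\rho-\varphi^-)$ can be replaced by its $\bModa1$ representative; by \eqref{condition} this representative is $O(1)$.

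We must also check the domain issue: for points of $\Pi$ and their images in $\Pi$, the intermediate point lies in a bounded region of the rescaled $\Pi^+$ coordinates. This holds because $\tilde R$, $\tilde\Phi$ and $\tilde X$ are bounded by the formulas above. Hence the estimates \eqref{eq:derisyst2:11} apply, through \eqref{eq:kr} and \eqref{condition}, and \eqref{eq:T0^k_new} is valid there.

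The main obstacle is the uniformity of the $o(1)$ bounds after rescaling. The derivative $\partial\hat h_2/\partial r_0=O(k)$ becomes, after rescaling $r\sim\hat p(\delta)\delta^2$ and $\varphi\sim\delta$, a term of order $k\,\hat p(\delta)\delta^2/\delta=O(\hat p(\delta))=o(1)$ by \eqref{eq:kr}. The $\lambda^{k/2}$ terms are divided by $\tilde p(\delta)\delta^2$, and they stay small because $k\sim\delta^{-1}$ makes $\lambda^{k/2}$ decay exponentially in $\delta^{-1}$. We take these bounds from \eqref{eq:T0^k_new} and \eqref{eq:T1_new} as stated, so only the composition bookkeeping above remains.
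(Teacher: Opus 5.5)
Your proposal is correct and follows the paper's route: the paper's proof is the single sentence ``combining \eqref{eq:T1_new} and \eqref{eq:T0^k_new}'', and your contraction-mapping elimination of the intermediate variables $\tilde\Phi$ and $\tilde Y$ supplies the bookkeeping that sentence leaves implicit. One caution about your final paragraph: like the paper, you rely on $\|\bar h\|_{C^1}=o(1)$ in \eqref{eq:T0^k_new}, but your rescaling check would fail for the entry $\partial\bar R/\partial\tilde\Phi$ — the bound $\partial\hat h_1/\partial\varphi_0=O(k|r_0|+\lambda^{k/2})$ in \eqref{eq:derisyst2:11} rescales to $O(k\delta|\tilde R|)=O(1)$ under \eqref{eq:kr} — so that entry needs the sharper bound $o(k|r_0|)+O(\lambda^{k/2})$, which does hold (e.g.\ because $\frac{d}{d\varphi}\log G_k'=o(k)$ uniformly by unique ergodicity of the irrational rotation) but is not what \eqref{eq:derisyst2:11} records.
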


\subsection{Cone  lemma}
We now establish  the existence of invariant cone fields in $\Pi$, which implies the uniform hyperbolicity, and the partial hyperbolicity, of the system of first-return maps $T_k$. We denote by $(\Delta R,\Delta \Phi,\Delta X,\Delta Y)$ vectors in the tangent space. 

\begin{lem}\label{lem:conefields} Take any  small $L>0$.
For all sufficiently small $\delta$ and all $k$ satisfying \eqref{eq:kr} and \eqref{condition} such that $T_k$ assumes the form \eqref{eq:Tk}, the cone fields
\begin{align}
\mathcal{C}^{\u} &= \{(\Delta R,\Delta \Phi,\Delta X,\Delta Y): \max\{|\Delta R|,\|\Delta X\|\} \leqslant L(|\Delta \Phi|+\|\Delta Y\|)\},\label{eq:conefields:u}\\
\mathcal{C}^{\uu} &= \{(\Delta R,\Delta \Phi,\Delta X,\Delta Y): \max\{|\Delta R|,|\Delta \Phi|,\|\Delta X\|\}\leqslant L  \|\Delta Y\|\} \subset \mathcal{C}^{\u} ,\label{eq:conefields:uu}
\end{align}
are strictly forward-invariant in the sense that if a point $M\in \Pi$ has its image $\bar M=T_k(M)$ in $\Pi$, 
then the cone at $M$ is mapped into the interior of the cone at $\bar M$ by $\D T_k$; 
and the cone fields
\begin{align}
\mathcal{C}^{\s}&=\{(\Delta R,\Delta \Phi,\Delta X,\Delta Y): \max\{ |\Delta \Phi|,\|\Delta Y\|\}\leqslant L(|\Delta R|+\|\Delta X\|)\},\label{eq:conefields:s}\\
\mathcal{C}^{\ss} &= \{(\Delta R,\Delta \Phi,\Delta X,\Delta Y): \max\{|\Delta R|,|\Delta \Phi|,\|\Delta Y\|\}\leqslant L  \|\Delta X\|\} \subset \mathcal{C}^{\s} , \nonumber 
\end{align}
are strictly backward-invariant in the sense that if a point $\bar{M}\in\Pi$ has its pre-image $M=T^{-1}_{k}(\bar M)$ in $\Pi$, 
then the cone at $\bar{M}$ is mapped into the interior of the cone at $M$ by $\D T^{-1}_k$. Moreover, vectors in $\mathcal{C}^{\u}$ are uniformly expanded by $\D T_{k}$, and vectors in $\mathcal{C}^{\s}$ are uniformly contracted by $\D T_{k}$.
\end{lem}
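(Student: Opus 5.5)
The plan is to work directly with the cross-form \eqref{eq:Tk} of Lemma~\ref{lem:normalform}. The derivative $\D T_k$ is not given explicitly, because $Y$ and $\bar Y$ appear on the "wrong" sides. So I will express the relation between tangent vectors in cross-form as well:
\begin{align*}
\Delta\bar R&=\alpha^{-1}\Delta R+o(1)\cdot(\Delta R,\Delta\Phi,\Delta X,\Delta\bar Y),\\
\Delta\bar\Phi&=\alpha\Delta\Phi+o(1)\cdot(\Delta R,\Delta\Phi,\Delta X,\Delta\bar Y),\\
\Delta\bar X&=o(1)\cdot(\Delta R,\Delta\Phi,\Delta X,\Delta\bar Y),\\
\Delta Y&=o(1)\cdot(\Delta R,\Delta\Phi,\Delta X,\Delta\bar Y).
\end{align*}
Here every $o(1)$ is a bound on the relevant partial derivatives of $h$, uniform on $\Pi$ as $\delta\to0$ by \eqref{eq:est1}. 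These relations hold whenever $M$ and $\bar M=T_k(M)$ both lie in $\Pi$. The constant term $\delta^{-1}((\varphi^+-\varphi^-+k\rho)\bModa 1)$ is independent of the point and drops out after differentiation. Consequently, the only nontrivial linear parts are $\alpha^{-1}$ in $R$ and $\alpha$ in $\Phi$, with $|\alpha|>1$ by the standing assumption of this section. The $X$-direction is strongly contracted and the $Y$-direction strongly expanded, since the corresponding linear coefficients are $o(1)$.

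For $\mathcal{C}^{\u}$ I will use the standard cross-form argument. Take $(\Delta R,\Delta\Phi,\Delta X,\Delta Y)\in\mathcal{C}^{\u}$ at $M$. The fourth equation gives $\|\Delta Y\|\le o(1)(|\Delta R|+|\Delta\Phi|+\|\Delta X\|+\|\Delta\bar Y\|)$. Combined with the cone condition $|\Delta R|,\|\Delta X\|\le L(|\Delta\Phi|+\|\Delta Y\|)$, this yields $\|\Delta Y\|\le o(1)(|\Delta\Phi|+\|\Delta\bar Y\|)$, because $L$ is fixed and $\delta$ is small. It follows that $|\Delta R|,\|\Delta X\|\le (L+o(1))(|\Delta\Phi|+\|\Delta\bar Y\|)$. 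The first three equations then give
\[
|\Delta\bar R|\le(|\alpha|^{-1}L+o(1))(|\Delta\Phi|+\|\Delta\bar Y\|),\qquad |\Delta\bar\Phi|\ge|\alpha||\Delta\Phi|-o(1)(|\Delta\Phi|+\|\Delta\bar Y\|),
\]
and $\|\Delta\bar X\|=o(1)(|\Delta\Phi|+\|\Delta\bar Y\|)$. Hence $\max\{|\Delta\bar R|,\|\Delta\bar X\|\}\le (|\alpha|^{-1}L+o(1))(|\Delta\Phi|+\|\Delta\bar Y\|)$. Since $|\alpha|^{-1}<1\le|\alpha|$, the right-hand side is strictly less than $L(|\Delta\bar\Phi|+\|\Delta\bar Y\|)$ once $\delta$ is small, so the image lies in the interior of the cone.

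Expansion follows from the same estimates. I will measure vectors in $\mathcal{C}^{\u}$ by $|\Delta\Phi|+\|\Delta Y\|$, which is equivalent to the full norm up to a factor $1+2L$. With this norm, $|\Delta\bar\Phi|+\|\Delta\bar Y\|\ge \min\{|\alpha|,o(1)^{-1}\}\,(1-o(1))(|\Delta\Phi|+\|\Delta Y\|)$, which gives a uniform expansion factor close to $|\alpha|>1$.

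The strong-unstable cone $\mathcal{C}^{\uu}$ is handled identically, but the dominant quantity is now $\|\Delta Y\|$ alone. From $\|\Delta Y\|\le o(1)\|\Delta\bar Y\|$ and the cone condition, all of $|\Delta\bar R|$, $|\Delta\bar\Phi|$ and $\|\Delta\bar X\|$ are at most $o(1)\|\Delta\bar Y\|$. This step uses the fact that $|\alpha|L\|\Delta Y\|\le o(1)\|\Delta\bar Y\|$, so the bounded factor $\alpha$ is absorbed by the $o(1)$ smallness. The strict invariance then holds with room to spare. Strict invariance and $\mathcal{C}^{\uu}\subset\mathcal{C}^{\u}$ are immediate from the definitions.

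The stable cones $\mathcal{C}^{\s}$ and $\mathcal{C}^{\ss}$ are treated by the symmetric argument for $T_k^{-1}$. Here I will solve the cross-form the other way round, using the first and third equations to control $\Delta R$ and $\Delta X$ in terms of $(\Delta\bar R,\Delta\bar X,\Phi\text{-data})$. The roles of $(R,X)$ and $(\Phi,Y)$ swap, and $\alpha^{-1}$ becomes the contraction factor. Some care is needed because $\bar\Phi$ rather than $\Phi$ appears on the left-hand side. My approach is to first rewrite the second equation as $\Delta\Phi=\alpha^{-1}\Delta\bar\Phi+o(1)(\cdots)$, which is legitimate since $\alpha\neq0$ and $\partial h_2/\partial\Phi=o(1)$. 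After this rewriting the computation mirrors the unstable case exactly.

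The only real obstacle is bookkeeping. The implicit appearance of $Y$ means each estimate must be closed via a small-coefficient fixed-point bound, so one has to keep every $o(1)$ uniform in $k$ satisfying \eqref{eq:kr} and \eqref{condition}. This uniformity is exactly what \eqref{eq:est1} provides.
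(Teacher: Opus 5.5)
Your proof is correct and follows the paper's approach. The paper notes that at $\delta=0$ the differentiated cross-form \eqref{eq:Tk} reduces to $\Delta\bar R=\alpha^{-1}\Delta R$, $\Delta\bar\Phi=\alpha\Delta\Phi$, $\Delta\bar X=0$, $\Delta Y=0$, where all the cone claims are immediate since $|\alpha|>1$, and then appeals to $C^1$-openness; you carry out that perturbation step explicitly with uniform $o(1)$ bounds from \eqref{eq:est1}.

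One phrase in your stable case is imprecise: the third equation cannot be solved for $\Delta X$ (its $X$-coefficient is $o(1)$). So $X$ stays an input of the rewritten cross-form $(R,\bar\Phi,X,\bar Y)\mapsto(\bar R,\Phi,\bar X,Y)$, and the third equation only supplies the bound $\|\Delta\bar X\|\le o(1)(|\Delta R|+|\Delta\bar\Phi|+\|\Delta X\|+\|\Delta\bar Y\|)$. That bound is exactly what your ``mirror'' computation needs.
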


\begin{proof} Letting $\delta=0$ in (\ref{eq:Tk}), we get 
$$\Delta\bar R = {\thebetai}\Delta R, \qquad
\Delta \bar\Phi = \thebeta \Delta \Phi, \qquad
\Delta \bar X = 0, \qquad \Delta Y=0,$$
and the lemma obviously holds in the limit $\delta=0$ (recall that we consider here the case $|\alpha|>1$).
As the cone property is $C^1$-open, the 
result holds true for all sufficiently small $\delta$ as well.
\end{proof}

\subsection{Covering property}\label{sec:cover}
Our goal is to show that for certain $k$ values the first-return map 
$T_k$ has the so-called covering property (given by Lemma \ref{cor:image} below) for the sub-cube
\begin{equation}\label{eq:Pi_rho}
\Pi_d:=[-1,1]\times[-d,d]\times[-1,1]^{N-1}\times[-1,1]^{N-1}\subset \Pi,
\end{equation}
where $\Pi$ is defined in \eqref{eq:Tk_domain}, and $0 <d <1$.
 
Note that since the rotation number $\rho$ is irrational, the Dirichlet Approximation Theorem implies that there exist arbitrarily large co-prime integers $p$ and $q>0$ such that 
\begin{equation}\label{eq:qcon}
\rho = \frac{p}{q} + \frac{C}{q^2} \quad\mbox{with}\;\; |C|<1.
\end{equation}
\begin{lem}\label{prop:covering}
Let $q$ satisfying \eqref{eq:qcon} be sufficiently large and
\begin{equation}\label{eq:deltaq}
\delta=\dfrac{5}{2q}.
\end{equation}
For any $\Phi\in [-d,d]$ there exists 
integer $k\in[q,2q-1]$ such that 
\begin{equation}\label{coverp}
|\delta^{-1}((\varphi^+-\varphi^- +k\rho) \bModa 1) +\thebeta \Phi| < d,
\end{equation}
\end{lem}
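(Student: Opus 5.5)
The plan is to reduce \eqref{coverp} to a statement about how densely the points $k\rho \bModa 1$, $k\in[q,2q-1]$, fill the circle. With $\delta=5/(2q)$ from \eqref{eq:deltaq}, inequality \eqref{coverp} is equivalent to
$$\Big| \big((\varphi^+-\varphi^- +k\rho)\bModa 1\big) - \tau\Big| < \frac{5d}{2q}, \qquad \tau:=-\frac{5\alpha\Phi}{2q}.$$
Since $|\tau|\le 5|\alpha| d/(2q)\to 0$ as $q\to\infty$, for large $q$ the target $\tau$ and the whole window around it lie well inside $(-1/2,1/2)$. So the $\bModa$ normalization causes no trouble: it suffices to find $k$ such that the point $\theta+k\rho$ of $\mathbb{R}/\mathbb{Z}$, with $\theta:=\varphi^+-\varphi^-$, lies in the open arc of length $5d/q$ centred at $\tau$.

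\textbf{Step 1: structure of the orbit segment.} Write $k=q+j$ with $j\in\{0,\dots,q-1\}$. By \eqref{eq:qcon},
$$\theta + k\rho \equiv \theta + \frac{C}{q} + \frac{jp}{q} + \frac{jC}{q^2} \pmod 1 .$$
Because $p$ and $q$ are coprime, $j\mapsto jp \bmod q$ is a bijection of $\{0,\dots,q-1\}$. Hence the points $\theta+C/q+jp/q$ are exactly the $q$ equally spaced points $\theta'+m/q$, where $\theta':=\theta+C/q$ and $m=0,\dots,q-1$.

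\textbf{Step 2: gap bound.} The remaining perturbation $jC/q^2$ satisfies $0\le jC/q^2<1/q$ if $C\ge0$, and $-1/q<jC/q^2\le 0$ if $C<0$; in both cases it has a fixed sign. Therefore each point lies in the half-open arc $[\theta'+m/q,\ \theta'+(m+1)/q)$, or in the mirror arc if $C<0$, with exactly one point in each such arc. Consequently every arc of $\mathbb{R}/\mathbb{Z}$ of length strictly greater than $2/q$ contains at least one of the points: such an arc contains a full arc $[\theta'+m/q,\theta'+(m+1)/q)$ up to its endpoints, and a short check handles the endpoint cases.

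\textbf{Step 3: conclusion and the constraint on $d$.} The target arc has length $5d/q$. This exceeds $2/q$ as soon as $d>2/5$, and then Step 2 produces $j$, hence $k=q+j\in[q,2q-1]$, satisfying \eqref{coverp} with strict inequality. Here is the point I expect to need care: the constant $5/2$ in \eqref{eq:deltaq} only works for $d$ bounded below. Specifically, one needs $d>2/5$, which is compatible with $d<1$. I would therefore make explicit that $d\in(2/5,1)$ is fixed, presumably close to $1$, which is what the subsequent covering argument uses.

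The remaining obstacle is only the endpoint bookkeeping in Step 2. If that turns out messy, I would instead bound the maximal gap between cyclically consecutive points by $2/q$ directly. Two such points lie in adjacent or equal residue arcs, so their distance is at most $1/q$ plus the difference of perturbations, which is less than $1/q$. This gives a gap strictly less than $2/q<5d/q$, so every open arc of length $5d/q$ meets the set.
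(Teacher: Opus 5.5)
Your Steps 1--2 are correct: each residue arc contains exactly one point, so all gaps are $<2/q$. Your diagnosis is also right: with $\delta=5/(2q)$ the bound $d$ in \eqref{coverp} cannot hold for every $d\in(0,1)$.

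In fact the statement as written is false for small $d$. The same computation shows all gaps exceed $(1-|C|)/q$. Suppose $5d\le 1-|C|$; by Hurwitz's theorem this happens for infinitely many admissible $q$ once $d<(1-5^{-1/2})/5$. Then the windows $\{\tau:\,|((\varphi^+-\varphi^-+k\rho)\bModa 1)-\tau|<d\delta\}$ are pairwise disjoint open intervals of length $5d/q$. They cannot cover the connected set of targets $\tau=-\alpha\delta\Phi$, $\Phi\in[-d,d]$, whose length is $5|\alpha|d/q>5d/q$.

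The paper's proof contains the matching slip. Its Bezout construction writes $\varphi^+-\varphi^-+\delta\alpha\Phi=(s+C_0)/q$ and takes $k\in[q,2q-1]$ with $kp+s\equiv 0\pmod{q}$. This gives the error $1/(2q)+2/q=5/(2q)$. It then writes $5/(2q)=d\delta$, which is true only if $\delta=5/(2dq)$. With \eqref{eq:deltaq} as stated, one only gets $1$ in place of $d$.

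The gap in your proposal is the repair. You fix $d\in(2/5,1)$, ``presumably close to $1$'', but $d$ is not at your disposal. The paper sets $d=(|\alpha|-1)/(|\alpha|+1)$ in \eqref{eq:rho}, and this is essentially forced. In Lemma~\ref{lem:hyp} the cross-map must send $\Pi$ into itself, which uses $|\alpha|^{-1}(1+d(1+|\alpha|))\le 1$, i.e.\ $d\le(|\alpha|-1)/(|\alpha|+1)$. Since only $|\alpha|>1$ is assumed, this contradicts $d>2/5$ whenever $|\alpha|<7/3$.

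The correct fix is to keep $d$ arbitrary and rescale, e.g.\ $\delta=5/(2dq)$; your sharper gap bound even allows any $\delta>1/(dq)$. The target arc then has length $2d\delta=5/q>2/q$, so your argument proves the lemma for every $d\in(0,1)$. The later uses go through: \eqref{eq:kr} holds with the $d$-dependent constants $c_1=5/(2d)$, $c_2=5/d$, and \eqref{coverest} is unchanged.

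Apart from this, your route and the paper's rest on the same fact, that $j\mapsto jp \bmod q$ is a bijection. The paper hits the target directly through a congruence; you bound the largest gap of the perturbed grid. Yours gives the better constant: every target lies within $1/q$ of some point, versus $5/(2q)$.
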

\begin{proof}
For any given $\varphi^+,\varphi^-$  and $\Phi\in[-d,d]$, one can write 
\begin{equation}\label{eq:cover:4}
\varphi^+ - \varphi^- +\delta\thebeta \Phi=\dfrac{s}{q} + \dfrac{C_0}{q} 
\quad\mbox{with}\;\; |C_0|<\dfrac{1}{2}. 
\end{equation}
Since $p$ and $q$ are co-prime, the Diophantine equation
\begin{equation}\label{eq:cover:5}
kp - nq = - s
\end{equation}
has an integer solution $(k',n')$. Moreover, any pair $(k' + iq,n'+ip)$ with $i\in \mathbb{Z}$ is a solution too. Hence, for any 
value of $s$, we can always find a solution $(k,n)$ such that $k\in[q,2q-1]$. 

Take such $(k,n)$. Then, by \eqref{eq:qcon} and \eqref{eq:cover:4}, we have 
\begin{equation}\label{eq:cover6}
\varphi^+ -\varphi^-+k\rho  + \thebeta\delta\Phi  
=n+ \dfrac{C_0}{q}+\dfrac{k C}{q^2},
\end{equation}
so
$$|(\varphi^+ -\varphi^-+k\rho  + \thebeta\delta\Phi) \bModa 1| < \frac{5}{2q}.$$
When $\delta$ is small enough, this gives
\begin{equation}\label{eq:cover7}
|(\varphi^+ -\varphi^-+k\rho)  \bModa 1 + \thebeta\delta\Phi| < \frac{5}{2q} = d \delta,
\end{equation}
which proves the lemma.
\end{proof}

We now define $\mathcal{K}_q$ as the set 
of all integers $k\in  [q,2q-1]$ for which \eqref{eq:cover6} 
is satisfied for at least one value of $\Phi\in [-d,d]$.

\begin{rem}\label{rem:n=0} 
Since $|\Phi|\leq  d$ in \eqref{eq:cover7}, it follows that
\begin{equation}\label{coverest}
|(\varphi^+-\varphi^- +k\rho) \bModa 1| < d (1+\thebeta) \delta.
\end{equation}
for all $k\in\mathcal{K}_q$. Thus, condition \eqref{condition} is satisfied for all $k\in\mathcal{K}_q$. 
Condition \eqref{eq:kr} is satisfied as well for $\delta=O(1/q)$ and $q\leq k \leq 2q-1$, so  
the maps $T_k$, for all $k\in\mathcal{K}_q$, are given by \eqref{eq:Tk} with \eqref{eq:est1} satisfied.
\end{rem}

\begin{rem}\label{rem:Ak}
Since $\delta=5/(2q)$ and $|\alpha|>1$, the variation of $\alpha\delta \Phi$ as $\Phi$ runs the interval $[-1,1]$ is at least $5/q$, and the variation in $\alpha\delta \Phi - \frac{C_0(\Phi)}{q}$ is at least $4/q$
(where $C_0$ is given by (\ref{eq:cover:4})). It then follows from \eqref{eq:cover6} that, for all sufficiently large $q$, the maximal difference between the values of $(\varphi^+-\varphi^- +k\rho)\bModa 1$ for different $k\in \mathcal{K}_q$ is at least $3/q$, implying that there always exists $k\in \mathcal{K}_q$  such that  $\delta^{-1}|(\varphi^+-\varphi^- +k\rho)\bModa 1|\geq 3/5$.
\end{rem}

Take a small $\kappa>0$ and consider the cube 
$$\Pi_d' := [-(1-\kappa),1-\kappa]\times[-d(1-\kappa),d(1-\kappa)]\times[-\dfrac{1}{2},\dfrac{1}{2}]^{N-1}\times[-1,1]^{N-1}\subset \Pi_d.$$
\begin{defi}[Crossing] 
Consider a cube $Q = Q_1\times Q_2$ where $Q_i$ are closed subsets of $\mathbb{R}^{n_i}$ ($i=1,2$), diffeomorphic to discs. An $n_2$-dimensional disc $D$ is said to {\em cross} $Q$ in the direction of $Q_2$ if $D\cap Q$ is the graph of a function $Q_2\to int(Q_1)$.
\end{defi}
\begin{lem}\label{cor:image}
The following holds for all sufficiently large $q$ satisfying \eqref{eq:qcon} and $\delta=5/(2q)$: for any $(N-1)$-dimensional disc $D$  which crosses $\Pi_d$ along the $Y$-direction and is tangent to $\mathcal{C}^{\uu}$,  there exists $k\in \mathcal K_q$ such that $T_k(D\cap \Pi_d)$ is a disc that crosses $\Pi_d'$ along the $Y$-direction and is tangent to $\mathcal{C}^{\uu}$. 
\end{lem}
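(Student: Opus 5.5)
The plan is to reduce to the limit of \eqref{eq:Tk} as $\delta\to0$, where the center projection is the affine iterated function system $(R,\Phi)\mapsto(\alpha^{-1}R,\ \alpha\Phi+c_k)$ with $c_k:=\delta^{-1}((\varphi^+-\varphi^-+k\rho)\bModa 1)$, and then to absorb the $o(1)$ corrections using the cone structure of Lemma~\ref{lem:conefields}.

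\emph{Parametrizing the disc.} Let $D$ cross $\Pi_d$ in the $Y$-direction and be tangent to $\mathcal{C}^{\uu}$. Then $D\cap\Pi_d$ is a graph $(R,\Phi,X)=(R_D(Y),\Phi_D(Y),X_D(Y))$ over $Y\in[-1,1]^{N-1}$ with Lipschitz constant at most $L$. Pick a point $Y_0$ and set $\Phi_0=\Phi_D(Y_0)\in(-d,d)$. Lemma~\ref{prop:covering} gives $k\in[q,2q-1]$ with $|c_k+\alpha\Phi_0|<d$. By Remark~\ref{rem:n=0}, $T_k$ has the form \eqref{eq:Tk}, and the bound $\|h\|_{C^1}=o(1)$ holds uniformly in $k\in\mathcal K_q$. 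This $k$ lies in $\mathcal K_q$.

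\emph{Image as a graph over $\bar Y$.} Use the cross form of \eqref{eq:Tk}. For each $\bar Y\in[-1,1]^{N-1}$, solve $Y=h_4(R_D(Y),\Phi_D(Y),X_D(Y),\bar Y)$ for $Y$. The right-hand side is a $C^1$-small contraction in $Y$, so there is a unique solution $Y(\bar Y)$ depending $C^1$-smoothly on $\bar Y$ with $\|\partial Y/\partial\bar Y\|=o(1)$. Substituting into the remaining equations represents $T_k(D\cap\Pi_d)$ as the graph over $\bar Y\in[-1,1]^{N-1}$ of
\[
\bar R=\alpha^{-1}R_D+o(1),\qquad \bar\Phi=c_k+\alpha\Phi_D(Y(\bar Y))+o(1),\qquad \bar X=o(1).
\]
Tangency to $\mathcal{C}^{\uu}$ then follows directly from Lemma~\ref{lem:conefields}. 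Two of the three bounds needed for crossing $\Pi_d'$ are immediate. First, $|\bar R|\le|\alpha|^{-1}+o(1)<1-\kappa$ because $|\alpha|>1$ and $\kappa$ is small. Second, $\|\bar X\|=o(1)<1/2$.

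\emph{The main obstacle: the $\bar\Phi$ bound.} I need $|\bar\Phi|<d(1-\kappa)$ over the whole disc, while $|c_k+\alpha\Phi_0|<d$ only holds at one point and has no margin. Two sources of error must be controlled.
\begin{itemize}
\item \emph{Variation of $\Phi_D$.} Along $D$, $\Phi_D$ varies by at most $O(L)$, since $Y(\bar Y)$ ranges over a set of diameter $O(1)$. Multiplying by $\alpha$ adds $O(|\alpha|L)$.
\item \emph{Margin in the covering lemma.} I would sharpen Lemma~\ref{prop:covering} by tracking constants in its proof. Writing \eqref{eq:cover:4} with $|C_0|<1/2$ and using $kC/q^2\le 2/q$ yields $|c_k+\alpha\Phi_0|\le\delta^{-1}(1/(2q)+2/q)=d\delta\cdot\frac{2q}{5}\cdot\frac{5}{2q}\cdot(\ldots)$, which only reaches $d$ if $d=1$ in the normalization. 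So I would instead rescale: replace $\delta=5/(2q)$ by $\delta=5/(2q\,d(1-2\kappa))$, or equivalently reduce the grid spacing, so that $|c_k+\alpha\Phi_0|<d(1-2\kappa)$. Remark~\ref{rem:n=0} still holds with modified constants.
\end{itemize}
With $L$ and $\kappa$ chosen small relative to $d$, and $q$ then taken large so that the $o(1)$ terms are negligible, the total error stays below $\kappa d$, giving $|\bar\Phi|<d(1-\kappa)$.

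If the statement's constant $d$ is meant to be fixed with $\delta=5/(2q)$ as given, I would instead pick $k$ by applying the lemma at a slightly shrunk target. Either way, the delicate point is verifying that the Dirichlet-grid spacing $1/q$, rescaled by $\delta$, is strictly finer than $2d$ with room to absorb the disc's $\Phi$-spread and the $o(1)$ errors.
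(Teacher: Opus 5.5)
Your route is the paper's. You write $T_k(D\cap\Pi_d)$ as a graph over $\bar Y$ by solving $Y=h_4(\cdot)$ by contraction, choose $k$ from Lemma~\ref{prop:covering} at one point of $D$, handle $\bar R$ and $\bar X$ directly, and control $\bar\Phi$ by the covering estimate plus the small $\Phi$-spread of $D$.

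Your diagnosis of the delicate point is correct, and it is a slip in the paper rather than in your reasoning. The proof of Lemma~\ref{prop:covering} gives $|(\varphi^+-\varphi^-+k\rho)\;\mathrm{mod}_0\;1+\alpha\delta\Phi|<\frac{1}{2q}+\frac{2}{q}=\frac{5}{2q}=\delta$, which is only $|c_k+\alpha\Phi|<1$. The identity $\frac{5}{2q}=d\delta$ written there holds only for $d=1$. The paper's last step also adds $L$ and $h_2$ to a strict inequality with no quantified room, and stops at $|\bar\Phi|<d$ instead of $(1-\kappa)d$.

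Your choice $\delta=5/(2qd(1-2\kappa))$ repairs both problems:
\begin{itemize}
\item the same Dirichlet argument gives $|c_k+\alpha\Phi_0|<d(1-2\kappa)$;
\item Remark~\ref{rem:n=0} and \eqref{eq:kr} survive with $d$-dependent constants;
\item the cross-map estimate in Lemma~\ref{lem:hyp} even gains a margin, $|\Phi|\le 1-2\kappa d/|\alpha|+o(1)$ for $d$ as in \eqref{eq:rho}.
\end{itemize}

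You should commit to this rescaling rather than treat it as optional. Your fallback (keep $\delta=5/(2q)$ and apply the lemma \emph{at a slightly shrunk target}) cannot work, because with $\delta=5/(2q)$ the statement fails in general for small $d$. Let $\theta=\varphi^+-\varphi^-$. The set of $\theta$ for which some $k\in[q,2q-1]$ has $|(\theta+k\rho)\;\mathrm{mod}_0\;1|<d\delta$ is a union of $q$ intervals of length $2d\delta=5d/q$, so its measure is at most $5d$. Suppose $d<1/5$, which with \eqref{eq:rho} means $|\alpha|<3/2$. By the reverse Fatou inequality, a set of $\theta$ of measure at least $1-5d$ then has, for infinitely many admissible $q$, no $c_k$ within $d$ of $0$. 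For such $\theta$ and $q$ the conclusion already fails for the flat disc $\{R=0,\Phi=0,X=0\}$, since $\bar\Phi=c_k+o(1)$ there. So a $d$-dependent $\delta$ is necessary, and your main argument is the right proof of the corrected lemma.

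A simplification: the relevant preimage points satisfy $Y=h_4(\cdot)=o(1)$. Taking $Y_0=0$, as the paper does, makes the $\Phi$-spread of the piece of $D$ you use equal to $L\cdot o(1)$, so no tuning of $L$ is needed. Note also that $\kappa$ is fixed by $\Pi_d'$, so it is not a parameter you can shrink at that stage.
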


\begin{figure}[!h]
\begin{center}
\includegraphics[scale=.7]{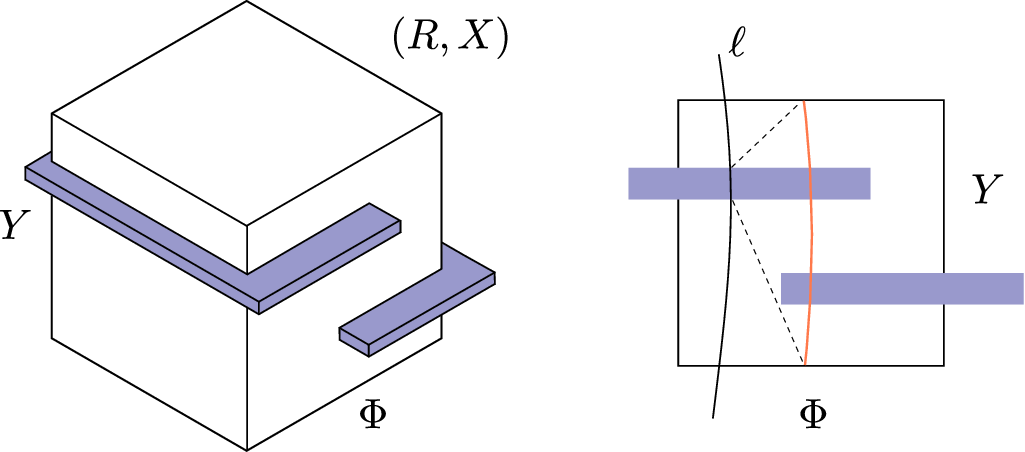}
\end{center}
\caption{The case where $N=1$, and $\mathcal{K}_q$ has two elements. The purple boxes are the two preimages of $\Pi$.
The one-dimensional disc $\ell$ intersects at least one of the preimages, and hence its image under $T_k$ for some $k\in \mathcal{K}_q$ again contains a piece (the orange one) intersecting one of the preimages.}
\label{fig:cover}
\end{figure}

\begin{proof}
By Remark \ref{rem:n=0}, for $k\in \mathcal{K}_q$ the map $T_k$ assumes the form \eqref{eq:Tk} with estimates \eqref{eq:est1}, and
Lemma \ref{lem:conefields} is applicable. So, the cone field $\mathcal{C}^{\uu}$ is forward-invariant, implying that
the image $T_k (D)$ is tangent to $\mathcal{C}^{\uu}$ as required.

Let $(R,\Phi,X)=(R(Y),\Phi(Y),X(Y))$  be the equation of $D$, with the derivative of the functions
$R(Y),\Phi(Y),X(Y)$ being bounded by the constant $L$ from Lemma \ref{lem:conefields}. In particular,
\begin{equation}\label{phl}
|\Phi(y) - \Phi(0)| \leq L \|y\| = o(1)_{\delta\to 0},
\end{equation}
because the cone constant $L$ can be taken arbitrarily small as $\delta\to 0$.
By \eqref{eq:Tk}, for each
$\bar Y\in [-1,1]^{N-1}$ there exist unique $\bar R, \bar\Phi, \bar X$ such that $(\bar R, \bar\Phi, \bar X, \bar Y)\in T_k (D)$
and 
\begin{equation}\label{phiy0}
\begin{aligned}
\bar R&= {\thebetai}R(y) +h_1(R(y),\Phi(Y), X(Y),\bar Y), \\
\bar\Phi&=\delta^{-1}((\varphi^+-\varphi^- +k\rho) \bModa 1) +\thebeta \Phi(Y) +h_2(R(y),\Phi(Y), X(Y),\bar Y), \\
\bar X&=h_3(R(Y),\Phi(Y), X(Y),\bar Y),
\end{aligned}
\end{equation}
where
$$Y=h_4(R(Y),\Phi(Y), X(Y),\bar Y)$$
(the value of $Y=o(1)_{k\to+\infty}$ is uniquely defined from the last equation for all $\bar Y\in [-1,1]^{N-1}$ by the contraction mapping principle).

By \eqref{eq:est1}, we have $\|\bar X\|<1/2$ if $k$ is large enough; since $|\thebetai|<1$, we also have $|\bar R|<1-\kappa$ if $|R|\leq 1$. Thus, to prove the lemma, it remains to show that $k\in \mathcal{K}_q$ can be chosen such that
$|\bar \Phi|< (1-\kappa) d$ for all $\bar Y \in [-1,1]^{N-1}$. For that, we choose $k$ given by Lemma \ref{prop:covering}
with $\Phi$ equal to $\Phi(0)$ from \eqref{phiy0}. Then, by \eqref{coverp} and \eqref{phl}, we have
\begin{align*}
&\max_{\|Y\|\leq 1} |\delta^{-1}((\varphi^+-\varphi^- +k\rho) \bModa 1) +\thebeta \Phi(Y)|\\ 
&\qquad\qquad\qquad\qquad\qquad \leq 
\max_{|\Phi|\leq d} |\delta^{-1}((\varphi^+-\varphi^- +k\rho) \bModa 1) +\thebeta \Phi| + L< d.
\end{align*}
Since $h_2\to 0$ as $q\to +\infty$, this implies $\max_{\|Y\|\leq 1} |\bar \Phi|<d$, which completes the proof of the lemma.
\end{proof}

\subsection{A blender for an induced map}\label{sec:cublender}

Choose
\begin{equation}\label{eq:rho}
d := \dfrac{|\alpha|-1}{|\alpha|+1}<1
\end{equation}
and $\delta=5/(2q)$ with a sufficiently large integer $q$ that satisfies \eqref{eq:qcon}.
Consider the induced map $\hat T$ defined by
\begin{equation}\label{eq:induce2}
\hat T(P)=T_k(P) \quad\mbox{if}\quad P\in T^{-1}_k(\Pi^-)\cap\Pi^-
\quad\mbox{for}\quad k\in \mathcal{K}_q.
\end{equation}
In what follows, we first find a hyperbolic set of $\hat T$, and then show that it is indeed a cu-blender, using Lemma~\ref{cor:image}.

Let us restrict $\hat T$ to the cube $\Pi$ given by \eqref{eq:Tk_domain}.
We say that a point $M$ has the coding $\{k_n\in \mathcal{K}_q\}_{n\in\mathbb{Z}}$ if there is a sequence of points 
$\{M_n\in\Pi\}_{n\in\mathbb{Z}}$ with $M_0=M$ such that $T_{k_n}(M_n)=M_{n+1}$. 
\begin{lem}\label{lem:hyp}
If $q$ is sufficiently large, the map $\hat T$ has a hyperbolic basic set $\hat\Lambda_q\subset int(\Pi)$ which is in one-to-one correspondence with the set of all possible codings $\{k_n\in \mathcal{K}_q\}_{n\in\mathbb{Z}}$.
\end{lem}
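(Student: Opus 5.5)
The plan is the standard Conley--Moser / Afraimovich--Shilnikov construction for a finite system of maps in cross-form with invariant cone fields. By Remark~\ref{rem:n=0}, every $T_k$ with $k\in\mathcal{K}_q$ is given by \eqref{eq:Tk} with the estimate \eqref{eq:est1}. By Lemma~\ref{lem:conefields}, these maps share the cone fields $\mathcal{C}^\u$ and $\mathcal{C}^\s$, which are strictly invariant, expanded and contracted uniformly in $k\in\mathcal{K}_q$. Since $\mathcal{K}_q$ is finite for fixed $q$, we may take $\delta$ small enough that all the $o(1)$ bounds hold simultaneously for every $k\in\mathcal{K}_q$.

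\textbf{Step 1: crossing properties of $T_k$.} Take any disc that crosses $\Pi$ in the unstable direction $(\Phi,Y)$, i.e.\ the graph of a map $(\Phi,Y)\mapsto(R,X)$, and that is tangent to $\mathcal{C}^\u$. The cross-form \eqref{eq:Tk} gives $\bar R=\alpha^{-1}R+o(1)$ and $\bar X=o(1)$, so $|\bar R|<1$ and $\|\bar X\|<1$. The $\bar\Phi$-coordinate is an expanding function of $\Phi$ with slope close to $\alpha$. Since $|\alpha|>1$ and $\delta^{-1}((\varphi^+-\varphi^-+k\rho)\bModa 1)$ is bounded by \eqref{coverest}, the set $\{|\bar\Phi|\le1\}$ corresponds to a strip of $\Phi$-width about $2/|\alpha|<2$, lying inside $[-1,1]$ once $d(1+|\alpha|)/|\alpha|+1/|\alpha|<1$. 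With the choice \eqref{eq:rho} this reads $d(1+|\alpha|)<|\alpha|-1$, which holds with equality. I would therefore either shrink $d$ slightly or use the strict inequality from \eqref{coverest} together with the small error terms, to obtain margins. The $\bar Y$ variable is free by the cross-form. Hence the image of the part of the disc inside $T_k^{-1}(\Pi)$ again crosses $\Pi$ in the unstable direction and is tangent to $\mathcal{C}^\u$. Symmetrically, discs tangent to $\mathcal{C}^\s$ that cross $\Pi$ in the $(R,X)$ direction have preimages of the same type. This establishes that each $T_k$ maps the "vertical strip" $\sigma_k^-=\Pi\cap T_k^{-1}(\Pi)$ onto a "horizontal strip" $\sigma_k^+$ in the sense of cross-form.

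\textbf{Step 2: disjointness and coding.} The $\Phi$-strips $\sigma_k^-$ for distinct $k\in\mathcal{K}_q$ must be disjoint; otherwise the coding would not be injective. Their $\Phi$-centers are $-\alpha^{-1}\delta^{-1}((\varphi^+-\varphi^-+k\rho)\bModa 1)$. Distinct $k\in[q,2q-1]$ give values $(\varphi^+-\varphi^-+k\rho)\bModa 1$ that differ by at least about $1/q-O(1/q)$, by \eqref{eq:qcon} and the coprimality of $p,q$ (they lie on a grid of spacing $1/q$ up to $kC/q^2$). I expect this to be the main obstacle: the spacing $|C|k/q^2<2/q$ could in principle collapse two grid points. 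I would handle it by counting residues: $kp \bmod q$ takes distinct values for $k\in[q,2q-1]$, and the perturbation $kC/q^2$ is monotone in $k$, so the spacing in $\Phi$ after dividing by $\delta$ is at least comparable to $2/5$ times a constant, while each strip has width $O(\delta$-error$)$ and so is thin. If that fails, I would pass to a subset of $\mathcal{K}_q$, keeping enough elements for Lemma~\ref{cor:image}, or define the coding via the strips themselves, which are still disjoint because the maps are distinct branches of the first-return map. Disjoint strips on distinct $\Phi$-intervals then follow.

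\textbf{Step 3: hyperbolic set and coding bijection.} Given a sequence $\{k_n\}$, the nested intersections $\bigcap_{n\ge0}(T_{k_{-1}}\circ\dots\circ T_{k_{-n}})(\sigma^-_{k_{-n}})$ form a disc crossing $\Pi$ in the unstable direction, by Step 1 and the graph transform. Contraction of the cone-tangent directions, via Lemma~\ref{lem:conefields}, makes these nested sets converge to a single $\mathcal{C}^\u$-disc; similarly, the forward itineraries give a single $\mathcal{C}^\s$-disc. These two discs intersect in exactly one point, by transversality of the cones, and this point is $M$ with the prescribed coding. The cone expansion and contraction give uniform hyperbolicity, and the strict inequalities $|R|,\|X\|<1$, $|\Phi|<1$ and $\|Y\|<1$ obtained in Step 1 place the resulting set $\hat\Lambda_q$ in $int(\Pi)$. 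The set is conjugate to the full shift on $|\mathcal{K}_q|$ symbols, so it is compact, zero-dimensional and transitive. Its local maximality in a neighborhood of $\bigcup_k\sigma_k^-$ follows because any orbit staying in $\Pi$ has a coding, and the coding determines the point uniquely.
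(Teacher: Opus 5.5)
Your Step~2 is where the argument breaks, and the problem is not a matter of constants: the strips $\sigma_k^-=\Pi\cap T_k^{-1}(\Pi)$ are \emph{not} separated in $\Phi$, and they cannot be.

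By your own Step~1, the $\Phi$-projection of $\sigma_k^-$ is, up to $o(1)$, the interval $\{|A_k+\alpha\Phi|\le 1\}$, where $A_k=\delta^{-1}((\varphi^+-\varphi^-+k\rho)\bModa 1)$. This interval has length $2/|\alpha|$, not width $O(\delta)$.

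In fact the blender mechanism forces overlap. Lemma~\ref{prop:covering} says the open intervals $I_k=\{\Phi:|A_k+\alpha\Phi|<d\}$, $k\in\mathcal K_q$, cover $[-d,d]$. Each $I_k$ has length $2d/|\alpha|<2d$. Since $d<1$, each $I_k$ also lies in the $\Phi$-projection of $\sigma_k^-$. A connected interval cannot be covered by finitely many pairwise disjoint open intervals each shorter than itself. So the $\Phi$-projections of some of the $\sigma_k^-$ must intersect, and your conclusion ``disjoint strips on distinct $\Phi$-intervals'' is false. Your first fallback, keeping only a $\Phi$-separated subfamily of $\mathcal K_q$, would destroy the covering property of Lemma~\ref{cor:image}, which is the whole reason the set is built.

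What is true is that the $\sigma_k^-$ are disjoint because of the return time, i.e.\ along the strong-unstable direction. You need this both for $\hat T$ to be single-valued and for the point-to-coding direction of the bijection.

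\begin{itemize}
\item Suppose $P\in\sigma_k^-\cap\sigma_{k'}^-$ with $k<k'$. Then $T_0^{k'-k}$ carries $T_k(P)\in\Pi$ back into $\Pi$ along an orbit in $V$.
\item Every point of $\Pi$ satisfies $\|y-y^-\|=O(\delta)$, and $y^-\neq 0$.
\item On the other hand, \eqref{eq:T0_xform}, the identity $g_4(r,\varphi,x,0)\equiv 0$ and \eqref{eq:T0_xform_deri} give $\|\bar y\|\ge\lambda^{-1}\|y\|$ at every iterate.
\item These two facts are incompatible for small $\delta$.
\end{itemize}

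So in the rescaled picture the strips are separated in $Y$, at exponentially small scale, not in $\Phi$. Your phrase ``distinct branches of the first-return map'' points at this, but it has to be proved, and it has to replace the $\Phi$-spacing argument rather than sit beside it. With that substitution, Steps~1 and~3 give a legitimate Conley--Moser proof.

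That route differs from the paper's, which constructs no strips and no nested intersections. The paper notes that each cross-map $T_k^\times:(R,\bar\Phi,X,\bar Y)\mapsto(\bar R,\Phi,\bar X,Y)$ sends $\Pi$ into $int(\Pi)$ and is a contraction, since $|\alpha^{-1}|<1$. Shilnikov's fixed-point lemma on $\prod_{n\in\mathbb Z}\Pi$ then gives existence and uniqueness for every coding at once. The paper takes the well-definedness of $\hat T$ for granted.

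Your observation that $|\alpha|^{-1}(1+d(1+|\alpha|))=1$ exactly under \eqref{eq:rho} is correct. The strict inequality in \eqref{coverest} gives no margin uniform in $q$, so $d$ should be taken slightly smaller. The paper's own estimate $|\Phi|<1$ has the same marginality.
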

\begin{proof}
It is enough to show that for any coding $\{k_n\in \mathcal{K}_q\}_{n\in\mathbb{Z}}$
there is a unique point in $int(\Pi)$ with this coding. Once this is proven, we define 
$\hat\Lambda_q$ as the set of all points whose entire orbit under $\hat T$ never leaves $int(\Pi)$;
by Lemma~\ref{lem:conefields} the set $\hat\Lambda_q$ is uniformly hyperbolic.
By the definition of codings, ${\hat T}|_{\hat\Lambda_q}$ is conjugate to the full shift of $\mathrm{Card}\,\mathcal{K}_q$ symbols and, hence, $\hat\Lambda_q$ is a transitive locally-maximal set, i.e., it is indeed a hyperbolic basic set of $\hat T$. 

Let us take any sequence $\{k_n\in \mathcal{K}_q\}_{n\in\mathbb{Z}}$ and  find the point $M\in int(\Pi)$ with this 
coding sequence. Note that one can express $\Phi$ from formula \eqref{eq:Tk} as a function of $(R,\bar\Phi, X,\bar Y)$:
$$\Phi =\thebetai (\bar\Phi - \delta^{-1}((\varphi^+-\varphi^- +k\rho) \bModa 1)) + o(1)_{\delta\to 0}.$$

Consider now  the {\em cross-map} $T^\times_k$  which takes 
$(R,\bar \Phi,X,\bar Y)$ to $(\bar R, \Phi,\bar X, Y)$ if and only if 
$(\bar R,\bar \Phi,\bar X,\bar Y)= T_k(R, \Phi,X, Y)$. It is a map defined on $\Pi$, and we claim that $T^\times_k(\Pi) \subset int(\Pi)$ for all
$k\in \mathcal{K}_q$ with sufficiently large $q$. Indeed, we note from 
 \eqref{coverest} and \eqref{eq:rho} that 
$|\Phi|< |\thebetai| (1 + d (1+|\alpha|)) = 1$ for large enough $q$, and from  \eqref{eq:Tk} that $|\bar R|= |\thebetai R| + o(1)_{\delta\to 0}<1$ and
$\|(\bar X,Y)\|=o(1)_{\delta\to 0}<1$. 

By the definition of the cross-map, a   point
$P_n=(R_n,\Phi_n,X_n,Y_n) \in \Pi$ satisfies $T_{k_n}(P_n)=P_{n+1}$ if and only if 
$$(R_{n+1},\Phi_n,X_{n+1},Y_n)=T^\times_{k_n}(R_n,\Phi_{n+1},X_n,Y_{n+1}).$$
Thus, the orbit corresponding to the coding sequence $\{k_n\}$ is the fixed point  of the operator 
$$T^\infty_{_{\{k_n\}}}:\{( R_n, \Phi_n, X_n, Y_n)\}_{n\in\mathbb Z}\mapsto \{(\bar R_n,\bar \Phi_n,\bar X_n,\bar Y_n)\}_{n\in\mathbb Z},$$
which acts on the space of sequences of points in $\Pi$ by the rule
$$(\bar R_{n+1},\bar \Phi_n,\bar X_{n+1},\bar Y_n)=T^\times_{k_n}(R_n,\Phi_{n+1},X_n,Y_{n+1}).$$

Since $|\alpha^{-1}|<1$, it follows from \eqref{eq:Tk} that the cross-maps $T_k$ are contractions for $k\in \mathcal{K}_q$.
Therefore, by the Shilnikov lemma on the fixed point in a direct product of metric spaces \citep[Theorem 6.2]{Shi:67}, the operator $T^\infty_{_{\{k_n\}}}$ has a unique fixed point -- the sought orbit in $int(\Pi)$ with the coding $\{k_n\}$.  
\end{proof}

\begin{rem}\label{invfol} The local stable and unstable manifolds of a point $P\in \hat \Lambda_q$ is the set of all points whose coding sequence $\{k_n\}$ coincides with the coding of $P$ for all $n\geq 0$ and, respectively, $n\leq 0$. It is a standard consequence of the cross-map construction (see e.g. \cite[Lemma 1]{Tur:24}) that $W^\s_\loc(P)$ is the graph of a smooth function
$w^\s_P: \{|R|\leq 1, \; \|X\|\leq 1\} \to \{|\Phi|< 1, \; \|Y\| < 1\}$ and $W^\u_\loc(P)$ is the graph of a smooth function
$w^\u_P:  \{|\Phi|\leq 1, \; \|Y\|\leq 1\} \to  \{|R|< 1, \; \|X\| < 1\}$. Since the local stable and unstable manifolds are tangent to the cones $\mathcal{C}^\s$ and, respectively, $\mathcal{C}^\u$, the norms of the derivatives of $w^\s_P$ and $w^\u_P$ are bounded by the constant $L$ of Lemma~\ref{lem:conefields}, which satisfies $L=o(1)_{\delta\to 0}=o(1)_{q\to\infty}$.
\end{rem}

\begin{prop}\label{prop:cublender}
The  hyperbolic basic set $\hat\Lambda_q\subset \Pi$ found in Lemma~\ref{lem:hyp} is a cu-blender of $\hat T$.
\end{prop}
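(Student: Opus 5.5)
To show that $\hat\Lambda_q$ is a cu-blender of $\hat T$ in the sense of Definition~\ref{defi:blender}, we need to exhibit a pair $\mathcal{D}'\subset\mathcal{D}$ of $C^1$-open families of $(N-1)$-dimensional discs. We take $\mathcal{D}$ to be the set of embedded discs that cross $\Pi_d$ along the $Y$-direction and are tangent to $\mathcal{C}^{\uu}$. More precisely, these are graphs $(R,\Phi,X)=(R(Y),\Phi(Y),X(Y))$ over $Y\in[-1,1]^{N-1}$ with values in the interior of $[-1,1]\times[-d,d]\times[-1,1]^{N-1}$ and derivative bounded by $L$. We take $\mathcal{D}'$ to be the analogous set with $\Pi_d$ replaced by $\Pi_d'$. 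To make the cone condition open, we use a slightly smaller cone constant: tangency to the cone with constant $L/2$, say, which the proof of Lemma~\ref{lem:conefields} also gives, since in the $\delta\to0$ limit the image of any vector in $\mathcal{C}^{\uu}$ has $\Delta\bar R=\Delta\bar\Phi=\Delta\bar X=0$.

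With these definitions, $\mathcal{D}$ is $C^1$-open: crossing into an interior target and strict cone tangency are both open conditions. The family $\mathcal{D}'$ lies at positive $C^1$-distance from $\partial\mathcal{D}$, because $\Pi_d'$ sits inside $\Pi_d$ with margins $\kappa$ in $R$, $\kappa d$ in $\Phi$, and $1/2$ in $X$, and because of the strict cone margin. The remaining blender axiom is that $\hat T(D)$ contains a disc of $\mathcal{D}'$ for every $D\in\mathcal{D}$, and this is exactly Lemma~\ref{cor:image}. That lemma gives $k\in\mathcal{K}_q$ such that $T_k(D\cap\Pi_d)$ crosses $\Pi_d'$ along $Y$ and is tangent to $\mathcal{C}^{\uu}$. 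Since $\Pi_d'\subset\Pi$ and $\hat T=T_k$ on $T_k^{-1}(\Pi)\cap\Pi$, this piece lies in $\hat T(D)$.

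There are two points to check beyond this. The first is that $\hat\Lambda_q$ carries the required subbundle $E^{\uu}\subsetneq E^\u$. This follows from the nested invariant cones $\mathcal{C}^{\uu}\subset\mathcal{C}^\u$ of Lemma~\ref{lem:conefields}, together with the dominated expansion in the $Y$-direction: $\D T_k$ in $Y$ is of order $a_{44}^{-1}\lambda^{-k}$, whereas in the $\Phi$-direction it is only of order $\alpha$. This domination is what makes $\mathcal{C}^{\uu}$ strictly invariant. The second is that the discs in $\mathcal{D}$ lie in an isolating neighborhood of $\hat\Lambda_q$. By Lemma~\ref{lem:hyp}, $\Pi$, or a slight shrinking of it, serves as that neighborhood, and all the discs lie in $\Pi_d\subset\Pi$.

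The main obstacle is making sure Lemma~\ref{cor:image} applies uniformly to every disc in the open family $\mathcal{D}$, in particular to discs whose $\Phi(0)$ is near $\pm d$ and whose $R$ is near $\pm1$. The $R$-direction is harmless because $|\alpha^{-1}|<1$ contracts $R$ strictly into $[-(1-\kappa),1-\kappa]$. The $\Phi$-bound comes from \eqref{coverp} with the strict inequality, using compactness of $[-d,d]$ and the fact that $L$ and $h_2$ are $o(1)$ as $q\to\infty$. I would verify the $\Phi$-margin, which requires shrinking $\kappa$ relative to the slack in \eqref{coverp}. If that slack is not uniform, I would fall back on the explicit estimate \eqref{eq:cover7}, which gives the bound $5/(2q)-o(1/q)$ and hence a uniform margin.
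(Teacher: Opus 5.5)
Your proposal is correct and is essentially the paper's own proof. The paper takes $\mathcal{D}$ and $\mathcal{D}'$ to be the discs tangent to $\mathcal{C}^{\uu}$ that cross $\Pi_d$ and $\Pi_d'$, respectively, and observes that Lemma~\ref{cor:image} is exactly the covering requirement of Definition~\ref{defi:blender}; your shrinking of the cone constant to make the tangency condition genuinely open is a harmless refinement.

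Since Lemma~\ref{cor:image} is stated for every such disc, the $\Phi$-margin worry in your last paragraph is not needed for this proposition, but your fallback would not supply that margin anyway. The slack in \eqref{eq:cover7} is only of order $1/q^2$, i.e.\ $O(1/q)$ after dividing by $\delta$, and this need not dominate the $o(1)$ errors such as $h_2$, which contains a twist term of order $\hat p(\delta)$ with no rate relative to $1/q$. A genuinely uniform margin comes instead from running Lemma~\ref{prop:covering} with a target slightly smaller than $d$, i.e.\ taking $\delta$ a fixed factor larger than $5/(2q)$.
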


\begin{proof}
Let $\mathcal{D}$ be the set of all $(N-1)$-dimensional discs that cross $\Pi_d\subset \Pi$ and are tangent to $\mathcal{C}^{\uu}$, and let $\mathcal{D}'$ be the set of all $(N-1)$-dimensional discs that cross $\Pi_d'\subset \Pi_d$ and tangent to $\mathcal{C}^{\uu}$, where $\Pi_d$ and $\Pi_d'$ are the cubes from Lemma~\ref{cor:image}. By construction,
all requirements of Definition~\ref{defi:blender} are satisfied by $\mathcal{D}$ and $\mathcal{D}'$.
\end{proof}

\subsection{Connecting the blender to $\gamma$: proof of Theorem~\ref{thm:main_blender_cubic_nonpara}}
The blender $\hat\Lambda_q$ of $\hat T$ which is given by Proposition~\ref{prop:cublender} corresponds to 
a cu-blender $\Lambda_q$ of $f$:
\begin{equation}\label{eq:blenderf}
\Lambda_q:=\bigcup_{k\in\mathcal{K}_q}\bigcup_{n=0}^{k}f^n(\hat\Lambda_q).
\end{equation} 
We complete the proof of Theorem~\ref{thm:main_blender_cubic_nonpara} for the case $|\alpha|>1$ by showing that
$\Lambda_q$ is connected to $\gamma$ in the sense of Definition~\ref{defi:blenConn}. The partial hyperbolicity requirement of this definition is an automatic consequence of Lemma~\ref{lem:conefields}.
The local transversality is achieved in $V^\u:=\Pi$, which follows from the facts that 
$W^\u_{\loc}(\gamma)\cap \Pi=\{R=0,X=0\}$ by \eqref{eq:strait3} and \eqref{eq:scaling1}, and that for every point $P\in \Lambda_q\cap \Pi$, one has $W^\s_\loc(P)=
\{(\Phi,Y)=w^\s_P(R,X)\}$ by Remark~\ref{invfol}. Finally, the blender property is a consequence of Lemma~\ref{lem:blentoblenCon} plus the fact that  the strong-unstable leaf through $M^-$ contains a disc in the set $\mathcal{D}$ in the proof of Proposition~\ref{prop:cublender}. \qed

\section{Creation of symplectic blenders}\label{sec:proofsymp}
In this section we prove Theorem~\ref{thm:main_2punfolding} and Proposition~\ref{prop:persis}. Recall that we consider sufficiently smooth two-parameter families of symplectic maps $f_\eps$ which have, for all small $\varepsilon$, a non-degenerate whiskered KAM-curve $\gamma$ with rotation number $\rho$ (independent of $\eps$). At $\eps=0$, the curve $\gamma$
has a partially-hyperbolic homoclinic tangency. We start with the proof of the theorem with case (1) -- the cubic tangency case. Then we show that the unfolding of a quadratic tangency can give rise to two secondary quadratic tangencies, which further lead to a cubic tangency, thus proving cases (2) and (3). In the last subsection, we consider the unfolding of the non-transverse intersections with the invariant manifolds of a symplectic blender, proving Proposition~\ref{prop:persis}. We start with preliminary results, common for all the proofs.

\subsection{Iterations of the local map and an inclination lemma}
By assumption, $\gamma$ lies in a normally-hyperbolic, two-dimensional invariant cylinder $\A$.
We use $C^{m'}$ Fenichel coordinates in a neighborhood of $\A$, where the inner map assumes the form (\ref{eq:kam_new}) with some integer $m$ satisfying $m'\geq m\geq 2$ (the numbers $m'$ and $m$ can be taken arbitrarily large when $f_\eps$ is sufficiently smooth). Combining 
Lemma~\ref{lem:F^k} and Lemma~\ref{lem:derisyst} (with $s\geq m'+2$), we immediately obtain 

\begin{lem}\label{lem:derisyst2} If  $V$ (the neighborhood of $\A$ where the local map $T_0$ is defined) is sufficiently small, 
then, for any $(r_0,\pp_0,x_0,y_0) \in V$ with sufficiently small $r_0$, and for all $k=o(r_0^{1-m})$ one has $(r_k,\pp_k,x_k,y_k)=T_0^k(r_0,\pp_0,x_0,y_0)\in V$ if and only if
\begin{equation}\label{eq:derisyst2:4}
\begin{aligned}
r_k & = r_0 +  h_1(r_0,\pp_0,x_0,y_k,\eps), \\
\varphi_k & =\varphi_0 +k\rho + kr_0+k\hat\rho(r_0,\eps)+ k h_2(r_0,\pp_0,x_0,y_k,\eps),\\
 x_k & =  h_3(r_0,\pp_0,x_0,y_k,\eps),\quad
 y_0  = h_4(r_0,\pp_0,x_0,y_k,\eps),
\end{aligned}
\end{equation}
where $\rho = O(r_0^2)$ is a degree $(m-1)$ polynomial in $r_0$, and
the functions $h$ satisfy the following properties:
$$h_s=h_{s1}(r_0,\pp_0,\eps) + h_{s2}(r_0,\pp_0,x_0,y_k,\eps), \qquad s=1,2,$$
where
\begin{equation}\label{eq:derisyst2:1}
\begin{array}{l}
\dfrac{\p^{i+|j|+|\ell|} (h_{11},h_{21})}{\p r^i_0 \p (\pp_0,\eps)^j}=
O(kr_0^{m-i}), \;\; 0\leq i+|j| \leq \min\{m,m'-m\},\\
\|(h_{12},h_{22})\|_{C^{m'}}= O(\lambda^{\frac{k}{2}}), \qquad
\|(h_3,h_4)\|_{C^{m'}}=O(\lambda^k).
\end{array}
\end{equation}
We also have
\begin{align*}
&h_{11}=0\;\;\mbox{for}\;\; r_0=0, \qquad
(h_{12},h_{22},h_3)=0\;\;\mbox{for}\;\; x_0=0, \\
& (h_{12},h_{22},h_4)=0\;\;\mbox{for}\;\; y_k=0.
\end{align*}
\end{lem}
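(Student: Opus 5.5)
The plan is to prove Lemma~\ref{lem:derisyst2} by composing the two earlier results. Lemma~\ref{lem:derisyst} compares the orbit of $T_0$ with the orbit of the inner map; Lemma~\ref{lem:F^k} describes that inner orbit explicitly.

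First I apply Lemma~\ref{lem:derisyst} with $s\geq m'+2$, so that derivatives up to order $m'$ are controlled. For given $(r_0,\pp_0,x_0,y_k)$ it yields unique $(r_k,\pp_k,x_k,y_0)$. I then write
\[
(r_k,\pp_k)=F^k(r_0,\pp_0,\eps)+E_k(r_0,\pp_0,x_0,y_k,\eps),
\]
where the error satisfies $\|E_k\|_{C^{m'}}=O(\lambda^{k/2})$ by \eqref{eq:derisyst:1}. By \eqref{eq:derisyst:2}, $\|(x_k,y_0)\|_{C^{m'}}=O(\lambda^k)$. These give $h_3,h_4$ directly.

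Second, I substitute formula \eqref{F^k} for $F^k$, which is valid for $k=o(r_0^{1-m})$. This gives $r_k=r_0+\xi_k+E_k^{(1)}$ and
\[
\pp_k=\pp_0+k\rho+kr_0+k\hat\rho(r_0,\eps)+k\eta_k+E_k^{(2)}.
\]
I then set $h_{11}=\xi_k$, $h_{21}=\eta_k$, $h_{12}=E^{(1)}_k$ and $h_{22}=E^{(2)}_k/k$. The estimates on $h_{11},h_{21}$ are exactly \eqref{eq:gg}. Dividing by $k$ only improves the bound on $h_{22}$, so $\|(h_{12},h_{22})\|_{C^{m'}}=O(\lambda^{k/2})$ still holds. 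The polynomial $\hat\rho$ is $O(r_0^2)$ of degree $m-1$ by construction in \eqref{eq:kam_new}. The condition that the orbit stays in $V$, namely that the inner orbit stays in the interior of $\A$, holds because $r_k=r_0+O(kr_0^m)=O(r_0)$ for $k=o(r_0^{1-m})$. Small $r_0$ then keeps the orbit inside $\A$.

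Third, I check the vanishing identities. Since $\gamma=\{r=0\}$ is invariant, $\xi_k(0,\pp_0,\eps)=0$, so $h_{11}=0$ at $r_0=0$. The last clause of Lemma~\ref{lem:derisyst} gives $x_k=0$ when $x_0=0$, $y_0=0$ when $y_k=0$, and $(r_k,\pp_k)=F^k$ in either case. Hence $E_k$ vanishes there, and so do $h_{12},h_{22}$.

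The only delicate point is uniformity of the $C^{m'}$ bound on $E_k$ in $\eps$ and over the range $k=o(r_0^{1-m})$. Here I rely on the fact that Lemma~\ref{lem:derisyst} holds for any orbit staying in $\A$, with constants independent of $k$.
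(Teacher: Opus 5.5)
Your proposal is correct and follows the paper's argument: the paper obtains Lemma~\ref{lem:derisyst2} directly by combining Lemma~\ref{lem:F^k} with Lemma~\ref{lem:derisyst} applied with $s\geq m'+2$. As you do, it splits $h_1,h_2$ into the inner-map part $(\xi_k,\eta_k)$ and the $O(\lambda^{k/2})$ correction. Your reading of the misprinted ``$\rho=O(r_0^2)$'' as the polynomial $\hat\rho$ from \eqref{eq:kam_new} is also the intended one.
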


We will use the following version of the ``inclination lemma'' for whiskered KAM-curves (cf. \citep{CreWig:15,Sab:15}). 
\begin{lem}\label{lem:incli0}
Let $m'\geq m \geq 3$ and let $m^*:=\min\{m'-m,m\}$. If a smooth $N$-dimensional manifold $W$ intersects  
$W^\s_{\loc}(\gamma)$ transversely, then the sequence of manifolds $\{T_0^k(W)\}_{k\in\mathbb{N}}$ accumulates,
in the $C^{m^*}$ topology (jointly with respect to variables and parameters),
on $W^\u_{\loc}(\gamma)$ from both sides -- from $r>0$ and $r<0$. Likewise, if $W$ intersects  $W^\u_{\loc}(\gamma)$ transversely, then the sequence $\{T_0^{-k}(W)\}_{k\in\mathbb{N}}$ accumulates on $W^\s_{\loc}(\gamma')$ from both sides.
\end{lem}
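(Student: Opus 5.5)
We work in the $C^{m'}$ Fenichel coordinates of Section~\ref{sec:innermap}, where $W^\s_{\loc}(\gamma)=\{r=0,y=0\}$ and $W^\u_{\loc}(\gamma)=\{r=0,x=0\}$. The iterations $T_0^k$ are given in cross-form by Lemma~\ref{lem:derisyst2}. By symmetry (replace $f$ by $f^{-1}$) it suffices to treat the forward statement.

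\emph{Step 1: normal form for $W$.} Transversality of $W$ to the $N$-dimensional $W^\s_{\loc}(\gamma)$ at a point $(0,\pp^*,x^*,0)$ means that, near this point, $W$ is a graph over the complementary variables $(r,y)$. We write it as
\[
W=\{\pp=w_1(r,y,\eps),\ x=w_2(r,y,\eps)\},
\]
with $w$ of class $C^{m'}$, jointly in $\eps$, for $|r|,\|y\|$ small. The restriction $r\in(0,r^*)$ gives a piece on the side $r>0$, and $r\in(-r^*,0)$ a piece on the side $r<0$.

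\emph{Step 2: solving for the image.} We seek points of $T_0^k(W)$ with $y_k=\bar y$ prescribed and $r_k=\bar r$ prescribed. The target value of $\bar r$ ranges over a neighbourhood of $r=0$ of size shrinking with $k$, on a fixed side. Using \eqref{eq:derisyst2:4}, the unknowns $(r_0,\pp_0,x_0)$ must satisfy three conditions:
\[
\pp_0=w_1(r_0,y_0),\qquad x_0=w_2(r_0,y_0),\qquad y_0=h_4(r_0,\pp_0,x_0,\bar y).
\]
Since $\|h_4\|_{C^{m'}}=O(\lambda^k)$, the last equation and the graph equations are solved by the contraction principle. This gives $y_0,\pp_0,x_0$ as functions of $(r_0,\bar y,\eps)$ that are $O(\lambda^k)$-close, in $C^{m'}$, to $(0,w_1(r_0,0),w_2(r_0,0))$.

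The remaining equation is $\bar r=r_0+h_1(\dots)$, where $h_{11}=O(kr_0^m)$ and $h_{12}=O(\lambda^{k/2})$. Because $h_{11}$ vanishes at $r_0=0$, this gives $r_0=\bar r+O(k\bar r^m)+O(\lambda^{k/2})$. Then
\[
\pp_k=w_1(r_0,0)+k\rho+kr_0+k\hat\rho(r_0)+k h_2,\qquad x_k=h_3=O(\lambda^k).
\]
So $T_0^k(W)$ near $W^\u_{\loc}(\gamma)$ is the graph $\{r=\bar r,\ x=O(\lambda^k)\}$, parametrised by $(\pp_k,\bar y)$.

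\emph{Step 3: the main obstacle, surjectivity in $\pp$ with controlled derivatives.} We must show that $T_0^k(W)$ is a graph over the whole annulus, meaning all $\pp\in\mathbb{S}^1$ and all small $y$, with $r=\mathcal{R}_k(\pp,y,\eps)$ and $x=\mathcal{X}_k(\pp,y,\eps)$, and with both functions tending to $0$ in $C^{m^*}$.

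The twist is what makes this work. When $r_0$ runs over an interval of length about $1/k$ on one side of $0$, the map $r_0\mapsto \pp_k$ has derivative
\[
k\bigl(1+O(r_0)+O(kr_0^{m-1})\bigr)+O(1),
\]
which is bounded below by about $k$. Hence $\pp_k$ covers a full circle while $r_0=O(1/k)$. We would choose $r_0$ in a window such as $[1/k,3/k]$ for the side $r>0$, and its negative for the side $r<0$. This uses $m\ge3$, so that $kr_0^{m-1}=o(1)$, which is consistent with the range $k=o(r_0^{1-m})$ of Lemma~\ref{lem:derisyst2}.

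Inverting, we get $r_0$ as a function of $\pp_k$. Its $j$-th derivatives in $\pp$ are $O(k^{-1})$ times bounded factors. This needs the estimates \eqref{eq:derisyst2:1} on $h_{11},h_{21}$: the $i$-th $r$-derivative costs $r_0^{-i}\sim k^i$ but is paired with $kr_0^{m}$, and this is fine up to order $m^*$. Differentiating with respect to $\eps$ is handled the same way, because the estimates are joint in $\eps$. We would conclude that $\mathcal{R}_k=O(1/k)$ in $C^{m^*}$ and $\mathcal{X}_k=O(\lambda^k)$, with $\operatorname{sign}\mathcal{R}_k$ fixed by the chosen side. This gives $C^{m^*}$ accumulation on $W^\u_{\loc}(\gamma)$ from $r>0$ and from $r<0$ separately.

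The delicate bookkeeping, which I expect to be the main work, is checking that the derivative losses from the inverse-function step do not exceed the $k r_0^{m-i}$ gains. The choice $r_0\sim1/k$ and the restriction to order $\le m^*$ are made precisely so that this holds.
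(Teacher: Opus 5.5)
Your proposal is correct and follows the paper's proof. You write $W$ as a graph $(\varphi,x)=w(r,y,\varepsilon)$ over the transverse directions, restrict to a thin strip in $r_0$ on one side, apply Lemma~\ref{lem:derisyst2}, use the twist term $kr_0$ to make $\varphi_k$ sweep an interval of length greater than $1$, and invert to obtain $T_0^k(W)$ as a graph over $(\varphi,y)$ that tends to zero in $C^{m^*}$. The only difference is the strip width: the paper takes $r_0\in[\delta_k/2,\delta_k]$ with $\delta_k=k^{-3/4}$, so the image wraps about $k^{1/4}$ times, while your choice $r_0\sim 1/k$ gives roughly two turns and makes the derivative bookkeeping transparent in the rescaled variable $s=kr_0$ (there the $h_{21}$-contribution to $\varphi_k$ is $O(k^{2-m})$ in $C^{m^*}$), under the same requirement $m\ge 3$.
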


\begin{proof}
By symmetry, it suffices to only consider the case where $W$ intersects $W^\s_\loc(\gamma)$.
Let $M$ be the point of the transverse intersection of $W$ and $W^\s_\loc(\gamma)$. 
Recall that $W^\s_\loc(\gamma)$ is given by the equation $\{r=0, y=0\}$. So,
by the transversality, $W$ near $M$ is the graph of some $C^{m'}$ function 
$(\pp_0,x_0)=w(r_0,y_0,\eps)$ defined for all small $r_0,y_0,\eps$.  

For integer $k$, let $\delta_k=k^{-3/4}$. Let $W_k^+$ be the part of $W$ corresponding to $r_0\in [\delta_k/2,\delta_k]$ and 
$y\in D$ where $D$ is a small $(N-1)$-dimensional ball around zero, and $W_k^-$ be the part of $W$ corresponding to $r_0\in [-\delta_k,-\delta_k/2]$ and $y\in D$. To prove the lemma, we will show that $T^k_0(W^+_k)$ and 
$T^k_0(W^-_k)$ accumulate on $W^\u_{\loc}(\gamma): \{r=0, x=0\}$ as $k\to+\infty$, from the side of positive and, respectively, negative $r$. We only consider the sequence $T^k_0(W^+_k)$; the computations for $T^k_0(W^-_k)$
are the same. 

So, our goal is to show that $T^k_0(W^+_k)$ is the graph of a function $(\varphi,y)\mapsto (r,x)$ defined for 
$y\in D$ and $\varphi$ from an interval of length larger than $1$, such that the $C^{m^*}$ norm of this function tends to zero as $k\to +\infty$. 
Since $m\geq 3$,
one has $k<|r_0|^{-4/3}=o(r_0^{1-m})$ for $|r_0| \leq \delta_k$. Thus, Lemma~\ref{lem:derisyst2} is applicable for sufficiently large $k$, and we have from \eqref{eq:derisyst2:1} that $(r_k, \varphi_k, x_k, y_k) \in T^k_0(W^+)$ 
if and only if, for some $r_0 \in [\delta/2,\delta]$, $y_0\in D$,
\begin{equation}\label{eq:T0incline2}
\begin{aligned}
r_k &=r_0 +  \hat h_1(r_0,y_0,y_k,\eps), \\ 
\varphi_k &=w_1(r_0,y_0,\eps) +k\rho+ kr_0 +k\hat\rho(r_0,\eps) +  k \hat h_2(r_0,y_0,y_k,\eps),\\
 x_k &=  \hat h_3(r_0,y_0,y_k,\eps),\qquad  
 y_0 =  \hat h_4(r_0,y_0,y_k,\eps),
\end{aligned}
\end{equation}
where $w_1$ is the $\pp_0$-component of $w$ and $\hat h(r_0,y_0,y_k,\eps)=h(r_0,w(r_0,y_0,\eps),y_k,\eps)$. We have
$$\dfrac{\p^{i+|j|} \hat h_{1,2}}{\p r_0^i\p(y_0, y_k, \varepsilon)^j}=O(\delta^{m-{\frac{4}{3}}-i}),$$
for $0\leq i+|j|\leq m^*$, and 
$$\|\hat h_{3,4}\|_{C^{m^*}}= O(\lambda^k).$$

Since the range of values of $w_1$ is bounded, $\rho= O(r_0^2)=O(\delta^2)$, and $\hat h_2=O(\delta^{m-(4/3)})=
O(\delta^{5/3})$, it follows from the second equation in \eqref{eq:T0incline2} that the range of $\pp_k$ covers
an interval whose length is of order $k\delta \gg 1$ as $r_0$ runs from $\delta/2$ to $\delta$. 
Also, since $\hat h_1=O(\delta^{5/3})=o(\delta)$, we have from the first equation in \eqref{eq:T0incline2}
that $r_k>0$ when $r_0\in [\delta/2, \delta]$ and $\delta$ is sufficiently small.

Hence, we are left to show that for each $\pp_k$ from this interval, and $y_k\in D$, the corresponding values of $x_k$ and $r_k>0$ are uniquely defined from \eqref{eq:T0incline2}, and that they tend to zero as $k\to +\infty$, along with the derivatives up to order $m^*$. For that, we express $(r_0,y_0)$ as a function of $(r_k,y_k)$ from the 
$r_0$ and $y_0$-equations in \eqref{eq:T0incline2} ans substitute the result into the rest of the equations. This gives
$$r_0 =r_k +  \tilde h_1(r_k,y_k,\eps), \qquad y_0 =  \tilde h_4(r_k,y_k,\eps), \qquad x_k =\tilde h_3(r_k,y_k,\eps),$$
$$\frac{1}{k} \varphi_k - \rho =  r_k + \hat\rho(r_k,\eps) + \frac{1}{k} w_1(r_k,y_k,\eps) +  \hat h_2(r_k,y_k,\eps),$$
where 
$\frac{\p^{i+|j|} \tilde h_{1,2}}{\p r_k^i\p (y_k, \varepsilon)^j}=O(\delta^{m-(4/3)-i})$
for $0\leq i+|j|\leq m^*$, and 
$\|\tilde h_{3,4}\|_{C^{m^*}}= O(\lambda^k)$.
Now, expressing $r_k$ as a function of $(\pp_k,y_k)$ from the last equation, and noticing that 
$k^{-1}= \delta^{4/3}$, we find that, 
$$x_k=O(\lambda^k), \qquad r_k = O(\delta^{m-{\frac{4}{3}}})= O(k^{-{\frac{5}{4}}}),$$
along with all derivatives with respect to $(y_k,\pp_k,\eps)$ up to the order $m^*$. 
\end{proof}

Recall that $\gamma$ is surrounded in $\A$ by a Cantor set of non-gegenerate KAM circles whose rotation numbers
have the same Diophantine properties as $\rho$ (they all are $(c,\tau)$-Diophantine). These curves are circles
$r=\const$ (see (\ref{8241})); so for any such curve $\gamma'$ we can make it equation $\{r=0\}$ by a shift in $r$,
and all the formulas for the local map in $\A$ remain the same, i.e., the theory we developed for the curve $\gamma$
remains true for the curve $\gamma'$. In particular, the above lemma is applicable to any such curve. This gives us

\begin{lem}\label{lem:incli}
For any pair of the $(c,\tau)$-Diophantine KAM-curves $\gamma_1,\gamma_2$ in $\A$, if 
$W^\u(\gamma_1)$ has a transverse intersection with $W^\s(\gamma_2)$, 
then the images of $W^\u(\gamma_1)$ by $T_0^k$ and of $W^\s(\gamma_2)$ by $T_0^{-k}$, $k \in \mathbb{N}$
accumulate (from both sides) on $W^\u_{\loc}(\gamma_2)$ and, respectively,  $W^\s_{\loc}(\gamma_1)$, in the $C^{m^*}$ topology (jointly with variables and parameters),  where $m^*=\min\{m'-m,m\}$.
\end{lem}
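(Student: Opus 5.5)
The plan is to reduce Lemma~\ref{lem:incli} to Lemma~\ref{lem:incli0}, applied once to the curve $\gamma_2$ for the forward statement and once to $\gamma_1$ for the backward one. As noted just before the lemma, for any $(c,\tau)$-Diophantine KAM-curve $\gamma'=\{r=r'\}$ in $\A$, a shift $r\mapsto r-r'$ keeps the Fenichel coordinates. In these coordinates the straightened fibration \eqref{8241} still holds. The inner map keeps the form \eqref{eq:kam_new} with $\rho$ replaced by $\rho(\gamma')=\rho+r'+\hat\rho(r',\eps)$, and it keeps estimates \eqref{eq:kam_new_deri}. The $r'$-shift changes neither the linear twist coefficient (up to a factor $1+O(r')$) nor the smoothness $m,m'$. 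The coefficient can be renormalized as before, so Lemma~\ref{lem:derisyst2} and hence Lemma~\ref{lem:incli0} hold verbatim with $\gamma'$ in place of $\gamma$. The constants are uniform because all these curves lie in the same Diophantine class and in the fixed neighbourhood $V$.

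\emph{Forward statement.} Let $M$ be a point of transverse intersection of $W^\u(\gamma_1)$ with $W^\s(\gamma_2)$. Iterating forward by $f$, I would move $M$ into $W^\s_{\loc}(\gamma_2)$; transversality is preserved under the diffeomorphism. I would then take $W$ to be a small $N$-dimensional disc of $W^\u(\gamma_1)$ around this iterate. Since $W^\u(\gamma_1)$ is $f$-invariant as a set of $f^{\per}$-orbits, $W$ is an $N$-dimensional $C^{m'}$ manifold, and it depends smoothly on $\eps$ because $\gamma_1$ persists as a KAM-curve. Moreover, $W$ intersects $W^\s_{\loc}(\gamma_2)$ transversely. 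Applying Lemma~\ref{lem:incli0} to $\gamma_2$ gives that $T_0^k(W)\subset W^\u(\gamma_1)$ accumulates in $C^{m^*}$ on $W^\u_{\loc}(\gamma_2)$ from both sides $r>r_2$ and $r<r_2$.

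\emph{Backward statement.} Symmetrically, iterating $M$ backward puts it in $W^\u_{\loc}(\gamma_1)$. I would take a disc of $W^\s(\gamma_2)$ there and apply the second half of Lemma~\ref{lem:incli0} (the $T_0^{-k}$ version) to $\gamma_1$.

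The only real point to check is the uniformity and validity of Lemma~\ref{lem:incli0} for a curve other than $\gamma$. First, the polynomial $\hat\rho$ must still be $O(r^2)$ after recentering. This holds after absorbing the linear correction into a rescaling of $r$. Second, the remainders $\xi,\eta$ must still satisfy \eqref{eq:kam_new_deri} centered at $r'$. This follows from \eqref{8241}: $\xi,\eta$ vanish identically on the Diophantine set. That set has density points at $r'$, so all $r$-derivatives up to order $m'$ vanish at $r=r'$, and Taylor's formula gives $O((r-r')^{m-i})$. I expect this verification to be the main (though mild) obstacle. Once it is done, the conclusion follows directly, with the $C^{m^*}$ convergence jointly in parameters inherited from Lemma~\ref{lem:incli0}.
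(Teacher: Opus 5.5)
Your proposal is correct and follows the paper's own route. The paper likewise shifts $r$ so that the straightened KAM-curve $\{r=\const\}$ becomes $\{r=0\}$, observes that the formulas for the local map keep the same form, and applies Lemma~\ref{lem:incli0} to $\gamma_2$ for the forward iterates and to $\gamma_1$ for the backward iterates. Your check of \eqref{eq:kam_new_deri} after recentering via \eqref{8241} is a detail the paper leaves implicit; note only that the shift $r'=r'(\eps)$ depends on the parameters, so you also need the mixed $(\varphi,\eps)$-derivatives of $\xi,\eta$ to vanish at $r=r'(\eps)$. This follows by differentiating the identity $\xi(r'(\eps),\varphi,\eps)\equiv 0$ and using the vanishing of the $r$-derivatives.
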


\begin{figure}[!h]
\begin{center}
\includegraphics[scale=.7]{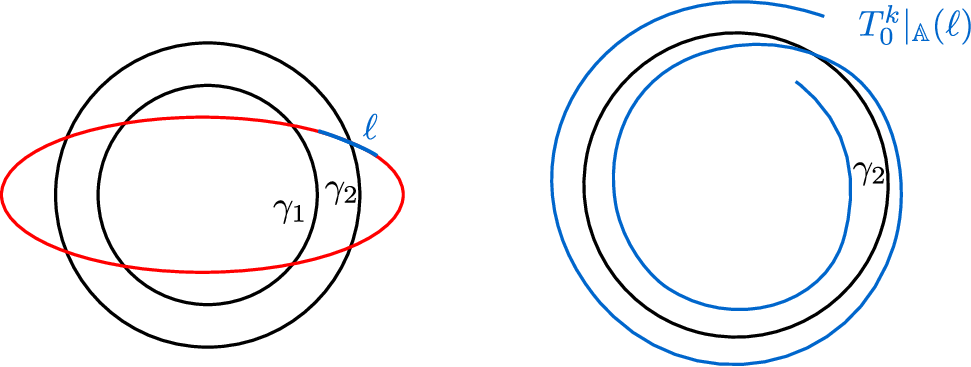}
\end{center}
\caption{Illustration to Lemma \ref{lem:incli} ($(r,\pp)$ projection). For any small piece $\ell$ of $S(\gamma_1)$ containing the point of a transverse intersection of $S(\gamma_1)$ with $\gamma_2$, there exists $k$ such that the $k$-th iterate of $\ell$ approaches $\gamma_2$ from both sides. The scattering map $S$ is defined for partially-hyperbolic heteroclinic intersections in Remark \ref{rem:scat_trans}.}
\label{fig:incline}
\end{figure}

\subsection{Case of a cubic tangency: proof of case (1) of Theorem~\ref{thm:main_2punfolding}}\label{sec:proofcubic}

We will use two key lemmas:

\begin{lem}\label{lem:a11}
Let $\min\{m'-m,m\}\geq 3$. Let the non-degenerate whiskered KAM-curve $\gamma$ have an orbit $\Gamma$
of a partially-hyperbolic cubic homoclinic tangency. Let $\hat V$ be a small neighborhood of $\orb\cup \Gamma$. Then, for a generic two-parameter unfolding family given by \eqref{eq:unfoldcubic}, there exists a sequence $\eps_k\to 0$ such that the  has, at each $\eps=\eps_k$, a secondary cubic homoclinic tangency in $\hat V$. The tangency is hyperbolic (expanding), see Definition \ref{defi:phtangency}, with the expansion factor $|\alpha|$ tending to infinity as $k\to \infty$. Moreover, this tangency unfolds generically as $\eps$ varies from $\eps_k$.
\end{lem}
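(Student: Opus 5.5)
We will find the secondary tangencies among the homoclinic orbits that make one extra pass near $\orb$. Using \eqref{eq:T1_cubic} for $T_1$ and Lemma~\ref{lem:derisyst2} for $T_0^k$, we write the composition $T_1\circ T_0^k\circ T_1$ restricted to $W^\u_\loc(\gamma)=\{r=0,x=0\}$. By Lemma~\ref{lem:equitangency}, it suffices to work with the scattering maps. We study the curve $S\circ F^k\circ S(\gamma)$ near the point $(0,\pp^+)$ of $\A$, with the strong-stable and strong-unstable directions contributing only $O(\lambda^{k/2})$ corrections. The parameters $(\mu,\nu)$ are used as coordinates on the parameter space; this is allowed by \eqref{eq:unfoldcubic}.

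\textbf{Step 1: a one-dimensional model.} The first application of $S$ carries an arc of $\gamma$ near $\pp^-$ to the curve $\tilde r=\mu+\nu\tilde\Phi+\beta\tilde\Phi^3+\dots$, where $\tilde\Phi=\tilde\pp-\pp^+$. Iterating this curve by $F^k$ via \eqref{eq:derisyst2:4} keeps $r$ essentially unchanged and shifts the angle by $k\rho+k\tilde r+\dots$. The image curve is therefore the graph, over $\pp_k$, of $\tilde r$ determined implicitly by
\[
\pp_k=\pp^+ +\tilde\Phi+k\rho+k(\mu+\nu\tilde\Phi+\beta\tilde\Phi^3)+\dots .
\]
The second application of $S$ then gives $\bar r=\mu+\alpha^{-1}\tilde r+\nu\bar\Phi+\beta\bar\Phi^3+\dots$, where $\bar\Phi$ is linked to $\pp_k$ through the second row of \eqref{eq:T1_cubic}.

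\textbf{Step 2: choosing parameters.} We choose $k$ from the Dirichlet-type sequence \eqref{eq:qcon}, so that $(\pp^+-\pp^-+k\rho)\bModa 1$ is small, and we take $\mu,\nu$ of order $1/k$. The twist term $k\tilde r$ then stretches the cubic profile. Writing $\pp_k-\pp^-=\alpha^{-1}\bar\Phi+\dots$ and solving for $\tilde\Phi$ as a function of $\bar\Phi$, we get $\tilde\Phi\approx$ a cubic polynomial in $\bar\Phi$ of size $O(1/k)$. Substituting this into $\bar r$, we impose the three conditions $\bar r=\partial_{\bar\Phi}\bar r=\partial^2_{\bar\Phi}\bar r=0$ at some $\bar\Phi^*$ and solve them for $(\mu,\nu,\bar\Phi^*)$ in the rescaled variables $\mu=M/k$, $\nu=\mathcal N/k$, $\tilde\Phi=k^{-1/2}\Theta$ or a similar scaling. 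In the limit $k\to\infty$ we expect a nondegenerate polynomial system whose solution gives $\eps_k\to0$. The $C^{m^*}$ control in Lemma~\ref{lem:derisyst2} with $m^*\ge3$ makes the remainders small in $C^3$, and the implicit function theorem then gives genuine solutions. The same Jacobian computation shows that the new splitting parameters $(\mu_{\new},\nu_{\new})$ depend on $(\mu,\nu)$ with nonzero determinant; this is exactly the genericity \eqref{eq:unfoldcubic} for the secondary tangency.

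\textbf{Step 3: expansion and partial hyperbolicity.} The contraction factor along $\gamma$ of the new scattering map is the derivative of the composed map along the tangent of $\gamma$ at the tangency. This derivative picks up the factor $\partial\pp_k/\partial\tilde\Phi\approx 1+k\nu+3k\beta\tilde\Phi^2$ times the factor coming from the fold of the cubic. We arrange $\bar\Phi^*$ so that the relevant derivative is of order $k^{1/2}$ or larger, which gives $|\alpha_{\new}|\to\infty$. Partial hyperbolicity of the secondary orbit, condition \eqref{eq:condition:trans1}, follows from that of $\Gamma$ together with the $O(\lambda^k)$ smallness of the $x,y$ couplings in Lemma~\ref{lem:derisyst2}. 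The new orbit lies in $\hat V$ because it shadows $\Gamma$ and $\gamma$.

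\textbf{Main obstacle.} The main obstacle is Step 2. We must pick the scaling so that the limiting polynomial system for a triple root, as opposed to a double root, is solvable and nondegenerate, and at the same time keeps $\alpha_{\new}$ large. The plan is first to test the scaling $\tilde\Phi\sim k^{-1/3}$, which balances $k\beta\tilde\Phi^3$ against $\tilde\Phi$. If that fails, we will let $\nu$ absorb the linear term and use the two free parameters to cancel the constant and quadratic coefficients of the composed cubic.
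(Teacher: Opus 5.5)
Your setup (one pass through the cubic tangency, then $k$ iterations of $T_0$, then a second pass through the \emph{same} tangency) is legitimate. There is, however, a genuine gap: the decisive step is exactly the one you leave open, and the scaling you prescribe cannot give $|\alpha|\to\infty$.

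Along $\gamma$ the new expansion factor is $\alpha_{\mathrm{new}}\approx\alpha^2\,\partial\varphi_k/\partial\tilde\Phi=\alpha^2(1+k\nu+3k\beta\tilde\Phi^2)$. With $\nu=\mathcal N/k$ and $\tilde\Phi=k^{-1/2}\Theta$, and with $c_k:=(\varphi^+-\varphi^-+k\rho)\bModa 1$ small as in your Dirichlet choice, this factor is $O(1)$. The reason is that the slope of $S(\gamma)$ near the tangency is only $\nu+3\beta\tilde\Phi^2=O(1/k)$, so the twist stretches nothing. Concretely, the limiting triple-root system has solutions at $\Theta=0$ with $\mathcal N=0$ or $\mathcal N=-(1+\alpha^{-2})$. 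These give $\alpha_{\mathrm{new}}\to\alpha^2$, which is contracting when $|\alpha|<1$, or $\alpha_{\mathrm{new}}\to-1$, which is not hyperbolic.

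A large-expansion branch does exist in your double-passage setting, but only under two conditions. You must choose $k$ with $|c_k|\gg k^{-1/2}$, the opposite of your choice, and take $\tilde\Phi\sim(c_k/k)^{1/3}$. Requiring the second passage at $\varphi_k\approx\varphi^-$ and killing the constant and linear coefficients then gives $k\beta\tilde\Phi^3\approx-(1+\alpha^{-1})c_k$; the factor $1+\alpha^{-1}$ appears because $\mu$ enters both passages. This branch disappears when $\alpha=-1$, which the lemma does not exclude. Separately, the balance $k\beta\tilde\Phi^3\sim\tilde\Phi$ corresponds to $\tilde\Phi\sim k^{-1/2}$, not $k^{-1/3}$.

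The missing idea is the one the paper uses.
\begin{itemize}
\item First, Lemma~\ref{lem:transinter} gives a partially-hyperbolic \emph{transverse} homoclinic orbit $\Gamma_2$ near $\Gamma_1$.
\item One then studies $T_1\circ T_0^k\circ T_2$, where $T_2$ is the transition map along $\Gamma_2$.
\item The manifold $T_2(W^{\mathrm{u}}_{\mathrm{loc}}(\gamma))$ crosses $W^{\mathrm{s}}_{\mathrm{loc}}(\gamma)$ with $O(1)$ slope, since $a_{12}\neq0$. The twist in $T_0^k$ turns it into an almost flat piece whose angle is stretched by a factor of order $k$.
\item Since $\mu$ and $\nu$ do not affect $T_2$, the rescaling \eqref{a11:trans} gives $\bar R=k^{3/2}\mu+(k\nu+b^2)\bar\Phi+\beta\bar\Phi^3+O(k^{-1/2})$ with a $C^3$-small remainder.
\end{itemize}
This yields the cubic tangency at $\mu=O(k^{-2})$, $\nu=-k^{-1}b^2+O(k^{-3/2})$. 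The generic unfolding in $(\mu,\nu)$ is then immediate, and $\alpha=-kb^{-1}a_{12}^{-1}+O(k^{1/2})\to\infty$ whatever the original $\alpha$ is.
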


\begin{lem}\label{lem:trans1}
Let $\min\{m'-m,m\}\geq 3$. Let the non-degenerate whiskered KAM-curve $\gamma$ have an orbit $\Gamma$ of cubic hyperbolic homoclinic tangency and let $\hat V$ be a small neighborhood of $\orb\cup \Gamma$. Then, the blender $\Lambda\subset \hat V$ given by Theorem~\ref{thm:main_blender_cubic_nonpara} is homoclinically related to $\gamma$. Namely, $W^\u(\Lambda)$ has a transverse intersection with $W^\s(\gamma)$
and $W^\s(\Lambda)$ has a transverse intersection with $W^\u(\gamma)$. Moreover, the corresponding heteroclinic orbits stay in $\hat V$.  
\end{lem}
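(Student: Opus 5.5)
We prove Lemma~\ref{lem:trans1} directly from the construction in Section~\ref{sec:proofcu}, treating the expanding case $|\alpha|>1$; the contracting case follows by passing to $f^{-1}$. Recall that $\Lambda$ is built from the induced map $\hat T$ on the cube $\Pi\subset\Pi^-$ near $M^-\in W^\u_\loc(\gamma)$, with rescaled coordinates \eqref{eq:scaling1}. By Remark~\ref{invfol}, every $P\in\hat\Lambda_q$ has:
\begin{itemize}[nosep]
\item a local stable manifold given as a graph $(\Phi,Y)=w^\s_P(R,X)$ over the full $(R,X)$-cube;
\item a local unstable manifold given as a graph $(R,X)=w^\u_P(\Phi,Y)$ over the full $(\Phi,Y)$-cube;
\end{itemize}
and both graphs have derivatives bounded by $L=o(1)$.

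\textbf{The intersection $W^\s(\Lambda)\pitchfork W^\u(\gamma)$.} In $\Pi$ we have $W^\u_\loc(\gamma)\cap\Pi=\{R=0,X=0\}$, which is the graph of the zero function over the $(\Phi,Y)$-cube. The $N$-dimensional manifold $W^\s_\loc(P)$ is a graph over $(R,X)$ with small slope. So the two graphs meet in exactly one point, namely $(R,X)=(0,0)$, $(\Phi,Y)=w^\s_P(0,0)$. The intersection is transverse because the tangent spaces lie in the complementary cones $\mathcal{C}^\s$ and $\mathcal{C}^\u$. This is exactly the local transversality already noted at the end of Section~\ref{sec:proofcu}, and the corresponding orbit stays in $\Pi\subset\hat V$.

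\textbf{The intersection $W^\u(\Lambda)\pitchfork W^\s(\gamma)$.} This is the step needing work. Take $P\in\hat\Lambda_q$ with $\hat T^{-1}(P)=T_k^{-1}(P)$ for some $k\in\mathcal K_q$, and consider $W^\u_\loc(P')$ for the preimage $P'=T_k^{-1}(P)\in\Pi$. Apply $T_1$ to $W^\u_\loc(P')$, which is the graph of $w^\u_{P'}$ with $|R|,\|X\|<1$. By \eqref{eq:T1_new}, its image in $\Pi^+$ is an $N$-dimensional surface that is close, in rescaled coordinates, to $\{\tilde X=a_{33}X,\ \tilde R=\alpha^{-1}R\}$, with $\tilde\Phi$ ranging over an interval of size about $|\alpha|$ and $\tilde Y$ ranging over the unit cube. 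In these rescaled coordinates \eqref{eq:scaling2}, $W^\s_\loc(\gamma)=\{\tilde R=0,\tilde Y=0\}$, which is a graph over $(\tilde\Phi,\tilde X)$. Transversality in the full space then reduces to two checks:
\begin{itemize}[nosep]
\item the image surface is a graph over $(\tilde\Phi,\tilde Y)$ with small $(\tilde R,\tilde X)$-slopes;
\item $\tilde\Phi$ covers a neighborhood of some value.
\end{itemize}
Under these two conditions, the set $\{\tilde R=0,\tilde Y=0\}$ meets the surface in exactly one point, and the intersection is transverse. The point to verify is that the $\tilde R$-coordinate is a function of $(\tilde\Phi,\tilde Y)$ with small derivative. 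This follows from $\|\tilde h\|_{C^1}=o(1)$ and the unstable cone $\mathcal C^\u$ being preserved by $D T_1$ in rescaled form; the latter is the same computation as in Lemma~\ref{lem:conefields}, restricted to $T_1$. The intersection point lies in $\Pi^+$, and its backward orbit under $T_1^{-1}$ and the local dynamics stays in $\hat V$. So the heteroclinic orbit lies in $\hat V$.

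\textbf{Main obstacle.} The difficulty is the second intersection. At the tangency, the transition map $T_1$ itself carries $W^\u_\loc(\gamma)$ tangentially to $W^\s_\loc(\gamma)$. One must therefore make sure that the rescaling genuinely separates the $\tilde R$-direction: the $\tilde R$-equation in \eqref{eq:T1_new} is $\alpha^{-1}R+o(1)$, with no $\tilde\Phi$-dependence at leading order. This is exactly why the 2-flatness (respectively cubicity) and the weight $\hat p(\delta)\delta^2$ in \eqref{eq:scaling1}--\eqref{eq:scaling2} were chosen. In the KAM setting of this section, the smoothness hypothesis $\min\{m'-m,m\}\ge3$ guarantees that the cubic tangency is 2-flat in the required sense. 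Hence Theorem~\ref{thm:main_blender_cubic_nonpara} and the above estimates apply verbatim.
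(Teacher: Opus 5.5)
Your treatment of $W^\s(\Lambda)\pitchfork W^\u(\gamma)$ is fine; it is the local transversality in $\Pi$ already recorded at the end of Section~\ref{sec:proofcu}. The argument for $W^\u(\Lambda)\pitchfork W^\s(\gamma)$, however, has a genuine gap: your two conditions give near-tangency, not transversality.

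In the coordinates \eqref{eq:scaling1}--\eqref{eq:scaling2}, $T_1(W^\u_\loc(P'))$ is a graph $(\tilde R,\tilde X)=\psi(\tilde\Phi,\tilde Y)$ with $o(1)$ slopes. So its tangent space contains a vector $o(1)$-close to $\partial_{\tilde\Phi}$. But $\partial_{\tilde\Phi}$ lies in $T W^\s_\loc(\gamma)=\{\Delta\tilde R=0,\ \Delta\tilde Y=0\}$, so the two tangent spaces nearly share a direction. Concretely, an intersection point solves the scalar equation $\psi_1(\tilde\Phi,0)=0$, and transversality means $\partial_{\tilde\Phi}\psi_1\neq 0$ there. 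That is the opposite of ``small slope''. At this scale $\psi_1$ is $o(1)$ in $C^1$, because $W^\u_\loc(P')$ is $C^1$-close to $\{R=0,X=0\}$. Hence neither the existence of a zero nor its simplicity can be read off. The absence of $\tilde\Phi$-dependence in the $\tilde R$-equation of \eqref{eq:T1_new} is therefore the obstacle itself, not its resolution: it is the trace of the tangency of $T_1(W^\u_\loc(\gamma))$ with $W^\s_\loc(\gamma)$.

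The missing idea is to resolve the $\tilde R$-direction at the scale of the cubic term. The paper uses the finer rescaling \eqref{eq:scalingnew}, with $r=\delta^3R$ and $x,\tilde y\sim\delta^{7/2}$. Under it, $T_1$ reads $\tilde R=\alpha^{-1}R+\beta\tilde\Phi^3+O(\delta^{1/2})$ and $T_k$ takes the form \eqref{eq:Tkcubic}.

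Then $W^\u_\loc(P)$, for the fixed point $P$ of $T_k$, is no longer nearly flat. It is $O(\delta^{1/2})$-close to a graph $R=g(\Phi)$, where $g$ is a cubic polynomial determined by the invariance relation $g(\alpha\Phi+A(k))=\alpha^{-1}g(\Phi)+\beta(\alpha\Phi)^3$. Consequently $T_1(W^\u_\loc(P))$ is given by $\tilde R=\alpha^{-1}g(\alpha^{-1}\tilde\Phi)+\beta\tilde\Phi^3+O(\delta^{1/2})$.

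A real cubic always has a real root, which gives the intersection with $\{\tilde R=0,\tilde Y=0\}$. That root is simple unless the cubic is a perfect cube $b_1(\tilde\Phi-b_2)^3$. The invariance relation shows this forces $A(k)=0$, which is excluded by choosing $k\in\mathcal K_q$ as in Remark~\ref{rem:Ak}. The proof thus genuinely needs two things your proposal lacks: the cubic term with $\beta\neq0$ at a scale where it survives, and a specific choice of $k$. The $\hat p(\delta)\delta^2$ scaling cannot see either.
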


These two lemmas are proved in Section~\ref{sec:a11} and Section~\ref{sec:trans2}, respectively.  The results imply case (1) of Theorem~\ref{thm:main_2punfolding} as follows.
Applying Lemma~\ref{lem:a11}, we find arbitrarily close to $\eps=0$ values of $\eps= \eps_k$ for which  $\gamma$ has a hyperbolic cubic homoclinic tangency with $|\alpha|>1$. 
By Theorem~\ref{thm:main_blender_cubic_nonpara}, a cu-blender $\Lambda^{\cu}$ exists for $\eps=\eps_k$. By 
Lemma~\ref{lem:trans1}, the blender is homoclinically related to $\gamma$.
The blender persists for all $C^1$-small perturbations by definition, so it exists for an open set of $\eps$ values around $\eps^k$; the homoclinic relation to $\gamma$ is also persistent. 

Since the newly found cubic homoclinic tangency unfolds generically as $\eps$ varies from $\eps_k$, we can apply the same arguments to it -- but now we do it for the family $f^{-1}_{\eps}$. Thus, arbitrarily close to $\eps_k$
(so, in the region of the existence of $\Lambda^{\cu}$), we find an open region of $\eps$
where the map $f_{\eps}^{-1}$ has a cu-blender homoclinically related to $\gamma$. The cu-blender for $f^{-1}$ is a cs-blender for $f_\eps$. Thus, arbitrarily close to $\eps=0$ we have regions of $\eps$ values for which $f_\eps$
has a cu-blender $\Lambda^{\cu}$ and a cs-blender $\Lambda^{\cs}$, both connected to $\gamma$.

Now, by Lemma~\ref{lem:incli0}, $W^\u(\Lambda^\cu)$ accumulates on $W^\u(\gamma)$ and, hence, has a transverse intersection with $W^\s(\Lambda^\cs)$. Similarly, $W^\u(\Lambda^\cs)$ has a transverse intersection with $W^\s(\Lambda^\cu)$. So, the two blenders $\Lambda^\cu$ and $\Lambda^\cs$ are homoclinically related. 
By construction, the blenders and the heteroclinic orbits that connect them belong to a small neighborhood $\hat V$
of $\mathcal{O}(O)\cup \Gamma$ (where $\Gamma$ is the orbit of homoclinic tangency to $O$ that exists at $\eps=0$).
Thus, they inherit the partial hyperbolicity of $\Gamma$. This implies that a hyperbolic basic set set 
$\Lambda$ containing $\Lambda^\cu \cup \Lambda^\cs$ (such exists because $\Lambda^\cu \cup \Lambda^\cs$
are homoclinically related) carries the partially-hyperbolic structure required by Definition~\ref{defi:sympblen_conn}. 
Since $\Lambda$ contains a cu-blender and a cs-blender, it also obviously satisfies the other conditions in the definition, and hence it is a symplectic blender connected to $\gamma$, as required. \hfill $\Box$

\subsubsection{Secondary cubic homoclinic tangencies: proof of Lemma~\ref{lem:a11}}\label{sec:a11}

We need the following auxiliary result:
\begin{lem}\label{lem:transinter} Let $m'\geq m+1 \geq 4$.
If $\gamma$ has an orbit $\Gamma_1$ of a partially-hyperbolic cubic homoclinic tangency, then there exists a sequence of homoclinic orbits of transverse intersection of $W^{\u}(\gamma)$ and $W^{\s}(\gamma)$, accumulating on $\Gamma_1$. Moreover, the homoclinics are partially-hyperbolic in the sense of \eqref{eq:condition:trans1}.
\end{lem}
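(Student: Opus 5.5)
The plan is to work entirely in the $(r,\pp)$-projection, through the scattering map and the KAM fibration. Lemma~\ref{lem:equitangency} turns the problem into a planar one: a partially-hyperbolic homoclinic point of $\gamma$ near $\Gamma_1$ is the same thing as an intersection point of a curve $T_0^{j}(S(\gamma))$ (or $S(T_0^{j}(\gamma))$) with $\gamma$, where $S$ is the scattering map along $\Gamma_1$. Near the tangency point $(0,\pp^+)$ the curve $S(\gamma)$ is given, by \eqref{eq:T1_cubic} with $\mu=\nu=0$, by
\[
\tilde r=\dd(\tilde\pp-\pp^+)^3+\dots ,
\]
where $\dd\neq0$. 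Because the tangency is cubic, this curve \emph{crosses} $\gamma$ at $\pp^+$: it lies on the side $r>0$ on one side of $\pp^+$ and on the side $r<0$ on the other. This sign change is the only thing the argument uses.

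To produce transverse intersections, I will use the twist. The KAM curves $r=\const$ near $\gamma$ are invariant, and by \eqref{eq:kam_new} the map rotates them with rotation number $\rho+r+\hat\rho(r)$. A short arc $\ell\subset S(\gamma)$ through $(0,\pp^+)$ therefore contains points with small positive $r$ and points with small negative $r$. Under $T_0^k$, given by Lemma~\ref{lem:F^k} (or Lemma~\ref{lem:derisyst2} for the full map), the points of $\ell$ off $\gamma$ are sheared relative to the point on $\gamma$. On the part of $\ell$ where $0<r\leq\delta$, the image $T_0^k(\ell)$ is a spiralling graph that stays in $r>0$ and approaches $\gamma$. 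A single iterate of a curve that sits on one side of $\gamma$ never meets $\gamma$, so a different source of intersections is needed.

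The source will be the \emph{preimage} side, combined with the fact that the composition of $S$ with iterates of $T_0$ again yields homoclinics. I take a second return. Consider $S\circ T_0^k\circ S(\gamma)$ restricted to a neighbourhood of $\Gamma_1$; its points on $\gamma$ are homoclinic points whose orbits pass close to $\Gamma_1$ twice, and they accumulate on $\Gamma_1$ as $k\to\infty$. The curve $T_0^k(\ell)$ contains long nearly horizontal pieces $r\approx c\,k^{-1}\cdot(\text{phase})$, which are $C^1$-close to arcs of $\gamma$ near $\pp^-$ and lie on \emph{both} sides of $\gamma$, because $\ell$ itself crosses $\gamma$. Consequently $T_0^k(\ell)$ contains an arc near $\pi^\u(M^-)$ that is a graph $r=\eta_k(\pp)$ with $\eta_k$ small in $C^3$ and taking both signs. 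Applying $S$ with $\mu=\nu=0$, its image near $\pp^+$ is
\[
\tilde r=\alpha^{-1}\eta_k(\pp)+\dd(\tilde\pp-\pp^+)^3+\dots .
\]
A small perturbation of a cubic whose derivative is perturbed by a small amount still has a simple zero, by the intermediate value theorem together with the fact that the derivative is nonzero away from $\pp^+$ once $|\tilde\pp-\pp^+|$ exceeds the perturbation size. Choosing the subsequence of $k$ so that $\eta_k$ is $o(1)$ in $C^1$, I get a simple zero $\tilde\pp_k\to\pp^+$, that is, a transverse intersection of the new curve with $\gamma$. Alternatively, and more simply, the transversality of the zero can be placed away from $\pp^+$ at distance $\sim\|\eta_k\|^{1/3}$, where the cubic has derivative $\sim\|\eta_k\|^{2/3}\gg\|\eta_k'\|$. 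Controlling the relative sizes of $\eta_k$ and $\eta_k'$ is the main obstacle. For this I will use Lemma~\ref{lem:incli0}-type estimates: the derivatives of $\eta_k$ decay like $k^{-1}$ times the $\pp$-scale, while $\eta_k$ itself can be tuned through the choice of the subsequence.

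The final step checks partial hyperbolicity. The new homoclinic orbits stay in a small neighbourhood of $\orb\cup\Gamma_1$, and condition \eqref{eq:condition:trans1} is open and holds along $\orb\cup\Gamma_1$, so it holds along the new orbits. Transversality in $\mathbb{R}^{2N}$ then follows from Lemma~\ref{lem:equitangency} with order $\ell=0$.
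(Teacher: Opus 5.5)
There is a genuine gap, at precisely the step that is meant to produce the new homoclinics.

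\emph{Scattering-map compositions give pseudo-orbits, not homoclinics.} Points of $S\circ T_0^k\circ S(\gamma)$ on $\gamma$ are not homoclinic points of $\gamma$. Lemma~\ref{lem:equitangency} covers a single transition between \emph{invariant} whiskered tori, whose local manifolds are unions of strong leaves. Take $p\in F^k(S(\gamma))$ off $\gamma$. The lift $(\pi^\u)^{-1}(p)$ lies on the strong-unstable leaf of $p$, so its backward orbit follows the backward $F$-orbit of $p$. That orbit is separated from $\gamma$ by KAM circles and never approaches it. To obtain genuine homoclinics you must work with the actual manifold $T_1\circ T_0^k\circ T_1(W^\u_\loc(\gamma))$. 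The same applies to transversality in $\mathbb{R}^{2N}$, where Lemma~\ref{lem:equitangency} again does not apply. By Lemma~\ref{lem:derisyst2}, $T_0^k T_1(W^\u_\loc(\gamma))$ has $x=O(\lambda^k)$ rather than $x=0$. Its $(r,\pp)$-projection differs from $F^k(S(\gamma))$ by exponentially small, $y$-dependent terms. You then have to show that the zeros you find survive these corrections, and you do not.

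\emph{The sign change is not the mechanism that works.} The only arc of $T_0^k(\ell)$ taking both signs is the one through $F^k(0,\pp^+)\in\gamma$. Since $F^k$ is a diffeomorphism preserving $\gamma$, this arc is still cubically tangent to $\gamma$, with third derivative $\approx 6\dd$. So $\eta_k$ is not small in $C^3$. When $\dd$ and the twist have opposite signs, the arc even folds at distance $\sim k^{-1/2}$ from that point and is not a graph over $\pp$. Nor does a $C^1$-small perturbation of $\dd u^3$ necessarily keep a simple zero: $\dd(u-c)^3$ is $O(c)$-close to $\dd u^3$ in $C^1$. So the two-sidedness cannot carry the argument, and the point you call ``the main obstacle'' is left open.

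\emph{What actually gives transversality.} A curve with a cubic tangency to $\gamma$ crosses transversely every nearly horizontal curve at small height $c\neq 0$ whose slope is $o(|c|^{2/3})$. It meets such a curve at distance $|u|\sim|c|^{1/3}$ from the tangency point, where $\dd u^3$ has slope $\sim|c|^{2/3}$. Your ``alternative'' sentence is exactly this observation. It applies to a piece of $T_0^k(\ell)$ that, where the intersection occurs, lies on one side of $\gamma$ at height $\sim 1/k$ with slope $\sim 1/k$.

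The paper applies the same observation to invariant objects, which avoids any new estimate:
\begin{itemize}[nosep]
\item $S(\gamma)$ and $S^{-1}(\gamma)$ cross every straightened KAM circle $\gamma'=\{r=\mathrm{const}\}\neq\gamma$ near $\gamma$ transversely.
\item Lemma~\ref{lem:equitangency} then legitimately gives $W^\u(\gamma)\pitchfork W^\s_\loc(\gamma')\neq\emptyset$ and $W^\s(\gamma)\pitchfork W^\u_\loc(\gamma')\neq\emptyset$.
\item The inclination Lemma~\ref{lem:incli}, i.e.\ the $C^1$-accumulation of $W^\s(\gamma)$ on $W^\s_\loc(\gamma')$, gives transverse homoclinics near $\Gamma_1$.
\end{itemize}
Partial hyperbolicity is then inherited, as you say. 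Your direct two-pass route can also be completed. That would require an explicit rescaled computation of $T_1\circ T_0^k\circ T_1$ on the genuine manifolds, in the style of the proof of Lemma~\ref{lem:a11}.
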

\begin{proof}
Take two points $M^+\in W^\s_\loc(\gamma)$ and $M^-\in W^\s_\loc(\gamma)$ in $\Gamma_1$, so $W^\u$ is tangent to  $W^\s_\loc(\gamma)$ at $M^+$ and $W^\s$ is tangent to  $W^\u_\loc(\gamma)$ at $M^-$.
By Lemma~\ref{lem:equitangency}, there are cubic tangencies between $S(\gamma)$ and $\gamma$, and between 
$\gamma$ and $S^{-1}(\gamma)$ as well, where $S$ is the scattering map.
Recall that $\gamma$ is accumulated, in $C^{m'}$ topology with $m' > 3$, by non-degenerate $(c,\tau)$-Diophantine KAM-curves (see Section \ref{sec:kam_strait}). Since $S(\gamma)$ and $S^{-1}(\gamma)$ have a cubic tangency to 
$\gamma$, it is obvious that they intersect all these KAM curves (except for $\gamma$ itself) transversely. Thus, we can take a $(c,\tau)$-Diophantine curve $\gamma'$ such that 
$$W^\u(\gamma)\pitchfork W^\s_\loc(\gamma')\neq \emptyset,\qquad 
W^\s(\gamma)\pitchfork W^\u_\loc(\gamma')\neq \emptyset.$$
Note that these intersections can be found arbitrarily close to $\Gamma_1$ by taking $\gamma'$ sufficiently close to $\gamma$. Applying Lemma~\ref{lem:incli} with $m^*=1$ gives the accumulation of $W^\s(\gamma)$ on $W^\s_\loc(\gamma')$ in the $C^1$ topology, and hence the sought transverse intersections of $W^\u(\gamma)$ and $W^\s(\gamma)$. By construction, the corresponding homoclinic orbits lie in a small neighborhood of $\orb\cup \Gamma_1$, so they inherit the partial hyperbolicity from $\Gamma_1$.
\end{proof}
 
Let $\Gamma_2$ be a transverse homoclinic orbit given by the preceding lemma. Take $n_1$ and $n_2$ such that 
$M^-_1\in W^\u_{\loc}(\gamma)\cap \Gamma_1$ and $M^+_1=f^{n_1}(M^-_1)\in W^\s_{\loc}(\gamma)\cap \Gamma_1$, and $M^-_2\in W^\u_{\loc}(\gamma)\cap \Gamma_2$ and $M^+=f^{n_2}(M^-_2)\in W^\s_{\loc}(\gamma)\cap \Gamma_2$.  In the Fenichel coordinates, we can write (see \eqref{eq:strait}):
\begin{align*}
M^-_1=(0,\varphi^-_1,0,y^-_1),\qquad
M^+_1=(0,\varphi^+_1,x^+_1,0),\\
M^-_2=(0,\varphi^-_2,0,y^-_2),\qquad
M^+_2=(0,\varphi^+_2,x^+_2,0),
\end{align*}
for some $\pp^\pm_{1,2},x^+_{1,2},y^-_{1,2}$.

Since $\Gamma_2$ is partially hyperbolic in the sense of \eqref{eq:condition:trans1}, it follows from \eqref{eq:T1_general} that the transition map $T_2:(r,\varphi,x,y)\mapsto (\tilde r,\tilde \varphi,\tilde x,\tilde y)$ along the transverse homoclinic from a neighbourhood of $M^-_2$ to a neighborhood of $M^+_2$ takes the form
\begin{equation*}
\begin{aligned}
\tilde r &= {a}_{11} r +  a_{12} (\varphi -\varphi^-_2) +  a_{13} x +  a_{14}\tilde y+\dots, \\
\tilde \varphi - \varphi^+_2&= {a}_{21} r +  a_{22} (\varphi -\varphi^-_2 ) +  a_{23} x +  a_{24}\tilde y+\dots, \\
\tilde x-x^+_2 &= {a}_{31} r +  a_{32} (\varphi -\varphi^-_2 ) +  a_{33} x +  a_{34}\tilde y+\dots, \\
y-y^-_2 &= {a}_{41} r +  a_{42} (\varphi -\varphi^-_2 ) +  a_{43} x +  a_{44}\tilde y+\dots,
\end{aligned}
\end{equation*}
where $a_{44}\neq 0$. By the transversality of the homoclinic intersection, we  also have $a_{12}\neq 0$.
This allows us to find $(\varphi-\varphi^-_2)$ from the first equation and rewrite the above formula in the following cross-form:
\begin{equation}\label{homo:T1}
\begin{aligned}
\varphi -\varphi^-_2&= {a}_{11} r +  a_{12} \tilde r +  a_{13} x +  a_{14}\tilde y+\dots, \\
\tilde \varphi - \varphi^+_2&= {a}_{21}  r +  a_{22} \tilde r +  a_{23} x +  a_{24}\tilde y+\dots, \\
\tilde x-x^+_2 &= {a}_{31}  r+  a_{32} \tilde r +   a_{33} x +  a_{34}\tilde y+\dots, \\
y-y^-_2 &= {a}_{41}  r +  a_{42} \tilde r+  a_{43} x +  a_{44}\tilde y+\dots,
\end{aligned}
\end{equation}
with new coefficients $a$ such that $a_{12}\neq 0$ and $a_{44}\neq 0$. When we change parameters $\eps$, the transverse homoclinic persists; the coefficients $a_{ij}$ and $\pp^\pm_{1,2},x^+_{1,2},y^-_{1,2}$ then are 
at least $C^{m'-1}$ functions of $\eps$.

The transition map $T_1:(\hat r,\hat \varphi,\hat x,\hat y)\mapsto (\bar r,\bar \varphi,\bar x,\bar y)$ along the cubic tangency takes a neighbourhood of $M^-1$ to a neighborhood of $M^+_1$. A formula for $T_1$ for a two-parameter unfolding $\{f_\eps\}$ is given by \eqref{eq:T1_cubic}. To avoid confusion with the formula  \eqref{homo:T1} for the map $T_2$, we replace $a_{ij}$ in \eqref{eq:T1_cubic} by $b_{ij}$ and $\alpha^{-1}$ by $b$, and write $T_1$ as
\begin{equation}\label{homo:T2}
\begin{aligned}
\bar r &= \mu+\nu(\bar\varphi - \varphi^+_1)+{b} \hat r +  \dd (\bar \varphi - \varphi^+_1)^3 +  b_{13} \hat x +  b_{14}\bar y+\dots, \\
\hat \varphi -\varphi^-_1 &= {b}_{21}\hat  r +  {b} (\bar \varphi - \varphi^+_1) +  b_{23} \hat x +  b_{24}\bar y+\dots, \\
\bar x-x^+_1 &= {b}_{31} \hat r +  b_{32} (\bar \varphi - \varphi^+_1) +  b_{33} \hat x +  b_{34}\bar y+\dots, \\
\hat y-y^-_1 &= {b}_{41} \hat r +  b_{42} (\bar \varphi - \varphi^+_1) +  b_{43}\hat  x +  b_{44}\bar y+\dots,
\end{aligned}
\end{equation}
where $b\neq 0$, $\dd\neq 0$, and also $\mu=\nu=0$ at $\eps=0$. Note that the coefficients are at least 
$C^{m'-3}$ functions of $\eps$.

\begin{proof}[Proof of Lemma~\ref{lem:a11}]
According to \eqref{eq:unfoldcubic}, we can use $\mu$ and $\nu$ as parameters. We consider the composition 
$$(r,\pp,x,y) \xmapsto{T_2} (\tilde r,\tilde \pp,\tilde x,\tilde y) \xmapsto{T^k_0}  (\hat r,\hat \pp,\hat x,\hat y)
\xmapsto{T_1} (\bar r,\bar \pp,\bar x,\bar y),$$
 and show that changing $(\mu,\nu)$ leads to a secondary cubic tangency between $T_1\circ T^k_0\circ T_2(W^\u_\loc(\gamma))$ and $W^\s_\loc(\gamma)$.
 
We set
\begin{equation}\label{a11:trans}
\begin{array}{l}
(r,\tilde r,\hat r,\bar r)=k^{-\frac{3}{2}}(R,\tilde R,\hat R,\bar R),\\
(\varphi - \varphi^-_2,\tilde \varphi - \varphi^+_2) = k^{-\frac{3}{2}}(\Phi,\tilde \Phi),\qquad
(\hat \varphi - \varphi^-_1,\bar \varphi - \varphi^+_1) = k^{-\frac{1}{2}}(\hat \Phi,\bar \Phi),\\
(x,\tilde y)=k^{-\frac{5}{2}}(X,\tilde Y),\qquad 
(\tilde x - x^+_2, y-y^-_2)= k^{-\frac{3}{2}}(\tilde X,Y),\\
(\hat x, \bar y)=k^{-\frac{5}{2}}(\hat X,\bar Y),\qquad
(\bar x - x^+_1,\hat y-y^-_1)=k^{-\frac{1}{2}}(\bar X,\hat Y).
\end{array}
\end{equation}
Since we consider finite values of $\tilde R$, we have $k=O(\tilde r^{-2/3})=o(\tilde r^{1-m})$. Hence Lemma~\ref{lem:derisyst2} is applicable with  $(\tilde r,\tilde \pp,\tilde x,\tilde y):=(r_0,\pp_0,x_0,y_0)$ and $(\hat r,\hat \pp,\hat x,\hat y):=(r_k,\pp_k,x_k,y_k)$. 
In the new coordinates, the formulas \eqref{homo:T2}, \eqref{homo:T1}
and \eqref{eq:derisyst2:4} (with $m'\geq m+3\geq 6$) become\\

\noindent $T_1:(\hat R, \hat \Phi,\hat X,\hat Y)\mapsto (\bar R, \bar \Phi,\bar X,\bar Y)$:
\begin{equation}\label{homo:3}
\begin{aligned}
\bar R &= k^{\frac{3}{2}}\mu+k\nu\bar\Phi+b \hat R + \dd \bar\Phi^3 +O(k^{-1/2}), \qquad
\hat\Phi =  b \bar\Phi +O(k^{-\frac{1}{2}}), \\
\bar X &=   b_{32}\bar\Phi+O(k^{-\frac{1}{2}}), \qquad \hat Y =  b_{42} \bar\Phi+O(k^{-\frac{1}{2}}),
\end{aligned}
\end{equation}

\noindent$T_2:(R,\Phi,X,Y)\mapsto (\tilde R, \tilde \Phi,\tilde X,\tilde Y)$:
\begin{equation}\label{homo:1}
\begin{aligned}
\Phi &= a_{11}  R +  a_{12} \tilde R+ O(k^{-\frac{3}{2}}), \qquad
\tilde \Phi= a_{21}  R +  a_{22}\tilde R +  O(k^{-\frac{3}{2}}), \\
\tilde X &= a_{31} R +  a_{32}\tilde R+ O(k^{-\frac{3}{2}}), \qquad Y = a_{41} R +  a_{42}\tilde R + O(k^{-\frac{3}{2}}),
\end{aligned}
\end{equation}

\noindent $T^k_0:(\tilde R, \tilde \Phi,\tilde X,\tilde Y)\mapsto (\hat R, \hat \Phi,\hat X,\hat Y)$:
\begin{equation}\label{homo:2}
\begin{aligned}
\hat R &= \tilde R + O(k^{-\frac{3(m-1)}{2}}) ,\qquad
\hat \Phi= A(k)+ \tilde R+ O(k^{-\frac{3}{2}}) ,\\
\hat X &=  O(k^{\frac{5}{2}}\lambda^k),\qquad \tilde Y = O(k^{\frac{5}{2}} \lambda^k),
\end{aligned}
\end{equation}
where
\begin{equation}\label{ak}
A(k)= k^{\frac{1}{2}} ((\varphi^+_2 - \varphi^-_1+k \rho) \bModa  1),
\end{equation}
and the $O(\cdot)$ estimates in these formulas are with $m^*\geq 3$ derivatives at least.

By the Dirichlet Approximation Theorem, there exist arbitrarily large co-prime integers $p$ and $q>0$ such that 
$\rho = p/q + O(q^{-2})$. Let $s$ be an integer such that
$|\varphi^+_2 - \varphi^-_1 - s/q| <q^{-1}$. Take $k\in \{q,\dots, 2q-1\}$ such that
$k p + s = nq$ for an integer $n$. Then 
$$\varphi^+_2 - \varphi^-_1+k \rho  = n + O(k^{-1}),$$
implying that
$$A(k) = O(k^{-1/2})$$
in \eqref{ak}, i.e., we can choose a sequence of $k$ values such that $A(k)\to 0$.

Combining formulas \eqref{homo:3}--\eqref{homo:2} gives that gives that the map  
$T_1\circ T^k_0\circ T_2: (R,\Phi,X,Y)\mapsto(\bar R, \bar \Phi,\bar X,\bar Y)$ 
has the following form for these $k$:
$$\begin{aligned}
\bar R &= k^{\frac{3}{2}}\mu+(k\nu + b^2)\bar\Phi + \dd \bar\Phi^3 +O(k^{-\frac{1}{2}}), \quad
\bar\Phi = b^{-1} a_{12}^{-1} (a_{11}  R  -  \Phi) + O(k^{-\frac{1}{2}}), \\
\bar X &=   b_{32}\bar\Phi+O(k^{-\frac{1}{2}}), \qquad Y = a_{41} R +  a_{42} b \bar \Phi + O(k^{-\frac{1}{2}}).
\end{aligned}
$$

We have $W^\u_\loc(\gamma)=\{R=0,X=0\}$ and $W^\s_\loc(\gamma)=\{\bar R=0,\bar Y=0\}$. So,
the equation of $T_1\circ T^k_0\circ T_2 (W^\u_\loc(\gamma))$ in these coordinates is
$$\bar R = k^{\frac{3}{2}}\mu+(k\nu + b^2)\bar\Phi + \dd \bar\Phi^3 +O(k^{-\frac{1}{2}}), \qquad
\bar X =   b_{32}\bar\Phi+O(k^{-\frac{1}{2}}).$$
As the $O(k^{-1/2})$ terms are at least $C^3$ small, we immediately obtain the existence of the sought cubic
tangency with $W^\s_\loc(\gamma)$ for some
$$\mu = O(k^{-2}), \qquad \nu = - k^{-1} b^2 + O(k^{-\frac{3}{2}});$$
as required, the tangency unfolds generically as $(\mu,\nu)$ vary.

This tangency is partially hyperbolic, as every orbit lying in a small neighborhood of $\orb\cup \Gamma_1$
is partially hyperbolic. The quantity $\alpha$ for this tangency is given by $\alpha=\p \bar \varphi/\p \varphi= k \cdot 
\p \bar \Phi/\p \Phi = - k b^{-1} a_{12}^{-1} + O(k^{1/2}) \to \infty$.
\end{proof}

\subsubsection{Blenders of Theorem~\ref{thm:main_blender_cubic_nonpara} for  cubic tangencies: proof of Lemma~\ref{lem:trans1}}\label{sec:trans2}

By symmetry of the problem, it suffices to prove  Lemma~\ref{lem:trans1} for a cu-blender of Theorem~\ref{thm:main_blender_cubic_nonpara}. Recall that the cu-blender is obtained from the blender $\hat\Lambda_q$ of the induced map $\hat T$ given by Proposition~\ref{prop:cublender}, where $\hat T$ is defined by  \eqref{eq:induce2}. Since a fixed point of $T_k=T_0^k\circ T_1$ for some $k\in\mathcal{K}_q$ is a fixed point of $\hat T$ in $\hat\Lambda_q$ by \eqref{lem:hyp}, it further suffices to prove   the following
\begin{lem}\label{lem:trans2}
Let $\min\{m'-m,m\}\geq 3$.
For all sufficiently large $q$, there exists $k\in \mathcal{K}$ such that the unstable manifold  of the fixed point $P$ of $T_k$ intersects $W^\s(\gamma)$ transversely.
\end{lem}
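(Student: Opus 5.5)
The plan is to work directly in the Fenichel coordinates near $\Pi^\pm$, not in the rescaled coordinates of Section~\ref{sec:scaling}. In that rescaling the $r$-direction is stretched by the factor $\aa$, which hides the cubic term. The point is that, in the smooth (cubic) case, the fixed point $P$ of $T_k$ lies at distance $O(\delta^3)$ from $\gamma$ in the $r$-direction. This is exactly the scale at which the cubic term of $T_1$ can move the image of $W^\u_\loc(P)$ across $W^\s_\loc(\gamma)=\{r=0,y=0\}$.

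\textbf{Step 1: locate $P$.} Take $k\in\mathcal{K}_q$ and let $P=(r_P,\pp_P,x_P,y_P)\in\Pi$ be the fixed point of $T_k$ from Lemma~\ref{lem:hyp}. Write $t:=\tilde\pp-\pp^+$ for its $\tilde\pp$-coordinate after applying $T_1$, so $t=O(\delta)$.
\begin{itemize}
\item Combine \eqref{eq:T1_cubic} (at $\eps=0$, so $\mu=\nu=0$) with Lemma~\ref{lem:derisyst2}.
\item The local map changes $r$ only by $O(k r^m)$ plus exponentially small terms $O(\lambda^{k/2})$. The $x,\tilde y$ contributions of $T_1$ are also exponentially small at $P$.
\item So the fixed-point equation in $r$ reads
\[
r_P=\alpha^{-1}r_P+\beta t_P^3+O(|r_P|\,\delta)+o(\delta^3).
\]
\item Hence $r_P=\beta t_P^3/(1-\alpha^{-1})+o(\delta^3)$, with $|\alpha|>1$.
\item By Remark~\ref{rem:Ak} (together with \eqref{eq:cover6}), choose $k\in\mathcal{K}_q$ so that $|\delta^{-1}((\pp^+-\pp^-+k\rho)\bModa 1)|\ge 3/5$. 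Then the $\Phi$-coordinate of the fixed point, which equals $(1-\alpha)^{-1}\delta^{-1}((\pp^+-\pp^-+k\rho)\bModa 1)+o(1)$, is bounded away from $0$.
\item Consequently $|t_P|\ge c\delta$ for some $c>0$ independent of $q$.
\end{itemize}

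\textbf{Step 2: the image curve.} By Remark~\ref{invfol}, $W^\u_\loc(P)$ is a graph $(r,x)=w^\u_P(\pp,y)$ over the full $\Phi$-range $[-1,1]$. On this graph, $r=r_P+O(L\delta\cdot\delta^3/\delta)$, i.e.\ $r$ stays within $o(\delta^3)$ of $r_P$ when measured in the correct scale (the slope in the cone $\mathcal{C}^\u$ is small). Apply $T_1$ and restrict to $\tilde y=0$, which cuts out an $N$-dimensional submanifold. Its $(\tilde r,\tilde\pp)$-trace is the curve
\[
\tilde r=\alpha^{-1}r_P+\beta t^3+o(\delta^3),\qquad t\in[-\delta|\alpha|,\delta|\alpha|]\ \text{(up to $o(\delta)$)},
\]
with $C^1$ error $o(\delta^2)$ in $t$. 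Substituting Step 1, $\tilde r=\beta\bigl(t^3+t_P^3/(\alpha-1)\bigr)+o(\delta^3)$. This has a zero at $t_*=-t_P(\alpha-1)^{-1/3}+o(\delta)$, which lies inside the admissible range. The root is simple because $\partial_t\tilde r=3\beta t_*^2\gtrsim\delta^2$ dominates the $o(\delta^2)$ error; this is where $|t_P|\ge c\delta$ is used.

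\textbf{Step 3: full transversality.} Transversality of $T_1(W^\u_\loc(P))$ to $W^\s_\loc(\gamma)$ in the whole space follows from two facts:
\begin{itemize}
\item the $\tilde y$-directions of $T_1(W^\u_\loc(P))$ span the complement of $\{y=0\}$, since $a_{44}\neq0$ and the tangent space lies in $\mathcal{C}^{\uu}$-type directions;
\item the $\tilde r$-direction is covered by Step 2.
\end{itemize}
Together these give $\dim$ sum $=2N$. By the isolation of the orbit construction, the resulting heteroclinic orbit stays in $\hat V$.

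\textbf{Main obstacle.} I expect the bookkeeping in Step 1 to be the hard part: showing that every correction to $r_P$ and to the $r$-variation along $W^\u_\loc(P)$ is genuinely $o(\delta^3)$. The dangerous terms are the mixed terms $O(r\,t)$ and $O(t^4)$ from the Taylor remainder of $T_1$, and the $O(kr^m)$ drift of $T_0^k$ with $k\sim\delta^{-1}$. Here $m\ge3$ and $\min\{m'-m,m\}\ge3$ should make these terms small enough. If the $r$-slope of $w^\u_P$ turns out to be too large in unrescaled units, the fallback is to redo the cone estimate with an $r$-scale of $\delta^3$ in place of $\aa\delta^2$, which is legitimate in the cubic case because $T_1$ is $C^3$.
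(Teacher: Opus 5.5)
Your Step 1 is fine. It agrees with the paper's fixed-point computation in the scaling $r=\delta^3R$, $\varphi-\varphi^-=\delta\Phi$: there $\Phi^*=A(k)/(1-\alpha)$ and $R^*=-\beta\alpha^4A(k)^3/(1-\alpha)^4$, which is your $r_P=\beta t_P^3/(1-\alpha^{-1})$.

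\textbf{The gap is in Step 2.} It is not true that $r$ stays within $o(\delta^3)$ of $r_P$ along $W^\u_\loc(P)$.
\begin{itemize}
\item The small slope $L$ of Lemma~\ref{lem:conefields} and Remark~\ref{invfol} refers to the coarser scaling $r=\hat p(\delta)\delta^2R$. In unrescaled units it only bounds the variation of $r$ over a $\varphi$-interval of length $O(\delta)$ by $L\hat p(\delta)\delta^2$.
\item In the cubic case, the cubic term of $T_1$ contributes $3\beta(\delta/\hat p(\delta))\tilde\Phi^2$ to $\partial\tilde R/\partial\tilde\Phi$ in that scaling. So $L$ is at least of order $\delta/\hat p(\delta)$, and the bound gives $O(\delta^3)$, not $o(\delta^3)$.
\item Your fallback does not help either. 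At scale $\delta^3$ the limit return map is $\bar R=\alpha^{-1}R+\beta(\alpha\Phi)^3$, $\bar\Phi=A(k)+\alpha\Phi$. Its cubic term has an $O(1)$ derivative, so no thin unstable cone exists.
\end{itemize}
In fact $W^\u_\loc(P)=\{R=g(\Phi)+O(\delta^{1/2})\}$, where $g$ is the cubic polynomial solving $g(\alpha\Phi+A(k))=\alpha^{-1}g(\Phi)+\beta(\alpha\Phi)^3$. Its leading coefficient is $\alpha^4\beta/(\alpha^4-1)\neq0$. Thus $r$ varies by order $\delta^3$ along $W^\u_\loc(P)$, the same order as $r_P$. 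Also $\partial r/\partial\varphi$ along it is of order $\delta^2$, the same order as the quantity $3\beta t_*^2$ on which your simple-root argument rests.

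Hence the image curve is $\tilde R=\alpha^{-1}g(\alpha^{-1}\tilde\Phi)+\beta\tilde\Phi^3+O(\delta^{1/2})$. By the functional equation this equals $g(\tilde\Phi+A(k))+O(\delta^{1/2})$. It is not the depressed cubic $\alpha^{-1}R_P+\beta\tilde\Phi^3$, so your computation of $t_*$ and of the derivative at $t_*$ does not apply.

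\textbf{The missing step.} A real cubic always has a real root, and it has a simple one unless it is a perfect cube. So you must rule out
\[
\alpha^{-1}g(\alpha^{-1}\tilde\Phi)+\beta\tilde\Phi^3=b_1(\tilde\Phi-b_2)^3 .
\]
The paper does this in three moves:
\begin{itemize}
\item the functional equation would then force $g=c_1(\Phi-c_2)^3$;
\item comparing coefficients then forces $c_2=0$;
\item the explicit coefficient $a_2=3\alpha^3a_3A(k)/(1-\alpha^3)$ of $g$ then forces $A(k)=0$.
\end{itemize}
Your choice of $k$ via Remark~\ref{rem:Ak} is the right condition. But it has to enter through this non-degeneracy argument for the actual cubic $g$, not through a flat model of $W^\u_\loc(P)$.
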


We will prove this lemma by showing that  $T_1(W^\u_\loc(P))$ intersects $W^\s_\loc(\gamma)$ transversely. For that, we need to consider the new transition map corresponding to the cubic tangency and find a formula for $T_k$ with scalings slightly different from \eqref{eq:scaling1} and \eqref{eq:scaling2}.

We begin with the transition map, which is now given by \eqref{eq:T1_xform_n} with $\ell= 2$ (here our map is at least $C^{3}$). We rewrite it as the following form similar to \eqref{eq:T1_qudra+0}:
\begin{equation*}\label{eq:T1cubic}
\begin{aligned}
\tilde r &=\thebetai r  + \beta(\tilde \pp-\pp^+)^3+ a_{13} x +  a_{14}\tilde y+ p_1(\tilde \varphi )+q_1(r,\tilde \pp,x,\tilde y), \\
\varphi-\varphi^- &= a_{21} r +  \thebetai(\tilde \varphi - \varphi^+) +  a_{23} x +  a_{24}\tilde y+
p_2(\tilde \varphi )+q_2(r,\tilde \pp,x,\tilde y), \\
\tilde x-x^+ &= a_{31} \tilde r+  a_{32} (\tilde \varphi - \varphi^+) +  a_{33} x +  a_{34}\tilde y+b_3(\tilde \varphi - \varphi^+)^2 \\
&\qquad\qquad\qquad \qquad \qquad\qquad\qquad 
+c_3(\tilde \varphi - \varphi^+)^3+
p_3(\tilde \varphi )+q_3(r,\tilde \pp,x,\tilde y), \\
y-y^- &= a_{41} r +  a_{42} (\varphi - \varphi^-) +  a_{43} x +  a_{44}\tilde y + b_4(\varphi - \varphi^-)^2\\
&\qquad\qquad\qquad \qquad \qquad\qquad\qquad 
+c_4(\tilde \varphi - \varphi^+)^3+
p_4(\tilde \varphi )+q_4(r,\tilde \pp,x,\tilde y),
\end{aligned}
\end{equation*}
where $\beta\neq 0$ and 
\begin{equation*}\label{eq:T1cubicEst}
\begin{array}{l}
p_i=o((\tilde \varphi-\pp^+ )^3),\qquad 
\dfrac{\p p_i}{\p \tilde\pp} = o((\tilde \varphi-\pp^+ )^2 ),\qquad i=1,3,4, \\
p_2=O((\tilde \varphi-\pp^+ )^{2}),\qquad
\dfrac{\p p_2}{\p \tilde\pp} = O(\tilde \varphi-\pp^+ ),\\
q=O(r^2+x^2+\tilde y^2+|\tilde \varphi-\pp^+|(|r|+\|x\|+\|\tilde y\|)),\\
\dfrac{\p q}{\p \tilde\pp} = O(|r|+
\|x\| + \|\tilde y\|),\qquad
\dfrac{\p q}{\p (r, x, \tilde y)} = O(|r|+|\tilde \varphi-\pp^+|
+\|x\| + \|\tilde y\|).\end{array}
\end{equation*}
We consider the following scalings: 
\begin{equation}\label{eq:scalingnew}
\begin{array}{l}
r=   \delta^3 R,\quad \varphi-\varphi^-=\delta\Phi,\quad x= \delta^{\frac72} X,\\[5pt]
 y-y^-=   \delta^{\frac72} Y+a_{41}r+ a_{42}\thebeta(\varphi-\pp^-)+
 a_{43}x+ b_4(\varphi-\pp^-)^2,\\[5pt]
 \tilde r= \delta^3 \tilde R,\quad \tilde \varphi-\varphi^+=\delta \tilde \Phi,\quad \tilde y=  \delta^{\frac72} \tilde Y,\\[5pt]
\tilde x-x^+= \delta^{\frac72} \tilde X + a_{31}\tilde r + a_{32}(\tilde \varphi - \varphi^+)
+ a_{34}\tilde y + b_3(\tilde \varphi - \varphi^+)^2.
\end{array}
\end{equation}
In the rescaled coordinates, the above transition map assumes the form
\begin{equation}\label{eq:T1cubicNew}
\begin{aligned}
&\tilde R= \thebetai R+ \beta\tilde\Phi^3+ \tilde h_1(R,\tilde\Phi,X,\tilde{Y}), \qquad
\Phi=\thebetai\tilde \Phi +\tilde h_2(R,\tilde\Phi,X,\tilde{Y}), \\
&\tilde X=a_{33}X+\tilde h_3(R,\tilde\Phi,X,\tilde{Y}), \qquad
Y =a_{44}\tilde Y +\tilde h_4(R,\tilde\Phi,X,\tilde{Y}),
\end{aligned}
\end{equation}
where $\|\tilde h\|_{C^1}=O(\delta^{\frac12})$.

For the iterations $T^k_0:\Pi^+ \to \Pi^-$ of the local map, we  use  formula~\eqref{eq:derisyst2:4} with $\min\{m'-m,m\}\geq 3$.  With the assignment~\eqref{eq:asign}, we find that $T^k_0$ assumes the same form as~\eqref{eq:T0^k_new}, but with $\|\bar h\|_{C^1}=O(\delta)$ (here we also used the fact that $k\in \mathcal{K}_q$ satisfy~\eqref{eq:kr} and~\eqref{condition}).
Thus, we obtain the return map $T_k$ as 
\begin{equation}\label{eq:Tkcubic}
\begin{aligned}
\bar R&= {\thebetai}R + \beta (\alpha\Phi)^3 +h_1( R, \Phi, X,\bar Y), \\
\bar\Phi &= \delta^{-1}((\varphi^+-\varphi^- +k\rho) \bModa 1) +\alpha\Phi +  h_2( R, \Phi, X,\bar Y), \\
\bar X&=h_3( R, \Phi, X,\bar Y),\qquad
Y=h_4(R, \Phi, X,\bar Y),
\end{aligned}
\end{equation}
where $\bModa 1$ is defined in~\eqref{mod} and the functions $h$ satisfy $\|h\|_{C^1}=O(\delta^{\frac12})$.

\begin{proof}[Proof of Lemma~\ref{lem:trans2}]
By \eqref{condition}, the limit 
\begin{equation}\label{eq:Ak}
A(k):=\lim_{\delta\to 0} \delta^{-1}((\varphi^+-\varphi^- +k\rho) \bModa 1),
\end{equation}
for any $k\in \mathcal{K}_q$ is finite. Hence in the limit case $\delta=0$, the map $T_k$ assumes the form
\begin{equation}
\bar R  = \alpha^{-1}R +\beta \alpha^3 \Phi^3,\qquad
\bar \Phi = A(k)+\alpha\Phi,\qquad \bar X=0,\qquad Y=0.
\end{equation}
We easily find from this the fixed point $P=(R^*,\Phi^*,X^*,Y^*)$ of   the original map, where
\begin{align*}
&R^*=\dfrac{-\beta \alpha^4 A(k)^3}{(1-\alpha)^4}+O(\delta^{\frac12}),\qquad
\Phi^*=\dfrac{A(k)}{1-\alpha}+O(\delta^{\frac12}),\\
&X^*=O(\delta^{\frac12}),\qquad
Y^*=O(\delta^{\frac12}),
\end{align*}
and  two invariant manifolds of $T_k$ are given by 
\begin{equation}\label{eq:invmfdP}
\begin{aligned}
W^\s_\loc(P):&\qquad \Phi=\Phi^*+w^\s_1(R,X),\qquad Y=w^\s_2(R,X),\\
W^\u_\loc(P):& \qquad R=a_3\Phi^3+a_2\Phi^2+a_1\Phi+a_0+w^\u_1(\Phi,Y),\qquad X=w^\u_2(\Phi,Y)
\end{aligned}
\end{equation}
where functions $w^{\s/\u}_{1,2}$ along with their first derivatives are estimated as $O(\delta^{\frac12})$, and 
\begin{equation}\label{eq:thm1cubic:0}
\begin{aligned}
&a_3=\dfrac{\alpha^4\beta}{\alpha^4-1}\neq 0,\qquad
a_2=\dfrac{3\alpha^3 a_3A(k)}{1-\alpha^3},\qquad
a_1=\dfrac{2\alpha^2a_2A(k)+3\alpha^2a_3A(k)^2}{1-\alpha^2},\\
&a_0=\dfrac{a_1A(k)+a_2A(k)^2+a_3A(k)^3}{\alpha^{-1}-1}.
\end{aligned}
\end{equation}
It is obvious from~\eqref{eq:Tkcubic} that points on $W^\s_\loc(P)$ converge to $P$, so it is indeed the local stable manifold of $P$. Since $a_3\neq 0$, $W^\u_\loc(P)$ is transverse to $W^\s_\loc(P)$ at $P$, and hence  it is  the local unstable manifold of $P$ by uniqueness.

Let us denote $g(\Phi):=a_3\Phi^3+a_2\Phi^2+a_1\Phi+a_0$.
By \eqref{eq:T1cubicNew}, the image $T_1(W^\u_\loc(P))$ is given by
\begin{equation*}
\tilde R= \alpha^{-1} g(\alpha^{-1}\tilde \Phi)+ \beta\tilde\Phi^3+ O(\delta^{\frac12}),\qquad
\tilde X = O(\delta^{\frac12}).
\end{equation*}
Since $W^\s_\loc(\gamma)\cap \Pi^-=\{\tilde R=0,\tilde Y=0\}$ by \eqref{eq:strait3} and \eqref{eq:scalingnew}, we see that $T_1(W^\u_\loc(P))$ is at least topologically transverse  $W^\s_\loc(\gamma)$ for all sufficiently small $\delta$ (hence all sufficiently large $q$ by~\eqref{eq:deltaq}). To ensure that this intersection is indeed transverse and hence to prove the lemma, it suffices to show that there exists $k\in\mathcal{K}_q$ such that no real numbers $b_1,b_2$  satisfy
\begin{equation}\label{eq:thm1cubic:1}
\alpha^{-1}g(\tilde \Phi)+ \beta(\alpha\tilde\Phi)^3 = b_1(\tilde \Phi-b_2)^3.
\end{equation}
In what follows we prove this.

Since $W^\s_\loc(P)$ is invariant, $\bar R$ and $\bar X$ found from \eqref{eq:Tkcubic} also satisfies $\bar R=g(\bar \Phi)+w^\u_1(\bar \Phi,\bar Y)$ and $\bar X=w^\u_2(\bar \Phi,\bar Y)$. Taking $\delta\to 0$, we find the following functional equation for $g$:
$$
g(\alpha\Phi+A(k))=\alpha^{-1}g(\Phi) +\beta(\alpha \Phi)^3.
$$
If \eqref{eq:thm1cubic:1} holds for some $b_1$ and $b_2$, then the above equation implies that $g(\Phi)=c_1(\Phi-c_2)^3$ for some constants $c_1$ and $c_2$. As a result, we have
$$
\alpha^{-1}c_1(\tilde \Phi-c_2)^3+ \beta(\alpha\tilde\Phi)^3 = b_1(\tilde \Phi-b_2)^3,
$$
which holds only if $c_2=0$, and hence only if $A(k)=0$ by \eqref{eq:thm1cubic:0}. Thus, we only need to take $k\in\mathcal{K}$ such that $A(k)\neq 0$, whose existence is guaranteed   by Remark~\ref{rem:Ak}.
\end{proof}

\subsection{Perturbations of quadratic tangencies: proofs for cases (2) and (3) of Theorem~\ref{thm:main_2punfolding}}\label{sec:proofquadra}

We now conclude the proof of Theorem~\ref{thm:main_2punfolding} by observing that cases~(2) and~(3) can be reduced to case~(1) by  the following two results:

\begin{lem}\label{lem:1to2}
Let $\gamma$ have a partially-hyperbolic  quadratic homoclinic tangency, and consider any proper two-parameter unfolding family given by \eqref{eq:unfoldquadra}.
There exists a sequence $\{\eps_j\}$ converging to $0$ such that   the continuation of  $\gamma$ at each $\eps=\eps_j$ has two partially-hyperbolic quadratic  homoclinic tangencies. Moreover, these two tangencies unfold independently in the sense of  \eqref{eq:unfold2quadra} as $\eps$ varies from $\eps=\eps_j$.
\end{lem}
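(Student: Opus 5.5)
The plan is to keep the original quadratic tangency $\Gamma_1$ as one of the two tangencies and to manufacture the second one as a secondary tangency. This secondary tangency goes once around an auxiliary transverse homoclinic orbit, spends $k$ iterations near $\orb$, and then passes along $\Gamma_1$. Its splitting parameter should depend on $\alpha$ in a way that makes it independent of $\mu$.

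\textbf{Step 1: a transverse homoclinic orbit.} I would argue as in Lemma~\ref{lem:transinter}. By Lemma~\ref{lem:equitangency}, $S(\gamma)$ has a quadratic tangency with $\gamma$, so on one side of $\gamma$ it intersects transversely every nearby $(c,\tau)$-Diophantine KAM-curve $\gamma'$. The same holds for $S^{-1}(\gamma)$ on the appropriate side. If the two sides do not match, I would first shift $\mu$ slightly so that $S(\gamma)$ crosses $\gamma$ transversely, and then use that crossing directly. Lemma~\ref{lem:incli} then gives a partially-hyperbolic transverse homoclinic orbit $\Gamma_2$ of $\gamma$, close to $\orb\cup\Gamma_1$. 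Its transition map $T_2$ has the cross-form \eqref{homo:T1} with $a_{12}a_{44}\neq0$, and it persists for all small $\eps$.

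\textbf{Step 2: the secondary tangency.} With $T_1$ written as in \eqref{eq:T1_quadra}, I would study $T_1\circ T_0^k\circ T_2$ in rescaled coordinates, in the spirit of \eqref{a11:trans}. I would scale $r$ by $k^{-2}$, the phases near $\Gamma_2$ by $k^{-2}$, and the phases near $\Gamma_1$ by $k^{-1}$, adjusting the exponents so that $\beta\bar\Phi^2$ survives in the limit. Lemma~\ref{lem:derisyst2} gives $\hat\Phi=A(k)+\tilde R+o(1)$, where $A(k)=k\,((\varphi_2^+-\varphi_1^-+k\rho)\bModa 1)$, and Dirichlet approximation keeps $A(k)$ bounded along a sequence $k\to\infty$. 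The resulting image of $W^\u_\loc(\gamma)$ is a parabola
$$\bar R = k^{2}\mu+\beta\bar\Phi^2+ c_k(\alpha)+o(1), \qquad \bar X=b_{32}\bar\Phi+o(1).$$
Here $c_k$ comes from the term $\alpha^{-1}\hat r$ in \eqref{eq:T1_quadra}, evaluated along the curve where $\hat\Phi$ is pinned to $\alpha\bar\Phi$ plus a phase offset. Setting the vertex to zero gives a second quadratic tangency with splitting parameter $\mu_k=\mu+k^{-2}c_k(\alpha)+o(k^{-2})$. Partial hyperbolicity of this tangency is inherited from a neighbourhood of $\orb\cup\Gamma_1$.

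\textbf{Step 3: independence.} The two splitting parameters are $\mu$ and $\mu_k$. Condition \eqref{eq:unfoldquadra} lets me use $(\mu,\alpha)$ as coordinates on parameter space, so \eqref{eq:unfold2quadra} reduces to showing $\p\mu_k/\p\alpha\neq0$, uniformly in the chosen $k$ after rescaling. The sequence $\eps_j$ is then given by solving $\mu=0$, $\mu_k=0$ for each $k$. Since $\mu_k-\mu=O(k^{-2})$, these solutions satisfy $\alpha_k\to\alpha(0)$ and $\mu\to0$, which yields the required sequence converging to $0$.

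\textbf{Main obstacle.} The hard step is verifying that $c_k$ really depends on $\alpha$ with a nonvanishing derivative. The risk is that the $\alpha$-dependence cancels in the limit, or is dominated by the $\eps$-dependence of the other coefficients $a_{ij}$, $b_{ij}$ and $\varphi^\pm$. My first attempt would be to compute the limit map explicitly, as in the proof of Lemma~\ref{lem:a11}, and track where $\alpha$ enters through the phase equation $\varphi-\varphi^-=\alpha^{-1}(\tilde\varphi-\varphi^+)+\dots$. If this fails, I would replace $T_2$ by a second pass along $\Gamma_1$ itself, that is, use $T_1\circ T_0^k\circ T_1$ at $\mu\neq0$ small. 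In that composition the vertex involves $\alpha^{-1}\mu$ and $\beta(\alpha(\cdot))^2$ explicitly, so the $\alpha$-dependence of the secondary splitting is transparent and its $\alpha$-derivative can be checked directly.
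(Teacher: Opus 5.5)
Your proposal has a genuine gap, in two places.

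\textbf{Keeping $\Gamma_1$ fails when $\alpha>0$.} Keeping $\Gamma_1$ forces $\mu=0$ at $\eps_j$, so Step~2 needs a transverse homoclinic $\Gamma_2$ that exists at $\mu=0$. In the normalization \eqref{eq:1to2:scat0}, at $\mu=0$ the curve $S(\gamma)$ lies in $\{r\le 0\}$ and $S^{-1}(\gamma)$ lies in $\{\alpha r\ge 0\}$. When $\alpha>0$ (your ``sides do not match'' case), the $S$-image of any circle $r=r^*<0$ has its vertex at $\alpha^{-1}r^*+O(r^{*2})<0$. So orbits leaving $\gamma$ along $\Gamma_1$ are driven to $r<0$ and cannot return, and in general there is no transverse homoclinic near $\orb\cup\Gamma_1$ at $\mu=0$. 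The orbit you get by shifting $\mu$ exists only for $\mu$ on one side of $0$ and collapses into $\Gamma_1$ as $\mu\to0$, so it does not persist for all small $\eps$. In this case the original tangency cannot be one of the two; the paper's two tangencies sit at $\mu\approx r^+\neq 0$.

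\textbf{The independence mechanism has the wrong order, even for $\alpha<0$.} Carrying out Step~2 with \eqref{eq:T1_quadra} gives
\[
\mu_k=\mu-\frac{1}{\alpha k}\bigl((\varphi_2^+-\varphi_1^-+k\rho)\bModa 1\bigr)-\frac{1}{4\beta\alpha^{4}k^{2}}+o(k^{-2}).
\]
The $\eps$-dependence of $\varphi_2^+-\varphi_1^-$ is not controlled by \eqref{eq:unfoldquadra}, and it enters $\partial_\alpha\mu_k$ at order $k^{-1}$. The effect you rely on is only $O(k^{-2})$.

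If that phase difference moves with $\alpha$, the $k^{-1}$ drift can close the argument, but by a mechanism you have not analysed. If it does not move, you need $c_k(\alpha)=-\alpha^{-1}A(k)-\alpha^{-4}/(4\beta)$ to vanish for some $\alpha$ near $\alpha(0)$, i.e.\ $A(k)\to-\alpha(0)^{-3}/(4\beta)$ along your sequence. Dirichlet only makes $A(k)$ bounded, with no control on its value. For quadratic irrational $\rho$ with $\varphi_2^+-\varphi_1^-\in\mathbb{Z}+\rho\mathbb{Z}$, the limit points of $A(k)$ form a discrete set.

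So ``$\mu_k-\mu=O(k^{-2})$, hence $\alpha_k\to\alpha(0)$'' does not follow. When $\partial_\alpha\mu_k$ is also $O(k^{-2})$, a zero of $\mu_k(0,\cdot)$ near $\alpha(0)$ exists only if $c_k(\alpha(0))\to 0$. Your fallback $T_1\circ T_0^k\circ T_1$ has the same structure (phase $\varphi^+-\varphi^-+k\rho$, $\alpha$-effect at order $k^{-2}$). It also needs $\mu\ne0$, which destroys $\Gamma_1$.

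\textbf{The missing idea.} Let $\alpha$ act on the straightened KAM circles $\{r=r^*\}$, $r^*\in\mathcal{G}^\pm$, which have $r=0$ as a density point, rather than on long excursions near $\gamma$. By \eqref{8346}, $S$ maps an arc of $\{r=r^*\}$ to a parabola with vertex $\mu+\alpha^{-1}r^*+O(r^{*2})$. Once $r^*$ is fixed, varying $\alpha$ moves this vertex with the definite speed $-\alpha^{-2}r^*$, while $S(\gamma)$ moves only with $\mu$. Since these circles are invariant with fixed rotation numbers, no phase matching arises.

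Density lets one satisfy the tangency conditions exactly inside $\mathcal{G}^\pm$: for $\alpha>0$, $\mu=r^+$ and $\mu+\alpha^{-1}r^-=0$; for $\alpha<0$, $\mu=0$ and $r^+=\alpha^{-1}r^-+o(r^-)$. This gives heteroclinic tangencies between KAM circles together with transverse heteroclinic crossings. Lemma~\ref{lem:incli} (with $m^*\geq 2$) then makes $W^{\mathrm u}(\gamma)$ and $W^{\mathrm s}(\gamma)$ accumulate on the relevant local manifolds. An arbitrarily small further change of $(\mu,\alpha)$ converts the heteroclinic tangencies into two homoclinic ones that unfold independently.
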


\begin{lem}\label{lem:cubic}
Let $\gamma$ have two partially-hyperbolic  quadratic homoclinic tangencies, and consider any generic two-parameter unfolding family given by \eqref{eq:unfold2quadra}.
There exists a sequence $\{\eps_k\}$ converging to $0$ such that the continuation of  $\gamma$ at each $\eps=\eps_k$ has a partially-hyperbolic cubic homoclinic tangency. Moreover, 
the found  tangency   unfolds generically in the sense of \eqref{eq:unfoldcubic} as $\eps$ varies  from $\eps=\eps_k$.
\end{lem}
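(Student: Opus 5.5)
The plan is to imitate the proof of Lemma~\ref{lem:a11}: build a secondary homoclinic orbit that follows the first tangency $\Gamma_1$, then makes $k$ turns near $\orb$, then follows the second tangency $\Gamma_2$. The cubic tangency should come from the twist of the inner map~\eqref{eq:kam_new}, which acts on the image of the first parabola.

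\emph{Setup.} Let $T_1$ and $T_2$ be the transition maps along $\Gamma_1$ and $\Gamma_2$. Both have the form~\eqref{eq:T1_quadra}, with splitting parameters $\mu_1,\mu_2$, coefficients $\alpha_1,\alpha_2$, nonzero $\beta_1,\beta_2$, and homoclinic points $M_i^\pm$. By~\eqref{eq:unfold2quadra} we may use $(\mu_1,\mu_2)$ as the parameters. We then study the curve $T_2\circ T_0^k\circ T_1(W^\u_\loc(\gamma))\cap W^\s_\loc(\A)$ and its position relative to $W^\s_\loc(\gamma)$. By Lemma~\ref{lem:equitangency}, it suffices to work with the $(r,\pp)$-projection, i.e.\ with the composite scattering-type curve.

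\emph{Leading-order computation.} Parametrize $T_1(W^\u_\loc(\gamma))$ by $u=\tilde\pp-\pp_1^+$, so that $\tilde r=\mu_1+\beta_1u^2+\dots$. Lemma~\ref{lem:derisyst2} gives $\hat r=\tilde r+\dots$ and
\[
\hat\pp-\pp_2^-=u+A(k)+k\mu_1+k\beta_1u^2+\dots,\qquad A(k)=(\pp_1^+-\pp_2^-+k\rho)\bModa 1 .
\]
Applying $T_2$ then yields
\[
\bar\pp-\pp_2^+\approx\alpha_2\bigl(u+c+k\beta_1u^2\bigr),\qquad \bar r\approx\mu_2+\alpha_2^{-1}\beta_1u^2+\beta_2(\bar\pp-\pp_2^+)^2,
\]
where $c:=A(k)+k\mu_1$. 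Using the Dirichlet argument from the proof of Lemma~\ref{lem:a11}, I will choose $k$ along a sequence with $A(k)=O(k^{-1})$.

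\emph{Rescaling and the limit curve.} Next I rescale: $u=k^{-1}U$, $\bar\pp-\pp_2^+=k^{-1}\bar\Phi$, $r$-variables by $k^{-2}$, $c=k^{-1}C$, with scalings of $x,y$ chosen as in~\eqref{a11:trans}. In the limit $k\to\infty$ the composite curve becomes the fixed parametric polynomial curve
\[
\bar\Phi=\alpha_2(U+C+\beta_1U^2),\qquad \bar R=k^2\mu_2+\alpha_2^{-1}\beta_1U^2+\beta_2\bar\Phi^2 ,
\]
and the remainders are small in $C^3$; this needs $m^*\geq3$, via Lemma~\ref{lem:derisyst2}. Near a regular point of $\bar\Phi(U)$, the function $\bar R$ as a function of $\bar\Phi$ is smooth. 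I then need a value $C=C^*$ and a point where $\frac{d\bar R}{d\bar\Phi}=0$ and $\frac{d^2\bar R}{d\bar\Phi^2}=0$ while the third derivative is nonzero. This should hold because the fold of $\bar\Phi(U)$ at $U=-1/(2\beta_1)$ forces the curvature to change sign along the curve. The second-derivative condition is independent of the shift $C$ only up to where the horizontal tangent lies, so I must verify by explicit computation that $C$ (equivalently $\mu_1$) can place a horizontal tangent exactly at the inflection point. This computation is the step I expect to be the main obstacle, and there is a fallback if it fails: compose more turns, so that the twist contributes a genuinely cubic term, as in Lemma~\ref{lem:a11}. Once the inflection is a horizontal tangent, $\mu_2$ places it on $\bar R=0$ by the implicit function theorem.

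\emph{Genericity and partial hyperbolicity.} The resulting splitting parameters $(\mu,\nu)$ of the cubic tangency depend on $(k^2\mu_2, C)$ with a nondegenerate Jacobian, because $\mu_2$ enters only additively in $\bar R$ and $\mu_1$ enters through $C$. This gives~\eqref{eq:unfoldcubic}. Finally, the new homoclinic orbit lies in a small neighbourhood of $\orb\cup\Gamma_1\cup\Gamma_2$, so it inherits partial hyperbolicity. Letting $k\to\infty$ produces the required sequence $\eps_k\to0$.
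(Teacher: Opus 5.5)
Your route is the paper's. You compose $T_2\circ T_0^k\circ T_1$, choose $k$ by Dirichlet so that $kA(k)$ stays bounded, and scale $r$ and $\mu_i$ by $k^{-2}$ and angles by $k^{-1}$. The twist term $k\tilde r$ then carries the first parabola into the angle entering the second, and up to normalisation your limit curve is \eqref{cubic:comp}--\eqref{cubic:eta}.

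The gap is that you stop at the step that carries the content of the lemma. You do not show that, for suitable $(\mu_1,\mu_2)$, the limit curve has a horizontal inflection with nonzero third derivative. The fold heuristic gives at most some zero of the curvature; it does not give its location, its nondegeneracy, or that the tangent can be made horizontal there. No extra turns are needed, because the computation is short.

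Put $V=\beta_1U$ and $W=\beta_1C+V+V^2$. Then $\bar\Phi$ is a fixed nonzero multiple of $W$. After dividing $\bar R$ by $\beta_2\alpha_2^2/\beta_1^2$, the curve reads $\bar R=m+BV^2+W^2$, where $B=\beta_1/(\beta_2\alpha_2^3)\neq0$ (the paper's $B=b^3\beta_1/\beta_2$) and $m$ is affine in $\mu_2$ with nonzero coefficient. Writing $'$ for $d/dV$,
\[
W'\bar R''-W''\bar R'=2B+2(1+2V)^3 ,
\]
which depends on neither $C$ nor $m$. So the inflection sits at the simple root $(1+2V^*)^3=-B$. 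This is not the fold, since $1+2V^*=-B^{1/3}\neq0$.

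At $V^*$ the slope is $d\bar R/dW=2BV^*/(1+2V^*)+2W$, which is affine in $C$ with coefficient $2\beta_1\neq0$. So $C$ (i.e.\ $\mu_1$) makes the slope vanish, and then $m$ (i.e.\ $\mu_2$) puts the point on $\bar R=0$. At that point $\bar R'=\bar R''=0$, hence $d^3\bar R/dW^3=\bar R'''/(W')^3=12/(1+2V^*)^2\neq0$. This is exactly the paper's reduction to \eqref{cubic:root}, and simplicity of the root lets the solution survive the $C^3$-small $O(k^{-1})$ remainders.

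The same computation supplies what your genericity argument omits. Saying that $\mu_1$ ``enters through $C$'' is not enough: you need $\partial\nu/\partial C\neq0$, where $\nu$ is the slope at the inflection point. Since $V^*$ does not move with $C$ in the limit, this derivative is the nonzero constant $2\beta_1$, up to the fixed normalisation. Together with $\partial\nu/\partial\mu_2=0$ and $\partial\mu/\partial\mu_2\neq0$, the Jacobian is triangular and nondegenerate. The paper expresses the same fact as invertibility of the final $2\times2$ matrix, whose determinant is $-(1+2\tilde\Phi)\neq0$.
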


\subsubsection{Creating two quadratic tangencies from one: proof of Lemma~\ref{lem:1to2}} \label{sec:1to2}
Let $\gamma$ have a partially-hyperbolic quadratic homoclinic tangency. In the Fenichel coordinates, we have \eqref{eq:strait}
satisfied, and the transition map from a small neighborhood of $M^-=(0,\pp^-,0,y^-)$ to a small neighborhood of $M^+=(0,\pp^+,x^+,0)$  is given by \eqref{eq:T1_quadra}.  Let $S$ be the scattering map defined by \eqref{eq:scattering_3}. Since  $\gamma$ is given by $\{r=0,x=0,y=0\}$,  Lemma~\ref{lem:equitangency} implies that $S(\gamma)$ has a quadratic tangency with $\gamma$ at $S(0,\pp^-)=(0,\pp^+)$ in the cylinder $\A:\{x=0,y=0\}$. Let $\ell_0$ and $\ell_1$ be two small arcs of 
$\gamma$ that contain $(0,\pp^-)$ and $(0,\pp^+)$, respectively. The splitting parameter $\mu$ in \eqref{eq:T1_quadra} measures the signed distance between  $S(\ell_0)$ and $\ell_1$. By \eqref{eq:unfoldquadra}, we can take $(\eps_1,\eps_2)=(\alpha-\alpha_0,\mu)$, where $\alpha_0$ is the value of $\alpha$ for the original quadratic tangency at $\eps=0$.

Adding the splitting parameter $\mu$ to \eqref{eq:scatter_new} and scaling $r$, we may write the scattering map $S$ near 
$(0,\pp^-)$ as
\begin{equation}\label{eq:1to2:scat0}
\tilde r = \mu + \thebetai r - (\tilde \varphi - \varphi^+)^2 +\dots, \qquad
\varphi-\varphi^- = c\; r +  \thebetai(\tilde \varphi - \varphi^+) +\dots,
\end{equation}
where $c$ is a constant, and the dots denote higher order terms. So, the image $S(\ell_0)$ is the parabola-like curve
\begin{equation}\label{8345}
\tilde r = \mu - (\tilde \varphi - \varphi^+)^2 +\dots.
\end{equation}

Recall that $\gamma$ is accumulated, from both sides, by a set of KAM-curves that lie in the cylinder $\A$.
Moreover, these curves are straightened, i.e.,
the coordinates are chosen such that  (\ref{8241}) is satisfied, so these KAM-curves are circles     of constant
$r$. In particular, there are two sets $\mathcal{G}^+$ and $\mathcal{G}^-$ of positive and, respectively, negative $r$ values  such that the KAM-curves are given by $r\in \mathcal{G}^\pm$ and $r=0$ is a Lebesgue density point for both of $\mathcal{G}^\pm$. 

For  $r^*\in \mathcal{G}^\pm$, we denote by $\ell^*$ a small arc in the KAM-curve $\gamma^*:\{r=r^*\}$ that is centered at $(r^*,\pp^-)$ and near $\ell_0$. By \eqref{eq:1to2:scat0}, the image $S(\ell^*)$ is the curve
\begin{equation}\label{8346}
\tilde r = \mu + \thebetai r^* - (\tilde \varphi - \varphi^+)^2 +\dots.
\end{equation}
In what follows, we consider the cases $\alpha>0$ and $\alpha<0$ separately. 

\noindent{\bf (1) The case of $\alpha>0$.} First note that we can choose  $r^+\in \mathcal{G}^+,r^-\in \mathcal{G}^-$ and take $\mu=r^++o(r^+)>0$ such that
\begin{itemize}[nosep]
\item $S(\ell_0)$ given by (\ref{8345}) has a tangency with the KAM-curve $\gamma^+:\{r=r^+\}$, and
\item $S(\ell^-)$ given by (\ref{8346}) with $r^*=r^-<0$ has a tangency with $\gamma$.
\end{itemize}
Such 
$r^+$ and $r^-$ exist arbitrarily close to $r=0$: by~(\ref{8346}) they must satisfy $r^+ + o(r^+) = - \thebetai r^- + o(r^-)$,
and this equation always has solutions near $0$, since $r=0$ is the density point for $\mathcal{G}^+$ and $\mathcal{G}^-$.
Obviously, these two tangencies are quadratic, and unfold generically as $\mu$ and $\alpha$ vary. By Lemma~\ref{lem:equitangency}, they correspond to two partially-hyperbolic quadratic heteroclinic tangencies: one is between $T_1(W^\u_{\loc}(\gamma))$ and $W^\s_{\loc}(\gamma^+)$, and the other is between $T_1(W^\u_{\loc}(\gamma^-))$ and $W^\s_{\loc}(\gamma)$, where $\gamma^-$ is the KAM-curve $\{r=r^-\}$.

Since $r^+$ and $\mu=r^+ +o(r^+)$ are both positive, one readily finds from~(\ref{8346}) with $r^*=r^+>0$ that $S(\ell^+)$ intersects $\gamma$ transversely, so Lemma~\ref{lem:equitangency} gives a transverse intersection of $T_1(W^\u_{\loc}(\gamma^+))$ with $W^\s_{\loc}(\gamma)$.
Similarly, by~(\ref{8345}), we have a transverse intersection of $S(\ell_0)$ with $\gamma^-$, which gives a transverse intersection of $T_1(W^\u_{\loc}(\gamma))$ with $W^\s_{\loc}(\gamma^-)$. 
Lemma~\ref{lem:incli} then implies $W^\s(\gamma)\to W^\s_{\loc}(\gamma^+)$  and
$W^\u(\gamma) \to W^\u_{\loc}(\gamma^-)$, where  ``$\to$''  means that the former manifold accumulates on the latter in the $C^1$ topology from both sides. It follows  that arbitrarily small changes in $\mu$ and $\alpha$ that unfold the heteroclinic tangencies of $T_1(W^\u_{\loc}(\gamma))$ with $W^\s_{\loc}(\gamma^+)$ and of $T_1(W^\u_{\loc}(\gamma^-))$ with $W^\s_{\loc}(\gamma)$ create the sought pair of homoclinic tangencies between $W^\u(\gamma)$ and $W^\s(\gamma)$
(we need the tangencies to be quadratic and to unfold generically -- this requires the reqularity $m^*\geq 2$ in Lemma~\ref{lem:incli}). This finishes the proof of the lemma for $\alpha>0$; see Figure~\ref{fig:1to2a} for an illustration.

\begin{figure}[!h]
\begin{center}
\includegraphics[scale=1.2]{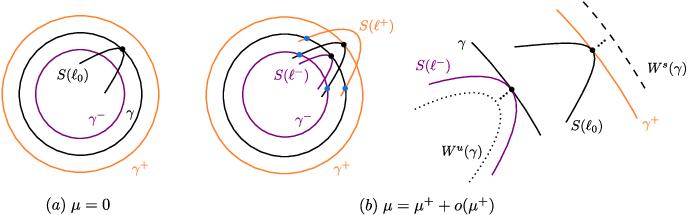}
\end{center}
\caption{(a) The original homoclinic tangency at $\mu=0$. (b) The creation of two heteroclinic tangencies (black dots) at $\mu>0$, the transverse intersections (blue dots), and the accumulations. Here the dotted curve represents the $(r,\pp)$-coordinate projection of a piece of $W^\u(\gamma)$ close to $T_1(W^\u_\loc(\gamma^-))$   and the dashed curve represents the $(r,\pp)$-coordinate projection of a piece of $W^\s(\gamma)$ close $W^\s_{\loc}(\gamma^+))$.}
\label{fig:1to2a}
\end{figure}

\noindent{\bf (2) The case of $\alpha>0$.} We note that at $\mu=0$ there exist $r^-\in \mathcal{G}^-$ and $r^+\in \mathcal{G}^+$ 
such that the image $S(\ell^-)$ of an arc of the KAM-curve $\gamma^-:\{r=r^-<0\}$ has a quadratic tangency to the
KAM-curve $\gamma^+:\{r=r^+>0\}$. Indeed, by (\ref{8346}) with $r^*=r^-$, this happens when 
$r^+= \thebetai r^- + o(r^-)$, 
and the existence of arbitrarily small solutions to this equation follows from the density of $\mathcal{G}^-$ and $ \mathcal{G}^+$ at $r=0$ (when $\alpha$ changes, the tangency splits with a non-zero velocity). Thus, by Lemma~\ref{lem:equitangency}, we have at $\mu=0$ a partially-hyperbolic quadratic heteroclinic tangency
between $T_1(W^\u_{\loc}(\gamma^-))$ and $W^\s_{\loc}(\gamma^+)$, in addition to the homoclinic tangency of 
the stable and unstable manifolds of $\gamma$.

We also have from (\ref{8345}) and (\ref{8346}) that 
\begin{itemize}[nosep]
\item $S(\ell_0)$ intersects    $\gamma^-$ transversely,
\item $S(\ell^*)$,  for every KAM-curve 
$\gamma^*:\{r=r^*\in\mathcal{G}^-\}$, intersects $\gamma$ transversely, and
\item $S(\ell^+)$ intersects transversely KAM-curves $\gamma^*$ with $r^* < \thebetai r^+ + o(r^+)$.
\end{itemize}
By Lemma~\ref{lem:equitangency}, this gives a transverse intersection between $T_1(W^\u_{\loc}(\gamma))$ and $W^\s_{\loc}(\gamma^-)$, a transverse intersection of $T_1(W^\u_{\loc}(\gamma^*))$ with $W^\s_{\loc}(\gamma)$, and a transverse intersection of $T_1(W^\u_{\loc}(\gamma^+))$ with $W^\s_{\loc}(\gamma^*)$.  Thus, by Lemma~\ref{lem:incli}, we have $W^\u(\gamma) \to W^\u(\gamma^-)$, $W^\s(\gamma) \to W^\s(\gamma^*)$ and $W^\s(\gamma^*) \to W^\s(\gamma^+)$, where the last two further imply $W^\s(\gamma) \to W^\s(\gamma^+)$. As a result, an arbitrarily small change in $\alpha$ that splits the heteroclinic tangency of $T_1(W^\u_{\loc}(\gamma^-))$ with $W^\s_{\loc}(\gamma^+)$ creates a homoclinic tangency between $W^\u(\gamma)$ and $W^\s(\gamma)$ in addition to the primary tangency between them (since we do not change $\mu$). This completes the proof of the lemma for $\alpha<0$; see Figure~\ref{fig:1to2b}. 

\begin{figure}[!h]
\begin{center} 
\includegraphics[scale=1.3]{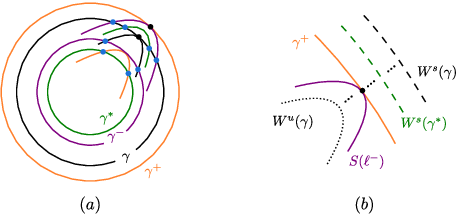}
\end{center}
\caption{(a) The homoclinic tangency, heteroclinic tangency (black dots) and transverse intersections (blue dots). (b) The accumulation, where the dotted and dashed curves represent  projections of pieces of the corresponding manifolds.}
\label{fig:1to2b}
\end{figure}

\subsubsection{Creating cubic tangencies: proof of Lemma~\ref{lem:cubic}}\label{sec:cubic}
Now let $\gamma$ have two partially-hyperbolic quadratic homoclinic tangencies.
Take two pairs of points $(M^-_i,M^+_i)$ $(i=1,2)$  in the two orbits of tangency, such that $M^-_i\in W^\u_{\loc}(\gamma)$, $M^+_i\in W^\s_{\loc}(\gamma)$, with $M^+_i=f^{n_i}(M^-_i)$ for some integers $n_i>0$. We define transition maps as $T_1:=f^{n_1}$ from a neighborhood of $M^-_1$ to a neighborhood of $M^+_1$ and $T_2:=f^{n_2}$ from a neighborhood of $M^-_2$ to a neighborhood of $M^+_2$. 
In the Fenichel coordinates, equations \eqref{eq:strait} hold, and one has $M^+_i=(0,\pp^+_i,x^+_i,0)$ and $M^-_i=(0,\pp^-_i,0,y^-_i)$.
By \eqref{eq:T1_quadra}, for any  family $\{f_{\eps}\}$ with $f_0=f$, the maps $T_1:(r,\pp,x,y)\mapsto (\tilde r,\tilde \pp,\tilde x,\tilde y)$ and $T_2:(\hat r,\hat \pp,\hat x,\hat y)\mapsto (\bar r,\bar \pp,\bar x,\bar y)$ take the following form (with a slight change in the notation):
\begin{equation}\label{eq:T1new}
\begin{aligned}
\tilde r &= \mu_1+{a} r +  \dd_1 (\tilde \varphi - \varphi^+_1)^2 +  a_{13} x +  a_{14}\tilde y+\dots, \\
\varphi -\varphi^-_1 &= {a}_{21} r +  {a} (\tilde \varphi - \varphi^+_1) +  a_{23} x +  a_{24}\tilde y+\dots, \\
\tilde x-x^+_1 &= {a}_{31} r +  a_{32} (\tilde \varphi - \varphi^+_1) +  a_{33} x +  a_{34}\tilde y+\dots, \\
y-y^-_1 &= {a}_{41} r +  a_{42} (\tilde \varphi - \varphi^+_1) +  a_{43} x +  a_{44}\tilde y+\dots,
\end{aligned}
\end{equation}
and,
\begin{equation}\label{eq:T2new}
\begin{aligned}
\bar r &= \mu_2+{b} \hat r + \dd_2 (\bar \varphi - \varphi^+_2)^2 +  b_{13} \hat x +  b_{14}\bar y+\dots, \\
\hat \varphi -\varphi^-_2 &= {b}_{21}\hat  r +  {b} (\bar \varphi - \varphi^+_2) +  b_{23} \hat x +  b_{24}\bar y+\dots, \\
\bar x-x^+_2 &= {b}_{31} \hat r +  b_{32} (\bar \varphi - \varphi^+_2) +  b_{33} \hat x +  b_{34}\bar y+\dots, \\
\hat y-y^-_2 &= {b}_{41} \hat r +  b_{42} (\bar \varphi - \varphi^+_2) +  b_{43}\hat  x +  b_{44}\bar y+\dots,
\end{aligned}
\end{equation}
where all the coefficients are functions of $\eps$; note that $\mu_{1,2}=0$ at $\eps=0$, and ${a},{b},\dd_{1,2}$ are non-zero.
The genericity condition \eqref{eq:unfold2quadra} allows us to use $\mu_1$ and $\mu_2$ as parameters.

Let us consider the map $T_2\circ T^k_0 \circ T_1:(r,\varphi,x,y)\mapsto (\bar r,\bar \varphi,\bar x,\bar y)$ from a neighborhood of $M^-_1$ to a neighborhood of $M^+_2$, where $T^k_0$ is given by \eqref{eq:derisyst2:1}.   Similarly to the proof of Lemma~\ref{lem:a11}, we look for homoclinic intersections of $W^\u(\gamma)=\{r=x=0\}$ and $W^\s_{\loc}(\gamma)=\{r=y=0\}$ as solutions to the equation $T_2\circ T^k_0 \circ T_1(0,\varphi,0,y)=(0,\bar\varphi,\bar x,0)$. This is done by  the cross-form $(r,\bar\varphi,x,\bar y)\mapsto(\bar r, \varphi, \bar x, y)$ of the map. We will further show that for some parameter values there is a solution which satisfies
\begin{equation}\label{cubic:derivative}
\dfrac{\partial\bar r}{\partial\bar\varphi}=0,\qquad 
\dfrac{\partial^2\bar r}{\partial\bar\varphi^2}=0, \qquad 
\dfrac{\partial^3\bar r}{\partial\bar\varphi^3}\neq 0,
\end{equation}
and hence  corresponds to a partially-hyperbolic cubic homoclinic tangency of $\gamma$.

\noindent{\bf (1) Rescaling of parameters and variables.}
We set
\begin{equation}\label{cubic:trans1}
\begin{array}{l}
\mu_i = k^{-2}\mu'_i, \qquad (r,\tilde r,\hat r,\bar r)=k^{-2}(R,\tilde R,\hat R,\bar R),\\
(\varphi - \varphi^-_1,
\tilde \varphi - \varphi^+_1,
\hat \varphi - \varphi^-_2,
\bar \varphi - \varphi^+_2)
 = k^{-1}(\Phi,\tilde \Phi,\hat \Phi,\bar \Phi),\\
(x,\hat x,\tilde y,\bar y)=k^{-3}(X,\hat X,\tilde Y,\bar Y),\\
(\tilde x - x^+_1,\bar x - x^+_2,\hat y-y^-_2,y-y^-_1)=k^{-1}(\tilde X,\bar X, Y,\hat Y).
\end{array}
\end{equation}
In the new coordinates, the two transition maps \eqref{eq:T1new} and \eqref{eq:T2new} satisfy the following relations:  

\noindent$T_1:(R,\Phi,X,Y)\mapsto (\tilde R, \tilde \Phi,\tilde X,\tilde Y):$
\begin{equation*}
\begin{aligned}
\tilde R &= \mu'_1+{a} R+  \dd_1 \tilde \Phi^2 + O(k^{-1}), \qquad
\Phi=   a \tilde{\Phi} + O(k^{-1}), \\
\tilde X &= a_{32} \tilde{\Phi} + O(k^{-1}), \qquad
Y = a_{42} \tilde{\Phi} + O(k^{-1}),
\end{aligned}
\end{equation*}
$T_2:(\hat R, \hat \Phi,\hat X,\hat Y)\mapsto (\bar R, \bar \Phi,\bar X,\bar Y):$
\begin{equation*}
\begin{aligned}
\bar R &= \mu'_2+{b} \hat R +  \dd_2 \bar \Phi^2 + O(k^{-1}), \qquad
\hat \Phi= b \bar \Phi + O(k^{-1}), \\
\bar X &= b_{32} \bar \Phi + O(k^{-1}), \qquad
\hat Y = b_{42} \bar \Phi + O(k^{-1}),
\end{aligned}
\end{equation*}
where the  $O(k^{-1})$ terms are $C^3$ functions of $(R,\tilde \Phi, X,\tilde Y)$ in the first set of equations, and  of $(\hat R,\bar \Phi, \hat X,\bar Y)$ in the second set, and, moreover, the derivatives up to third order of these terms are also estimated as $O(k^{-1})$. 

Since we  consider finite values of $\tilde R$ and $\hat{R}$,  the scaling \eqref{cubic:trans1} implies that the condition $k=o(r_0^{1-m})$ of  Lemma~\ref{lem:derisyst2} is satisfied when $m\geq 2$.  Thus, by taking $(r_0,\pp_0,x_0,y_0)=(\tilde r,\tilde \pp, \tilde x,\tilde y)$ and $(r_k,\pp_k,x_k,y_k)=(\hat r,\hat \pp, \hat x,\hat y)$ in \eqref{eq:derisyst2:4} and setting  $m\geq 3$ and $m'-m\geq 3$ in \eqref{eq:derisyst2:1}, the map  $T^k_0:(\tilde R, \tilde \Phi,\tilde X,\tilde Y)\mapsto (\hat R, \hat \Phi,\hat X,\hat Y)$ satisfies 
\begin{equation*}
\begin{aligned}
\hat R &= \tilde R + O(k^{-1}) ,\qquad
\hat \Phi= A(k)+ \tilde{\Phi} + \tilde R+ O(k^{-1}) ,\\
\hat X &=  O(k^{-1}),\qquad
\tilde Y = O(k^{-1}),
\end{aligned}
\end{equation*}
 for all sufficiently large $k$, where
\begin{equation}\label{Ak2}
A(k)= k((\varphi^+_1 -\varphi^-_2+k\rho_0)\bModa 1),
\end{equation}
and the $O(\cdot)$ terms are functions of $(\tilde R, \tilde \Phi,\tilde X,\hat Y)$, with their  joint derivatives with respect to variables and parameters  up to the third order estimated as $O(k^{-1})$.

We consider the maps $T_1$ and $T_2$ in small neighborhoods of $M_1^-$ and, respectively, $M_2^-$, which correspond to bounded values of the rescaled variables. Note that  there are infinitely many $k$ values for which $A(k)$ in \eqref{Ak2} stays uniformly bounded, so that the map 
$T_2\circ T^k_0\circ T_1$  is well-defined (i.e., $T^k_0$ takes the image by $T_1$ of the small neighborhood of $M_1^-$ to the domain of $T_2$). Indeed, we can always take arbitrarily large co-prime integers $p$ and $q>0$ such that 
$\rho = p/q + O(q^{-2})$. Then, if $s$ is an integer such that
$|\varphi^+_1 - \varphi^-_2 - s/q| <q^{-1}$, we take $k\in \{q,\dots, 2q-1\}$ such that
$k p + s = 0 \bmod q$, whic gives 
$$\varphi^+_2 - \varphi^-_1+k \rho  \bModa 1 = O(k^{-1}).$$

\noindent{\bf (2) Cross-form for $T_2\circ T^k_0\circ T_1$.} 
Let us further simplify the above formulas. Since $\dd_1,\dd_2,{b}$ are non-zero, we can take
\begin{equation}\label{cubic:trans2}
\begin{array}{l}
\mu''_1 = \dd_1 \mu'_1 + \beta_1A(k),\quad 
\mu''_2 = \dfrac{{b}\dd_1^2}{\dd_2}\mu'_2 -\dfrac{{b}^3\dd_1}{\dd_2}A(k),\\[10pt]
\tilde R=\dfrac{1}{\dd_1}\tilde R^{\mathrm{new}} - A(k),\quad  
\hat R=\dfrac{1}{\dd_1}\hat R^{\mathrm{new}} - A(k), \quad
\bar R=\dfrac{\dd_2}{\dd_1^2{b}^2}\bar R^{\mathrm{new}}, \\[10pt]
(\tilde \Phi,\hat \Phi)=\dfrac{1}{\dd_1}(\tilde \Phi,\hat \Phi)^{\mathrm{new}},\quad
\bar \Phi=\dfrac{1}{\dd_1{b}}\bar \Phi^{\mathrm{new}},
\end{array}
\end{equation}
The three maps now assume the following forms:

\noindent$T_1:(R,\Phi,X,Y)\mapsto (\tilde R, \tilde \Phi,\tilde X,\tilde Y):$
\begin{equation}\label{cubic:1}
\begin{aligned}
\tilde R &= \mu''_1+a\beta_1 R+ \tilde \Phi^2 + O(k^{-1}), \qquad
\Phi= \dfrac{{a}}{\dd_1}    \tilde{\Phi} + O(k^{-1}), \\
\tilde X &= \dfrac{a_{32}}{\dd_1}\tilde{\Phi} + O(k^{-1}), \qquad
Y = \dfrac{a_{42}}{\dd_1}  \tilde{\Phi} + O(k^{-1}),
\end{aligned}
\end{equation}
where the right-hand sides are functions of $(R,\tilde \Phi,X,\tilde{Y})$;\\

\noindent $T^k_0:(\tilde R, \tilde \Phi,\tilde X,\tilde Y)\mapsto (\hat R, \hat \Phi,\hat X,\hat Y):$
\begin{equation}\label{cubic:2}
\begin{aligned}
\hat R &= \tilde R + O(k^{-1}) ,\qquad
\hat\Phi=  \tilde{\Phi} +\tilde R+ O(k^{-1}),\\
\hat X &=  O(k^{-1}),\qquad
\tilde Y = O(k^{-1}),  
\end{aligned}
\end{equation}
where the right-hand sides are functions of $(\tilde R,\tilde \Phi,\tilde X,\hat{Y})$;\\

\noindent$T_2:(\hat R, \hat \Phi,\hat X,\hat Y)\mapsto (\bar R, \bar \Phi,\bar X,\bar Y):$
\begin{equation}\label{cubic:3}
\begin{aligned}
\bar R &= \mu''_2+B \hat R + \bar \Phi^2 + O(k^{-1}), \qquad
\hat \Phi=  \bar \Phi + O(k^{-1}), \\
\bar X &= \dfrac{b_{32} }{\dd_1{b}}\bar \Phi + O(k^{-1}), \qquad
\hat Y =\dfrac{ b_{42}}{\dd_1{b}} \bar \Phi + O(k^{-1}), 
\end{aligned}
\end{equation}
where the right-hand sides are functions of $(\hat R,\bar \Phi,\hat X,\bar{Y})$, with $B={{b}^3\dd_1}/{\dd_2}$.

Combing  \eqref{cubic:1}, \eqref{cubic:2} and \eqref{cubic:3}, we see that
one has
\begin{equation}\label{arrowmap}
(R,\Phi,X,Y)
\xrightarrow{T_1} (\tilde R,\tilde \Phi,\tilde X,\tilde Y)
\xrightarrow{T^k_0}  (\hat R,\hat \Phi,\hat X,\hat Y)
\xrightarrow{T_2} (\bar R,\bar \Phi,\bar X,\bar Y),
\end{equation}
if and only if 
\begin{equation}\label{cubic:comp}
\begin{aligned}
\bar R &= \mu''_2 + B( \mu''_1+a\beta_1R+\tilde \Phi^2)+(\mu''_1+a\beta_1R+\tilde\Phi+\tilde\Phi^2)^2+O(k^{-1}),\\
\Phi &= \dfrac{a}{\dd_1} \tilde{\Phi} + O(k^{-1}),\\
\bar X &= \dfrac{b_{32} }{\dd_1{b}}(\mu''_1+R+\tilde\Phi+\tilde\Phi^2)+O(k^{-1}),\qquad
Y=\dfrac{a_{42}}{\dd_1} \tilde{\Phi} + O(k^{-1}),
\end{aligned}
\end{equation}
and
\begin{equation}\label{cubic:eta}
\bar\Phi = \mu''_1+a\beta_1 R+\tilde\Phi+\tilde\Phi^2+O(k^{-1}),
\end{equation}
where the  $O$ terms are functions of $(R,\tilde \Phi,X,\bar Y)$ such that their  joint derivatives with respect to variables and parameters up to the third order are estimated as $O(k^{-1})$.

\noindent{\bf (3) Creation of a partially-hyperbolic cubic  homoclinic tangency.} 
After the coordinate transformations \eqref{cubic:trans1} and \eqref{cubic:trans2}, we have $W^\u_\loc(\gamma)=\{ R=0,X=0\}$ and $W^\s_\loc(\gamma)=\{\bar R=0,\bar Y=0\}$. Thus, a homoclinic intersection between $T_2\circ T^k_0\circ T_1(W^\u_\loc(\gamma))$ and $W^\s_\loc(\gamma)$ corresponds to 
  a solution  of \eqref{cubic:comp} with $R,\bar R,X,\bar Y$ vanishing. 
  To find such a solution, it suffices to set $R,\bar R,X,\bar Y$ to 0  in the first equation of \eqref{cubic:comp} and solve for $\tilde\Phi$, since other coordinates can be obtained once $\tilde\Phi$ is found. This gives
\begin{equation*}
0= \mu''_2 + B( \mu''_1+\tilde \Phi^2)+( \mu''_1+\tilde\Phi+\tilde\Phi^2)^2+O(k^{-1}),
\end{equation*}
or,
\begin{equation}\label{eq:cubic11}
0=\mu''_2 + B \mu''_1+(\mu''_1)^2+2\mu''_1\tilde{\Phi} + (1+B+2\mu''_1)\tilde\Phi^2+2\tilde\Phi^3+\tilde\Phi^4+O(k^{-1}),
\end{equation}
where the term $O(k^{-1})$ is a function of $\tilde{\Phi}$. 

Obviously, if $\p\tilde \Phi/\p\Phi\neq 0$, then the conditions for the homoclinic intersection to be a cubic tangency as stated in \eqref{cubic:derivative}  for the non-scaled coordinates are equivalent to $\partial\bar R/\partial \tilde \Phi=0, \partial^2\bar R/\partial \tilde \Phi^2=0,\partial^3\bar R/\partial \tilde \Phi^3\neq 0$. By~\eqref{cubic:comp}, they read
\begin{align}
0&=\mu''_1 +(1+B+2\mu''_1)\tilde\Phi+3\tilde\Phi^2+2\tilde\Phi^3+O(k^{-1}),\label{eq:cubic12}\\
0&=1+B+2\mu''_1+6\tilde\Phi+6\tilde\Phi^2+O(k^{-1}),\label{eq:cubic12'}\\
0&\neq 1+2\tilde\Phi +O(k^{-1}).\label{eq:cubic12''}
\end{align} 
Thus, a cubic tangency corresponds to  a solution $(\tilde{\Phi},\mu_1'',\mu_2'')$ of the system consisting of  \eqref{eq:cubic11} and \eqref{eq:cubic12} that also satisfies $\p\tilde \Phi/\p\Phi\neq 0$.

Expressing $\mu_1''$ and $\mu''_2$ as functions of $\tilde{\Phi}$ through \eqref{eq:cubic11} and   \eqref{eq:cubic12}, and substituting them into \eqref{eq:cubic12'}, reduces the problem to 
\begin{equation}\label{cubic:root}
(2\tilde\Phi+1)^3=-B+O(k^{-1}).
\end{equation}
Obviously, it always has a simple real solution 
 for every sufficiently large $k$. 
Since $B={b}^3\dd_1/\dd_2\neq 0$ by assumption, the  inequality  \eqref{eq:cubic12''} holds automatically. 
Thus, we have found a cubic homoclinic tangency of $\gamma$; it is  partially hyperbolic since it is created in a small neighborhood of  partially hyperbolic orbits. 

\noindent{\bf (4) Genericity of the unfolding.}
To finish the proof of the lemma, let us verify that changing $\mu_1$ and $\mu_2$  unfolds the   cubic tangency  in the sense of \eqref{eq:unfoldcubic}. Note that the parameters $\mu$ and $\nu$ in \eqref{eq:unfoldcubic} are just the values of  $\bar r$ and $\partial  \bar r/\partial \bar \pp$, respectively, evaluated at the cubic tangency point. Then it suffices to show that the  matrix of derivatives   of the right-hand sides of \eqref{eq:cubic11} and \eqref{eq:cubic12} with respect to $\mu''_{1,2}$ is invertible. This is immediate since the matrix is
$$\begin{pmatrix}
B+2\tilde{\Phi}+2\tilde{\Phi}^2+2\mu''_1 &1\\
1+2\tilde{\Phi}&0
\end{pmatrix},$$
and  $\tilde \Phi\neq -1/2$ at the tangency point.

\subsection{Persistent intersections: proof of Proposition~\ref{prop:persis}}\label{sec:unfoldblen}

Consider any $f\in \symp^s(\mathcal{M})$ having a symplectic blender $\Lambda$ connected to a whiskered KAM-torus $\gamma$, where $s\geq 4$ is sufficiently large so that $\min\{m',m'-m\}\geq 2$ in \eqref{eq:kam_new}. Let $L^\u$ be an $(N-1)$-dimensional manifold close to a strong-unstable leaf $\ell^\uu$ of $W^\u_\loc(\gamma)$ and  $L^\s$ be an $(N-1)$-dimensional manifold  close to a strong-stable leaf $\ell^\ss$ of $W^\s_\loc(\gamma)$ such that $W^\s(\Lambda)\cap L^\u \neq \emptyset$ and $W^\u(\Lambda)\cap L^\s\neq \emptyset$ by the blender property of Definition~\ref{defi:sympblen_conn}. 
We first show that each of these two intersections can be unfolded within one-parameter families  (Lemmas~\ref{lem:unfold1} and~\ref{lem:unfold2}). After that, we  consider their simultaneous unfolding in Proposition~\ref{prop:unfoldble}, proving the first claim of Proposition~\ref{prop:persis}. The second claim then follows from a general bifurcation result for non-transverse intersections with the invariant manifolds of a  partially-hyperbolic saddle (see Lemma~\ref{lem:model}).

Consider any family $\{f_\eps \}\subset \symp^s(\mathcal{M})$ with $f_0=f$, where $f_\eps$ is jointly $C^s$ with respect to variables and parameters. Let
$\{L^{\u}_\eps\}$ and $\{L^{\s}_\eps\}$ be two $C^1$ families  of $(N-1)$-dimensional $C^1$ manifolds with $L^{\u}_0=L^{\u}$ and $L^{\s}_0=L^{\s}$.  Recall that $\Lambda$ and $\gamma$ persist for all small $\eps$ and we omit their dependence on $\eps$ for simplicity.
The genericity condition of Proposition~\ref{prop:persis} will be formulated with
the signed distance $\Delta^\u$ between $L^\u_\eps$ and  $W^\s_\loc(\gamma)$, and the signed distance $\Delta^\s$ between $L^\s_\eps$ and  $W^\u_\loc(\gamma)$. These distances have simple expressions in the   Fenichel coordinates. In these coordinates (see Section~\ref{sec:innermap}), we have  for all small $\eps$  that
\begin{equation}\label{eq:prelimiaries}
\begin{aligned}
&\ell^\uu=\{r=0,\pp=\pp^\uu,x=0,y\in D^\uu\},\\
&\ell^\ss=\{r=0,\pp=\pp^\ss,x\in D^\ss,y=0\},\\
&W^\s_\loc(\gamma)=\{r=0,\pp\in \mathbb{S}^1, x\in D^\ss, y=0\},\\
&W^\u_\loc(\gamma)=\{r=0, \pp\in \mathbb{S}^1,x=0, y\in D^\uu\},\\
&W^\s_\loc(\A)=\{(r,\pp)\in\A, x\in D^\ss, y=0\},\\
&W^\u_\loc(\A)=\{(r,\pp)\in\A, x=0, y\in D^\uu\},
\end{aligned}
\end{equation}
for some $\pp^\uu,\pp^\ss\in \mathbb{S}^1$ and some closed discs $D^\uu,D^\ss\subset\mathbb{R}^{N-1}$ containing $0$.  Since $\ell^\uu$ is transverse to $W^\s_\loc(\A)$ by the partial hyperbolicity, up to decreasing $\delta'$,  $L^\u_\eps$ intersects $W^\s_\loc(\A)$ transversely at some point $Q^\u_\eps$ for every small $\eps$. It is then  clear from \eqref{eq:prelimiaries} that $\Delta^\u$ is equal to the $r$-coordinate of $Q^\u_\eps$.  Similarly, $L^\s_\eps$ intersects $W^\u_\loc(\A)$ transversely at some point $Q^\s_\eps$, whose $r$-coordinate gives $\Delta^\s$.

\subsubsection{First claim of Proposition~\ref{prop:persis}}

First let $\{f_\eps\}$, $\{L^{\u}_\eps\}$ and $\{L^{\s}_\eps\}$ be one-parameter families. Let $L^\u$ and $L^\s$ be $\delta'$-$C^1$-close to $\ell^\ss$ and $\ell^\uu$ for some $\delta'\in(0,\delta)$, where $\delta$ is given by the blender property of Definition~\ref{defi:sympblen_conn}. By definition, there exist $P_1,P_2\in \Lambda$ such that $L^\u\cap W^\s(P_1)\neq \emptyset$ and $L^\s\cap W^\s(P_2)\neq \emptyset$. 

\begin{lem}\label{lem:unfold1}
The following hold for all sufficiently small $\delta'$:
\begin{itemize}[nosep]
\item if ${d\Delta^\u}/{d\eps}|_{\eps=0} \neq 0$, then  the intersection $L^\u\cap W^\s(P_1)$ unfolds with a non-zero velocity as $\eps$ varies;
\item if  ${d\Delta^\s}/{d\eps}|_{\eps=0} \neq 0$, then  the intersection $L^\s\cap W^\s(P_2)$ unfolds with a non-zero velocity as $\eps$ varies.
\end{itemize}
\end{lem}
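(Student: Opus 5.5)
The plan is to use the local structure near $P_1$ given by the connection to $\gamma$: by the blender property of Definition~\ref{defi:sympblen_conn}, $L^\u$ meets $g^{-k}(W^\s_{\loc,V^\u}(P_1))$ for some $P_1\in\Lambda^{\cu}\cap V^\u$ and $0\le k\le K$, with the first $k$ iterates of the intersection point staying in $V$. Replacing $L^\u_\eps$ by $f_\eps^k(L^\u_\eps)$ reduces the problem to $k=0$. This reduction is harmless: in $V$ the iterate of $L^\u_\eps$ is still $C^1$-close to a strong-unstable leaf. Its intersection point with $W^\s_\loc(\A)$ is $f^k_\eps(Q^\u_\eps)$, and since the $r$-coordinate is preserved by the inner map up to $O(r^2)$ (formula \eqref{eq:kam_new}), the velocity of $\Delta^\u$ changes only by a factor close to $1$.

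With $k=0$, I would work in the Fenichel coordinates and the cross-coordinates of Remark~\ref{invfol} around the blender. There $W^\s_\loc(P_1)$ is a graph $(\Phi,Y)=w^\s_{P_1}(R,X)$ whose slope is bounded by the small cone constant $L$. This makes $W^\s_\loc(P_1)$ an $N$-dimensional surface almost parallel to $W^\s_\loc(\A)=\{y=0\}$ in the $(r,x)$-directions and transverse to the $(\pp,y)$-directions. Meanwhile $L^\u_\eps$ is a graph $(r,\pp,x)=\ell_\eps(y)$ that is $\delta'$-$C^1$-close to $\{r=0,\pp=\pp^\uu,x=0\}$. The intersection $L^\u_\eps\cap W^\s_\loc(P_{1,\eps})$ is therefore equivalent to a scalar condition. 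First, one solves for the point where $L^\u_\eps$ meets the surface $\{\Phi = w^\s_1\}$ (in the $(\pp,y)$ variables, which gives a unique point by the contraction principle). Then the single equation that remains is
$$F(\eps):=r\text{-coordinate of }L^\u_\eps\text{ at that point}-\hat w(\eps)=0,$$
where $\hat w$ is the $r$-value of $W^\s_\loc(P_{1,\eps})$ above the relevant $(\pp,x,y)$. The first term equals $\Delta^\u(\eps)+O(\delta')$, with derivative in $\eps$ equal to $d\Delta^\u/d\eps+O(\delta')$, because $L^\u_\eps$ is $C^1$-close to a leaf and $Q^\u_\eps$ lies on it.

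The main obstacle is to show that $\partial_\eps \hat w$ is small, i.e. that the stable manifold of $P_1$ does not itself move in the $r$-direction at a rate comparable to $d\Delta^\u/d\eps$. My first attempt would be to place $\Lambda$ close to $W^\s_\loc(\A)$ in the $r$-direction, using the rescaling \eqref{eq:scaling1}: points of $\Pi$ have $r=O(\delta^2)$, and the cross-map construction in the proof of Lemma~\ref{lem:hyp} depends on $\eps$ through $T_0$ and $T_1$. However, the blender sits at a definite rescaled distance, so $\partial_\eps r$ of $W^\s_\loc(P_1)$ is not automatically small. I would therefore use the KAM structure instead. Fix the straightened KAM coordinates of Section~\ref{sec:kam_strait} uniformly in $\eps$, and choose the parametrization (shifting $r$ by the persistent KAM curve) so that $\gamma_\eps=\{r=0\}$ for all $\eps$. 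The stable manifolds of points of $\Lambda$ are then determined by the dynamics of $f_\eps$ near $\gamma_\eps$. What is needed is that the $\eps$-derivative of $w^\s_{P_1}$ is bounded independently of $\delta'$. Once that holds, the required bound follows by choosing $\delta'$ small relative to the fixed nonzero number $d\Delta^\u/d\eps$, because the factor $O(\delta')$ multiplies only the leaf-deviation terms. This is where I would spend the effort: differentiate the fixed-point equation of the cross-map operator $T^\infty_{\{k_n\}}$ with respect to $\eps$, and use that it is a uniform contraction to get a uniform $C^1$ bound on $\partial_\eps w^\s_{P}$ over all $P$.

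Granting that bound, I would aim for $F'(0)\ne 0$ through the chain: $d\Delta^\u/d\eps$ is fixed and nonzero, while the remaining contributions are $O(\delta')$ plus the (bounded) $\partial_\eps\hat w$ term; this does not yet give a sign by itself, but I expect the rescaling by $\aa\delta^2$ to make the $r$-motion of the blender small relative to $\Delta^\u$ at the original scale. The second item follows by the symmetric argument applied to $f_\eps^{-1}$, exchanging the roles of $x,y$ and of $\Lambda^{\cu},\Lambda^{\cs}$.
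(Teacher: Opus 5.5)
Your argument has a genuine gap, and it is at the step you flag yourself. After reducing to $k=0$ you have no mechanism that forces a nonzero relative velocity. The $\eps$-derivative of any mismatch between $L^\u_\eps$ and $W^\s_\loc(P_{1,\eps})$ has two parts: a contribution from $d\Delta^\u/d\eps$, and the velocity of $W^\s_\loc(P_{1,\eps})$ itself. The second part is only known to be bounded. It is not small and is not multiplied by $\delta'$, so shrinking $\delta'$ does not help, and nothing in your argument stops it from cancelling the first part. Your hope that the rescaling $r=\hat p(\delta)\delta^2R$ makes it small is unfounded: the blender and its scale are fixed before $\eps$ varies. Nothing makes its $\eps$-velocity small in the original coordinates, since it depends on $\eps$ through $\pp^\pm(\eps)$ and the coefficients of $T_0,T_1$.

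Moreover, $r$ is the wrong direction in which to measure the mismatch. By Remark~\ref{invfol}, $W^\s_\loc(P_1)$ is a graph $(\Phi,Y)=w^\s_{P_1}(R,X)$. The natural reduction is to solve the $Y$-equation on $L^\u_\eps$ by contraction; the single remaining condition is then $\pp=w^\s_1(r,x)$. So the defect of transversality lies in the weak-unstable, essentially angular, direction, which is also what the proof of Lemma~\ref{lem:unfold2} uses. A radial shift of $L^\u_\eps$ enters this condition only through $\partial_r w^\s_1$, which you do not control. In the rescaled coordinates it is $O(L)$, so $W^\s_\loc(P_1)$ nearly contains the $r$-direction.

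The missing idea is the twist of the inner map, i.e.\ the KAM non-degeneracy. Your argument never uses it except to straighten coordinates. The paper goes the opposite way from your reduction: it iterates $L^\u_\eps$ by $T_0^{k'}$ with $k'$ large but $k'=o(1/\delta')$, so that Lemma~\ref{lem:derisyst2} applies.

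Take $\eps=r^\u-r^\u_*$ as the parameter, which is possible since $d\Delta^\u/d\eps\neq0$. Write $L^\u_\eps$ as $r_0=\eps+r^\u_*+\ell^\u_1(y_0,\eps)$, $\pp_0=\ell^\u_2(y_0,\eps)$, $x_0=\ell^\u_3(y_0,\eps)$. Using $\bar\pp=\pp+\rho+r+\dots$ from \eqref{eq:kam_new}, the image satisfies $\pp=k'(\rho+r^\u_*)+k'\eps+\hat\ell^\u_2(y,\eps)$ with $\partial_\eps\hat\ell^\u_2=O(k'\delta')$. The image is still $C^1$-close to a strong-unstable leaf of $W^\u_\loc(\gamma)$. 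Hence the blender property, after adjusting the number of iterates by at most $K$, gives $P_1$ with $T_0^{k'}(L^\u)\cap W^\s_\loc(P_1)\neq\emptyset$.

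Now straighten $W^\s_\loc(P_{1,\eps})$ to $\{\pp=0,\,y=0\}$. The mismatch becomes $\Delta_\eps=k'(\rho+r^\u_*)+k'\eps+\hat\ell^\u_2(0,\eps)$. Its $\eps$-derivative is $k'$ plus a term that is bounded uniformly over the local stable manifolds in $V^\u$ and independent of $k'$. Thus the radial velocity is amplified by the factor $k'$ into an angular velocity transverse to $W^\s(P_1)$. For $k'$ large this beats the bounded motion of the blender's stable manifolds. Without this amplification your chain cannot produce a nonzero velocity.
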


We postpone the proof of this lemma to Section~\ref{sec:prooflemunfold}.

\begin{lem}\label{lem:unfold2}
The following hold for all sufficiently small $\delta'$ and  any point $P\in \Lambda$:
\begin{itemize}[nosep]
\item   if ${d\Delta^\u}/{d\eps}|_{\eps=0} \neq 0$, then there exist $\eps\to 0$  for which $L^\u_{\eps}\cap W^\s(P_\eps)\neq \emptyset$ and the intersection unfolds with a non-zero velocity as $\eps$ varies;
\item   if  ${d\Delta^\s}/{d\eps}|_{\eps=0} \neq 0$, then there exist $\eps\to 0$  for which $L^\s_{\eps}\cap W^\u(P_\eps)\neq \emptyset$ and the intersection unfolds with a non-zero velocity as $\eps$ varies.
\end{itemize}
\end{lem}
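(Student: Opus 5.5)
Lemma~\ref{lem:unfold2} will be derived from Lemma~\ref{lem:unfold1}, using that $W^\s(P)$ is dense in $W^\s(\Lambda)$, and then $C^1$-continuity to transfer the nonzero velocity from $P_1$ to $P$. I treat the first item; the second follows by applying the same argument to $f_\eps^{-1}$.

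First I use Lemma~\ref{lem:unfold1} at $\eps=0$ for the point $P_1$. Its proof (a local computation in the Fenichel coordinates near $\gamma$) should give more than the conclusion stated: for every point $P'\in\Lambda^{\cu}$ whose stable manifold meets $L^\u$ through the blender mechanism, the local stable manifold $g^{-k}(W^\s_{\loc,V^\u}(P'))$ is an $N$-dimensional graph over the $(R,X)$-directions, with slope $o(1)$ by Remark~\ref{invfol}. As a result, the intersection with $L^\u_\eps$ moves with velocity $c\,d\Delta^\u/d\eps+o(1)$ for some $c\neq 0$. The bound on the velocity is therefore uniform over all such stable leaves close to the one through the intersection point.

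Next, by transitivity of $\Lambda$ and since $\Lambda$ is a basic set, $W^\s(P)$ is dense in $W^\s(\Lambda)$. In particular, $W^\s(P)$ contains sheets $g^{-j}(\text{pieces of } W^\s(P))$ lying in $V^\u$ that are $C^1$-close to the local stable leaf of $P_1$ which meets $L^\u$. This is the standard $\lambda$-lemma applied inside the hyperbolic set: choose a point of $W^\u(P)\cap W^\s(P_1)$ near $P_1$, or equivalently take $P'\in\Lambda$ near $P_1$ with $W^\s_\loc(P')$ close to $W^\s_\loc(P_1)$ and $P'\in W^\u$-related to $P$, and pull back. These sheets, together with their continuations for small $\eps$, form a family of $N$-dimensional discs $C^1$-close to $W^\s_{\loc}(P_{1,\eps})$, uniformly in small $\eps$. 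Because $\ell^\uu$ has dimension $N-1$, the intersection of $L^\u_\eps$ with one such sheet amounts to one scalar equation, the $\Phi$-offset, in $\eps$. The sheets accumulate on $W^\s_\loc(P_1)$ only from a Cantor-like set of directions, so for a single sheet the equation is not solvable at $\eps=0$. However, its left-hand side differs from the one for $P_1$ by an amount tending to zero, while its $\eps$-derivative is bounded away from zero by the first step. By the intermediate value theorem, or the implicit function theorem, each sheet $W_j$ therefore yields some $\eps_j\to0$ with $L^\u_{\eps_j}\cap W_j\neq\emptyset$, and the crossing is transverse in $\eps$, which is the ``nonzero velocity'' claimed.

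The main obstacle is the first step: extracting from the proof of Lemma~\ref{lem:unfold1} a velocity estimate that is uniform over all nearby stable sheets, instead of only for $P_1$ itself. I would handle it by noting that the velocity comes from $d\Delta^\u/d\eps$ propagated through the first $k\le K$ iterates via the maps of Lemma~\ref{lem:derisyst2} near $\gamma$, whose dependence on the sheet is only through $C^1$-small terms. For this to work, $\delta'$ has to be taken small.
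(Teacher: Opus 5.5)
Your proposal is correct and is essentially the paper's argument. Lemma~\ref{lem:unfold1} gives a nonzero velocity for the intersection of $L^\u_\eps$ with $W^\s(P_{1,\eps})$, and the gap between $L^\u$ and a stable sheet is one-dimensional, which the paper phrases as the velocity lying in the $E^{\mathrm{wu}}$ direction. This nonzero velocity then passes to the sheets of $W^\s(P)$ that accumulate in $C^1$ on $W^\s_\loc(P_1)$. The uniformity you flag as the main obstacle is already built into the proof of Lemma~\ref{lem:unfold1}, where $\partial\hat\ell^\u_2/\partial\eps$ is bounded uniformly over all choices of local stable manifolds in $V^\u$.

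One slip in your justification of the density step: you derive the sheets from a point of $W^\u(P)\cap W^\s(P_1)$, which is the wrong intersection. The sheets are simply the local stable manifolds of points of $\Lambda\cap W^\s(P)$ accumulating on $P_1$; these lie in $W^\s(P)$ and are $C^1$-close to $W^\s_\loc(P_1)$, jointly in $\eps$, by continuous dependence on the base point.
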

\begin{proof}
First note that, since  $L^\u$ is close to $\ell^\uu$, the partial hyperbolicity  of Definition~\ref{defi:sympblen_conn} implies that the tangent space at each point of $L^\u$ is transverse to $E^\ss\oplus E^\mathrm{ws} \oplus E^\mathrm{wu}$. Since $T W^\s(P')\subset E^\ss\oplus E^\mathrm{ws}$, it  follows that the  unfolding of the intersection $L^\u\cap W^\s(P')$ given by  Lemma~\ref{lem:unfold1} must have non-zero velocity in the direction of $E^\mathrm{wu}$.  This immediately gives the first claim of the lemma  since $W^\s(P)$ accumulates on $W^\s(P')$ in the $C^1$-topology  ($\Lambda$ is a hyperbolic basic set). The second claim follows from a completely parallel argument.
\end{proof}

Now let  the families $\{f_\eps\},\{L^\u_\eps\},\{L^\s_\eps\}$ have at least two parameters. Write $\eps=(\mu,\nu,\eps')$, where $\mu,\nu\in \mathbb{R}$ are any two components of $\eps$ and $\eps'$ denotes the remaining components. We have the following detailed version of the first claim of Proposition~\ref{prop:persis}:
\begin{prop}\label{prop:unfoldble}
The following holds for all sufficiently small $\delta'$: if 
\begin{equation}\label{eq:gcpersis}
\det \left.\frac{\p(\Delta^\u,\Delta^\s)}{\p (\mu,\nu)}\right|_{\eps=0}\neq 0,
\end{equation}
then one can find a neighborhood $\mathcal{E}$ of $\eps=0$ such that, for  any points $P,P'\in\Lambda$,  there exists a dense subset $ \hat{\mathcal{E}}_{P,P'}\subset \mathcal{E}$  such that, for every $\eps\in \hat{\mathcal{E}}_{P,P'}$,
$$
L^\u_{\eps}\cap W^\s(P_{\eps})\neq \emptyset,\qquad
L^\s_{\eps}\cap W^\u(P'_{\eps})\neq \emptyset.
$$
Moreover, these two intersections unfold independently with respect to $\mu$ and $\nu$, that is, for every $\eps_*\in \hat{\mathcal{E}}_{P,P'}$, there exist
\begin{itemize}[nosep]
\item a smooth function $\mu=\hat\mu_*(\nu,\eps')$ defined in a neighborhood of $(\nu_*,\eps'_*)$   such that $\mu_*=\hat\mu_*(\nu_*,\eps'_*)$ and  $L^\u_{\eps}$ intersects $W^\s(P_{\eps})$  if and only if $\eps = (\hat\mu_*(\nu,\eps'),\nu,\eps')$, and
\item a smooth function $\nu=\hat\nu_*(\mu,\eps')$ defined in a neighborhood of $(\mu_*,\eps'_*)$   such that $\nu_*=\hat\nu_*(\mu_*,\eps'_*)$ and  $L^\s_{\eps}$ intersects $W^\u_{\eps}(P'_\eps)$  if and only if  $\eps = (\mu,\hat\nu_*(\mu,\eps'),\eps')$.
\end{itemize}
\end{prop}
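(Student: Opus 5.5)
Because of \eqref{eq:gcpersis}, I will reparametrize and take $(\mu,\nu)=(\Delta^\u,\Delta^\s)$ as two of the parameters, keeping $\eps'$ as the rest. The goal is then to show that the set of $(\mu,\nu)$ where $L^\u_\eps$ meets $W^\s(P_\eps)$ is a union of smooth graphs $\mu=\hat\mu(\nu,\eps')$, dense in $\mathcal{E}$, and that likewise the set where $L^\s_\eps$ meets $W^\u(P'_\eps)$ is a dense union of graphs $\nu=\hat\nu(\mu,\eps')$. Two such transverse families of graphs, one nearly vertical and one nearly horizontal, intersect in a dense set of points, and at each intersection point the two unfoldings are independent. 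This set is $\hat{\mathcal{E}}_{P,P'}$.

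\emph{Step 1: one-sided unfolding, uniform in the other parameters.} I apply Lemma~\ref{lem:unfold1} and Lemma~\ref{lem:unfold2} to the one-parameter subfamilies in which $\mu$ varies and $(\nu,\eps')$ is frozen at an arbitrary value in a small neighborhood $\mathcal{E}$. The hypotheses hold uniformly there. The blender property gives $C^1$-robust intersections for all $\eps\in\mathcal{E}$, and $\p\Delta^\u/\p\mu=1$. Two facts carry the argument. First, for every $P''\in\Lambda$ the manifold $W^\s(P''_\eps)$ is $C^1$ and depends $C^1$ on $\eps$. Second, transversality of $TL^\u$ to $E^\ss\oplus E^{\mathrm{ws}}\oplus E^{\mathrm{wu}}$, together with the nonzero $E^{\mathrm{wu}}$-velocity coming from $d\Delta^\u/d\mu\neq0$, lets the implicit function theorem turn each intersection point of $L^\u_\eps$ with a local piece of $W^\s(P''_\eps)$ into a smooth graph $\mu=\hat\mu(\nu,\eps')$. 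The derivative of this graph in $\nu$ is controlled by the velocity of the intersection under changes of $\nu$. For $L^\u$ that velocity is small compared with the $\mu$-velocity, since $\nu$ moves $\Delta^\s$ but not $\Delta^\u$ at linear order. The symmetric argument applies to $L^\s$.

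\emph{Step 2: density.} Fix any $\eps_0\in\mathcal{E}$. By the blender property, $L^\u_{\eps_0}$ meets $W^\s(P''_{\eps_0})$ for some $P''\in\Lambda$. Because $\Lambda$ is a basic set, $W^\s(P_{\eps_0})$ accumulates in $C^1$ on $W^\s_\loc(P''_{\eps_0})$; the leaves come from orbits of $P$ shadowing $P''$. Combined with the nonzero unfolding velocity from Step~1, this gives values $\mu$ arbitrarily close to $\mu_0$ at which $L^\u$ meets $W^\s(P)$, and each of these points lies on a graph of the form described in Step~1. The same reasoning applied to $L^\s$ and $P'$ produces graphs $\nu=\hat\nu(\mu,\eps')$ passing arbitrarily close to every point of $\mathcal{E}$.

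\emph{Step 3: intersecting the graphs.} In the $(\mu,\nu)$-plane, at fixed $\eps'$, one family of curves has slope in $\mu$ close to zero in the $\nu$ direction and the other has slope close to zero in the $\mu$ direction. Each pair near a given point therefore crosses transversally at a unique point, and these crossing points are dense. At a crossing point $\eps_*$ the functions $\hat\mu_*$ and $\hat\nu_*$ are exactly the required local descriptions.

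The main obstacle is the uniformity in Step~1. The $C^1$-closeness of the shadowing leaves of $W^\s(P)$, and the smallness of their $\nu$-dependence, must hold uniformly in the parameters on all of $\mathcal{E}$, so that the implicit function theorem works with constants independent of which leaf is used. I would handle this with the local product structure of $\Lambda_\eps$, which is continuous in $\eps$, and with the $C^1$ dependence of the stable and unstable laminations on parameters. This also explains why $\delta'$ must be taken small.
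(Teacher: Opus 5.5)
Your argument is correct in outline, but it is organized differently from the paper's proof. You build two dense families of graphs and intersect them. The paper instead nests two one-parameter arguments.

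\textbf{The paper's route.} Fix $\eps_0=(\mu_0,\nu_0,\eps'_0)$.
\begin{itemize}
\item Apply the first claim of Lemma~\ref{lem:unfold2} to $\mu\mapsto f_{\mu+\mu_0,\nu_0,\eps'_0}$. The implicit function theorem then gives a single graph $\mu=\hat\mu_*(\nu,\eps')$ with $\mu_*$ close to $\mu_0$.
\item Restrict the family to the curve $\nu\mapsto(\hat\mu_*(\nu,\eps'_0),\nu,\eps'_0)$ and apply the second claim of Lemma~\ref{lem:unfold2} along this curve.
\end{itemize}
The values $\nu_*$ obtained this way accumulate at $\nu_0$. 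They therefore lie automatically in the domain of the already chosen $\hat\mu_*$, and the crossing point $\eps_*$ comes for free. The implicit function theorem is only used pointwise, and no control on the size of its domains is needed.

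\textbf{What your route needs.} Your Step~3 requires the graphs $\mu=\hat\mu(\nu,\eps')$ and $\nu=\hat\nu(\mu,\eps')$ to be defined on neighbourhoods of size bounded below. This bound must not depend on which leaf of $W^\s(P)$ or $W^\u(P')$ is used. That is exactly the uniformity you flag as the main obstacle.

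The uniformity is plausible. In the blender cube the local stable leaves are graphs over the whole $(R,X)$-domain (Remark~\ref{invfol}), with bounds uniform in the leaf and in the parameters. So the mismatch functions whose zeros define the intersections should have uniform $C^1$ bounds. Granting this, your approach gives a somewhat stronger conclusion: a whole transverse grid of curves, rather than one crossing near each $\eps_0$. The nested argument reaches the proposition with less work.

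\textbf{A point in your Step~1.} The small slopes you need do not come from the $\nu$-velocity being small. Even though $\p\Delta^\u/\p\nu=0$, changing $\nu$ still moves the map and $W^\s(P_\eps)$ at speed $O(1)$. The small slopes come from the $\mu$-velocity being large, of order $k'$, which is produced by the twist as in the proof of Lemma~\ref{lem:unfold1}. The paper's second step, turning the intersection into a graph over $\mu$, relies on the same mechanism.
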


\begin{proof}
Let ${\mathcal{E}}$ be the set of $\eps$ values for which $\det\frac{\p(\Delta^\u,\Delta^\s)}{\p (\mu,\nu)}\neq 0$, and $L^\u_\eps$ and $L^\s_\eps$ are $\delta'$-close to $\ell^\uu$ and $\ell^\ss$, respectively.  Take any small $\eps_0=(\mu_0,\nu_0,\eps'_0)\in {\mathcal{E}}$. To prove the proposition, it suffices to  find $\eps_*$ that is arbitrarily close to $\eps_0$ and associated with the required functions.

By assumption, we can take $\mu=\Delta^\u$ and $\nu=\Delta^\s$. Applying the first claim of Lemma~\ref{lem:unfold2} to the family $\{f^1_{\mu}\}:=\{f_{\mu+\mu_0,\nu_0,\eps'_0}\}$, we find by the implicit function theorem a  set $\mathcal{E}^\u$ of $\mu$ values converging to 0, and, for every $\mu_*\in\mathcal{E}^\u$, a smooth function $\mu=\hat\mu_*(\nu,\eps')$  defined near $(\nu_0,\eps'_0)$ 
such that $\mu_*=\hat\mu_*(\nu_0,\eps'_0)$ and $L^\u_{\eps}\cap W^\s(P_\eps)\neq \emptyset$ for $\eps=(\hat\mu_*(\nu,\eps'),\nu,\eps')$.

Next consider the family $\{f^2_{\nu}\}:=\{f_{\hat\mu_*(\nu+\nu_0,\eps'_0),\nu+\nu_0,\eps'_0}\}$ for the function $\hat\mu_*$  associated with some $\mu_*\in \mathcal{E}^\u$. Since $d \Delta^\s/d\mu= d \nu/d\mu=  0$, we can apply the second claim of Lemma~\ref{lem:unfold2} to this family, yields a  set $\mathcal{E}^\s$ of $\nu$ values converging to 0, and, for every $\nu_*\in\mathcal{E}^\s$, a smooth function $\nu=\hat\nu_*(\mu,\eps')$ defined near $(\hat\mu_*(\nu_*,\eps'_0),\eps'_0)$ with $\nu_*=\hat\nu_*(\hat\mu_*(\nu_*,\eps'_0),\eps'_0)$ such that $L^\s_{\eps}\cap W^\u(P'_\eps)\neq \emptyset$ for $\eps=(\mu,\hat\nu_*(\mu,\eps'),\eps')$ and this intersection disappears as $\eps$ varies. The proposition follows by taking $\eps_*=(\hat\mu_*(\nu_*,\eps'_0),\nu_*,\eps'_0)$ with sufficiently small $\mu_*$ and $\nu_*$.
\end{proof}

\subsubsection{Proof of Lemma~\ref{lem:unfold1}}\label{sec:prooflemunfold}
Recall that $Q^\u_\eps$ and $Q^\s_\eps$ are the intersection points of $L^\u_\eps\cap W^\s_\loc(\A)$ and $L^\s_\eps\cap W^\u_\loc(\A)$, respectively.
Let $r^\u(\eps)$ and $r^\s(\eps)$ be the $r$-coordinates of  $Q^\u_\eps$ and $Q^\s_\eps$. 
Denote $r^{\u/\s}_*:=r^{\u/\s}(0)$.
The two assumptions  of Lemma~\ref{lem:unfold1}  imply that we can take
$$\eps=r^\u-r^\u_* 
\quad\mbox{and}\quad
\eps=r^\s-r^\s_*,$$
respectively. 
 Due to the symmetry of the problem, it suffices to prove the first claim of Lemma~\ref{lem:unfold2}. 

\noindent\textbf{(1) Iterations of $L^{\u}_\eps$.}  Recall that the $C^1$-closeness between the manifolds and leaves are understood as the $C^1$-closeness of the corresponding embeddings. Thus, since $\eps=r^\u-r^\u_*$, the defining function of $L^\u_\eps$ has the following form:
\begin{equation}\label{eq:L^ufam}
\begin{aligned}
r_0=\eps+r^\u_*+ \ell^\u_1(y_0,\eps),\qquad
\pp_0=\ell^\u_2(y_0,\eps),\qquad
x_0= \ell^\u_3(y_0,\eps),
\end{aligned}
\end{equation}
where  $|r^\u_*|<\delta$  and  $\ell_i^{\u}$   $(i=1,2,3)$ are $C^1$ functions defined on $D^{\uu}$ and satisfying for all sufficiently small $\eps$ that $\ell_1^{\u}(0,\eps)\equiv 0$ and $\|\ell_i^{\u}\|_{C^1}<2\delta$. 

Let us use  Lemma~\ref{lem:derisyst2}  to find a formula for $T_0^{k'}(L^\u_\eps)$ with ${k'}=o(\delta^{-1})$. Recall that we assumed $\min\{m'-m,m\}\geq 2$.
Combining \eqref{eq:L^ufam} with the last equation in \eqref{eq:derisyst2:4}, yields $y_0=\hat h_4(y_{k'})$ for some function $\hat h_4$ with $\|\hat h_4\|_{C^{1}}=O(\lambda^{k'})$. Substituting this together with~\eqref{eq:L^ufam} into the remaining equations in  \eqref{eq:derisyst2:4} and using \eqref{eq:derisyst2:1}, we readily find the defining function of $T_0^{k'}(L^\u_\eps)$ as (after renaming $r_k,\pp_k,x_k,y_k$ to $r,\pp,x,y$)
\begin{equation}\label{eq:T^kW^u}
r= \hat \ell^\u_1(y,\eps),\qquad
\pp= {k'}(\rho+r^\u_*) + {k'} \eps + \hat \ell^\u_2(y,\eps),\qquad
x= \hat \ell^\u_3(y,\eps),
\end{equation}
where the functions $\hat \ell^\u_i$ are defined for $y\in D^\uu$ and satisfy
\begin{equation}\label{eq:T^kW^uDeri}
\begin{aligned}
&\hat \ell^\u_{i}=O(\delta),\qquad
 \dfrac{\p\hat \ell^\u_{i}}{\p y}=O(\lambda^\frac{{k'}}{3}),\qquad 
 \dfrac{\p\hat \ell^\u_{1,3}}{\p \eps}=O(\delta) ,\qquad
 \dfrac{\p\hat \ell^\u_{2}}{\p \eps}=O({k'} \delta),
\end{aligned}
\end{equation}
for all sufficiently small $\eps$.  

\noindent\textbf{(2) Unfolding of the intersections.} By the blender property of Definition~\ref{defi:sympblen_conn}, up to an  integer to $k'$, there exists $P_1\in \Lambda\cap V^\u$  such that $T_0^{k'}(L^\u)\cap W^\s_\loc(P_1)\neq \emptyset.$
In what follows, we compute the distance, denoted by $\Delta_\eps$, between $T_0^{k'}(L^\u_\eps)$ and $ W^\s_\loc(P_{1,\eps})$ near the intersection point for small $\eps$.

First note from \eqref{eq:prelimiaries} and the local transversality  of Definition~\ref{defi:sympblen_conn} that, for all small $\eps$, the  local manifold $ W^\s_\loc(P_{1,\eps})$ is  given by $ \pp=w^\s_1(r,x,\eps)$ and $y=w^\s_2(r,x,\eps)$
for some functions $w^\s_{1,2}$  with uniformly bounded first derivatives. We straighten it by setting
$$
\pp^\new =\pp - w^\s_1(r,x,\eps),\qquad
y^\new = y - w^\s_2(r,x,\eps).
$$
In the new coordinates, we have $W^\s_\loc(P_{1,\eps})=\{\pp=0,y=0\}$. The iterates $T_0^{k'}(L^\u_\eps)$ still takes the  form  \eqref{eq:T^kW^u}, where the functions $\hat \ell^\u_i$ are different but have the same estimates as in \eqref{eq:T^kW^uDeri}. It is then obvious that
$$
\Delta_\eps ={k'}(\rho+r^\u_*) + {k'}\eps + \hat \ell^\u_2(0,\eps),
$$
Since $\p\hat \ell^\u_2/\p \eps$ is uniformly bounded for all choices of the local manifolds in $V^\u$, one has  that $\p \Delta_\eps/\p \eps $ is arbitrarily large with sufficiently large $k'$. In particular, the intersection unfolds with a non-zero velocity as $\eps$ varies. The proof of the first claim of Lemma~\ref{lem:unfold1} is now complete.

\subsubsection{Second claim of Proposition~\ref{prop:persis}}
We prove a general statement that implies the second claim. Let $g$ be a diffeomorphism of a $d$-dimensional manifold for some integer $d\geq 4$. Let $P$ be a periodic point of $g$ with multipliers $\lambda_1,\dots,\lambda_{i^\s}$ and $\gamma_1,\dots,\gamma_{i^\u}$, where $i^\s+j^\u=d$, satisfying $\lambda_1,\gamma_1\in\mathbb{R}$ and
$$
|\lambda_{i^\s}|\leq \dots \leq |\lambda_2|<|\lambda_1|<1< |\gamma_1|<|\gamma_2|\leq \dots \leq |\gamma_{i^\u}|.
$$
Thus, the orbit $\mathcal{O}(P)$ is a hyperbolic invariant set with a partially-hyperbolic structure.
There is an invariant splitting $E^{\ss}\oplus E^{\mathrm{ws}}\oplus E^{\mathrm{wu}}\oplus E^{\uu}$ in a small neighborhood $V$ of $\mathcal{O}(P)$, where $\dim E^{\mathrm{ss}} =d^\s-1$, $\dim E^{\mathrm{uu}}=d^\u-1$ and $\dim E^{\mathrm{ws}} =\dim E^{\mathrm{wu}}=1$. Let $\mathcal{C}^\ss$ be the strong-stable cone field containing $E^\ss$ and $\mathcal{C}^\uu$ be the strong-unstable cone field containing $E^\uu$. 

We consider a two-parameter family $\{g_{\mu,\nu}\}$ such that $g_0=g$ and $g_{\mu,\nu}$ is jointly $C^3$ with respect to variables and parameters.  Let $\{L^\s_{\mu,\nu} \subset V\}$ be a family of   $(d^\s-1)$-dimensional $C^1$ manifolds which are tangent to $\mathcal{C}^\ss$, and $\{L^\u_{\mu,\nu} \subset V\}$ be a family of   $(d^\u-1)$-dimensional $C^1$ manifolds which are tangent to $\mathcal{C}^\uu$. Suppose   $W^\s(P)\cap L^\u\neq \emptyset$ and  $W^\u(P)\cap L^\s\neq \emptyset$ at $(\mu,\nu)=0$.
\begin{lem}\label{lem:model} 
If the two intersections at $(\mu,\nu)=0$ unfold independently with respect to $\mu$ and $\nu$, then there exists a sequence $\{(\mu_j,\nu_j)\}$  converging to 0 such that $L^\u_{\mu_j,\nu_j}\cap L^\s_{\mu_j,\nu_j}\neq \emptyset$ at $(\mu,\nu)=(\mu_j,\nu_j)$ for each $j$.
\end{lem}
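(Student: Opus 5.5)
The plan is to carry out the classical ``inclination lemma'' argument near the partially-hyperbolic saddle $P$. Replace $g$ by $g^{\mathrm{per}(P)}$, so that $P$ may be assumed fixed. Then introduce $C^1$ (parameter-dependent) local coordinates $(u,w,v,z)\in\mathbb{R}\times\mathbb{R}^{d^\s-1}\times\mathbb{R}\times\mathbb{R}^{d^\u-1}$ near $P$ with the following properties:
\begin{itemize}
\item $P=0$, $W^\s_\loc(P)=\{v=0,z=0\}$ and $W^\u_\loc(P)=\{u=0,w=0\}$;
\item the weak directions $u,v$ correspond to the real multipliers $\lambda_1,\gamma_1$;
\item the strong-stable leaves inside $W^\s_\loc(P)$ are $\{u=\const, v=z=0\}$, and the strong-unstable leaves inside $W^\u_\loc(P)$ are $\{v=\const, u=w=0\}$.
\end{itemize}
The joint $C^3$ smoothness of the family is used to get enough regularity of these straightenings and of the weak-direction linearization. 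In these coordinates the iterate $g^n$ is written in cross-form (as in Lemma~\ref{lem:derisyst}), with the weak components dominating:
\begin{itemize}
\item $u_n\approx\lambda_1^n u_0$ and $v_0\approx\gamma_1^{-n}v_n$,
\item the strong components are controlled by $\|w_n\|\lesssim|\lambda_2|^n$ and $\|z_0\|\lesssim|\gamma_2|^{-n}$,
\end{itemize}
up to errors that are small relative to the weak rates.

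First, I iterate $L^\u$ forward, using its intersection with $W^\s(P)$ (pulled into $W^\s_\loc(P)$ by a fixed number of iterates). Since $L^\u$ is tangent to $\mathcal{C}^\uu$ and meets $W^\s_\loc(P)$ at a point with weak-stable coordinate $u=u^\u$, the image $g^n(L^\u)$ near $W^\u_\loc(P)$ becomes a $(d^\u-1)$-disc $C^1$-close to a strong-unstable leaf of $W^\u_\loc(P)$. Its position is given by:
\begin{itemize}
\item $u\approx\lambda_1^n u^\u$, with $w$ exponentially small;
\item weak-unstable coordinate $v\approx\gamma_1^n\,\Delta^\u(\mu,\nu)$, where $\Delta^\u$ denotes the signed splitting of $L^\u$ from $W^\s(P)$ in the $v$-direction.
\end{itemize}
Independent unfolding allows me to take $\Delta^\u=\mu$ and $\Delta^\s=\nu$, after reparametrizing. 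Symmetrically, $g^{-m}(L^\s)$ is $C^1$-close to a strong-stable leaf of $W^\s_\loc(P)$, located at $v\approx\gamma_1^{-m}v^\s$ and $u\approx\lambda_1^{-m}\nu$, where $v^\s$ is the weak-unstable coordinate of $L^\s\cap W^\u_\loc(P)$.

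Next, I find the intersection. The discs $g^n(L^\u)$ and $g^{-m}(L^\s)$ have complementary strong dimensions, and together with the two weak directions they account for all $d$ dimensions. By the cone structure, $g^n(L^\u)\cap g^{-m}(L^\s)\ne\emptyset$ reduces, via a contraction-mapping argument in the $(w,z)$ directions, to two scalar equations in the weak coordinates:
\[
\lambda_1^n u^\u+o(\lambda_1^n)=\lambda_1^{-m}\nu+\dots,\qquad \gamma_1^n\mu+\dots=\gamma_1^{-m}v^\s+o(\gamma_1^{-m}).
\]
These are solved by
\[
\nu_{n,m}\approx\lambda_1^{n+m}u^\u,\qquad \mu_{n,m}\approx\gamma_1^{-(n+m)}v^\s,
\]
both tending to $0$. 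Here the implicit function theorem applies because the partial derivatives with respect to $\mu$ and $\nu$ are nondegenerate (of order $\gamma_1^n$ and $\lambda_1^{-m}$, respectively), and the error terms are $C^1$-small in the rescaled parameters. If $u^\u=0$ or $v^\s=0$, the intersection point already lies on the weak manifold and this is only easier. Taking $n=m=j\to\infty$ gives the sequence $(\mu_j,\nu_j)$, and since $g^n(L^\u)\cap g^{-m}(L^\s)\ne\emptyset$ is equivalent to $g^{n+m}(L^\u)\cap L^\s\ne\emptyset$, this yields the conclusion, reading the statement with an iterate of $L^\u$ as in the second claim of Proposition~\ref{prop:persis}.

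The main obstacle is the justification of the weak-direction asymptotics. I need the derivatives of the error terms with respect to $(\mu,\nu)$ to be genuinely smaller than $\gamma_1^n$ and $\lambda_1^{-m}$, uniformly as $n,m\to\infty$. This requires the gaps $|\lambda_2|<|\lambda_1|$ and $|\gamma_1|<|\gamma_2|$ together with $C^1$-linearizability in the weak directions with parameters, which is where the $C^3$ joint smoothness enters. My first attempt is to handle this with a Shilnikov-type boundary-value formulation (cross-map as in Lemma~\ref{lem:hyp}) rather than explicit linearization.
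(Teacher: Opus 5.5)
Your proposal is correct and is essentially the paper's proof. The paper also uses the straightened coordinates of \citep[Lemma 6]{GonShiTur:08}, with $W^{\s/\u}_\loc(P)$ and their strong leaves flattened, and the Shilnikov cross-form of $F^k$ from \citep[Lemma 7]{GonShiTur:08}, whose leading weak terms are $\lambda_1^k u_0$ and $\gamma_1^{-k}v_k$. It reduces $F^k(L^\u)\cap L^\s\neq\emptyset$ to two scalar equations in $(\mu,\nu)$ with nondegenerate Jacobian, whose solutions tend to $0$ like $|\lambda_1|^k$ and $|\gamma_1|^{-k}$; so the statement is indeed meant with an iterate of $L^\u$, as you read it.

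The only difference is that the paper does not split into $n$ forward and $m$ backward iterates. It solves one boundary-value problem over $k$ iterates with boundary data taken directly on $L^\u$ and $L^\s$, which is exactly your proposed fallback. This makes the remainders automatically $o(|\lambda_1|^k)$ and $o(|\gamma_1|^{-k})$ and removes the error-bookkeeping issue you flag.
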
 

\begin{proof}[Proof of the second claim of Proposition~\ref{prop:persis}]
Let the families $\{f_\eps\},\{L^\u_\eps\},\{L^\s_\eps\}$ be as in Proposition~\ref{prop:unfoldble}.
Let $P$ be any periodic point of the blender $\Lambda$, and let the set  $\hat{\mathcal{E}}_{P,P}$ be given by Proposition~\ref{prop:unfoldble} with setting $P'=P$. Take any $\eps_*=(\mu_*,\nu_*,\eps'_*)\in \hat{\mathcal{E}}_{P,P}$ and an integer $n$ large enough so that $f^{n}(L^\u_{\eps_*})\cap W^\s_\loc(P_{\eps_*})\neq \emptyset$ and $f^{-n}(L^\s_{\eps_*})\cap W^\u_\loc(P_{\eps_*})\neq \emptyset$. Since $L^\u$ is close to a strong-unstable leaf, it is tangent to the strong-unstable cone field $\mathcal{C}^\uu$ associated with  $\Lambda$, and the same holds for $f^{n}(L^\u_{\eps_*})$ since $\mathcal{C}^\uu$ is forward-invariant. Similarly, $f^{-n}(L^\s_{\eps_*})$ is tangent to the strong-stable cone field $\mathcal{C}^\uu$ associated with $\Lambda$. 
The second claim of Proposition~\ref{prop:persis}  follows by applying Lemma~\ref{lem:model} to the families $\{g_{\mu,\nu}\}:=\{f_{\mu+\mu_*,\nu+\nu_*,\eps'_*}\}$, $\{L^\u_{\mu,\nu}\}:=\{f^{n}(L^\u_{\mu+\mu_*,\nu+\nu_*,\eps'_*})\}$, $\{L^\s_{\mu,\nu}\}:=\{f^{-n}(L^\s_{\mu+\mu_*,\nu+\nu_*,\eps'_*})\}$.
\end{proof}

\begin{proof}[Proof of Lemma~\ref{lem:model}]
By \citep[Lemma 6]{GonShiTur:08}, there exist $C^r$ ($C^{r-2}$ with respect to parameters) coordinates $(u,v,x,y)\in \mathbb R \times\mathbb R \times\mathbb R ^{d^\ss} \times\mathbb R ^{d^\uu}$ in a small neighborhood $U$ of $P$ such that, for all small $\mu$ and $\nu$, the local objects are straightened (we omit the subscripts for continuations): $W^\s_{\loc}(P)=\{v=0,y=0\}$, $ W^\u_{\loc}(P)=\{u=0,x=0\}$, the strong-stable leaves in $W^\s_{\loc}(P)$ are given by $\{u={\const},v=0,y=0\}$, and the strong-unstable leaves in $W^\u_{\loc}(P)$ are given by $\{u=0,v={\const},x=0\}$. Denote $F:=g^{\mathrm{per}(P)}|_U$. Lemma  7 of \cite{GonShiTur:08} also shows that these coordinates can be chosen such that, for any $(u_0,v_0,x_0,y_0)\in V$, one has $(u_k,v_k,x_k,y_k)=F^k(u_0,v_0,x_0,y_0)$ if and only if 
\begin{equation}\label{eq:model:1}
\begin{aligned}
&u_k = \lambda_1^k u_0 +h_1(u_0,v_k,x_0,y_k,\mu,\nu),\qquad
v_0 = \gamma_1^{-k} v_k +h_2(u_0,v_k,x_0,y_k,\mu,\nu),\\
&x_k = h_3(u_0,v_k,x_0,y_k,\mu,\nu),\qquad
y_0 = h_4(u_0,v_k,x_0,y_k,\mu,\nu),
\end{aligned}
\end{equation}
where $\|h_{1,3}\|=o(\lambda_1^k)$ and $\|h_{2,4}\|=o(\gamma_1^{-k})$.

Up to replacing $L^\u$ and $L^\s$  with some forward and, respectively, backward iterates, we have $L^\u\subset U$ and $L^\s\subset U$. Thus, since $L^\u$ and $L^\s$ are tangent to $\mathcal{C}^\ss$ and $\mathcal{C}^\uu$, respectively, their  defining functions  are given by
\begin{equation*}
\begin{aligned}
L^\u:& \quad u=\xi_1(y,\mu,\nu),\qquad
&&v=\xi_2(y,\mu,\nu),\qquad
&&x=\xi_3(y,\mu,\nu),\\
L^\s:& \quad u=\eta_1(x,\mu,\nu),\qquad
&&v=\eta_2(x,\mu,\nu),\qquad
&&y=\eta_3(x,\mu,\nu),
\end{aligned}
\end{equation*}
for some smooth functions $\xi$ and $\eta$ with bounded first derivatives. The assumption of the independent unfolding means that  
\begin{equation}\label{eq:model:2}
\xi_2(0,0,0)=0,\quad 
\eta_1(0,0,0)=0,\quad 
\det\left.\dfrac{\partial (\xi_2,\eta_1)}{\partial (\mu,\nu)}\right|_{(x,y,\mu,\nu)=0}\neq 0.
\end{equation}

In what follows, we solve the system  consisting  of \eqref{eq:model:1} and the following equations:
\begin{equation}\label{eq:model:3}
\begin{aligned}
&  u_0=\xi_1(y_0,\mu,\nu),\qquad
&&v_0=\xi_2(y_0,\mu,\nu),\qquad
&&x_0=\xi_3(y_0,\mu,\nu),\\
&  u_k=\eta_1(x_k,\mu,\nu),\qquad
&&v_k=\eta_2(x_k,\mu,\nu),\qquad
&&y_k=\eta_3(x_k,\mu,\nu).
\end{aligned}
\end{equation}
The found value of $(\mu,\nu)$ for each fixed $k$ corresponds to the parameter value for which $F^k(L^\u)\cap L^\s\neq \emptyset$. We will show that those values tend to 0 as $k\to\infty$ and hence prove the Lemma.

Substituting the equations for $u_0,x_0,v_k,y_k$ in \eqref{eq:model:3} into the $x_k$-equation in \eqref{eq:model:1}, yields $x_k=o(\lambda_1^k)$ as a function of $(y_0,\mu,\nu)$. Similarly, substituting those equations into the $y_0$-equation in \eqref{eq:model:1}, yields $y_0=o(\gamma_1^{-k})$ also as a function of $(x_k,\mu,\nu)$. Combining the two newly obtained expressions, one finds  
\begin{equation}\label{eq:model:4} 
x_k=o(\lambda_1^k),\qquad y_0=o(\gamma_1^{-k})
\end{equation}
as functions of $(\mu,\nu)$. Substituting \eqref{eq:model:4} and  the equations for $u_0,x_0,v_k,y_k$ in \eqref{eq:model:3} into the $u_k$- and $v_0$-equations in \eqref{eq:model:1}, yields
\begin{equation}\label{eq:model:5} 
u_k=O(\lambda_1^k), \qquad
v_0=O(\gamma_1^{-k}),
\end{equation}
where the right hand sides are functions of $(\mu,\nu)$. 

On the other hand, by the first two equations in \eqref{eq:model:2} and \eqref{eq:model:4}, we can write the $v_0$- and $u_k$-equations in \eqref{eq:model:3} as
\begin{align*}
v_0&=\dfrac{\partial \xi_2(0)}{\partial \mu}\mu + \dfrac{\partial \xi_2(0)}{\partial \nu}\nu +o(\gamma_1^{-k})+O(\mu^2+\nu^2)\\
u_k&=\dfrac{\partial \eta_1(0)}{\partial \mu}\mu + \dfrac{\partial \eta_1(0)}{\partial \nu}\nu +o(\lambda_1^k)+O(\mu^2+\nu^2),
\end{align*}
where the right-hand sides are functions of $(\mu,\nu)$. It then follows from the last equation in  \eqref{eq:model:2} that   the system consisting of the above two equations and \eqref{eq:model:5} admits solutions of the form $\mu=O(\lambda_1^k)$ and $\nu=O(\gamma_1^{-k})$ for all sufficiently large $k$. Taking $k\to\infty$ gives the desired sequence of parameter values. 
\end{proof}

\section{General homoclinic orbits: proof of Theorem~\ref{thm:main_blender_allcases}}\label{sec:thmA}
We will use Theorem~\ref{thm:main_2punfolding} to prove Theorem~\ref{thm:main_blender_allcases}. For that, we  first 
make $\gamma$ KAM non-degenerate and
create a transverse homoclinic to $\gamma$, and then obtain a quadratic homoclinic tangency from it.

\begin{prop}\label{prop:A1}
Let $f\in \symp^s(\M)$,  $\gamma$ be a whiskered $C^s$ torus 
with an irrational rotation number, $\Gamma$ be a homoclinic orbit 
of $\gamma$, and $\hat V$ be any small neighborhood of
$\orb\cup \Gamma$, as in Theorem~\ref{thm:main_blender_allcases}. In any neighborhood of $f$ in $\symp^s(\M)$, there exists 
a map $g$ (of class $C^\infty$ if $s<\infty$) such that $g$ has a non-degenerate whiskered KAM-torus $\gamma_g$ with a partially-hyperbolic transverse homoclinic orbit $\tilde{\Gamma}\subset \hat V$. In the case of infinite regularity ($s=\infty,\omega$) we can choose 
$\gamma_g=\gamma$, and for finite $s$ the torus 
$\gamma_g$ is $C^\infty$ and $C^s$-close to $\gamma$. 
\end{prop}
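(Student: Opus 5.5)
The plan has three successive perturbations, each $C^s$-small and supported away from where earlier properties must be kept. The order is: make $\gamma$ a non-degenerate KAM-torus; make the homoclinic orbit partially hyperbolic; make it transverse. For finite $s$ we first replace $f$ by a $C^\infty$ symplectic map close in $C^s$ (Zehnder \citep{Zeh:76}). The whiskered torus is normally hyperbolic within the center cylinder $\A$, so it has a continuation. However, the continuation may lose its irrational rotation number. To avoid this we perturb only near the torus, with the support localized in $\A$ and its Fenichel neighbourhood. Irrationality of $\rho(\gamma)$ lets us choose a nearby Diophantine number $\rho'$. We then make a small symplectic perturbation of the inner map $F=f^{\per}|_{\A}$ supported near $\gamma$. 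In $C^\infty$/analytic regularity (where $f|_\gamma$ need not yet be smoothly conjugate to a rotation) we argue as follows. We approximate $F$ by a map whose restriction to $\gamma$ is smoothly conjugate to the rigid rotation by $\rho'$. Since rotations with Diophantine rotation number are dense among circle diffeomorphisms with that rotation number, in the smooth conjugacy-class sense, this can be done $C^s$-closely. For $s=\infty,\omega$ the paper claims we can keep $\gamma_g=\gamma$. I would achieve this by not touching $\gamma$ and noting that the theorem assumes the regularity needed: if $\rho(\gamma)$ is already Diophantine, Yoccoz gives the smooth conjugacy. Otherwise I would modify only the normal form near $\gamma$, keeping $\gamma$ invariant. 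I would then add a twist term $\epsilon\, r^2/2$ to the generating function of $F$ near $\{r=0\}$. This is realized by composing $f$ with the time-one map of a Hamiltonian $H=\epsilon\,\chi\, r^2/2$, where $\chi$ is a cutoff equal to $1$ near $\mathcal{O}(\gamma)$ and extended to be independent of $(x,y)$ in the Fenichel coordinates. This keeps $\gamma$, $\A$ and the whiskers invariant, and it makes $\int \partial\bar\pp/\partial r\,d\pp\neq 0$. The exactness required for KAM holds by Remark~\ref{rem:exact}.

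Next, partial hyperbolicity of $\Gamma$ (condition \eqref{eq:condition:trans1}) is open and dense among perturbations supported near a single point $M\in\Gamma$ away from $\hat V\setminus$ (a neighbourhood of $M$). We compose $f$ with a symplectic map $\Psi$ supported in a tiny ball around $M$, chosen with $\Psi(M)=M$ and $D\Psi(M)$ a symplectic matrix. This keeps $\Gamma$ homoclinic and the torus untouched. Choosing $D\Psi(M)$ generic moves $T_M\ell^{\uu}$ off $T_MW^\s(\A)$ and $T_M\ell^{\ss}$ off $T_MW^\u(\A)$. This is transversality of $(N-1)$-planes to $(N+1)$-planes in $\mathbb{R}^{2N}$, which is generic, and the symplectic group acts transitively enough on such configurations.

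Finally we make the homoclinic transverse. By Lemma~\ref{lem:equitangency} this is equivalent to transversality of $S(\gamma)$ and $\gamma$ at $\pi^\s(M^+)$ inside $\A$. We therefore compose with a Hamiltonian perturbation supported near a point of $\Gamma$ away from $\gamma$. Its effect on the scattering map is composition with a near-identity area-preserving map of $\A$ moving $S(\gamma)$ near $\pi^\s(M^+)$. A local Hamiltonian $h(\pp)$ applied via the holonomy gives a shear $r\mapsto r+\epsilon h'(\pp)$, which makes the curve $S(\gamma)$ cross $\gamma$ with nonzero slope at a nearby point. The new transverse homoclinic $\tilde\Gamma$ is close to $\Gamma$, hence in $\hat V$ and still partially hyperbolic by openness. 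This perturbation is far from $\mathcal{O}(\gamma)$, so KAM non-degeneracy persists.

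The main obstacle is the first step when $\rho(\gamma)$ is merely irrational, namely making the torus Diophantine without destroying it or losing $C^s$-closeness. My first attempt would be to change the rotation number by a small twist-plus-shift perturbation. This would show that the continued cylinder map has, near $\gamma$, an invariant Diophantine KAM-curve $\gamma_g$ that is $C^s$-close to $\gamma$. The justification would be KAM persistence for twist maps applied after creating twist, with the curve then taken as the new torus.
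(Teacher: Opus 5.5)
There is a genuine gap in the finite-smoothness case. After the Zehnder smoothing you assert that $\gamma$ ``is normally hyperbolic within the center cylinder $\mathbb{A}$, so it has a continuation.'' This is false. The direction inside $\mathbb{A}$ transverse to $\gamma$ is neutral, since both central Lyapunov exponents vanish on $\gamma$. So $\gamma$ is only partially hyperbolic and is in general destroyed by arbitrarily small perturbations.

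Your fallback (create twist, then invoke KAM persistence to get a nearby Diophantine curve $\gamma_g$) does not work either. KAM theory produces invariant curves only where the cylinder map is, in high smoothness, close to an integrable twist map, i.e.\ around a curve that is already a KAM-curve. Here $\rho(\gamma)$ may be Liouville, so $f^{\mathrm{per}(\gamma)}|_\gamma$ need not even be $C^1$-linearizable. Moreover $f$ may be only $C^2$, and the smoothed map is only $C^s$-close to $f$. So the smoothed map need not have any invariant curve near $\gamma$ at all.

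What is required is a direct construction of a $C^\infty$ symplectic map with a $C^\infty$ invariant curve that is $C^s$-close to $\gamma$ and has irrational rotation number. This is Lemma~\ref{lem:Cfinite}, and it is the missing idea. The construction there goes as follows.
\begin{itemize}
\item One works in a continuous family of $C^\infty$ Darboux--Weinstein charts that only nearly straighten $\gamma$; a straightening chart is $C^{s-1}$ and would lose $C^s$-closeness.
\item One freezes $r$ on a smooth approximation of the graph of $\gamma$. This gives an auxiliary map for which $\{x=y=0\}$ is normally hyperbolic in the $(x,y)$-directions, because the whiskers are Lagrangian and hence tangent to $\{\Delta r=0\}$ along $\gamma$.
\item One takes the resulting persistent $C^\infty$ curve and forces its invariance for the true map by an exact Hamiltonian correction of the $r$-component. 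Exactness comes from $\int_0^1 P\circ\Phi^{-1}\,dt=0$.
\item One tunes an added rotation, using monotonicity of the rotation number, to restore $\rho(\gamma)$.
\end{itemize}

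The ordering of your steps is also a problem, in every regularity. Your first step (the KAM perturbation, and for finite $s$ the smoothing) changes the map in a neighbourhood of $\gamma$. That neighbourhood contains the local whiskers and the tails of $\Gamma$. The perturbation therefore moves $W^{\mathrm{s}/\mathrm{u}}(\gamma)$ and may destroy $\Gamma$ if it is non-transverse, leaving nothing for your later steps to act on. The paper makes $\Gamma$ transverse \emph{first}, via Lemma~\ref{lem:steptrans} and the corank-reduction Lemma~\ref{lem:genearaltrans}, which needs no partial hyperbolicity; a transverse $\Gamma$ then survives all later perturbations.

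In the $C^\infty$ case your two local steps can simply be moved to the front. Your partial-hyperbolicity step is valid without transversality. For symplectic $D\Psi(M)$, the conditions $D\Psi(T\ell^{\mathrm{uu}})\cap TW^{\mathrm{s}}(\mathbb{A})=0$ and $T\ell^{\mathrm{ss}}\cap D\Psi(TW^{\mathrm{u}}(\mathbb{A}))=0$ are $\Omega$-dual and hence equivalent (cf.\ $a_{12}=-a_{21}$). Your scattering-map shear then yields transversality, which is a legitimate alternative to the paper's route.

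The Diophantine step, however, should not use KAM at all. Compose with the time-$\tau$ map of a Hamiltonian whose differential along $\gamma$ is $dr$ (in Darboux coordinates with $\gamma=\{r=x=y=0\}$) and which vanishes with its first derivatives on $\mathcal{O}(\gamma)\setminus\gamma$. Then $\gamma$ stays invariant and is rotated by $\tau$. Strict monotonicity of the rotation number at irrational values gives Diophantine values for arbitrarily small $\tau$, Yoccoz's theorem gives the linearization, and $\gamma_g=\gamma$.

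Finally, for $s=\omega$ none of your bump-function perturbations are available. The paper replaces them by analytic Hamiltonians vanishing with their first derivatives on $\mathcal{O}(\gamma)$ (or on $\mathcal{O}(\gamma)\setminus\gamma$), produced with Cartan's Theorem~B.
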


\begin{prop}\label{prop:A2}
Let $g\in \symp^s(\mathcal{M})$, $s=\infty,\omega$, 
has a non-degenerate whiskered KAM-torus $\gamma$ of class $C^s$, with a partially-hyperbolic transverse homoclinic orbit $\tilde\Gamma$, as in Proposition~\ref{prop:A1}.  Then, arbitrarily $C^s$-close to $g$ there exists $\hat g\in \symp^s(\mathcal{M})$ such that the KAM-continuation of $\gamma$ has, in $\hat V$, an orbit $\hat \Gamma$ of a hyperbolic quadratic homoclinic tangency.
\end{prop}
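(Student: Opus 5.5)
The plan is to work entirely inside the cylinder $\A$ through the scattering map. By Lemma~\ref{lem:equitangency} (and Remark~\ref{rem:scat_trans}), tangencies between $W^\u(\gamma)$ and $W^\s(\gamma)$ correspond to tangencies between $S(\gamma)$ and $\gamma$, with the order of tangency preserved. So I only need a $C^s$-small symplectic perturbation, localized near a point of $\tilde\Gamma$ away from $\orba$, that makes some scattering image of $\gamma$ quadratically tangent to the KAM-continuation of $\gamma$. I also need $|\alpha|\neq1$ at the tangency. Such a perturbation is the composition of $g$ with a Hamiltonian flow supported in a small ball near a point of $\tilde\Gamma$; it leaves the local picture near $\orba$ untouched. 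In particular $\gamma$, its KAM fibration, and the Fenichel coordinates persist, and $S$ gets composed with an essentially arbitrary symplectic diffeomorphism of $\A$ close to the identity.

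\textbf{Step 1.} Since $\tilde\Gamma$ is transverse, $S(\gamma)$ crosses $\gamma$ transversely at $(0,\pp^+)$. Nearby, by \eqref{eq:scatter_new} with $\ell=0$, $S(\gamma)$ is the graph $\tilde r=c(\tilde\pp-\pp^+)+\dots$ with $c\neq0$.

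\textbf{Step 2.} Rather than fold $S(\gamma)$ directly, which would need a large perturbation, I exploit the twist as in Lemma~\ref{lem:incli0}. The iterate $T_0^k(S(\gamma))$ near the crossing is a curve of height $O(\delta)$ over $\gamma$ that spirals around $\A$ with slope $\sim 1/k$. Following the proof of Lemma~\ref{lem:a11}, I compose the transverse transition $T_2$ with $T_0^k$ and then with $T_2$ again. The resulting secondary homoclinic image, once rescaled as in \eqref{a11:trans}, is a graph $\bar R=k^{3/2}\mu+c'\bar\Phi+O(k^{-1/2})$ over a long interval of $\bar\Phi$.

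\textbf{Step 3.} A perturbation of size $O(k^{-N})$ in $C^s$, chosen for $N$ large, adds a function $\eta(\bar\Phi)$ with a nondegenerate critical point of $\bar R$. Concretely, I compose with the time-one map of a Hamiltonian $H=\theta\,\chi(\pp,r)$ localized near the homoclinic point. Because the rescaling inflates a small bump into an $O(1)$ term, this yields a quadratic tangency at $\bar R=0$ after adjusting $\mu$. The same perturbation size can be made $C^s$-small as $k\to\infty$, since every derivative of the bump carries a factor of $k$ only polynomially. In the $C^\omega$ case I would instead use an analytic Hamiltonian approximating the bump, which is possible on compact $\M$ via Zehnder-type approximation arguments.

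\textbf{Step 4.} Hyperbolicity of the tangency follows from the computation at the end of the proof of Lemma~\ref{lem:a11}: the composite has $\alpha\sim -kb^{-1}a_{12}^{-1}\to\infty$, so $|\alpha|>1$. Partial hyperbolicity is inherited from orbits in $\hat V$. KAM persistence of $\gamma$ holds because the perturbation is supported away from $\orba$.

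\textbf{Main obstacle.} The hardest point is Step 3. I have to ensure that a $C^s$-small (for all $s$, or analytic) perturbation produces an $O(1)$ curvature in rescaled coordinates. My first attempt would be to place the bump at the first transverse homoclinic point and scale its amplitude by $k^{-3/2}$ and its width by $k^{-1/2}$. If that loss of derivatives is too costly for $C^\infty$ smallness, I would instead pick a slowly growing $k$ and use the freedom in Dirichlet approximants to balance the two.
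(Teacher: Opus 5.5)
Your proposal has a genuine gap in Steps 2--3: the mechanism you describe cannot produce a tangency through a $C^\infty$-small perturbation.

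\textbf{Step 2.} In the proof of Lemma~\ref{lem:a11}, the term $(k\nu+b^2)\bar\Phi$ gives a crossing slope $O(1/k)$ in the original coordinates only because the second transition there is a (cubic) tangency. Through $\bar r\approx b\hat r$, it sends the nearly horizontal curve $T_0^kT_2(W^{\mathrm u}_{\mathrm{loc}}(\gamma))$ to a nearly horizontal one. With transverse transitions in both slots the picture is different. $\tilde S$ sends the nearly horizontal curve $T_0^k(S(\gamma))$ to a curve $C^1$-close to the image of a horizontal circle. That image crosses $\gamma$ at an angle bounded away from zero, uniformly in $k$. So your secondary homoclinics are uniformly transverse, and the graph $\bar R=k^{3/2}\mu+c'\bar\Phi+\dots$ with $c'=O(1)$ does not arise.

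The twist does produce small angles, but only at forward iterates of crossings such as $T_0^k(M^+)$, and these do not help. $T_0^k$ preserves $\gamma$, so tangency with $\gamma$ is invariant under it (cf.\ \eqref{eq:changesigma}). A perturbation near $M^+$ must therefore still rotate an $O(1)$ angle. A perturbation near $T_0^k(M^+)$ lies within distance $\sim\lambda^k$ of $\mathbb{A}$, and if it stays localized it has huge derivatives.

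\textbf{Step 3.} This step fails independently of Step 2. Under \eqref{a11:trans}, a bump that adds an $O(1)$ function of $\bar\Phi$ to $\bar R$ has amplitude $k^{-3/2}$ in $r$ and width $k^{-1/2}$ in $\varphi$. Its $j$-th derivative is therefore of order $k^{(j-3)/2}$, which is not small for $j\ge 3$; this contradicts your ``size $O(k^{-N})$'' claim. The loss comes from the scaling exponents, so no choice of $k$ or of Dirichlet approximants can repair it. Step 4 inherits the problem, since the $\alpha$-asymptotics of Lemma~\ref{lem:a11} come from the tangent transition.

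\textbf{What is missing} is a geometric source of large curvature that a tiny perturbation can exploit. The paper gets it from Lemma~\ref{lem:g_eps}, using $C^s$-small perturbations in three moves:
\begin{enumerate}
\item Make the rotation number on an invariant curve $c\subset\mathbb{A}$ near $\gamma$ rational, so that $c$ is filled with periodic points.
\item Use two Hamiltonian translations near homoclinic points to place $P_1=c\cap S(\gamma)$ and $P_2=c\cap\tilde S^{-1}(\gamma)$ on one periodic orbit.
\item Adjust the $1$- and $3$-jets to make that orbit a generic saddle-center.
\end{enumerate}

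The KAM circles around $P_2$ shrink to a point and so have arbitrarily large curvature. Hence a slight shift of $\tilde S^{-1}(\gamma)$, which passes through $P_2$, makes it quadratically tangent to some circle $\gamma_2$. By Lemma~\ref{lem:equitangency}, this is a quadratic tangency between $W^{\mathrm s}(\gamma)$ and $W^{\mathrm u}(\gamma_2)$.

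Meanwhile $S(\gamma)$ passes through $P_1$, so it crosses every KAM circle $\gamma_1$ around $P_1$ transversely. By Lemma~\ref{lem:incli0}, $W^{\mathrm u}(\gamma)$ therefore accumulates on $W^{\mathrm u}(\gamma_1)$, and hence on $W^{\mathrm u}(\gamma_2)$ with $\gamma_2=g^n(\gamma_1)$. A further small parameter change then turns the heteroclinic tangency into the required homoclinic tangency of $\gamma$. The $O(1)$ effect comes from the curvature of small invariant circles around an elliptic point, not from rescaling a bump.
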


The proofs of these two propositions are given in Sections~\ref{sec:propA1} and~\ref{sec:propA2}, respectively.

\begin{proof}[Proof of Theorem~\ref{thm:main_blender_allcases}]
We first apply Proposition~\ref{prop:A1} and then Proposition~\ref{prop:A2} to obtain the map $\hat g$.  
Since the corresponding KAM-continuation of $\gamma$ is periodic
under $\hat g$, the map $\hat g$ is exact in a small neighborhood of
$\orb$.
The theorem then  follows by  embedding $\hat g$ into any proper unfolding family of Theorem~\ref{thm:main_2punfolding}, case (3); the existence of such families is shown in Section~\ref{sec:fam3}.
\end{proof}

\subsection{Proof of Proposition~\ref{prop:A1}}\label{sec:propA1}

The desired map $g$ is obtained by applying a sequence of perturbations. 
First, we prove in Section \ref{sec:propA1.trans} the following
\begin{lem}\label{lem:steptrans}
Arbitrarily close in $\symp^s(\M)$ to the map $f$ of Proposition~\ref{prop:A1} there exists a map, for which $\gamma$ remains a whiskered torus with the same rotation number, and the homoclinic orbit $\Gamma$ to $\gamma$ becomes transverse. 
\end{lem}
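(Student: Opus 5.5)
We will make the homoclinic intersection transverse by a symplectic perturbation supported in a small ball around a single point of $\Gamma$, away from $\orb$. Since the perturbation vanishes near $\orb$, the curve $\gamma$, the restriction $f^\per|_\gamma$ (hence the rotation number) and the local manifolds $W^{\s/\u}_\loc(\gamma)$ stay unchanged. The global manifolds change only through the piece of $W^\u(\gamma)$ passing through the support.

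First fix a point $M\in\Gamma$ and a small ball $B$ around it with $B\cap\orb=\emptyset$. Choose $B$ so that the orbit of $M$ visits $B$ only once. We may also arrange that a piece $W^\u_n$ of $W^\u(\gamma)$ (finitely many iterates of $W^\u_\loc(\gamma)$) and a piece $W^\s_n$ of $W^\s(\gamma)$ pass through $M$, while no other pieces of these finite iterates enter $B$. This is possible because $\Gamma$ accumulates only on $\orb$, and only finitely many pieces of the relevant finite unions can come near $M$. Both pieces are $N$-dimensional Lagrangian submanifolds; they are isotropic since $W^{\s/\u}(\gamma)$ is a union of leaves of Lagrangian-type foliations over a curve. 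Whether or not this holds, we only use that they are $N$-dimensional embedded discs through $M$.

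Next perturb $f$ to $g=\Phi\circ f$, where $\Phi$ is the time-one map of a Hamiltonian flow generated by a $C^\infty$ function $H$ supported in $f(B)$ and $C^s$-small. Then $g(W^\u_n\cap B)=\Phi(f(W^\u_n\cap B))$, whereas $W^\s_n$ near $f(M)$ is unchanged, because the forward orbits from $f(B)$ to $W^\s_\loc(\gamma)$ avoid the support. Using Darboux/Weinstein coordinates near $f(M)$, write $f(W^\u_n)$ locally as the zero section graph $\{p=0\}$ and $W^\s_n$ as a graph $\{p=dS(q)\}$, or more generally as an $N$-dimensional manifold through $0$. The image $\Phi(\{p=0\})$ of the zero section is locally a Lagrangian graph $\{p=dh(q)\}$, where $h$ is essentially $-H(q,0)$ to first order. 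By choosing $h$ as a small generic (Morse-type) perturbation via a parametric transversality/Thom argument, we make $\{p=dh\}$ transverse to $W^\s_n$ at some point near $f(M)$. Concretely, we need the intersection of the graphs $dh$ and $dS$ (when $W^\s_n$ is a graph) to be nondegenerate, i.e.\ $h-S$ should have a nondegenerate critical point near $0$. Adding a small quadratic form and a linear term to $h$ achieves this, and the point still exists near $M$ because the graphs intersect at $0$. If $W^\s_n$ is not a graph over $q$ in these coordinates, we first choose a different Lagrangian splitting adapted to both manifolds, which is always possible for two Lagrangians by a linear symplectic change. If one does not assume the Lagrangian property, we use a general transversality argument: for the family of Hamiltonians $\sum_i t_i H_i$, the map (point, $t$) $\mapsto$ the displaced point is a submersion, and Sard gives transversality for generic small $t$.

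The new intersection point lies on an orbit in $\hat V$ that is $C^0$-close to $\Gamma$, so the new homoclinic orbit stays in $\hat V$. For finite $s$ we keep $H\in C^\infty$; the smoothness of $g$ is that of $f$, which suffices for the lemma. The main obstacle is ensuring that the perturbation affects only the chosen piece of $W^\u(\gamma)$ and not the local invariant structure. It must also preserve the existence of a homoclinic point near $M$ and not merely destroy it. This is handled by the support choice and the fact that the intersection persists as a nondegenerate critical point after a small perturbation of $h-S$, whose critical point at $0$ we make nondegenerate instead of removing.
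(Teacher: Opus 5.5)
Your smooth-case mechanism is sound and shorter than the paper's. In Lemma~\ref{lem:genearaltrans} the paper uses $(-\nu\cdot u+\mu\cdot v-\eps u_iu_j)\chi$: the parameters $(\mu,\nu)$ pin the intersection at $P$, and $\eps$ raises the rank of the block $D$ by one, so the step is iterated until the corank is $0$. Your shear $p\mapsto p+\eps q$ instead makes the existing critical point $0$ of $h-S$ nondegenerate in one step, since $\eps I-D^2S(0)$ is invertible for all small $\eps\neq 0$.

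The genuine gap is the real-analytic case. The lemma is stated for the $f$ of Proposition~\ref{prop:A1}, which allows $s=\omega$. A compactly supported Hamiltonian perturbation is never real-analytic, so your $g$ is not in $\symp^\omega(\M)$, and you do not address this case at all. It is not a formality. An analytic perturbation is necessarily global, so to keep $\gamma$ invariant with the same rotation number the Hamiltonian must vanish together with its gradient along $\orb$. The paper gets such a factor $h\in C^\omega(\M)$ with $h(P)=1$ from Cartan's Theorem~B and perturbs by $h\cdot H^1$, where $H^1$ is an analytic approximation of $H^0/h$. Then $W^{\u/\s}_\loc(\gamma)$ and the pieces through the homoclinic point are no longer fixed; they move slightly, and one must show the intersection survives. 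This is why the paper states Lemma~\ref{lem:genearaltrans} robustly for all $C^{1,1}$-close families, with $(\mu,\nu)$ recovered by the implicit function theorem. In your framework a workable route is to fix $\eps$ and obtain the transverse homoclinic for the $C^\infty$ map first. Then replace $H$ by such an $h\cdot\tilde H^1$, $C^3$-close to $H$, and invoke openness of transversality plus continuous dependence of the whiskers on the map. Some argument of this kind has to be supplied.

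There is also a fixable inconsistency for finite $s$. Coordinates in which $f(W^\u_n)$ is exactly the zero section are only finitely smooth, because $W^\u(\gamma)$ is $C^{s-1}$. So $-h(q)\chi$ written in them is not $C^{s+1}$, let alone $C^\infty$, contrary to ``we keep $H\in C^\infty$''. Smoothing $H$ destroys the exact pinning at $0$. Since the nondegeneracy you create is only of size $\eps$, a smoothing error not small relative to $\eps$ can remove the intersection entirely; a cubic tangency shows this. The easy repair is to use $C^\infty$ Darboux coordinates adapted only to the tangent planes at $f(M)$, in which
\[
f(W^\u_n)=\{p=dU(q)\},\qquad W^\s_n=\{p=dS(q)\},\qquad dU(0)=dS(0)=0,\quad D^2U(0)=0 .
\]
Then $H=-\tfrac{\eps}{2}|q|^2\chi$ is $C^\infty$, and $0$ is a nondegenerate critical point of $U+\tfrac{\eps}{2}|q|^2-S$ for all small $\eps\neq 0$. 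Finally, drop the Sard fallback. A generic perturbation can simply empty the intersection, and your main argument genuinely needs the (true) Lagrangian property of the whiskers.
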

We add the perturbation given by this lemma to the original map $f$; by the transversality, the homoclinic orbit persists at all further perturbations.

At the next step, we focus on the case of finite  $s$ and prove the following result in Section~\ref{sec:reduction}:
\begin{lem}\label{lem:Cfinite}
Let a map $f\in \symp^s(\M)$, $2\leq s <\infty$, have 
a whiskered torus $\gamma$ with an irrational rotation number.
Then arbitrarily close to $f$ in $\symp^s(\M)$ there exists a $C^\infty$ map $\tilde f$ with a $C^\infty$ whiskered torus $\tilde \gamma$, 
$C^s$-close to $\gamma$, with the same irrational rotation number. 
\end{lem}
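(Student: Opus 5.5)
The plan is to approximate $f$ by a $C^\infty$ symplectic map, locate an invariant curve of the right rotation number near $\gamma$, and then correct the map near that curve so the curve becomes $C^\infty$. First, approximate $f$ in $C^s$ by a $C^\infty$ symplectic map $f_1$, using Zehnder's approximation \citep{Zeh:76}. By normal hyperbolicity, $f_1$ has a continuation $\A_1$ of the cylinder $\A$ containing $\gamma$. The difficulty is that the invariant curve $\gamma$ itself need not persist. Instead, I will only use that the inner map $f_1^{\per}|_{\A_1}$ is an area-preserving map of an annulus that is $C^s$-close to $F=f^{\per}|_{\A}$. The curve $\gamma$ is a rotational invariant circle of $F$ with irrational rotation number $\rho$; it is contained in $\A$ and is the graph $r=0$ in Fenichel coordinates.

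The cleanest route avoids relying on persistence of $\gamma$. I would do the approximation in a way that keeps $\gamma$ invariant. In Fenichel-type coordinates near $\orb$ the map $f^{\per}$ is described by a generating function. I will smooth the generating function and correct it so that the smoothed map still leaves a nearby $C^\infty$ curve invariant with rotation number exactly $\rho$.

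Concretely, the first step is to approximate the $C^s$ curve $\gamma\subset\A$ by a $C^\infty$ curve $\tilde\gamma$. I would also approximate the circle map $f^{\per}|_\gamma$, which is a $C^{s}$ circle diffeomorphism with rotation number $\rho$, by a $C^\infty$ circle diffeomorphism with the same rotation number $\rho$. This is possible because for irrational $\rho$ the set of diffeomorphisms with rotation number $\rho$ is a closed set. By composing a smooth approximation with a small rotation $R_t$ and using the continuity and monotonicity of $t\mapsto\rho(R_t\circ h)$, one can adjust the rotation number exactly back to $\rho$.

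The second step is to build a symplectic diffeomorphism $\Psi$, $C^s$-close to the identity and supported in a small neighborhood of $\orb$, that carries $\gamma$ to $\tilde\gamma$. The map $\tilde f:=\Psi\circ f_1'\circ\Psi^{-1}$ then has $\tilde\gamma$ invariant, where $f_1'$ is a map preserving $\gamma$. I would realize these as time-one maps of Hamiltonians cut off near $\orb$; this works because, by Remark~\ref{rem:exact}, the form is exact there. The inner dynamics on $\tilde\gamma$ is then adjusted to the smooth circle diffeomorphism of the first step. For this I take the composition with a Hamiltonian flow whose Hamiltonian is $H(r,\pp)$ near $\gamma$ with $H|_{\gamma}$ prescribing the tangential correction. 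A Hamiltonian of the form $r\,a(\pp)$ generates the shift $\dot\pp=a(\pp)$ along $r=0$ and keeps $r=0$ invariant.

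Finally, $\tilde f$ may still fail to be $C^\infty$ away from $\tilde\gamma$. I would glue the perturbed map near $\orb$ with the $C^\infty$ approximation $f_1$ outside, again via exact Hamiltonian corrections supported in a tubular neighborhood. The curve $\tilde\gamma$ stays invariant and stays whiskered, since partial hyperbolicity is $C^1$-open. Its rotation number is $\rho$ by construction.

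The main obstacle is to keep both smoothness and exact invariance at once. The dynamics in the normal directions must be smooth near $\tilde\gamma$ while $\tilde\gamma$ remains exactly invariant. I would first handle this by working in the $(N+1)$-dimensional center-stable coordinates. There I would write $f^{\per}$ near $\gamma$ as the product of a smooth map, which leaves $\tilde\gamma$ invariant with the prescribed circle dynamics, and a $C^s$-small symplectic correction generated by a Hamiltonian vanishing to second order on $\tilde\gamma$. I would then replace that correction by a $C^\infty$ Hamiltonian that still vanishes to second order on $\tilde\gamma$. Such a Hamiltonian's flow fixes $\tilde\gamma$ pointwise, so invariance and the circle dynamics are preserved.
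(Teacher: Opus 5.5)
Your outline has a genuine gap exactly where this lemma is difficult: keeping the perturbation $C^s$-small while forcing exact invariance. Every correction you use is asserted to be $C^s$-close to the identity, but the constructions you indicate lose one derivative.
\begin{itemize}
\item Coordinates adapted to $\gamma$ (Fenichel coordinates, or Darboux--Weinstein coordinates straightening $\gamma$) are only $C^{s-1}$. Smoothing a generating function in them therefore cannot return a $C^\infty$ map that is $C^s$-close to $f$.
\item A symplectic $\Psi$ carrying $\gamma$ onto $\tilde\gamma$ must displace points in the $x,y$ directions by $C^s$ functions of $\pp$. Symplecticity then forces terms such as $a'(\pp)\cdot y$ in the $r$-component, e.g.\ $(r,\pp,x,y)\mapsto(r+a'(\pp)\cdot y,\pp,x+a(\pp),y)$, which is only $C^{s-1}$-close to the identity.
\item The flow of $H=r\,a(\pp)$ has $\dot r=-r\,a'(\pp)$, so it suffers the same loss.
\end{itemize}
A Hamiltonian correction is $C^s$-small essentially only when $H$ is $C^{s+1}$-small. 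Prescribing $C^s$-small data along the curve does not give that by itself.

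Your final paragraph does not close the gap. It presupposes a $C^\infty$ symplectic $S$, $C^s$-close to the current map, leaving $\tilde\gamma$ invariant with the prescribed circle dynamics. That is the lemma itself for a smooth curve. Building $S$ by the same Hamiltonian corrections applied to a smooth approximation again gives only $C^{s-1}$-smallness. Two smaller points: $f^{\per}$ does not preserve $\tilde\gamma$, so $S^{-1}\circ f^{\per}$ cannot fix $\tilde\gamma$ pointwise. Also, exactness near $\orb$ constrains you rather than helps: Hamiltonian maps supported there preserve the action of loops, so $\tilde\gamma$ must be chosen with the same action as $\gamma$.

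Your route could be repaired by working with a generating function $W\in C^{s+1}$ of $f^{\per}$ in $C^\infty$ charts. Invariance of a smooth curve with smooth prescribed dynamics is a condition on the $1$-jet of $W$ along a smooth curve. A Whitney-type extension, mollifying at the scale of the distance to the curve, gives a $C^{s+1}$-small correction realizing that jet, and $W$ can then be smoothed with the jet kept fixed.

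The paper avoids this machinery altogether:
\begin{itemize}
\item It works in $C^\infty$ charts $\mathcal{DW}_\delta$ that straighten smooth approximations $\gamma_\delta$ of $\gamma$, and prescribes only the $r$-component $r=r_{\delta,\nu}(\pp)$ of the new curve.
\item The $(x,y)$-part of the curve comes for free from normal hyperbolicity of the reduced map $(\pp,x,y)\mapsto\bigl(q_{\delta,\nu},z_{\delta,\nu}\bigr)(r_{\delta,\nu}(\pp),\pp,x,y)$ of the smooth approximation $f_\nu$.
\item Invariance in $r$ is restored by the shear $r\mapsto r-P_{\delta,\nu}\circ\Phi_{\delta,\nu}^{-1}(\pp,\eps)$. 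Its Hamiltonian $-\int_0^{\pp}P_{\delta,\nu}\circ\Phi_{\delta,\nu}^{-1}$ gains a derivative, and exactness makes it well defined.
\item The rotation number is matched by a rigid-rotation parameter $\eps$ and monotonicity, rather than by prescribing the circle map.
\end{itemize}
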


Recall that the stable and unstable manifolds of a whiskered torus are given by the strong-stable and strong-unstable foliations, which depend continuously on the map in the $C^1$ topology. Since $\tilde f$ and $\tilde \gamma$ are close to $f$ and $\gamma$,  it follows that $\tilde \gamma$  must also have a transverse homoclinic orbit near the one of $\gamma$.
Thus, this lemma reduces the finite smoothness case to the $C^\infty$ one, and we may continue under the assumption that $s=\infty$ or $s=\omega$. In Section \ref{sec:propA1.ph}, we prove

\begin{lem}\label{lem:stepph}
Let $s=\infty,\omega$, and let a map in $\symp^s(\M)$ have a whiskered torus $\gamma$ with a transverse homoclinic orbit $\Gamma$. Then there exists an arbitrarily small perturbation in $\symp^s(\M)$ such that $\gamma$ remains a whiskered torus with the same rotation number and the homoclinic orbit becomes partially-hyperbolic, in the sense that it satisfies condition \eqref{eq:condition:trans1}.
\end{lem}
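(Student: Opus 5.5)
The plan is to perturb $f$ only in a small neighborhood $U$ of a single point $M$ of $\Gamma$, chosen away from $\orb$ and from all other points of $\Gamma$. The point is that such a perturbation leaves untouched the local objects near $\orb$: the torus $\gamma$, its rotation number, the cylinder $\A$, the local manifolds $W^{\s/\u}_\loc(\A)$ and the foliations $\mathcal{F}^\ss_\loc,\mathcal{F}^\uu_\loc$. It only changes the transition map $T_1$ along $\Gamma$.

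Concretely, write $f_\eps=\Psi_\eps\circ f$, where $\Psi_\eps$ is the time-one map of a Hamiltonian $\eps H$. To make $H$ supported near $f(M)$, take $H=\chi\cdot h$ with $\chi$ a bump function in the $C^\infty$ case. In the real-analytic case compact support is impossible, so instead take $H$ analytic and small on a complex neighborhood of $\M$, but concentrated near $f(M)$: for instance, approximate a compactly supported $C^\infty$ Hamiltonian by an analytic one. The effect of the perturbation far from $f(M)$ is then $C^\infty$-small. Because $\gamma$ is normally hyperbolic, it persists as an invariant curve $\gamma_\eps$. Since $\orb$ stays at a positive distance from the support, $f_\eps^\per|_{\gamma_\eps}$ is a small perturbation, and I will keep the rotation number unchanged using the KAM structure, because $\gamma$ is Diophantine/KAM here. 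Alternatively, the rotation number can be fixed by an extra parameter. This loose point can be sidestepped in the analytic case.

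Partial hyperbolicity \eqref{eq:condition:trans1} at $M$ means that $T_M\ell^\uu$ is transverse to $T_M W^\s(\A)$ and that $T_M\ell^\ss$ is transverse to $T_M W^\u(\A)$. Both are open conditions. Near $M$, the leaf $\ell^\uu$ and $W^\s(\A)$ are images of the unperturbed local objects under $f$ and $f^{-1}$ iterates that avoid $U$, except for the single application of $\Psi_\eps$. So the relevant tangent data at $\Psi_\eps(f(M'))$ transform as $\D\Psi_\eps$ acting on the unperturbed $(N-1)$-plane $E=T\ell^\uu$, while $F=T W^\s(\A)$, an $(N+1)$-plane, is unchanged. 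The space of linear symplectic maps $A$ near the identity is realized by time-one maps of quadratic Hamiltonians $\frac12\langle Sz,z\rangle$, and the condition that $AE$ is not transverse to $F$ is a proper algebraic subset of $\mathrm{Sp}(2N)$, since one can certainly find a symplectic $A$ making $AE\pitchfork F$. Hence a generic small quadratic $h$ achieves it, and likewise simultaneously for the second condition. The transversality of $\Gamma$ must also survive the perturbation; it is open, so I take $\eps$ small.

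The main obstacle is keeping the homoclinic point itself fixed: the perturbation moves $W^\u(\gamma)$ near $M$. Since $\Gamma$ is transverse, however, it has a continuation $\Gamma_\eps$, and the open conditions only need to be checked at the continuation point. Choosing $h$ with $h=0$ and $dh=0$ at $f(M)$ keeps $f(M)$ fixed by $\Psi_\eps$, so $\Gamma$ itself persists exactly, while the quadratic part rotates the tangent planes as needed.
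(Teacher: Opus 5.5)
\paragraph{Gap in the real-analytic case.}
Your proof has a genuine gap when $s=\omega$. There the perturbation cannot be localized, and you give no valid reason why $\gamma$ survives as a whiskered torus with the \emph{same} rotation number. Each of your three justifications fails.
\begin{itemize}
\item The curve $\gamma$ is not normally hyperbolic. Its normal bundle contains the neutral direction of $E^\c$ transverse to $\gamma$ inside $\A$; only the cylinder $\A$ is normally hyperbolic. An invariant circle of the area-preserving map $f^{\mathrm{per}(\gamma)}|_{\A}$ with irrational rotation number is, in general, destroyed by a small perturbation that does not vanish near it.
\item No KAM structure is available. The lemma assumes only an irrational (possibly Liouville) rotation number, with neither a Diophantine nor a twist condition. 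In the paper these are only produced in the subsequent step, Lemma~\ref{lem:stepkam}.
\item An extra parameter can tune a rotation number only after an invariant curve is known to exist.
\end{itemize}
Your normalisation $h(f(M))=0$, $dh(f(M))=0$ controls a single point, not $\orb$.

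The missing idea, which is how the paper closes the argument, is to make the analytic perturbation the identity on $\orb$. Take $\hat H=h\cdot H^1$, where:
\begin{itemize}
\item $h\in C^\omega(\M)$ vanishes together with its first derivatives on $\orb$ and is non-zero at the perturbation point (Cartan's Theorem B);
\item $H^1$ is a real-analytic approximation, in a sufficiently high $C^k$ norm, of the $C^\infty$ function equal to $H/h$ near that point and to $0$ elsewhere.
\end{itemize}
Then $\hat H-H=h\,(H^1-H/h)$ is $C^k$-small. The Hamiltonian vector field of $\hat H$ vanishes on $\orb$, so the perturbed map coincides with $f$ on $\orb$ and $\gamma$ stays invariant with unchanged restricted dynamics. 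Partial hyperbolicity near $\orb$ persists by $C^1$-smallness, and the whiskers and strong leaves move only $C^1$-slightly. Hence your open transversality conditions still hold at the continuation of the transverse homoclinic point.

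\paragraph{The $C^\infty$ case.}
Apart from this, your $C^\infty$ argument is correct and takes a different route from the paper.
\begin{itemize}
\item You perturb by a compactly supported quadratic Hamiltonian in an ordinary $C^\infty$ Darboux chart at a point $f(M)$ away from $\orb$, and obtain the tangent-plane conditions from genericity in $\mathrm{Sp}(2N)$.
\item The paper works at $M^+$ in Fenichel coordinates made Darboux (Lemma~\ref{lem:strsymp}) and reduces the two conditions to $a_{12}\neq0$ and $a_{21}\neq0$ in the cross-form of $T_1$. It observes $a_{12}=-a_{21}$ by symplecticity, so the two conditions are equivalent and your ``simultaneously'' costs nothing. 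It then uses the explicit Hamiltonian $-\tilde r\tilde x_i\chi$ to get $a_{21}(\eps)=\eps a^i_{31}(0)$ with $a^i_{31}\neq0$.
\end{itemize}
Your route avoids the finite smoothness of the Fenichel coordinates, which is why the paper needs the $h\cdot H^1$ replacement even when $s=\infty$. The paper's route yields an explicit one-parameter family with non-zero velocity.

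You should still justify that the bad set in $\mathrm{Sp}(2N)$ is proper. This follows because $T\ell^\uu$ is isotropic, and $TW^\s(\A)$ is the $\Omega$-orthogonal complement of $T\ell^\ss$, which admits a transverse isotropic $(N-1)$-plane. Since $\mathrm{Sp}(2N)$ acts transitively on isotropic subspaces of a given dimension, some symplectic $A$ makes $AE$ transverse to $F$.
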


Like the hyperbolicity, the partial hyperbolicity is an open property, so
it persists when we apply the final perturbation, which makes the curve $\gamma$ KAM non-degenerate, thus giving us the sought map $g$ of Proposition~\ref{prop:A1}. This  perturbation exists according to the following
\begin{lem}\label{lem:stepkam}
Let $s=\infty,\omega$, and let a map in $\symp^s(\M)$ have a whiskered torus $\gamma\in C^s$ with an irrational rotation number. Then there exists an arbitrarily small perturbation in 
$\symp^s(\M)$ that makes $\gamma$ a non-degenerate whiskered KAM-torus.
\end{lem}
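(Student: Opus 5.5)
We need to prove Lemma~\ref{lem:stepkam}: for $s=\infty,\omega$, a whiskered torus $\gamma$ of class $C^s$ with irrational rotation number can be made a non-degenerate whiskered KAM-torus by an arbitrarily small perturbation in $\symp^s(\M)$.

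The perturbation splits into two parts. The first makes the rotation number Diophantine. The second creates a non-zero twist. Both are realized as time-one maps of Hamiltonians supported near $\orb$, so $\gamma$ stays an invariant normally hyperbolic curve.

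\textbf{Step 1: make the rotation number Diophantine.}
- Work in the Fenichel coordinates of Section~\ref{sec:innermap}, where $\gamma=\{r=0,x=0,y=0\}$ and $\Omega|_\A=dr\wedge d\pp$.
- Compose $f$ with the time-$t$ map $\Psi_t$ of a Hamiltonian $H(r,\pp,x,y)=\chi\cdot r$, where $\chi$ is a cutoff equal to $1$ near $\gamma$.
- Near $\A$ take $H=r$ in symplectic coordinates extending $(r,\pp)$ (for instance, use a symplectic tubular structure in which $\Omega=dr\wedge d\pp+\Omega'(x,y)$ near $\gamma$).
- Then $\Psi_t$ acts as $\pp\mapsto\pp+t$ near $\gamma$ and preserves $\gamma$, $\A$ and the sets $\{r=c\}$. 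The circle map $f^\per|_\gamma$ is replaced by its composition with the rotation by $t$, so the rotation number $\rho(t)$ is continuous and monotone in $t$.
- Since $\rho(0)$ is irrational, $\rho(t)$ is strictly increasing at $t=0$. Hence arbitrarily small $t$ give a Diophantine $\rho(t)$ (Diophantine numbers have full measure, and the image of a small interval is a non-trivial interval).
- By partial hyperbolicity, $\gamma$ stays whiskered for small $t$.
- By Yoccoz (see the discussion before \eqref{eq:kam_low}), $F|_\gamma$ is then $C^\infty$-conjugate (resp.\ analytically conjugate for $s=\omega$) to a rotation. By the lemma preceding Section~\ref{sec:kam_strait}, $\gamma$ becomes a whiskered KAM-torus in the form \eqref{eq:kam_low}.

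\textbf{Step 2: create the twist.}
- Put the inner map in the form $\bar r=r+O(r^2)$, $\bar\pp=\pp+\rho+\hat\rho_1 r+O(r^2)$, averaging only to first order.
- If $\hat\rho_1\neq0$ we are done. Otherwise compose with the time-$\tau$ map of $H=\chi\cdot r^2/2$.
- This adds $\tau r$ to $\bar\pp$ on $\A$ and leaves $\gamma$ and the rotation number unchanged. It therefore makes $\int \partial\bar\pp/\partial r\,d\pp=\hat\rho_1+\tau\neq0$, so $\gamma$ is non-degenerate.
- The averaging coordinates change $\hat\rho_1$ only by conjugation, so the twist coefficient transforms by adding $\tau$ exactly to first order.

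\textbf{Main obstacle: the real-analytic case.}
In the analytic case the cutoff $\chi$ is not allowed. I would replace the local Hamiltonian by a global analytic Hamiltonian $H$ on $\M$ that approximates the local one near $\orb$; such approximations exist by density of analytic functions, or one can embed the local flow via Zehnder-type approximation. The new difficulty is that $H$ no longer vanishes on $\gamma$ exactly, so the invariant curve moves.

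For Step 1 this is harmless: after the perturbation we take the perturbed normally hyperbolic cylinder and an invariant circle in it with the chosen Diophantine rotation number. We use a one-parameter family and pick a parameter where the Diophantine circle exists, which follows from continuity of the rotation number of the circle map restricted to the (perturbed) invariant curve.

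For Step 2 the issue is more serious: a general analytic perturbation may destroy $\gamma$. Here I would apply KAM theory in a parametrized form, or a translated-curve argument: with the frequency fixed, a twist-creating perturbation keeps an invariant curve with the same Diophantine rotation number after adjusting a drift parameter via the Step~1 Hamiltonian. Rüssmann's theorem on translated curves then gives exact persistence of $\gamma_g$ close to $\gamma$, which is consistent with the statement's phrasing of $\gamma$ as the same torus only up to continuation.
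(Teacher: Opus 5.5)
For $s=\infty$ your two steps are the paper's proof. A Hamiltonian whose differential along $\gamma$ is $dr$ rotates $\gamma$ rigidly, and the rotation number is strictly monotone at an irrational value, so a Diophantine value can be chosen. Then the shear generated by $-\chi r^2/2$ moves the twist integral $I$ at unit speed while keeping $\gamma$ and its rotation number. One caveat here: the Fenichel and averaging coordinates near $\A$ are only finitely smooth, so these Hamiltonians must be written in $C^\infty$ coordinates near $\gamma$, or replaced by $C^\infty$ ones with the same relevant jet along $\orb$.

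The genuine gap is the case $s=\omega$. Once you replace the cut-off Hamiltonian by an arbitrary analytic approximation, $\gamma$ is no longer invariant, and there is nothing to continue. Inside $\A$ the curve $\gamma$ is neutral (both central exponents vanish), not normally hyperbolic. An invariant circle with merely irrational rotation number and possibly zero twist need not survive any perturbation. Hence the ``rotation number of the circle map restricted to the perturbed invariant curve'' in your Step~1 is undefined. Picking a Diophantine circle in the perturbed cylinder already presupposes KAM persistence, which needs the twist you only create in Step~2. Likewise, the appeal in Step~2 to a parametrized KAM or R\"ussmann translated-curve theorem is unproved when the unperturbed circle may have no twist. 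Even if it worked, it would give a curve near $\gamma$, not $\gamma$ itself. The lemma, as used in Proposition~\ref{prop:A1} with $\gamma_g=\gamma$ for $s=\infty,\omega$, keeps $\gamma$ invariant.

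The missing idea is that neither a cut-off nor a persistence theorem is needed. Only the jet of the Hamiltonian along the analytic curve $\orb$ matters, and Cartan's Theorem~B lets you prescribe it with a global analytic function. For Step~1 the paper takes $H^0\in C^\omega(\M)$ with $\partial H^0/\partial r|_\gamma=1$ and $\partial H^0/\partial(\pp,x,y)|_\gamma=0$, in analytic Darboux--Weinstein coordinates near $\gamma$. It also requires $H^0$ to vanish together with its first derivatives on $\orb\setminus\gamma$. The time-$\tau$ map of $H^0$ fixes $\orb\setminus\gamma$ pointwise and is exactly $\pp\mapsto\pp+\tau$ on $\gamma$. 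So $\gamma$ stays a whiskered torus and your monotonicity argument goes through unchanged. For the twist the paper takes $\hat H=h\cdot h_1$. Here $h$ is analytic and vanishes with its first derivatives on $\orb$, except that $\partial h/\partial r\neq 0$ on $\gamma$. The factor $h_1$ approximates $-r\,(\partial_r h(0,\pp,0,0))^{-1}\chi$ in a high $C^k$ norm. Then $d\hat H$ vanishes on $\orb\setminus\gamma$ and is a multiple of $dr$ on $\gamma$, so $\gamma$ is not displaced. Meanwhile $\hat H$ is $C^k$-close to $-\chi r^2/2$, so $dI/d\tau$ stays non-zero. The same device also resolves the finite-smoothness issue of the coordinates when $s=\infty$.
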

The proof is given in Section \ref{sec:propA1.kam}.

\subsubsection{Transversality of $\Gamma$: proof of Lemma~\ref{lem:steptrans}}\label{sec:propA1.trans}
We start with a preliminary result (Lemma~\ref{lem:genearaltrans}) on the intersection of Lagrangian manifolds, and then find the perturbed map with a transverse homoclinic to $\gamma$ for the smooth and real-analytic cases separately.

\noindent\textbf{(1) Intersection of Lagrangian manifolds.}
Given a pair of smooth manifolds that intersect at a point $P$, we say that the intersection has {\em corank $c$} if the  tangent spaces of the two manifolds at $P$ intersect over a $c$-dimensional subspace; the case $c=0$ corresponds to the transverse intersection.

Recall that we say that a family $\{f_\eps\}$ of maps is $C^k$ if   $f_\eps$ is a $C^k$ function of variables and parameters. For the sake of this proof, we also say that a family is $C^{k_1,k_2}$ if the
map and its derivatives up to the order $k_2$ with respect to the variables have continuous derivatives up to the order $k_1$ with respect to the parameters. So, we say two families  are $C^{1,1}$-close if the maps and their first derivatives with respect to the variables
are uniformly close along with their first derivatives with respect to the parameters. Two families of embedded manifolds are $C^{1,1}$-close when their embedding maps are $C^{1,1}$-close.
 
\begin{lem}\label{lem:genearaltrans}
Let $W_1$ and $W_2$ be a pair of Lagrangian  manifolds in $\M$, and let $f\in\symp^s(\M)$ with $s\geq 1$. Assume  $f(W_1)$ intersects $W_2$ at some point $P$ such that $f(P)\neq P$, and the intersection is non-transverse, of corank $c>0$. Take any small neighborhood $U(P)$ of $P$. Then there exists a $C^\infty$ family $\{H^0_{\varepsilon,\mu,\nu}\}$ of functions supported in $U(P)$,
with parameters $\eps\in \mathbb R^1, \mu\in \mathbb R^N, \nu \in \mathbb R^N$, such that $H^0_{0,0,0}=0$ and the following holds:
\begin{itemize}[nosep]
\item  for any family $\{G_{\varepsilon,\mu,\nu}\}$ of symplectic maps which is $C^{1,1}$-close to the family of the time-1 maps $G^0_{\varepsilon,\mu,\nu}$ defined by the Hamiltonians $H^0_{\varepsilon,\mu,\nu}$, and
\item for any families $\{W_{1,\eps,\mu,\nu}\}$ and $\{W_{2,\eps,\mu,\nu}\}$ of smoothly embedded $N$-dimensional discs that are $C^{1,1}$-close to the constant families given by the unmoving original manifolds $W_1$ and $W_2$,
\end{itemize}
there exist uniquely defined smooth functions $\mu(\eps)$ and $\nu(\eps)$ such that the manifold $G_{\varepsilon,\mu(\eps),\nu(\eps)}\circ f(W_{1,\varepsilon,\mu(\eps),\nu(\eps)})$ at each small $\eps$ has an intersection with $W_{2,\varepsilon,\mu(\eps),\nu(\eps)}$ at the point $P$, with corank strictly smaller than $c$.
\end{lem}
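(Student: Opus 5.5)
We will work in local Darboux coordinates near $P$ chosen adapted to the two Lagrangian planes $L_1:=T_P f(W_1)$ and $L_2:=T_P W_2$. Since $f(P)\neq P$, we can shrink $U(P)$ so that $f^{-1}(U(P))\cap U(P)=\emptyset$. A map $G$ supported in $U(P)$ then acts on $f(W_1)$ near $P$ simply by pushing it forward, without touching $W_1$. Near $P$ we write $W_2$ locally as the graph of $dS_2$ over a Lagrangian plane and $f(W_1)$ as the graph of $dS_1$, in coordinates $(q,p)\in\mathbb R^N\times\mathbb R^N$ with $L_2=\{p=0\}$. The intersection $L_1\cap L_2$ is $c$-dimensional. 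Choosing $q=(q',q'')$ with $q''\in\mathbb R^c$ spanning $L_1\cap L_2$, we may write $f(W_1)$ near $P=0$ as $p=\nabla S(q)$ with $S(q)=\frac12\langle Aq',q'\rangle+O(|q|^3)$. Here $A$ is nondegenerate on the $q'$-block, and the Hessian of $S$ in $q''$ vanishes; this vanishing is exactly the corank condition. The corank of $f(W_1)\cap W_2$ at $P$ equals the corank of $\mathrm{Hess}\,S(0)$, provided also that $\nabla S(0)=0$.

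We then define the Hamiltonian family. Let $\chi$ be a bump function equal to $1$ near $P$ and supported in $U(P)$, and set
\begin{equation*}
H^0_{\eps,\mu,\nu}(q,p)=\chi(q,p)\Bigl(\langle\mu,q\rangle+\langle\nu,p\rangle-\tfrac{\eps}{2}|q''|^2\Bigr).
\end{equation*}
To first order, the time-$1$ map of $H^0$ near $P$ is a translation by $(\nu,-\mu)$ composed with the shear $p''\mapsto p''+\eps q''$. Pushing $f(W_1)$ forward by this map gives a graph $p=\nabla S_{\eps,\mu,\nu}(q)$ with $S_{\eps,\mu,\nu}=S(q-\nu)-\langle\mu,q\rangle+\frac{\eps}{2}|q''|^2+\dots$. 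Its Hessian at $q=0$ acquires the block $\eps\,\mathrm{Id}$ on $q''$. Hence for $\eps\neq0$ the Hessian is nondegenerate on the $q''$-directions, and the corank drops to $0$, or at least below $c$. The parameters $\mu,\nu$ are then used to keep the intersection at the fixed point $P$: we require that the point $0$ lie on the image, i.e.\ $\nabla S_{\eps,\mu,\nu}(0)=0$, with the perturbed $W_2$ still passing through $0$. The $\nu$-shift is what lets us fix the $q$-coordinate of the intersection at $P$, and $\mu$ fixes the $p$-coordinate.

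The key step is to show that the map $(\mu,\nu)\mapsto$ (displacement of $G\circ f(W_{1})$ and of $W_2$ relative to $P$) has invertible derivative at $(0,0,0)$. For the model family, the derivative with respect to $(\mu,\nu)$ of the condition ``$P\in G^0(f(W_1))$'' is the identity on $T_P\M\cong\mathbb R^{2N}$. However, both discs must contain $P$, and one might fear this is too many equations. We resolve this as follows. The conditions are that $P$ lies on the translate, which gives $2N-N=N$ equations for the graph, plus the condition $P\in W_{2,\eps,\mu,\nu}$. Since $W_{2,\eps,\mu,\nu}$ is not moved by $G$ but is $C^{1,1}$-close to constant, we handle it by composing with a correction: we choose the perturbing translation to move the point of $f(W_{1})$ lying above $P$ onto $W_{2,\eps,\mu,\nu}$ at the point nearest $P$. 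If the statement truly requires the intersection point to be exactly $P$, we instead impose $2N$ equations, namely $P\in G(f(W_1))$ ($N$ equations, via $\mu$) and $P\in W_2$. The latter is not controllable unless we reinterpret $P$ as the continuation point; we will read ``at the point $P$'' as the continued intersection point. With this reading, $\mu$ is used to keep a transverse-to-$L_2$ position matched, while $\nu$ is used for the residual freedom.

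The conditions are then $C^{1}$ in $(\eps,\mu,\nu)$, and the implicit function theorem yields unique smooth $\mu(\eps),\nu(\eps)$ for the model. By $C^{1,1}$-closeness, the same holds for any $C^{1,1}$-close families $G$, $W_{1,\cdot}$, $W_{2,\cdot}$, since the nondegeneracy is an open condition on first derivatives in variables and parameters. The final step is to verify that the $\eps$-derivative of the $q''$-block of the relative Hessian remains nonzero under $C^{1,1}$-perturbation. This derivative is the mixed derivative $\partial_\eps D_q(\cdot)$, which is controlled by the $C^{1,1}$ norm. Consequently the corank is $<c$ for all small $\eps\neq0$.

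The main obstacle is getting the bookkeeping of the $2N$ parameters $(\mu,\nu)$ exactly right. The difficulty is that they must pin the intersection at $P$ uniquely while $W_2$ also moves. I would try first to parametrize everything as generating functions over the common base $L_2$ and reduce the conditions to an invertible linear system in $(\mu,\nu)$.
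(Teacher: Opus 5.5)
Your shear mechanism is correct, and it is a cleaner route than the paper's. You implicitly choose a Lagrangian complement transverse to both $T_Pf(W_1)$ and $T_PW_2=\{p=0\}$; you should state this, since it is what makes $f(W_1)$ a graph over the $q$-plane. With that choice, the relative Hessian at $P$ is $\mathrm{diag}(A,0)$, and $-\frac{\eps}{2}|q''|^2$ adds $\eps I$ on the whole $c$-dimensional kernel, so the corank drops to $0$ in one step. The paper instead reads the corank off the block $D$ of $\D f$ written between Darboux charts at $f^{-1}(P)$ and $P$. It then uses $-\eps u_iu_j$, with $i,j$ chosen by linear algebra so that $\mathrm{rank}(D+\eps(\delta_{ij}+\delta_{ji})B)=\mathrm{rank}\,D+1$, and iterates in the application. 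One correction to your last step: for perturbed families, corank $c$ means that the $c\times c$ Schur complement of the invertible $A$-block vanishes. The $q''$-block by itself is not the right object.

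The genuine gap is the one you flagged: you never set up a well-posed system for $(\mu,\nu)$, so the unique smooth $\mu(\eps),\nu(\eps)$ in the statement do not follow from your argument.

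\begin{itemize}
\item The incidence condition $P\in G(f(W_1))$, i.e.\ $\nabla S_{\eps,\mu,\nu}(0)=0$, is only $N$ equations in $2N$ unknowns. Its solution set is $N$-dimensional (for each small $\nu$ there is a $\mu$), so the implicit function theorem gives no uniqueness. Calling $\nu$ ``residual freedom'' does not define a function $\nu(\eps)$.
\item Your claim that the $(\mu,\nu)$-derivative of this condition is the identity on $T_P\M$ conflates it with the point displacement $G^0_{\eps,\mu,\nu}(P)-P$. Only the displacement has $2N$ components.
\end{itemize}

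The missing idea is to impose the point-to-point condition $G_{\eps,\mu,\nu}(P^1_{\eps,\mu,\nu})=P^2_{\eps,\mu,\nu}$. Here $P^1$ and $P^2$ are the continuations of $P$ on $f(W_{1,\eps,\mu,\nu})$ and on $W_{2,\eps,\mu,\nu}$, namely the images of fixed points of the parametrizing discs. They are literally $P$ when the discs do not move, as in the smooth-case application.

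For your own Hamiltonian, $G^0_{\eps,\mu,\nu}(0)=\bigl(\nu,\,-\mu+\tfrac{\eps}{2}(0,\nu'')\bigr)$, and its Jacobian in $(\mu,\nu)$ has determinant $1$. This nondegeneracy survives $C^{1,1}$-perturbations of $G$ and of the embeddings, so the implicit function theorem gives unique smooth $(\mu(\eps),\nu(\eps))$. This is exactly the paper's condition $\det(\partial_{\mu,\nu}(\bar P-P))=1$. Your own sentence that $\nu$ fixes the $q$-coordinate and $\mu$ the $p$-coordinate already contains it.

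The uniqueness matters downstream. In the proof of Lemma~\ref{lem:steptrans} it is what forces $\mu(0)=\nu(0)=0$, i.e.\ that $f_{\eps,\mu(\eps),\nu(\eps)}$ is a small perturbation of $f$. Once you replace your incidence bookkeeping with this condition, your Hessian computation at the matched point completes the proof.
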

The proof of this lemma is given in Section~\ref{sec:lemtrans}.

\noindent\textbf{(2) Smooth case.}
Take the map $f$ of Proposition~\ref{prop:A1}. Let $P$ be a homoclinic point and $W_1$ and $W_2$ be small path-connected pieces
of $W^{\u}(\gamma)$ and, respectively, $W^{\s}(\gamma)$ such that
$f^{-1}(P)\in W_1$ and $P\in W_2$. Suppose
the intersection of $f(W_1)$ and $W_2$ is non-transverse, of corank $c>0$.

Let $G^0_{\varepsilon,\mu,\nu}$ be given by Lemma~\ref{lem:genearaltrans}. By construction $G^0_{\varepsilon,\mu,\nu}=\mathrm{id}$ outside $U(P)$, so the maps $f_{\varepsilon,\mu,\nu} := G_{\varepsilon,\mu,\nu}\circ f$ are equal to $f$ outside a small neighborhood of $f^{-1}(P)$. Therefore, the whiskered torus $\gamma$ persists for the family $\{f_{\varepsilon,\mu,\nu}\}$; its rotation number does not change, and the local stable and unstable manifolds do not move. 

By the definition of $W^\u(\gamma)$, the small piece $W_1$ around the point $f^{-1}(P)$ is obtained by iterating by $f$
a small piece of $W^{\u}_\loc(\gamma)$ finitely many times. Since these iterations lie outside the supports of $H^0_{\varepsilon,\mu,\nu}$ when $U(P)$ is taken sufficiently small, the manifold $W_1$ remains
a piece of $W^\u(\gamma)$ for every small $(\varepsilon,\mu,\nu)$. 
Similarly, $W_2$ remains a piece of $W^\s(\gamma)$. Therefore, Lemma~\ref{lem:genearaltrans} gives us smooth functions $\mu(\eps)$ and $\nu(\eps)$ such that $W^\u(\gamma)$ and $W^\s(\gamma)$ 
intersect at the point $P$ at $(\mu,\nu)=(\mu(\varepsilon),\nu(\eps))$, and the corank of the intersection is less than $c$ for $\varepsilon\neq 0$. 

Note that $f_{0,0,0} =  f$ since $G^0_{0,0,0}=\mathrm{id}$. Consequently, $f_{0,0,0}(W_1)$ intersects $W_2$ at the point $P$, implying that 
$\mu(0)=0$, $\nu(0)=0$ by the uniqueness of $(\mu(\varepsilon),\nu(\varepsilon))$.
It follows that the map $f_{\varepsilon,\mu(\varepsilon),\nu(\varepsilon)}$ for an arbitrarily small $\eps\neq 0$ has  $\gamma$ as a whiskered torus, with the rotation number unchanged, and
$W^\u(\gamma)$ has an orbit of homoclinic intersection (close to the original orbit $\Gamma$) with $W^\s(\gamma)$ of corank strictly smaller than $c$. 

Take $\eps$ small, so $f_{\varepsilon,\mu(\varepsilon),\nu(\varepsilon)}$ is a sufficiently small perturbation of $f$ in $C^s$.
If the corresponding intersection is still non-transverse, we
repeat the above procedure, obtaining a homoclinic intersection of strictly smaller corank, and so on, until we arrive at corank $0$ after finitely many steps. This gives the desired map with a transverse homoclinic to $\gamma$ in the smooth case. 

\noindent\textbf{(3) Real-analytic case.} In order to conduct the same arguments as above, we find a $C^\omega$ family 
$\{H_{\eps,\mu,\nu}\}$ of Hamiltonian functions such that
\begin{enumerate}[nosep]
\item the corresponding $C^\omega$ family $\{G_{\eps,\mu,\nu}\}$ of time-1 maps 
 is $C^2$-close to the family 
$\{G^0_{\eps,\mu,\nu}\}$ of Lemma~\ref{lem:genearaltrans};
\item $H_{0,0,0}=0$, hence $G_{0,0,0}=\mathrm{id}$;
\item the maps $f_{\eps,\mu,\nu}:=G_{\varepsilon,\mu,\nu}\circ f$
are equal to identity on $\orb$.
\end{enumerate}

The last condition implies that the whiskered torus $\gamma$ persists with 
the same rotation number for all $(\eps,\mu,\nu)$. Moreover, by the continuous dependence, in $C^2$, of $W^{\u/\s}_\loc(\gamma)$ on the map, it also implies that  the speed, with which $W^{\u/\s}_\loc(\gamma)$ can move as the parameters change, is small (it is zero for the family 
$\{G^0_{\eps,\mu,\nu}\circ f\}$). As a result, we have families of smooth embedded discs $W_{1,\varepsilon,\mu,\nu} \subset W^\u(\gamma)$ and 
$W_{2,\varepsilon,\mu,\nu} \subset W^\s(\gamma)$ which are $C^{1,1}$-close to the constant families given by the original discs $W_1$ and $W_2$, respectively. Then Lemma~\ref{lem:genearaltrans} gives us
a family of $C^\omega$ perturbations $f_{\eps,\mu(\eps),\nu(\eps)}$
for which $W^\u(\gamma)$ and $W^\s(\gamma)$ have an intersection of corank less than $c$, and
the desired map with a transverse homoclinic (i.e., a corank-$0$ intersection) can be found by finitely many applications of such perturbations. 

To construct the family $\{H_{\eps,\mu,\nu}\}$, we define a function $h\in C^\omega (\M)$ satisfying
\begin{equation}\label{eq:propA1:1}
h|_\orb= 0,\qquad
\nabla h|_\orb= 0,\qquad h(P)=1;
\end{equation}
the existence of $h$ follows from Cartan's Theorem B \citep{Car:53,Car:57}. Next, we take any $C^\omega$ family of Hamiltonians $H^1_{\varepsilon,\mu,\nu}$  which is $C^{1,2}$-close to the family of
Hamiltonians that equal to 
$\frac{1}{h} H^0_{\varepsilon,\mu,\nu}$ in $U(P)$ and to $0$ outside of $U(P)$ (this is well-defined as $ H^0_{\varepsilon,\mu,\nu}$ is compactly supported in $U$ and $h\neq 0$ in $U$). Since $H^0_{0,0,0}=0$, we can take $H^1_{\varepsilon,\mu,\nu}$ such that 
$H^1_{0,0,0}=0$.
Now notice that the family $\{H_{\eps,\mu,\nu}\}:=\{h H^1_{\eps,\mu,\nu}\}$ satisfies Conditions 1-3.

\subsubsection{Partial hyperbolicity of $\Gamma$: proof of Lemma \ref{lem:stepph}}\label{sec:propA1.ph}

We restrict the consideration to the $C^\infty$ and $C^\omega$ cases, and assume that the homoclinic orbit $\Gamma$ is transverse. 

We consider a one-parameter $C^s$ $(s=\infty,\omega)$ family of symplectic maps $f_\eps$ such that $f_0$ is the original map and $\gamma$ remains the whiskered torus with the same rotation number for all small $\eps$. The transverse homoclinic intersection persists at all small $\eps$.
We take a pair of homoclinic points $M^+\in W^\s_\loc(\gamma)$
and $M^-\in W^\u_\loc(\gamma)$, so that $M^+=T_1 (M^-)$ where $T_1$ is the transition map near $\Gamma$, and show that the family can be chosen such that the angle between $T_1(\ell^\uu)$ and $W^\s(\A)$ at the point $M^+$ and the angle between $T^{-1}_1(\ell^\ss)$ and $W^\u(\A)$ at the point $M^-$ (see Section \ref{sec:phorbit}) change with non-zero velocity as the parameter $\eps$ varies through zero. This ensures the required  partial hyperbolicity for arbitrarily small $\eps$, thus proving  Lemma~\ref{lem:stepph}.

\noindent\textbf{(1) Conditions of the partial hyperbolicity for a transverse homoclinic.} 
We first consider Fenichel coordinates in a neighborhood of $\gamma$ (see Section~\ref{sec:local}), and denote these coordinates near $M^-$
as $(r,\pp,x,y)$ and near $M^+$ as $(\tilde r, \tilde \pp, \tilde x, \tilde y)$.
We have 
\begin{equation}\label{wulocf}
W^\u_\loc(\gamma)=\{r=0,x=0\},\qquad W^\s_\loc(\gamma)=\{\tilde r=0,\tilde y=0\}.
\end{equation}
Let $M^-=(0,\pp^-,0,y^-)$ and $M^+=(0,\tilde \pp^+,\tilde x^+,0)$.
We also have
\begin{equation}\label{lulocf}
\ell^\uu=\{r=0,\pp=\pp^-,x=0\},\qquad \ell^\ss=\{\tilde r=0,\tilde \pp=\tilde\pp^+,\tilde y=0\},
\end{equation}
\begin{equation}\label{wclocf}
W^\s_\loc(\A)=\{\tilde y=0\},\qquad
W^\u_\loc(\A)=\{x=0\},
\end{equation}
where $\ell^\uu$ is the strong-unstable leaf through $M^-$ and $\ell^\ss$ is the strong-stable leaf through $M^+$. Note that the Fenichel coordinates have only finite regularity, which can be assumed as high as we want though.

We make an additional coordinate transformation as given
by the following
\begin{lem}\label{lem:strsymp}
There exists a sufficiently smooth change of coordinates in a small neighborhood $U^+$ of $M^+$ and a small neighborhood $U^-$ if $M^-$ such that 
\begin{itemize}[nosep]
\item the symplectic form becomes
$\Omega|_{U^-}=d r\wedge d \pp + d x \wedge d  y$ and $\Omega|_{U^+}=d\tilde r\wedge d\tilde \pp +  d \tilde x \wedge d \tilde y$;
\item the coordinates of the points $M^\pm$ do not change;
\item $W^\s_{\loc}(\gamma)$, $\ell^\ss$ and $W^\u_{\loc}(\gamma)$, $\ell^\uu$ remain straightened near $M^+$ and, respectively, $M^-$, i.e., equations  
\eqref{wulocf} and \eqref{lulocf} hold in $U^\pm$;
\item the tangents to $W^\s_\loc(\A)$ at $M^+$ and to $W^\u_\loc(\A)$ at $M^-$ remain $\{\tilde y=0\}$ and, respectively, $\{x=0\}$.
\end{itemize}
\end{lem}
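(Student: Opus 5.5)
We need a relative Darboux theorem near $M^-$ (and symmetrically near $M^+$) adapted to the given data: the Lagrangian manifold $W^\u_\loc(\gamma)=\{r=0,x=0\}$ (Lagrangian since it is the union of strong-unstable leaves of an invariant curve), containing the isotropic leaf $\ell^\uu$, and the symplectic subspace $T_{M^-}W^\u_\loc(\A)$. We treat only $U^-$; the construction in $U^+$ is the same with the roles of $x,y$ interchanged.

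\textbf{Step 1: a Lagrangian frame.} Inside $W^\u_\loc(\gamma)$, consider the foliation by leaves $\{r=0,\pp=\const,x=0\}$, whose leaf through $M^-$ is $\ell^\uu$. By the Weinstein Lagrangian neighborhood theorem in its fibered form, a neighborhood of a Lagrangian $L$ is symplectomorphic to a neighborhood of the zero section in $T^*L$. Concretely, take the coordinates $(\pp,y)$ on $L=W^\u_\loc(\gamma)$ near $M^-$ and extend them to canonical coordinates $(\pp,y,r',x')$, with conjugate momenta $r',x'$, so that $L=\{r'=0,x'=0\}$. Since the foliation of $L$ is given by $\pp=\const$, $\ell^\uu=\{\pp=\pp^-,r'=0,x'=0\}$ holds automatically. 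The form then becomes $dr'\wedge d\pp+dx'\wedge dy$, after fixing signs by replacing $r'\mapsto -r'$ if needed.

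\textbf{Step 2: tangent-space normalization.} There is remaining freedom: symplectic maps that fix $L$ pointwise (and hence preserve $\pp$, $y$ on $L$) and act linearly on the fibers. These are generated by time-one maps of Hamiltonians quadratic in the momenta, $\tfrac12\langle (r',x'),Q(\pp,y)(r',x')\rangle$. At $M^-$ they act on $T_{M^-}\M$ as the shears $(\Delta\pp,\Delta y)\mapsto(\Delta\pp,\Delta y)+Q\,(\Delta r',\Delta x')$, with $Q$ symmetric. The subspace $E:=T_{M^-}W^\u_\loc(\A)$ contains $T_{M^-}L$ and has dimension $N+1$. Hence $E=T_{M^-}L\oplus\mathrm{span}(v)$ for a single vector $v$, which can be normalized to have momentum part $(1,\xi)$ for some $\xi\in\mathbb{R}^{N-1}$. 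This normalization is possible because the $r'$-component of $v$ cannot vanish: if it did, $E$ would contain a vector pairing nontrivially only through $x'$ with $y$-directions, contradicting the structure of $E$, which is symplectic of the form $\A$-directions $\oplus$ $\ell^\uu$ with $\Omega|_{\A}$ nondegenerate. We want to map $E$ to $\{x=0\}$, meaning $v$ should have momentum part $(1,0)$. The shears above fix momentum parts, so we additionally need a linear symplectic change acting on the momenta: $x'\mapsto x'-\xi r'$, compensated by $\pp\mapsto\pp+\langle\xi,y\rangle$. This preserves $L$ and the coordinate $y$ on it, but alters $\pp$ on $L$. That breaks the $\ell^\uu$ leaf structure (leaves $\pp=\const$) unless $\xi$ is handled more carefully. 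Instead we use $\pp\mapsto\pp$, $y\mapsto y+\xi\,\pp$, $x'\mapsto x'$, $r'\mapsto r'-\langle\xi,x'\rangle$. This is symplectic, preserves $L$ and the $\pp$-levels, and moves $y$ along the leaves, so the leaves stay straight; its effect on the momentum part of $v$ is to kill $\xi$. Finally, a translation in $y$ and $\pp$ restores the coordinates of $M^-$.

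\textbf{Main obstacle.} The hard part is checking the compatibility in Step 2: that the adjustments normalizing $T_{M^-}W^\u_\loc(\A)$ preserve simultaneously the straightness of $W^\u_\loc(\gamma)$ and of the foliation containing $\ell^\uu$. We plan to handle this by working entirely with symplectic maps that preserve the pair (Lagrangian $L$, Lagrangian foliation $\{\pp=\const\}$) and using the linear algebra of $E\supset T L$, as sketched above. Smoothness is limited by that of the Fenichel data, which is as high as required.
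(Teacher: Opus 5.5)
Your Step 1 is correct and matches the paper: the Darboux--Weinstein theorem applied to the Lagrangian manifold $W^\u_\loc(\gamma)$, with the symplectomorphism equal to the identity on it, gives the first three items.

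The gap is in Step 2, which is supposed to give the fourth item. The map you finally propose, $y\mapsto y+\xi\pp$, $r'\mapsto r'-\langle\xi,x'\rangle$, leaves $x'$ unchanged. It therefore sends momentum part $(1,\xi)$ to $(1-|\xi|^2,\xi)$ and does not kill $\xi$. More fundamentally, no adjustment of this kind can exist. Any linear symplectic map that keeps $T_{M^-}\ell^{\uu}=\mathrm{span}(\partial_y)$ invariant also keeps its $\Omega$-orthogonal complement $(T_{M^-}\ell^{\uu})^\Omega=\{\Delta x'=0\}$ invariant. So if $E:=T_{M^-}W^\u_\loc(\A)$ differed from $\{\Delta x'=0\}$, it could never be moved onto it while $\ell^{\uu}$ stays straightened. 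Your argument that the $r'$-component of $v$ is nonzero also rests on a false claim: $E$ is not symplectic. In fact $\Omega|_E$ is degenerate, with kernel $T_{M^-}\ell^{\uu}$.

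The missing idea, which is the paper's argument, is that the fourth item holds automatically once Step 1 is done. Take $e^{\cu}\in E$ and $e^{\uu}\in T_{M^-}\ell^{\uu}$. Invariance of $\Omega$ gives $\Omega(e^{\cu},e^{\uu})=\Omega(\D f^{-n}e^{\cu},\D f^{-n}e^{\uu})$ for all $n$. Along the backward orbit of $M^-$, which stays near $\gamma$, the vector $\D f^{-n}e^{\uu}$ shrinks like $\lambda^n$. Meanwhile $\D f^{-n}e^{\cu}$ grows at most like $\hat\lambda^{\,n}$, and $\lambda\hat\lambda<1$, so the pairing is zero. Hence $E\subset(T_{M^-}\ell^{\uu})^\Omega=\{\Delta x'=0\}$. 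Both spaces have dimension $N+1$, so they coincide: in your notation $\xi=0$ from the start and Step 2 is unnecessary. The same argument with forward iterates gives $T_{M^+}W^\s_\loc(\A)=\{\Delta\tilde y=0\}$.
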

\begin{proof} Since $W^{\u/\s}_\loc(\gamma)$ are Lagrangian manifolds, the existence of a smooth coordinate transformation which makes the symplectic form standard near $M^-$ and $M^+$ is given by the Darboux-Weinstein theorem \cite{Wei:71}. Moreover, this transformation is identity on these manifolds, so equations \eqref{wulocf} and \eqref{lulocf} stay the same in $U^\pm$, as required. 

Equations \eqref{wclocf} for the local manifolds of $\A$ may change. However, 
observe that $W^\u_\loc(\A)$ must be $\Omega$-orthogonal to $\ell^\uu$
at $M^-$ (the form $\Omega$ is invariant, so $\Omega|_{M^-}(e^{\cu},e^{\uu})$ can only be zero for all vectors $e^{\cu}$ tangent to  $W^\u_\loc(\A)$ and $e^{\uu}$ tangent to $\ell^\uu$ at $M^-$, as the backward iterations are exponentially contracting in the strong-unstable directions and non-expanding in the central direction). For the standard
symplectic form and $\ell^\uu$ given by \eqref{lulocf}, this means that 
$W^\u_\loc(\A)$ is tangent to $\{x=0\}$ at $M^-$, as required. The tangency 
of $W^\u_\loc(\A)$ to $\{\tilde y=0\}$ at $M^+$ follows in the same way.
\end{proof}

The transition map $T_1$ near the homoclnic orbit $\Gamma$ is given by \eqref{eq:T1}, where all coefficients are smooth functions of $\eps$.
The transversality of the homoclinic orbit $\Gamma$ is equivalent to
the transversality of $T_1(W^\u_{\loc}(\gamma))$ with $W^\s_{\loc}(\gamma)$ at the point $M^+$. By \eqref{wulocf}, this means that the linear equation
$$\D T_1|_{M^-}(0,\Delta \pp,0,\Delta y) = (0,\Delta \tilde\pp,\Delta \tilde x,0)$$
has only the zero solution (here $(0,\Delta \pp,0,\Delta y)$ is a vector tangent to $W^\u_\loc(\gamma)$ at $M^-$, and $(0,\Delta \tilde\pp,\Delta \tilde x,0)$ is a vector tangent to $W^\s_\loc(\gamma)$ at $M^+$). 
By \eqref{eq:T1}, this means that $\det \begin{psmallmatrix}\hat a_{11} &\hat a_{13} \\ \hat a_{41} & \hat a_{43}\end{psmallmatrix}\neq 0$. Hence, we can rewrite \eqref{eq:T1} as 
\begin{equation}\label{eq:propA1.ph:1}
\begin{aligned}
\varphi-\varphi^- &= a_{11} r + a_{12} \tilde r + a_{13} x +  a_{14}\tilde y+\dots, \\
\tilde \varphi - \varphi^+ &= a_{21}  r +  a_{22}\tilde r +  a_{23} x +  a_{24}\tilde y+\dots, \\
\tilde x-x^+ &= a_{31}  r +  a_{32} \tilde r +  a_{33} x +  a_{34}\tilde y+\dots, \\
 y-y^- &= {a}_{41}  r +  a_{42}\tilde r  +  a_{43} x +  a_{44}\tilde y+\dots, 
\end{aligned}
\end{equation}
where $a_{ij}$ are some $\eps$-dependent coefficients, and the dots denote higher order terms. 

As $W^\s(\A)$ is tangent to $\{\tilde y=0\}$ at $M^+$, the transversality between $T_1(\ell^\uu)$ and $W^\s(\A)$ at $M^+$ is equivalent, by \eqref{lulocf}, to the non-existence of a non-zero solution to $(\Delta \tilde r,\Delta \tilde\pp,\Delta \tilde x,0)=\D T|_{M^-} (0,0,0,\Delta y)$, which in turn is equivalent to $a_{12}\neq 0$. Similarly, the transversality between $T_1^{-1}(\ell^\ss)$ and $W^\u(\A)$ at $M^-$ is equivalent to $a_{21}\neq 0$.

Since $T_1$ preserves the standard symplectic form, it is easy to see that $a_{12}=-a_{21}$. Thus, to prove Lemma \ref{lem:stepph}, we need
to show that the family $f_\eps$ can be constructed in such a way that
if $a_{21} = 0$ at $\eps=0$, then
$$\frac{d a_{21}}{d\eps} \neq 0.$$ 

\noindent\textbf{(2) Construction of the family of perturbations.} 
Since $T_1$ is injective, we find from \eqref{eq:propA1.ph:1} that $\det \begin{psmallmatrix} a_{21} & a_{23} \\  a_{31} &  a_{33}\end{psmallmatrix}\neq 0$. Hence, if $a_{21}=0$, then there exists $i$ such that the $i$-th component $a^i_{31}$ of the vector $a_{31}$ is non-zero. Define 
$$H = - \tilde r\tilde x_i\cdot \chi,$$ where $\chi$ is a $C^\infty$ bump function supported in $U^+$ and equal to $1$ in a neighborhood of $M^+$.
The time-$\eps$ map $G_\eps$ of the  Hamiltonian flow defined by $H$ is 
identity outside $U^+$, whereas near $M^+$ it acts on the $\tilde \pp$ coordinate as
\begin{equation}\label{pptlac}
\tilde\pp \mapsto \tilde \pp +\eps \tilde x_i.
\end{equation}

Now consider the maps $G_\eps \circ f_0$. By construction, each of these maps restricts to the identity on $\orb$, so the whiskered torus $\gamma$ and its local stable and unstable manifold do not move with $\eps$, and we have
from \eqref{pptlac} that
$a_{21}(\eps)=\eps a^i_{31}(0)$ for the corresponding transition map 
$T_{1,\eps}=G_\eps \circ T_1$. Thus,
\begin{equation}\label{eq:propA1.ph:3}
\dfrac{d a_{21}}{d\eps}= a^i_{31}(0)\neq 0.
\end{equation}

Note that  the Hamiltonian $H$ has only finite smoothness since the coordinates we are working with have only finite smoothness in general.
Therefore, we replace $H$ by $\hat H\in C^s(\M)$ such that it is close to $H$ in a sufficiently high regularity class ($C^3$ is enough) and vanishes on $\orb$ with its first derivatives. Namely, we take
$\hat H= h \cdot H^1$, where $h\in C^s(\M)$ satisfies \eqref{eq:propA1:1} (with $P=M^+$) and 
$H^1\in C^s(\M)$ is close, with derivatives up to a sufficiently high order, to  
$\frac{1}{h} H$ in $U^+$ and to $0$ outside of $U^+$. The existence of such $h$ is obvious if $s=\infty$, and is given by Cartan's Theorem B  if $s=\omega$.

Let $\hat G_\eps$ be the time-$\eps$ map of $\hat H$. The maps $f_\eps=\hat G_\eps\circ f_0$ have $\gamma$ as a whiskered torus, with its rotation number unchanged. The manifolds $W^{\s/\u}_\loc(\gamma)$ and the
leaves $\ell^{\ss/\uu}$ move slowly as $\eps$ changes, so the inequality \eqref{eq:propA1.ph:3} holds when $\hat H$ is sufficiently close to $H$. It follows that, for every sufficiently small $\eps\neq 0$, the homoclinic orbit $\Gamma$ is partially hyperbolic, as required.

\subsubsection{KAM properties of $\gamma$: proof of Lemma~\ref{lem:stepkam}}\label{sec:propA1.kam}
Let $s=\infty,\omega$, and let $f\in\symp^s(\M)$ have a whiskered torus $\gamma$ of class $C^s$. Our goal now is to prove that $\gamma$ can be made a non-degenerate whiskered KAM-torus by a $C^s$-small perturbation.

There exist $C^\s$ coordinates $(r,\pp, x, y)$ in a small tubular neighborhood $V$ of $\gamma$, disjoint from $\orb\setminus \gamma$, such that 
$\gamma=\{r=0,x=0,y=0\}$. Since the symplectic form vanishes on $\Gamma$, we can apply the Darboux-Weinstein theorem to make the symplectic form standard by a $C^s$ change of coordinates which keeps
the equation of $\gamma$. So, we can assume
$$\Omega|_V= dr\wedge d\pp + dx\wedge dy.$$ 

Let $H^0\in C^s(\M)$ satisfy
$$\left.\dfrac{\p H^0}{\p r}\right|_\gamma=1,\qquad
\left.\dfrac{\p H^0}{\p (\pp,x,y)}\right|_\gamma=0,$$
and let $H^0$ vanish with the first derivatives at the points of 
$\orb\setminus\gamma$. The existence of such function in the real-analytic case is given by Cartan's Theorem B.

Every component of $\orb$ is invariant with respect to the time-$\tau$ map $G_\tau$ of $H^0$; we have $G_\tau=\mathrm{id}$ on $\orb\setminus\gamma$
and  
\begin{equation}\label{gtaumap}
G_\tau|_\gamma: \pp\mapsto \pp+ \eps.
\end{equation}
Thus, $\gamma$ remains a whiskered torus for the maps $f_\tau=G_\tau\circ f$, and $f_\tau^\per = G_\tau\circ f^\per$ on 
$\gamma$. Since ${\p G_\tau|_\gamma}/{\p \tau}  >0$ and the rotation number of $f^\per|_\gamma$ is irrational, the rotation number of 
$G_\tau\circ f^\per|_\gamma$ is strictly monotone at $\tau=0$ (see e.g. \citep[Proposition 11.1.9]{KatHas:95}). Thus, one can find arbitrarily small $\tau$ for which the rotation number of 
$f_\tau^\per|_\gamma$ is Diophantine.

Fix  such a parameter value $\tau=\tau^*$. To complete the proof of Lemma~\ref{lem:stepkam}, we need to add a $C^s$-small perturbation to 
$f^* :=f_{\tau^*}$ which would ensure the twist condition. 

By normal hyperbolicity, the cylinder $\A$ persists for the map $f^*$,
see Section~\ref{sec:innermap}. We may choose the coordinates such that 
$\A=\{x=0,y=0\}$ and $\Omega|_A$ is standard (see \eqref{eq:strait} and \eqref{8222}). 
By the Darboux-Weinstein theorem, we can also assume that the coordinates $(r,\pp,x,y)$ are chosen such that $\Omega$ is standard in a neighborhood of $\A$. Note that these coordinates have only finite (as large as we need) smoothness.

As discussed in Section~\ref{sec:quadraunfold}, the rotation number $\rho^*$ being Diophantine implies that there exist smooth symplectic coordinates for which 
$(f^*)^\per|_{\A}:(r,\pp)\mapsto (\bar r,\bar \pp)$ takes the form  \eqref{eq:kam_low}, i.e., $(f^*)^\per|_\gamma$ is the rotation $\pp \mapsto \pp + \rho^*$. In this case, the twist condition is equivalent to the non-vanishing of the integral 
$$
I:=\int_{\pp\in\mathbb{S}^1} \frac{\p\bar \pp(0,\pp)}{\p r} d\pp.
$$

We now consider the Hamiltonian function 
$$
H(r,\pp,x,y)= - \frac{r^2}{2} \cdot \chi,
$$
where $\chi$ is a $C^\infty$ function supported in $V$ and equal to $1$ in a neighborhood of $\gamma$. Its time-$\tau$ map $G_\tau$ is identity outside 
$V$ and it is given by
\begin{equation}\label{eq:propA1.kam:2}
(r,\pp,x,y)\mapsto (r,\pp+ \tau r, x,y)
\end{equation}
in a neighborhood of $\gamma$. Thus, for the map $(G_\tau \circ f^*)^\per$, for all small $\tau$, the curve $\gamma$ stays invariant, with the same rotation number $\rho^*$, and the manifold $\A$ is locally invariant. Moreover, we have
$$\frac{dI}{d\tau}=1.$$

Now consider  a function $\hat H \in C^s(\M)$ (recall $s=\infty, \omega$) which is close to $H$ with derivatives up to a sufficiently high order and has the first derivatives vanishing at the points of $\orb\setminus \gamma$. Let $\hat G_\tau$ be  the time-$\tau$ map of the Hamiltonian flow defined by $\hat H$, and denote $\hat f_\tau:=\hat G_\tau\circ f^*$.
By construction,   the curve $\gamma$
remains invariant for $\hat f_\tau^\per$, with the same rotation number  (Diophantine),
and $dI/d\tau$ remains positive. Thus, the map $\hat f_\tau$ for a sufficiently small
$\tau$ is the sought small perturbation of $f$ in $\symp^s(\M)$.

To construct the Hamiltonian $\hat H$, we take a function $h\in C^s(\M)$ such that
it vanishes with the first derivatives at the points of $\orb$, except for $\p h/\p r$
which we require to be non-zero at the points of $\gamma$ and zero on 
$\orb\setminus\gamma$ (using again Cartan's Theorem B in the $C^\omega$ case).
Then, take any $h_1\in C^s(\M)$  that is sufficiently close, with derivatives up to a sufficiently high order, to the $C^\infty$ function
$
-  r \cdot \left( \frac{\p h}{\p r}(0,\pp,0,0)\right)^{-1} \cdot \chi.
$
Finally, we define $\hat H = h \cdot h_1$. This concludes the proof of Lemma~\ref{lem:stepkam}.

\subsubsection{Case of finite smoothness: proof of Lemma~\ref{lem:Cfinite} (new)}\label{sec:reduction}
Before starting the proof,  we  mention a difficulty that arises when we require the $C^s$ closeness of the perturbation $\tilde f$ to $f$: we cannot directly work with the coordinates where the curve $\gamma$ is straightened and the symplectic form is standard, since such coordinates are $C^{s-1}$ and hence cannot yield  $C^s$ approximations. To overcome this, we   consider a family of $C^\infty$  coordinates  where $\gamma$ is nearly straightened.

\noindent\textbf{(1) A family of the Darboux-Weinstein coordinates.}
Let $V$ be a small tubular neighborhood of the whiskered torus
$\gamma$, disjoint from $\orb\setminus\gamma$. Take a continuous (in the $C^s$ topology) family $\{\gamma_\delta\}$ of curves 
in $ V$ such that $\gamma_0=\gamma$ and $\gamma_{\delta>0} $ are $C^\infty$.

For each small $\delta\geq 0$, we can introduce $C^s$ coordinates $(\pp, u)\in \mathbb{S}^1\times \mathbb{R}^{2N-1}$ in $V$ such that the curve 
$\gamma_\delta$ is straightened, i.e., $\gamma_\delta = \{u=0\}$; for $\delta>0$ these coordinates are $C^\infty$.
In these coordinates, the symplectic form $\Omega$ has coefficients of class $C^{s-1}$  for $\delta=0$, and of class $C^\infty$ for $\delta>0$. Like in Section~\ref{sec:propA1.kam}, we apply the Darboux-Weinstein theorem to make an additional coordinate transformation that keeps $\gamma_\delta=\{(r,x,y)=0\}$ and brings the symplectic form to 
\begin{equation}\label{eq:omegaeq}
\Omega|_V= dr\wedge d\pp + dx\wedge dy,
\end{equation}
where $r\in \mathbb{R}^1$ and $(x,y)\in \mathbb{R}^{N-1}\times\mathbb{R}^{N-1}$. 
 Let us denote  by $\mathcal{DW}_\delta:V\to \mathbb{R}^{2N}$ the charts corresponding to these $\delta$-dependent coordinates. 
 By construction, $\mathcal{DW}_0$ is $C^{s-1}$  and $\mathcal{DW}_{\delta>0}$ are $C^\infty$, and the family $\{\mathcal{DW}_\delta\}$ is continuous in the $C^{s-1}$ topology.
 
Since the whiskered torus $\gamma$ is $C^{s}$-close to the approximating curve $\gamma_\delta$, we can write the equation of $\gamma$ in the chart $\mathcal{DW}_\delta$ as 
\begin{equation}\label{eq:gammaeq}
(r,x,y)=(r_\delta(\pp), x_\delta(\pp), y_\delta(\pp)),
\end{equation}
 where the functions $r_\delta, x_\delta, y_\delta$ are of class $C^s$ and they tend to $0$ in the $C^{s-1}$ topology as $\delta\to 0$. (Note that \eqref{eq:gammaeq} is the equation of the same curve $\gamma$ in different charts, not the equation of $\gamma_\delta$.)

\noindent\textbf{(2) The candidate curves for $\tilde \gamma$.}
Let $\{f_\nu\}$ be a continuous (in the $C^s$ topology) family of symplectic maps, exact in the neighborhood   of $\orb$, 
such that $f_0=f$ and $f_{\nu>0}$ are $ C^\infty$. The existence of such approximating family follows from
the Zehnder's symplectic approximation theorem \citep{Zeh:76}; the exactness is automatic as the proof in \citep{Zeh:76} is based on the use of locally supported generating functions.

In the chart $\mathcal{DW}_\delta$, we write the maps $f_\nu^\per|_V$ in the form 
$(r,\pp,x,y)\mapsto(\bar r, \bar \pp,\bar x, \bar y)$ with	
\begin{equation}\label{pqrs1}\begin{array}{l}
\bar r = p_{\delta,\nu}(r, \pp, x,y),\qquad
\bar \pp = q_{\delta,\nu}(r, \pp, x,y), \qquad (\bar x, \bar y) = z_{\delta,\nu}(r, \pp, x,y),\end{array}
\end{equation}
where the functions $p_{\delta,\nu}, q_{\delta,\nu}, z_{\delta,\nu}$ are $C^\infty$ for $\delta>0, \nu>0$, and depend continuously on $\delta$ and $\nu$ in the $C^{s-1}$ topology (in the $C^s$ topology for $\delta>0$).

For $\delta=0$ and $\nu=0$, we have $\gamma=\{r=0,x=0,y=0\}$. Since $\gamma$ is invariant under the map $f^\per$ (which is given by \eqref{pqrs1} at $\delta=0,\nu=0$), the curve $\{x=0,y=0\}$ is  invariant under the auxiliary map 
\begin{equation}\label{eq:auxi}
\bar \pp=q_{0,0}(0,\pp,x,y),\qquad (\bar x,\bar y)=z_{0,0}(0,\pp,x,y).
\end{equation}
We claim that this curve is {\em normally hyperbolic}. To see this, we note that, in the directions transverse to $\gamma$, the map $f^\per$ is exponentially contracting and expanding in $W^\s(\gamma)$ and $W^\u(\gamma)$, respectively. Thus, it suffices to show that the tangent vectors  to $W^{\s/\u}_\loc(\gamma)$ at every point of 
$\gamma$ lie in $\{\Delta r=0\}$. This is immediate from \eqref{eq:omegaeq} and the fact that  these tangent vectors must be $\Omega$-orthogonal to the tangent vectors to $\gamma$  (since $W^{\s/\u}_\loc(\gamma)$ is Lagrangian).

By the $C^1$ persistence of normally-hyperbolic curves \cite{Fen:71}, the invariant curve $\{x=0,y=0\}$ admits a continuation for any small perturbation of  the map \eqref{eq:auxi}. We will use this continuation to construct  candidates for the sought curve $\tilde \gamma$ of Lemma~\ref{lem:Cfinite}.

For $\delta>0$, we  approximate function $r_\delta$ in \eqref{eq:gammaeq} by $C^\infty$ functions $r_{\delta,\nu}$ such that $r_{\delta,\nu}\to r_\delta$ in the $C^s$ topology as $\nu\to 0$; so we can take $r_{\delta,0}=r_\delta$. We then consider the following perturbation of \eqref{eq:auxi}:
\begin{equation}\label{auxm}
\bar\pp=  q_{\delta,\nu}(r_{\delta,\nu}(\pp),\pp,x,y)+\eps, \qquad
(\bar x, \bar y) = z_{\delta,\nu}(r_{\delta,\nu}(\pp),\pp,x,y).
\end{equation}
This map has, for every small $\eps$ and $\delta>0,\nu>0$, a uniquely defined, normally-hyperbolic invariant curve 
$$\{(x,y)=(\tilde x_{\delta,\nu}(\pp,\eps),\tilde y_{\delta,\nu}(\pp,\eps))\},$$
where the $C^\infty$
functions $\tilde x_{\delta,\nu}$ and $\tilde y_{\delta,\nu}$ tend to zero, as $\delta,\nu \to 0$, along with their derivatives up to order $s-1\geq 1$ with respect to $\pp$ and $\eps$. We cannot guarantee the convergence in higher regularity because the chart $\mathcal{DW}_0$ is $C^{s-1}$, but the $C^1$ converge is enough for us (see step (4)). Note that the range of $\eps$ values,
for which this invariant curve persists, does not shrink as $\delta,\nu\to 0$, and, for each fixed $\delta>0$, the dependence of  the functions
$\tilde x_{\delta,\nu}$ and $\tilde y_{\delta,\nu}$ on $\nu$ is continuous in the $C^s$ topology by the choice of $\{f_\nu\}$.

On the other hand, since $\gamma$ is the invariant curve of \eqref{pqrs1} for $\nu=0$, the curve
$\{(x,y)=(x_\delta(\pp),y_\delta(\pp))\}$ (see \eqref{eq:gammaeq}) is invariant under
the map \eqref{auxm} at $\eps=0$ and $\nu=0$.
It then follows from  the uniqueness that 
\begin{equation}\label{zdl0}
\tilde x_{\delta,0}(\pp,0)= x_\delta(\pp), \qquad \tilde y_{\delta,0}(\pp,0)=y_\delta(\pp).
\end{equation}
Note also that the invariance of $\gamma$ under \eqref{pqrs1} at 
$\nu=0$ gives us
\begin{equation}\label{pdl0}
p_{\delta,0}(r_\delta(\pp), \pp, \tilde x_{\delta,0}(\pp,0),\tilde y_{\delta,0}(\pp,0))
= r_\delta(q_{\delta,0}(r_\delta(\pp), \pp, \tilde x_{\delta,0}(\pp,0),\tilde y_{\delta,0}(\pp,0))).
\end{equation}

Define
\begin{align}
\Phi_{\delta,\nu}(\pp,\eps)&=q_{\delta,\nu}(r_{\delta,\nu}(\pp), \pp, \tilde x_{\delta,\nu}(\pp,\eps), \tilde y_{\delta,\nu}(\pp,\eps))+\eps,\label{restr}\\
P_{\delta,\nu}(\pp,\eps) &= p_{\delta,\nu}(r_{\delta,\nu}(\pp), \pp, \tilde x_{\delta,\nu}(\pp,\eps),\tilde y_{\delta,\nu}(\pp,\eps)) -
r_{\delta,\nu}(\Phi_{\delta,\nu}(\pp,\eps)).\nonumber
\end{align}
It is easy to check that for every $\eps$ the curve 
$$\tilde \gamma_{\delta,\nu,\eps}=\{r=r_{\delta,\nu}(\pp),(x,y)=(\tilde x_{\delta,\nu}(\pp,\eps),\tilde y_{\delta,\nu}(\pp,\eps))\}$$
is invariant under the map $F_{\delta,\nu,\eps}$ given by 
\begin{equation}\label{repsf}
\begin{aligned}
&\bar r = p_{\delta,\nu}(r, \pp, x,y) - P_{\delta,\nu}(\pp,\eps),\\
&\bar \pp = q_{\delta,\nu}(r, \pp, x,y)+\eps, \qquad
(\bar x, \bar y) = z_{\delta,\nu}(r, \pp, x,y),
\end{aligned}
\end{equation}
and the restriction of this map to $\tilde \gamma_{\delta,\nu,\eps}$ is given by
$$\pp \mapsto \Phi_{\delta,\nu}(\pp,\eps).$$

By construction, the  curve $\tilde \gamma_{\delta,\nu,\eps}$ is $C^\infty$ for $\delta>0$ and $\nu>0$. Moreover, it depends on $\nu$ and  $\eps$ continuously in the $C^s$ topology for $\delta>0$.  This implies that $\tilde \gamma_{\delta,\nu,\eps}$ is  $C^s$-close to $\gamma$ for small $\nu>0$ and $\eps>0$, since by \eqref{zdl0}  the curve $\tilde \gamma_{\delta,0,0}$ coincides with $\gamma$. In what follows, we show that  $\tilde \gamma_{\delta,\nu,\eps}$ is the sought whiskered torus $\tilde \gamma$  for some choice of the parameters.

\noindent\textbf{(3) Globalization of the map $F_{\delta,\nu,\eps}$.} We now construct a map $f_{\delta,\nu,\eps}\in \symp^\infty(\M)$, $C^s$-close to $f$, such that $f_{\delta,\nu,\eps}^\per$ restricts to $F_{\delta,\nu,\eps}$ near $\gamma$. 
 By \eqref{repsf}, 
the map $F_{\delta,\nu,\eps}$   is the composition $G_{\delta,\nu,\eps}\circ R_\eps \circ f_\nu^\per$ where $R_\eps$ is the rotation of angle $\eps$ and
$$G_{\delta,\nu,\eps}: (r, \pp, x,y) \mapsto 
(r -P_{\delta,\nu}\circ\Phi_{\delta,\nu}^{-1}(\pp,\eps), \pp, x, y),$$
where  $\Phi_{\delta,\nu}:\pp\mapsto\bar\pp$ is given by \eqref{restr}.
Note that 
\begin{equation}\label{htpdll}
P_{\delta,0}\circ\Phi_{\delta,0}^{-1}(\pp,0)=0
\end{equation}
by (\ref{pdl0}). The map
$G_{\delta,\nu,\eps}$ preserves the standard symplectic form in $V$, so $F_{\delta,\nu,\eps}$ is a symplectic map. Since $F_{\delta,\nu,\eps}$ has
an invariant curve $\gamma_{\delta,\nu,\eps}$, it is exact. Consequently, the map
$G_{\delta,\nu,\eps}$ is exact too, which implies
\begin{equation}\label{eq:exactcon}
\int_0^1 P_{\delta,\nu}\circ\Phi_{\delta,\nu}^{-1}(t,\eps) dt=0.
\end{equation}
Now we define the Hamiltonian function 
$H_{\delta,\nu,\eps}$, which 
is equal to zero outside of $V$ and is given by 
$$H_{\delta,\nu,\eps} (r,\pp,x,y) = - \xi(r,\pp,x,y) \cdot \int_0^\pp P_{\delta,\nu}\circ\Phi_{\delta,\nu}^{-1}(t,\eps) d t$$
in the chart $\mathcal{DW}_\delta$ iin $V$; here $\xi$ is some 
 $C^\infty$ bump function, 1-periodic in $\pp$, equal to $1$ in a neighborhood $V^\prime$ of $\gamma$ (independent of $\delta$, $\nu$, and $\eps$) and equal to zero at $\partial V$. By \eqref{eq:exactcon}, the function $H_{\delta,\nu,\eps}$
is 1-periodic in $\pp$, i.e., it is well-defined. Note that
$H_{\delta,0,0}=0$ by \eqref{htpdll}.

The time-1 map of $H_{\delta,\nu,\eps}$ coincides with 
$G_{\delta,\nu,\eps}$
in $V^\prime$ and is identity outside of $V$. So, if we define the map $f_{\delta,\nu,\eps}$ by the composition of this time-1 map, the time-1 map of the Hamiltonian given by $(r,\pp,x,y)\mapsto -\eps r \xi$ in $V$ and $0$ outside of $V$ (it gives the rotation $R_\eps$ when restricted to $V^\prime$), and the map $f_\nu$, then $f_{\delta,\nu,\eps}^\per$ is given by \eqref{repsf} in $V^\prime$.

Summarizing, we have constructed symplectic maps $f_{\delta,\nu,\eps}$ of $\M$ such that $f_{\delta,0,0}=f$ for all small $\delta$; for each fixed $\delta$ and $\nu$ the one-parameter family 
$\{f_{\delta,\nu,\eps}\}_\eps$ is at least $C^{s-1}$, and it is $C^s$ for $\delta> 0$ and $C^\infty$ for $\delta>0,\nu>0$; for each fixed $\delta>0$ these families depend continuously on $\nu$ in $C^s$. 
Thus, for every small $\delta>0,\nu>0,\eps$, the map  $f_{\delta,\nu,\eps}$ belongs to $\symp^\infty(\M)$, is $C^s$-close to $f$, and has $\tilde\gamma_{\delta,\nu,\eps}$ as a whiskered torus with period $\per$.

\noindent\textbf{(4) Modification of the rotation number of $\tilde \gamma_{\delta,\nu,\eps}$.}  By \eqref{restr}, the restriction $f^\per_{\delta,\nu,\eps}|_{\tilde\gamma_{\delta,\nu,\eps}}$ is given by $\pp\mapsto \Phi_{\delta,\nu}(\pp,\eps)$. 
We have
$$\Phi_{0,0}(\pp,\eps)=q_0(0, \pp, 0,0)+\eps,$$
so ${\p\Phi_{0,0}}/{\p \eps} =1 $. Since $\Phi_{\delta,\nu}$
depends on $\delta$ and $\nu$ continuously in $C^1$, we have
${\p\Phi_{\delta,\nu}}/{\p \eps} > 0$ for all small $\delta$ and $\nu$. This implies that the rotation number 
$\rho_{\delta,\eps,\nu}$ of $f^\per_{\delta,\nu,\eps}|_{\gamma_{\delta,\nu,\eps}}$ is a monotone function of $\eps$, and strictly monotone when it takes irrational values.
In particular, it is strictly monotone at $\eps=0,\nu=0$, because $f^\per_{\delta,0,0}|_{\tilde\gamma_{\delta,0,0}} = f^\per|_\gamma$ by the construction, and the rotation number $\rho(\gamma)$ of $f^\per|_\gamma$ is irrational.

Therefore, for every fixed small $\delta>0$ the rotation number
$\rho_{\delta,0,\eps}$ runs an interval around $\rho(\gamma)$ as $\eps$
runs a small interval around zero. By the continuous dependence of the rotation number of parameters, the same holds true for every sufficiently small $\nu$. This allows us, for this value of $\delta>0$,
to find arbitrarily small $\nu>0$ and 
$\eps$ such that $\rho_{\delta,\nu,\eps}=\rho(\gamma)$. The map
$f_{\delta,\nu,\eps}$ for these parameter values is the sought map
$\tilde f$ of Lemma~\ref{lem:Cfinite}: it is $C^\infty$ because $\nu>0$,
and it is $C^s$-close to $f_{\delta,0,0}=f$, as required.

\subsection{Proof of Proposition~\ref{prop:A2}}\label{sec:propA2}
The transverse intersection between $W^\s(\gamma)$ and $W^\u(\gamma)$ at $\Gamma$ implies, by Lemma~\ref{lem:incli}, the existence of infinitely many transverse homoclinic orbits in $\hat V$; we choose one and denote it by $\tilde \Gamma$. Let $S$ and $\tilde S$ be the scattering maps defined along $\Gamma$ and $\tilde \Gamma$, respectively.  By Lemma~\ref{lem:equitangency}, both $S(\gamma)$ and $\tilde S^{-1}(\gamma)$ intersect $\gamma$ transversely (see Figure~\ref{fig:transverse}(a)).

 Recall that $\A$ is the normally-hyperbolic $g^\per$-invariant cylinder that contains $\gamma$. The KAM-curve $\gamma$ and the cylinder $\A$ persist at all $C^s$-small perturbations, where we now have $s=\infty,\omega$.
\begin{lem} \label{lem:g_eps}
Let $\mathcal{U}$ be any neighborhood of $g$ in $\symp^s(\M)$, 
There exists a one-parameter $C^s$ family $\{g_\nu\}\subset\mathcal{U}$  such that the maps $g_\nu$ have a generic saddle-center periodic orbit smoothly dependent on $\nu$,
and the following holds. There are two  points $P_1,P_2\in \A$ that belong to this orbit such that
$P_1\in S(\gamma)$ for all small $\nu$ and $P_2\in \tilde S^{-1}(\gamma)$ for $\nu=0$, and the distance between   
$\tilde S^{-1}(\gamma)$ and $P_2$ changes with a non-zero velocity as the parameters $\nu$ varies.
\end{lem}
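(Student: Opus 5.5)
The plan is to build $g_\nu$ by two local Hamiltonian perturbations inside the cylinder $\A$, away from $\orb$ and from the pieces of $W^{\s/\u}_\loc(\gamma)$ that are iterated to produce $\Gamma$ and $\tilde\Gamma$. First, choose a point $P_1\in S(\gamma)\cap\A$ near the transverse intersection of $S(\gamma)$ with $\gamma$, but not on $\gamma$; so $P_1$ lies between two KAM curves, in a region where the inner map $F=g^\per|_\A$ is a twist map. By the Poincaré–Birkhoff theorem, or simply because a rational rotation number lies between the rotation numbers of nearby KAM curves, there are periodic orbits of $F$ arbitrarily close to $\gamma$. We choose one, say $Q$, of period $p$, whose orbit comes close to both curves $S(\gamma)$ and $\tilde S^{-1}(\gamma)$. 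This is possible because its orbit is $O(1/p)$-dense near a circle $\{r=\const\}$, and both curves cross every such circle transversally near $\gamma$.

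Next, apply a small Hamiltonian perturbation of the form $\chi\cdot h$, supported in a small ball around two points of the orbit of $Q$ that lie off $\orb$ and off the finitely many iterates of the local whiskers used to define $S$ and $\tilde S$. The first part of this perturbation translates the orbit point so that it sits exactly on $S(\gamma)$; call the new point $P_1$. Because the support avoids the whiskers, the curve $S(\gamma)$ does not move. The second part makes the periodic point elliptic and generic, with Moser twist and $\rho\notin\pi\mathbb{Q}$ for low orders, by adding a quadratic-plus-quartic Hamiltonian in local Darboux coordinates of $\A$ centered at the point. Elliptic here means non-hyperbolic inside $\A$; normal hyperbolicity of $\A$ then makes it a saddle-center of $g$. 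The genericity conditions~\ref{word:C1}--\ref{word:C3} are open and dense, so they can be achieved this way. In the real-analytic case, as in Section~\ref{sec:propA1.ph}, each bump Hamiltonian is replaced by $h\cdot H^1$ using Cartan's Theorem B, so that it vanishes to first order on $\orb$ and stays $C^{1,1}$-close.

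Then introduce the parameter. Let $P_2$ be the point of the same orbit nearest to $\tilde S^{-1}(\gamma)$. Add a one-parameter perturbation $\nu H_2$, supported near $P_2$ and away from $P_1$, whose flow moves $P_2$ transversally to $\tilde S^{-1}(\gamma)$: locally $H_2=\pm\,r$ in coordinates in which $\tilde S^{-1}(\gamma)$ is transverse to $\partial_r$. A constant shift of $P_2$ must be compensated so that the orbit stays periodic. To do this, choose a fixed $\nu_0$ placing $P_2$ on $\tilde S^{-1}(\gamma)$, then parametrize by $\nu-\nu_0$. The orbit is then continued by the implicit function theorem, since the multipliers are $\neq1$. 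Because $H_2$ is supported away from $P_1$ and from the whiskers, $P_1$ stays on the fixed curve $S(\gamma)$ for all $\nu$. Meanwhile the $\nu$-derivative of the signed distance from $P_2$ to $\tilde S^{-1}(\gamma)$ equals $dP_2/d\nu\cdot n$, which is nonzero to leading order by construction.

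The main obstacle is making sure $P_1$ stays exactly on $S(\gamma)$ for all $\nu$. The continuation of the periodic orbit moves every point of the orbit, including $P_1$, when the perturbation near $P_2$ is switched on. To handle this, combine the two perturbations into a two-parameter family $(\nu_1,\nu)$. Solve $\mathrm{dist}(P_1,S(\gamma))=0$ for $\nu_1(\nu)$ by the implicit function theorem, which works because the $\nu_1$-perturbation moves $P_1$ transversally to $S(\gamma)$ with nonzero speed. The resulting one-parameter family is $g_\nu$. Its transversality derivative at $P_2$ changes only by a small amount, since $d\nu_1/d\nu$ is small when the support balls are far apart along the orbit, so it remains nonzero.
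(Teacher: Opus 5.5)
The proposal has a genuine gap: the step that places $P_1$ on $S(\gamma)$, and later moves $P_2$ across $\tilde S^{-1}(\gamma)$, by perturbing near the periodic orbit inside $\A$ does not work as justified.

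You argue that since the support avoids $\orb$ and the iterated whisker pieces, $S(\gamma)$ does not move. But $S(\gamma)=\pi^\s\bigl(W^\u(\gamma)\cap\Sigma^+\bigr)$ also depends on $\A$ and on the strong-stable holonomy $\pi^\s$. The strong-stable leaf of $P_1$ consists of points whose forward orbits converge to the orbit of $P_1$, so they pass through your support infinitely often. Hence $\mathcal{F}^{\ss}$, and with it $S(\gamma)$ near $P_1$, is changed by your perturbation. Likewise $\tilde S^{-1}(\gamma)$ changes through $\mathcal{F}^{\uu}$.

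This is not a technicality. The natural way to translate a periodic point while keeping its orbit periodic is to replace $g$ by $h\circ g\circ h^{-1}$, with $h$ symplectic and supported in a small ball around one orbit point. That is exactly a perturbation supported near two consecutive orbit points, as you describe. Then $\A$, $\mathcal{F}^{\ss}$ and $S(\gamma)$ are all moved by $h$ (near $\Sigma^+$, outside the ball, $h$ is the identity). So whether the orbit point lies on $S(\gamma)$ does not change at all.

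Consequently, the nonzero speed of $P_1$ relative to $S(\gamma)$ under your $\nu_1$-perturbation is not established, and your final implicit-function step rests on it. Rescuing it would need a quantitative estimate of how the holonomies change for a specifically chosen Hamiltonian. One could, e.g., use that leaves starting at $\Sigma^+$ re-enter the support only after about $p$ iterates, exponentially close to $\A$. You give no such estimate.

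The last step also contains an error. $d\nu_1/d\nu$ need not be small just because the two supports are far apart along the orbit. For a periodic orbit, a perturbation at one point moves every point of the continued orbit. The response is governed by $(I-\D F^{p})^{-1}$, where $F$ is the inner map and $p$ the period, and this is large when the orbit is close to parabolic. You would need an explicit $2\times2$ nondegeneracy argument with suitably chosen translation directions.

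The paper avoids both difficulties by moving the curves instead of the points:
\begin{enumerate}
\item Hamiltonian perturbations supported near the homoclinic points $M_1\in\Gamma\cap W^\s_\loc(\gamma)$ and $M_2\in\tilde\Gamma\cap W^\u_\loc(\gamma)$, far from $\A$, shift $S(\gamma)\cap\gamma$ and $\tilde S^{-1}(\gamma)\cap\gamma$ independently without touching $\A$ or its foliations.
\item A rotation Hamiltonian turns the old position $c$ of $\gamma$ into a circle of parabolic periodic points with rotation number $p/q$, while the KAM continuation moves off $c$. The intersection points of $c$ with $S(\gamma)$ and $\tilde S^{-1}(\gamma)$ are then automatically periodic, and they lie on one orbit once their angular difference is in $\frac1q\mathbb{Z}$.
\item A local perturbation fixing $P_1$ creates the generic saddle-center.
\item The family $g_\nu$ varies only the parameter acting near $M_2$, which leaves the orbit and $S(\gamma)$ untouched.
\end{enumerate}
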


\begin{figure}[!h]
\begin{center}
\includegraphics[scale=1.1]{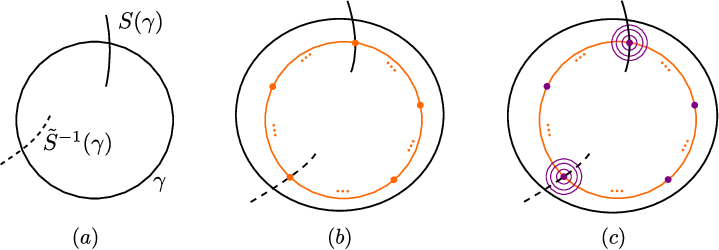}
\end{center}
\caption{(a) Two transverse homoclinic intersections. (b) A new whiskered torus filled with periodic points and transverse to $S(\gamma)$ and $\tilde S^{-1}(\gamma)$. (c) Creating saddle-center periodic points.}
\label{fig:transverse}
\end{figure}

The lemma is proved in \ref{sec:g_eps} and it implies Proposition~\ref{prop:A2} as follows.
Since the saddle-center periodic orbit is  generic, the points $P_1$ and $P_2$ in this orbit are encircled by KAM-curves in $\A$ that accumulate on these points (see condition \ref{word:C1} of Section~\ref{sec:localSC} and the discussion that follows). An illustration is given in Figure~\ref{fig:transverse}(c). For every KAM-curve $\gamma_1$ around $P_1$, there is  a KAM-curve $\gamma_2$ around $P_2$ satisfying 
\begin{equation}\label{eq:A2:2}
\gamma_2=g_0^n(\gamma_1)
\end{equation}
for some integer $n>0$. We have that $P_1\in S(\gamma)$, so $S(\gamma)$ intersects transversely all the KAM-curves around $P_1$. It then follows from Lemma~\ref{lem:incli0} that $W^\u(\gamma)$ accumulates on the unstable manifold  $W^\u(\gamma_1)$ for any $\gamma_1$ around $P_1$, and hence on the unstable manifold $W^\u(\gamma_2)$ by \eqref{eq:A2:2}.

Since $P_2\in \tilde S^{-1}(\gamma)$ at $\nu=0$, an arbitrarily small change in $\nu$ which makes $\tilde S^{-1}(\gamma)$ move can create a quadratic tangency with a KAM-curve 
$\gamma_2$ around $P_2$. By Lemma~\ref{lem:equitangency}, this corresponds to a partially-hyperbolic quadratic tangency between $W^\s(\gamma)$  and $W^\u(\gamma_2)$. It then follows from the accumulation of $W^\u(\gamma)$ on $W^\u(\gamma_2)$ that the sought quadratic homoclinic tangency for $\gamma$ is created by an additional move of $\tilde S^{-1}(\gamma)$ (i.e., by an arbitrarily small change in $\nu$).

\subsubsection{Proof of Lemma \ref{lem:g_eps}}\label{sec:g_eps}
We will first embed $g$ into a family that controls the positions of $S(\gamma)$ and $\tilde S^{-1}(\gamma)$. Based on this family, we then create a saddle-center periodic orbit with the desired properties.

\noindent\textbf{(1) Moving $S(\gamma)$ and $\tilde S^{-1}(\gamma)$.}
Take two homoclinic points $M_1\in W^\s_\loc(\gamma)\cap \Gamma$ and $M_2\in W^\u_\loc(\gamma)\cap \tilde\Gamma$. In the Fenichel coordinates $(r,\pp,x,y)$  (see \eqref{eq:strait}--\eqref{eq:strait3}), we have $M_1=(0,\pp'_1,x'_1,0)$ and $M_2=(0,\pp'_2,0,y'_2)$ for some $\pp'_{1,2},x'_1,y'_2$.
 Lemma~\ref{lem:equitangency} shows that $S(\gamma)$ and $\tilde S^{-1}(\gamma)$ intersect  $\gamma$  transversely at the points $Q_1:=\pi^\s(M_1)=(0,\pp'_1)$ and $Q_2:=\pi^\u(M_2)=(0,\pp'_2)$, respectively. Since the whiskered KAM-torus $\gamma$ persists and the Fenichel coordinates depend continuously on the map, the values $\pp'_i$ have well-defined continuations for  small  perturbations. 

We first construct a perturbation that controls $\pp'_1$.  
Since $W^\s_\loc(\gamma)$ is Lagrangian, by the Darboux-Weinstein Theorem, there exists a symplectic change of coordinates
in a small neighborhood $U$ of $M_1$ that restricts to the identity on $W^\s_\loc(\gamma)$ and makes the symplectic form standard. In the new coordinates,  we have   $W^\s_\loc(\gamma)=\{\tilde r=0,\tilde y=0\}$  and $\Omega|_U=d \tilde r\wedge d\tilde \pp +d\tilde x \wedge d\tilde y$. The  transition map $T_1$ defined by $\Gamma$ takes the  form  \eqref{eq:T1_xform_n} with 
$\ell=0$ (since $\Gamma$ is a transverse homoclinic) and  
$T_1(W^\u_{\loc}(\gamma))$  is given by 
\begin{equation}\label{eq:A2:3a}
\begin{aligned}
\tilde r &=   \dd (\tilde \varphi - \varphi'_1) +   a_{14}\tilde y+O((\tilde \pp - \pp'_1)^2+\tilde y^2), \\
\tilde x-x'_1 &=   a_{32} (\tilde \varphi - \varphi'_1) +  a_{34}\tilde y+O((\tilde \pp - \pp'_1)^2+\tilde y^2),
\end{aligned}
\end{equation}
where $\beta\neq 0$. Consider the Hamiltonian function
$
H^1_{\eps_1}(\tilde r,\tilde \pp,\tilde x,\tilde y)=-\eps_1 r\cdot \chi(\tilde r,\tilde \pp,\tilde x,\tilde y),
$
where $\chi$  is a $C^{\infty}$ bump function supported in a neighborhood of $M^+$ and equal to $1$ in some smaller neighborhood of $M^+$.
The corresponding time-1 map  $G^1_{\eps_1}$ near $M^+$ takes the form
$(\tilde r,\tilde \pp,\tilde x,\tilde y)\mapsto (\tilde r+\eps_1,\tilde \pp,\tilde x,\tilde y)$. By construction, the transition map for   $G^1_{\eps_1}\circ g$   is $T_{1,\eps_1}=G^1_{\eps_1} \circ T_1$. As a result, $T_{1,\eps_1}(W^\u_{\loc})$ is given by  the same formula \eqref{eq:A2:3a}, with replacing $\pp'_1$ by
$$\varphi'_1(\eps_1)=\varphi'_1(0) + \eps_1.
$$
Obviously, $\pp'_1(\eps)$ is the $\pp$-coordinate of $Q_1(\eps)$, and we have $d\pp'_1/d\eps_1\neq 0$. 
A similar  construction near $M_2$, applied to the inverse map, gives a function $H^2_{\eps_2}$ whose time-1 map  changes  $\pp'_2(\eps)$ --  the $\pp$-coordinate of $Q_2(\eps)$ -- with non-zero velocity.  We take $H_\eps=H^1_{\eps_1}+H^2_{\eps_2}$, which is  supported near $M_1$ and $M_2$.   By construction, with  $G_\eps=G^2_{\eps_2}\circ G^1_{\eps_1}$ being the corresponding time-1 map, the values $\pp'_i$  for   the map $G_{\eps}\circ g$   change  independently  as parameters vary, that is, 
\begin{equation}\label{eq:A2:3}
\det \left.\dfrac{\p(\pp'_1,\pp'_2)}{\p(\eps_1,\eps_2)}\right|_{\eps=0}\neq 0.
\end{equation}
Recall that the Fenichel coordinates have only finite smoothness (though arbitrarily high). We further consider  a $C^s$ family $ \{H'_\eps\}\subset C^\s(\M)$, which satisfies $H'_0=0$ and  is   close to $\{H_{\eps}\}$ with sufficiently many derivatives so that \eqref{eq:A2:3} remains valid. We still denote by $G_{\eps}$ the  time-1 map of the Hamiltonian flow defined by $H'_\eps$.

\noindent\textbf{(2) Creating candidate periodic orbits.}
By the KAM property, the whiskered torus $\gamma$ persists for small perturbations, where the continuation is  the unique invariant curve  near $\gamma$ that has the same Diophantine rotation number $\rho(\gamma)$.
 Let us denote by $\gamma_\eps$   the KAM continuation of $\gamma$ for the map $G_{\eps}\circ g$.
Take a sufficiently small tubular neighborhood $V$ of $\gamma_\eps$, disjoint from $\mathcal{O}(\gamma_\eps)\setminus \gamma_\eps$.  For each small $\eps$, consider $C^\s$ coordinates $(r,\pp, x, y)$ in  $V$ such that $\Omega|_V= dr\wedge d\pp + dx\wedge dy$ and $\gamma_\eps$ coincides with the set $c:=\{r=0,x=0,y=0\}$. Moreover, we can assume that $(G_\eps\circ g)^\per|_c$ is a rotation of angle $\rho(\gamma)$ (by \citep{Yoc:84}).  Let $\mathcal{C}_\eps: V\to \mathbb{R}^{2N}$ be the   chart  corresponding to the $\eps$-dependent coordinates.
In what follows, we construct perturbations $g_{\eps,\tau}$ for which $c$ becomes  filled with  periodic points, while the continuation $\gamma_{\eps,\tau}$ of $\gamma_\eps$ moves away from $c$ as $\tau$ varies.

As shown in  Section~\ref{sec:propA1.kam}, for each $\eps$, there exists a Hamiltonian $ H^0_\eps\in C^\s(\M)$ such that the time-$\tau$ map  $\tilde G_{\eps,\tau}$ of its flow keeps $\mathcal{O}(\gamma_\eps)$ invariant, acts as
identity on $\mathcal{O}(\gamma_\eps)\setminus\gamma_\eps$ and as a rotation of angle $\tau $ on $\gamma_\eps$, see \eqref{gtaumap}. By construction, the Hamiltonian $H^0_\eps$  is $C^s$ in $\eps$.

Since the set $c$ coincides with $\gamma_\eps$ in the chart $\mathcal{C}_\eps$,  it remains invariant with respect to 
\[
g_{\eps,\tau}:=\tilde G_{\eps,\tau}\circ  G_{\eps}\circ g,
\]
and $g_{\eps,\tau}^\per|_c$ is a rotation of  angle
$\rho_c=\rho(\gamma)+\tau.$
In particular, we can find arbitrarily small $\tau^*\neq 0$ and  $q\in \mathbb{N}$ such that $\rho_c=p/q$ for some $p\in\mathbb{Z}$. In this case, the whiskered torus $c$ is filled with parabolic periodic points of $g_{\eps,\tau^*}|_c$.

Recall that the KAM continuation $\gamma_{\eps,\tau}$ is the unique invariant curve of $g^\per_{\eps,\tau}$ near $\gamma_\eps$ that has the same Diophantine rotation number $\rho(\gamma)$. Thus, we have $\gamma_{\eps,\tau}\neq c$ in the chart $\mathcal{C}_\eps$ for $\tau\neq 0$, and $c$ is $C^s$-close to $\gamma_{\eps,\tau}$. In particular, the curve $c$ coexists with $\gamma_{\eps,\tau}$ at the $\tau$ values for which $c$ is filled with parabolic periodic points.On the other hand,  since the cylinder $\A_{\eps,\tau}$ contains all   orbits of $g^\per_{\eps,\tau}$  that lie entirely in $V$ (up to shrinking $V$ if necessary), we have $c\subset \A_{\eps,\tau}$. It then follows from   
the assumption that $\gamma$ intersects transversely $S(\gamma)$ and $\tilde S^{-1}(\gamma)$ that $c$ intersects transversely $S(\gamma_{\eps,\tau})$ and $\tilde S^{-1}(\gamma_{\eps,\tau})$   for all small $\eps$ and $\tau$.  

Denote by $P_1$ and $P_2$ the  intersection points of $c$ with $S(\gamma_{\eps,\tau})$ and $\tilde S^{-1}(\gamma_{\eps,\tau})$,  respectively, 
and by $\pp_1$ and $\pp_2$ their $\pp$-coordinates.  By construction,  $P_i=Q_i$ and $\pp_i=  \pp'_i$ at $\tau=0$.
It  follows from \eqref{eq:A2:3}  and the fact $\tilde G_{\eps,0}=\mathrm{id}$ that
$$
\det \left.\dfrac{\p(\pp_1,\pp_2)}{\p(\eps_1,\eps_2)}\right|_{\eps=0}\neq 0.
$$
Therefore,  there exist arbitrarily small $(\eps^*,\tau^*)$   such that $\rho_c=p/q$   and $(\pp_1-\pp_2)=p'/q$ for some  $p,p'\in \mathbb{Z}$, implying that $P_1$ and $P_2$ belong to the same parabolic periodic orbit of $g_{\eps^*,\tau^*}$.
Moreover, as $\eps_1$ and $\eps_2$ vary from $\eps_1^*$ and $\eps_2^*$, the distances between $S(\gamma_{\eps,\tau})$ and $P_1$ and, respectively,
between $\tilde S^{-1}(\gamma_{\eps,\tau})$ and $P_2$ change with non-zero velocity.

\noindent\textbf{(3) Creating the saddle-center periodic orbit.}
By the partial hyperbolicity, the periodic orbit that contains the points $P_{1}$ and $P_2$ has $N-1$ multipliers outside and $N-1$ multipliers inside the unit circle. At $(\eps,\tau)=(\eps^*,\tau^*)$, the remaining two multipliers $\lambda_1$ and $\lambda_2$ are equal to $1$. The goal now is to modify these two multipliers so that the periodic orbit becomes a 
saddle-center.

It is a general principle that, given the
germ at a point $P$ of any  $C^\infty$ symplectic map, there exists a $C^\infty$ Hamiltonian function such the $k$-jet  (the Taylor polynomial of degree $k$ at $P$) of the germ coincides with the $k$-jet at $P$ of the time-1 map of the corresponding Hamiltonian flow. Moreover, the Hamiltonian depends continuously on the germ. 
So, given any close to identity symplectic matrix, there exists a close to identity Hamiltonian
$H$, supported in a small neighborhood of $P_1$, such that its time-1 map has the point $P_1$ fixed, and the derivative at $P_1$ is given by this matrix. Thus, we can choose $H$ close to identity such that $P_1$ and $P_2$ remain
in the same periodic orbit of $G\circ g_{\eps^*,\tau^*}$ and the multipliers $\lambda_{1,2}$ of this orbit become complex
(here $G$ is the time-1 map of $H$). Thus, the orbit of $P_1$ becomes a saddle-center. In the same way, one makes   the first Birkhoff coefficient non-zero so that this saddle-center is generic (i.e., the twist condition in \ref{word:C1} is satisfied, see Section~\ref{sec:localSC} for details). 

Since  $G$ is locally supported, the intersections of $S(\gamma_{\eps^*,\tau^*})$ with $P_1$ and $\tilde S^{-1}(\gamma_{\eps^*,\tau^*})$ with $P_2$ do not disappear when $g_{\eps^*,\tau^*}$ is modified to $G\circ g_{\eps^*,\tau^*}$. 
Thus, in the $C^\infty$ case,   the one-parameter family $\{G\circ g_{\eps_1^*,\eps_2^*+\nu,\tau^*}\}$  is  the sought family $\{g_\nu\}$ of Lemma~\ref{lem:g_eps}

In the $C^\omega$ case, we  replace $H$ by any sufficiently close real-analytic approximation. Note that the non-zero speed of the motions of $S(\gamma_{\eps,\tau})$ and $\tilde S^{-1}(\gamma_{\eps,\tau})$ relative to $P_1$ and $P_2$, respectively, ensures $P_1\in S(\gamma_{\eps^{**},\tau^{**}})$ and $P_2\in \tilde S^{-1}(\gamma_{\eps^{**},\tau^{**}})$ for some $(\eps^{**},\tau^{**})$ close to $(\eps^{*},\tau^{*})$. Moreover, using Cartan's Theorem B again, we can choose this approximating Hamiltonian such that its 4-jets at the points of the orbit of $P_1$ coincide with those of $H$, so that $P_1$ and $P_2$
remain in the same generic saddle-center periodic orbit (the multipliers of a periodic orbit are determined by the 1-jet of the map, and the first Birkhoff coefficient of the saddle-center is determined by the 3-jet). The sought family is then given by   $G\circ g_{\eps_1^{**},\eps_2^{**}+\nu,\tau^{**}}$ (where $G$ is the time-1 map corresponding to the real-analytic approximation of $H$).

\section{Symplectic blenders near saddle-center periodic points}\label{sec:SC}
In this section, we prove Theorems~\ref{thm:main_SC_para} and~\ref{thm:main_SCblender}, following the lines sketched  in Section~\ref{sec:introSC}.  

\subsection{Problem setting: genericity conditions and unfolding families}\label{sec:para_SC}

\subsubsection{Local dynamics}\label{sec:localSC} 
Let $f\in \symp^s(\mathcal{M})$  have a saddle-center periodic point $O$, and let  $\lambda_i$ and $\lambda^{-1}_i$ $(i=1,\dots,N)$ be the multipliers of $O$:
\begin{equation}\label{eq:Orho}
\lambda_{1}=e^{ i\rho},\quad\lambda_{2}=e^{ -i\rho},\quad
|\lambda_i|<1,\; i=2,\dots,N,
\end{equation}
for some $\rho\in(0,\pi).$
By assumption the point $O$ has a two-dimensional,  locally-invariant, normally-hyperbolic, symplectic center manifold $W^\c(O)$  in some small neighborhood $V$ of $O$. The discussion  in Section~\ref{sec:whiskeredtori} on the normally-hyperbolic cylinder $\A$ also applies to $W^\c(O)$. In particular, $W^\c(O)$ is the intersection of the $(n+1)$-dimensional center-stable manifold $W^\cs(O)$ and center-unstable manifold $W^\cu(O)$; all these manifolds are of  class $C^{s'}$, where $s'=s$ if $s$ is finite and $s'$ can be arbitrarily large if $s=\infty,\omega$.

 We define the local map as  $T_0:=f^{\mathrm{per}(O)}|_{V}$. There exist {\em Birkhoff coordinates} (see e.g. \citep[Appendix 7]{Arn:78}) $(u,v)\in \mathbb{R}^2$ such that $O$ is at the origin, and, in the polar coordinates 
  $(r,\pp)\in \mathbb{R}\times\mathbb{S}^1$ given by
\begin{equation}\label{eq:bcoor} 
u=\sqrt{2r}\cos\pp,\qquad v=\sqrt{2r}\sin\pp,
\end{equation}  
the inner map $F:=T_0|_{W^\c(O)}$ takes the form 
\begin{equation}\label{eq:localmap_SC_old}
\begin{aligned}
\bar r = r + O(r^m),\qquad
\bar \varphi = \varphi + p(r) + O(r^m),
\end{aligned}
\end{equation} 
where $p(r)$ is a polynomial of $r$ with $p(0)=\rho$ and $\deg p(r)=m-1$, and the terms $O(r^m)$ are functions of $r,\varphi$ with period 1 in $\pp$. Here $m$ can be taken  as large as we want if $s$ is sufficiently large.  
In these coordinates the symplectic form restricted to $W^\c(O)$ keeps the standard form 
\begin{equation}\label{eq:OmegaW^c}
du\wedge dv=dr\wedge d\pp. 
\end{equation}

We impose the following genericity condition:

\noindent\textbf{(}\setword{\textbf{C1}}{word:C1}\textbf{)} The argument $\rho$ is irrational and the twist condition is satisfied, i.e., $p'(0)\neq 0$.

\noindent By Moser \citep{Mos:73}, this condition, plus that the inner map $F$ is at least $C^4$, implies the existence of  a large measure Cantor set  of KAM-curves (see Definition \ref{defi:kam}) on $W^\c(O)$. Let $\mathcal{G}$ be a subset that contains all KAM-curves whose rotation numbers are $(c,\tau)$-Diophantine for some fixed $c,\tau$ (see \eqref{eq:diophantine}).
The KAM-curves in $\mathcal{G}$ satisfy the density and persistence properties as discussed in Section~\ref{sec:kam_strait}; moreover, in the current case,   the set $\bigcup_{\gamma\in\mathcal{G}}\gamma$ has density 1 at $O$.

Consider a family $\{f_\eps\}\subset \symp^s(\mathcal{M})$ with $f_0=f$, where $f_\eps$ is jointly $C^s$ with respect to variables and parameters.
Like in Section~\ref{sec:quadraunfold}, where we make the cylinder $\A$ invariant by bounding it with  two KAM-curves, we now take a small neighborhood $\mathbb{D}$ of $O$ in $ W^\c(O)$ that is bounded by an arbitrary KAM-curve in $\mathcal{G}$. Then $\mathbb{D}$ is a normally-hyperbolic {\em invariant} manifold lying in the neighborhood $V$, and it persists for all small $\eps$.  
It possesses  an $(N+1)$-dimensional stable manifold $W^\s(\DD)\subset W^\cs(O)$ and an $(N+1)$-dimensional unstable manifold $W^\u(\DD)\subset W^\cu(O)$, foliated by a strong-stable foliation $\mathcal{F}^{\ss}$ and, respectively, a strong-unstable foliation $\mathcal{F}^{\uu}$, with $N$-dimensional leaves.  When $V$ is sufficiently small, both manifolds are of class $C^{s'}$, and both foliations are $C^{s'-1}$ with $C^{s'}$ leaves.

As in Section~\ref{sec:innermap}, we  introduce $C^{s'-1}$ Fenichel coordinates $(r,\pp,x,y)\in \mathbb R\times \mathbb R\times \mathbb{R}^{N-1} \times \mathbb{R}^{N-1}$ such that $O_\eps$  is at the origin  and  the local manifolds are  straightened: 
 \begin{equation}\label{eq:strait2}
 \begin{array}{l}
 \mathbb D\subset \{x=y=0\},\qquad W^\s_{\loc}(\mathbb D)=\{y=0\},\qquad W^\u_{\loc}(\mathbb D)=\{x=0\},\\
W^\s_{\loc}(O)=\{r=\pp= y=0\},\qquad W^\u_{\loc}(O)=\{r=\pp= x=0\},\\
\ell^{\ss}=\{(r,\pp)={\const},y=0\},\qquad \ell^{\uu}=\{(r,\pp)={\const},x=0\},
 \end{array}
 \end{equation} 
where $\ell^{\ss}$ and $\ell^\uu$ denote leaves of $\mathcal{F}^{\ss}$ and $\mathcal{F}^{\uu}$.  Note that the whole structure persists for all small $\eps$, provided that $f$ is sufficiently smooth (so that $s'$ is large).

By P\"oschel \citep{Pos:82}, all the KAM-curves in $\mathcal{G}$ can be straightened simultaneously for all small $\eps$, as in Section~\ref{sec:kam_strait}. We will consider Fenichel coordinates with this additional property. In summary, after normalizing the linear term of the polynomial $p$ in \eqref{eq:localmap_SC_old},  the inner map  assumes the form (cf.~\eqref{eq:kam_new})
\begin{equation}\label{eq:localmap_SC}
\bar r= r + \xi(r,\pp,\eps),\qquad
\bar \varphi =\varphi +\rho(\eps)+ r+ \sum_{i=2}^{m-1}p_i(\eps)r^i+ \eta(r,\pp,\eps),
\end{equation}
where $r>0$, $\rho(\eps)$ is the argument of the central multiplier $\lambda_{1,\eps}$ of $O$, and the functions $\xi$ and $\eta$ satisfy   \eqref{eq:kam_new_deri} and \eqref{8241}. 
Denote 
\begin{equation}\label{eq:localpoly}
p(r,\eps):=\rho(\eps)+ r+ \sum_{i=2}^{m-1}p_i(\eps)r^i.
\end{equation}
Condition \eqref{8241} implies that every KAM-curve $\gamma\in \mathcal{G}$ is given by $\{r=r_{\gamma,\eps}\}$ for some constant $r_{\gamma,\eps}>0$ depending on $\gamma$ and $\eps$, and 
the restriction $F|_{\gamma_\eps}$ is given by
\begin{equation}\label{eq:kam_onit_SC}
\begin{aligned}
\bar r = r_{\gamma,\eps} ,\qquad
\bar \varphi = \varphi + p(r_{\gamma,\eps},\eps),
\end{aligned}
\end{equation}
where  $p(r_{\gamma,\eps},\eps)$ is equal to the rotation number $\rho_\gamma$ of $\gamma$. 
 Solving $p(r_{\gamma,\eps},\eps)=\rho_\gamma=p(r_{\gamma,0},0)$, we obtain
\begin{equation}\label{eq:rotnr}
\begin{aligned}
r_{\gamma,\eps}=r_{\gamma,0} - (\rho(\eps) -\rho(0) ) +  h(r_{\gamma,0},\eps),
\end{aligned}
\end{equation}
where 
$$
h = O(r^2_{\gamma,0} \cdot \eps),\qquad
\dfrac{\p h}{\p r}=O(r_{\gamma,0}\cdot \eps),\qquad
\dfrac{\p h}{\p \eps}=O(r^2_{\gamma,0}).
$$

\begin{rem}
The difference between the equations \eqref{eq:localmap_SC} and \eqref{eq:kam_new} is that $\rho(\eps)$ is now the argument of the central multipliers of $O$, rather than the rotation number of a KAM-curve, so it can change as parameters vary.
\end{rem}

\subsubsection{Homoclinic orbit and scattering map}\label{sec:SCsattering}
We now assume that the saddle-center $O$ has a homoclinic orbit $\Gamma$ satisfying the following partial hyperbolicity condition:

\noindent\textbf{(}\setword{\textbf{C2}}{word:C2}\textbf{)} For any point $M\in \Gamma$, 
\begin{equation}\label{eq:condition:trans2}
 T_M W^\s(O) \oplus  T_M W^\u(O) \oplus  T_M (W^\s(\mathbb{D})\cap W^\u(\mathbb{D})) = \mathbb{R}^{2N}.
\end{equation}
\noindent This condition is essentially the same as \eqref{eq:condition:trans1}, allowing us to define the scattering map as in Section~\ref{sec:scattering}. Specifically, we  take two homoclinic points $M^-\in W^\u_{\loc}(O)\cap\Gamma$ and $M^+\in W^\s_{\loc}(O)\cap\Gamma$ such that $f^n(M^-)=M^+$ for some $n\in\mathbb{N}$; then we take their  neighborhoods $\Sigma^-(M^-)$ and $\Sigma^+(M^+)$ inside the two-dimensional intersection $W^\u(\mathbb{D})\cap W^\s(\mathbb{D})$, and define the holonomy maps   $\pi^\s:\Sigma^+ \to \mathbb{D}$ and $ \pi^\u:\Sigma^- \to \mathbb{D}$ along the leaves of $\mathcal{F}^\ss$ and $\mathcal{F}^\uu$, respectively; finally, with the transition map $T_1:=f^n$, we define the scattering map $S$ by~\eqref{eq:scattering_3}. As before, $S$ is a symplectic $C^{s'-1}$-diffeomorphism.

When the system is perturbed, the homoclinic orbit $\Gamma$ may disappear, but the scattering map is still defined. So, we can write $S$ as 
\begin{equation}\label{eq:scattering_SC}
\begin{aligned}
\bar u &= \mu+ b_{11}u + b_{12}v +\dots,\qquad
\bar v &=\nu+ b_{21}u + b_{22}v +\dots,
\end{aligned}
\end{equation} 
where $S(O)=(\mu,\nu)$ and the dots denote the higher order terms. In particular, $\mu=\nu=0$ when the homoclinic orbit exists. The last genericity condition is

\noindent \textbf{(}\setword{\textbf{C3}}{word:C3}\textbf{)} The  matrix $L:=\D S(O)= \begin{pmatrix}
b_{11}&b_{12}\\b_{21}&b_{22}
\end{pmatrix}$ is not a rotation.

\noindent This condition does not depend on the choice of  coordinates, since the only coordinate transformations which keep $F$ in the Birkhoff normal form are rotations. In the proof we will consider the  coordinates given by the following 

\begin{lem}\label{lem:rotaton}
If condition~\ref{word:C3} is satisfied, then the Fenichel coordinates can be chosen such that  ${b}^2_{11}+{b}^2_{12}\neq {b}^2_{21}+{b}^2_{22}$.
\end{lem}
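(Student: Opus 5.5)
The idea is to use the remaining freedom in the Birkhoff coordinates, namely rotations $(u,v)\mapsto R_\theta(u,v)$. Such a rotation preserves the normal form \eqref{eq:localmap_SC_old} of the inner map, since it is just $\pp\mapsto\pp+\theta$ in the polar coordinates \eqref{eq:bcoor}. It also keeps the symplectic form \eqref{eq:OmegaW^c} standard. It extends to the Fenichel coordinates by acting only on the $(r,\pp)$ components, and the straightened objects \eqref{eq:strait2} (and the straightened KAM-curves $r=\const$) are unaffected because they are rotation invariant in $(u,v)$.

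The rotation does change the linearization of $S$ at $O$. If we rotate only the image side (equivalently, use the freedom \eqref{eq:changesigma}, or rotate coordinates near $M^+$ and $M^-$ differently, which is legitimate since $S$ is defined via two different holonomies), then $L$ becomes $R_\theta L$. The quantities $b_{11}^2+b_{12}^2$ and $b_{21}^2+b_{22}^2$ are the squared norms of the rows of the matrix, so we need some $\theta$ for which the two rows of $R_\theta L$ have different lengths.

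Set $M=LL^T$, a symmetric positive matrix with $\det M=1$ (since $\det L=1$). The squared row norms of $R_\theta L$ are the diagonal entries of $R_\theta M R_\theta^T$. These diagonal entries agree for every $\theta$ only if $M$ is a multiple of the identity, hence $M=I$ because $\det M=1$. That would make $L$ orthogonal with determinant $1$, i.e.\ a rotation, contradicting \ref{word:C3}. So some $\theta$ works, in fact all but finitely many $\theta$ mod $\pi$ do, since the difference of the diagonal entries is $a\cos2\theta+b\sin2\theta$ with $(a,b)\neq 0$.

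The main obstacle is to justify that rotating only one side is a legitimate change of the Fenichel coordinates. A single global rotation of $W^\c(O)$ conjugates $L$ to $R_\theta L R_\theta^{-1}$. That conjugation fixes the sum of squared entries and the trace, and can still separate the row norms unless $M$ is conjugation-invariant in the relevant way. So I would first try the single conjugation, because it is cleaner, and check it with the same argument applied to $R_\theta LR_{-\theta}$. The difference of the row norms of $R_\theta LR_{-\theta}$ is again of the form $a'\cos2\theta+b'\sin2\theta$ plus a constant term. Its vanishing for all $\theta$ forces $L$ to commute with rotations up to the symmetric part, which means $L$ is a rotation times a scalar, and $\det L=1$ then makes it a rotation. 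If that computation degenerates, I would fall back on \eqref{eq:changesigma}, replacing $S$ by $T_0^{n}\circ S$, whose linearization is $R_{n\rho}L$ with $\rho$ irrational, and apply the row-norm argument above with $\theta=n\rho$ dense.
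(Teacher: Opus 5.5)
Your core computation is the right one, but you run it on $R_\theta L$, which is not what a change of Fenichel coordinates produces. The case the lemma actually needs is only gestured at.

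In these coordinates, $(r,\varphi)$ is constant along the straightened strong-stable and strong-unstable leaves, and those leaves pass through $\mathbb{D}$. So both holonomies $\pi^{\mathrm{s}},\pi^{\mathrm{u}}$ are coordinate projections onto one and the same chart of $\mathbb{D}$. The admissible change $\varphi\mapsto\varphi+\theta$ therefore rotates the source and the target of $S$ simultaneously, and acts by conjugation, $L\mapsto R_\theta L R_\theta^{-1}$. You cannot rotate ``only near $M^+$'' without breaking the straightening. The route through \eqref{eq:changesigma} replaces $S$ by $T_0^n\circ S$: it changes the scattering map rather than the coordinates, so it proves a different statement.

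For the conjugation itself, you only assert the shape of the row-norm difference, including a ``constant term'' that is in fact absent. You then justify the conclusion by ``$L$ commutes with rotations up to the symmetric part'', which is not an argument.

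The missing observation, which is exactly the paper's proof, is that right multiplication by an orthogonal matrix does not change the length of any row. The $i$-th row of $R_\theta L R_\theta^{-1}$ is the $i$-th row of $R_\theta L$ multiplied by $R_\theta^{-1}$. Hence the squared row norms of $R_\theta L R_\theta^{-1}$ are the diagonal entries of $R_\theta M R_\theta^{T}$ with $M=LL^{T}$, and your one-sided computation applies verbatim. Their difference is $(m_{11}-m_{22})\cos 2\theta-2m_{12}\sin 2\theta$, which vanishes for every $\theta$ only if $M=cI$. Then $\det M=1$ and $M>0$ give $c=1$, so $L$ is orthogonal with $\det L=1$, i.e.\ a rotation, contradicting condition~\ref{word:C3}. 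With this line added, the fallback is unnecessary.
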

\begin{proof}
It suffices to show that 
 there exists a rotation matrix $R$ such that in the new coordinates $(\hat u,\hat v)^\T=R\cdot (u,v)^\T$, the elements of $\hat L =R L R^{-1} $ satisfy
$\hat{b}^2_{11}+\hat{b}^2_{12}\neq \hat{b}^2_{21}+\hat{b}^2_{22}$,
or, equivalently, the two row vectors of $RLR^{-1}$ have different lengths. Assume the contrary.
Since the right multiplication by $R^{-1}$ only rotates the rows  of $RL$, we thus  suppose that the row vectors  of $RL$  have the same length for every rotation matrix $R$. A direct computation then shows that we must  have ${b}^2_{11}+{b}^2_{12}={b}^2_{22}+{b}^2_{22}$ and $b_{11}b_{21}+b_{12}b_{22}=0$. This implies that $L L^\T=c\cdot \mathrm{id}$ for some constant $c$. Since $\det L=1$ (being symplectic), $c=1$ and hence $L$ is a rotation, a contradiction.
\end{proof} 

\begin{rem}\label{rem:LTL}
Since $L^\T L$ is symplectic,  symmetric and positive definite, it has two positive real eigenvalues  $\lambda$ and $\lambda^{-1}$ with $\lambda\geq 1$. Condition~\ref{word:C3} means $\lambda>1$; otherwise $L^\T L$, being diagonalizable, would be the identity matrix, implying that $L$ is a rotation.
\end{rem}

\subsubsection{Unfolding families for Theorems~\ref{thm:main_SC_para} and \ref{thm:main_SCblender}}\label{sec:properunfolding}
Recall that $\rho$ is the argument of the central multipliers of $O$ (see \eqref{eq:Orho}), and $L$ is the matrix in condition \ref{word:C3}. 
Consider the following three unfolding families (we denote by $\mathrm{tr}$  the trace of a matrix):

\noindent \textbf{(}\setword{\textbf{H1}}{word:H1}\textbf{)}
A  family $\{f_\eps\}$ with at least 4 parameters    is called a {\em proper unfolding family}  if  the matrix
$ \left.\frac{\partial(\rho, \tr L^\T L, \mu,\nu)}{\partial \eps}\right|_{\eps=0}
$
has full rank, where $\mu$ and $\nu$ are the splitting parameters of the homoclinic orbit $\Gamma$ and they are defined by \eqref{eq:scattering_SC}. 

\noindent \textbf{(}\setword{\textbf{H2}}{word:H2}\textbf{)}
A family $\{f_\eps\}$ with at least 2 parameters is called  {\em tangency-unfolding}  if
the matrix
$ \left.\frac{\partial(\rho,\tr L^\T L)}{\partial \eps}\right|_{\eps=0}
$
has full rank, and $\mu=\nu=0$ for all $\eps$ (i.e., the homoclinic orbit persists). 

\noindent \textbf{(}\setword{\textbf{H3}}{word:H3}\textbf{)}
A  family  $\{f_\eps\}$ with   at least 3 parameters is called   {\em homoclinic-unfolding}   if the matrix
$ \left.\frac{\partial(\rho,\mu,\nu)}{\partial \eps}\right|_{\eps=0}
$
has full rank.

We will show that, at $\eps=0$, there exists a KAM-curve $\gamma\in \mathcal{G}$ having heteroclinic tangencies with two other KAM-curves in  $\mathcal{G}$ (see Lemma~\ref{lem:tangency_KAM}), and then prove that
changing $\rho$ and $\tr L^\T L$ unfolds independently these heteroclinic tangencies and hence produces two homoclinic tangencies of $\gamma$ (see Lemma~\ref{lem:tangenfamily}). As a result, case (2) of Theorem~\ref{thm:main_2punfolding} applies and gives a symplectic blender  connected to $\gamma$, proving Theorem~\ref{thm:main_SCblender}. 
In Section~\ref{sec:homounfold}, we use the found  blender to connect the manifolds of $O$ by changing $\mu$ and $\nu$ within a homoclinic-unfolding family, proving Theorem~\ref{thm:main_SC_para}.

\subsection{Heteroclinic tangencies between KAM-curves}\label{sec:tangenunfold}
Here, we prove
\begin{lem}\label{lem:tangency_KAM}
Given any $\delta>0$, there exist KAM-curves $\gamma,\gamma_{1},\gamma_2\in \mathcal{G}$ such that  the radius of $\gamma$ is smaller than $\delta$ and  $S(\gamma)$ is quadratically tangent to $\gamma_1$ and $\gamma_2$. That is, by Lemma~\ref{lem:equitangency}, there are 
partially-hyperbolic quadratic  tangencies between $W^\u(\gamma)$ and $W^\s_{\loc}(\gamma_1)$, and between $W^\u(\gamma)$ and $W^\s_{\loc}(\gamma_2)$.
\end{lem}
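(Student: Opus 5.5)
We work in the Birkhoff coordinates $(u,v)$ on $\DD$, where $O$ is the origin and, at $\eps=0$, $S(O)=O$ with $\D S(O)=L$. Here $L$ is not a rotation by \ref{word:C3}, and its coordinates are chosen as in Lemma~\ref{lem:rotaton}. The KAM-curves in $\mathcal{G}$ are the straightened circles $\{r=r_\gamma\}$, i.e. near-round circles $u^2+v^2=2r_\gamma$, and $\bigcup_{\gamma\in\mathcal G}\gamma$ has density $1$ at $O$.

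The first step is to blow up. Rescale $(u,v)=\sigma(U,V)$ with $\sigma\to 0$. Then $S$ becomes $L+O(\sigma)$ in $C^2$, and each circle of radius comparable to $\sigma$ becomes a circle of radius $O(1)$ up to a $C^2$-small error. For $\gamma$ of radius $\sigma$, the image $S(\gamma)$ is therefore, in rescaled coordinates, $C^2$-close to the ellipse $E=L(\mathbb S^1)$. By Remark~\ref{rem:LTL}, $E$ has semi-axes $\lambda^{1/2}>1>\lambda^{-1/2}$, since $\det L=1$ and $L^\T L$ has eigenvalues $\lambda^{\pm1}$ with $\lambda>1$. The function $|z|$ restricted to $E$ therefore has exactly two nondegenerate maxima (value $\lambda^{1/2}$) and two nondegenerate minima (value $\lambda^{-1/2}$), each with nonzero second derivative along $E$.

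Next, I compare $S(\gamma)$ with the circles $\{|z|=t\}$. Such a circle is tangent to $E$ exactly at $t=\lambda^{\pm1/2}$, and the tangency is quadratic: the curvature of the circle differs from that of $E$ at its vertices, since $E$ is not a circle. This nondegeneracy is $C^2$-open, so for the perturbed curve $S(\gamma)$ the function $\mathrm{dist}(\cdot,O)$ along $S(\gamma)$ still has a nondegenerate maximum value $t_+\approx\lambda^{1/2}\sigma$ and a nondegenerate minimum value $t_-\approx\lambda^{-1/2}\sigma$. Because the $r$-coordinate is a smooth function of $|z|^2$, the same holds for $r$ along $S(\gamma)$. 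Consequently, whenever a KAM circle $\{r=r_1\}$ has $r_1$ equal to the maximal (resp. minimal) value of $r$ on $S(\gamma)$, that circle is quadratically tangent to $S(\gamma)$. Lemma~\ref{lem:equitangency} then turns these into the claimed partially-hyperbolic quadratic heteroclinic tangencies.

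The main obstacle is that $\gamma$ ranges only over the Cantor-type set $\mathcal G$, and we need the extremal values $r_\pm(\gamma)$ of $r$ on $S(\gamma)$ to land in $\mathcal G$ simultaneously for both. The family $\gamma\mapsto S(\gamma)$ is defined only for $\gamma\in\mathcal G$. The maps $r_\gamma\mapsto r_\pm(\gamma)$ extend naturally to all small $r$, since $S$ is defined on the whole disc, and they are smooth with derivative $\approx\lambda^{\pm1}$ in $r$ (because $r\sim|z|^2/2$). Let $R=\{r: \{r=\const\}\in\mathcal G\}$. The set of $r_0\in R\cap(0,\delta)$ with $r_+(r_0)\in R$ and $r_-(r_0)\in R$ has relative measure close to $1$ in $(0,\delta')$ for small $\delta'$. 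This follows because $R$ has density $1$ at $0$ and the maps $r_\pm$ are bi-Lipschitz with scaling factors bounded away from $0$ and $\infty$; as a result, the preimages of the complement of $R$ have relative measure $o(1)$. Hence the intersection of these three full-density sets is nonempty, and taking $r_0$ in it gives $\gamma$, $\gamma_1=\{r=r_+(r_0)\}$ and $\gamma_2=\{r=r_-(r_0)\}$. To verify the density bound, I would apply the density-$1$ property to the intervals $(0,\lambda^{\pm1}\delta'(1+o(1)))$. I expect this measure-counting step, together with checking uniformity of the $C^2$ error of the blow-up, to be the delicate part.
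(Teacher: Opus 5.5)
Your proposal is correct and follows essentially the paper's route. The paper writes $S$ in polar coordinates as $\bar r = r g(r,\pp)$ with $g=g_0+O(\sqrt r)$ and finds nondegenerate extrema of $g_0$ whose values are the eigenvalues $\lambda^{\pm1}$ of $L^\T L$; your blow-up to the ellipse $L(\mathbb S^1)$ is the same computation, and your bi-Lipschitz preimage count, which forces $r_0$, $r_+(r_0)$ and $r_-(r_0)$ into $R$ simultaneously, is a more careful version of the paper's final density step, which phrases the conditions along $\ell^+$ and $\ell^-$ separately.
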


It is proved by Lerman and Markova \citep{LerMar:15} that when conditions \ref{word:C1}--\ref{word:C3} are satisfied, the image $S(\gamma)$ of each KAM-curve $\gamma\in\mathcal{G}$ is an ellipse that intersects $\gamma$ transversely at four points (see Figure~\ref{fig:chain}(a)); so, $\gamma$ has exactly four primary homoclinic orbits. In fact, there exists $\kappa>0$ such that, for all small $r$, the image of the circle of radius $r$ intersects transversely every circle of radii between $(1-\kappa) r$ and $(1+\kappa) r$. Since $\bigcup_{\gamma\in\mathcal{G}}\gamma$ has Lebesgue density 1 at $O$, this implies the following

\begin{lem}\label{lem:chain}
Up to shrinking $\DD$, for any two KAM-curves $\gamma, \gamma'\in \mathcal{G}$, there exist two heteroclinic chains $\{\gamma^+_i\in \mathcal{G}\}_{i=1}^{n^+}$  and $\{\gamma^-_i\in \mathcal{G}\}_{i=1}^{n^-}$, with $\gamma^+_1=\gamma^-_1=\gamma$ and $\gamma^+_{n^+}=\gamma^-_{n^-}= \gamma'$, such that $S(\gamma^+_i)\pitchfork \gamma^+_{i+1}\neq \emptyset$ for $i=1,\dots,n^+-1$, and $S^{-1}(\gamma^-_i)\pitchfork \gamma^-_{i+1}\neq \emptyset$ for $i=1,\dots,{n^-}-1$.
\end{lem}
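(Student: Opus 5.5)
The plan is a chaining argument in radius, based on the Lerman–Markova annulus property quoted just before the lemma. Fix $\kappa>0$ so that, for all $r\in(0,r_0]$, the image under $S$ of the circle of radius $r$ meets transversely every circle whose radius lies in $[(1-\kappa)r,(1+\kappa)r]$. The preimage statement is the same property applied to $S^{-1}$. Indeed, $S^{-1}$ is also a symplectic map fixing $O$, and its linear part $L^{-1}$ is not a rotation, so condition \ref{word:C3} holds for it; after shrinking $\kappa$, the property therefore holds for $S^{-1}$ as well. We shrink $\DD$ so that every KAM-curve in $\mathcal{G}$ has radius below $r_0$. The lemma only concerns curves in $\mathcal{G}$, whose radii lie in a set $\mathcal{R}\subset(0,r_0)$ accumulating at $0$ with density $1$.

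First step: use the density to find a good radius near every scale. Since $\bigcup_{\gamma\in\mathcal{G}}\gamma$ has Lebesgue density $1$ at $O$, the set $\mathcal{R}$ has the following property after further shrinking $r_0$. For every $t\in(0,r_0)$, the interval $[t,(1+\kappa/2)t]$ contains a point of $\mathcal{R}$, and so does $[(1-\kappa/2)t,t]$. The reason is that the portion of the disc of radius $(1+\kappa/2)t$ lying in the annulus $t\le r\le (1+\kappa/2)t$ has relative area bounded below, while the complement of the KAM set has relative area $o(1)$ as $t\to 0$. Here we use that the radius coordinate $r$ of \eqref{eq:bcoor} is the area coordinate, $\sqrt{2r}$ is the Euclidean radius, and the KAM-curves are the straightened circles $\{r=\const\}$. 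The comparisons are all multiplicative, so it does not matter which radius convention we use, as long as we stay consistent; I would simply absorb the change into $\kappa$.

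Second step: build the chain. Take $\gamma,\gamma'$ with radii $a,b$, and suppose $a<b$ (the other case is symmetric). Define $\gamma^+_1=\gamma$. Given $\gamma^+_i$ of radius $r_i<b$, we distinguish two cases. If $b\le(1+\kappa)r_i$, then $\gamma'$ is already in the admissible annulus, so we set $\gamma^+_{i+1}=\gamma'$. Otherwise, the first step gives a curve in $\mathcal{G}$ with radius in $[(1+\kappa/2)r_i,(1+\kappa)r_i]$, which we take as $\gamma^+_{i+1}$. The annulus property then yields $S(\gamma^+_i)\pitchfork\gamma^+_{i+1}\neq\emptyset$. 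The radii grow geometrically, by a factor of at least $1+\kappa/2$ at each step, so the chain terminates at $\gamma'$ after finitely many steps. When $a>b$, we decrease the radius in the same way, using the lower interval. The $S^{-1}$-chain $\{\gamma^-_i\}$ is built identically from the $S^{-1}$ annulus property. Finally, if $a=b$ then $\gamma=\gamma'$, and the chain $n^\pm=1$ trivially works.

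Main obstacle: the only delicate point is uniformity in the Lerman–Markova property. The constant $\kappa$ must be independent of $r$ and valid for both $S$ and $S^{-1}$ at all small scales. I would obtain this by rescaling by $\sqrt r$. In the rescaled variables, $S$ converges in $C^1$ to the linear map $L$, which maps the unit circle to an ellipse with semi-axes $\lambda^{1/2}>1>\lambda^{-1/2}$ (Remark~\ref{rem:LTL}). That ellipse crosses every circle with radius in $(\lambda^{-1/2},\lambda^{1/2})$ transversely, and transversality persists under $C^1$-small perturbations. This gives a uniform $\kappa$, and the same argument with $L^{-1}$ handles $S^{-1}$.
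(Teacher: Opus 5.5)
Your proposal is correct and follows the paper's route. The paper likewise takes a uniform $\kappa$ (quoted from Lerman--Markova) such that $S$ maps the circle of radius $r$ onto a curve crossing every circle of radius in $[(1-\kappa)r,(1+\kappa)r]$ transversely, and combines it with the density-one property of $\bigcup_{\gamma\in\mathcal{G}}\gamma$ at $O$. Your chaining in radius and your rescaling-to-$L$ derivation of $\kappa$ just supply the details the paper leaves implicit.

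There is one small arithmetic slip. Since $(1+\kappa/2)^2>1+\kappa$, the increasing step should use density intervals $[t,(1+\kappa/4)t]$ instead. Also, the $S^{-1}$-chain comes for free as a reversed $S$-chain from $\gamma'$ to $\gamma$, because $S^{-1}(\gamma_i)\pitchfork\gamma_{i+1}\neq\emptyset$ if and only if $S(\gamma_{i+1})\pitchfork\gamma_i\neq\emptyset$.
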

\begin{figure}[!h]
\begin{center}
\includegraphics[scale=1.4]{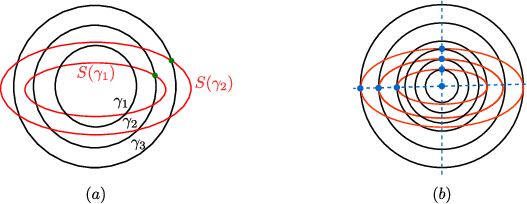}
\end{center}
\caption{(a) A heteroclinic chain connecting $\gamma_1$ and $\gamma_3$ via $S$. (b) The circles are tangent to their images (in red) along two smooth curves (in blue).}
\label{fig:chain}
\end{figure}

Using \eqref{eq:bcoor}, we  rewrite the scattering map \eqref{eq:scattering_SC}  at $\eps=0$ in the polar coordinates as $S:(r,\pp)\mapsto (\bar r,\bar \pp)$, where
\begin{equation}\label{eq:scat_polar}
\bar r = r\left((b_{11}\cos\pp +b_{12}\sin\pp)^2+(b_{21}\cos\pp +b_{22}\sin\pp)^2+O(\sqrt{r})\right).
\end{equation}
Since  all  KAM-curves   in  $\mathcal{G}$ are straightened,  the circles centered at $O$ form a smooth foliation $\mathcal{F}$ of $\mathbb{D}$ that contains $\mathcal{G}$.

\begin{lem}\label{lem:tangency_2family}
If  condition \ref{word:C3} is satisfied, then given the circle $C_r\in\mathcal{F}$ with a sufficiently small radius $r$, the image $S(C)$ is quadratically tangent to two circles with the radii
\begin{equation}\label{eq:radii}
r^+=\lambda  r+O(r^\frac{3}{2}) \quad \mbox{and}\quad r^-=\lambda^{-1}  r+ O(r^\frac{3}{2}),
\end{equation}
where $\lambda>1$ is the maximal eigenvalue of $L^\T L$.
The two corresponding sets of the tangency points are smooth curves $\ell^+$ and $\ell^-$ which intersect transversely at $O$ and intersect 
transversely the circles in $\mathcal{F}$ (see Figure \ref{fig:chain}(b)).
\end{lem}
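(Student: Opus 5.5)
We reduce the tangency question to a one-dimensional extremum problem for the radial function of $S(C_r)$. Parametrize $C_r$ by $\pp$ and write the image point $S(r,\pp)=(\bar r(\pp),\bar\pp(\pp))$. By \eqref{eq:scat_polar},
\[
\bar r(\pp)=r\bigl(Q(\pp)+O(\sqrt r)\bigr),\qquad Q(\pp)=\|L(\cos\pp,\sin\pp)^\T\|^2=e_\pp^\T L^\T L\, e_\pp .
\]
Since $S$ is a local diffeomorphism near $O$ with $\D S(O)=L$ invertible, the angle $\bar\pp$ is a diffeomorphic function of $\pp$ along $C_r$, with $d\bar\pp/d\pp$ bounded away from zero. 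Hence $S(C_r)$ is the graph $\bar r=R(\bar\pp)$ over the angle, and its tangencies with a circle $C_{r'}$ are exactly the critical points of $R$, equivalently of $\pp\mapsto\bar r(\pp)$. Such a tangency is quadratic iff the critical point is nondegenerate.

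The core step is to control the $O(\sqrt r)$ term in $C^2$. In the $(u,v)$ coordinates $S$ is $C^{s'-1}$, so we expand $S(u,v)=L(u,v)^\T+O(u^2+v^2)$ with derivative bounds. Substituting $u=\sqrt{2r}\cos\pp$, $v=\sqrt{2r}\sin\pp$ gives
\[
\bar r=\tfrac12\|S(u,v)\|^2=r\,Q(\pp)+r^{3/2}g(\pp,\sqrt r),
\]
where $g$ is smooth with $\pp$-derivatives up to order $2$ bounded uniformly. This holds because $\partial_\pp$ of the remainder is again of order $r^{3/2}$: each $\pp$-derivative costs no power of $r$. The function $Q(\pp)=\tfrac{\lambda+\lambda^{-1}}{2}+\tfrac{\lambda-\lambda^{-1}}{2}\cos 2(\pp-\pp_0)$ is obtained by diagonalizing $L^\T L$ as in Remark~\ref{rem:LTL}. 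By \ref{word:C3}, $\lambda>1$, so $Q$ is a Morse function on $\mathbb S^1/\pi$ with nondegenerate maxima at $\pp_0,\pp_0+\pi$ and minima at $\pp_0\pm\pi/2$.

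By the implicit function theorem applied to $\partial_\pp(Q+\sqrt r g)=0$, for small $r$ the critical points persist as four points $\pp^\pm_j(r)=\pp^\pm_j(0)+O(\sqrt r)$, all nondegenerate. They give tangencies with circles of radii $r^\pm=r(\lambda^{\pm1}+O(\sqrt r))$, which is \eqref{eq:radii}. Two antipodal critical points of the same type may give slightly different radii. We state the lemma per branch, or note that the central symmetry $(u,v)\mapsto(-u,-v)$ of the linear part makes the two coincide to leading order. For the claim we take one circle per type.

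It remains to check the tangency curves. In the $(u,v)$ plane the tangency locus is $\ell^\pm=\{(\sqrt{2r}\cos\pp^\pm(r),\sqrt{2r}\sin\pp^\pm(r))\}$. Because $\pp^\pm(r)$ depends smoothly on $\sqrt r$, this is a smooth curve through $O$ parametrized by $\sqrt{2r}$ (taking both antipodal branches) with tangent direction $e_{\pp_0}$ for $\ell^+$ and $e_{\pp_0+\pi/2}$ for $\ell^-$. These directions are orthogonal, so $\ell^+\pitchfork\ell^-$ at $O$. Each curve is radial to leading order, hence transverse to the circles of $\mathcal F$.

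The main obstacle is smoothness of $\ell^\pm$ at $O$. Since $\pp^\pm$ is a smooth function of $\sqrt r$, one must verify that the combined curve $t\mapsto(t\cos\pp^\pm(t^2/2),\ldots)$, $t\in(-\delta,\delta)$, is $C^1$ or better. We handle this by redoing the critical-point equation directly in Cartesian form. Tangency means $\nabla(\|S\|^2)$ at $S(p)$ is parallel to $\D S(p)\,p^\perp$ and orthogonal to it, i.e., $G(p):=\langle S(p),\D S(p)Jp\rangle=0$. Dividing by $|p|^2$ yields a smooth function of $(t,\theta)$ in blow-up coordinates whose $\theta$-derivative is nonzero at the Morse critical points. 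The implicit function theorem then gives smooth $\theta(t)$, odd-symmetric, and therefore a smooth curve through $O$.
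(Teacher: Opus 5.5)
Your proposal is correct and follows the paper's route. You reduce tangencies of $S(C_r)$ with circles to nondegenerate critical points of $\bar r/r=\|Le_\varphi\|^2+O(\sqrt r)$, perturb this Morse function, and read off the extremal values $\lambda^{\pm1}$ as the eigenvalues of $L^{\mathrm{T}}L$. Your $C^2$ control of the remainder, the reparametrization from $\varphi$ to $\bar\varphi$, and the blow-up argument for smoothness of $\ell^\pm$ at $O$ supply details the paper only asserts.

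One small point: the paper's $\ell^\pm$ are the images under $S$ of the preimage loci you parametrize, since the actual tangency points lie on $S(C_r)$. This is harmless, because $S$ is a diffeomorphism fixing $O$, and any $C^1$ curve through $O$ with nonzero tangent is transverse to the small circles.
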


\begin{proof}
The image $S(C_r)$ has a quadratic  tangency to $\mathcal{F}$ if and only if there exists $\varphi_r\in \mathbb{S}^1$ such that  $\partial \bar r(r,\varphi_r)/\partial\varphi=0$ and $\partial^2 \bar r(r,\varphi_r)/\partial \varphi^2\neq 0$. 
Let us rewrite  \eqref{eq:scat_polar} as
\begin{equation}\label{eq:g1+g2}
\bar r =r g(r,\pp),
\end{equation}
where
\begin{equation}\label{eq:gnew}
g(r,\pp)=(b_{11}\cos\pp +b_{12}\sin\pp)^2+(b_{21}\cos\pp +b_{22}\sin\pp)^2 + O(\sqrt{r})).
\end{equation}
Obviously, the sought tangencies are given by non-degenerate extrema of $g$. Thus, it suffices to show that 
\begin{equation}\label{eq:g1}
g_0(\pp)=(b_{11}\cos\pp +b_{12}\sin\pp)^2+(b_{21}\cos\pp +b_{22}\sin\pp)^2
\end{equation}
has a non-degenerate minimum and a non-degenerate maximum on $[0,\pi)$, and the corresponding minimal and maximal values are 
$\lambda^{-1}$ and $\lambda$.

The existence of the non-degenerate extrema becomes obvious once we
rewrite \eqref{eq:g1} as
$$
g_0(\varphi)= \dfrac{1}{2}(b^2_{11}+b^2_{12}+b^2_{21}+b^2_{22})+\dfrac{1}{2}(b^2_{11}+b^2_{21}-b^2_{12}-b^2_{22})\cos 2\pp+(b_{11}b_{12}+b_{21}b_{22})\sin 2\pp.$$
By \eqref{eq:gnew}, the sought curves $\ell^\pm$ are the images by $S$ of the graphs of some functions 
$$
\pp=\pp^\pm + O(\sqrt{r}),\qquad
\pp=\pp^\pm +\pi + O(\sqrt{r}),
$$
where $\pp^\pm$ are the two extremal points of $g_0$ on $[0,\pi)$.

To evaluate the corresponding extremal values, one just notes 
from \eqref{eq:bcoor} that
\[
g_0(\pp)= \frac{1}{u^2+v^2} (u,v)\cdot L^\T L \cdot(u,v)^\T,
\]
and hence the minimum and maximum of $g_0$ equal to the eigenvalues of $L^\T L$. 
\end{proof}

By the above lemma, the image $S(\gamma)$ of a KAM-curve $\gamma\in \mathcal{G}$ has a quadratic tangency with KAM-curves $\gamma_{1,2} \in \mathcal{G}$ when $S(\gamma)\cap \ell^+ \cap\gamma_1\neq \emptyset$ and $S(\gamma)\cap \ell^- \cap \gamma_2 \neq \emptyset$. So,  denoting $G:=\bigcup_{\gamma\in \mathcal{G}} \gamma$ and $S(G):=\bigcup_{\gamma\in \mathcal{G}} S(\gamma)$,
Lemma~\ref{lem:tangency_KAM} is reduced to the claim that
the set 
$\ell^+\cap S(G)$ has an non-empty intersection with $\ell^+\cap G$
arbitrarily close to $O$, and the set $\ell^-\cap S(\gamma)$ has an non-empty intersection with $\ell^-\cap G $ arbitrarily close to $O$. This, in turn, follows since 
$G$ and $S(G)$ have Lebesgue density 1 at $O$,
thus completing the proof of Lemma~\ref{lem:tangency_KAM}.

\subsection{Creation of coexisting quadratic homoclinic tangencies: proof of Theorem~\ref{thm:main_SCblender}}
Since the property of being a symplectic blender connected to some whiskered KAM-torus is $C^1$-open (in the space of symplectic diffeomorphisms), to prove Theorem~\ref{thm:main_SCblender}, it suffices to consider a tangency-unfolding family   $\{f_\eps\}$ with two parameters. In this case, the first requirement in \ref{word:H2} reads
\begin{equation}\label{eq:gcfamily_SC}
 \det\left.\dfrac{\partial(\rho,\tr L^\T L)}{\partial \eps}\right|_{\eps=0}\neq 0.
\end{equation}

Recall that every KAM-curve in $\mathcal{G}$ persists  in the sense that every  symplectic map close to $f$ has a KAM-curve with the same rotation number. Thus, we say that a quadratic tangency between $S(\gamma_1)$ and $\gamma_2$ persists under a perturbation if the continuations of the two curves in the perturbed system have a quadratic tangency close to the original tangency point.  
We  show below that the two heteroclinic tangencies between KAM-curves given by Lemma~\ref{lem:tangency_KAM}  are unfolded independently within the family $\{f_{\eps}\}$: there exist two smooth curves in a small neighborhood of 0 in the parameter plane such that they intersect transversely at $0$ and that each tangency persists on one of the curves  and disappears elsewhere in the neighborhood.

\begin{lem}\label{lem:tangenfamily}
If  $\mathbb D$ is sufficiently small, then for any KAM-curves $\gamma,\gamma_{1},\gamma_2$ given by Lemma \ref{lem:tangency_KAM}, the family $\{f_{\eps}\}$ unfolds independently  the   tangencies between $S(\gamma)$ and   $\gamma_{1},\gamma_{2}$.
\end{lem}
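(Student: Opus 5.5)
We plan to compute how each heteroclinic tangency moves under the two parameters $(\rho,\tr L^\T L)$ and check that the resulting $2\times 2$ Jacobian is nondegenerate. By \eqref{eq:gcfamily_SC} we reparametrize so that $\eps=(\eps_1,\eps_2)$ with $\eps_1=\rho(\eps)-\rho(0)$ and $\eps_2=\tr L^\T L(\eps)-\tr L^\T L(0)$. Since $\mu=\nu=0$ along the family (condition \ref{word:H2}), the scattering map keeps the form \eqref{eq:scattering_SC} with $S(O)=O$, and in polar coordinates it is $\bar r=r\,g(r,\pp,\eps)$ as in \eqref{eq:g1+g2}, with $g_0$ depending smoothly on $\eps$. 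The KAM-curves $\gamma,\gamma_1,\gamma_2$ persist with fixed rotation numbers, and by \eqref{eq:rotnr} their radii are $r_{\gamma,\eps}=r_{\gamma,0}-\eps_1+O(r_{\gamma,0}^2\eps)$, and similarly for $\gamma_{1,2}$.

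The first step is to write the splitting functions. By Lemma~\ref{lem:tangency_2family} (applied with $\eps$-dependent coefficients), $S(\gamma_\eps)$ has nondegenerate extrema of its radius with values $r^\pm(\eps)=\lambda(\eps)^{\pm1}r_{\gamma,\eps}+O(r^{3/2})$, where $\lambda(\eps)+\lambda(\eps)^{-1}=\tr L^\T L$ by Remark~\ref{rem:LTL}. Since the extrema are nondegenerate, the tangency points depend smoothly on $\eps$, and the signed splittings are
\[
D_1(\eps)=r^+(\eps)-r_{\gamma_1,\eps},\qquad D_2(\eps)=r^-(\eps)-r_{\gamma_2,\eps},
\]
both vanishing at $\eps=0$. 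Each tangency persists exactly on the curve $\{D_i=0\}$.

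The second step is the derivative computation. Differentiating in $\eps_1$ gives $\p D_1/\p\eps_1=-\lambda+1+O(r)$ and $\p D_2/\p\eps_1=-\lambda^{-1}+1+O(r)$. Differentiating in $\eps_2$ gives $\p D_1/\p\eps_2=\lambda'r_\gamma+O(r^{3/2})$ and $\p D_2/\p\eps_2=-\lambda^{-2}\lambda' r_\gamma+O(r^{3/2})$, where $\lambda'=d\lambda/d(\tr L^\T L)=1/(1-\lambda^{-2})\neq0$ because $\lambda>1$. The determinant is then
\[
r_\gamma\lambda'\bigl[(1-\lambda)(-\lambda^{-2})-(1-\lambda^{-1})\bigr]+O(r^{3/2})
= r_\gamma\lambda'(\lambda-1)\lambda^{-2}\cdot 2+O(r^{3/2}),
\]
with $1-\lambda^{-1}=(\lambda-1)/\lambda$. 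Both terms have the same sign, so they add; to leading order the determinant equals $2r_\gamma\lambda'(\lambda-1)\lambda^{-2}\neq0$. For $\mathbb D$ small, hence $r_\gamma$ small, the determinant is therefore nonzero. The implicit function theorem then gives two curves $\{D_1=0\}$ and $\{D_2=0\}$ that cross transversally at $0$, which is the claimed independent unfolding.

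The main obstacle is the error control in the $\eps$-derivatives. The $\eps_2$-column is only of order $r_\gamma$, so we need the $\eps$-derivatives of the $O(r^{3/2})$ remainders in \eqref{eq:scat_polar} to be $O(r^{3/2})$, and the correction $h$ in \eqref{eq:rotnr} to contribute only $O(r^2)$. We also need the $\eps_1$-dependence of $g$ to enter only through $O(r^{1/2})\cdot r$ terms. I would first establish this by noting that the Fenichel/Birkhoff coordinates depend smoothly on $\eps$, with $\bar r/r$ a smooth function of $(\sqrt r\cos\pp,\sqrt r\sin\pp,\eps)$. One should also check that the rotation of the normalizing coordinates needed to keep the Birkhoff form does not affect $\tr L^\T L$, which is invariant under conjugation by rotations.
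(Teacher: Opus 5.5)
Your proposal follows the paper's proof: the same splitting functions (the paper's $\Delta_1,\Delta_2$), the same reparametrization $\eps_1=\rho-\rho_0$, $\eps_2=\tr L^\T L-\tr L_0^\T L_0$, and the same use of \eqref{eq:rotnr} and Lemma~\ref{lem:tangency_2family}, leading to a $2\times2$ determinant of order $r_\gamma$.

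There is one arithmetic slip. The two terms in your bracket have opposite signs, since $(\lambda-1)\lambda^{-2}-(\lambda-1)\lambda^{-1}=-(\lambda-1)^2\lambda^{-2}$. The determinant is therefore $-r_\gamma\lambda'(\lambda-1)^2\lambda^{-2}+O(r_\gamma^{3/2})$, as in the paper, not $2r_\gamma\lambda'(\lambda-1)\lambda^{-2}$. It is still nonzero because $\lambda>1$, so your conclusion stands. Also, the errors in the $\eps_1$-column are $O(r_\gamma^{1/2})$ rather than $O(r_\gamma)$, because the $O(r^{3/2})$ remainder is differentiated through $r_{\gamma,\eps}$. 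This is harmless, since it affects the determinant only at order $r_\gamma^{3/2}$.
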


\begin{proof}
Denote by $\gamma_{\eps}$ and $\gamma_{i,\eps}$ ($i=1,2$) the continuations of the corresponding KAM-curves (so, $\gamma_0=\gamma$ and $\gamma_{i,0}=\gamma_i)$, and by $r_{*,\eps}$ and $r_{i,\eps}$ their radii. Denote by  $\lambda_\eps$ and $\lambda_\eps^{-1}$  the two eigenvalues of $L^\T L$.
Since we have $S(O)=O$ for a tangency-unfolding family, formulas in 
 \eqref{eq:radii} hold for all small $\eps$. Hence, $\gamma_{\eps}$ is tangent to two circles with radii
\begin{equation*}
r^+_{*,\eps}=\lambda_\eps  r_{*,\eps}+O(r_{*,\eps}^\frac{3}{2}),\qquad  r_{*,\eps}^-=\lambda_\eps^{-1}  r_{*,\eps}+ O(r_{*,\eps}^\frac{3}{2}),
\end{equation*}
where $r^+_{*,0}=r_{1,0}$ and $r^-_{*,0}=r_{2,0}$. We define
\begin{equation}\label{eq:unfold2:1}
\Delta_1(\eps)=r^+_{*,\eps}-r_{1,\eps},\qquad
\Delta_2(\eps)=r^-_{*,\eps}-r_{2,\eps}.
\end{equation}
By construction, for each $i$, $S(\gamma_{\eps})$ is tangent to $\gamma_{i,\eps}$ if and only if $\Delta_i(\eps)=0$. In what follows we prove  
\begin{equation*}
\det \left.\dfrac{\p(\Delta_1,\Delta_2)}{\p(\eps_1,\eps_2)}\right|_{\eps = 0}\neq 0,
\end{equation*}
which immediately gives the lemma.

Condition \eqref{eq:gcfamily_SC} allows us to take
\begin{equation}\label{eq:unfold2:2}
\eps_1=\rho-\rho_0,\qquad
\eps_2=\tr L^\T L -\tr L_0^\T L_0,
\end{equation} 
where $\rho_0$ and $L_0$ denote  the corresponding values for $f_0=f$.
It  follows from \eqref{eq:rotnr} that for all small $\eps$
\begin{equation}\label{eq:radius0}
r_{\alpha,\eps}=r_{\alpha,0}-\eps_1+O(r_{\alpha,0}^2 \cdot \eps   ), \qquad \alpha=*,1,2.
\end{equation}

We next compute the derivatives involved in the Jacobin matrix.
Since $\tr L^\T L = \lambda_\eps + \lambda^{-1}_\eps$, the choice of $\eps_2$ implies that $A:=(\p\lambda_\eps /\p \eps_2)|_{\eps=0}$ is non-zero. Since ${\p \tr L^\T L }/{\p \eps_1}=0$, we must have  $(\p\lambda_\eps /\p \eps_1)|_{\eps=0}=0$, for otherwise $\lambda_0^2=1$ contradicting condition \ref{word:C3} (see Remark~\ref{rem:LTL}). This together with \eqref{eq:unfold2:1} and the above expressions for $r_{*,\eps}$ and $r_{i,\eps}$ leads to 
\begin{align*}
\det \left.\dfrac{\p(\Delta_1,\Delta_2)}{\p (\eps_1,\eps_2)}\right|_{\eps=0}
&=
\left|
\begin{matrix}
(1-\lambda_0)+O(\sqrt{r_{*,0}})&
A r_* +O(r_{*,0}^2)\\
(1-\lambda_0^{-1})+O(\sqrt{r_{*,0}})&
-\dfrac{A}{\lambda_0^2} r_*+ O(r_{*,0}^2)
\end{matrix}
\right|\\
&=-\dfrac{(\lambda_0-1)^2}{\lambda_0^2} A r_{*,0}+ O(r_{*,0}^\frac{3}{2}).
\end{align*}
Since $\lambda_0> 1$ by Remark~\ref{rem:LTL}, this determinant is non-zero for all sufficiently small $r_{*,0}$. 
\end{proof}

\begin{proof}[Proof of Theorem~\ref{thm:main_SCblender}]
Lemma~\ref{lem:tangenfamily} together with Lemma~\ref{lem:equitangency} implies that the family $\{f_\eps\}$ unfolds independently the partially-hyperbolic quadratic tangencies between $W^\u(\gamma)$ and $W^\s(\gamma_1)$ and between $W^\u(\gamma)$ and $W^\s(\gamma_2)$. On the other hand, by Lemma~\ref{lem:chain}, $\gamma$ can be connected to $\gamma_{1,2}$ by the $S^{-1}$ heteroclinic chains of KAM-curves; then, applying Lemma~\ref{lem:incli}  repeatedly shows that $W^\s(\gamma)$ accumulates on $W^\s(\gamma_1)$ and $W^\s(\gamma_2)$ in the $C^{m^*}$ topology, where $m^*\geq 2$ when $f_\eps$ are sufficiently smooth.  It follows that an arbitrarily small change of $\eps$ creates two  quadratic homoclinic tangencies of $\gamma$, which, by construction, unfold independently under further change of $\eps$. It is clear that an independent unfolding of two homoclinic tangencies  is generic in the sense of~\eqref{eq:unfold2quadra}.

Note that the partial hyperbolicity condition~\eqref{eq:condition:trans2} remains valid  for all orbits sufficiently close to $\mathcal{O}(O)\cup \Gamma$; in particular, it holds for the homoclinic tangencies to $\gamma$ which we just  obtained. We thus proved

\begin{prop}\label{prop:tangenfamily}
Let  $\{f_\eps\}$ be a two-parameter tangency-unfolding family. Given any $\delta>0$, there exist a KAM-curve $\gamma\in\mathcal{G}$ with radius smaller than $\delta$ and  a sequence $\{\eps_j\}$ converging to $\eps=0$ such that the continuation of $\gamma$ for each $f_{\eps_j}$ has two partially-hyperbolic quadratic homoclinic tangencies that unfold independently.
\end{prop}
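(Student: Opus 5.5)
The proof assembles the three ingredients just established in this section.

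\textbf{Step 1: pick the curves.} Fix $\delta>0$ and take $\eps=0$. By Lemma~\ref{lem:tangency_KAM} we choose KAM-curves $\gamma,\gamma_1,\gamma_2\in\mathcal{G}$ with the radius of $\gamma$ smaller than $\delta$. The images $S(\gamma)$ are quadratically tangent to $\gamma_1$ and $\gamma_2$ at points of $\ell^+$ and $\ell^-$. Lemma~\ref{lem:equitangency} turns these into partially-hyperbolic quadratic heteroclinic tangencies between $W^\u(\gamma)$ and $W^\s_\loc(\gamma_i)$. Partial hyperbolicity is inherited from condition~\eqref{eq:condition:trans2}, because all the orbits involved stay in a small neighbourhood of $\mathcal{O}(O)\cup\Gamma$ once $\mathbb{D}$ is small.

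\textbf{Step 2: independent unfolding.} By Lemma~\ref{lem:tangenfamily}, the two-parameter tangency-unfolding family unfolds these two heteroclinic tangencies independently. Concretely, the splitting functions $\Delta_1(\eps),\Delta_2(\eps)$ have non-degenerate Jacobian at $\eps=0$, so we may use $(\Delta_1,\Delta_2)$ as local parameters.

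\textbf{Step 3: convert to homoclinic tangencies.} We need $W^\s(\gamma)$ to accumulate on $W^\s_\loc(\gamma_i)$.
\begin{itemize}
\item Lemma~\ref{lem:chain} gives $S^{-1}$-heteroclinic chains of KAM-curves in $\mathcal{G}$ joining $\gamma$ to $\gamma_1$ and to $\gamma_2$, with transverse links.
\item Applying Lemma~\ref{lem:incli} along each chain, with $m^*\ge2$ (guaranteed by sufficient smoothness), shows that $W^\s(\gamma)$ accumulates, from both sides and in $C^{2}$ jointly in variables and parameters, on $W^\s_\loc(\gamma_1)$ and on $W^\s_\loc(\gamma_2)$.
\item These chains persist for small $\eps$, since the KAM-curves persist and transversality is open.
\end{itemize}
Hence for any small target $(\Delta_1,\Delta_2)$ we can pick a sheet of $W^\s(\gamma)$ that is $C^2$-close to $W^\s_\loc(\gamma_1)$, and another that is $C^2$-close to $W^\s_\loc(\gamma_2)$. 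The signed distance of each sheet is a small $C^2$ perturbation of $\Delta_i$, jointly in $\eps$.

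The main point is simultaneity. The two conditions ``sheet 1 is tangent to $W^\u(\gamma)$'' and ``sheet 2 is tangent to $W^\u(\gamma)$'' are two equations in $\eps$. Each is a $C^1$-small perturbation of $\Delta_1=0$ and of $\Delta_2=0$ respectively, so the implicit function theorem applied to the perturbed map near $(\Delta_1,\Delta_2)=0$ gives a solution $\eps_j$. This solution tends to $0$ as we take sheets closer to the limits. The $C^2$ closeness ensures that both tangencies remain quadratic. The Jacobian stays invertible, so the two homoclinic tangencies of $\gamma$ at $\eps_j$ unfold independently, which is condition~\eqref{eq:unfold2quadra}.

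The delicate step is controlling the accumulation jointly in parameters, uniformly along the chains. For that we rely on the parameter-joint $C^{m^*}$ statement of Lemma~\ref{lem:incli}.
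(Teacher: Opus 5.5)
Your proposal is correct and follows essentially the same route as the paper. The paper likewise chooses the curves via Lemma~\ref{lem:tangency_KAM}, gets independent unfolding of the two heteroclinic tangencies from Lemma~\ref{lem:tangenfamily} (translated by Lemma~\ref{lem:equitangency}), uses the $S^{-1}$-chains of Lemma~\ref{lem:chain} with Lemma~\ref{lem:incli} ($m^*\ge 2$) to turn them into two quadratic homoclinic tangencies of $\gamma$, and inherits partial hyperbolicity from \eqref{eq:condition:trans2}. Your implicit-function-theorem argument for simultaneity just spells out the step the paper states briefly as ``an arbitrarily small change of $\eps$ creates two quadratic homoclinic tangencies.''
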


Theorem~\ref{thm:main_SCblender} then follows  from Proposition~\ref{prop:tangenfamily} and case (2) of Theorem~\ref{thm:main_2punfolding}.
\end{proof}

\subsection{Persistent saddle-center homoclinics: proof of Theorem~\ref{thm:main_SC_para}\label{sec:homounfold}}

We will connect the manifolds of $O$ by using a symplectic blender found from   Theorem~\ref{thm:main_SCblender}.
The key step   is to show that, within a homoclinic-unfolding family, one can create a heteroclinic connection between $O$ and the whiskered KAM-torus that is connected to a symplectic blender.

By condition~\ref{word:H3}, for a $d$-parameter homoclinic-unfolding family, we can take 
\begin{equation}\label{eq:eps}
\eps=(\Delta \rho,\mu,\nu,\eps'),
\end{equation}
where $\Delta \rho=\rho_\eps-\rho_0$  and  $\eps'\in \mathbb{R}^{d-3}$. 

\begin{lem}\label{lem:Otogamma}
For any homoclinic-unfolding family $\{f_\eps\}$ and any given KAM-curve $\gamma\in \mathcal{G}$, there exists a sequence $\{\eps_j\}$ converging to $0$ such that $W^\u(O_{\eps_j}) \cap W^\s(\gamma_{\eps_j})\neq \emptyset$ and $W^\s(O_{\eps_j})\cap W^\u(\gamma_{\eps_j})\neq \emptyset$. Moreover, the two  intersections unfold independently as $\mu$ and $\nu$ vary from $\mu_j$ and $\nu_j$.
\end{lem}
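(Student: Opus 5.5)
We plan to work entirely in the Fenichel coordinates \eqref{eq:strait2} near $O$ and with the scattering map $S$ in Birkhoff coordinates $(u,v)$, as in \eqref{eq:scattering_SC}, where $S(O)=(\mu,\nu)$. The idea is to read off both intersections from the position of $S(O)$ and $S^{-1}(O)$ relative to the KAM-curves of $\mathcal{G}$, using Lemma~\ref{lem:equitangency} in its heteroclinic form (Remark~\ref{rem:scat_trans}), where the "torus" on one side degenerates to the point $O$.

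\textbf{Step 1 (reduction to the centre disc).} Since $W^\u_\loc(O)=\{r=\pp=x=0\}$ is the strong-unstable leaf through $O$, the transition map $T_1$ carries $W^\u_\loc(O)$ into $W^\s_\loc(\DD)$. Its $\pi^\s$-projection is exactly the point $S(O)=(\mu,\nu)\in\DD$. Hence $T_1(W^\u_\loc(O))$ meets $W^\s_\loc(\gamma')$ for a KAM-curve $\gamma'\in\mathcal{G}$ if and only if $S(O)\in\gamma'$, that is, $\tfrac12(\mu^2+\nu^2)=r_{\gamma',\eps}$. Condition \ref{word:C2} guarantees that this intersection is the transverse intersection of a $(N-1)$-leaf with an $N$-manifold inside the $(2N-1)$-dimensional complement. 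Symmetrically, $W^\s(O)\cap W^\u_\loc(\gamma'')\neq\emptyset$ if and only if $S^{-1}(O)\in\gamma''$. Writing $S^{-1}(O)=(\mu',\nu')$, the map $(\mu,\nu)\mapsto(\mu',\nu')$ is a local diffeomorphism near $0$, with derivative $-L^{-1}$ at leading order.

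\textbf{Step 2 (hitting KAM-curves).} Fix small $\gamma_1,\gamma_2\in\mathcal{G}$. Their radii $r_{\gamma_i,\eps}$ depend on $\eps$ through \eqref{eq:rotnr}, essentially $r_{\gamma_i,0}-\Delta\rho$, and are otherwise nearly independent of $(\mu,\nu)$. The two equations $|S(O)|^2/2=r_{\gamma_1,\eps}$ and $|S^{-1}(O)|^2/2=r_{\gamma_2,\eps}$ define two smooth hypersurfaces in $(\mu,\nu)$-space, namely a circle and an ellipse-like curve of radius $\sim\sqrt{r}$. These two curves intersect transversally, because condition \ref{word:C3} and Lemma~\ref{lem:rotaton} make $L^{-1}$ not a rotation, so a circle and the preimage ellipse cross at non-tangent points. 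Choosing $\gamma_1,\gamma_2$ with radii tending to $0$ gives parameters $\eps_j\to0$ with $O$ connected to $\gamma_1$ and to $\gamma_2$. The normals are $(\mu,\nu)$ and $\approx L^{-\T}L^{-1}(\mu,\nu)$, which are independent away from eigen-directions. This gives the independent unfolding in $(\mu,\nu)$ via the implicit function theorem.

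\textbf{Step 3 (transfer to the given $\gamma$).} We want the connection with the prescribed $\gamma$, not with nearby curves. We use Lemma~\ref{lem:chain}: chains of transverse heteroclinics via $S$ connect $\gamma_1$ to $\gamma$, and via $S^{-1}$ connect $\gamma_2$ to $\gamma$. These chains persist under small perturbation. Iterating Lemma~\ref{lem:incli} then shows that $W^\u(\gamma_1)\cap W^\s(O)$ propagates, since $W^\s(\gamma_1)$ accumulates $C^1$ on $W^\s(\gamma)$. Concretely, the transverse intersection $W^\u(O)\pitchfork$-type meeting with $W^\s(\gamma_1)$, combined with $W^\s(\gamma_1)$ being $C^1$-approximated by pieces of $W^\s(\gamma)$, yields nearby parameter values with $W^\u(O)\cap W^\s(\gamma)\neq\emptyset$. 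The unfolding velocity in $(\mu,\nu)$ is preserved by $C^1$-closeness, and the same holds on the other side.

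\textbf{Main obstacle.} The main obstacle we expect is Step 3. The one-sided approximation must keep both intersections simultaneously and keep the transversality of the $(\mu,\nu)$-unfolding. We must also ensure that $\gamma_1$ sits on the appropriate side of the chain, so that $W^\s(\gamma)$ accumulates on $W^\s_\loc(\gamma_1)$ rather than the reverse. If the chain direction is wrong, we would first try to replace $\gamma_1,\gamma_2$ directly by $\gamma$, using the density of $\mathcal{G}$ to choose $\gamma$ itself in Step 2. This works whenever $\gamma$ has small radius, which is the case needed later.
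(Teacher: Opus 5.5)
Your proposal follows the paper's proof. In both, the chains of Lemma~\ref{lem:chain} and the $C^1$ accumulation (in variables and parameters) from Lemma~\ref{lem:incli} reduce the problem to placing $S(O)$ and $S^{-1}(O)$ on small KAM-circles. That becomes a circle/ellipse intersection problem in $(\mu,\nu)$, solved by the implicit function theorem. The paper does the chain reduction first, uses a single curve $\gamma_j$ for both points, and uses $\Delta\rho$ to pin its radius at $r_{j,0}$; treating the $O(r^2\eps)$ radius drift as a perturbation, as you do, works equally well.

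\textbf{Step~2 needs a fix.} For independently chosen $\gamma_1,\gamma_2$, the circle $|w|^2=2r_1$ and the ellipse $|L^{-1}w|^2=2r_2$ meet only if $r_2/r_1\in[\lambda^{-1},\lambda]$, so as written your crossing claim can fail. Take $\gamma_1=\gamma_2$, as the paper does, and use $\lambda>1$ (equivalently $\tr L^\T L>2$, Remark~\ref{rem:LTL}) to get a crossing. Such a crossing is automatically off the eigen-directions of $L^{-\T}L^{-1}$, because there the Rayleigh quotient equals $1\neq\lambda^{\pm1}$, so your transversality argument then applies. Your eigen-direction criterion is in fact the accurate form of the paper's Jacobian computation, whose displayed leading term omits a multiple of $\mu^2-\nu^2$.

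\textbf{Step~3 works as intended.} The obstacle you anticipate does not arise, since Lemma~\ref{lem:chain} gives both an $S$-chain and an $S^{-1}$-chain for any ordered pair of curves. This yields exactly that $W^\s(\gamma)$ accumulates on $W^\s_\loc(\gamma_1)$ and $W^\u(\gamma)$ accumulates on $W^\u_\loc(\gamma_2)$. Your first sentence of Step~3 states the accumulation backwards; your ``concretely'' sentence has it right. Your fallback of using $\gamma$ itself in Step~2 would not prove the lemma: for fixed $\gamma$ the solution has $|(\mu,\nu)|\approx\sqrt{2r_\gamma}$, which does not tend to $0$.
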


\begin{proof}
Recall that we consider the  coordinates where the KAM-curves in $\mathcal{G}$ are circles (see Section~\ref{sec:localSC}). 
Take any sequence $\{\gamma_j\in \mathcal{G}\}$ of KAM-curves   with radii $r_j\to 0$. Denote the continuations of  $\gamma$ and $\gamma_j$ by $\gamma_\eps$ and $\gamma_{j,\eps}$. 
By Lemma~\ref{lem:chain}, there exists for each large $j$ and  all small $\eps$  heteroclinic chains  from $\gamma_{j,\eps}$ to $\gamma_{\eps}$ via $S$ and $S^{-1}$.    Applying Lemma~\ref{lem:incli} to these chains, we see that $W^\s(\gamma_\eps)$ and $W^\u(\gamma_\eps)$ accumulate on $W^\s_{\loc}(\gamma_{j,\eps})=\{r=r_{j,\eps},y=0\}$ and, respectively, on $W^\u_{\loc}(\gamma_{j,\eps})=\{r=r_{j,\eps},x=0\}$,
with their  first derivatives with respect to variables and parameters. Thus, to prove the lemma, it sufficies to show the existence of $\eps_j\to 0$ for which
 \begin{equation}\label{eq:Otogamma0}
W^\u(O)\cap W^\s_\loc(\gamma_{j,\eps_j})\neq\emptyset,\qquad
W^\s(O)\cap W^\u_\loc(\gamma_{j,\eps_j})\neq\emptyset,
\end{equation}
and the two intersections unfold independently as $\mu$ and $\nu$ vary from $\mu_j$ and $\nu_j$.

It further suffices to find $\eps_j\to 0$ for which both $S(O)$ and $S^{-1}(O)$ lie in $ \gamma_{j,\eps_j}$, and, moreover, they can be independently moved away from  $ \gamma_{j,\eps_j}$  by changing $\mu$ and $\nu$.
A direct computation using \eqref{eq:scattering_SC} shows that $S^{-1}(O)=(b_{12}\nu -b_{22}\mu, b_{21}\mu - b_{11}\nu)+O(\mu^2+\nu^2)$. So, by \eqref{eq:bcoor}, the $r$-coordinates of $S(O)$ and $S^{-1}(O)$ are
\begin{equation}\label{eq:rurs}
r^\u =\dfrac{1}{2}(\mu^2 + \nu^2),\quad
r^\s =\dfrac{1}{2}((b_{22}^2+b_{21}^2)\mu^2+(b_{12}^2+b_{11}^2)\nu^2-2(b_{12}b_{22}+b_{21}b_{11})\mu\nu)+\dots,
\end{equation}
where the dots denote cubic and higher order terms.
Let $r_{j,\eps}$ be the  radius of $\gamma_{j,\eps}$, which by \eqref{eq:rotnr} is given by
\begin{equation}\label{eq:radius2}
r_{j,\eps}=r_{j,0}-\Delta \rho + h(r_{j,0},\eps),
\end{equation}
where $h=O(r^2_{j,0}\cdot \eps)$. We need to find for each large $j$  a parameter value $\eps_j=(\Delta\rho_j,\mu_j,\nu_j,0)$  such that 
\begin{equation}\label{eq:Otogammanew:1}
r^\u=r^\s=r_{j,\eps_j},\qquad
\left.\det\dfrac{\p(r^\u,r^\s)}{\p (\mu,\nu)}\right|_{\eps=\eps_j}\neq 0.
\end{equation}
Since $\det\frac{\p(r^\u,r^\s)}{\p (\mu,\nu)}=(b_{12}^2+b_{11}^2-b_{22}^2-b_{21}^2)\mu\nu+\dots$, it follows from  Lemma~\ref{lem:rotaton} that this determinant is non-zero 
for all sufficiently small non-zero $\mu$ an $\nu$. Thus, it further suffices to find $\eps_j$ with $\mu_j\neq 0$ and $\nu_j\neq 0$ that solves \eqref{eq:Otogammanew:1}.

Observe that, for every sufficiently large $j$, the equation 
$$\Delta \rho - h(r_{j,0},\Delta \rho,\mu,\nu,0)=0$$
 admits a solution $\Delta\rho=\hat \rho_j(\mu,\nu)$ defined near 0. Substituting this into \eqref{eq:radius2}, we see that to solve $r^\u=r^\s=r_{j,\eps_j}$ it suffices to find $\mu_j$ and $\nu_j$ such that
$r^\u=r_{j,0}$ and $r^\s=r_{j,0}$, and then set $\eps_j=(\hat\rho_j(\mu_j,\nu_j),\mu_j,\nu_j,0)$. By \eqref{eq:rurs}, it is enough to take $\mu_j =\sqrt{2r_{j,0}}\cos\theta$ and $\nu_j =\sqrt{2r_{j,0}}\sin\theta$ for some $\theta$ satisfying $\theta\bmod \pi\notin \{0,\pi/2\}$ and solving $r^\s=r_{j,0}$, that is,
\begin{align*}
&\dfrac{1}{2}(b_{22}^2+b_{21}^2-b_{12}^2-b_{11}^2)\cos 2\theta
- (b_{12}b_{22}+b_{21}b_{11})\sin 2\theta \\
&\qquad\qquad\qquad\qquad\qquad\qquad = 1 - \dfrac{1}{2}(b_{22}^2+b_{21}^2+b_{12}^2+b_{11}^2)+O(\sqrt{r_{j,0}}).
\end{align*}
It is easy to see that this equation admits such a solution  when $b_{22}^2+b_{21}^2+b_{12}^2+b_{11}^2=\tr L^\T L>2$, which holds automatically by Remark~\ref{rem:LTL}.
\end{proof}

\begin{prop}\label{prop:homofamily}
Let $f\in\symp^s(\mathcal{M})$, $s=\infty,\omega$, have a generic saddle-center point $O$ with a generic homoclinic orbit and a symplectic blender  connected to some KAM-torus $\gamma_*\in \mathcal{G}$ that is sufficiently close to $O$. 
For any homoclinic-unfolding family $\{f_\eps\}$, in any neighborhood of $\eps=0$ in the parameter space there exists an open set $\mathcal{E}$ such that the set of $\eps$  for which the continuation of the saddle-center  has a homoclinic orbit is dense in $\mathcal{E}$. 
\end{prop}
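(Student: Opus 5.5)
The plan is to combine Lemma~\ref{lem:Otogamma} with the persistent-intersection result, Proposition~\ref{prop:persis} (in its detailed form, Proposition~\ref{prop:unfoldble}, together with Lemma~\ref{lem:model}). Pieces of $W^\u(O)$ and $W^\s(O)$ will play the roles of $L^\u_\eps$ and $L^\s_\eps$. First I fix the symplectic blender $\Lambda$ connected to $\gamma_*$. Its defining property is $C^1$-robust, and $\gamma_*$ is a KAM-curve in $\mathcal{G}$ that persists for all small $\eps$. Hence there are a neighborhood $\mathcal{U}$ of $f$ and a $\delta>0$ such that, for every $f_\eps\in\mathcal{U}$, any $(N-1)$-disc $\delta$-$C^1$-close to a strong-unstable (resp. strong-stable) leaf of $W^\u_\loc(\gamma_{*,\eps})$ (resp. $W^\s_\loc(\gamma_{*,\eps})$) meets $W^\s(\Lambda_\eps)$ (resp. $W^\u(\Lambda_\eps)$).

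Next I apply Lemma~\ref{lem:Otogamma} with $\gamma=\gamma_*$. This gives parameters $\eps_j\to0$ at which $W^\u(O)$ meets $W^\s(\gamma_*)$ and $W^\s(O)$ meets $W^\u(\gamma_*)$, and the two intersections unfold independently in $(\mu,\nu)$. By the proof of that lemma, these intersections come through $W^\s_\loc(\gamma_j)$ and $W^\u_\loc(\gamma_j)$ for small KAM-curves $\gamma_j$ together with the inclination Lemma~\ref{lem:incli}, and they are partially hyperbolic, since they lie near $\mathcal{O}(O)\cup\Gamma$ and condition \ref{word:C2} applies. Now iterate forward a small piece of $W^\u(O)$ near the intersection point with $W^\s(\gamma_*)$ until it lands in $W^\s_\loc(\gamma_*)$, and apply $T_0^k$. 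By Lemma~\ref{lem:incli0}, or more directly by the partial hyperbolicity and the strong-unstable cone invariance, the resulting $(N-1)$-dimensional disc $L^\u_\eps$ is $C^1$-close to a strong-unstable leaf of $W^\u_\loc(\gamma_*)$. Symmetrically, backward iterates of $W^\s(O)$ give a disc $L^\s_\eps$ that is $C^1$-close to a strong-stable leaf of $W^\s_\loc(\gamma_*)$.

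In the notation of Section~\ref{sec:unfoldblen}, the signed distances $\Delta^\u,\Delta^\s$ of these discs to $W^\s_\loc(\gamma_*)$, $W^\u_\loc(\gamma_*)$ are exactly the splitting quantities of the heteroclinic intersections in Lemma~\ref{lem:Otogamma}. Their independent unfolding in $(\mu,\nu)$ is therefore condition \eqref{eq:gcpersis} at $\eps=\eps_j$. Proposition~\ref{prop:unfoldble}, with $P=P'$ a periodic point of $\Lambda$, then yields a neighborhood $\mathcal{E}$ of $\eps_j$ and a dense subset on which $L^\u_\eps\cap W^\s(P_\eps)\ne\emptyset$ and $L^\s_\eps\cap W^\u(P_\eps)\ne\emptyset$, unfolding independently. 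Lemma~\ref{lem:model} then produces, arbitrarily close to each such parameter, values where an iterate of $L^\u_\eps\subset W^\u(O_\eps)$ intersects $L^\s_\eps\subset W^\s(O_\eps)$, that is, a homoclinic orbit of $O_\eps$. This gives density in $\mathcal{E}$. Since $\eps_j\to0$, such $\mathcal{E}$ exist in every neighborhood of $0$.

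The main obstacle is making the closeness in the blender property uniform. The discs $L^{\u/\s}_\eps$ must remain $\delta$-close to strong leaves for all $\eps$ in a whole open set, not only at $\eps_j$, and the transversality \eqref{eq:gcpersis} must survive the extra iterations. I would handle this by first choosing $\gamma_*$ and the blender (fixing $\delta$). Then I would take $j$ large so that $\eps_j$ lies inside the robustness neighborhood. Finally I would use the $C^1$ joint dependence on parameters in Lemma~\ref{lem:incli}, noting as in the proof of Lemma~\ref{lem:unfold1} that iteration only increases the $\eps$-derivative of the relevant distances, so nondegeneracy is preserved.
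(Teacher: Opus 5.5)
Your proposal is correct and takes essentially the same route as the paper. The paper's proof runs in the same order:
\begin{enumerate}
\item apply Lemma~\ref{lem:Otogamma} to $\gamma_*$;
\item iterate the pieces of $W^\u(O)$ and $W^\s(O)$ until they are $C^1$-close to strong leaves of $\gamma_*$, so that their independent unfolding becomes condition~\eqref{eq:gcpersis};
\item invoke the second claim of Proposition~\ref{prop:persis}, which you correctly unpack as Proposition~\ref{prop:unfoldble} with $P=P'$ periodic plus Lemma~\ref{lem:model}.
\end{enumerate}
Your remarks on keeping the $\delta$-closeness and the nondegeneracy uniform over an open parameter set make explicit what the paper leaves implicit; they are not a different argument.
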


\begin{proof}
Let $\eps_j$ be given by Lemma~\ref{lem:Otogamma} such that, at $\eps=\eps_j$,  a piece $W^\u$ of $W^\u(O)$  intersects $W^\s(\gamma_*)$ and a piece $W^\s$ of $W^\s(O)$  intersects $W^\u(\gamma_*)$. Due to the partial hyperbolicity, up to replacing $W^\u$ by $f^n(W^\u)$ with a sufficiently large $n$, we can assume that $W^\u$ is  $C^1$-close to a local unstable leaf of $W^\u_\loc(\gamma_*)$. Similarly,  $W^\s$ can be taken  $C^1$-close to a local stable leaf of $W^\s_\loc(\gamma_*)$. Since these intersections unfold independently by Lemma~\ref{lem:Otogamma}, the condition \eqref{eq:gcpersis} of Proposition~\ref{prop:persis} is fulfilled with  $L^\u_\eps:=W^\u_{\eps+\eps_j}$ and $L^\s_\eps:=W^\s_{\eps+\eps_j}$.  The lemma then follows  immediately from the second claim of Proposition~\ref{prop:persis}.
\end{proof}

\begin{proof}[Proof of Theorem~\ref{thm:main_SC_para}]
Note that a proper unfolding family (see \ref{word:H1}) restricted to $\mu=\nu=0$ is tangency-unfolding (see \ref{word:H2}), so we can apply Theorem~\ref{thm:main_SCblender}  to obtain a sequence $\eps_j\to 0$ such that $f_{\eps_j}$ has a symplectic blender connected to a KAM-curve, which can be arbitrarily close to $O$. 
Note that the homoclinic to $O$ persists since we took $\mu=\nu=0$.
Because the genericity conditions \ref{word:C1}--\ref{word:C3} are all robust under small symplectic perturbations, they are  satisfied for all $f_{\eps_j}$ with $j$ sufficiently large. 
  Moreover, since $\{f_\eps\}$ is  a homoclinic-unfolding family, the family $\{f_{j,\eps}\}_\eps:=\{f_{\eps+\eps_j}\}$ for every large $j$ is homoclinic-unfolding as well (note that $\{f_{j,\eps}\}$ is the same family $\{f_\eps\}$ with the origin in the parameter space shifted to $\eps_j$). Thus, we can apply Proposition~\ref{prop:homofamily} to each family $\{f_{j,\eps}\}$ to obtain the sought sequence $\mathcal{E}_{j}$ of Theorem~\ref{thm:main_SC_para}. 
\end{proof}

\appendix

\renewcommand{\theHsection}{A\arabic{section}}
\setcounter{lema}{0}                       
\renewcommand\thelema{A\arabic{lema}}   

\setcounter{equation}{0}
\renewcommand{\theequation}{A\arabic{equation}}
\renewcommand{\thesubsection}{A\arabic{subsection}}
\section*{Appendix}

\subsection{Approximation lemmas}
\subsubsection{Iterations of the local map $T^k_0$: proof of Lemma~\ref{lem:derisyst}}\label{sec:T0k}

We  extend  $T_0$ to the map $(r,\pp,\eps,x,y)\to(\bar r,\bar \pp,\bar \eps,\bar x,\bar y)$, where $\eps$ are now additional variables under the action of identity map. Let us denote $w:=(r,\pp,\eps)$. Then  the cross-form \eqref{eq:T0_xform} of the new $T_0$ can  be written as
\begin{equation}\label{eq:T0_xform123}
\begin{aligned}
\bar w &=F(w)+g(w,x,\bar y),\\
\bar x &=  g_3(w,x,\bar y),\qquad
 y =  g_4(w,x,\bar y),	
\end{aligned}
\end{equation}
where $F(w)=(F_1(w),F_2(w),\eps)$ and $g(w,x,y)=(g_1(w,x,y), g_2(w,x,y),0)$, and the functions satisfy
\begin{equation}\label{eq:ggg123}
\begin{aligned}
g(w,0,\bar y)\equiv 0, \quad g(w,x,0) \equiv 0, \quad
g_3(w,0,\bar y)\equiv 0,\quad g_4(w,x,0)\equiv 0,\\
\left\|\dfrac{\partial F }{\partial w}\right\|<\cc, \qquad
\left\|\left(\dfrac{\partial F}{\partial w}\right)^{-1}\right\|<\cc, \qquad
\left\| \dfrac{\partial g_3}{\partial x} \right\| < \lambda, \qquad \left\| \dfrac{\partial g_4}{\partial \bar y} \right\| < \lambda.
\end{aligned}
\end{equation}

 Denote by $V_\delta$ the $\delta$-neighborhood of the cylinder $\A$.
\begin{lema}[{\citep[Lemma 1]{GelTur:17}}]\label{lem:GelTur:17}
 If \eqref{eq:ggg123} holds, then there exists $\delta_0$ such that for any $\delta\in(0,\delta_0)$ and any $k\geqslant 0$ the following results hold:
\begin{enumerate}
\item Any orbits of length $k$ with $(w_j,x_j,y_j)=T^j_0(w_0,x_0,y_0)\in V_\delta$ for $j=0,\dots,k$ satisfy  
\begin{equation}\label{eq:gt17:1}
\begin{array}{c}
\|x_j\|\leqslant \delta\lambda^j, \quad \|y_i\|\leqslant \delta\lambda^{k-j},\quad
 \|w_j-F^j(w_0)\|\leqslant \delta(\cc\lambda)^{k/2},
\end{array}
\end{equation}
\item The orbit $(w_j,x_j,y_j)$ is determined uniquely for any given $(w_0,x_0,y_k)$ provided that $\|x_0\|,\|y_k\|<\delta$ and $w_0\in \A$. In particular, $w_k,x_k,y_0$ are smooth functions of $w_0,x_0,y_k$.
\end{enumerate}
\end{lema}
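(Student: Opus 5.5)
The plan is the standard graph-transform / cross-map argument for normally hyperbolic invariant manifolds, applied to the cross-form \eqref{eq:T0_xform123}. Define, for given $(w_0,x_0,y_k)$, the operator acting on sequences $\{(w_j,x_j,y_j)\}_{j=0}^k$ in $V_\delta$ by
\[
\bar w_{j+1}=F(w_j)+g(w_j,x_j,y_{j+1}),\qquad \bar x_{j+1}=g_3(w_j,x_j,y_{j+1}),\qquad \bar y_j=g_4(w_j,x_j,y_{j+1}),
\]
with $\bar w_0=w_0$, $\bar x_0=x_0$ and $\bar y_k=y_k$. Orbits of length $k$ with the prescribed boundary data are exactly the fixed points of this operator.

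The first step is the a priori estimates for $(x_j,y_j)$. By \eqref{eq:ggg123}, $g_3(w,0,\bar y)=0$ and $\|\p g_3/\p x\|<\lambda$, so by the mean value theorem in $x$ we get $\|x_{j+1}\|\le\lambda\|x_j\|$, uniformly in $w,\bar y$ once $\delta$ is small. Similarly, $g_4(w,x,0)=0$ gives $\|y_j\|\le\lambda\|y_{j+1}\|$. Iterating yields $\|x_j\|\le\delta\lambda^j$ and $\|y_j\|\le\delta\lambda^{k-j}$. These bounds hold for any orbit staying in $V_\delta$, and the existence part does not enter here.

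The second step is the $w$-estimate. Since $g$ vanishes when $x=0$ or $\bar y=0$ and is $C^2$, we have $\|g(w,x,\bar y)\|\le C\|x\|\,\|\bar y\|\le C\delta^2\lambda^{j}\lambda^{k-j-1}=O(\delta^2\lambda^{k})$. Set $e_j=w_j-F^j(w_0)$. Then $\|e_{j+1}\|\le\cc\|e_j\|+C\delta^2\lambda^k$, and summing gives $\|e_k\|\lesssim \delta^2\lambda^k\cc^k/(\cc-1)$. To obtain the stated $\delta(\cc\lambda)^{k/2}$, I would split at $j=k/2$ and use the product bound more carefully: $\|x_j\|\|y_{j+1}\|\le\delta^2\lambda^{k-1}$. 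Because $\cc$ can be taken close to $1$, $\lambda\cc^{k}$-type growth is dominated, and $\cc^{k}\lambda^{k}\le(\cc\lambda)^{k/2}$ whenever $\cc\lambda<1$, which holds by \eqref{eq:intro:1aa}. The same argument with $F^{-1}$, estimated from the endpoint, handles indices near $k$. The bound must hold uniformly for all $j$, not only at $j=k$, and I expect this bookkeeping to be the main obstacle.

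The third step is existence, uniqueness and smoothness. On the product space of sequences with $\|x_j\|,\|y_j\|\le\delta$, the $(x,y)$-part of the operator is a contraction (factor $\lambda$), as in the argument already used for Lemma~\ref{lem:hyp}. The $w$-part is a triangular forward recursion, so it is determined once $(x,y)$ is fixed. Shilnikov's fixed-point lemma on direct products \citep[Theorem 6.2]{Shi:67}, or a contraction in a weighted norm, then gives a unique fixed point. This fixed point depends smoothly on $(w_0,x_0,y_k)$, because the operator is $C^r$ and contracting in the fibre variables, so the fibre contraction theorem applies.

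The invariance statements follow from uniqueness. When $x_0=0$, the sequence with $x_j\equiv0$ is a fixed point, because $g_3(\cdot,0,\cdot)=0$ and $g(\cdot,0,\cdot)=0$. The case $y_k=0$ is handled symmetrically.
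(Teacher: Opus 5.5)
The paper does not prove this statement; it quotes it from \citep[Lemma~1]{GelTur:17}, so your argument is judged on its own. Steps~1 and~2 are essentially right, and Step~2 is simpler than you expect (see the end). The genuine gap is in Step~3, which is where the content of the lemma lies. Your contraction with factor $\lambda$ holds only while the $w$-sequence is frozen. Once $w$ is eliminated through the forward recursion, the feedback through the central direction is not small uniformly in $k$:
\begin{itemize}
\item Changing $y_1$ by $\Delta$ moves $w_1$ by $\partial_{\bar y}g(w_0,x_0,y_1)\Delta=O(\|x_0\|\,\Delta)=O(\delta\Delta)$, since $g(w,0,\cdot)\equiv0$.
\item The recursion with $\|DF\|<\hat\lambda$ can amplify this to $O(\delta\hat\lambda^{k}\Delta)$ at $w_{k-1}$. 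Even for the twist maps used later it generically grows like $k\delta\Delta$.
\item Since $\partial_w g_4=O(\|\bar y\|)$, the component $\bar y_{k-1}=g_4(w_{k-1},x_{k-1},y_k)$ then moves by $O(\delta^2\hat\lambda^{k}\Delta)$.
\end{itemize}
So in the sup norm the Lipschitz constant of your operator can be of order $\lambda+\delta^2\hat\lambda^{k}$. Your argument therefore yields a $\delta_0$ that shrinks as $k$ grows. The lemma, and its use in Lemma~\ref{lem:derisyst} for $k\to\infty$, needs $\delta_0$ independent of $k$. The analogy with Lemma~\ref{lem:hyp} is misleading: there every argument of the cross-map is contracted, because the expanding variable $\Phi$ is solved for. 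Here $w$ is central and is contracted neither forwards nor backwards, so Shilnikov's lemma cannot be used in the same way.

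The missing idea is to build the gap $\lambda\hat\lambda<1$ into the norm of the fixed-point problem, not only into the a~priori bound. For example:
\begin{itemize}
\item Work on the set of sequences with $\|x_j\|\le\delta\lambda^j$ and $\|y_j\|\le\delta\lambda^{k-j}$; your Step~1 computation shows this set is invariant.
\item Keep the sup norm on $x$, and measure $y$ by $\max_j\beta^{-(k-j)}\|y_j\|$ with $\lambda<\beta\le\hat\lambda^{-1}$.
\end{itemize}
The direct terms then contract by at most $\lambda/\beta<1$. A unit change of $y$ in this norm moves $w_j$ by at most $C\delta\beta^{k}\hat\lambda^{j}$. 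Its weighted effect on $\bar y_j$ is therefore $O\bigl(\delta^2\lambda^{k-j}(\beta\hat\lambda)^{j}\bigr)=O(\delta^2)$, uniformly in $j$ and $k$. The remaining cross terms are $O(\delta)$, again using $\lambda\hat\lambda<1$ when a change of $x$ is propagated through $w$. With this choice your existence, uniqueness, smoothness and invariance conclusions go through.

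Step~2, by contrast, is not an obstacle. From $\|e_{j+1}\|\le\hat\lambda\|e_j\|+C\delta^2\lambda^{k-1}$ and $e_0=0$ you get $\|e_j\|\le C\delta^2\lambda^{k-1}\hat\lambda^{k}/(\hat\lambda-1)$ for every $j\le k$. This is below $\delta(\hat\lambda\lambda)^{k/2}$ once $\delta<\lambda(\hat\lambda-1)/C$. No splitting at $k/2$ is needed. Moreover, a backward estimate from $w_k$ would compare $w_j$ with $F^{j-k}(w_k)$, not with $F^j(w_0)$.
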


This lemma gives the $C_0$ estimates in Lemma~\ref{lem:derisyst}. To find estimates for higher derivatives, the  idea  is to consider a new  system which is the combination of \eqref{eq:T0_xform123} and its formal first  derivatives. We verify condition \eqref{eq:intro:1} for this system so that Lemma~\ref{lem:GelTur:17} is applicable, giving the $C^0$ estimates for the new system and hence $C^1$ estimates for the original system. This procedure can be repeated to achieve the estimates of derivatives up the order equal to $s'-1$ (where $s'$ is the smoothness of our system in Fenichel coordinates).
Let us start to prove the estimates for the first derivatives in the Lemma~\ref{lem:derisyst}.

\noindent\textbf{(1) Systems with formal derivatives.} 
Consider the map 
$$\hat T_0:(w,x,y,w',x',y')\mapsto (\bar w,\bar x,\bar y,\bar w',\bar x',\bar y')$$
 given in the cross-form consisting of \eqref{eq:T0_xform123} and
\begin{equation}\label{eq:derisyst:3.0}
\begin{aligned}
\bar w'&=\dfrac{\partial F(w)}{\partial w}w'
+\left(\dfrac{\partial g(w,x,y)}{\partial w}w'
+\dfrac{\partial g(w,x,y)}{\partial x}x' +\dfrac{\partial g(w,x,y)}{\partial \bar y}\bar y'\right)\\
&=:F'(w,w')+g'(w,w',x,x',\bar y,\bar y'),\\
\bar x'&=\dfrac{\partial g_3(w,x,y)}{\partial w}w'
+\dfrac{\partial g_3(w,x,y)}{\partial x}x' +\dfrac{\partial g_3(w,x,y)}{\partial \bar y}\bar y'\\
&=:g'_3(w,w',x,x',\bar y,\bar y'),\\
y'&=\dfrac{\partial g_4(w,x,y)}{\partial w}w'
+\dfrac{\partial g_4(w,x,y)}{\partial x}x' +\dfrac{\partial g_4(w,x,y)}{\partial \bar y}\bar y'\\
&=:g'_4(w,w',x,x',\bar y,\bar y').
\end{aligned}
\end{equation}
Denote
\begin{equation}\label{eq:boldcoor}
\begin{aligned}
&\bm w=(w,w'),\quad \bm x=(x,x'),\quad \bm y=(y,y'),\\
&\bm F=(F,F'),\quad \bm g=(g,g'),\quad \bm g_i=(g_i,g'_i)\quad i=3,4.
\end{aligned}
\end{equation}
Then the map $\hat T_0$ assumes the form $(\bm w,\bm x,\bm y )\mapsto(\bar {\bm w},\bar{\bm x},\bar{\bm y} )$. Since $w=(r,\pp,\eps)$ with $(r,\pp)$ lying in the cylinder $\A$ and $\eps$ small, the variable $\bm w$ belongs to some closed bounded region $\hat A\subset \mathbb R^6$. Recall that $V$ is the neighborhood where \eqref{eq:T0_xform_deri} holds. Let $\hat V\subset\mathbb R^{4N+2} $ be a small neighborhood of $\hat A$ such that, the $(w,x,y)$ components of any point in $\hat V$  lie in $V$.
\begin{claim*}
There exists a norm $\|\cdot\|_\circ$ such that the following holds in $\hat V$:
\begin{equation}\label{eq:derisyst:3}
\begin{array}{c}
\bm g(\bm w,0,\bar{\bm y})\equiv 0,\quad \bm g(\bm w,\bm x,0)\equiv 0,\quad
\bm g_3(\bm w,0,\bar{\bm y})\equiv 0,\quad \bm g_4(\bm w,\bm x,0)\equiv 0,\\[5pt]
\left\|\dfrac{\partial\bm F}{\partial \bm w}\right\|_\circ<\cc, \qquad
\left\|\left(\dfrac{\partial\bm F}{\partial \bm w}\right)^{-1}\right\|_\circ<\cc, \qquad
\left\| \dfrac{\partial \bm g_3}{\partial \bm x} \right\|_\circ < \lambda, \qquad \left\| \dfrac{\partial \bm g_4}{\partial \bar{\bm y}} \right\|_\circ < \lambda.
\end{array}
\end{equation}
\end{claim*}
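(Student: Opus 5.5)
The identities in the first line of \eqref{eq:derisyst:3} I will verify directly, and the norm $\|\cdot\|_\circ$ I will build as a weighted norm in which the primed variables are scaled by a small factor. For the identities: $g(w,0,\bar y)\equiv 0$ for all $(w,\bar y)$, so its partial derivatives in $w$ and $\bar y$ also vanish at $x=0$. Putting $x=0$, $x'=0$ in the formula for $g'$ in \eqref{eq:derisyst:3.0} kills every term, so $\bm g(\bm w,0,\bar{\bm y})\equiv0$. The same argument, using $g(w,x,0)\equiv0$ and taking derivatives in $(w,x)$, gives $\bm g(\bm w,\bm x,0)\equiv0$. It also applies verbatim to $g_3$ (which vanishes on $x=0$) and to $g_4$ (which vanishes on $\bar y=0$). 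So these identities are purely formal consequences of \eqref{eq:ggg123}.

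For the inequalities, I compute the block structures of the Jacobians. The matrix $\partial\bm F/\partial\bm w$ is lower block-triangular. Its diagonal blocks are both $\partial F/\partial w$, and its off-diagonal block is $\partial^2F/\partial w^2\cdot w'$, which is bounded on the compact region $\hat A$. Similarly, $\partial\bm g_3/\partial\bm x$ has diagonal blocks $\partial g_3/\partial x$ and an off-diagonal block $\partial^2 g_3/\partial x\,\partial(w,x,\bar y)\cdot(w',x',\bar y')$, which is bounded on $\hat V$. The analogous statements hold for $\partial\bm g_4/\partial\bar{\bm y}$ and for $(\partial\bm F/\partial\bm w)^{-1}$; the latter is again block-triangular with diagonal blocks $(\partial F/\partial w)^{-1}$. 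I then take $\|(u,u')\|_\circ:=\max\{\|u\|,\kappa\|u'\|\}$ with a small $\kappa>0$. For a lower-triangular block matrix with diagonal blocks $D$ and off-diagonal block $B$, the operator norm in $\|\cdot\|_\circ$ is at most $\max\{\|D\|,\|D\|+\kappa\|B\|\}=\|D\|+\kappa\|B\|$. The inequalities in \eqref{eq:ggg123} are strict, and by compactness the diagonal norms are bounded away from $\hat\lambda$ and $\lambda$. Choosing $\kappa$ smaller than that gap divided by the uniform bound on $\|B\|$ therefore yields all four inequalities.

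The main obstacle is making the off-diagonal blocks uniformly bounded. These blocks contain second derivatives of $F$ and $g_i$, multiplied by the primed variables. So I need $s'\ge 2$ in the Fenichel coordinates, and I need $\hat V$ to be a bounded set in the primed directions, which it is because $\bm w\in\hat A$ and $\hat V$ is a small neighbourhood of $\hat A$. I also need the strict inequalities of \eqref{eq:ggg123} to hold with a uniform margin on the closure of $V$. This is arranged by slightly shrinking $V$, or by slightly increasing $\lambda$ while keeping $\lambda<1$ and $\lambda\hat\lambda<1$. Once this uniformity is in place, the weighted-norm argument goes through.
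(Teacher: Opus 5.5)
Your proposal is correct and follows essentially the paper's route. The paper also uses the lower block-triangular form of $\partial\bm F/\partial\bm w$, its inverse, $\partial\bm g_3/\partial\bm x$ and $\partial\bm g_4/\partial\bar{\bm y}$, and weights the primed components by a small factor $\delta C^{-1}$. It uses a sum-type weighted norm instead of your max-type one, which gives the same bound $\|D\|+\kappa\|B\|$. For the vanishing identities, the paper factors $g=x\bar y h$, $g_3=xh_3$, $g_4=\bar y h_4$ rather than differentiating the relations in \eqref{eq:ggg123}; this difference is inessential.
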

We postpone the proof of the claim until we prove the lemma for case $i=1$.

\noindent\textbf{(2) First derivatives with respect to $w_0$.} 
Let $V_\delta$ be the neighborhood where Lemma~\ref{lem:GelTur:17} holds. Consider any orbit segment $\{(w_j,x_j,y_j)\}_{j=0}^k\subset V_\delta$, where $w_j=(w^1_j,w^2_j,w^3_j)=(r_j,\pp_j,\eps)$. For any $i=1,2,3$,  we take
\begin{equation}\label{eq:derisyst:3.1}
\begin{array}{l}
w'_j=\dfrac{\partial w_j}{\partial w^i_0},\qquad
x'_j=\dfrac{\partial x_j}{\partial w^i_0},\qquad 
 y'_{j-1}=\dfrac{\partial y_{j}}{\partial w^i_0},\qquad (j=1,\dots,k)\\
w'_0=
\begin{cases}
(1,0,0) &\quad \mbox{if}\quad i=1,\\
(0,1,0) &\quad \mbox{if}\quad i=2,\\
(0,0,1) &\quad \mbox{if}\quad i=3,
\end{cases}
\qquad x'_0=0,\quad y'_k =0.
\end{array}
\end{equation}

One easily sees that this gives an orbit segment $\{(\bm  w_j,\bm x_j,\bm y_j)\}_{j=0}^k$ of $\hat T_0$. The identities for $g_{3,4}$ in \eqref{eq:ggg123} show that $x'_j = O(\delta)$ and $y'_j=O(\delta)$ for any orbit segment in  $V_\delta$. Thus, $\{(\bm  w_j,\bm x_j,\bm y_j)\}_{j=0}^k$ lies in  some $\hat \delta$-neighbourhood of $\hat A$ satisfying $\hat \delta\to 0$ as $\delta\to 0$. By \eqref{eq:derisyst:3}, we can apply Lemma \ref{lem:GelTur:17} to $\hat T_0$ with this orbit segment. 
 The first two estimates in \eqref{eq:gt17:1} give
$$
\left\|\bm x_k\right\|_\circ\leqslant\hat \delta \lambda^k
\quad \mbox{and}\quad 
\left\|\bm y_0\right\|_\circ\leqslant\hat \delta \lambda^k,
$$
which, by \eqref{eq:boldcoor}, implies
\begin{equation}\label{eq:derisyst:3.2}
\left\|\dfrac{\partial x_k}{\partial w^i_0}\right\|_\circ\leqslant\hat \delta \lambda^k
\quad \mbox{and}\quad 
\left\|\dfrac{\partial y_0}{\partial w^i_0}\right\|_\circ\leqslant\hat \delta \lambda^k,
\end{equation}
for any $\hat \delta\in(0,\hat \delta_0)$ with $\hat \delta_0$ given by Lemma \ref{lem:GelTur:17}.

By the last estimate in \eqref{eq:gt17:1}, we have
\begin{equation*}\label{eq:derisyst:4}
\|(w_k, w'_k)-\bm F^k(w_0, w'_0)\|_\circ\leqslant \hat \delta(\cc\lambda)^{\frac{k}{2}}.
\end{equation*}
We claim that
\begin{equation}\label{eq:derisyst:5}
\bm F^k(w_0, w'_0)=\left(F^k(w_0),\dfrac{\partial F^k(w_0)}{\partial r}\right).
\end{equation}
This together with \eqref{eq:derisyst:3.1}  yields
\begin{equation}\label{eq:derisyst:6}
\left\| \dfrac{\partial w_k}{\partial w^i_0}
-\dfrac{\partial F^k(w_0)}{\partial w^i_0}\right\|_\circ \leqslant \hat \delta(\cc\lambda)^{\frac{k}{2}}.
\end{equation}
Since all norms in finite dimensional spaces are equivalent, \eqref{eq:derisyst:3.2} and \eqref{eq:derisyst:6} give the desired estimates for the first derivatives with respect to $r_0,\pp_0,\eps$.

Let us now prove the claim by induction on $k$. 
The initial case for $k=1$ is immediate. We assume that the claim holds for $k-1$. By \eqref{eq:derisyst:3.0}, we can then write 
\begin{align*}
\bm F^k(w_0, w'_0) &= \left(F^k(w_0),F'(\bm F^{k-1}(w_0, w'_0) )\right)\\
&=\left(F^k(w_0),F'\left(\left(F^{k-1}(w_0),\dfrac{\partial F^{k-1}(w_0)}{\partial w^i}\right) \right)\right)\\
&=\left(F^k(w_0),\dfrac{\p F(F^{k-1}(w_0))}{\p w}\dfrac{\partial F^{k-1}(w_0)}{\partial w^i}\right),\\
\end{align*}
which equals \eqref{eq:derisyst:5}. The claim is proven.


\noindent\textbf{(3) First derivatives with respect to $x_0$ and $y_k$.} We denote $$x:=(x^1,\dots,x^{N-1}),$$ and find the derivatives with respect to each component $x^i$ with $i=1,\dots,N-1$. Take
\begin{equation}\label{eq:derisyst:9}
\begin{array}{l}
w'_j=\dfrac{\partial w_j}{\partial x^i_0},\quad
x'_j=\dfrac{\partial x_j}{\partial x^i_0},\quad 
 y'_{j-1}=\dfrac{\partial y_{j}}{\partial x^i_0},\quad (j=1,\dots,k)\\
w'_0=0,\quad x'_0=(0,\dots,1,\dots,0),\quad y'_k =0,
\end{array}
\end{equation}
where the $i$-th component of $x'_0$ is 1. Obviously, this also gives an orbit segment of $\hat T_0$. In order to use the arguments in step (2), we need  the segment to lie in a small neighborhood of $\hat A$ (whose size is of order $O(\delta)$). It is not automatic in this case, since  $(x'_0)^i=1$ (and $x'_j$ are just bounded).  However, one sees  from \eqref{eq:T0_xform123} that $\bar w,\bar x, y$ do not depend on $w',x',\bar y'$ and
from \eqref{eq:derisyst:3.0}  that  $\bar w',\bar x', y'$ are linear in $w',x',\bar y'$. Hence, after the rescaling
$$(r',\varphi',x',\bar y')^{\mathrm{new}}=L^{-1}\hat \delta (r',\varphi',x',\bar y')$$
with $L$ being the upper bound of all $\|x'_j\|$, the orbit segment belongs to a $(L^{-1}\hat \delta)$-neighbourhood of $\hat A$.

 Similar argument also applies to the orbit segment
\begin{equation*}\label{eq:derisyst:10}
\begin{array}{l}
w'_j=\dfrac{\partial w_j}{\partial y^i_k},\quad
x'_j=\dfrac{\partial x_j}{\partial y^i_k},\quad 
 y'_{j-1}=\dfrac{\partial y_{j}}{\partial y^i_k},\quad (j=1,\dots,k)\\

w'_0=0,\quad y'_k =(0,\dots,1,\dots,0),
\end{array}
\end{equation*}
where the $i$-th component of $y'_k$ is 1. Therefore, applying Lemma~\ref{lem:GelTur:17} to the above two orbit segments with each $i$, we get
\begin{equation*}\label{eq:derisyst:11}
\left\|\dfrac{\partial (x_k,y_0)}{\partial (x_0,y_k)}\right\|_\circ\leqslant L\lambda^k,\quad
\left\|\dfrac{\partial w_k}{\partial (x_0,y_k)}-\dfrac{\partial F^k(w_0)}{\partial (x_0,y_k)}\right\|_\circ \leqslant L(\cc\lambda)^{\frac{k}{2}}.
\end{equation*}
Since $F$ is independent of $x_0$ and $y_k$, the last inequality implies
$$
\left\|\dfrac{\partial w_k}{\partial (x_0,y_k)}\right\|_\circ \leqslant L(\cc\lambda)^{\frac{k}{2}}.
$$
We thus obtained all required estimates for the derivatives with respect to $(x_0,y_k)$.

\noindent\textbf{(4) Proof of \eqref{eq:derisyst:3}.}
First note that the identities in \eqref{eq:ggg123} imply
\begin{equation*}
\begin{aligned}
&g(w,x,\bar y)=x\bar y h(w,x,\bar y), \\
&g_3(w,x,\bar y)=xh_3(w,x,\bar y),\quad g_4(w,x,\bar y)=\bar y h_4(wx,\bar y),
\end{aligned}
\end{equation*}
for some smooth functions $h$ and $h_{1,2}$. Hence, we can rewrite the functions $g'$ and $g'_{3,4}$ in \eqref{eq:derisyst:3.0} as
\begin{align*}
g' &= x\bar y\dfrac{\partial h}{\partial w}w'
+\bar y\dfrac{\partial xh}{\partial x}x' +x\dfrac{\partial \bar y h}{\partial \bar y}\bar y'\quad
g'_3=x\dfrac{\partial h_3}{\partial r}w'
+\dfrac{\partial xh_3}{\partial x}x' +x\dfrac{\partial  h_3}{\partial \bar y}\bar y',\\
g'_4&=\bar y\dfrac{\partial h_4}{\partial r}w'
+\bar y\dfrac{\partial h_4}{\partial x}x' +\dfrac{\partial \bar y h_4}{\partial \bar y}\bar y',
\end{align*}
which immediately lead to the first line of \eqref{eq:derisyst:3}. 

We proceed to estimate the derivatives. By \eqref{eq:derisyst:3.0} we have
\begin{equation*}\label{eq:derisyst:12}
\dfrac{\partial \bm g_3}{\partial \bm x}
=\dfrac{\p(g_3,g'_3)}{\p(x,x')}
=\begin{pmatrix}
\dfrac{\partial g_3}{\partial x} & 0\\[15pt]
\dfrac{\partial g_3'}{\partial x} & \dfrac{\partial g_3}{\partial x} 
\end{pmatrix}
\quad\mbox{and}\quad
\dfrac{\partial \bm g_4}{\partial \bar{\bm y}}
=\dfrac{\p(g_4,g'_4)}{\p(\bar y,\bar y')}
=\begin{pmatrix}
\dfrac{\partial g_4}{\partial \bar y} & 0\\[15pt]
\dfrac{\partial g_4'}{\partial \bar y} & \dfrac{\partial g_4}{\partial \bar y}
\end{pmatrix}.
\end{equation*}
By the second line of \eqref{eq:ggg123}, one can find $\delta_1>0$ such that
\begin{equation*}\label{eq:derisyst:13}
\left\|\dfrac{\partial  g_3}{\partial x}\right\|+{\delta_1}<\lambda
\quad\mbox{and}\quad
\left\|\dfrac{\partial  g_4}{\partial \bar y}\right\|+{\delta_1}<\lambda,
\end{equation*}
Denote 
$$C_1 = \sup_{(\hat r,\hat\varphi,\hat x,\hat y)\in \hat V}\left\|\dfrac{\partial g_3'}{\partial x}\right\|+\left\|\dfrac{\partial  g_3}{\partial  x}\right\|
+
\left\|\dfrac{\partial g_4'}{\partial \bar y}\right\|+\left\|\dfrac{\partial  g_4}{\partial \bar y}\right\|.
$$

Let  $(\Delta x,\Delta x')$ be any vector in the tangent space of the $\hat x$-space. We see that the norm defined by
$$\|(\Delta x,\Delta x')\|_1:=\|\Delta x\|+\delta_1 C^{-1}_1\|\Delta x'\|$$
satisfies
\begin{align*}
\left\|
\dfrac{\partial \bm g_3}{\partial \bm x}
\begin{pmatrix}
\Delta x \\ \Delta x'
\end{pmatrix}
\right\|_1
&=
\left\|
\begin{pmatrix}
\dfrac{\partial g_3}{\partial x}\Delta x \\[15pt]
\dfrac{\partial g_3'}{\partial x}\Delta x + \dfrac{\partial g_3}{\partial x} \Delta x'
\end{pmatrix}
\right\|_1 \\
&\leq
\left\|\dfrac{\partial  g_3}{\partial x}\right\| \|\Delta x\|
+\dfrac{\delta_1}{C_1} \left\|\dfrac{\partial g_3'}{\partial x}\right\| \|\Delta x\|
+\dfrac{\delta_1}{C_1}
\left\|\dfrac{\partial  g_3}{\partial x}\right\|\|\Delta x'\|\\
&<
\lambda (\|\Delta x\|+\dfrac{\delta_1}{C_1}\|\Delta x'\|)
=\lambda\|(\Delta x,\Delta x')\|_1,
\end{align*}
where the last inequality follows from the choice of $\delta_1$ and $C_1$. Similarly, one has
$$
\left\|
\dfrac{\partial \bm g_3}{\partial \bm x}
\begin{pmatrix}
\Delta x \\ \Delta x'
\end{pmatrix}
\right\|_1
<\lambda \|(\Delta y,\Delta y')\|_1.
$$
We thus have
\begin{equation*}\label{eq:g_3}
\left\|\dfrac{\p \bm g_3}{\p \bm x}\right\|_1<\lambda
\quad\mbox{and}\quad
\left\|\dfrac{\p \bm g_4}{\p \bar{\bm y}}\right\|_1<\lambda.
\end{equation*}

By \eqref{eq:derisyst:3.0}, we also have
\begin{align*}
\dfrac{\p \bm F}{\p \bm w}
=
\dfrac{\partial(F,F')}{\partial ( w,w')}
=\begin{pmatrix}
\dfrac{\partial F}{\partial w} & 0\\[15pt]
\dfrac{\partial F'}{\partial w} & \dfrac{\partial F}{\partial w} 
\end{pmatrix}
=:\begin{pmatrix}
A &0\\B &A
\end{pmatrix},
\end{align*}
whose inverse is given by
$$\begin{pmatrix}
A^{-1} &0\\-A^{-1}BA^{-1} & A^{-1}
\end{pmatrix}.$$
Similarly, we consider the norm defined by 
$$\|(\Delta w,\Delta w')\|_2:=\|(\Delta w)\|+\delta_2 C^{-1}_2\|(\Delta w')\|,$$
where $\delta_3$ satisfies
\begin{equation*}\label{eq:derisyst:15}
\left\|\dfrac{\partial F}{\partial w}\right\|+\delta_2<\cc 
\quad\mbox{and}\quad
\left\|\left(\dfrac{\partial F}{\partial w}\right)^{-1}\right\|+\delta_2<\cc,
\end{equation*}
and
$$C_2 = \sup_{(\hat r,\hat\varphi,\hat x,\hat y)\in \hat V}\|B\|+\|A\|+\|A^{-1}BA^{-1}\|+\|A^{-1}\|.$$
The same computation as above shows that
$$
\left\|\dfrac{\p \bm F}{\p \bm w}\right\|_2 < \cc
\quad\mbox{and}\quad
\left\|\left(\dfrac{\p \bm F}{\p \bm w}\right)^{-1}\right\|_2 < \cc.
$$


Thus, the desired norm in \eqref{eq:derisyst:3} can be defined as following:  
\begin{equation*}
\|(\Delta u, \Delta u')\|_{\circ}=\|\Delta u\|+\delta C^{-1}\| \Delta u'\|, \quad  u\in\{w, x, y\}, 
\end{equation*}
where $\delta=\min\{\delta_1,\delta_2\}$ and $C=\max\{C_1,C_2\}$.

\noindent\textbf{(5) Higher derivatives.} 
Now let $\hat T_0$ be given by \eqref{eq:derisyst:3.0} with the prime coordinates defined in \eqref{eq:derisyst:3.1}. By  replacing the role of $T_0$ with $\hat T_0$ in the above arguments, we obtain the desired estimates for the first derivatives of $\p(w_k,x_k,y_0)/\p(w_0,x_0,y_k)$ with respect to $w^i_0$.  Similarly, with the prime coordinates in   \eqref{eq:derisyst:9}, we obtain the derivatives with respect to $x_0,y_k$. This concludes the case of second derivatives. Repeating this procedure up to $m'-1$ gives the lemma, where $m'$ is the smoothness of our system in Fenichel coordinates. (Since  Lemma~\ref{lem:GelTur:17} requires the system to be at least $C^1$, our approach only works derivatives up to order $m'-1$.)

\subsubsection{Iterations of the inner map $F^k$: proof of Lemma~\ref{lem:F^k}}\label{sec:lem:F^k}

We first prove an auxiliary result. Take any $\delta>0$, any positive integers $n_1,n_2,d$. Consider any map $\hat F: \mathbb R^{n_1}\times \mathbb R^{n_2}\to \mathbb R^{n_1}\times \mathbb R^{n_2} $ of the following form:
\begin{equation}\label{eq:Fhat0}
\bar u = u + \delta^{d-1}p(u,v),\qquad \bar v = v+ v^*+\delta \hat q(u) + \delta^{d}q(u,v),
\end{equation}
where $v^*\in \mathbb{R}^{n_2}$ is constant, $\hat q$ is $C^1$ with bounded $C^1$ norm, and $p$ and $q$ are bounded continuous functions.

\begin{lema}\label{lem:Fhat}
The $k$-th iteration map $\hat{F}^k : (u_0,v_0)\mapsto (u_k,v_k)$ is given by
\begin{equation}\label{eq:Fhat1}
u_k = u_0 + k\delta^{d-1}p_k(u_0,v_0),\qquad v_k = v_0+ k v^*+ k\delta\hat q(u_0) + k^2\delta^{d}q_k(u_0,v_0),
\end{equation}
for some continuous functions $p_k$ and $q_k$ bounded uniformly for all $k$.
\end{lema}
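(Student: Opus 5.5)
The plan is to argue by induction on $k$, reading the estimates as valid uniformly in the range of $k$ where they are meaningful, i.e. $k\delta\lesssim 1$ (this is the range relevant for Lemma~\ref{lem:F^k}, where $k=o(r_0^{1-m})$). Without such a range restriction, the $q_k$ term could not stay bounded; a restriction of this kind is implicit in how the lemma is applied. Throughout, write
\[
u_j=u_0+j\delta^{d-1}p_j(u_0,v_0),\qquad v_j=v_0+jv^*+j\delta\hat q(u_0)+j^2\delta^d q_j(u_0,v_0).
\]
I will show inductively that $|p_j|\le P$ and $|q_j|\le Q$ for constants $P,Q$ depending only on $\sup|p|$, $\sup|q|$ and $\|\hat q\|_{C^1}$.

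The $u$-equation is immediate. Summing the first equation of \eqref{eq:Fhat0} along the orbit gives
\[
u_k-u_0=\delta^{d-1}\sum_{j=0}^{k-1}p(u_j,v_j).
\]
So I set $p_k:=\frac1k\sum_{j<k}p(u_j,v_j)$. This is continuous in $(u_0,v_0)$, being a composition of continuous maps, and it satisfies $|p_k|\le\sup|p|=:P$ uniformly in $k$. Notably, this step needs no induction at all.

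For the $v$-equation, summation gives
\[
v_k=v_0+kv^*+\delta\sum_{j<k}\hat q(u_j)+\delta^d\sum_{j<k}q(u_j,v_j).
\]
The only nontrivial point is to replace $\hat q(u_j)$ by $\hat q(u_0)$. By the mean value theorem and the $u$-estimate already obtained,
\[
|\hat q(u_j)-\hat q(u_0)|\le\|\hat q\|_{C^1}\,|u_j-u_0|\le\|\hat q\|_{C^1}\,jP\delta^{d-1}.
\]
Therefore
\[
\delta\sum_{j<k}\hat q(u_j)=k\delta\hat q(u_0)+O\bigl(\delta^{d}k^2\bigr),
\]
where the implied constant is $\tfrac12\|\hat q\|_{C^1}P$. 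The last sum is bounded by $k\delta^d\sup|q|\le k^2\delta^d\sup|q|$. Collecting both error terms and dividing by $k^2\delta^d$ defines $q_k$, which is continuous and satisfies $|q_k|\le\tfrac12\|\hat q\|_{C^1}P+\sup|q|=:Q$, uniformly in $k$.

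The step that deserves care is this $\hat q$ replacement, because it is where the $k^2$ factor arises and why it is forced. The $u$-drift is of size $k\delta^{d-1}$, and it gets amplified by $\delta$ through $\hat q$ over $k$ steps. One must also make sure the orbit stays in the domain where the bounds on $p$, $q$ and $\hat q$ hold. Here I would use that $|u_j-u_0|\le jP\delta^{d-1}$ and $|v_j-v_0-jv^*|=O(j\delta)$, both of which remain small for $j\le k$ in the admissible range. With the global boundedness assumed in the statement, though, this is automatic. Continuity of $p_k$ and $q_k$ follows because they are finite sums of continuous functions of $(u_0,v_0)$.
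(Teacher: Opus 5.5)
Your argument is correct and is the unrolled form of the paper's induction: the paper's recursions $p_{k+1}=\frac{1}{k+1}\bigl(kp_k+p(u_k,v_k)\bigr)$ and $q_{k+1}=\frac{1}{(k+1)^2}\bigl(k^2q_k+O(\hat q')\,kp_k+q(u_k,v_k)\bigr)$ sum to exactly your closed-form averages, and your explicit constants avoid having to choose $C_1,C_2$ compatibly for the inductive step. One correction: your opening caveat is unfounded, because under the lemma's global boundedness hypotheses your own bound $|q_k|\le\tfrac12\|\hat q\|_{C^1}P+\sup|q|$ holds for every $k$; a range restriction on $k$, namely $k=o(\delta^{1-m})$ rather than $k\delta\lesssim 1$, is needed only later, in the proof of Lemma~\ref{lem:F^k}, to keep the orbit of $F_{\mathrm{ext}}$ in the region where it coincides with $F$.
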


\begin{proof}
Take any $C_1$ and $C_2$ such that $\|p\|<C_1, \|q\|<C_2, C_2\|\hat p\|_{C^1}< C_1$.
We prove the lemma by induction on $k$. 
The elementary step is automatic. We now assume the validity of \eqref{eq:Fhat1} with $\|p_k\|<C_1$ and $\|q_k\|<C_2$.

The $(k+1)$-iteration is found from
 \eqref{eq:Fhat0} and \eqref{eq:Fhat1} as
\begin{equation}\label{eq:00}
\begin{aligned}
u_{k+1} &= u_k +  p(u_k,v_k)= u_0 +k\delta^{d-1}p_k(u_0,v_0) + \delta^{d-1} p(u_k,v_k),\\
v_{k+1} &= v_k + \hat q(u_k)+
\delta^{d}q(u_k,v_k)\\
&= v_0+ (k+1)\delta\hat q(u_0) +  k^2\delta^{d}q_k(u_0,v_0) + O(\hat q') k\delta^d p_k(u_0,v_0)+ \delta^{d} q(u_k,v_k).
\end{aligned}
\end{equation}
Denote 
\begin{equation}\label{eq:000}
\begin{aligned}
 p_{k+1}(u_0,v_0)&:= \dfrac{1}{k+1}  (kp_k(u_0,v_0) + p(u_k,v_k)),\\
 q_{k+1}(u_0,v_0)&:=\dfrac{1}{(k+1)^2} (k^2 q_k(u_0,v_0) + O(\hat q')\cdot k p_k(u_0,v_0)+  q(u_k,v_k)).
\end{aligned}
\end{equation}
Since $C_2\|\hat p\|_{C^1}< C_1$, it is immediate that $\|p_{k+1}\|<C_1$ and $\|q_{k+1}\|<C_2$. This completes the induction and hence the proof of the lemma.
\end{proof}

Recall that $m'\geq m\geq 2$ in (\ref{eq:kam_new}) and (\ref{eq:kam_new_deri}) and $m^*=\min\{m,m'-m\}$. Take any small $\delta>0$ such that \eqref{eq:kam_new} and \eqref{eq:kam_new_deri} are valid  for $[-2\delta,2\delta]$. We consider  the coordinates
\begin{equation}\label{eq:Fk1}
r= \delta r^{\mathrm{new}}.
\end{equation} 
\begin{lema}
For all sufficiently small $\delta$ and  all $r^{\mathrm{new}}\in [-3/2,3/2]$, the formula for the iteration $F^k$ assumes the form
\begin{equation}\label{eq:Fk1a}
\begin{aligned}
r^{\mathrm{new}}_k&=  r^{\mathrm{new}}_0 + k\delta^{m-1}p_k(r^{\mathrm{new}}_0,\pp_0,\eps),\\
 \varphi_k &=\varphi_0 +k\rho+  k\delta\hat q(r^{\mathrm{new}}_0,\eps) r+  k^2\delta^m q_k(r^{\mathrm{new}}_0,\pp_0,\eps),
\end{aligned}
\end{equation}
where $\hat q(r^{\mathrm{new}},\eps)=r^{\mathrm{new}} + \delta^{-1}\hat\rho(\delta r^{\mathrm{new}},\eps)$, and
 $p_k$ and $q_k$ are uniformly bounded in the $C^{m^*}$ topology for all sufficiently small $\delta$ and $k=o(\delta^{1-m})$. 
\end{lema}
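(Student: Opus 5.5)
The plan is to rescale the inner map \eqref{eq:kam_new} and then apply Lemma~\ref{lem:Fhat} directly, with $u=r^{\mathrm{new}}$ and $v=\pp$. Substituting $r=\delta r^{\mathrm{new}}$ into \eqref{eq:kam_new} gives
$$
\bar r^{\mathrm{new}} = r^{\mathrm{new}} + \delta^{-1}\xi(\delta r^{\mathrm{new}},\pp,\eps),\qquad
\bar\pp=\pp+\rho+\delta\,\hat q(r^{\mathrm{new}},\eps)+\eta(\delta r^{\mathrm{new}},\pp,\eps),
$$
where $\hat q(r^{\mathrm{new}},\eps)=r^{\mathrm{new}}+\delta^{-1}\hat\rho(\delta r^{\mathrm{new}},\eps)$. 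Since $\hat\rho=O(r^2)$ is a polynomial, $\hat q$ is bounded in $C^{m'}$ uniformly in small $\delta$ on $|r^{\mathrm{new}}|\le 2$.

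By \eqref{eq:kam_new_deri}, $\xi(\delta r^{\mathrm{new}},\cdot)=\delta^{m}\tilde p$ and $\eta(\delta r^{\mathrm{new}},\cdot)=\delta^m \tilde q$. Here $\tilde p,\tilde q$ are bounded together with their derivatives up to order $m^*$ in $(r^{\mathrm{new}},\pp,\eps)$, because each $r$-derivative of order $i$ produces a factor $\delta^i$ against $O(r^{m-i})=O(\delta^{m-i})$. The map therefore has exactly the form \eqref{eq:Fhat0} with $d=m$, $v^*=\rho$, $p=\tilde p$, $q=\tilde q$. The parameter $\eps$ can be absorbed as a $u$-variable with trivial dynamics.

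Lemma~\ref{lem:Fhat} then yields \eqref{eq:Fhat1}, which is \eqref{eq:Fk1a} at the $C^0$ level. There is one bookkeeping point: the $r^{\mathrm{new}}_k$ must stay in $[-2,2]$ so that the form remains valid. This holds for $r^{\mathrm{new}}_0\in[-3/2,3/2]$ because $k\delta^{m-1}p_k=o(1)$ when $k=o(\delta^{1-m})$. The induction in Lemma~\ref{lem:Fhat} can be run with this domain constraint carried along.

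For the $C^{m^*}$ bounds on $p_k,q_k$, the plan is to use the device of Section~\ref{sec:T0k}. I would augment the map by its formal derivatives (variational equations) and check that the augmented system is again of type \eqref{eq:Fhat0}. Differentiating $\bar u=u+\delta^{m-1}p$ and $\bar v=v+\rho+\delta\hat q(u)+\delta^m q$ gives the linear block
$$
\begin{pmatrix}1+\delta^{m-1}p_u & \delta^{m-1}p_v\\ \delta\hat q' +\delta^m q_u & 1+\delta^m q_v\end{pmatrix}.
$$
The term $\delta\hat q'(u)\,u'$ plays the role of $\delta\hat q$. The only new effect is that $v'$ can grow like $k\delta$ times $u'$, which is harmless for $k\delta\lesssim$ the allowed range after a rescaling of $v'$. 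The main obstacle is this step: showing that derivatives of $p_k,q_k$ stay uniformly bounded rather than picking up extra powers of $k$.

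For that step I would differentiate the recursion \eqref{eq:000} directly and induct on the derivative order. Each $u$- or $v$-derivative of $p(u_k,v_k)$ involves $\partial(u_k,v_k)/\partial(u_0,v_0)$. Its $v_k$-part is $O(1+k\delta)$, and the extra $k\delta$ is compensated because it multiplies terms carrying $\delta^{m-1}$. With $k=o(\delta^{1-m})$, the products remain bounded for derivatives up to order $m^*$. The loss of $m$ derivatives built into $m^*=\min\{m,m'-m\}$ comes from the $\delta^{-i}$ division when passing back from $\xi$ to $\tilde p$.
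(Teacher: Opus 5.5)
Your route is the paper's: rescale $r=\delta r^{\mathrm{new}}$ to bring $F$ to the form \eqref{eq:Fhat0} with $d=m$, $v^*=\rho$, $u=(r^{\mathrm{new}},\varepsilon)$, $v=\varphi$. Then read off the $C^0$ statement from Lemma~\ref{lem:Fhat}, and get derivative bounds from the variational system. The $C^0$ part is fine: carrying $|r^{\mathrm{new}}_j|\le 2$ through the induction is an acceptable substitute for the paper's global extension, and your one-step formula, without the stray factor $r$ in the displayed statement, is the right one.

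The gap is exactly the step you flagged: the compensation you invoke does not take place. Put $a_j=\partial u_j/\partial u_0$, $b_j=\partial v_j/\partial u_0$, with $a_0=1$, $b_0=0$. Then
\[
a_{j+1}=a_j+\delta^{m-1}(p_u a_j+p_v b_j),\qquad
b_{j+1}=b_j+\delta\,\hat q'(u_j)\,a_j+\delta^{m}(q_u a_j+q_v b_j).
\]
As long as $a_j=O(1)$, this gives $b_j=O(j\delta)$ and $a_k-1=O(k\delta^{m-1}+k^2\delta^{m})$. Since $a_k=1+k\delta^{m-1}\partial_{u_0}p_k$ and $b_k=k\delta\,\hat q'(u_0)+k^2\delta^m\partial_{u_0}q_k$, you obtain only $\partial_{u_0}p_k,\ \partial_{u_0}q_k=O(1+k\delta)$.

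The factor $\delta^{m-1}$ has already been divided out in the definition of $p_k$. It makes each increment $\delta^{m-1}p_v b_j$ small, but the $k$ increments add up to $k^2\delta^m$, which is $k\delta$ times the normalisation $k\delta^{m-1}$. Rescaling $v'$ only moves the factor $1+k\delta$ into the coefficient $p'$. Moreover, the a priori bound $a_j=O(1)$ needs $k^2\delta^m\lesssim 1$, and so does the bound $\partial v_k/\partial v_0=1+O(k^2\delta^m)$ for the $\varphi_0$-derivatives.

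Hence your argument gives uniform $C^1$ bounds (and, iterating, $C^{m^*}$ bounds) only for $k=O(\delta^{-1})$. That is the whole range $k=o(\delta^{1-m})$ only when $m=2$. For $m\ge 3$ and $k\gg\delta^{-1}$ you would need genuine cancellation in the oscillatory sums $\sum_{j<k} j\delta\,\partial_\varphi p(u_j,v_j)$ along the nearly rigid rotation $v_j\approx v_0+j(\rho+\delta\hat q(u_0))$. That is an averaging/small-divisor argument, and your proposal does not contain one.

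The paper's proof does not supply this ingredient either. It applies Lemma~\ref{lem:Fhat} to the augmented map in $(r,r',\varepsilon,\varphi,\varphi')$, asserting that $p'$ and $q'$ are uniformly bounded. But $p'=p_r r'+p_\varphi\varphi'$ and $q'$ are linear in $\varphi'$, which grows like $k\delta\,\partial_r\hat q$ along the orbit started at $(r',\varphi')=(1,0)$. So that argument, like yours, is valid as written only for $k=O(\delta^{-1})$. A correct write-up should either restrict the derivative statement to this range, which is enough for the applications in which $kr_0$ stays bounded, or supply a cancellation argument for larger $k$.
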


Lemma~\ref{lem:F^k} follows immediately by taking  $\delta=r_0$ and
  $r^{\mathrm{new}}=1$ (so $r_0^\mathrm{new}=1$ and $r_k^\mathrm{new}=r_0^{-1}r_k$), and setting $\xi_k(r_0,\pp_0,\eps)=kr_0^{m}p_k(1,\pp_0,\eps)$ and  $\eta_k(r_0,\pp_0,\eps)=kr_0^{m}q_k(1,\pp_0,\eps)$.

\begin{proof}
Let us drop the superscript.  The inner map $F$ given by  \eqref{eq:kam_new} assumes the following  form in the coordinates \eqref{eq:Fk1}:
\begin{equation}\label{eq:Fk2}
\bar r=  r + \delta^{m-1}p(r,\eps,\pp),\qquad
\bar \eps =\eps,\qquad
\bar \varphi =\varphi +\rho+ \delta \hat q(r,\eps) r+ \delta^m q(r,\eps,\pp),
\end{equation}
where we  also consider  parameters $\eps$ as  variables, and
\begin{equation}\label{eq:Fk2a}
p(r,\eps,\pp)=\delta^{-m}\xi(\delta r,\eps,\pp),\qquad
q(r,\eps,\pp)=\delta^{-m}\eta(\delta r,\eps,\pp).
\end{equation}
By \eqref{eq:kam_new_deri}, we have
\begin{equation}\label{eq:Fk2b}
\|p,q,\hat q\|_{C^{m^*}}<C,
\end{equation}
for some constant $C>0$ uniformly for all small $\delta$.

After taking $u=(r,\eps)$ and $v=\pp$, the map $F$ has same form as \eqref{eq:Fhat0}. By \eqref{eq:kam_new_deri},  $p,q,\hat q$ are bounded for $r\in [-2,2]$, uniformly for all sufficiently small $\delta$. Take any continuous extension $F_\mathrm{ext}$ of $F$ to $\mathbb{R}^3$  such that it still has the form \eqref{eq:Fk2} with \eqref{eq:Fk2b} satisfied and $F_\mathrm{ext}=F$ for $r\in [-5/3,5/3]$. Then applying Lemma~\ref{lem:Fhat} to $F_\mathrm{ext}$, we find the desired formula  \eqref{eq:Fk1a} and the  $C^1$ boundedness of $p_k$ and $q_k$ for the iteration $F^k_\mathrm{ext}$, uniformly for all small $\delta$. Finally, since $k=o(\delta^{1-m})$, the estimates for $r_k$ implies that $r_k=r_0+o(1)_{\delta\to 0}$, and hence $F^k_\mathrm{ext}=F^k$ when $r\in [-3/2,3/2]$ for all sufficiently small $\delta$.

Let us proceed to find  derivatives of $p_k$ and $q_k$, which is done by the same method as in the proof of Lemma~\ref{lem:derisyst}. We first deal with the first derivative with respect to $r$. Consider the map $\hat F:(r,r',\eps,\pp,\pp')\mapsto (\bar r,\bar r',\bar \eps,\bar \pp,\bar \pp)$ defined by \eqref{eq:Fk2} and 
\begin{align*}
\bar r' &= r' + \delta^{m-1} \dfrac{\p p(r,\eps,\pp)}{\p r}r'+\delta^{m-1} \dfrac{\p p(r,\eps,\pp)}{\p \pp}\pp'
=:r' + \delta^{m-1}p'(r,r',\eps,\pp,\pp'),\\
\bar \pp' &= \pp' +\delta \dfrac{\p \hat q(r,\eps)}{\p r} r' 
+\delta^{m} \dfrac{\p q(r,\eps,\pp)}{\p r}r'+\delta^{m} \dfrac{\p q(r,\eps,\pp)}{\p \pp}\pp'\\
&=:\pp' +\delta \hat q'(r,r',\eps) + \delta^{m} q'(r,r',\eps,\pp,\pp').
\end{align*}
By \eqref{eq:Fk2b}, the functions $p',q',\hat q'$ are uniformly bounded for all small $\delta$.
Arguing as in the previous paragraph, we take $u=(r,r',\eps)$ and $v=(\pp,\pp')$ and apply Lemma~\ref{lem:Fhat} to an  extension $\hat F_{\mathrm{ext}}$ of $\hat F$, which, by the choice of $k$, can be taken such that $\hat F^k_{\mathrm{ext}}=\hat F^k$ for  $r\in [-3/2,3/2]$. 

Therefore, we have
$$r'_k=r_0 + k\delta^{m-1}p'_k,\qquad
\pp'_k =\pp_0+ k\delta\hat q' + k^2\delta^{m}q'_k,
$$
where $p'_k$ and $q'_k$ are functions of $(r_0,r'_0,\eps,\pp_0,\pp'_0)$, uniformly bounded for all small $\delta$ and  $k<\delta^{1-m}$. This gives the desired estimates for $\p r_k/\p r_0$ and $\p \pp_k/\p \pp_0$ since $\{(r_j,r'_j,\eps,\pp_j,\pp'_j)\}$ with 
$$
r'_j=\dfrac{\p r_j}{\p r_0},\qquad
\pp'_j=\dfrac{\p \pp_j}{\p r_0},\qquad j=1,\dots,k
r'_0=1,\qquad \pp'_0=0,
$$
is the orbit segment corresponding to $\hat F_k(r_0,r'_0,\eps,\pp_0,\pp'_0)=(r_k,r'_k,\eps,\pp_k,\pp'_k)$.

The  first derivative with respect to $\pp_0$ is obtained in the same way, and also for the higher derivatives up to order $m'$ (see step (5) of the proof of Lemma~\ref{lem:derisyst}). 
\end{proof}

\subsection{Examples of unfolding families for Theorem~\ref{thm:main_2punfolding} and Theorem~\ref{thm:main_SC_para}}
\label{sec:unfoldingfam}

We will construct each unfolding family in the form $\{f_\eps\}:=\{G_\eps\circ f\}$, where $G_\eps$, with $G_0=\mathrm{id}$, is the composition of  time-1 maps of certain $C^\infty$ Hamiltonian flows, each depending on finitely many parameters. This guarantees that the maps $f_\eps$ are symplectic and, in the context of Theorem~\ref{thm:main_2punfolding},   meet the local exactness requirement. Note that any unfolding family for Theorem~\ref{thm:main_SC_para} also works for Theorem~\ref{thm:main_SCblender}.

The unfolding family can be made real analytic whenever $f$ is real analytic. Indeed, it suffices to take real-analytic approximations of the $C^\infty$ families of Hamiltonian functions that define the above Hamiltonian flows.  Since all unfolding conditions are $C^1$-open in the space of families, they also hold for the family $\{\tilde G_\eps\circ f\}$, where $\tilde G_\eps$ is the   composition of  time-1 maps of the Hamiltonian flows associated with these approximations. Moreover,  since the $C^\infty$ families of Hamiltonian functions vanish at $\eps=0$ by construction, the real-analytic approximations can be chosen to vanish at $\eps=0$ as well, ensuring $\tilde G_0=\mathrm{id}$. The desired real-analytic family is thus given by $\{\tilde G_\eps\circ f\}$.

In the end of this section, we also construct perturbations that realize the genericity conditions of Theorem~\ref{thm:main_SC_para}, in the same way as above.
 
\subsubsection{Case (1) of Theorem~\ref{thm:main_2punfolding}}\label{sec:fam1}

Recall that the transition map $T_1:(r,\pp,x,y)\mapsto (\tilde r,\tilde \pp,\tilde x,\tilde y)$  takes a small neighborhood of $M^-\in W^\u_\loc(\gamma)$ to a small neighborhood of $M^+\in W^\s_\loc(\gamma)$. In the Fenichel coordinates, where $W^\u_\loc(\gamma)=\{r=0,x=0\}$ and $W^\s_\loc(\gamma)=\{\tilde r=0,\tilde y=0\}$, the map  $T_1$ takes the form   \eqref{eq:T1_xform_n}, with setting $\ell=2$ there. The image $T_1(W^\u_{\loc}(\gamma))$  is given by 
\begin{equation}\label{eq:fam1}
\begin{aligned}
\tilde r =   \dd (\tilde \varphi - \varphi^+)^3 +   a_{14}\tilde y+\dots, \qquad
\tilde x-x^+ =   a_{32} (\tilde \varphi - \varphi^+) +  a_{34}\tilde y+\dots,
\end{aligned}
\end{equation}
where $M^+=(0,\pp^+,x^+,0)$. To obtain condition~\eqref{eq:unfoldcubic} (see also \eqref{eq:T1_cubic}), it suffices to construct a two-parameter family $\{f_\eps\}$ such that $W^\s_\loc(\gamma)=\{\tilde r=0,\tilde y=0\}$ for all $\eps$ and the $\tilde r$-component of the defining function of $T_{1,\eps}(W^\u_{\loc}(\gamma))$, denoted by $\tilde r = w^\u(\tilde \pp, \tilde y,\eps)$  satisfies
$$
\left.\dfrac{\p w^\u}{\p\eps_1}\right|_{M^+,\eps=0}\neq 0,\qquad
\left.\dfrac{\p^2 w^\u}{\p\tilde\pp \p\eps_2}\right|_{M^+,\eps=0}\neq 0.
$$
In what follows, we obtain $f_\eps$ as localized perturbations  that change the shape and position of $T_1(W^\u_{\loc}(\gamma))$.

Since $W^\s_\loc(\gamma)$ is Lagrangian, by \citep{Wei:71}  there exists a symplectic change of coordinates
in a small neighborhood $U$ of $M^+$ that restricts to the identity on $W^\s_\loc(\gamma)$  and brings the symplectic form $\Omega$ to the standard form. In the new coordinates (which we still denote by $\tilde r,\tilde \pp,\tilde x,\tilde y$),   $W^\s_\loc(\gamma)=\{\tilde r=0,\tilde y=0\}$, $\Omega|_U=d \tilde r\wedge d\tilde \pp +d\tilde x \wedge d\tilde y$, and the  map $T_1$ remains the same form with
$T_1(W^\u_{\loc}(\gamma))$   given by \eqref{eq:fam1}.
Let $\{g_\eps\}$ be any two-parameter family  of $C^{\infty}$ functions $g_\eps:\tilde \pp\mapsto (\tilde r,\tilde x)$ with $g_0\equiv 0$. Following the construction in \citep[Section 2.3]{GonTurShi:07}, we define the Hamiltonian function
\begin{equation}\label{eq:hamil}
H_\eps(\tilde r,\tilde \pp,\tilde x,\tilde y)=-\chi(\tilde r,\tilde \pp,\tilde x,\tilde y) \cdot \int^{\tilde \pp}_{\pp^+} g_\eps(s) ds,
\end{equation}
where $\chi$  is a $C^{\infty}$ bump function which is supported in $U$ and equal to 1 near $M^+$.
One readily finds that the time-1 map $G_\eps$ of the Hamiltonian flow near $M^+$ takes the form
$$(\tilde r,\tilde \pp,\tilde x,\tilde y)\mapsto (\tilde r+g_\eps(\tilde \pp),\tilde \pp,\tilde x,\tilde y).$$

For each $\eps $  the perturbation $G_\eps\circ f$ coincides with $f$ outside a small neighborhood of $f^{-1}(M^+)$. Hence, the transition map for $G_\eps\circ f$ is $T_{1,\eps}:=(G_\eps\circ f)^n=G_\eps \circ f^n=G_\eps \circ T_1$. As a result, $T_{1,\eps}(W^\u_{\loc})$ is given by 
$$\begin{aligned}
\tilde r =  g_\eps(\tilde \pp)+  \dd (\tilde \varphi - \varphi^+)^3 +   a_{14}\tilde y+\dots, \qquad
\tilde x-x^+ =   a_{32} (\tilde \varphi - \varphi^+) +  a_{34}\tilde y+\dots,
\end{aligned}
$$
Taking $g_\eps(\pp)=\eps_1+\eps_2(\pp-\pp^+)$ yields the  desired family $\{f_\eps\}:=\{G_\eps\circ f\}$. 

\subsubsection{Case (2) of Theorem~\ref{thm:main_2punfolding}}\label{sec:fam2}

We now have two orbits of quadratic tangencies. Let $T_1$ be the transition map defined along one of these orbits.
Setting $\ell=1$ in \eqref{eq:T1_xform_n}, we find  the image  $T_1(W^\u_{\loc}(\gamma))$ as
\begin{equation*}
\begin{aligned}
\tilde r =   \dd (\tilde \varphi - \varphi^+)^2 +   a_{14}\tilde y+\dots, \qquad
\tilde x-x^+ =   a_{32} (\tilde \varphi - \varphi^+) +  a_{34}\tilde y+\dots,
\end{aligned}
\end{equation*}
The same procedure as in case (1), with $g(\tilde \pp)=\eps_1$ in \eqref{eq:hamil},  gives a family $\{H'_{\eps_1}\}$ of locally supported Hamiltonian functions such that the perturbation obtained by post-composing the corresponding time-1 maps with $f$ unfolds the tangency under consideration.  Applying the construction again to   the second orbit of tangency, now using   $g(\tilde \pp)=\eps_2$ in \eqref{eq:hamil}, we find  another family $\{H''_{\eps_2}\}$ by which we unfold this tangency. Since the two families are localized in different places, the time-1 maps $G_\eps$ of  the Hamiltonians $H_\eps=H''_{\eps_2}\circ H'_{\eps_1}$ give the desired  family  $\{f_\eps\}:=\{G_{\eps}\circ f\}$ satisfying \eqref{eq:unfold2quadra}.

%

\subsubsection{Case (3) of Theorem~\ref{thm:main_2punfolding}}\label{sec:fam3}

We first construct a one-parameter family that changes the hyperbolicity coefficient $\alpha$ in \eqref{eq:unfoldquadra}. By Lemma~\ref{lem:scattering_deri} and \eqref{eq:a11}, it suffices to change the coefficient $\tilde a_{22}$ in \eqref{eq:T1_general} (here we do not need to use the fact that the homoclinic orbit is non-transverse). Formula \eqref{eq:T1_general} is written in the Fenichel coordinates where $W^\u_\loc(\gamma)=\{r=0,x=0\}$ and $W^\s_\loc(\gamma)=\{\tilde r=0,\tilde y=0\}$. 
One sees that   $\D T_1$ takes  the vector $v=(0,1,0,\tilde a_{42})$ in the tangent space of $W^\u_\loc(\gamma)$ at $M^-$ to the vector $(0, \tilde a_{22},\tilde a_{32},0)$ in the   tangent space of $ W^\s_\loc(\gamma)$ at $M^+$ , i.e., $\tilde a_{22}$ is the $\tilde \pp$-component of the vector $\D T_1 v$. 
Evidently, for any diffeomorphism $G$ satisfying $G|_{W^\u_\loc(\gamma)}=\mathrm{id}$, the $\tilde \pp$-component of $\D(G\circ T_1)v$ remains $\tilde a_{22}$.
It follows that the coefficient $\tilde a_{22}$  is independent of the choice of coordinates near $M^+$ that restrict to the identity on $W^\s_\loc(\gamma)$.
In particular, we  can use the  standard coordinates near $M^+$   used in Section~\ref{sec:fam1}, and modify  $\tilde a_{22}$ by using the Hamiltonian function
\begin{equation*}
H'_{\eps_1}(\tilde r,\tilde \pp,\tilde x,\tilde y)=-\eps_1\tilde r \tilde \pp\cdot \chi(\tilde r,\tilde \pp,\tilde x,\tilde y).
\end{equation*}
Indeed, since the corresponding time-1 map  $G'_{\eps_1}$ restricts to $(\tilde r,\tilde \pp,\tilde x,\tilde y)\mapsto (\tilde r e^{\eps_1},\tilde \pp e^{-\eps_1},\tilde x,\tilde y)$ near $M^+$ and the transition map for  $G'_{\eps_1}\circ f$ is $T_{1,\eps_1}=G'_{\eps_1}\circ T_1$, the $\tilde \pp$-component of the vector $\D T_{1,\eps_1} v$ is $\tilde a_{22,\eps_1}=\tilde a_{22} e^{-\eps_1}$.

To unfold the homoclinic tangency of $\gamma$, we  construct as in case (2) a family $\{H''_{\eps_2}\}$ of Hamiltonian functions supported near $M^+$, whose corresponding time-1 map $G''_{\eps_2}$ restricts to $(\tilde r,\tilde \pp,\tilde x,\tilde y)\mapsto (\tilde r +\eps_2,\tilde \pp ,\tilde x,\tilde y)$ near $M^+$, such that the tangency between $G''_{\eps_2}\circ T_{1}(W^\u_\loc(\gamma))$ and $W^\s_\loc(\gamma)$ unfolds generically.

Now consider the perturbation $G''_{\eps_2}\circ G'_{\eps_1}\circ f $. Its transition map is $T_{1,\eps_1,\eps_2}=G''_{\eps_2}\circ   G'_{\eps_1}\circ T_1$. 
By construction, the tangency between $ T_{1,\eps_1,\eps_2}(W^\u_\loc(\gamma))$ and $W^\s_\loc(\gamma)$ persists for all $\eps_1$ and unfolds generically when $\eps_2$ varies. Moreover, since the differential of $G''_{\eps_2}$ is the identity, $\D T_{1,\eps_1,\eps_2} v=\D T_{1,\eps_1} v$, and hence $\tilde a_{22,\eps_1,\eps_2}=\tilde a_{22} e^{-\eps_1}$ changes as $\eps_1$ varies. Evidently, $\{G''_{\eps_2}\circ G'_{\eps_1}\circ f \} $ is the desired unfolding family.

\subsubsection{Unfolding families for Theorem \ref{thm:main_SC_para}}\label{sec:famC}

We will construct four one-parameter perturbations that  independently modify  the quantities $\rho, \tr L^\T L, \mu,\nu $  in \ref{word:H1}.

\noindent\textbf{(1) Modification of  $\rho$.}
Recall that $\rho$ is the argument of the central multipliers $e^{\pm i\rho}$ of the saddle-center $O$. Since $W^\c(O)$ is symplectic and $\Omega|_{W^\c(O)}=du\wedge dv$, one can find standard coordinates $(u,v,x,y)\in \mathbb{R}\times \mathbb{R}\times \mathbb{R}^{N-1}\times \mathbb{R}^{N-1}$ in a small neighborhood  $U$ of $O$ such that $O$ is at the origin, $W^\c(O)=\{x=0,y=0\}$ and $\Omega|_U=du \wedge dv+ dx \wedge dy$. A further symplectic change of coordinates on $W^\c(O)$ puts $\D f^\tau|_{W^\c(O)}$  at $O$ in  the  form
$
\begin{psmallmatrix}
\cos\rho & -\sin\rho\\
\sin\rho & \cos\rho
\end{psmallmatrix},
$
where $\tau$ denotes the period of $O$.  Now consider the Hamiltonian function
$$H'_{\eps_1}=- \dfrac{\eps_1}{2\tau} (u^2+v^2)\cdot \chi_1(u,v,x,y),$$
where  $ \chi_1$ is a bump function supported near $O$.  The corresponding time-1 map $G_{\eps_1}$ restricts to a rotation of angle $\eps_1/\tau$ on $W^\c(O)$. As a result, $O$ is a periodic point of $G_{\eps_1}\circ f$  with  period $\tau$ and central multipliers $e^{\pm i(\eps_1+\rho)}$.\\

\noindent\textbf{(2) Modification of $\tr L^\T L$.}
Recall that $L=\D S(O)$ and the scattering map $S=\pi^\s\circ T_1|_{\Sigma^-}\circ (\pi^\u)^{-1}$ is defined in Section~\ref{sec:SCsattering}.
 Here the transition map $T_1$ takes the two-dimensional disc $\Sigma^+\subset W^\cu_\loc(O)\cap W^\cs(O)$ containing $M^-$ to the two-dimensional disc $\Sigma^-\subset W^\cs_\loc(O)\cap W^\cu(O)$ containing $M^+$, and $\pi^\u$ and $\pi^\s$ are the holonomy maps of $\Sigma^-$ and $\Sigma^+$, respectively.

We use the coordinates \eqref{eq:strait2}, where the two holonomy maps are identities, so 
\begin{equation}\label{eq:famC:0}
L=\D T_1|_{\Sigma^-}(M^-).
\end{equation}
 By \eqref{eq:bcoor}, we replace the central polar coordinates $r,\pp$ by Cartesian coordinates $u,v$.
Since $\Sigma^\pm$ are transverse to the foliations, we can use $(u,v)$ as coordinates of $\Sigma^\pm$. Note also that $M^+=(0,0,x^+,0)$ and $M^-=(0,0,0,y^-)$, and hence the map $T_1|_{\Sigma^-}$ takes the form
\begin{equation}\label{eq:famC:1}
\begin{pmatrix}
u\\ v
\end{pmatrix}
\mapsto
\begin{pmatrix}
b_{11} u + b_{12} v +\dots\\
b_{12} u + b_{22} v +\dots
\end{pmatrix},
\end{equation}
where the dots denote higher order terms.

By \eqref{eq:OmegaW^c}, we have  $\Omega_{\Sigma^+}=(\pi^{\s})^* \Omega_{W^\c(O)}=d u \wedge d  v$. So, there exists a symplectic change of coordinates in a small neighborhood $U$ of $\Sigma^-$ that restricts to the identity on $\Sigma^-$ and brings the symplectic form in the new coordinates $( u,  v, x ,  y)$ to $\Omega_U=d u \wedge d  v + d x \wedge d  y$. In particular,  formula \eqref{eq:famC:1} for $T_1|_{\Sigma^-}$ remains the same.

Define the Hamiltonian function
\begin{equation*}
H''_{\eps_2}( u, v, x, y)=-\eps_2 u  v\cdot \chi_2( u, v, x, y) .
\end{equation*}
where $\chi_2$ is a bump function supported near $M^+\in \Sigma^{+}$. Let $\{G''_{\eps_2}\}$ be the  time-1 maps of the corresponding flows. Then the transition map  for  $G''_{\eps_2}\circ G'_{\eps_1}\circ f$ is $T_{1,\eps_1,\eps_2}=G'_{\eps_2}\circ T_1$ (recall that the supports of $\chi_1$ and $\chi_2$ are disjoint), given by 
\begin{equation}\label{eq:famC:1a}
\begin{pmatrix}
u\\ v
\end{pmatrix}
\mapsto
\begin{pmatrix}
e^{\eps_2} b_{11} u + e^{\eps_2} b_{12} v +\dots\\
e^{-\eps_2} b_{12} u +e^{-\eps_2} b_{22} v +\dots
\end{pmatrix}.
\end{equation}
By \eqref{eq:famC:0}, we have  $L_{\eps_2}=\D T_{1,\eps_2}|_{\Sigma^-}(M^-)$ and hence
$$\dfrac{d\tr L_{\eps_2}^\T L_{\eps_2}}{d\eps_2}=e^{2\eps_2} (b^2_{11}+b^2_{21})- e^{2\eps_2}(b^2_{12}+ b^2_{22}).$$
Arguing as in the proof of Lemma~\ref{lem:rotaton} (by considering columns and $LR^{-1}$ instead of rows and $RL$), we can find Fenichel coordinates  where $b^2_{11}+b^2_{21}- (b^2_{12}+ b^2_{22})\neq 0$, so  $\{G'_{\eps_2}\circ f\}$ is the desired perturbation family that modifies $\tr L^\T L$.\\

\noindent\textbf{(3) Modification of $\mu$ and $\nu$.} 
Recall that $(\mu,\nu)$ are the  coordinates of $S(O)$ (see Section~\ref{sec:properunfolding}). In the Fenichel coordinates where $\pi^\u$ and $\pi^\s$ are identities, we have $(u,v)=T_{1,\eps}|_{\Sigma^-}(0,0)$.
Define 
$$
H'''_{\eps_3,\eps_4}(u,v,x,y)=
(\eps_4 u -\eps_3 v) \cdot
\chi_3(u,v,x,y),
$$
where $\chi_3$ is supported near $M^+$.   The restriction $G'''_{\eps_3,\eps_4}|_{\Sigma^-}$ of  the     corresponding time-1 map is given by 
$
(u,v)\mapsto (u+ \eps_3,
v+\eps_4).
$
Since $G'_{\eps_1}$ is supported near $O$, it follows that the transition map for $ G'''_{\eps_3,\eps_4}\circ G''_{\eps_2}\circ G'_{\eps_1}\circ f$ is $T_{1,\eps}=G'''_{\eps_3,\eps_4}\circ  G''_{\eps_2}\circ T_{1}$, and its restriction to $\Sigma^-$ is given by 
\begin{equation}\label{eq:famC:2}
\begin{pmatrix}
u\\ v
\end{pmatrix}
\mapsto
\begin{pmatrix}
\eps_3+ e^{\eps_2} b_{11} u + e^{\eps_2} b_{12} v +\dots\\
\eps_4+e^{-\eps_2} b_{12} u +e^{-\eps_2} b_{22} v +\dots
\end{pmatrix}.
\end{equation}
One then easily verifies that $\{G'''_{\eps_3,\eps_4}\circ G''_{\eps_2}\circ G'_{\eps_1}\circ f\}$  is the desired unfolding family.

\subsubsection{Genericity conditions of Theorem~\ref{thm:main_SC_para}}
\label{sec:gc2}
For each genericity condition we construct a family of smooth Hamiltonian functions such that this condition is satisfied by  the perturbation $G_\eps\circ f$ for every $\eps\neq 0$, where $G_\eps$ is the corresponding time-1 map. Applying these perturbations successively, we obtain all the required conditions. 
When $f$ is real analytic, the involved perturbations 
can be made real analytic by replacing  the   Hamiltonian functions with their real analytic  approximations.

The irrationality of $\rho$ in condition~\ref{word:C1} is obtained as in step~(1) of Section~\ref{sec:famC}. Regarding the second part of condition~\ref{word:C1} (the twist condition), we proceed as in the whiskered torus setting. Specifically, we consider standard coordinates $(u,v,x,y)$ in a small neighborhood of $W^\c(O)$, extending the standard coordinates $(u,v)$ on $W^\c(O)$. Consider the Hamiltonian function 
$$H_\eps( u, v, x, y)= \frac{1}{64} \eps(u^2+v^2)^2 \cdot \chi( u, v, x, y),$$ where $\chi$ is a bump function supported in a small neighborhood of $O$ (chosen to be disjoint from $f(O)$ if $\mathrm{per}(O)>1$). The Hamiltonian flow restricted to $W^\c(O)\setminus\{O\}$, written in the polar coordinates \eqref{eq:bcoor}, is given by $\dot r=0,\dot\pp = \eps r$, which  extends  analytically to the whole of $W^\c(O)$. As a result, for  every $\eps\neq 0$, post-composing the time-1 map $G_\eps$ with $f$ yields  the perturbation $G_\eps\circ f$ for which  $O$ remains a saddle-center periodic point and satisfies the twist condition.

Condition~\ref{word:C2} is essentially the same as the partial hyperbolicity in Theorem~\ref{thm:main_2punfolding}. Condition~\ref{word:C3} can be dealt with by the perturbation in step~(2) of Section~\ref{sec:famC}, since \eqref{eq:famC:1a} shows that $L_\eps$ becomes hyperbolic for $\eps\neq 0$ and therefore cannot be a rotation.

\subsection{Intersection of Lagrangian manifolds: proof of Lemma~\ref{lem:genearaltrans}}\label{sec:lemtrans}
Since $W_2$ is Lagrangian, there are local coordinates 
 $(u,v)\in \mathbb{R}^N\times \mathbb{R}^N$ near $P$ such that $W_2$ is straightened, i.e., $W_2=\{ v=0\}$, and the symplectic form $\Omega$ is given\footnote{
This can be  done, for example, by following the construction of  Darboux coordinates in \citep[Section~43]{Arn:78}: one introduces $N$ pairs of conjugate coordinates inductively, and the fact that $W^\s_\loc(\gamma)$ is Lagrangian ensures that at each step one can  choose a  coordinate in $W^\s_\loc(\gamma)$ as the base for defining its conjugate coordinate.\label{fn:ph}
}
by $du\wedge dv$. Similarly, there are coordinates $(\tilde u, \tilde v)\in \mathbb{R}^N\times \mathbb{R}^N$ near $f^{-1}(P)$ such that $W_1=\{\tilde u=0\}$ and $\Omega = d\tilde u \wedge d\tilde v$. We can also assume that $P=(0,0)$ and $f^{-1}(P)$ are at the origin of the corresponding coordinate systems.

Let $(u,v)\in \mathbb{R}^N\times \mathbb{R}^N$ be the Darboux coordinates near $P$, so the symplectic form $\Omega$ is given by $du\wedge dv$. We can choose the coordinates such that $P$ is at the origin and $W_2$ is
tangent at $P$ to $\{v = Q u\}$ with some
$N\times N$ matrix $Q$. Since $W_2$ is Lagrangian, $Q$ is symmetric, implying that the coordinate transformation
$(u,v)\mapsto (u, v - Qu)$ is symplectic. We make this transformation, so $W_2$ becomes tangent to $\{v=0\}$ in these coordinates. Similarly, there are coordinates $(\tilde u, \tilde v)\in \mathbb{R}^N\times \mathbb{R}^N$ near $f^{-1}(P)$ such that $\Omega = d\tilde u \wedge d\tilde v$, $f^{-1}(P)=(0,0)$ and $W_1$
is tangent to $\{\tilde u=0\}$.

Let us write  $f$ near $f^{-1}(P)$ as
\begin{equation}\label{eq:transfn}
u = A \tilde u + B \tilde v+\dots,\qquad
v = C\tilde u + D \tilde v +\dots,
\end{equation}
where the dots denote terms of order higher than $1$, and $A,B,C,D$ are $N\times N$ matrices. Since $T_{f^{-1}(P)}W_1=\{\tilde u=0\}$, the tangent vectors in $T_P f(W_1)$  are of the form
$$u = B \tilde v,\qquad v = D\tilde v.$$
Since $T_P W_2=\{v=0\}$, the assumption that the intersection has corank $c$ means that
$${\rm rank} D = N-c.$$
Choose $N-c$ linearly independent rows 
$d_{i_1}, \dots, d_{i_{N-c}}$ in $D$; the rest are linear combinations of these.

Since $f$ is a diffeomorphism, $\det 
\begin{psmallmatrix}
A &B\\C&D
\end{psmallmatrix}
\neq 0$. Therefore, there exist $N$ linearly independent rows in the $2N\times N$ matrix 
$\begin{psmallmatrix}
B\\D
\end{psmallmatrix}$. 
One can then find $i$ such that the row $b_i$ of the matrix $B$ is not a linear combination of the rows $d_{i_1}, \dots, d_{i_{N-c}}$.

There are only two possibilities:
\begin{itemize}[nosep]
\item[(1)] either we can choose the row $b_i$ such that $i\not\in\{i_1, \dots, i_{N-c}\}$, so the corresponding row  $d_i$ is a linear combination of $d_{i_1}, \dots, d_{i_{N-c}}$,
\item[(2)] or $i\in\{i_1, \dots, i_{N-c}\}$ and there exists $j\not\in\{i_1, \dots, i_{N-c}\}$ such that
both the rows $b_j$ and $d_j$ are linear combinations of   $d_{i_1}, \dots, d_{i_{N-c}}$.
\end{itemize}

Denote by $\delta_{k_1k_2}$ the matrix whose only non-zero entry, which is equal to 1, is at the intersection of the $k_1$-th row and the $k_2$-th column. 
Define
\begin{equation}\label{deps}
D_\varepsilon = D + \varepsilon (\delta_{ij}+\delta_{ji})B,
\end{equation}
where we put $j=i$ in  case (1).
By construction, we have
\begin{equation}\label{rankdeps}
{\rm rank} D_\varepsilon = N - c+1
\end{equation}
at $\varepsilon\neq 0$. Indeed, by the choice of $i$ and $j$, the $(N-c+1)$ linearly independent rows are
$d_{i_1,\varepsilon}, \dots, d_{i_{N-c},\varepsilon}, d_{j,\varepsilon}$, where, in case (1),  $d_{i_n,\eps}=d_{i_n}$ for $n \in \{1,\dots,N-c\}$ and $d_{j,\varepsilon}=d_{i,\eps}= d_i + 2\eps b_i$, and, in case (2), $d_{i_n,\eps}=d_{i_n}$ for $n \in \{1,\dots,N-c\}\setminus\{i\}$, $d_{i,\eps}=d_i+ \eps b_j$ and $d_{j,\varepsilon}= d_j + \eps b_i$.
Therefore, one can find a $(N-c+1)\times (N-c+1)$ matrix $\hat D_\varepsilon$ from these rows such that
\begin{equation}\label{rankmon}
\left.\dfrac{d}{d\eps}\det \hat D_\varepsilon \right|_{\eps=0} \neq 0.
\end{equation}

Consider the Hamiltonian function
$$\tilde H_{\eps,\mu,\nu}=(-\sum_{k=1}^N\nu_k u_k + \sum_{k=1}^N\mu_k v_k - \varepsilon u_i u_j)\cdot \chi(u,v),$$
where $\chi$ is a $C^\infty$ bump function, supported in $U$ and equal to $1$ in a small neighborhood of zero.
The time-1 map $\tilde G_{\eps,\mu,\nu}:(u,v) \mapsto (\bar u,\bar v)$ for this Hamiltonian is, for small $(u,v)$, given by
\begin{equation}\label{case1uvmn}
\bar u = u + \mu, \qquad \bar v = v + \nu + 
\eps (\delta_i +\delta_j) (u + \frac{1}{2}\mu).
\end{equation}
Thus, we have $\bar P=P$ at $(\mu,\nu)=0$, and
\begin{equation}\label{munundg}
\det (\partial_{\mu,\nu}(\bar P-P)) = 1.
\end{equation}
In particular, $G_{\eps,\mu,\nu}\circ f (W_1)$ intersect $W_2$ at the point $P$ at 
$(\mu,\nu)=0$ for all $\varepsilon$. By (\ref{case1uvmn}),
the matrix $D$ in (\ref{eq:transfn}) is replaced by $D_\varepsilon$ 
given by (\ref{deps}). So, by (\ref{rankmon}), the intersection 
of $G_{\eps,\mu,\nu}\circ f (W_1)$ and $W_2$ has corank less than $c$ at 
$\eps\neq 0$.

Since conditions (\ref{rankdeps})  and (\ref{munundg}) persist
at small perturbations, the lemma follows by taking $H^0_{\eps,\mu,\nu}=\tilde H_{\eps,\mu,\nu}$ whenever $\tilde H^0_{\eps,\mu,\nu}$ is jointly $C^\infty$ with respect to variables and parameters; this is the case when $W_2$, and hence the coordinates $(u,v)$, are $C^\infty$. When $\tilde H_{\eps,\mu,\nu}$ has only  finite smoothness, it suffices to take $\{H^0_{\eps,\mu,\nu}\}$ to be any $C^\infty$ family of Hamiltonians supported in $U(P)$ that is sufficiently close to $\{\tilde H_{\eps,\mu,\nu}\}$.

\subsection{Translating the results to Hamiltonian dynamics}\label{sec:conti}
The main  results of Sections~\ref{sec:intro} and \ref{sec:intro2} can be translated to the continuous-time setting.

\subsubsection{Creation of blenders}
Let $\mathcal{M}'$ be a $2(N+1)$-dimensional ($N\geq 2$) symplectic manifold and consider any  Hamiltonian $H\in C^{s+1}(\mathcal{M}')$. The manifold  $\mathcal{M}'$ is foliated by  energy levels -- the $(2N+1)$-dimensional level sets of $H$, which are invariant under the flow of the system.  Assume that the Hamiltonian flow has, in some $(2N+1)$-dimensional energy level $H=h_0$, a two-dimensional whiskered torus $\tau\cong\mathbb{T}^2$, that is, there exists an  $2N$-dimensional cross-section $V\subset H^{-1}(h_0)$ such that its intersection with $\tau$  is a one-dimensional invariant whiskered torus $\gamma$ of  the Poincar\'e return map $T_0$ of $V$.

The map $T_0$ is a $C^s$ symplectic diffeomorphism to its image and we can consider the same local objects as in the discrete-time case:  the two-dimensional invariant cylinder $\A\subset V$ containing $\gamma$, and the strong-stable and strong-unstable foliations $\mathcal{F}^{\ss}$ and $\mathcal{F}^{\uu}$ in $V$ whose leaves comprise the local invariant manifolds of $\A$. In particular, the leaves through $\gamma$ form the $N$-dimensional local stable and unstable manifolds $W^\s_{\loc}(\gamma)$ and $W^\u_{\loc}(\gamma)$. The trajectories of the Hamiltonian flow starting from these two manifolds give the global invariant manifolds $W^\s(\tau)$ and  $W^\u(\tau)$, which are both $(N+1)$-dimensional.

We next define partially-hyperbolic homoclinics analogously to Definition~\ref{defi:phorbit}.
Let us assume that $W^\s(\tau)$ intersects $W^\u(\tau)$   along some orbit $\Gamma$. 
We take two points $M^-\in W^\u_{\loc}(\gamma)$ and $M^+\in W^\s_{\loc}(\gamma)$ from $\Gamma$. The flow near the homoclinic orbit defines the transition map $T_1$ from a small neighborhood $\Pi^-\subset V$ of $M^-$ to a small neighborhood $\Pi^+\subset V$ of $M^+$. Denote by $\ell^{\uu}$ the strong-unstable leaf through $M^-$ and by $\ell^{\ss}$ the strong-unstable leaf through $M^+$. Then homoclinic orbit $\Gamma$ is {\em partially hyperbolic} if  $T_1(\ell^{\uu})$ and $\ell^{\ss}$ satisfy condition \eqref{eq:condition:trans1} at the point $M^+$. 

The partial hyperbolicity implies that  $W^\u(\tau)\cap W^\s_{\loc}(\A)$ near $M^+$ and $W^\s(\tau)\cap W^\u_{\loc}(\A)$ near $M^-$ are  two-dimensional discs transverse to $\mathcal{F}^{\ss}$ and  $\mathcal{F}^{\uu}$, respectively. As a result, one can define the holonomy maps $\pi^\s$ and $\pi^\u$ from these discs to $\A$, and hence the scattering map  ${S}=\pi^\s\circ T_1\circ (\pi^\u)^{-1}$.  
The notion of contraction/expansion of homoclinic tangencies is defined in the same way as  in the discrete-time case, as well as the order of tangencies  (see Section~\ref{sec:transtangency}). 

The maps $T_0$ and $T_1$   have exactly the same  properties as  in the discrete-time case. As explained below Theorem~\ref{thm:main_blender_cubic_nonpara}, the search of blenders is based on  analysing these two maps, we have the following analogue of Theorem~\ref{thm:main_blender_cubic_nonpara}:
\begin{manualtheorem}{G$^\prime$}
Let $H\in C^4(\mathcal{M})$, and let the corresponding Hamiltonian flow  have a two-dimensional whiskered torus of class $C^2$ in some energy level $H=h_0$, which has a 2-flat homoclinic tangency. If $T_0|_{\tau \cap V}$ for some cross-section $V\subset H^{-1}(h_0)$ is $C^1$-conjugate to an irrational rotation, then,  given any neighborhood $V'$ of the homoclinic orbit and $\tau$, there exists a blender  connected via $V'$ to $\tau$, center-stable if the tangency is contracting and center-unstable if expanding.
\end{manualtheorem}

Here a blender of the flow is a blender of the first-return map $T$ of $V$  defined as $T=T_1$ in $\Pi^-$ and $T=T_0$ outside a small neighborhood of the closure of $\Pi^-$. 
The connection means that the blender of the induced map is connected to $\gamma=\tau \cap V$.

Let us now  discuss the perturbative results. 
We  further assume  that $H$ is $C^\infty$  and $\gamma$ is a one-dimensional whiskered KAM-torus of the Poincar\'e map $T_0$ in the sense of Definition~\ref{defi:kamtori}. 
In this case $\tau$, consisting of the orbits through $\gamma$, is  a two-dimensional KAM-torus of the Hamiltonian flow, which  persists
 for all  Hamiltonians close to $H$ and all energy levels close to $H=h_0$.
We embed the Hamiltonian $H$ into a one-parameter family $\{H_{\eps}\in C^{\infty}(\mathcal{M}')\}$ with $H_0=H$. We take the value of energy as a second parameter for the local and transition maps, i.e., there is a two-parameter family of first-return maps
 $T_{\hat\eps}$.

We note that the conditions for the three  unfolding families of Theorem~\ref{thm:main_2punfolding}  are all formulated in terms of the transition maps, as given by \eqref{eq:unfoldcubic}, \eqref{eq:unfold2quadra} and  \eqref{eq:unfoldquadra} (see also the end of Section~\ref{sec:transtangency}). Thus, those conditions also make sense for the family $\{T_{\hat\eps}\}$. 
Since the proof of Theorem~\ref{thm:main_2punfolding} is solely based on the analysis of the corresponding family of the local map and  transition map(s) and these maps have the same properties as in the discrete-time case, we obtain
\begin{manualtheorem}{B$^\prime$}\label{thm:B'}
Let $\{H_{\eps}\in C^{\infty}(\mathcal{M}')\}$ be any one-parameter  family such that 
 the  two-parameter family $\{T_{\hat\eps}\}$ unfolds 
 \begin{itemize}[nosep]
\item one partially-hyperbolic cubic homoclinic tangency of $\tau$ in the sense of \eqref{eq:unfoldcubic}, or
\item two partially-hyperbolic quadratic homoclinic tangencies of $\tau$ in the sense of \eqref{eq:unfold2quadra} , or
\item  one partially-hyperbolic quadratic homoclinic tangency of $\tau$ in the sense of  \eqref{eq:unfoldquadra}.
 \end{itemize}
 Let   $V'$ be any neighborhood of the homoclinic orbit and $\tau$.
Then there exist $\eps$ arbitrarily close to $0$ and  $h$ close $h_0$ such that the Hamiltonian flow of $H_\eps$ has a symplectic blender connected to the continuation of $\tau\subset H^{-1}(h)$ via $V'$. 
\end{manualtheorem}

As in the discrete-time case, the theorem also applies to families of sufficiently high regularity.
Note that the exactness assumption in Theorem~\ref{thm:main_2punfolding} is not required here (see Remark~\ref{rem:exact2}).
Again, the connection means that the blender of the induced map is connected to the continuation\footnote{The continuation of the torus $\tau$ is uniquely defined, whereas that of $\gamma$ depends on the choice of the cross-section.} of $\gamma$ in the sense of Definition~\ref{defi:sympblen_conn}.

Similarly, the arguments used to prove Theorem~\ref{thm:main_blender_allcases} can be translated  to the case of Hamiltonian flows. Thus, implementing the same series of perturbations we constructed to derive Theorem~\ref{thm:main_blender_allcases}  from  Theorem~\ref{thm:main_2punfolding}, we can obtain from Theorem~\ref{thm:B'} the following
\begin{manualtheorem}{A$^\prime$}\label{thm:A'} 
Let $H\in C^{s}(\mathcal{M}')$,  $s=3,\dots, \infty,\omega$, and let the corresponding Hamiltonian flow have a two-dimensional whiskered torus $\tau$ of class $C^s$ with a homoclinic orbit $\Gamma$ in some energy level $H=h_0$. Assume the rotation number of $\tau$  is irrational. Given any neighborhood $\hat V$ of $\Gamma\cup\tau$, there exists $H'\in C^s(\M')$, arbitrarily $C^s$-close to $H$, such that the flow of $H'$ has a symplectic blender for every energy value $h$ close to $h_0$, which is connected to a non-degenerate whiskered KAM-torus $\tau_{h}$ of class $C^s$, arbitrarily $C^s$-close to $\tau$. When the smoothness $s$ is finite,  both  $H'$ and $\tau_{h}$ can be taken $C^\infty$.
\end{manualtheorem}

\subsubsection{Saddle-center homoclinics} 
Let $H\in C^{s+1}(\mathcal{M}')$ and  the corresponding system have a periodic orbit $L$ in some energy level $H=h_0$. Take a small $2N$-dimensional cross-section $V\subset H^{-1}(h_0)$ to $L$. The flow of the system restricted to the energy level $H=h_0$ defines the Poincar\'e return map $T_0$, which is a locally defined $C^s$ symplectic diffeomorphism of $V$. The intersection point $O=L\cap V$ is a fixed point of $T_0$. The periodic orbit $L$ is called a {\em saddle-center} when $O$ is a saddle-center for $T_0$. Note that the map $T_0$ plays the same role as the local map near the saddle-center periodic point in the discrete-time case. On $V$ there exist $(N-1)$-dimensional local stable and unstable manifolds $W^\s_{\loc}(O)$ and $W^\u_{\loc}(O)$. The orbits of the Hamiltonian flow starting at $W^\s_{\loc}(O)$ and $W^\u_{\loc}(O)$ form the global  stable and, respectively, unstable invariant manifolds of $L$. Assume that $W^\s(L)$ intersects $W^\u(L)$ along a homoclinic orbit $\Gamma$. Take two points of the intersection of $\Gamma$ with $V$: $M^-\in W^\u_{\loc}(O)$ and $M^+\in W^\s_{\loc}(O)$. The flow near $\Gamma$ defines the transition map $T_1$ from a small neighborhood of $M^-$ to a small neighborhood of $M^+$ in $V$.

We say that $L$ and $\Gamma$ are generic if the pair of symplectic maps $(T_0,T_1)$ satisfies the genericity conditions \ref{word:C1}--\ref{word:C3} of Section \ref{sec:para_SC}, where $T_1$ is used to define the scattering map. As before,  for a   family $\{H_{\eps}\}$ with $H_0=H$, we can take the value of the energy $h$ as an extra parameter so that it generates a   family  of pairs of symplectic maps $\{(T_{0,\hat\eps},T_{1,\hat\eps})\}$, where   $\hat\eps=(\eps,h-h_0)$. We say that the family $\{H_{\eps}\}$ is a {\em proper unfolding} of $\Gamma$ if the corresponding family $\{(T_{0,\hat\eps},T_{1,\hat\eps})\}$ satisfies condition~\ref{word:H1} of Section~\ref{sec:para_SC}, and it is a {\em tangency unfolding} if condition~\ref{word:H2} is satisfied. Applying the proofs of Theorem~\ref{thm:main_SC_para} and Theorem~\ref{thm:main_SCblender}  to the family of pairs $\{(T_{0,\hat\eps},T_{1,\hat\eps})\}$, we obtain the continuous-time versions of these theorems.

\begin{manualtheorem}{C$^\prime$}\label{thm:C'} 
Let the Hamiltonian system defined by  $H\in C^{\infty}(\mathcal{M}')$  have a generic saddle-center periodic orbit with a generic homoclinic orbit. For any proper unfolding family $\{H_{\eps}\}$,  there exists a sequence $\{\mathcal{E}_j\}$ of open sets in the  $(\eps,h)$-space converging to $(0,h-h_0)$ such that  the parameter values  for which the continuation of the saddle-center  has a homoclinic orbit are dense in $\bigcup_j \mathcal{E}_j$.
\end{manualtheorem}

\begin{manualtheorem}{D$^\prime$}\label{thm:D'} 
Let the Hamiltonian system defined by  $H\in C^{\infty}(\mathcal{M}')$  have a generic saddle-center periodic orbit with a generic homoclinic orbit. For any tangency-unfolding family $\{H_{\eps}\}$,  there exist $\eps$ arbitrarily close to $0$ and  $h$ close $h_0$ such that the Hamiltonian flow of $H_\eps$ has a symplectic blender connected to some two-dimensional KAM-torus   $ \tau\in H^{-1}(h)$
\end{manualtheorem}

By the discussion above Corollary~\ref{cor:SC_nonpara}, the above theorems immediately imply

\begin{manualcor}{E$^\prime$}\label{cor:E'} 
Let $H\in C^{s}(\mathcal{M'},\mathbb R) $,  $s=2,\dots,\infty,\omega$, have a   saddle-center periodic orbit  $L$ with a  homoclinic orbit $\Gamma$, and let $ V'$ be any neighborhood of $L\cup\Gamma$. Then
 there exist a map $H'\in C^{s}(\mathcal{M'},\mathbb R)$, arbitrarily close to $H$, a $C^2$ neighborhood $\mathcal{U}\subset C^{s}(\mathcal{M'},\mathbb R)$ of $H'$ and a $C^s$-dense subset $\mathcal{U}'$ of $\mathcal{U}$ such that 
\begin{itemize}[nosep]
\item the system corresponding to $H'$ has a symplectic blender $\Lambda\subset V'$,
\item $W^\u(L_{F})\cap W^\s(\Lambda_{F})\neq \emptyset$ and  $W^\s(L_{F})\cap W^\u(\Lambda_{F})\neq \emptyset$ for every $F\in\mathcal{U}$, and
\item $L_F$ has a homoclinic orbit in some energy level for every $F\in \mathcal{U}'$.
\end{itemize}
\end{manualcor}

\bibliographystyle{myplainnat}
\bibliography{references}

\end{document}